\begin{document}

\title{Cylindrical Model Structures}
\author{Richard Williamson}
\date{}

\dedication{Dedicated to Kari Chard}

\maketitle

\begin{abstract} \noindent We build a model structure from the simple point of departure of a structured interval in a monoidal category --- more generally, a structured cylinder and a structured co-cylinder in a category. \end{abstract}

\tableofcontents

\begin{chapter}{Introduction} \label{IntroductionChapter}

Model structures are cherished --- powerful, but hard to construct. In this work we build a model structure from the simple point of departure of a structured interval in a monoidal category --- more generally, a structured cylinder and a structured co-cylinder in a category.

\section*{Abstract homotopy theory} \label{PlaceWithinAbstractHomotopyTheorySubsection} \label{PlaceWithinAbstractHomotopySubsection} The first steps towards an abstract homotopy theory were, for us, taken in the early 1950s by Kan --- though there is a considerable prehistory, for example in the work of Whitehead. Kan isolated in \cite{KanAbstractHomotopyTheoryII} the notion of a cylinder in a category and of homotopy with respect to a cylinder. 

Kan's immediate interest lay in the categories of cubical and simplicial sets. Whilst both categories admit a cylinder, the corresponding homotopies cannot be composed or reversed. What are now known as Kan complexes were introduced as a remedy. 

Abstract homotopy theory has subsequently evolved in two branches, which have been explored rather independently. The first is the theory of model categories and its many variants and weakenings. The second is much less known. Its origins lie in the observation that the cylinder in topological spaces admits a much richer structure than the cylinder of the categories of simplicial or cubical sets. Two directions have been followed in capturing this richer structure of the topological cylinder in an abstract setting. 

\begin{figure}[h] \label{FigureAbstractHomotopyTheory}

\centering

\begin{tikzpicture} [>=stealth]

\node[align=center,text width=2cm,anchor=north] (kan) at (4,0) {Kan \\[0.5em]  1956 \\[0.5em] Cylinder in a category};

\node[align=center,text width=3cm,anchor=north,inner ysep=1em] (quillen) at (0,-5) {Quillen \\[0.5em]  1967 \\[0.5em] Model categories};

\node[align=center,text width=2cm,anchor=north,inner ysep=1em] (grandis) at (5,-5) {Grandis \\[0.5em]  1990s \\[0.5em] Structured cylinders};

\node[align=center,text width=2.5cm,anchor=north,inner ysep=1em] (kamps) at (9,-5) {Kamps \\[0.5em]  Late 1960s \\[0.5em] Kan conditions};

\draw[bend right=45] (kan.west) to (quillen.north);
\draw[bend left=45] (kan.east) to (7,-3);
\draw[bend right=45] (7,-3) to (grandis.north);
\draw[bend left=45] (7,-3) to (kamps.north);
\draw[<-,dashed] (quillen.east) to node[auto] {our work} ++ (2.1,0);
\end{tikzpicture}

\caption*{Figure: Approaches to abstract homotopy theory}

\end{figure}
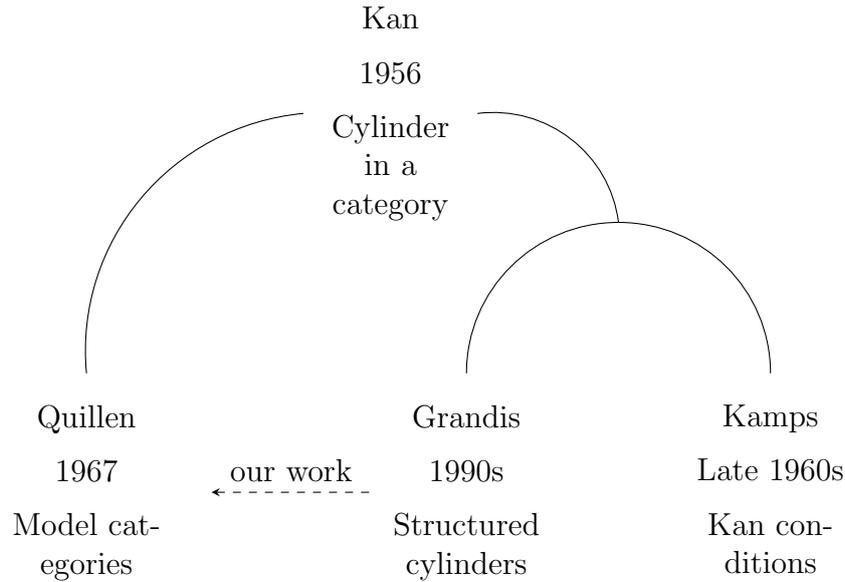

The first, begun by Kamps in the late 1960s in works such as \cite{KampsKanBedingungenUndAbstrakteHomotopietheorie}, explores the homotopy theory with respect to a cylinder whose associated cubical set satisfies properties similar to those of a Kan complex. This is a global approach, with the axioms requiring consideration of all arrows of a category. 

The second emerges out of works of Brown, Higgins, and others on cubical sets with connections, for example the paper \cite{BrownHigginsOnTheAlgebraOfCubes}. It is of a structural and categorical nature, involving a rainbow of natural transformations intertwining a cylinder and its corresponding double cylinder. This approach has been explored by Grandis in works such as \cite{GrandisCategoricallyAlgebraicFoundationsForHomotopicalAlgebra}. 

The book \cite{KampsPorterAbstractHomotopyAndSimpleHomotopyTheory} of Kamps and Porter gives a nice overview of both directions, with an emphasis on the first.

\section*{Outline}

The present work builds a bridge to model categories from the categorical approach to homotopy theory via structured intervals, cylinders, and co-cylinders. The theorems towards which all of our work leads are to be found in \ref{ModelStructureChapter}. Given a richly structured cylinder and co-cylinder in a category satisfying a certain strictness hypothesis, we prove that homotopy equivalences, cofibrations, and fibrations --- defined from an abstract point of view in the same way as for topological spaces --- equip this category with a model structure. 

More precisely, our theory typically gives rise to not one, but two model structures. We introduce in \ref{CofibrationsAndFibrationsChapter} a notion of a normally cloven cofibration, and of a normally cloven fibration. One of our model structures is defined by homotopy equivalences, cofibrations, and normally cloven fibrations, whilst the other is defined by homotopy equivalences, normally cloven cofibrations, and fibrations. 

The structures on a cylinder and a co-cylinder with which we work are defined in \ref{StructuresUponACylinderOrACoCylinderChapter}, along with our strictness hypothesis. Often, in practise, we construct a structured cylinder and a structured co-cylinder by means of a structured interval in a monoidal category. This is discussed in \ref{StructuresUponAnIntervalChapter}. 

We work in a 2-categorical setting, introduced in \ref{FormalCategoryTheoryPreliminariesChapter}, which allows us to express a duality between homotopy theory with respect to a cylinder on the one hand, and homotopy theory with respect to a co-cylinder on the other. This duality manifests itself throughout. 

In \ref{CylindricalAdjunctionsChapter}, we discuss a notion of adjunction between a cylinder and a co-cylinder, in the presence of which the corresponding homotopy theories coincide. For the remainder of this outline, we shall indicate neither the particular structures involved at different points, nor whether we are working with a cylinder, a co-cylinder, or both. These matters are carefully treated in the remainder of the work.

In \ref{HomotopyAndRelativeHomotopyChapter}, we define homotopies and relative homotopies with respect to a cylinder or co-cylinder. We demonstrate that we can compose and reverse them.

If our strictness hypothesis holds, we prove in \ref{MappingCylindersAndMappingCoCylindersChapter} that the mapping cylinder of a map gives rise to a factorisation of it into a normally cloven cofibration followed by a strong deformation retraction, and that the mapping co-cylinder of a map gives rise to a factorisation of it into a section of strong deformation retraction followed by a normally cloven fibration.

In \ref{DoldTheoremChapter}, we characterise trivial fibrations as strong deformation retractions, and characterise trivial cofibrations as sections of strong deformation retractions. This is by means of an abstraction of Dold's theorem for topological spaces, on homotopy equivalences under or over an object. 

Assuming once more that our strictness hypothesis holds, we prove in \ref{FactorisationAxiomsChapter} that a trivial cofibration is exactly a section of a strong deformation retraction, and dually that a trivial fibration is exactly a strong deformation retraction. With our mapping cylinder and mapping co-cylinder factorisations to hand, we deduce that the factorisation axioms for a model structure hold.

In \ref{LiftingAxiomsChapter}, we prove that the canonical map from the mapping cylinder of a map to the cylinder at its target admits a strong deformation retraction. We prove that normally cloven fibrations have the right lifting property with respect to sections of strong deformation retractions. We deduce that normally cloven fibrations have the covering homotopy extension property --- introduced in \ref{CoveringHomotopyExtensionPropertyChapter} --- with respect to cofibrations. 

This allows us to prove that cofibrations have the left lifting property with respect to trivial normally cloven fibrations. We conclude that the lifting axioms hold for one of our model structures. The lifting axioms for the other model structure follow by duality.  

\section*{Folk model structure} \label{ExamplesFutureDirectionsSubsection} This work was conceived as a step towards the construction of a model category of $n$-groupoids satisfying, in a strong sense, the homotopy hypothesis. The aim was to construct the folk model structure on categories and groupoids in a way which we could generalise to $n$-groupoids.

We demonstrate in \ref{ExampleChapter} that our work indeed gives a new construction of the folk model structure on categories and groupoids. The construction of a model category of $n$-groupoids by means of the present work is the point of departure of joint work with Marius Thaule which is in preparation.

\section*{Further examples} Our work gives rise to many other model structures. We discuss three. 

\begin{itemize}[itemsep=1em,topsep=1em]

\item[(1)] Let $\mathsf{Ch}(\cl{A})$ denote the category of chain complexes in an additive category $\cl{A}$ with finite limits and colimits. The cylinder and co-cylinder functors \ar{\mathsf{Ch}(\cl{A}),\mathsf{Ch}(\cl{A})} of homological algebra can be equipped with all the structures of \ref{StructuresUponACylinderOrACoCylinderChapter}. We refer the reader to \S{4.4.2} of the book \cite{GrandisDirectedAlgebraicTopology} of Grandis, for example. Our strictness hypothesis is satisfied. 

Our work thus gives a model structure on $\mathsf{Ch}(\cl{A})$ whose weak equivalences are chain homotopy equivalences. This model structure was constructed in a quite different way by Golasi{\'n}ski and Gromadzki in \cite{GolasinskiGromadzkiTheHomotopyCategoryOfChainComplexesIsAChainComplex}, appealing to a characterisation due to Kamps in \cite{KampsNoteOnNormalSequencesOfChainComplexes} of the fibrations and cofibrations.

\item[(2)] Let $\mathsf{Kan}_{\Delta}$ denote the category of algebraic Kan complexes introduced by Nikolaus in \cite{NikolausAlgebraicModelsForHigherCategories}. The objects of $\mathsf{Kan}_{\Delta}$ are Kan complexes with a chosen filling for every horn. The arrows of $\mathsf{Kan}_{\Delta}$ are morphisms of simplicial sets which respect the chosen horn fillings.

Our work gives a model structure on $\mathsf{Kan}_{\Delta}$, which we think of as akin to the model structure on topological spaces constructed by Str{\o}m in \cite{StromTheHomotopyCategoryIsAHomotopyCategory}. A different model structure on $\mathsf{Kan}_{\Delta}$ was constructed by Nikolaus in \cite{NikolausAlgebraicModelsForHigherCategories}, which we think of as akin to the Serre model structure on topological spaces.

The author conjectures that the identity functor defines a Quillen equivalence between these two model structures on $\mathsf{Kan}_{\Delta}$.

\item[(3)] Let $\mathsf{Top}$ denote the category of all topological spaces. The unit interval is exponentiable with respect to the cartesian monoidal structure on $\mathsf{Top}$ and can be equipped with all of the structures of \ref{StructuresUponAnIntervalChapter}. 

Our strictness hypothesis does not however hold.  Thus our work does not immediately give rise to a model structure on $\mathsf{Top}$. 

Nevertheless, homotopy equivalences in $\mathsf{Top}$ can be understood as homotopy equivalences with respect to the Moore co-cylinder, which to a topological space $X$ associates the set of pairs $(t,f)$ of a real number $t \in [0,\infty)$ and a map \ar{{[0,\infty)},X,f} such that $f(x) =t$ for all $x \geq t$, where this set is equipped with the subspace topology with respect to $[0,\infty) \times X^{[0,\infty)}$. Our strictness hypothesis does hold for the Moore co-cylinder.

The Moore co-cylinder does not however admit connection structures. There are two ways to get around this. Firstly, it is possible to generalise our work slightly to double cylinders and double co-cylinders which are not necessarily obtained by applying the cylinder or co-cylinder functor twice. We can then take our Moore double co-cylinder to consist of Moore rectangles as considered in the paper \cite{BrownMooreHyperrectanglesOnASpaceFormAStrictCubicalOmegaCategory} of Brown. Secondly, it should be possible to replace the connection structures in our work by the `strengths' of the paper \cite{VanDenBergGarnerTopologicalAndSimplicialModelsOfIdentityTypes} of van den Berg and Garner.

Following either route, we can obtain the model structure on $\mathsf{Top}$ constructed by Str{\o}m in \cite{StromTheHomotopyCategoryIsAHomotopyCategory} by working with respect to both the Moore co-cylinder and the usual cylinder and co-cylinder in $\mathsf{Top}$.

Around Easter 2012, Tobias Barthel and Bill Richter suggested to the author a construction of the Moore co-cylinder which could be carried out in a quite general setting. Thus our side-stepping of the failure of the strictness hypothesis to hold in $\mathsf{Top}$ may be able to be carried out more widely.   

The significance of the Moore co-cylinder for the construction of the Str{\o}m model structure on $\mathsf{Top}$ is explored in the paper \cite{BarthelRiehlOnTheConstructionOfFunctorialFactorizationsForModelCategories} of Barthel and Riehl.

\end{itemize}

\section*{Acknowledgements} I thank Kari, the wonderful funny bee to whom this work is dedicated, for everything. It is impossible to express how much your lifting of my spirit --- cakes of course playing a crucial role! --- has contributed to this work.

I thank Mum and Dad for everything over the years. I cannot express my gratitude. I would specifically like to thank you for your unwavering encouragement and belief throughout the writing of this work --- I am very lucky to have been able to phone up babbling incomprehensibly about mathematics!

I have the grandest pair of brothers in the world, Chris and Andrew. I thank you both for all the many laughter filled times --- for always being there for me.

I thank Babcia, Dziadziu, Grandma, and, beyond the grave, Grandad for everything they have done for me. I feel deeply that this work owes a great deal to your hard work throughout your lives.

To Alex, Tom, Richy, Shaun, Sian, Phil, and Katherine --- I count myself incredibly lucky to have got to know you all, and that we have such a close knit friendship. You all bring great joy to my life!

To Merav, Daniel, Nimrod, Henning, and Anne -- wonderful dinners, black and white films, and the fun brought into our lives by little Nimrod gave Kari and me great happiness in our time in Oxford, and I shall always look back with an ineffable fondness. I thank Henning for the very kind help with the final printing of the thesis, and for handing it over to the Bodleian library --- much appreciated!

I thank Rapha{\"e}l Rouquier greatly for his supervision over my four years in Oxford. That every turn in my path was greeted with enthusiasm, wisdom, and sincerity was a great encouragement to me.

I thank Tobias Barthel for many enjoyable discussions. I treasure our correspondence, and our sharing of mathematical dreams!   

I thank Nicholas Cooney for many pleasant conversations over coffee and in the pub, and for the great help with the submission of this work as my thesis! 

I thank Ulrike Tillmann for her advice and understanding at a crucial point in my time as a graduate student, and for her continuing encouragement.

I thank Michael Collins for his support and encouragement in the months before and during my time at Oxford, and for his insight in suggesting Rapha{\"e}l as a supervisor.  

To all at Oxenford Cricket Club --- especially my 2nd XI teammates --- for three years of immensely enjoyable summer Saturdays.

I thank Tim Porter and Kobi Kremnizer for their examination of my thesis, for their encouragement, and for their very helpful comments and suggestions. 

I thank Nils Baas greatly for his kindness in inviting us and helping us to settle down in Trondheim. I thank Nils and Andrew Stacey for their support in my finding a position at NTNU.

I thank Bj{\o}rn Ian Dundas for arranging the funding of my employment at NTNU as a guest researcher from January to March 2012. 

Finally to all our friends and family in Sandefjord, T{\o}nsberg, Larvik, and Trondheim for their help, well wishes, and understanding since we moved to Norway in August 2011 --- and of course for the dinners, waffles, and cakes\ldots a recurring theme!

\end{chapter}

\begin{chapter}{Formal category theory preliminaries} \label{FormalCategoryTheoryPreliminariesChapter}

In \ref{StructuresUponACylinderOrACoCylinderChapter}, we introduce various structures with which a cylinder or co-cylinder may be able to be equipped. The structures upon a co-cylinder are formally dual to those upon a cylinder. This duality, which is of a 2-categorical rather than a 1-categorical nature, as was already observed by Gray in \cite{GrayFibredAndCofibredCategories}, will manifest itself throughout this work. 

In order to express the duality we shall work throughout in a strict 2-category equipped with a strict final object. We think of the objects of this 2-category as formal categories. We now introduce these ideas as far as we shall need.

\begin{assum} Let $\cl{C}$ be a strict 2-category, and let $1$ be a final object of $\cl{C}$. \end{assum} 

\begin{defn} Let $\cl{A}$ be an object of $\cl{C}$. An {\em object} of $\cl{A}$ is a 1-arrow \ar{1,\cl{A}} of $\cl{C}$. \end{defn}

\begin{defn} Let $\cl{A}$ be an object of $\cl{C}$. Given objects $a_{0}$ and $a_{1}$ of $\cl{A}$, an {\em arrow} of $\cl{A}$ from $a_{0}$ to $a_{1}$ is  a 2-arrow \ar{a_{0},a_{1},} of $\cl{C}$. \end{defn}

\begin{rmk} Let $\underline{\mathsf{CAT}}$ denote the strict 2-category of large categories. The definitions above are motivated by the observation that, for any category $\cl{A}$, the category $\underline{\mathsf{Hom}}_{\underline{\mathsf{CAT}}}(1,\cl{A})$ is isomorphic to $\cl{A}$.  \end{rmk}

\begin{defn} Let $\cl{A}$ be an object of $\cl{C}$, and let \ar{a_{0},a_{1},f_{0}} and \ar{a_{1},a_{2},f_{1}} be arrows of $\cl{A}$. The {\em composition} $f_{1} \circ f_{0}$ of $f_{1}$ and $f_{0}$ in $\cl{A}$ is their composition in $\underline{\mathsf{Hom}}_{\cl{C}}(1,\cl{A})$. \end{defn}

\begin{defn} A {\em diagram} in $\cl{A}$ is a diagram in $\underline{\mathsf{Hom}}_{\cl{C}}(1,\cl{A})$. We define a {\em commutative diagram} in $\cl{A}$, a {\em co-cartesian square} in $\cl{A}$, and a {\em cartesian square} in $\cl{A}$ in the same way. \end{defn}

\begin{notn} Given 1-arrows 

\begin{diagram}

\begin{tikzpicture} [>=stealth]

\matrix [ampersand replacement=\&, matrix of math nodes, column sep=3 em, nodes={anchor=center}]
{ 
|(0-0)| \cl{A}_{0} \& |(1-0)| \cl{A}_{1} \& |(2-0)| \cl{A}_{2} \\ 
};
	
\draw[->] ({0-0}.north east) to node[auto] {$F_{0}$} ({1-0}.north west);
\draw[->] ({0-0}.south east) to node[auto,swap] {$F_{1}$} ({1-0}.south west);
\draw[->] (1-0) to node[auto] {$G$} (2-0); 

\end{tikzpicture} 

\end{diagram} %
of $\cl{C}$, and a 2-arrow \ar{F_{0},F_{1},\eta} of $\cl{C}$, we denote by $G \cdot \eta$ the 2-arrow \ar{GF_{0},GF_{1},} of $\cl{C}$ obtained by horizontal compositition of $id(G)$ and $\eta$. It is sometimes referred to as the {\em whiskering} of $G$ and $\eta$.

Similarly, given 1-arrows

\begin{diagram}

\begin{tikzpicture} [>=stealth]

\matrix [ampersand replacement=\&, matrix of math nodes, column sep=3 em, nodes={anchor=center}]
{ 
|(0-0)| \cl{A}_{0} \& |(1-0)| \cl{A}_{1} \& |(2-0)| \cl{A}_{2} \\ 
};
	
\draw[->] ({1-0}.north east) to node[auto] {$F_{0}$} ({2-0}.north west);
\draw[->] ({1-0}.south east) to node[auto,swap] {$F_{1}$} ({2-0}.south west);
\draw[->] (0-0) to node[auto] {$G$} (1-0); 

\end{tikzpicture} 

\end{diagram} %
of $\cl{C}$, and a 2-arrow \ar{F_{0},F_{1},\eta} of $\cl{C}$, we denote by $\eta \cdot G$ the 2-arrow \ar{F_{0}G,F_{1}G,} of $\cl{C}$ obtained by horizontal composition of $\eta$ and $id(G)$. It is also sometimes referred to as the {\em whiskering} of $\eta$ and $G$. \end{notn}

\begin{notn} Let \ar{\cl{A}_{0},\cl{A}_{1},F} be a 1-arrow of $\cl{C}$. If $a$ is an object of $\cl{A}_{0}$, we denote by $F(a)$ the object of $\cl{A}_{1}$ defined by the 1-arrow $F \circ a$ of $\cl{C}$. 

If \ar{a_{0},a_{1},f} is an arrow of $\cl{A}_{0}$, we denote by $F(f)$ the arrow of $\cl{A}_{1}$ from $F(a_{0})$ to $F(a_{1})$ defined by the whiskered 2-arrow $F \cdot f$ of $\cl{C}$. \end{notn}

\begin{rmk} \label{1ArrowsAsFunctors} In this way, we think of a 1-arrow \ar{\cl{A}_{0},\cl{A}_{1},F} of $\cl{C}$ as a functor from $\cl{A}_{0}$ to $\cl{A}_{1}$, corresponding to the functor \ar[6]{{\underline{\mathsf{Hom}}_{\cl{C}}(1,\cl{A}_{0})},{\underline{\mathsf{Hom}}_{\cl{C}}(1,\cl{A}_{1}).},{\underline{\mathsf{Hom}}_{\cl{C}}(1,F)}} \end{rmk}

\begin{notn} \label{2ArrowAsNaturalTransformationNotation} Let \pair{\cl{A}_{0},\cl{A}_{1},F_{0},F_{1}} be 1-arrows of $\cl{C}$, and let \ar{F_{0},F_{1},\eta} be a 2-arrow of $\cl{C}$. If $a$ is an object of $\cl{A}_{0}$, we denote by $\eta(a)$ the arrow of $\cl{A}_{1}$ from $F_{0}(a)$ to $F_{1}(a)$ defined by the whiskered 2-arrow $\eta \cdot a$ of $\cl{C}$. \end{notn}

\begin{rmk} Let \pair{\cl{A}_{0},\cl{A}_{1},F_{0},F_{1}} be 1-arrows of $\cl{C}$, thought of as functors from $\cl{A}_{0}$ to $\cl{A}_{1}$, as in Remark \ref{1ArrowsAsFunctors}. We think of a 2-arrow \ar{F_{0},F_{1},\eta} of $\cl{C}$ as a natural transformation from $F_{0}$ to $F_{1}$, corresponding to the natural transformation \ar[6]{{\underline{\mathsf{Hom}}_{\cl{C}}(1,F_{0})},{{\underline{\mathsf{Hom}}_{\cl{C}}(1,F_{1})}.},{\underline{\mathsf{Hom}}_{\cl{C}}(1,\eta)}} \end{rmk}

\begin{notn} We denote by $\cl{C}^{op}$ the 2-category obtained from $\cl{C}$ by reversing all 2-arrows. If $f$ is a 2-arrow of $\cl{C}$, we denote by $f^{op}$ the corresponding 2-arrow of $\cl{C}^{op}$. When viewing an object $\cl{A}$ of $\cl{C}$ as an object of $\cl{C}^{op}$, we denote it by $\cl{A}^{op}$. Thus $\underline{\mathsf{Hom}}_{\cl{C}^{op}}(1,\cl{A}^{op})$ is the opposite category of $\underline{\mathsf{Hom}}_{\cl{C}}(1,\cl{A})$. In particular, if \ar{a_{0},a_{1},f} defines an arrow of $\cl{A}$, then the 2-arrow $f^{op}$ of $\cl{C}^{op}$ defines an arrow of $\cl{A}^{op}$ from $a_{1}$ to $a_{0}$. \end{notn}

\begin{recollection} \label{AdjunctionRecollection} An {\em adjunction} between a pair $(F,G)$ of 1-arrows \adjunction{\cl{A}_{0},\cl{A}_{1},F,G} of $\cl{C}$ is a 2-arrow \ar{id(\cl{A}_{0}),GF,\eta} of $\cl{C}$, and a 2-arrow \ar{FG,id(\cl{A}_{1}),\zeta} of $\cl{C}$, such that the diagram \tri{F,FGF,F,F \eta, \zeta F, id} in $\underline{\mathsf{Hom}}_{\cl{C}}(\cl{A}_{0},\cl{A}_{1})$ commutes, and such that the diagram \tri{G,GFG,G,\eta G, G \zeta,id} in $\underline{\mathsf{Hom}}_{\cl{C}}(\cl{A}_{1},\cl{A}_{0})$ commutes. We refer to $F$ as a {\em left adjoint} of $G$, and to $G$ as a {\em right adjoint} of $F$. 

If we have an adjunction between $F$ and $G$, then for any object $\cl{A}$ of $\cl{C}$, the natural transformations $\underline{\mathsf{Hom}}_{\cl{C}}(\cl{A},\eta)$ and $\underline{\mathsf{Hom}}_{\cl{C}}(\cl{A},\zeta)$ define an adjunction between the following pair of functors. \adjunction[7]{{\underline{\mathsf{Hom}}_{\cl{C}}(\cl{A},\cl{A}_{0})},{\underline{\mathsf{Hom}}_{\cl{C}}(\cl{A},\cl{A}_{1})},{\underline{\mathsf{Hom}}_{\cl{C}}(\cl{A},F)},{\underline{\mathsf{Hom}}_{\cl{C}}(\cl{A},G)}} %
In particular, we have a natural isomorphism \ar{{\mathsf{Hom}_{\underline{\mathsf{Hom}}_{\cl{C}}(1,\cl{A}_{1})}\Big(\underline{\mathsf{Hom}}_{\cl{C}}\big(1,F(-)\big),-\Big)},{\mathsf{Hom}_{\underline{\mathsf{Hom}}_{\cl{C}}(1,\cl{A}_{0})}\Big(-,\underline{\mathsf{Hom}}_{\cl{C}}\big(1,G(-)\big)\Big)},\mathsf{adj}} of functors \ar{{\big( \mathsf{Hom}_{\cl{C}}(1,\cl{A}_{0}) \big)^{op} \times \mathsf{Hom}_{\cl{C}}(1,\cl{A}_{1})},{\mathsf{Set}.}} %
Adopting the shorthand 

\begin{diagram}

\begin{tikzpicture} [>=stealth, column 1/.style={anchor=east}, column 2/.style={anchor=west}]

\matrix [ampersand replacement=\&, matrix of math nodes, column sep=3 em, row sep=1em, nodes={anchor=center}]
{
|(0-0)| \cl{A} \& |(1-0)| \underline{\mathsf{Hom}}_{\cl{C}}(1,\cl{A}), \\  
|(0-1)| F   \& |(1-1)| \underline{\mathsf{Hom}}_{\cl{C}}\big(1,F \big), \\ 
|(0-2)| G   \& |(1-2)| \underline{\mathsf{Hom}}_{\cl{C}}\big(1,G \big), \\
};

\draw[<->] (0-0) to (1-0);
\draw[<->] (0-1) to (1-1);
\draw[<->] (0-2) to (1-2);

\end{tikzpicture}

\end{diagram} %
we shall write the above natural isomorphism as \ar{{\mathsf{Hom}_{\cl{A}_{1}}\big(F(-),-\big)},{\mathsf{Hom}_{\cl{A}_{0}}\big(-,G(-)\big).},\mathsf{adj}} \end{recollection}

\begin{defn} \label{PushoutsPullbacks2ArrowsArePushoutsPullbacksInFormalCategoriesTerminology} If for any objects $\cl{A}_{0}$ and $\cl{A}_{1}$ of $\cl{C}$, any object $a$ of $\cl{A}_{0}$, and any co-cartesian (respectively cartesian) square \sq{F_{0},F_{1},F_{2},F_{3},\eta_{0},\eta_{1},\eta_{2},\eta_{3}} in $\underline{\mathsf{Hom}}_{\cl{C}}(\cl{A}_{0},\cl{A}_{1})$, the square \sq{F_{0}(a),F_{1}(a),F_{2}(a),F_{3}(a),\eta_{0}(a),\eta_{1}(a),\eta_{2}(a),\eta_{3}(a)} in $\cl{A}_{1}$ is co-cartesian (respectively cartesian), we write that {\em pushouts (respectively pullbacks) of 2-arrows of $\cl{C}$ give rise to pushouts (respectively pullbacks) in formal categories}. \end{defn}

\begin{rmk} In both $\underline{\mathsf{CAT}}$ and $\underline{\mathsf{CAT}}^{op}$, pushouts and pullbacks of 2-arrows give rise to pushouts and pullbacks in formal categories, more or less by definition. The author expects that colimits and limits of 2-arrows equally give rise to colimits and limits in formal categories for quite general 2-categories, for instance 2-topoi. \end{rmk}
 
\begin{rmk} It would certainly be possible for us to work with weak rather than strict 2-categories. However, we are motivated by ordinary categories. In addition to $\underline{\mathsf{CAT}}$, the only 2-category of importance to us is $\underline{\mathsf{CAT}}^{op}$, exactly in order to capture duality. Thus it is sufficient for us to work with strict 2-categories, and we do so in order to avoid the distraction of coherency. \end{rmk}

\end{chapter}
 
\begin{chapter}{Structures upon a cylinder or a co-cylinder} \label{StructuresUponACylinderOrACoCylinderChapter}

We introduce the notion of a cylinder or a co-cylinder in a formal category. We define the structures --- contraction, involution, subdivision, and three flavours of connection --- upon a cylinder and a co-cylinder which play a role in our work, and introduce axioms expressing their compatibility. We refer the reader to \ref{ExampleChapter} for an example of these structures in the category of categories. 

These structures and axioms have previously appeared in the literature. However, the precise definitions and terminology vary from author to author and paper to paper. Thus we collect in one place and from a single point of view all that we shall need. Most of our structures and axioms can be found in \S{4} of Chapter I and at the end of \S{3} of Chapter II of the book \cite{KampsPorterAbstractHomotopyAndSimpleHomotopyTheory} of Kamps and Porter, or in \S{2.1} and \S{2.3} of the paper \cite{GrandisCategoricallyAlgebraicFoundationsForHomotopicalAlgebra} of Grandis. Compatibility of right connections with subdivision appears implicitly, and in a slightly different context, in \S{6.4} of \cite{BrownHigginsSiveraNonabelianAlgebraicTopology}.
 
We also introduce strictness hypotheses which our structures upon a cylinder or a co-cylinder may satisfy. We shall come in \ref{HomotopyAndRelativeHomotopyChapter} to define the notion of a homotopy with respect to a cylinder or a co-cylinder. The first of our strictness hypotheses, which we shall refer to as strictness of left or right identities, ensures that the left or right composition of an identity homotopy with a homotopy $h$ is exactly $h$. 

Strictness of identities will also allow us to prove in \ref{FactorisationAxiomsChapter} that the mapping cylinder (respectively the mapping co-cylinder) of any arrow $f$ yields a factorisation of $f$ into a normally cloven cofibration followed by a trivial fibration (respectively into a trivial cofibration followed by a normally cloven fibration). It will moreover be crucial in establishing that the lifting axioms for a model category hold, in \ref{LiftingAxiomsChapter}.

The significance of strictness of identities with regard to lifting has to the author's knowledge not previously been observed. Its importance with regard to factorisation was independently identified by van den Berg and Garner in \cite{VanDenBergGarnerTopologicalAndSimplicialModelsOfIdentityTypes}. We particularly draw the reader's attention to Remark 4.3.3 in \cite{VanDenBergGarnerTopologicalAndSimplicialModelsOfIdentityTypes}. Our strictness of left (respectively right) identities condition corresponds to the left (respectively right) unitality condition of van den Berg and Garner. 

The second of our hypotheses, which we shall refer to as strictness of left inverses, ensures that the composition of a homotopy with its inverse is exactly an identity homotopy. It also allows us, given a lower right connection structure $\Gamma_{lr}$, to construct an upper right connection structure $\Gamma_{ur}$ such that $\Gamma_{lr}$ and $\Gamma_{ur}$ are compatible with subdivision. The compatibility of right connections with subdivision will be vital for us when in \ref{CoveringHomotopyExtensionPropertyChapter} we investigate the covering homotopy extension property.

\begin{assum} Let $\cl{C}$ be a 2-category, and let $\cl{A}$ be an object of $\cl{C}$. \end{assum}

\begin{defn} A {\em cylinder} in $\cl{A}$ is a 1-arrow \ar{\cl{A},\cl{A},\cyl} of $\cl{C}$, together with a pair of 2-arrows \pair{\id_{\cl{A}},\cyl,i_0,i_1} of $\cl{C}$. \end{defn}

\begin{defn} A {\em co-cylinder} in $\cl{A}$ is a 1-arrow \ar[4]{\cl{A},\cl{A},\cocyl} of $\cl{C}$, together with a pair of 2-arrows \pair{\cocyl,\id_{\cl{A}},e_{0},e_{1}} of $\cl{C}$. \end{defn}

\begin{rmk} Let $\cocylinder = \big(\cocyl,e_{0},e_{1} \big)$ be a co-cylinder in $\cl{A}$. Then $\big( \cocyl, e_{0}^{op}, e_{1}^{op} \big)$ defines a cylinder in $\cl{A}^{op}$, which we denote by $\cocylinder^{op}$. \end{rmk}

\begin{defn} Let $\cylinder = \big( \cyl, i_0, i_1 \big)$ be a cylinder in $\cl{A}$. A {\em contraction structure} with respect to $\cylinder$ is a 2-arrow \ar{\cyl,\id_{\cl{A}},p} of $\cl{C}$, such that the following diagrams in $\underline{\mathsf{Hom}}_{\cl{C}}(\cl{A},\cl{A})$ commute. \twotriangles{\id_{\cl{A}},\cyl,\id_{\cl{A}},i_{0},p,id,\id_{\cl{A}},\cyl,\id_{\cl{A}},i_{1},p,id} \end{defn}

\begin{defn} Let $\cocylinder = \big( \cocyl, e_0, e_1 \big)$ be a co-cylinder in $\cl{A}$. A {\em contraction structure} with respect to $\cocylinder$ is a 2-arrow \ar{\id_{\cl{A}},\cocyl,c} of $\cl{C}$, such that $c^{op}$ equips the cylinder $\cocylinder^{op}$ in $\cl{A}^{op}$ with a contraction structure. \end{defn}

\begin{defn} Let $\cylinder = \big( \cyl,i_{0},i_{1} \big)$ be a cylinder in $\cl{A}$. An {\em involution structure} with respect to $\cylinder$ is a 2-arrow \ar{\cyl,\cyl,v} of $\cl{C}$, such that the following diagrams in $\underline{\mathsf{Hom}}_{\cl{C}}(\cl{A},\cl{A})$ commute. \twotriangles{\id_{\cl{A}},\cyl,\cyl,i_{0},v,i_{1},\id_{\cl{A}},\cyl,\cyl,i_{1},v,i_{0}} \end{defn}
 
\begin{defn} Let $\cocylinder = \big( \cocyl,e_{0},e_{1} \big)$ be a co-cylinder in $\cl{A}$. An {\em involution structure} with respect to $\cocylinder$ is a 2-arrow \ar{\cocyl,\cocyl,v} of $\cl{C}$, such that $v^{op}$ defines an involution structure with respect to the cylinder $\cocylinder^{op}$ in $\cl{A}^{op}$. \end{defn}

\begin{defn} Let $\cylinder = \big( \cyl, i_{0}, i_{1}, p \big)$ be a cylinder in $\cl{A}$ equipped with a contraction structure $p$. An involution structure $v$ with respect to $\cylinder$ is {\em compatible with $p$} if the following diagram in $\underline{\mathsf{Hom}}_{\cl{C}}(\cl{A},\cl{A})$ commutes. \tri{\cyl,\cyl,\id_{\cl{A}},v,p,p} \end{defn}

\begin{defn} Let $\cocylinder = \big( \cocyl, e_{0}, e_{1}, c \big)$ be a co-cylinder in $\cl{A}$ equipped with a contraction structure $c$. An involution structure $v$ with respect to $\cocylinder$ is {\em compatible with $c$} if the involution structure $v^{op}$ with respect to the cylinder $\cocylinder^{op}$ in $\cl{A}^{op}$ is compatible with the contraction structure defined by $c^{op}$. \end{defn}

\begin{defn} Let $\cylinder = \big( \cyl,i_{0},i_{1} \big)$ be a cylinder in $\cl{A}$. A {\em subdivision structure} with respect to $\cylinder$ is a 1-arrow \ar{\cl{A},\cl{A},\subdiv} of $\cl{C}$, together with a pair of 2-arrows \pair{\cyl,\subdiv,r_{0},r_{1}} of $\cl{C}$, such that the diagram \sq{\id_{\cl{A}},\cyl,\cyl,\subdiv,i_{0},r_{0},i_{1},r_{1}}in $\underline{\mathsf{Hom}}_{\cl{C}}(\cl{A},\cl{A})$ is co-cartesian, and a 2-arrow \ar{\cyl,\subdiv,s} of $\cl{C}$, such that the following diagrams in $\underline{\mathsf{Hom}}_{\cl{C}}(\cl{A},\cl{A})$ commute. \twosq{\id_{\cl{A}},\cyl,\cyl,\subdiv,i_{0},s,i_{0},r_{1},\id_{\cl{A}},\cyl,\cyl,\subdiv,i_{1},s,i_{1},r_{0}} \end{defn}

\begin{defn} Let $\cocylinder = \big( \cocyl,e_{0},e_{1} \big)$ be a co-cylinder in $\cl{A}$. A {\em subdivision structure} with respect to $\cocylinder$ is a 1-arrow \ar{\cl{A},\cl{A},\subdiv} of $\cl{C}$, together with a pair of 2-arrows \pair{\subdiv,\cocyl,r_{0},r_{1},s} of $\cl{C}$ and a 2-arrow \ar{\subdiv,\cocyl,s} of $\cl{C}$, such that $\big( \subdiv,r_{0}^{op},r_{1}^{op},s^{op} \big)$ defines a subdivision structure with respect to the cylinder $\cocylinder^{op}$ in $\cl{A}^{op}$. \end{defn}

\begin{defn} \label{SubdivisionCompatibleWithContractionDefinition} Let $\cylinder = \big( \cyl, i_{0}, i_{1}, p, \subdiv, r_{0}, r_{1}, s \big)$ be a cylinder in $\cl{A}$ equipped with a contraction structure $p$ and a subdivision structure $\big(\subdiv,r_{0},r_{1},s \big)$. Let \ar{\subdiv,\id_{\cl{A}},\overline{p}} denote the canonical 2-arrow of $\cl{C}$ such that the diagram \pushout{\id_{\cl{A}},\cyl,\cyl,\subdiv,\id_{\cl{A}},i_{0},r_{0},i_{1},r_{1},p,p,\overline{p}} in $\underline{\mathsf{Hom}}_{\cl{C}}(\cl{A},\cl{A})$ commutes. The subdivision structure $\big( \subdiv, r_{0}, r_{1}, s \big)$ is {\em compatible with $p$} if the following diagram in $\underline{\mathsf{Hom}}_{\cl{C}}(\cl{A},\cl{A})$ commutes. \tri{\cyl,\subdiv,\id_{\cl{A}},s,\overline{p},p}  \end{defn}

\begin{defn} Let $\cocylinder = \big( \cocyl, e_{0}, e_{1}, c \big)$ be a co-cylinder in $\cl{A}$ equipped with a contraction structure $c$. A subdivision structure $\big( \subdiv, r_{0}, r_{1}, s \big)$ with respect to $\cocylinder$ is {\em compatible with $c$} if the subdivision structure $\big( \subdiv, r_{0}^{op}, r_{1}^{op}, s^{op} \big)$ with respect to the cylinder $\cocylinder^{op}$ in $\cl{A}^{op}$ is compatible with the contraction structure defined by $c^{op}$. \end{defn}

\begin{defn} Let $\cylinder = \big( \cyl, i_0, i_1, \subdiv, r_0, r_1, s \big)$ be a cylinder in $\cl{A}$ equipped with a subdivision structure $\big( \subdiv, r_{0}, r_{1}, s \big)$. Then {\em $\cyl$ preserves subdivision} with respect to $\cylinder$ if the following diagram in $\underline{\mathsf{Hom}}_{\cl{C}}(\cl{A},\cl{A})$ is co-cartesian. \sq[{4,3}]{\cyl,\cyl^{2},\cyl^{2},\cyl \circ \subdiv,\cyl \cdot i_0, \cyl \cdot r_0, \cyl \cdot i_1, \cyl \cdot r_1} \end{defn} 

\begin{defn} Let $\cocylinder = \big( \cocyl, e_0, e_1, \subdiv, r_0, r_1, s \big)$ be a co-cylinder in $\cl{A}$ equipped with a subdivision structure $\big( \subdiv, r_{0}, r_{1}, s \big)$. Then {\em $\cocyl$ preserves subdivision} with respect to $\cocylinder$ if $\cocyl$ preserves subdivision with respect to the cylinder $\cocylinder^{op}$ in $\cl{A}^{op}$ equipped with the subdivision structure $\big( \subdiv, r_{0}^{op}, r_{1}^{op}, s^{op} \big)$. \end{defn}

\begin{defn} Let $\cylinder = \big( \cyl, i_{0}, i_{1}, p \big)$ be a cylinder in $\cl{A}$ equipped with a contraction structure $p$. An {\em upper left connection structure} with respect to $\cylinder$ is a 2-arrow \ar{\cyl^{2},\cyl,\Gamma_{ul}} of $\cl{C}$, such that the following diagrams in $\underline{\mathsf{Hom}}_{\cl{C}}(\cl{A},\cl{A})$ commute. 

\begin{diagram}

\begin{tikzpicture} [>=stealth]

\matrix [ampersand replacement=\&, matrix of math nodes, column sep=4 em, row sep=3 em, nodes={anchor=center}]
{ 
|(0-0)| \cyl         \& |(1-0)| \cyl^{2} \&[1em] |(2-0)| \cyl         \& |(3-0)| \cyl^{2} \\ 
                     \& |(1-1)| \cyl     \&                           \& |(3-1)| \cyl     \\
|(0-2)| \cyl         \& |(1-2)| \cyl^{2} \&      |(2-2)| \cyl         \& |(3-2)| \cyl^{2} \\
|(0-3)| \id_{\cl{A}} \& |(1-3)| \cyl     \&      |(2-3)| \id_{\cl{A}} \& |(3-3)| \cyl     \\      
};
	
\draw[->] (0-0) to node[auto] {$i_{0} \cdot \cyl$} (1-0);
\draw[->] (1-0) to node[auto] {$\Gamma_{ul}$} (1-1);
\draw[->] (0-0) to node[auto,swap] {$id$} (1-1);
\draw[->] (2-0) to node[auto] {$\cyl \cdot i_{0}$} (3-0);
\draw[->] (3-0) to node[auto] {$\Gamma_{ul}$} (3-1);
\draw[->] (2-0) to node[auto,swap] {$id$} (3-1);
\draw[->] (0-2) to node[auto] {$i_{1} \cdot \cyl$} (1-2);
\draw[->] (1-2) to node[auto] {$\Gamma_{ul}$} (1-3);
\draw[->] (0-2) to node[auto,swap] {$p$} (0-3);
\draw[->] (0-3) to node[auto,swap] {$i_{1}$} (1-3);
\draw[->] (2-2) to node[auto] {$\cyl \cdot i_{1}$} (3-2);
\draw[->] (3-2) to node[auto] {$\Gamma_{ul}$} (3-3);
\draw[->] (2-2) to node[auto,swap] {$p$} (2-3);
\draw[->] (2-3) to node[auto,swap] {$i_{1}$} (3-3);

\end{tikzpicture} 

\end{diagram} \end{defn}

\begin{defn} Let $\cocylinder = \big( \cocyl, e_{0}, e_{1}, c \big)$ be a co-cylinder in $\cl{A}$ equipped with a contraction structure $c$. An {\em upper left connection structure} with respect to $\cocylinder$ is a 2-arrow \ar{\cyl,\cyl^{2},\Gamma_{ul}} of $\cl{C}$, such that $(\Gamma_{ul})^{op}$ defines an upper left connection structure with respect to the cylinder $\cocylinder^{op}$ in $\cl{A}^{op}$ equipped with the contraction structure $c^{op}$. \end{defn}

\begin{defn} Let $\cylinder = \big( \cyl, i_{0}, i_{1}, p \big)$ be a cylinder in $\cl{A}$ equipped with a contraction structure $p$. A {\em lower right connection structure} with respect to $\cylinder$ is a 2-arrow \ar{\cyl^{2},\cyl,\Gamma_{lr}} of $\cl{C}$, such that the following diagrams in $\underline{\mathsf{Hom}}_{\cl{C}}(\cl{A},\cl{A})$ commute. 

\begin{diagram}

\begin{tikzpicture} [>=stealth]

\matrix [ampersand replacement=\&, matrix of math nodes, column sep=4 em, row sep=3 em, nodes={anchor=center}]
{ 
|(0-0)| \cyl         \& |(1-0)| \cyl^{2} \&[1em] |(2-0)| \cyl         \& |(3-0)| \cyl^{2} \\ 
                     \& |(1-1)| \cyl     \&                           \& |(3-1)| \cyl     \\
|(0-2)| \cyl         \& |(1-2)| \cyl^{2} \&      |(2-2)| \cyl         \& |(3-2)| \cyl^{2} \\
|(0-3)| \id_{\cl{A}} \& |(1-3)| \cyl     \&      |(2-3)| \id_{\cl{A}} \& |(3-3)| \cyl     \\      
};
	
\draw[->] (0-0) to node[auto] {$i_{1} \cdot \cyl$} (1-0);
\draw[->] (1-0) to node[auto] {$\Gamma_{lr}$} (1-1);
\draw[->] (0-0) to node[auto,swap] {$id$} (1-1);
\draw[->] (2-0) to node[auto] {$\cyl \cdot i_{1}$} (3-0);
\draw[->] (3-0) to node[auto] {$\Gamma_{lr}$} (3-1);
\draw[->] (2-0) to node[auto,swap] {$id$} (3-1);
\draw[->] (0-2) to node[auto] {$i_{0} \cdot \cyl$} (1-2);
\draw[->] (1-2) to node[auto] {$\Gamma_{lr}$} (1-3);
\draw[->] (0-2) to node[auto,swap] {$p$} (0-3);
\draw[->] (0-3) to node[auto,swap] {$i_{0}$} (1-3);
\draw[->] (2-2) to node[auto] {$\cyl \cdot i_{0}$} (3-2);
\draw[->] (3-2) to node[auto] {$\Gamma_{lr}$} (3-3);
\draw[->] (2-2) to node[auto,swap] {$p$} (2-3);
\draw[->] (2-3) to node[auto,swap] {$i_{0}$} (3-3);

\end{tikzpicture} 

\end{diagram} \end{defn}

\begin{defn} Let $\cocylinder = \big( \cocyl, e_{0}, e_{1}, c \big)$ be a co-cylinder in $\cl{A}$ equipped with a contraction structure $c$. A {\em lower right connection structure} with respect to $\cocylinder$ is a 2-arrow \ar{\cyl,\cyl^{2},\Gamma_{lr}} of $\cl{C}$, such that $(\Gamma_{lr})^{op}$ defines a lower right connection structure with respect to the cylinder $\cocylinder^{op}$ in $\cl{A}^{op}$ equipped with the contraction structure $c^{op}$. \end{defn}

\begin{defn} Let $\cylinder = \big( \cyl, i_{0}, i_{1}, p \big)$ be a cylinder in $\cl{A}$ equipped with a contraction structure $p$. A lower right connection structure $\Gamma_{lr}$ with respect to $\cylinder$ is {\em compatible with $p$} if the following diagram in $\underline{\mathsf{Hom}}_{\cl{C}}(\cl{A},\cl{A})$ commutes. \sq{\cyl^{2},\cyl,\cyl,\id_{\cl{A}},\Gamma_{lr},p,p \cdot \cyl,p} \end{defn} 

\begin{defn} Let $\cocylinder = \big( \cocyl, e_{0}, e_{1}, c \big)$ be a co-cylinder in $\cl{A}$ equipped with a contraction structure $c$. A lower right connection structure $\Gamma_{lr}$ with respect to $\cocylinder$ is {\em compatible with $c$} if the lower right connection structure $(\Gamma_{lr})^{op}$ with respect to the cylinder $\cocylinder^{op}$ in $\cl{A}^{op}$ is compatible with the contraction structure $c^{op}$. \end{defn} 

\begin{rmk} We shall not need to consider compatibility of an upper left connection structure with a contraction structure, or compatibility of an upper right connection structure, which we shall define next, with a contraction structure. \end{rmk}

\begin{defn} Let $\cylinder = \big( \cyl, i_{0}, i_{1}, p, v \big)$ be a cylinder in $\cl{A}$ equipped with a contraction structure $p$ and an involution structure $v$. An {\em upper right connection structure} with respect to $\cylinder$ is a 2-arrow \ar{\cyl^{2},\cyl,\Gamma_{ur}} of $\cl{C}$, such that the following diagrams in $\underline{\mathsf{Hom}}_{\cl{C}}(\cl{A},\cl{A})$ commute. 

\begin{diagram}

\begin{tikzpicture} [>=stealth]

\matrix [ampersand replacement=\&, matrix of math nodes, column sep=4 em, row sep=3 em, nodes={anchor=center}]
{ 
|(0-0)| \cyl         \& |(1-0)| \cyl^{2} \&[1em] |(2-0)| \cyl         \& |(3-0)| \cyl^{2} \\ 
                     \& |(1-1)| \cyl     \&                           \& |(3-1)| \cyl     \\
|(0-2)| \cyl         \& |(1-2)| \cyl^{2} \&      |(2-2)| \cyl         \& |(3-2)| \cyl^{2} \\
|(0-3)| \id_{\cl{A}} \& |(1-3)| \cyl     \&      |(2-3)| \id_{\cl{A}} \& |(3-3)| \cyl     \\      
};
	
\draw[->] (0-0) to node[auto] {$i_{0} \cdot \cyl$} (1-0);
\draw[->] (1-0) to node[auto] {$\Gamma_{ur}$} (1-1);
\draw[->] (0-0) to node[auto,swap] {$id$} (1-1);
\draw[->] (2-0) to node[auto] {$\cyl \cdot i_{1}$} (3-0);
\draw[->] (3-0) to node[auto] {$\Gamma_{ur}$} (3-1);
\draw[->] (2-0) to node[auto,swap] {$v$} (3-1);
\draw[->] (0-2) to node[auto] {$\cyl \cdot i_{0}$} (1-2);
\draw[->] (1-2) to node[auto] {$\Gamma_{ur}$} (1-3);
\draw[->] (0-2) to node[auto,swap] {$p$} (0-3);
\draw[->] (0-3) to node[auto,swap] {$i_{0}$} (1-3);
\draw[->] (2-2) to node[auto] {$i_{1} \cdot \cyl$} (3-2);
\draw[->] (3-2) to node[auto] {$\Gamma_{ur}$} (3-3);
\draw[->] (2-2) to node[auto,swap] {$p$} (2-3);
\draw[->] (2-3) to node[auto,swap] {$i_{0}$} (3-3);

\end{tikzpicture} 

\end{diagram} \end{defn}

\begin{defn} Let $\cocylinder = \big( \cocyl, e_{0}, e_{1}, c, v \big)$ be a co-cylinder in $\cl{A}$ equipped with a contraction structure $c$ and an involution structure $v$. An {\em upper right connection structure} with respect to $\cocylinder$ is a 2-arrow \ar{\cyl,\cyl^{2},\Gamma_{ur}}of $\cl{C}$, such that $(\Gamma_{ur})^{op}$ defines an upper right connection structure with respect to the cylinder $\cocylinder^{op}$ in $\cl{A}^{op}$ equipped with the contraction structure $c^{op}$ and the involution structure $v^{op}$. \end{defn}

\begin{rmk} \label{LowerLeftConnectionNotNecessaryRemark} Analogously, one can define a {\em lower left connection structure} with respect to a cylinder or a co-cylinder. Everything concerning upper and lower right connections below can equally be carried out for upper and lower left connections. \end{rmk}

\begin{defn} \label{RightConnectionsCylinderCompatibilityDefinition} Let $\cylinder = \big(\cyl,i_0,i_1,p,v,\subdiv,r_0,r_1,s,\Gamma_{lr},\Gamma_{ur} \big)$ be a cylinder in $\cl{A}$ equipped with a contraction structure $p$, an involution structure $v$, a subdivision structure $\big( \subdiv, r_{0}, r_{1}, s\big)$, an upper right connection structure $\Gamma_{ur}$, and a lower right connection structure $\Gamma_{lr}$. %
Let \ar{\subdiv \circ \cyl,\cyl,x} denote the canonical 2-arrow of $\cl{C}$ such that the following diagram in $\underline{\mathsf{Hom}}_{\cl{C}}(\cl{A},\cl{A})$ commutes. \pushout[{4,3,-1}]{\cyl,\cyl^{2},\cyl^{2},\subdiv \circ \cyl,\cyl,i_0 \cdot \cyl,r_0 \cdot \cyl,i_1 \cdot \cyl,r_1 \cdot \cyl,\Gamma_{lr},\Gamma_{ur},x} %
Then $\Gamma_{lr}$ and $\Gamma_{ur}$ are {\em compatible with $\big( \subdiv, r_{0}, r_{1}, s \big)$} if the following diagram in $\underline{\mathsf{Hom}}_{\cl{C}}(\cl{A},\cl{A})$ commutes. \tri[{4,3}]{\cyl^{2},\subdiv \circ \cyl,\cyl,s \cdot \cyl,x,p \cdot \cyl} \end{defn}

\begin{defn} \label{RightConnectionsCoCylinderCompatibilityDefinition} Let $\cocylinder = \big(\cocyl,e_0, e_1, c, v, \subdiv, r_0, r_1, s, \Gamma_{lr}, \Gamma_{ur} \big)$ be a co-cylinder in $\cl{A}$ equipped with a contraction structure $c$, an involution structure $v$, a subdivision structure $\big( \subdiv, r_{0}, r_{1}, s \big)$, an upper right connection structure $\Gamma_{ur}$, and a lower right connection structure $\Gamma_{lr}$. 

Then $\Gamma_{lr}$ and $\Gamma_{ur}$ are {\em compatible with $\big( \subdiv, r_{0}, r_{1}, s \big)$} if the right connections $\Gamma_{lr}^{op}$ and $\Gamma_{ur}^{op}$ with respect to the cylinder $\cocylinder^{op}$ in $\cl{A}^{op}$ equipped with the contraction structure $c^{op}$ and the involution structure $v^{op}$ are compatible with the subdivision structure $\big( \subdiv, r_{0}^{op}, r_{1}^{op}, s^{op} \big)$. \end{defn}

\begin{prpn} \label{LowerRightConnectionPlusInvolutionGivesUpperRightConnectionCylinderProposition} Let $\cylinder = \big( \cyl, i_0, i_1, p, v, \Gamma_{lr} \big)$ be a cylinder in $\cl{A}$ equipped with a contraction structure $p$, an involution structure $v$ compatible with $p$, and a lower right connection structure $\Gamma_{lr}$. Then the 2-arrow \ar[7]{\cyl^{2},\cyl,\Gamma_{lr} \circ (v \cdot \cyl)} of $\cl{C}$ defines an upper right connection structure with respect to $\cylinder$. \end{prpn}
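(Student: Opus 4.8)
The plan is to verify directly the four commutativity conditions defining an upper right connection structure with respect to $\cylinder$ for the candidate 2-arrow $\Gamma_{ur} := \Gamma_{lr} \circ (v \cdot \cyl)$; note first that this composite is indeed a 2-arrow $\cyl^{2} \to \cyl$, so it is at least of the right shape. The only inputs will be the four defining identities of the lower right connection $\Gamma_{lr}$, the two involution identities $v \circ i_{0} = i_{1}$ and $v \circ i_{1} = i_{0}$, the hypothesis that $v$ is compatible with $p$, i.e. $p \circ v = p$, and the interchange law of the strict 2-category $\cl{C}$ (equivalently, functoriality of whiskering).

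Before the four checks I would record two consequences of interchange that are used throughout. Since whiskering a 2-arrow on the right by the fixed 1-arrow $\cyl$ is functorial, $(v \cdot \cyl) \circ (i_{k} \cdot \cyl) = (v \circ i_{k}) \cdot \cyl$ for $k \in \{0,1\}$; and applying interchange to the 2-arrows $v$ and $i_{k}$ gives $(v \cdot \cyl) \circ (\cyl \cdot i_{k}) = (\cyl \cdot i_{k}) \circ v$ as 2-arrows $\cyl \to \cyl^{2}$. With these two identities in hand each verification is a single line. For $\Gamma_{ur} \circ (i_{0} \cdot \cyl) = \id$: rewrite the left side as $\Gamma_{lr} \circ \big((v \circ i_{0}) \cdot \cyl\big) = \Gamma_{lr} \circ (i_{1} \cdot \cyl)$, which is $\id$ by the corresponding identity of $\Gamma_{lr}$. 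For $\Gamma_{ur} \circ (i_{1} \cdot \cyl) = i_{0} \circ p$: likewise it equals $\Gamma_{lr} \circ (i_{0} \cdot \cyl) = i_{0} \circ p$. For $\Gamma_{ur} \circ (\cyl \cdot i_{1}) = v$: using the second interchange identity the left side equals $\Gamma_{lr} \circ (\cyl \cdot i_{1}) \circ v = \id \circ v = v$. For $\Gamma_{ur} \circ (\cyl \cdot i_{0}) = i_{0} \circ p$: likewise it equals $\Gamma_{lr} \circ (\cyl \cdot i_{0}) \circ v = (i_{0} \circ p) \circ v = i_{0} \circ (p \circ v) = i_{0} \circ p$, the last step being compatibility of $v$ with $p$.

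I do not anticipate a genuine obstacle — the statement is a formal diagram chase — so all the care is in the bookkeeping. The points that demand attention are: keeping the whiskering variances straight, so that $(v \cdot \cyl) \circ (i_{0} \cdot \cyl)$ really is the right-whiskered $(v \circ i_{0}) \cdot \cyl$ rather than a left-whiskered expression; matching each of the four conditions for $\Gamma_{ur}$ with the correct one of the four conditions for $\Gamma_{lr}$, observing that when the relevant inclusion hits the same (outer) copy of $\cyl$ as $v$ the involution interchanges the roles of $i_{0}$ and $i_{1}$, whereas when it hits the inner copy the inclusion is unchanged but an extra factor of $v$ is produced by interchange; and noting that the compatibility hypothesis $p \circ v = p$ is invoked exactly once, in the verification of $\Gamma_{ur} \circ (\cyl \cdot i_{0}) = i_{0} \circ p$, the other three conditions relying only on the involution identities and the defining identities of $\Gamma_{lr}$.
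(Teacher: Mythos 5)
Your proof is correct and takes essentially the same approach as the paper: you verify each of the four defining identities of an upper right connection by reducing to the corresponding identity of $\Gamma_{lr}$, using functoriality of right whiskering for the two conditions involving $i_{k}\cdot\cyl$, the interchange law to commute $v\cdot\cyl$ past $\cyl\cdot i_{k}$ for the other two, and invoking $p\circ v=p$ exactly once. The paper expresses these same reductions as four commutative diagrams rather than equational chains, but the mathematical content is identical.
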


\begin{proof} Firstly, the following diagram in $\underline{\mathsf{Hom}}_{\cl{C}}(\cl{A},\cl{A})$ commutes. 

\begin{diagram}

\begin{tikzpicture} [>=stealth]

\matrix [ampersand replacement=\&, matrix of math nodes, column sep=6 em, row sep=3 em, nodes={anchor=center}]
{ 
|(0-0)| \cyl \& |(1-0)| \cyl^{2} \\[2em] 
|(0-1)| \cyl \& |(1-1)| \cyl^{2} \\
};
	
\draw[->] (0-0) to node[auto] {$i_{0} \cdot \cyl$} (1-0);
\draw[->] (1-0) to node[auto] {$v \cdot \cyl$} (1-1);
\draw[->] (0-0) to node[auto,swap] {$i_{1} \cdot \cyl$} (1-1);
\draw[->] (1-1) to node[auto] {$\Gamma_{lr}$} (0-1);
\draw[->] (0-0) to node[auto,swap] {$id$} (0-1);
\end{tikzpicture} 

\end{diagram} %
Secondly, the following diagram in $\underline{\mathsf{Hom}}_{\cl{C}}(\cl{A},\cl{A})$ commutes, appealing to the compatibility of $v$ with $p$. \trapeziums[{3,3,1,0}]{\cyl,\cyl^{2},\cyl,\cyl^{2},\id_{\cl{A}},\cyl,\cyl \cdot i_{0}, v \cdot \cyl, \Gamma_{lr}, p, i_{0}, v, \cyl \cdot i_{0}, p}  %
Thirdly, the following diagram in $\underline{\mathsf{Hom}}_{\cl{C}}(\cl{A},\cl{A})$ commutes.  

\begin{diagram}

\begin{tikzpicture} [>=stealth]

\matrix [ampersand replacement=\&, matrix of math nodes, column sep=7 em, row sep=3 em, nodes={anchor=center}]
{ 
|(0-0)| \cyl         \& |(1-0)| \cyl^{2} \\
|(0-1)| \id_{\cl{A}} \& |(1-1)| \cyl^{2} \\
                     \& |(1-2)| \cyl     \\
};
	
\draw[->] (0-0) to node[auto] {$i_{1} \cdot \cyl$} (1-0);
\draw[->] (1-0) to node[auto] {$v \cdot \cyl$} (1-1);
\draw[->] (0-0) to node[auto,swap] {$i_{0} \cdot \cyl$} (1-1);
\draw[->] (0-0) to node[auto,swap] {$p$} (0-1);
\draw[->] (0-1) to node[auto,swap] {$i_{0}$} (1-2);
\draw[->] (1-1) to node[auto] {$\Gamma_{lr}$} (1-2);

\end{tikzpicture} 

\end{diagram} %
Fourthly, the following diagram in $\underline{\mathsf{Hom}}_{\cl{C}}(\cl{A},\cl{A})$ commutes. \squareabovetriangle[{4,3,0}]{\cyl,\cyl^{2},\cyl,\cyl^{2},\cyl,\cyl \cdot i_{1},v \cdot \cyl, v, \cyl \cdot i_{1},\Gamma_{lr},id} \end{proof}

\begin{cor} \label{LowerRightConnectionPlusInvolutionGivesUpperRightConnectionCoCylinderCorollary} Let $\cocylinder = \big( \cocyl, e_0, e_1, c, v, \Gamma_{lr} \big)$ be a co-cylinder in $\cl{A}$ equipped with a contraction structure $c$, an involution structure $v$ compatible with $c$, and a lower right connection structure $\Gamma_{lr}$. Then the 2-arrow \ar[8]{\cocyl,\cocyl^{2},{(v \cdot \cocyl) \circ \Gamma_{lr}}} of $\cl{C}$ defines an upper right connection structure with respect to $\cocylinder$. \end{cor}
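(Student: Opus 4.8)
The plan is to obtain the corollary from Proposition~\ref{LowerRightConnectionPlusInvolutionGivesUpperRightConnectionCylinderProposition} by a purely formal dualisation, exactly as for the other co-cylinder statements in this section: each structure on a co-cylinder has been defined so as to correspond, under the passage to $\cl{A}^{op}$, to the same structure on the cylinder $\cocylinder^{op}$. So first I would unwind the hypotheses. From $\cocylinder = \big( \cocyl, e_0, e_1, c, v, \Gamma_{lr} \big)$ we get the cylinder $\cocylinder^{op}$ in $\cl{A}^{op}$, with underlying $1$-arrow $\cocyl$, and it carries the contraction structure $c^{op}$, the involution structure $v^{op}$, and the lower right connection structure $\Gamma_{lr}^{op}$. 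The hypothesis that $v$ is compatible with $c$ is, by the definition of that notion for a co-cylinder, literally the statement that $v^{op}$ is compatible with $c^{op}$. Hence $\cocylinder^{op}$, so equipped, meets the hypotheses of Proposition~\ref{LowerRightConnectionPlusInvolutionGivesUpperRightConnectionCylinderProposition}.

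Next I would invoke that proposition, applied in $\cl{C}^{op}$. It tells us that $\Gamma_{lr}^{op} \circ \big( v^{op} \cdot \cocyl \big)$, the vertical composite taken in $\underline{\mathsf{Hom}}_{\cl{C}^{op}}(\cl{A}^{op}, \cl{A}^{op})$, is an upper right connection structure with respect to $\cocylinder^{op}$. The remaining task is to recognise this $2$-arrow of $\cl{C}^{op}$. Since forming $\cl{C}^{op}$ reverses $2$-arrows but leaves $1$-arrows alone, horizontal composition --- and hence whiskering --- is unchanged, so $v^{op} \cdot \cocyl = ( v \cdot \cocyl )^{op}$; and vertical composition is reversed by $(-)^{op}$. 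Putting these together, $\Gamma_{lr}^{op} \circ ( v \cdot \cocyl )^{op} = \big( ( v \cdot \cocyl ) \circ \Gamma_{lr} \big)^{op}$, the composite inside the brackets being the vertical composite in $\underline{\mathsf{Hom}}_{\cl{C}}(\cl{A}, \cl{A})$ of $\Gamma_{lr}$ (from $\cocyl$ to $\cocyl^{2}$) followed by $v \cdot \cocyl$ (from $\cocyl^{2}$ to $\cocyl^{2}$), which is well-formed and lands in $\cocyl \Rightarrow \cocyl^{2}$ as required of a connection structure on a co-cylinder.

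It then follows that $\big( ( v \cdot \cocyl ) \circ \Gamma_{lr} \big)^{op}$ defines an upper right connection structure with respect to the cylinder $\cocylinder^{op}$, which is by definition precisely the assertion that $( v \cdot \cocyl ) \circ \Gamma_{lr}$ defines an upper right connection structure with respect to $\cocylinder$. The hard part, such as it is, is simply the $2$-categorical bookkeeping in the previous paragraph: keeping straight that under $(-)^{op}$ whiskerings are preserved while vertical composites are reversed, so that the $2$-arrow handed to us by Proposition~\ref{LowerRightConnectionPlusInvolutionGivesUpperRightConnectionCylinderProposition} is exactly the op of the one named in the corollary. No new diagram-chasing is needed beyond what was already done for the cylinder case.
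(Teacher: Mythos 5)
Your proof is correct and takes exactly the same route as the paper, which simply says ``Follows immediately from Proposition~\ref{LowerRightConnectionPlusInvolutionGivesUpperRightConnectionCylinderProposition} by duality.'' You have merely spelled out the 2-categorical bookkeeping (whiskerings preserved, vertical composites reversed under $(-)^{op}$) that the paper leaves implicit.
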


\begin{proof} Follows immediately from Proposition \ref{LowerRightConnectionPlusInvolutionGivesUpperRightConnectionCylinderProposition} by duality. \end{proof}

\begin{defn} \label{StrictnessLeftIdentitiesCylinderDefinition} Let $\cylinder = \big(\cyl, i_0, i_1, p, \subdiv, r_0, r_1, s \big)$ be a cylinder in $\cl{A}$, equipped with a contraction structure $p$, and a subdivision structure $\big( \subdiv, r_{0}, r_{1}, s \big)$. Let \ar{\subdiv,\cyl,q_{l}} denote the canonical 2-arrow of $\cl{C}$ such that the following diagram in $\underline{\mathsf{Hom}}_{\cl{C}}(\cl{A},\cl{A})$ commutes. \pushout{\id_{\cl{A}},\cyl,\cyl,\subdiv,\cyl,i_{0},r_{0},i_{1},r_{1},id,i_{0} \circ p, q_{l}} %
Then $\cylinder$ has {\em strictness of left identities} if the following diagram in $\underline{\mathsf{Hom}}_{\cl{C}}(\cl{A},\cl{A})$ commutes. \tri{\cyl,\subdiv,\cyl,s,q_{l},id} \end{defn}

\begin{defn} \label{StrictnessLeftIdentitiesCoCylinderDefinition} Let $\cocylinder = \big( \cocyl, e_0, e_1, c, \subdiv, r_0, r_1, s \big)$ be a co-cylinder in $\cl{A}$ equipped with a contraction structure $c$, and a subdivision structure $\big( \subdiv, r_{0}, r_{1}, s \big)$. Then $\cocylinder$ has {\em strictness of left identities} if the cylinder $\cocylinder^{op}$ in $\cl{A}^{op}$ equipped with the contraction structure $c^{op}$ and the subdivision structure $\big( \subdiv^{op}, r_{0}^{op}, r_{1}^{op}, s^{op} \big)$ has strictness of left identities. \end{defn} 

\begin{defn} \label{StrictnessRightIdentitiesCylinderDefinition} Let $\cylinder = \big(\cyl, i_0, i_1, p, \subdiv, r_0, r_1, s \big)$ be a cylinder in $\cl{A}$ equipped with a contraction structure $p$ and a subdivision structure $\big( \subdiv, r_{0}, r_{1}, s \big)$. Let \ar{\subdiv,\cyl,q_{r}} denote the canonical 2-arrow of $\cl{C}$ such that the following diagram in $\underline{\mathsf{Hom}}_{\cl{C}}(\cl{A},\cl{A})$ commutes. \pushout{\id_{\cl{A}},\cyl,\cyl,\subdiv,\cyl,i_{0},r_{0},i_{1},r_{1},i_{1} \circ p,id,q_{r}} %
Then $\cylinder$ has {\em strictness of right identities} if the following diagram in $\underline{\mathsf{Hom}}_{\cl{C}}(\cl{A},\cl{A})$ commutes. \tri{\cyl,\subdiv,\cyl,s,q_{r},id} \end{defn}

\begin{defn} \label{StrictnessRightIdentitiesCoCylinderDefinition} Let $\cocylinder = \big( \cocyl, e_0, e_1, c, \subdiv, r_0, r_1, s \big)$ be a co-cylinder in $\cl{A}$ equipped with a contraction structure $c$, and a subdivision structure $\big( \subdiv, r_{0}, r_{1}, s \big)$. Then $\cocylinder$ has {\em strictness of right identities} if the cylinder $\cocylinder^{op}$ in $\cl{A}^{op}$ equipped with the contraction structure $c^{op}$ and the involution structure $v^{op}$ has strictness of right identities. \end{defn} 

\begin{defn} \label{StrictnessIdentitiesCylinderDefinition} Let $\cylinder = \big(\cyl, i_0, i_1, p, \subdiv, r_0, r_1, s \big)$ be a cylinder in $\cl{A}$ equipped with a contraction structure $p$ and a subdivision structure $\big( \subdiv, r_{0}, r_{1}, s \big)$. Then $\cylinder$ has {\em strictness of identities} if it has both strictness of left identities and strictness of right identities. \end{defn}

\begin{defn} \label{StrictnessIdentitiesCoCylinderDefinition} Let $\cocylinder = \big( \cocyl, e_0, e_1, c, \subdiv, r_0, r_1, s \big)$ be a co-cylinder in $\cl{A}$ equipped with a contraction structure $c$, and a subdivision structure $\big( \subdiv, r_{0}, r_{1}, s \big)$. Then $\cocylinder$ has {\em strictness of identities} if it has both strictness of left identities and strictness of right identities. \end{defn}

\begin{defn} \label{StrictnessLeftInversesCylinderDefinition} Let $\cylinder = \big(\cyl, i_0, i_1, v, \subdiv, r_0, r_1, s \big)$ be a cylinder in $\cl{A}$ equipped with an involution structure $v$ and a subdivision structure $\big( \subdiv, r_{0}, r_{1}, s \big)$. Let \ar{\subdiv,\cyl,w} denote the canonical 2-arrow of $\cl{C}$ such that the following diagram in $\underline{\mathsf{Hom}}_{\cl{C}}(\cl{A},\cl{A})$ commutes. \pushout{\id_{\cl{A}},\cyl,\cyl,\subdiv,\cyl,i_{0},r_{0},i_{1},r_{1},id,v,w} Then $\cylinder$ has {\em strictness of left inverses} if the following diagram in $\underline{\mathsf{Hom}}_{\cl{C}}(\cl{A},\cl{A})$ commutes. \sq{\cyl,\subdiv,\id_{\cl{A}},\cyl,s,w,p,i_1} \end{defn}

\begin{defn} \label{StrictnessLeftInversesCoCylinderDefinition} Let $\cocylinder = \big(\cocyl, e_0, e_1, v, \subdiv, r_0, r_1, s \big)$ be a co-cylinder in $\cl{A}$ equipped with an involution structure $v$, and a subdivision structure $\big( \subdiv, r_{0}, r_{1}, s \big)$. Then $\cocylinder$ has {\em strictness of left inverses} if the cylinder $\cocylinder^{op}$ in $\cl{A}^{op}$ has strictness of left inverses. \end{defn}

\begin{rmk} We shall not need to consider strictness of right inverses, for which there is an analogue of Proposition \ref{CriterionCompatibilityRightConnectionsWithSubdivisionCylinderProposition}. \end{rmk}

\begin{prpn} \label{CriterionCompatibilityRightConnectionsWithSubdivisionCylinderProposition} Let $\cylinder = \big(\cyl, i_0, i_1, p, v, \subdiv, r_0, r_1, s, \Gamma_{lr} \big)$ be a cylinder in $\cl{A}$ equipped with a contraction structure $p$, an involution structure $v$ compatible with $p$, a subdivision structure $\big( \subdiv, r_{0}, r_{1}, s\big)$, and a lower right connection structure $\Gamma_{lr}$. Suppose that $\cylinder$ has strictness of left inverses. Let $\Gamma_{ur}$ denote the upper right connection structure with respect to $\cylinder$ constructed in Proposition \ref{LowerRightConnectionPlusInvolutionGivesUpperRightConnectionCylinderProposition}. Then $\Gamma_{lr}$ and $\Gamma_{ur}$ are compatible with $\big( \subdiv,r_{0},r_{1},s \big)$. \end{prpn}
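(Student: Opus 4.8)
The plan is to derive the compatibility condition of Definition \ref{RightConnectionsCylinderCompatibilityDefinition} --- the identity $x \circ (s \cdot \cyl) = p \cdot \cyl$ --- from the strictness of left inverses hypothesis, by first identifying the canonical 2-arrow $x \colon \subdiv \circ \cyl \to \cyl$ in terms of the canonical 2-arrow $w \colon \subdiv \to \cyl$ of Definition \ref{StrictnessLeftInversesCylinderDefinition}. The key claim, from which everything follows by a short calculation, is that $x = \Gamma_{lr} \circ (w \cdot \cyl)$.

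To prove this claim, recall that the co-cartesian square defining $x$ (the subdivision pushout right-whiskered by $\cyl$) presents $x$ as the unique 2-arrow satisfying $x \circ (r_0 \cdot \cyl) = \Gamma_{lr}$ and $x \circ (r_1 \cdot \cyl) = \Gamma_{ur}$, where by Proposition \ref{LowerRightConnectionPlusInvolutionGivesUpperRightConnectionCylinderProposition} the upper right connection structure $\Gamma_{ur}$ is $\Gamma_{lr} \circ (v \cdot \cyl)$. So it suffices to check that $\Gamma_{lr} \circ (w \cdot \cyl)$ satisfies these two equations. Using that right-whiskering by $\cyl$ is functorial, together with the defining equations $w \circ r_0 = \id_{\cyl}$ and $w \circ r_1 = v$ of $w$ (read off from the pushout cocone in Definition \ref{StrictnessLeftInversesCylinderDefinition}), one computes $\bigl( \Gamma_{lr} \circ (w \cdot \cyl) \bigr) \circ (r_0 \cdot \cyl) = \Gamma_{lr} \circ \bigl( (w \circ r_0) \cdot \cyl \bigr) = \Gamma_{lr}$ and $\bigl( \Gamma_{lr} \circ (w \cdot \cyl) \bigr) \circ (r_1 \cdot \cyl) = \Gamma_{lr} \circ (v \cdot \cyl) = \Gamma_{ur}$, which establishes the claim.

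With $x = \Gamma_{lr} \circ (w \cdot \cyl)$ in hand, functoriality of right-whiskering gives $x \circ (s \cdot \cyl) = \Gamma_{lr} \circ \bigl( (w \circ s) \cdot \cyl \bigr)$. By strictness of left inverses, $w \circ s = i_1 \circ p$, so $x \circ (s \cdot \cyl) = \Gamma_{lr} \circ \bigl( (i_1 \circ p) \cdot \cyl \bigr) = \Gamma_{lr} \circ (i_1 \cdot \cyl) \circ (p \cdot \cyl)$. The first of the defining diagrams of the lower right connection structure $\Gamma_{lr}$ reads $\Gamma_{lr} \circ (i_1 \cdot \cyl) = \id_{\cyl}$, whence $x \circ (s \cdot \cyl) = p \cdot \cyl$; this is precisely the compatibility of $\Gamma_{lr}$ and $\Gamma_{ur}$ with $\bigl( \subdiv, r_0, r_1, s \bigr)$ required by Definition \ref{RightConnectionsCylinderCompatibilityDefinition}.

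The argument is essentially formal, and I do not expect a genuine obstacle. The one step that is not routine bookkeeping with the interchange law is the identification $x = \Gamma_{lr} \circ (w \cdot \cyl)$: this is where the hypothesis that $v$ is compatible with $p$ enters, namely through its use in Proposition \ref{LowerRightConnectionPlusInvolutionGivesUpperRightConnectionCylinderProposition} to realize $\Gamma_{ur}$ as $\Gamma_{lr} \circ (v \cdot \cyl)$, and it is what lets the strictness of left inverses equation $w \circ s = i_1 \circ p$ be brought to bear. The co-cylinder version, Definition \ref{RightConnectionsCoCylinderCompatibilityDefinition}, is not requested here but would follow by the usual duality.
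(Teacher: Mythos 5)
Your proof is correct and takes essentially the same approach as the paper: it identifies $x = \Gamma_{lr} \circ (w \cdot \cyl)$ via the universal property of $\subdiv \circ \cyl$ by checking the two pushout-cocone equations, and then feeds in the strictness of left inverses equation $w \circ s = i_1 \circ p$ together with $\Gamma_{lr} \circ (i_1 \cdot \cyl) = \id$. The only difference is presentational --- you state the key identification up front and then verify it, while the paper builds up to it --- but the computation is the same.
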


\begin{proof} Let \ar{\subdiv,\cyl,w} denote the canonical 2-arrow of $\cl{C}$ of Definition \ref{StrictnessLeftInversesCylinderDefinition}. The following diagram in $\underline{\mathsf{Hom}}_{\cl{C}}(\cl{A},\cl{A})$ commutes. \tri[{5,3}]{\cyl^{2},\subdiv \circ \cyl,\cyl^{2},r_{0} \cdot \cyl, w \cdot \cyl, id} Hence the following diagram in $\underline{\mathsf{Hom}}_{\cl{C}}(\cl{A},\cl{A})$ commutes. \tri[{5,3}]{\cyl^{2},\subdiv \circ \cyl, \cyl, r_{0} \cdot \cyl, \Gamma_{lr} \circ (w \cdot \cyl), \Gamma_{lr}} The following diagram in $\underline{\mathsf{Hom}}_{\cl{C}}(\cl{A},\cl{A})$ also commutes.  

\begin{diagram}

\begin{tikzpicture} [>=stealth]

\matrix [ampersand replacement=\&, matrix of math nodes, column sep=5 em, row sep=4 em, nodes={anchor=center}]
{ 
|(0-0)| \cyl^{2} \& |(1-0)| \subdiv \cyl \\ 
|(0-1)| \cyl \& |(1-1)| \cyl^{2} \\
};
	
\draw[->] (0-0) to node[auto] {$r_{1} \cyl$} (1-0);
\draw[->] (1-0) to node[auto] {$w \cdot \cyl$} (1-1);
\draw[->] (0-0) to node[auto,swap] {$v \cdot \cyl$} (1-1);
\draw[->] (1-1) to node[auto] {$\Gamma_{lr}$} (0-1);
\draw[->] (0-0) to node[auto,swap] {$\Gamma_{ur}$} (0-1);
\end{tikzpicture} 

\end{diagram} %
Putting the last two observations together, we have that the following diagram in $\underline{\mathsf{Hom}}_{\cl{C}}(\cl{A},\cl{A})$ commutes. \pushout[{4,3,-1}]{\cyl,\cyl^{2},\cyl^{2},\subdiv \circ \cyl, \cyl, i_{0} \cdot  \cyl, r_{0} \cdot \cyl, i_{1} \cdot \cyl, r_{1} \cdot \cyl, \Gamma_{lr}, \Gamma_{ur}, \Gamma_{lr} \circ (w \cdot \cyl)} %
By the universal property of $\subdiv \circ \cyl$, we deduce that $\Gamma_{lr} \circ (w \cdot \cyl) =x$, where \ar{S \circ \cyl,\cyl,x} is the canonical 2-arrow of $\cl{C}$ of Definition \ref{RightConnectionsCylinderCompatibilityDefinition}. 

The following diagram in $\underline{\mathsf{Hom}}_{\cl{C}}(\cl{A},\cl{A})$ commutes. \squareabovetriangle[{5,3,0}]{\cyl^{2},\subdiv \circ \cyl,\cyl,\cyl^{2},\cyl,s \cdot \cyl,w \cdot \cyl,p \cdot \cyl, i_{1} \cdot \cyl, \Gamma_{lr},id} %
We conclude that the following diagram in $\underline{\mathsf{Hom}}_{\cl{C}}(\cl{A},\cl{A})$ commutes, as required. \tri[{4,3}]{\cyl^{2},\subdiv \circ \cyl,\cyl,s \cdot \cyl,x,p \cdot \cyl}  
\end{proof}

\begin{cor} \label{CriterionCompatibilityRightConnectionsWithSubdivisionCoCylinderCorollary} Let $\cocylinder = \big(\cocyl, e_0, e_1, c, v, \subdiv, r_0, r_1, s, \Gamma_{lr} \big)$ be a co-cylinder in $\cl{A}$ equipped with a contraction structure $c$, an involution structure $v$ compatible with $c$, a subdivision structure $\big( \subdiv, r_{0}, r_{1}, s\big)$, and a lower right connection structure $\Gamma_{lr}$. Suppose that $\cocylinder$ has strictness of left inverses. Let $\Gamma_{ur}$ denote the upper right connection with respect to $\cocylinder$ of Corollary \ref{LowerRightConnectionPlusInvolutionGivesUpperRightConnectionCoCylinderCorollary}. Then $\Gamma_{lr}$ and $\Gamma_{ur}$ are compatible with $\big( \subdiv, r_{0}, r_{1}, s \big)$. \end{cor}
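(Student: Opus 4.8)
The plan is to deduce the corollary from Proposition \ref{CriterionCompatibilityRightConnectionsWithSubdivisionCylinderProposition} applied to the cylinder $\cocylinder^{op}$ in $\cl{A}^{op}$, exactly in the spirit of the other co-cylinder statements in this section. First I would unwind the hypotheses on $\cocylinder$ into hypotheses on $\cocylinder^{op}$: the contraction structure $c$ gives the contraction structure $c^{op}$; the involution structure $v$ compatible with $c$ gives, by the definition of compatibility for co-cylinders, an involution structure $v^{op}$ compatible with $c^{op}$; the subdivision structure $\big(\subdiv, r_{0}, r_{1}, s\big)$ gives the subdivision structure $\big(\subdiv, r_{0}^{op}, r_{1}^{op}, s^{op}\big)$; the lower right connection structure $\Gamma_{lr}$ gives the lower right connection structure $\Gamma_{lr}^{op}$; and strictness of left inverses for $\cocylinder$ is, by Definition \ref{StrictnessLeftInversesCoCylinderDefinition}, precisely strictness of left inverses for $\cocylinder^{op}$. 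Thus all the hypotheses of Proposition \ref{CriterionCompatibilityRightConnectionsWithSubdivisionCylinderProposition} are met by $\cocylinder^{op}$.

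Next I would identify the two upper right connection structures at play. The upper right connection structure with respect to $\cocylinder^{op}$ produced by Proposition \ref{LowerRightConnectionPlusInvolutionGivesUpperRightConnectionCylinderProposition} is the composite of $\Gamma_{lr}^{op}$ with $v^{op} \cdot \cocyl$, taken in $\underline{\mathsf{Hom}}_{\cl{C}^{op}}(\cl{A}^{op},\cl{A}^{op})$; translating back to $\cl{C}$ reverses the order of 2-arrow composition, so this corresponds to $(v \cdot \cocyl) \circ \Gamma_{lr}$, which is exactly the 2-arrow $\Gamma_{ur}$ of Corollary \ref{LowerRightConnectionPlusInvolutionGivesUpperRightConnectionCoCylinderCorollary}. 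Hence Proposition \ref{CriterionCompatibilityRightConnectionsWithSubdivisionCylinderProposition} applies to $\cocylinder^{op}$ and tells us that $\Gamma_{lr}^{op}$ and this upper right connection structure, i.e.\ $\Gamma_{ur}^{op}$, are compatible with the subdivision structure $\big(\subdiv, r_{0}^{op}, r_{1}^{op}, s^{op}\big)$.

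Finally I would invoke Definition \ref{RightConnectionsCoCylinderCompatibilityDefinition}: the assertion that $\Gamma_{lr}$ and $\Gamma_{ur}$ are compatible with $\big(\subdiv, r_{0}, r_{1}, s\big)$ means by definition exactly that $\Gamma_{lr}^{op}$ and $\Gamma_{ur}^{op}$ are compatible, with respect to $\cocylinder^{op}$, with $\big(\subdiv, r_{0}^{op}, r_{1}^{op}, s^{op}\big)$, which is what the previous step established. The only point requiring care — and the closest thing to an obstacle in an otherwise formal argument — is the bookkeeping of the direction reversal of 2-arrow composition under $(-)^{op}$, so that the upper right connection structure built for $\cocylinder^{op}$ genuinely coincides with the $\Gamma_{ur}$ of the statement rather than with some other composite; once that identification is pinned down, the corollary is immediate, and one can simply record the proof as "follows from Proposition \ref{CriterionCompatibilityRightConnectionsWithSubdivisionCylinderProposition} by duality."
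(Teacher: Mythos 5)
Your proposal is correct and takes exactly the same route as the paper: the paper's proof is the one-line statement that the result follows from Proposition \ref{CriterionCompatibilityRightConnectionsWithSubdivisionCylinderProposition} by duality, and your detailed unwinding of how the hypotheses and the upper right connection structure pass to $\cocylinder^{op}$ in $\cl{A}^{op}$ is precisely the bookkeeping that the word ``duality'' is meant to encode.
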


\begin{proof} Follows immediately from Proposition \ref{CriterionCompatibilityRightConnectionsWithSubdivisionCylinderProposition} by duality. \end{proof}

\end{chapter}

\begin{chapter}{Cylindrical adjunctions} \label{CylindricalAdjunctionsChapter} 

We introduce a notion of adjunction between a cylinder and a co-cylinder, which is discussed for example in \S{3} of \cite{KampsPorterAbstractHomotopyAndSimpleHomotopyTheory}. In \ref{StructuresUponAnIntervalChapter} we shall explain that an interval in a category gives rise to a cylinder and co-cylinder which are adjoint. 

In \ref{HomotopyAndRelativeHomotopyChapter} we shall define a notion of homotopy with respect to a cylinder or a co-cylinder. Given both a cylinder and a co-cylinder, it will be vital for us to know that the corresponding notions of homotopy equivalence coincide. If the cylinder and co-cylinder are adjoint, we shall see that this is the case.

Furthermore, we shall in \ref{CofibrationsAndFibrationsChapter} define cofibrations with respect to a cylinder, and, dually, define fibrations with respect to a co-cylinder. If we have both a cylinder $\cylinder$ and a co-cylinder $\cocylinder$, with $\cylinder$ left adjoint to $\cocylinder$, we shall be able to characterise fibrations with respect to $\cocylinder$ via a homotopy lifting property with respect to $\cylinder$, and shall be able to characterise cofibrations with respect to $\cylinder$ via a homotopy lifting property with respect to $\cocylinder$. 

We refer the reader to Recollection \ref{AdjunctionRecollection} for the notion of an adjunction between 1-arrows of $\cl{C}$.

\begin{assum} Let $\cl{C}$ be a 2-category with a final object, and let $\cl{A}$ be an object of $\cl{C}$. \end{assum}

\begin{defn} \label{CylindricalAdjunctionDefinition}  Let $\cylinder = \big( \cyl, i_{0}, i_{1} \big)$ be a cylinder in $\cl{A}$, and let $\cocylinder = \big( \cocyl, e_{0}, e_{1} \big)$ be a co-cylinder in $\cl{A}$. Then $\cylinder$ is {\em left adjoint} to $\cocylinder$ if the following conditions are satisfied. 

\begin{itemize}[itemsep=1em,topsep=1em] 

\item[(i)]  $\cyl$ is left adjoint to $\cocyl$. 

\item[(ii)] Suppose that (i) holds. Let \ar{{\mathsf{Hom}_{\cl{A}}\big( \cyl(-),- \big)},{\mathsf{Hom}_{\cl{A}}\big(-,\cocyl(-)\big)},\mathsf{adj}} denote the corresponding natural isomorphism, adopting the shorthand of Recollection \ref{AdjunctionRecollection}. We require that for every arrow \ar{\cyl(a_{0}),a_{1},h} of $\cl{A}$, the following diagrams in $\cl{A}$ commute. \twosq{a_{0},\cyl(a_{0}),\cocyl(a_{1}),a_{1},i_{0}(a_{0}),h,\mathsf{adj}(h),e_{0}(a_{1}),a_{0},\cyl(a_{0}),\cocyl(a_{1}),a_{1},i_{1}(a_{0}),h,\mathsf{adj}(h),e_{1}(a_{1})} 

\end{itemize} 
\end{defn}

\begin{defn} \label{CylindricalAdjunctionCompatibleWithContractionAndExpansionDefinition} Let $\cylinder = \big( \cyl, i_{0}, i_{1}, p \big)$ be a cylinder in $\cl{A}$ equipped with a contraction structure $p$, and let $\cocylinder = \big( \cocyl, e_{0}, e_{1}, c \big)$ be a co-cylinder in $\cl{A}$ equipped with a contraction structure $c$. Suppose that $\cylinder$ is left adjoint to $\cocylinder$. 

Let \ar{{\mathsf{Hom}_{\cl{A}}\big( \cyl(-),- \big)},{\mathsf{Hom}_{\cl{A}}\big(-,\cocyl(-)\big)},\mathsf{adj}} denote the corresponding natural isomorphism, adopting the shorthand of Recollection \ref{AdjunctionRecollection}. The adjunction between $\cyl$ and $\cocyl$ is {\em compatible with $p$ and $c$} if, for every arrow \ar{a_{0},a_{1},f} of $\cl{A}$, the following diagram in $\cl{A}$ commutes. \tri{a_{0},a_{1},\cocyl(a_{1}),f,c(a_{1}),\mathsf{adj}\big(f \circ p(a_{0}) \big)} \end{defn}

\begin{rmk} Given a cylinder $\cylinder$ in $\cl{A}$, and a 1-arrow \ar[4]{\cl{A},\cl{A},\cocyl} of $\cl{C}$ which is left adjoint to $\cyl$, one can always equip $\cocyl$ with the structure of a co-cylinder $\cocylinder$ in $\cl{A}$ via the adjunction. 

Moreover, one can transfer structures upon $\cylinder$ across the adjunction to structures upon $\cocylinder$. This is explained for example in \S{1.8} of the paper \cite{GrandisMacDonaldHomotopyStructuresForAlgebrasOverAMonad} of Grandis and MacDonald. It goes back at least to \S{7} of the paper \cite{KampsKanBedingungenUndAbstrakteHomotopietheorie} of Kamps. \end{rmk}

\end{chapter}

\begin{chapter}{Monoidal category theory preliminaries} \label{MonoidalCategoryTheoryPreliminariesChapter}

In \ref{StructuresUponAnIntervalChapter} we shall introduce the notion of an interval in a monoidal category $\cl{A}$. Under natural hypotheses, a structured interval in $\cl{A}$ will give rise to a structured cylinder and a structured co-cylinder in $\cl{A}$. 

We shall need the notion of an exponentiable object of $\cl{A}$. There are two possible definitions, as we shall not assume that our monoidal structures are symmetric. We make the following choice.

\begin{assum} Let $(\cl{A},\otimes)$ be a monoidal category. Let $1$ denote its unit object, and let $\lambda$ denote its natural isomorphism \ar{- \otimes 1, {-.}} \end{assum}

\begin{defn} An object $a$ of $\cl{A}$ is {\em exponentiable} with respect to $\otimes$ if the functor \ar[4]{\cl{A},\cl{A},- \otimes a} admits a right adjoint, which we shall denote by \ar{\cl{A},\cl{A}.,{(-)^{a}}} \end{defn}

\begin{rmk} \label{UnitExponentiableRemark} We have that $1$ is exponentiable with respect to $\otimes$, since the natural isomorphism \ar[7]{{\mathsf{Hom}_{\cl{A}}(- \otimes 1,-)},{\mathsf{Hom}_{\cl{A}}(-,-)},{\mathsf{Hom}_{\cl{A}}(\lambda^{-1},-)}} exhibits the identity functor as a right adjoint of $- \otimes 1$. \end{rmk} 

\begin{notn} Let \ar{a_{0},a_{1},f} be an arrow of $\cl{A}$ such that both $a_{0}$ and $a_{1}$ are exponentiable with respect to $\otimes$. Then, for any object $a$ of $\cl{A}$, we have a natural isomorphism \ar[4]{{\mathsf{Hom}_{\cl{A}}(- \times a_{0}, a)},{\mathsf{Hom}_{\cl{A}}(-,a^{a_{0}})},\mathsf{adj}(a_{0})} and a natural isomorphism \ar[4]{{\mathsf{Hom}_{\cl{A}}(- \times a_{1}, a)},{\mathsf{Hom}_{\cl{A}}(-,a^{a_{1}}).},\mathsf{adj}(a_{1})} %
Let \ar{{a^{a_{1}}},{a^{a_{0}}},{a^{f}}} denote the arrow of $\cl{A}$ corresponding via the Yoneda lemma to the following natural transformation.  

\begin{diagram}

\begin{tikzpicture} [>=stealth]

\matrix [ampersand replacement=\&, matrix of math nodes, column sep=5 em, row sep=4 em, nodes={anchor=center}]
{ 
|(0-0)| \mathsf{Hom}_{\cl{A}}(-,a^{a_{1}}) \& |(1-0)| \mathsf{Hom}_{\cl{A}}(- \otimes a_{1},a) \\ 
|(0-1)| \mathsf{Hom}_{\cl{A}}(-,a^{a_{0}}) \& |(1-1)| \mathsf{Hom}_{\cl{A}}(- \otimes a_{0}, a) \\
};
	
\draw[->] (0-0) to node[auto] {$\mathsf{adj}(a_{1})^{-1}$} (1-0);
\draw[->] (1-0) to node[auto] {$\mathsf{Hom}_{\cl{A}}(- \otimes f,a)$} (1-1);
\draw[->,dashed] (0-0) to (0-1);
\draw[<-] (0-1) to node[auto,swap] {$\mathsf{adj}(a_{0})$} (1-1);

\end{tikzpicture} 

\end{diagram} 

\end{notn} 

\begin{rmk} Let \ar{a_{0},a_{1},f} be an arrow of $\cl{A}$ such that both $a_{0}$ and $a_{1}$ are exponentiable with respect to $\otimes$. Associating to an object $a$ of $\cl{A}$ the arrow \ar{{a^{a_{1}}},{a^{a_{0}}},{a^{f}}} of $\cl{A}$ defines a natural transformation \ar{{(-)^{a_{1}}},{(-)^{a_{0}}.},{{(-)}^{f}}} \end{rmk}

\begin{defn} The monoidal structure upon $\cl{A}$ defined by $\otimes$ is {\em closed} if, for every object $a$ of $\cl{A}$, the functor \ar[4]{\cl{A},\cl{A},a \otimes -} admits a right adjoint. \end{defn}

\begin{rmk} Suppose that the monoidal structure upon $\cl{A}$ defined by $\otimes$ is symmetric. This monoidal structure is closed if and only if every object of $\cl{A}$ is exponentiable with respect to $\otimes$. \end{rmk}

\begin{rmk} Let \sq{a_{0},a_{1},a_{2},a_{3},f_{0},f_{1},f_{2},f_{3}} be a co-cartesian diagram in $\cl{A}$.  Let $a$ be an object of $\cl{A}$ such that the following diagram in $\cl{A}$ is co-cartesian. \sq[{4,3}]{a \otimes a_{0}, a \otimes a_{1}, a \otimes a_{2}, a \otimes a_{3}, a \otimes f_{0}, a \otimes f_{1}, a \otimes f_{2}, a \otimes f_{3}} %
If $a_{0}$, $a_{1}$, $a_{2}$, and $a_{3}$ are exponentiable with respect to $\otimes$, the following diagram in $\cl{A}$ is cartesian. \sq{{a^{a_{3}}},{a^{a_{2}}},{a^{a_{1}}},{a^{a_{0}}},{a^{f_{3}}},{a^{f_{2}}},{a^{f_{1}}},{a^{f_{0}}}} \end{rmk} 

\end{chapter}

\begin{chapter}{Structures upon an interval} \label{StructuresUponAnIntervalChapter} 

We define the notion of an interval in a monoidal category $\cl{A}$. We introduce --- exactly in parallel with \ref{StructuresUponACylinderOrACoCylinderChapter} --- structures with which this interval may be able to be equipped. 

An interval $\interval$ in $\cl{A}$ gives rise, under natural hypotheses, to a cylinder $\cylinderinterval$ and a co-cylinder $\cocylinderinterval$ in $\cl{A}$. Structures upon $\interval$ pass to structures upon $\cylinderinterval$ and $\cocylinderinterval$. 

Moreover, $\cylinderinterval$ is left adjoint to $\cocylinderinterval$. Whereas in an abstract setting we work with cylinders and co-cylinders, in practise we often construct a cylinder and co-cylinder via an interval in a monoidal category. 

In parallel with \ref{StructuresUponACylinderOrACoCylinderChapter} once more, we introduce a strictness of left identities hypothesis, a strictness of right identities hypothesis, and a strictness of left inverses hypothesis. If these hypotheses hold for $\interval$, then they hold for $\cylinderinterval$ and $\cocylinderinterval$. 

We refer the reader to \ref{MonoidalCategoryTheoryPreliminariesChapter} for our conventions regarding exponential objects in a monoidal category, and for other preliminary observations to which we shall appeal.

\begin{assum} Let $(\cl{A},\otimes)$ be a monoidal category. Let $1$ denote its unit object, and let $\lambda$ denote its natural isomorphism \ar{- \otimes 1, {-.}} %
Let $\alpha$ denote its natural isomorphism \ar{{(- \otimes -) \otimes -}, {- \otimes (- \otimes -).}} \end{assum}

\begin{defn} An {\em interval} in $\cl{A}$ is an object $I$ of $\cl{A}$, together with a pair of arrows \pair{1,I,i_0,i_1} of $\cl{A}$. \end{defn} 

\begin{rmk} We let \ar{\cl{A},\cl{A},{(-)^{1}}} denote the identity functor, by virtue of Remark \ref{UnitExponentiableRemark}. 

We shall frequently implicitly identify the functor \ar[4]{\cl{A},\cl{A},- \otimes 1} with the identity functor, via the natural isomorphism $\lambda$. \end{rmk}

\begin{defn} \label{CylinderAndCoCylinderFromInterval} Let $\interval = \big( I, i_0, i_1 \big)$ be an interval in $\cl{A}$. Regarding $\cl{A}$ as an object of the 2-category of categories, we denote by $\cylinderinterval$ the cylinder in $\cl{A}$ defined by the functor \ar[4]{\cl{A},\cl{A},- \otimes I} and the natural transformations \pair[{5,0.75}]{\id_{\cl{A}},{- \otimes I.},- \otimes i_{0}, - \otimes i_{1}} 

If $I$ is exponentiable with respect to $\otimes$, we denote by $\cocylinderinterval$ the co-cylinder in $\cl{A}$ defined by the functor \ar{\cl{A},\cl{A},{(-)^{I}}} and the natural transformations \pair[{4,0.75}]{{(-)^{I}},{\id_{\cl{A}}.},{(-)^{i_{0}}},{(-)^{i_{1}}}} \end{defn}

\begin{prpn} \label{CylinderAndCoCylinderFromIntervalAreAdjointProposition} Let $\interval = \big( I, i_0, i_1 \big)$ be an interval in $\cl{A}$. Suppose that $I$ is exponentiable with respect to $\otimes$. Then the cylinder $\cylinderinterval$ in $\cl{A}$ is left adjoint to the co-cylinder $\cocylinderinterval$ in $\cl{A}$. \end{prpn}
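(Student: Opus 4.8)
The plan is to verify, in turn, the two conditions of Definition~\ref{CylindricalAdjunctionDefinition} that make a cylinder left adjoint to a co-cylinder. Condition (i) asks that $\cyl = - \otimes I$ be left adjoint to $\cocyl = (-)^{I}$, and this is immediate: the hypothesis that $I$ is exponentiable with respect to $\otimes$ is precisely the assertion that $- \otimes I$ admits the right adjoint $(-)^{I}$. It is worth noting here that the natural isomorphism $\mathsf{adj}$ attached to this adjunction in Definition~\ref{CylindricalAdjunctionDefinition}(ii) is then exactly the isomorphism $\mathsf{adj}(I)$ of~\ref{MonoidalCategoryTheoryPreliminariesSection} (with the roles of $a_{0}$ and $a_{1}$ there played by $1$ and $I$), so there is no ambiguity in what follows.

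For condition (ii), fix objects $a_{0}$ and $a_{1}$ of $\cl{A}$ and an arrow $h \colon \cyl(a_{0}) = a_{0} \otimes I \to a_{1}$, with transpose $\mathsf{adj}(h) \colon a_{0} \to a_{1}^{I} = \cocyl(a_{1})$. I will verify the square involving $i_{0}$ and $e_{0}$; the square involving $i_{1}$ and $e_{1}$ is identical with every subscript $0$ replaced by $1$. The one real step is to unwind the definition of $e_{0}(a_{1})$. By Definition~\ref{CylinderAndCoCylinderFromInterval} we have $e_{0} = (-)^{i_{0}}$, so $e_{0}(a_{1}) = a_{1}^{i_{0}} \colon a_{1}^{I} \to a_{1}^{1} = a_{1}$, identifying $(-)^{1}$ with $\id_{\cl{A}}$ as in Remark~\ref{UnitExponentiableRemark}. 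Applying the definition of the arrow $a_{1}^{i_{0}}$ given in~\ref{MonoidalCategoryTheoryPreliminariesSection} to $i_{0} \colon 1 \to I$, together with the Yoneda lemma and the fact that $\mathsf{adj}(1) = \mathsf{Hom}_{\cl{A}}(\lambda^{-1},-)$, one obtains for every object $b$ and every arrow $g \colon b \to a_{1}^{I}$ the identity
\[ a_{1}^{i_{0}} \circ g \;=\; \mathsf{adj}^{-1}(g) \circ (b \otimes i_{0}) \circ \lambda_{b}^{-1}. \]
Under the implicit identification of $- \otimes 1$ with $\id_{\cl{A}}$ made in Definition~\ref{CylinderAndCoCylinderFromInterval}, the composite $(b \otimes i_{0}) \circ \lambda_{b}^{-1}$ is the component $i_{0}(b) \colon b \to b \otimes I$ of the natural transformation $- \otimes i_{0}$, so $a_{1}^{i_{0}} \circ g = \mathsf{adj}^{-1}(g) \circ i_{0}(b)$. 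Taking $b = a_{0}$ and $g = \mathsf{adj}(h)$ and using $\mathsf{adj}^{-1} \circ \mathsf{adj} = \id$ then gives
\[ e_{0}(a_{1}) \circ \mathsf{adj}(h) \;=\; \mathsf{adj}^{-1}\!\big(\mathsf{adj}(h)\big) \circ i_{0}(a_{0}) \;=\; h \circ i_{0}(a_{0}), \]
which is precisely the commutativity of the first square in Definition~\ref{CylindricalAdjunctionDefinition}(ii); the same computation with $i_{1}$ in place of $i_{0}$ yields $e_{1}(a_{1}) \circ \mathsf{adj}(h) = h \circ i_{1}(a_{0})$. Hence both conditions hold and $\cylinderinterval$ is left adjoint to $\cocylinderinterval$.

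I expect the only genuinely fiddly point to be keeping track of the identifications $(-)^{1} \cong \id_{\cl{A}}$ and $- \otimes 1 \cong \id_{\cl{A}}$ via $\lambda$ when unwinding the definition of $a_{1}^{i_{0}}$ from~\ref{MonoidalCategoryTheoryPreliminariesSection}; once these are pinned down the verification of (ii) is a direct diagram chase through the adjunction isomorphism using nothing beyond $\mathsf{adj}^{-1} \circ \mathsf{adj} = \id$.
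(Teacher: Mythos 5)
Your proof is correct and takes essentially the same approach as the paper: both proofs reduce the two squares of Definition~\ref{CylindricalAdjunctionDefinition}(ii) to the commutativity, in the category of sets, of the diagram that defines $(a_{1})^{i_{0}}$ (respectively $(a_{1})^{i_{1}}$). The only cosmetic difference is that you chase a generic element $g$ through that diagram and then substitute $g = \mathsf{adj}(h)$, whereas the paper states the commuting diagram of hom-sets directly and reads off the conclusion.
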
 

\begin{proof} Let \ar{a_{0} \otimes I,a_{1},h} be an arrow of $\cl{A}$. Since $I$ is exponentiable with respect to $\otimes$, we have a natural isomorphism \ar{{\mathsf{Hom}_{\cl{A}}(- \otimes I, a_{1})},{\mathsf{Hom}_{\cl{A}}(-,(a_{1})^{I}).},\sim} In particular, we have an isomorphism \ar{{\mathsf{Hom}_{\cl{A}}(a_{0} \otimes I,a_{1})},{\mathsf{Hom}_{\cl{A}}(a_{0}, (a_{1})^{I}).},\sim} %
Let us denote this isomorphism by $\mathsf{adj}$. 

By definition of \ar[4]{{(a_{1})^{I}},{a_{1},},{(a_{1})^{i_{0}}}} the following diagram in the category of sets commutes. 

\begin{diagram}

\begin{tikzpicture} [>=stealth]

\matrix [ampersand replacement=\&, matrix of math nodes, column sep=10 em, row sep=4 em, nodes={anchor=center}]
{ 
|(0-0)| \mathsf{Hom}_{\cl{A}}(a_{0},(a_{1})^{I}) \& |(1-0)| \mathsf{Hom}_{\cl{A}}(a_{0} \otimes I,a_{1}) \\ 
|(0-1)| \mathsf{Hom}_{\cl{A}}(a_{0},a_{1}) \& |(1-1)| \mathsf{Hom}_{\cl{A}}(a_{0} \otimes 1, a_{1}) \\
};
	
\draw[->] (0-0) to node[auto] {$\mathsf{adj}^{-1}$} (1-0);
\draw[->] (1-0) to node[auto] {$\mathsf{Hom}_{\cl{A}}(a_{0} \otimes i_{0},a_{1})$} (1-1);
\draw[->] (0-0) to node[auto,swap] {$\mathsf{Hom}_{\cl{A}}(a_{0},(a_{1})^{i_{0}})$} (0-1);
\draw[<-] (0-1) to node[auto,swap] {$\mathsf{Hom}_{\cl{A}}(\lambda^{-1}(a_{0}),a_{1})$} (1-1);

\end{tikzpicture} 

\end{diagram} %
Thus the following diagram in $\cl{A}$ commutes. \sq[{4,3}]{a_{0},a_{0} \otimes I,{(a_{1})^{I}},a_{1},a_{0} \otimes i_{0},h,\mathsf{adj}(h),{(a_{1})^{i_{0}}}} 

Similarly, by definition of \ar[4]{{(a_{1})^{I}},{a_{1},},{(a_{1})^{i_{1}}}} the following diagram in the category of sets commutes. 

\begin{diagram}

\begin{tikzpicture} [>=stealth]

\matrix [ampersand replacement=\&, matrix of math nodes, column sep=10 em, row sep=4 em, nodes={anchor=center}]
{ 
|(0-0)| \mathsf{Hom}_{\cl{A}}(a_{0},(a_{1})^{I}) \& |(1-0)| \mathsf{Hom}_{\cl{A}}(a_{0} \otimes I,a_{1}) \\ 
|(0-1)| \mathsf{Hom}_{\cl{A}}(a_{0},a_{1}) \& |(1-1)| \mathsf{Hom}_{\cl{A}}(a_{0} \otimes 1, a_{1}) \\
};
	
\draw[->] (0-0) to node[auto] {$\mathsf{adj}^{-1}$} (1-0);
\draw[->] (1-0) to node[auto] {$\mathsf{Hom}_{\cl{A}}(a_{0} \otimes i_{1},a_{1})$} (1-1);
\draw[->] (0-0) to node[auto,swap] {$\mathsf{Hom}_{\cl{A}}(a_{0},(a_{1})^{i_{1}})$} (0-1);
\draw[<-] (0-1) to node[auto,swap] {$\mathsf{Hom}_{\cl{A}}(\lambda^{-1}(a_{0}),a_{1})$} (1-1);

\end{tikzpicture} 

\end{diagram} Hence the following diagram in $\cl{A}$ commutes. \sq[{4,3}]{a_{0},a_{0} \otimes I,{a_{1}^{I}},a_{1},a_{0} \otimes i_{1},h,\mathsf{adj}(h),{a_{1}^{i_{1}}}} 

\end{proof}

\begin{defn} Let $\interval =\big( I, i_0, i_1 \big)$ be an interval in $\cl{A}$. A {\em contraction structure} with respect to $\interval$ is an arrow \ar{I,1,p} of $\cl{A}$, such that the following diagrams in $\cl{A}$ commute. \twotriangles{1,I,1,i_{0},p,id,1,I,1,i_{1},p,id} \end{defn}

\begin{rmk} Let $\interval = \big( I, i_0, i_1 \big)$ be an interval in $\cl{A}$. Suppose that $1$ is a final object of $\cl{A}$. This is the case, for example, if the monoidal structure upon $\cl{A}$ is cartesian. Then the canonical arrow \ar{I,1,} of $\cl{A}$ defines a contraction structure with respect to $\interval$. \end{rmk}

\begin{rmk} \label{IntervalWithContractionGivesCylinderWithContractionAndCocylinderWithExpansionRemark} Let $\interval = \big( I, i_0, i_1, p \big)$ be an interval in $\cl{A}$ equipped with a contraction structure $p$. Then the natural transformation \ar[4]{- \otimes I, \id_{\cl{A}},- \otimes p} equips the cylinder $\cylinderinterval$ in $\cl{A}$ with a contraction structure. 

If $I$ is exponentiable with respect to $\otimes$, the natural transformation \ar{\id_{\cl{A}},{(-)^{I}},{(-)^{p}}} equips the co-cylinder $\cocylinderinterval$ in $\cl{A}$ with a contraction structure. \end{rmk}

\begin{prpn} \label{AdjunctionBetweenCylinderAndCoCylinderFromIntervalCompatibleWithContractionAndExpansionProposition} Let $\interval = \big( I, i_0, i_1,p \big)$ be an interval in $\cl{A}$ equipped with a contraction structure $p$. Suppose that $I$ is exponentiable with respect to $\otimes$. We regard the cylinder $\cylinderinterval$ in $\cl{A}$ as equipped with the contraction structure defined by $- \otimes p$, and regard the co-cylinder $\cocylinderinterval$ in $\cl{A}$ as equipped with the contraction structure defined by $(-)^{p}$. 

Recall by Proposition \ref{CylinderAndCoCylinderFromIntervalAreAdjointProposition} that $\cylinderinterval$ is left adjoint to $\cocylinderinterval$. We moreover have that the adjunction between $- \otimes I$ and $(-)^{I}$ is compatible with $- \otimes p$ and $(-)^{p}$. \end{prpn}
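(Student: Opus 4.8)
The plan is to recognise the triangle of Definition~\ref{CylindricalAdjunctionCompatibleWithContractionAndExpansionDefinition} as an immediate consequence of two naturality statements, once a single transposition identity has been unwound. First I would recall, following Remark~\ref{IntervalWithContractionGivesCylinderWithContractionAndCocylinderWithExpansionRemark}, that the contraction structure on $\cylinderinterval$ is the natural transformation $q\colon -\otimes I\to\id_{\cl{A}}$ whose component at an object $a$ is $q_{a}=\lambda(a)\circ(a\otimes p)\colon a\otimes I\to a$, and that the contraction structure on $\cocylinderinterval$ is the natural transformation $(-)^{p}\colon\id_{\cl{A}}\to(-)^{I}$. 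With the customary identification of $-\otimes 1$ with $\id_{\cl{A}}$ via $\lambda$, for an arrow $f\colon a_{0}\to a_{1}$ the diagonal side of the triangle is $\mathsf{adj}\big(f\circ q_{a_{0}}\big)$ and the composite of the other two sides is $(a_{1})^{p}\circ f$, so the goal is the equality $(a_{1})^{p}\circ f=\mathsf{adj}\big(f\circ q_{a_{0}}\big)$ of arrows $a_{0}\to(a_{1})^{I}$.

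The one step that genuinely requires unpacking definitions is the identification, for each object $a_{1}$, of the contraction component $(a_{1})^{p}$ with the exponential transpose $\mathsf{adj}\big(q_{a_{1}}\big)$ of the cylinder contraction at $a_{1}$; here $\mathsf{adj}\colon\mathsf{Hom}_{\cl{A}}(a_{1}\otimes I,a_{1})\to\mathsf{Hom}_{\cl{A}}(a_{1},(a_{1})^{I})$ is the adjunction isomorphism underlying Proposition~\ref{CylinderAndCoCylinderFromIntervalAreAdjointProposition}. I would obtain it by tracing $\id_{a_{1}}=\id_{(a_{1})^{1}}$ through the defining diagram of $(-)^{p}$ from Section~\ref{MonoidalCategoryTheoryPreliminariesSection}: using Remark~\ref{UnitExponentiableRemark} to identify $(a_{1})^{1}$ with $a_{1}$ and $\mathsf{adj}(1)$ with $\mathsf{Hom}(\lambda^{-1},a_{1})$, the arrow $\id_{a_{1}}$ is first sent to $\lambda(a_{1})$, then by $\mathsf{Hom}(-\otimes p,a_{1})$ to $\lambda(a_{1})\circ(a_{1}\otimes p)=q_{a_{1}}$, and finally by $\mathsf{adj}(I)$ to $\mathsf{adj}\big(q_{a_{1}}\big)$; hence $(a_{1})^{p}=\mathsf{adj}\big(q_{a_{1}}\big)$.

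Granting this, the proof closes in two moves. Naturality of $\mathsf{adj}$ in its source variable, applied to $f$, gives $(a_{1})^{p}\circ f=\mathsf{adj}\big(q_{a_{1}}\big)\circ f=\mathsf{adj}\big(q_{a_{1}}\circ(f\otimes I)\big)$, and naturality of $q\colon-\otimes I\to\id_{\cl{A}}$ gives $q_{a_{1}}\circ(f\otimes I)=f\circ q_{a_{0}}$; composing the two equalities yields $(a_{1})^{p}\circ f=\mathsf{adj}\big(f\circ q_{a_{0}}\big)$, which is exactly the commutativity of the triangle. Equivalently, since $\mathsf{adj}$ is a bijection one may instead apply $\mathsf{adj}^{-1}$ to the two candidate arrows $a_{0}\to(a_{1})^{I}$ and verify that they have the common transpose $a_{0}\otimes I\to a_{1}$, the computation being the same pair of naturality steps.

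I do not foresee any real obstacle. The only delicate point is the bookkeeping in the preliminary identification of $(a_{1})^{p}$ with $\mathsf{adj}\big(q_{a_{1}}\big)$: one must keep straight the two adjunction isomorphisms $\mathsf{adj}(1)$ and $\mathsf{adj}(I)$ together with the identifications of $-\otimes 1$ and $(-)^{1}$ with $\id_{\cl{A}}$. It is also worth remarking that the isomorphism written $\mathsf{adj}$ in both Proposition~\ref{CylinderAndCoCylinderFromIntervalAreAdjointProposition} and Definition~\ref{CylindricalAdjunctionCompatibleWithContractionAndExpansionDefinition} is literally the exponential transpose, which is how the former was proved, so there is nothing to reconcile between the two occurrences.
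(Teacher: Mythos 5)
Your argument is correct and reaches the paper's conclusion by a mild reorganization of the same naturality computation. The paper's proof instantiates the defining natural square for $(-)^p$ (coming from the notation in Section~\ref{MonoidalCategoryTheoryPreliminariesSection}) directly at the object $a_0$ and applies it to $f$, which produces $(a_1)^p\circ f = \mathsf{adj}\big(f\circ\lambda(a_0)\circ(a_0\otimes p)\big)$ in a single step. You instead read the same square at $a_1$ on $\id_{a_1}$ to extract the pointwise formula $(a_1)^p=\mathsf{adj}\big(q_{a_1}\big)$, and then transport to $f$ by two naturality moves (of $\mathsf{adj}$ in its source variable, and of the cylinder contraction $q$). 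Both routes are sound; the paper's one-shot reading is more compressed, while yours factors out the explicit description of the co-cylinder contraction component as an independently useful intermediate identity. There is no gap.
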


\begin{proof} Let \ar{a_{0},a_{1},f} be an arrow of $\cl{A}$. Since $I$ is exponentiable with respect to $\otimes$, we have a natural isomorphism \ar{{\mathsf{Hom}_{\cl{A}}(- \otimes I, a_{1})},{\mathsf{Hom}_{\cl{A}}(-,(a_{1})^{I}).},\sim} %
In particular, we have an isomorphism \ar{{\mathsf{Hom}_{\cl{A}}(a_{0} \otimes I,a_{1})},{\mathsf{Hom}_{\cl{A}}(a_{0}, (a_{1})^{I}).},\sim} 

Let us denote this isomorphism by $\mathsf{adj}$. By definition of \ar[4]{a_{1},{(a_{1})^{I},},{(a_{1})^{p}}} the following diagram in the category of sets commutes. 

\begin{diagram}

\begin{tikzpicture} [>=stealth]

\matrix [ampersand replacement=\&, matrix of math nodes, column sep=10 em, row sep=4 em, nodes={anchor=center}]
{ 
|(0-0)| \mathsf{Hom}_{\cl{A}}(a_{0},a_{1}) \& |(1-0)| \mathsf{Hom}_{\cl{A}}(a_{0} \otimes 1,a_{1}) \\ 
|(0-1)| \mathsf{Hom}_{\cl{A}}(a_{0},(a_{1})^{I}) \& |(1-1)| \mathsf{Hom}_{\cl{A}}(a_{0} \otimes I, a_{1}) \\
};

\draw[->] (0-0) to node[auto] {$\mathsf{Hom}_{\cl{A}}(\lambda(a_{0}),a_{1})$} (1-0);
\draw[->] (1-0) to node[auto] {$\mathsf{Hom}_{\cl{A}}(a_{0} \otimes p,a_{1})$} (1-1);
\draw[->] (0-0) to node[auto,swap] {$\mathsf{Hom}_{\cl{A}}(a_{0},(a_{1})^{p})$} (0-1);

\draw[<-] (0-1) to node[auto,swap] {$\mathsf{adj}$} (1-1);

\end{tikzpicture} 

\end{diagram} Thus the following diagram in $\cl{A}$ commutes. \tri{a_{0},a_{1},{(a_{1})^{I}},f,{(a_{1})^{p}},\mathsf{adj}\big( f \circ ( a_{0} \otimes p) \big)} \end{proof}  

\begin{defn}  Let $\interval =\big( I, i_0, i_1 \big)$ be an interval in $\cl{A}$. An {\em involution structure} with respect to $\interval$ is an arrow \ar{I,I,v}of $\cl{A}$, such that the following diagrams in $\cl{A}$ commute. \twotriangles{1,I,I,i_{0},v,i_{1},1,I,I,i_{1},v,i_{0}} \end{defn}

\begin{rmk} Let $\interval = \big( I, i_0, i_1, v \big)$ be an interval in $\cl{A}$ equipped with an involution structure $v$. Then the natural transformation \ar[5]{- \otimes I, - \otimes I, - \otimes v} equips the cylinder $\cylinderinterval$ in $\cl{A}$ with an involution structure. 

If $I$ is exponentiable with respect to $\otimes$, the natural transformation \ar{{(-)^{I}},{(-)^{I}},{(-)^{v}}} equips the co-cylinder $\cocylinderinterval$ in $\cl{A}$ with an involution structure. \end{rmk}

\begin{defn} Let $\interval = \big( I, i_{0}, i_{1}, p \big)$ be an interval in $\cl{A}$ equipped with a contraction structure $p$. An involution structure $v$ with respect to $\interval$ is {\em compatible with $p$} if the following diagram in $\cl{A}$ commutes. \tri{I,I,1,v,p,p} \end{defn}

\begin{rmk} Let $\interval = \big( I, i_{0}, i_{1}, p, v \big)$ be an interval in $\cl{A}$ equipped with a contraction structure $p$ and an involution structure $v$ compatible with $p$. Then the involution structure $- \otimes v$ with respect to the cylinder $\cylinderinterval$ in $\cl{A}$ is compatible with the contraction structure $- \otimes p$.

If $I$ is exponentiable with respect to $\otimes$, the involution structure $(-)^{v}$ with respect to the co-cylinder $\cocylinderinterval$ in $\cl{A}$ is compatible with the contraction structure $(-)^{p}$. \end{rmk}

\begin{defn} Let $\interval = \big( I, i_0, i_1 \big)$ be an interval in $\cl{A}$. A {\em subdivision structure} with respect to $\interval$ is an object $S$ of $\cl{A}$, together with a pair of arrows \pair{I,S,r_{0},r_{1}} of $\cl{A}$, such that the square \sq{1,I,I,S,i_{0},r_{0},i_{1},r_{1}} in $\cl{A}$ is co-cartesian, and an arrow \ar{I,S,s} of $\cl{A}$, such that the following diagrams in $\cl{A}$ commute. \twosq{1,I,I,S,i_{0},s,i_{0},r_{1},1,I,I,S,i_{1},s,i_{1},r_{0}} \end{defn}

\begin{requirement} \label{SubdivisionRequirement} Let $\interval = \big( I, i_{0}, i_{1} \big)$ be an interval in $\cl{A}$, and let $\big( S, r_{0}, r_{1}, s \big)$ be a subdivision structure with respect to $\interval$. Then for any object $a$ of $\cl{A}$, the square \sq{a,a \otimes I, a \otimes I, a \otimes S, a \otimes i_{0}, a \otimes r_{0}, a \otimes i_{1}, a \otimes r_{1}} in $\cl{A}$ is co-cartesian. \end{requirement}

\begin{rmk} \label{SubdivisionRequirementRemark} Requirement \ref{SubdivisionRequirement} is satisfied whenever the monoidal structure on $\cl{A}$ is closed. It is also satisfied, for example, by the usual interval in the category of topological spaces, equipped with its cartesian monoidal structure. This monoidal structure is not closed. 

Since by Proposition \ref{CylinderAndCoCylinderFromIntervalAreAdjointProposition} we have that $\cylinderinterval$ is left adjoint to $\cocylinderinterval$, Requirement \ref{SubdivisionRequirement} is equivalent to the dual requirement that, for any object $a$ of $\cl{A}$, the square \sq{a^{S},a^{I},a^{I},a,a^{r_{0}},a^{i_{0}},a^{r_{1}},a^{i_{1}}} in $\cl{A}$ is cartesian. \end{rmk}

\begin{rmk} Let $\interval = \big( I, i_{0}, i_{1} \big)$ be an interval in $\cl{A}$, and let $\big( S, r_{0}, r_{1}, s \big)$ be a subdivision structure with respect to $\interval$, such that Requirement \ref{SubdivisionRequirement} holds. Then the functor \ar[4]{\cl{A},\cl{A},- \otimes S} and the natural transformations \pair[{5,0.75}]{- \otimes I, - \otimes S, - \otimes r_{0}, - \otimes r_{1}} and \ar[5]{- \otimes I, - \otimes S, - \otimes s} define a subdivision structure with respect to the cylinder $\cylinderinterval$ in $\cl{A}$. Moreover, $- \otimes I$ preserves subdivision with respect to $\cylinderinterval$ and $\big(- \otimes S, - \otimes r_{0}, - \otimes r_{1}, - \otimes s \big)$. 

If both $I$ and $S$ are exponentiable with respect to $\otimes$ then the functor \ar[4]{\cl{A},\cl{A},{(-)^{S}}} and the natural transformations \pair[{5,0.75}]{{(-)^{S}}, {(-)^{I}}, {(-)^{r_{0}}}, {(-)^{r_{1}}}} and \ar[5]{{(-)^{S}}, {(-)^{I}}, {(-)^{s}}} defines a subdivision structure with respect to the co-cylinder $\cocylinderinterval$ in $\cl{A}$. Moreover $(-)^{I}$ preserves subdivision with respect to $\cocylinderinterval$ and $\big( {(-)^{S}}, {(-)^{r_{0}}}, {(-)^{r_{1}}}, {(-)^{s}} \big)$.  \end{rmk} 

\begin{defn} Let $\interval = \big( I, i_{0}, i_{1}, p, S, r_{0}, r_{1}, s \big)$ be an interval in $\cl{A}$ equipped with a contraction structure $p$ and a subdivision structure $\big( \subdiv, r_{0}, r_{1}, s \big)$. Let \ar{S,1,\overline{p}} denote the canonical arrow of $\cl{A}$ such that the following diagram in $\cl{A}$ commutes. \pushout{1,I,I,S,1,i_0,r_0,i_1,r_1,p,p,\overline{p}} %
The subdivision structure $\big( S, r_{0}, r_{1}, s \big)$ is {\em compatible with $p$} if the following diagram in $\cl{A}$ commutes. \tri{I,S,1,s,\overline{p},p} \end{defn}

\begin{rmk} Let $\interval = \big( I, i_{0}, i_{1}, p, S, r_{0}, r_{1}, s\big)$ be an interval in $\cl{A}$ equipped with a contraction structure $p$, and a subdivision structure $\big( \subdiv, r_{0}, r_{1}, s \big)$ compatible with $p$. Suppose that Requirement \ref{SubdivisionRequirement} holds. Then the subdivision structure $\big( - \otimes S, - \otimes r_{0}, i- \otimes r_{1}, - \otimes s \big)$ with respect to the cylinder $\cylinderinterval$ in $\cl{A}$ is compatible with the contraction structure $- \otimes p$. 

If both $I$ and $S$ are exponentiable with respect to $\otimes$, then the subdivision structure $\big( {(-)^{S}}, {(-)^{r_{0}}}, {(-)^{r_{1}}}, {(-)^{s}} \big)$ with respect to the co-cylinder $\cocylinderinterval$ in $\cl{A}$ is compatible with the contraction structure $(-)^{p}$. \end{rmk}

\begin{notn} Let $\big( I, i_0, i_1 \big)$ be an interval in $\cl{A}$. We denote by $I^{2}$ the object $I \otimes I$ of $\cl{A}$. \end{notn}

\begin{rmk} Let $\big( I, i_0, i_1 \big)$ be an interval in $\cl{A}$. We shall frequently implicitly identify the functor $(- \otimes I) \otimes I$ with the functor $- \otimes I^{2}$, via the natural isomorphism $\alpha$. Similarly, we shall frequently implicitly identify the functor  $(- \otimes I) \otimes 1$ with the functor $- \otimes (I \otimes 1)$, via $\alpha$. \end{rmk} 

\begin{defn} Let $\interval = \big( I, i_0, i_1, p \big)$ be an interval in $\cl{A}$ equipped with a contraction structure $p$. An {\em upper left connection structure} with respect to $\interval$ is an arrow \ar{I^{2},I,\Gamma_{ul}}of $\cl{A}$, such that the following diagrams in $\cl{A}$ commute.  \twotriangles[{4,3,0}]{I,I^{2},I,i_{0} \otimes I, \Gamma_{ul},id,I,I^{2},I,I \otimes i_{0},\Gamma_{ul},id} %
\twosq[{4,3,0}]{I,I^{2},1,I,i_{1} \otimes I,\Gamma_{ul},p,i_{1},I,I^{2},1,I,I \otimes i_{1},\Gamma_{ul},p,i_{1}} \end{defn}

\begin{rmk} Let $\interval = \big( I, i_0, i_1, p, \Gamma_{ul} \big)$ be an interval in $\cl{A}$ equipped with a contraction structure $p$, and an upper left connection structure $\Gamma_{ul}$. Let us regard the cylinder $\cylinderinterval$ in $\cl{A}$ as equipped with the contraction structure $- \otimes p$, Then the natural transformation \ar[5]{- \otimes I^{2}, - \otimes I, - \otimes \Gamma_{ul}} equips $\cylinderinterval$ with an upper left connection structure. 

Suppose that $I$ is exponentiable with respect to $\otimes$. Let us regard the co-cylinder $\cocylinderinterval$ in $\cl{A}$ as equipped with the contraction structure $(-)^{p}$. Then the natural transformation \ar[4]{{(-)^{I}},{(-)^{I^{2}}},{(-)^{\Gamma_{ul}}}} equips $\cocylinderinterval$ with an upper left connection structure. \end{rmk}

\begin{defn} Let $\interval = \big( I, i_0, i_1, p \big)$ be an interval in $\cl{A}$ equipped with a contraction structure $p$. A {\em lower right connection structure} with respect to $\interval$ is an arrow \ar{I^{2},I,\Gamma_{lr}} of $\cl{A}$, such that the following diagrams in $\cl{A}$ commute.  \twotriangles[{4,3,0}]{I,I^{2},I,i_{1} \otimes I, \Gamma_{lr},id,I,I^{2},I,I \otimes i_{1},\Gamma_{lr},id} %
\twosq[{4,3,0}]{I,I^{2},1,I,i_{0} \otimes I,\Gamma_{lr},p,i_{0},I,I^{2},1,I,I \otimes i_{0},\Gamma_{lr},p,i_{0}} \end{defn}

\begin{rmk} Let $\interval = \big( I, i_0, i_1, p, \Gamma_{lr} \big)$ be an interval in $\cl{A}$ equipped with a contraction structure $p$, and a lower right connection structure $\Gamma_{lr}$. Let us regard the cylinder $\cylinderinterval$ in $\cl{A}$ as equipped with the contraction structure $- \otimes p$. Then the natural transformation \ar[5]{- \otimes I^{2}, - \otimes I, - \otimes \Gamma_{lr}} equips $\cylinderinterval$ with a lower right connection structure. 

Suppose that $I$ is exponentiable with respect to $\otimes$. Let us regard the co-cylinder $\cocylinderinterval$ in $\cl{A}$ as equipped with the contraction structure $(-)^{p}$. Then the natural transformation \ar[4]{{(-)^{I}},{(-)^{I^{2}}},{(-)^{\Gamma_{lr}}}} equips $\cocylinderinterval$ with a lower right connection structure. \end{rmk}

\begin{defn} Let $\interval = \big( I, i_{0}, i_{1}, p \big)$ be an interval in $\cl{A}$ equipped with a contraction structure $p$. A lower right connection structure $\Gamma_{lr}$ with respect to $\interval$ is {\em compatible with $p$} if the following diagram in $\cl{A}$ commutes. \sq{I^{2},I,I,1,\Gamma_{lr},p,I \otimes p,p} \end{defn} 

\begin{rmk} Let $\interval = \big( I, i_{0}, i_{1}, p, \Gamma_{lr} \big)$ be an interval in $\cl{A}$ equipped with a contraction structure $p$ and a lower right connection structure $\Gamma_{lr}$. If $\Gamma_{lr}$ is compatible with $p$, then the lower right connection structure $- \otimes \Gamma_{lr}$ with respect to the cylinder $\cylinderinterval$ in $\cl{A}$ is compatible with the contraction structure $- \otimes p$. 

Suppose that $I$ is exponentiable with respect to $\otimes$. Then the lower right connection structure $(-)^{\Gamma_{lr}}$ with respect to the co-cylinder $\cocylinderinterval$ in $\cl{A}$ is compatible with the contraction structure $(-)^{p}$. \end{rmk} 

\begin{rmk} We shall not need to consider compatibility of an upper left connection structure upon an interval with a contraction structure, or compatibility of an upper right connection structure upon an interval, which we shall define next, with a contraction structure. \end{rmk}

\begin{defn} Let $\interval = \big( I, i_0, i_1, p, v \big)$ be an interval in $\cl{A}$ equipped with a contraction structure $p$, and an involution structure $v$. An {\em upper right connection structure} with respect to $\interval$ is an arrow \ar{I^{2},I,\Gamma_{ur}} of $\cl{A}$, such that the following diagrams in $\cl{A}$ commute. 

\begin{diagram}

\begin{tikzpicture} [>=stealth]

\matrix [ampersand replacement=\&, matrix of math nodes, column sep=5 em, row sep=3 em, nodes={anchor=center}]
{ 
|(0-0)| I \& |(1-0)| I^{2} \&[2em] |(2-0)| I \& |(3-0)| I^{2} \\ 
          \& |(1-1)| I     \&                \& |(3-1)| I     \\
|(0-2)| I \& |(1-2)| I^{2} \&      |(2-2)| I \& |(3-2)| I^{2} \\
|(0-3)| 1 \& |(1-3)| I     \&      |(2-3)| 1 \& |(3-3)| I     \\      
};
	
\draw[->] (0-0) to node[auto] {$I \otimes i_{0}$} (1-0);
\draw[->] (1-0) to node[auto] {$\Gamma_{ur}$} (1-1);
\draw[->] (0-0) to node[auto,swap] {$id$} (1-1);
\draw[->] (2-0) to node[auto] {$i_{1} \otimes I$} (3-0);
\draw[->] (3-0) to node[auto] {$\Gamma_{ur}$} (3-1);
\draw[->] (2-0) to node[auto,swap] {$v$} (3-1);
\draw[->] (0-2) to node[auto] {$i_{0} \otimes I$} (1-2);
\draw[->] (1-2) to node[auto] {$\Gamma_{ur}$} (1-3);
\draw[->] (0-2) to node[auto,swap] {$p$} (0-3);
\draw[->] (0-3) to node[auto,swap] {$i_{0}$} (1-3);
\draw[->] (2-2) to node[auto] {$I \otimes i_{1}$} (3-2);
\draw[->] (3-2) to node[auto] {$\Gamma_{ur}$} (3-3);
\draw[->] (2-2) to node[auto,swap] {$p$} (2-3);
\draw[->] (2-3) to node[auto,swap] {$i_{0}$} (3-3);

\end{tikzpicture} 

\end{diagram} \end{defn}

\begin{rmk} Let $\interval = \big( I, i_0, i_1, p, v, \Gamma_{ur} \big)$ be an interval in $\cl{A}$ equipped with a contraction structure $p$, an involution structure $v$, and an upper right connection structure $\Gamma_{ur}$. Let us regard the cylinder $\cylinderinterval$ in $\cl{A}$ as equipped with the contraction structure $- \otimes p$ and the involution structure $- \otimes v$. Then the natural transformation \ar[5]{- \otimes I^{2}, - \otimes I, - \otimes \Gamma_{ur}} equips $\cylinderinterval$ in $\cl{A}$ with an upper right connection structure. 

Suppose that $I$ is exponentiable with respect to $\otimes$. Let us regard the co-cylinder $\cocylinderinterval$ in $\cl{A}$ as equipped with the contraction structure $(-)^{p}$ and the involution structure $(-)^{v}$. Then the natural transformation \ar[4]{{(-)^{I}},{(-)^{I^{2}}},{(-)^{\Gamma_{ur}}}} equips $\cocylinderinterval$ in $\cl{A}$ with an upper right connection structure. \end{rmk}

\begin{rmk} Analogously, one can define a {\em lower left connection structure} with respect to an interval. Everything concerning upper and lower right connections below can equally be carried out for upper and lower left connections. \end{rmk}

\begin{defn} \label{RightConnectionsIntervalCompatibilityDefinition} Let $\interval = \big( I, i_0, i_1, p, v, S, r_0, r_1, s, \Gamma_{lr}, \Gamma_{ur} \big)$ be an interval in $\cl{A}$ equipped with a contraction structure $p$, an involution structure $v$, a subdivision structure $\big( S, r_{0}, r_{1}, s \big)$, a lower right connection structure $\Gamma_{lr}$, and an upper right connection structure $\Gamma_{ur}$. Let \ar{I \otimes S,I,x} denote the canonical arrow of $\cl{A}$ such that the following diagram in $\cl{A}$ commutes. \pushout[{4,3,-1}]{I,I^{2},I^{2},I \otimes S,I,I \otimes i_{0},I \otimes r_0,I \otimes i_1,I \otimes r_1, \Gamma_{lr},\Gamma_{ur},x} Then $\Gamma_{lr}$ and $\Gamma_{ur}$ are {\em compatible with $\big( S, r_{0}, r_{1}, s \big)$} if the following diagram in $\cl{A}$ commutes. \tri[{4,3}]{I^{2},I \otimes S,I,I \otimes s,x,I \otimes p} \end{defn}

\begin{rmk} Let $\interval = \big( I, i_0, i_1, p, v, S, r_0, r_1, s, \Gamma_{lr}, \Gamma_{ur} \big)$ be an interval in $\cl{A}$ equipped with a contraction structure $p$, an involution structure $v$, a subdivision structure $\big( S, r_{0}, r_{1}, s \big)$, a lower right connection structure $\Gamma_{lr}$, and an upper right connection structure $\Gamma_{ur}$. Suppose that $\Gamma_{lr}$ and $\Gamma_{ur}$ are compatible with $\big( S, r_{0}, r_{1}, s \big)$, and that Requirement \ref{SubdivisionRequirement} holds. 

Then the right connections $- \otimes \Gamma_{lr}$ and $- \otimes \Gamma_{ur}$ are compatible with the subdivision structure $\big( - \otimes S, - \otimes r_{0}, - \otimes r_{1}, - \otimes s \big)$ upon the cylinder $\cylinderinterval$ in $\cl{A}$ equipped with the contraction structure $- \otimes p$. 

Suppose that both $I$ and $S$ are exponentiable with respect to $\otimes$. Then the right connections $(-)^{\Gamma_{lr}}$ and $(-)^{\Gamma_{ur}}$ are compatible with the subdivision structure $\big( (-)^{S}, (-)^{r_{0}}, (-)^{r_{1}}, (-)^{s} \big)$ upon the co-cylinder $\cocylinderinterval$ in $\cl{A}$ equipped with the contraction structure $(-)^{p}$. \end{rmk}

\begin{defn} \label{StrictnessLeftIdentitiesIntervalDefinition} Let $\interval = \big( I, i_0, i_1, p, S, r_0, r_1, s \big)$ be an interval in $\cl{A}$ equipped with a contraction structure $p$, and a subdivision structure $\big( S, r_{0}, r_{1}, s \big)$. Let \ar{S,I,q_{l}} denote the canonical arrow of $\cl{A}$ such that the following diagram in $\cl{A}$ commutes. \pushout{1,I,I,S,I,i_{0},r_{0},i_{1},r_{1},id,i_{0} \circ p,q_{l}} %
Then $\interval$ has {\em strictness of left identities} if the following diagram in $\cl{A}$ commutes. \tri{I,S,I,s,q_{l},id} \end{defn}

\begin{rmk} Let $\interval = \big( I, i_0, i_1, p, S, r_0, r_1, s \big)$ be an interval in $\cl{A}$ equipped with a contraction structure $p$ and a subdivision structure $\big( S, r_{0}, r_{1}, s \big)$. Suppose that Requirement \ref{SubdivisionRequirement} holds and that $\interval$ has strictness of left identities. 

Then the cylinder $\cylinderinterval$ in $\cl{A}$ equipped with the contraction structure $- \otimes p$ and the subdivision structure $\big( - \otimes S, - \otimes r_{0}, - \otimes r_{1}, - \otimes s \big)$ has strictness of left identities. 

If both $I$ and $S$ are exponentiable with respect to $\otimes$, then the co-cylinder $\cocylinderinterval$ in $\cl{A}$ equipped with the contraction structure $(-)^{p}$ and the subdivision structure $\big( (-)^{S}, (-)^{r_{0}}, (-)^{r_{1}}, (-)^{s} \big)$ has strictness of left identities. \end{rmk} 

\begin{defn} \label{StrictnessRightIdentitiesIntervalDefinition} Let $\interval = \big( I, i_0, i_1, p, S, r_0, r_1, s \big)$ be an interval in $\cl{A}$ equipped with a contraction structure $p$, and a subdivision structure $\big( S, r_{0}, r_{1}, s \big)$. Let \ar{S,I,q_{r}} denote the canonical arrow of $\cl{A}$ such that the following diagram in $\cl{A}$ commutes. \pushout{1,I,I,S,I,i_{0},r_{0},i_{1},r_{1},i_{1} \circ p,id,q_{r}} % 
Then $\interval$ has {\em strictness of right identities} if the following diagram in $\cl{A}$ commutes. \tri{I,S,I,s,q_{r},id} \end{defn}

\begin{rmk} Let $\interval = \big( I, i_0, i_1, p, S, r_0, r_1, s \big)$ be an interval in $\cl{A}$ equipped with a contraction structure $p$ and a subdivision structure $\big( S, r_{0}, r_{1}, s \big)$. Suppose that Requirement \ref{SubdivisionRequirement} holds, and that $\interval$ has strictness of right identities. 

Then the cylinder $\cylinderinterval$ in $\cl{A}$ equipped with the contraction structure $- \otimes p$ and the subdivision structure $\big( - \otimes S, - \otimes r_{0}, - \otimes r_{1}, - \otimes s \big)$ has strictness of right identities. 

If both $I$ and $S$ are exponentiable with respect to $\otimes$, then the co-cylinder $\cocylinderinterval$ in $\cl{A}$ equipped with the contraction structure $(-)^{p}$ and the subdivision structure $\big( (-)^{S}, (-)^{r_{0}}, (-)^{r_{1}}, (-)^{s} \big)$ has strictness of right identities. \end{rmk} 

\begin{defn}  Let $\interval = \big( I, i_0, i_1, p, S, r_0, r_1, s \big)$ be an interval in $\cl{A}$ equipped with a contraction structure $p$ and a subdivision structure $\big( S, r_{0}, r_{1}, S \big)$. Then $\interval$ has {\em strictness of identities} if it has both strictness of left identities and strictness of right identities. \end{defn}

\begin{defn} \label{StrictnessLeftInversesDefinition} Let $\interval = \big( I, i_0, i_1, v, S, r_0, r_1, s \big)$ be an interval in $\cl{A}$ equipped with an involution structure $v$, and a subdivision structure $\big( S, r_{0}, r_{1}, s \big)$. Let \ar{S,I,w} denote the canonical arrow of $\cl{A}$ such that the following diagram in $\cl{A}$ commutes. \pushout{1,I,I,S,I,i_{0},r_{0},i_{1},r_{1},id,v,w} Then $\interval$ has {\em strictness of left inverses} if the following diagram in $\cl{A}$ commutes. \sq{I,S,1,I,s,w,p,i_1} \end{defn}

\begin{rmk} Let $\interval = \big( I, i_0, i_1, v, S, r_0, r_1, s \big)$ be an interval in $\cl{A}$ equipped with an involution structure $v$ and a subdivision structure $\big( S, r_{0}, r_{1}, s \big)$. Suppose that Requirement \ref{SubdivisionRequirement} holds. Then the cylinder $\cylinderinterval$ in $\cl{A}$ equipped with the involution structure $- \otimes v$ and the subdivision structure $\big( - \otimes S, - \otimes r_{0}, - \otimes r_{1}, - \otimes s \big)$ has strictness of left inverses. 

If both $I$ and $S$ are exponentiable with respect to $\otimes$, then the co-cylinder $\cocylinderinterval$ in $\cl{A}$ equipped with the involution structure $(-)^{v}$ and the subdivision structure  $\big( (-)^{S}, (-)^{r_{0}}, (-)^{r_{1}}, (-)^{s} \big)$ has strictness of left inverses. \end{rmk}

\begin{rmk} We shall not need to consider strictness of right inverses. \end{rmk}

\end{chapter}

\begin{chapter}{Homotopy and relative homotopy} \label{HomotopyAndRelativeHomotopyChapter}

In \ref{StructuresUponACylinderOrACoCylinderChapter} we introduced a notion of a cylinder or a co-cylinder in a formal category $\cl{A}$. We now explain that a cylinder gives rise to a notion of homotopy between arrows of $\cl{A}$. In a dual manner, a co-cylinder gives rise to a notion of homotopy between arrows of $\cl{A}$. Thus a cylinder or a co-cylinder allows us to define a notion of homotopy equivalence in $\cl{A}$. 

If $\cl{A}$ admits both a cylinder and a co-cylinder, as we shall assume later in this work, it will be crucial for us to know that the corresponding notions of homotopy coincide. In \ref{CylindricalAdjunctionsChapter} we mentioned that an adjunction between the cylinder and the co-cylinder ensures that this holds. We shall now be able to observe this.

Homotopy theory with respect to a cylinder or a co-cylinder alone is rather spartan. The structures upon a cylinder or co-cylinder defined in \ref{StructuresUponACylinderOrACoCylinderChapter} allow for a much richer theory, which we shall explore in the remainder of this work. 

At first, all of our constructions will be abstractions from the homotopy theory of topological spaces. Later on, for instance in \ref{MappingCylindersAndMappingCoCylindersChapter} when we shall require that the strictness of identities hypotheses introduced in \ref{StructuresUponACylinderOrACoCylinderChapter} hold, topological spaces will no longer be our guide.

A contraction structure allows us to construct identity homotopies. An involution structure allows us to reverse homotopies. A subdivision structure allows to compose homotopies. In the presence of both an involution structure and a subdivision structure, homotopy equivalences in $\cl{A}$ have the two out of three property. 

Given a pair of commutative diagrams \twosq{a_{2},a_{0},a_{3},a_{1},g_{0},f,f',g_{1},a_{0},a_{2},a_{1},a_{3},r_{0},f,f',r_{1}} in $\cl{A}$, where $r_{0}$ is a retraction of $g_{0}$ and $r_{1}$ is a retraction of $g_{1}$, we demonstrate that if $f$ is a homotopy equivalence then so is $f'$. We make a technical observation concerning homotopy inverses, which we shall appeal to in our proof in \ref{DoldTheoremChapter} of Dold's theorem. 

We introduce the notion of a double homotopy with respect to a cylinder, and explain a pictorial notation. Double homotopies will play an indispensable role throughout this work. Our three flavours of connection structures allow us to construct double homotopies with specific boundary homotopies.

With respect to a cylinder or a co-cylinder equipped with a contraction structure, we define a notion of homotopy under or over an object of $\cl{A}$. If $\cl{A}$ admits both a cylinder $\cylinder$ equipped with a contraction structure $p$, and a co-cylinder $\cocylinder$ equipped with a contraction structure $c$, then an adjunction between $\cylinder$ and $\cocylinder$ which is compatible with $p$ and $c$ allows us to observe that the notion of a homotopy equivalence under (respectively over) an object with respect to $\cylinder$ coincides with that of a homotopy equivalence under (respectively over) an object with respect to $\cocylinder$.

Identity homotopies are also identity homotopies under or over an object. An involution structure which is compatible with contraction allows us to construct reverse homotopies under or over an object. A subdivision structure which is compatible with contraction allows us to compose homotopies under or over an object.  

We conclude by introducing the notion of a strong deformation retraction with respect to a cylinder or a co-cylinder. All consideration of homotopies under or over an object in this work relates to strong deformation retractions.

As discussed in \ref{IntroductionChapter}, homotopy with respect to a cylinder in an abstract setting was first considered by Kan in \cite{KanAbstractHomotopyTheoryII}. The insight that further structure upon a cylinder can give rise to a richer theory is due to Kamps, presented in works such as \cite{KampsKanBedingungenUndAbstrakteHomotopietheorie} from around 1970.  

In a setting closer to that in which we are working, the observation that a subdivision structure upon a cylinder allows one to compose homotopies was first explored, to the author's knowledge, by Grandis in  papers such as \cite{GrandisCategoricallyAlgebraicFoundationsForHomotopicalAlgebra}, written in the 1990s. The abstract notion of homotopy under and over an object is for example discussed for in the book \cite{KampsPorterAbstractHomotopyAndSimpleHomotopyTheory} of Kamps and Porter, which also treats much of the rest of this section.

\begin{assum} Let $\cl{C}$ be a 2-category with a final object. Suppose that pushouts and pullbacks of 2-arrows of $\cl{C}$ give rise to pushouts and pullbacks in formal categories, in the sense of Definition \ref{PushoutsPullbacks2ArrowsArePushoutsPullbacksInFormalCategoriesTerminology}. Let $\cl{A}$ be an object of $\cl{C}$. Recall that we view $\cl{A}$ as a formal category, writing of objects and arrows of $\cl{A}$. This terminology and all other formal category theory preliminaries can be found in \ref{FormalCategoryTheoryPreliminariesChapter}. \end{assum}

\begin{defn} \label{HomotopyCylinderDefinition} Let $\cylinder = \big( \cyl, i_0, i_1, \big)$ be a cylinder in $\cl{A}$, and let \pair{a_{0},a_{1},f_{0},f_{1}} be arrows of $\cl{A}$. A {\em homotopy} from $f_{0}$ to $f_{1}$ with respect to $\cylinder$ is an arrow \ar{\cyl(a_{0}),a_{1},h} of $\cl{A}$, such that the following diagrams in $\cl{A}$ commute. \twotriangles[{4,3,2}]{a_{0},\cyl(a_{0}),a_{1},i_{0}(a_{0}),h,f_{0},a_{0},\cyl(a_{0}),a_{1},i_{1}(a_{0}),h,f_{1}} \end{defn}

\begin{defn} Let $\cocylinder = \big( \cocyl, e_0, e_1 \big)$ be a co-cylinder in $\cl{A}$, and let \pair{a_{0},a_{1},f_{0},f_{1}} be arrows of $\cl{A}$. 

A {\em homotopy} from $f_{0}$ to $f_{1}$ with respect to $\cocylinder$ is an arrow \ar{a_{0},\cocyl(a_{1}),h} of $\cl{A}$, such that the following diagrams in $\cl{A}$ commute. \twotriangles[{4,3,2}]{a_{0},\cocyl(a_{1}),a_{1},h,e_{0}(a_{1}),f_{0},a_{0},\cocyl(a_{1}),a_{1},h,e_{1}(a_{1}),f_{1}} \end{defn} 

\begin{rmk} Let $\cocylinder = \big( \cocyl, e_0, e_1 \big)$ be a co-cylinder in $\cl{A}$, and let \pair{a_{0},a_{1},f_{0},f_{1}} be arrows of $\cl{A}$. An arrow \ar{a_{0},\cocyl(a_{1},h} of $\cl{A}$ is a homotopy from $f_{0}$ to $f_{1}$ if and only if $h^{op}$ is a homotopy from $f_{0}^{op}$ to $f_{1}^{op}$ with respect to the cylinder $\cocylinder^{op}$ in $\cl{A}^{op}$. \end{rmk}

\begin{prpn} \label{HomotopyCylinderGivesHomotopyCoCylinderProposition} Let $\cylinder = \big( \cyl, i_0, i_1 \big)$ be a cylinder in $\cl{A}$, and let $\cocylinder = \big( \cocyl, e_0, e_1 \big)$ be a co-cylinder in $\cl{A}$. Suppose that $\cylinder$ is left adjoint to $\cocylinder$. Let \ar{{\mathsf{Hom}_{\cl{A}}\big(\cyl(-),- \big)},{\mathsf{Hom}_{\cl{A}}\big(-,\cocyl(-) \big)},\mathsf{adj}} denote the corresponding natural isomorphism, adopting the shorthand of Recollection \ref{AdjunctionRecollection}. 

Let \pair{a_{0},a_{1},f_{0},f_{1}} be arrows of $\cl{A}$, and suppose that \ar{\cyl(a_{0}),a_{1},h} defines a homotopy from $f_{0}$ to $f_{1}$ with respect to $\cylinder$. Then the arrow \ar[4]{a_{0},\cocyl(a_{1}),\mathsf{adj}(h)} of $\cl{A}$ defines a homotopy from $f_{0}$ to $f_{1}$ with respect to $\cocylinder$. \end{prpn}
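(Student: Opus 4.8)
The plan is to verify directly that $\mathsf{adj}(h)$ satisfies the two equations required of a homotopy from $f_{0}$ to $f_{1}$ with respect to $\cocylinder$, namely that $e_{0}(a_{1}) \circ \mathsf{adj}(h) = f_{0}$ and $e_{1}(a_{1}) \circ \mathsf{adj}(h) = f_{1}$ as arrows of $\cl{A}$. Everything needed is already front-loaded into the hypotheses, so this will be a short diagram chase rather than a construction.

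First I would unwind the hypothesis that $h$ is a homotopy from $f_{0}$ to $f_{1}$ with respect to $\cylinder$: by Definition \ref{HomotopyCylinderDefinition} this says exactly that $h \circ i_{0}(a_{0}) = f_{0}$ and $h \circ i_{1}(a_{0}) = f_{1}$. Next I would invoke condition (ii) of Definition \ref{CylindricalAdjunctionDefinition}, applied to the arrow $h$ itself, using that $\mathsf{adj}$ here is by construction the component at $(a_{0},a_{1})$ of the natural isomorphism of Recollection \ref{AdjunctionRecollection} associated to the adjunction $\cyl \dashv \cocyl$. The two commuting squares of Definition \ref{CylindricalAdjunctionDefinition}(ii) say precisely that $e_{0}(a_{1}) \circ \mathsf{adj}(h) = h \circ i_{0}(a_{0})$ and $e_{1}(a_{1}) \circ \mathsf{adj}(h) = h \circ i_{1}(a_{0})$.

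Composing the two observations then gives $e_{0}(a_{1}) \circ \mathsf{adj}(h) = h \circ i_{0}(a_{0}) = f_{0}$ and $e_{1}(a_{1}) \circ \mathsf{adj}(h) = h \circ i_{1}(a_{0}) = f_{1}$, which are exactly the defining conditions of Definition \ref{HomotopyCylinderDefinition} (dually, its co-cylinder analogue) for $\mathsf{adj}(h)$ to be a homotopy from $f_{0}$ to $f_{1}$ with respect to $\cocylinder$. This completes the argument.

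I do not anticipate any real obstacle: the content of the proposition is entirely absorbed into the definition of a cylindrical adjunction, condition (ii) being tailored precisely so that the adjunct of a homotopy is again a homotopy. The only points requiring care are purely bookkeeping — instantiating the natural transformations $i_{0}, i_{1}$ at $a_{0}$ and $e_{0}, e_{1}$ at $a_{1}$, and checking that the $\mathsf{adj}$ appearing in the statement agrees with the one in Definition \ref{CylindricalAdjunctionDefinition}(ii). One could optionally append a remark that, since each component of $\mathsf{adj}$ is a bijection, running the argument backwards (or passing to the dual statement about the cylinder $\cocylinder^{op}$ in $\cl{A}^{op}$) shows conversely that every homotopy with respect to $\cocylinder$ is of the form $\mathsf{adj}(h)$ for a unique homotopy $h$ with respect to $\cylinder$, thereby establishing the coincidence of the two notions of homotopy promised in the introduction to this section.
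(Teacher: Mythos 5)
Your proof is correct and follows exactly the route the paper takes: the paper's own proof is a one-liner, ``Follows immediately from the fact that $\cylinder$ is left adjoint to $\cocylinder$,'' and your argument simply unfolds this by invoking condition (ii) of Definition \ref{CylindricalAdjunctionDefinition} and the defining equations of a homotopy. The bookkeeping you carry out is precisely the content the paper leaves implicit.
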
 

\begin{proof} Follows immediately from the fact that $\cylinder$ is left adjoint to $\cocylinder$. \end{proof}

\begin{cor} \label{HomotopyCoCylinderGivesHomotopyCylinderCorollary} Let $\cylinder = \big( \cyl, i_0, i_1 \big)$ be a cylinder in $\cl{A}$, and let $\cocylinder = \big( \cocyl, e_0, e_1 \big)$ be a co-cylinder in $\cl{A}$. Suppose that $\cylinder$ is left adjoint to $\cocylinder$. Let \ar{{\mathsf{Hom}_{\cl{A}}\big(\cyl(-),- \big)},{\mathsf{Hom}_{\cl{A}}\big(-,\cocyl(-) \big)},\mathsf{adj}} denote the corresponding natural isomorphism, adopting the shorthand of Recollection \ref{AdjunctionRecollection}. 

Let \pair{a_{0},a_{1},f_{0},f_{1}} be arrows of $\cl{A}$, and suppose that \ar{a_{0},\cocyl(a_{1}),h} defines a homotopy from $f_{0}$ to $f_{1}$ with respect to $\cocylinder$. Then the arrow \ar[4]{\cyl(a_{0}),a_{1},\mathsf{adj}^{-1}(h)} of $\cl{A}$ defines a homotopy from $f_{0}$ to $f_{1}$ with respect to $\cylinder$. \end{cor}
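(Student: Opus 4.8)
The plan is to apply condition (ii) of Definition \ref{CylindricalAdjunctionDefinition} to the transpose of $h$ under the adjunction. First I would put $g = \mathsf{adj}^{-1}(h)$, an arrow \ar{\cyl(a_{0}),a_{1},} of $\cl{A}$ with $\mathsf{adj}(g) = h$. What has to be checked, according to Definition \ref{HomotopyCylinderDefinition}, is simply that $g \circ i_{0}(a_{0}) = f_{0}$ and $g \circ i_{1}(a_{0}) = f_{1}$.

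The key step is that condition (ii) of Definition \ref{CylindricalAdjunctionDefinition}, applied to the arrow $g$, asserts that the squares
\twosq{a_{0},\cyl(a_{0}),\cocyl(a_{1}),a_{1},i_{0}(a_{0}),g,\mathsf{adj}(g),e_{0}(a_{1}),a_{0},\cyl(a_{0}),\cocyl(a_{1}),a_{1},i_{1}(a_{0}),g,\mathsf{adj}(g),e_{1}(a_{1})}
in $\cl{A}$ commute. Substituting $\mathsf{adj}(g) = h$ and reading off the two squares gives $g \circ i_{0}(a_{0}) = e_{0}(a_{1}) \circ h$ and $g \circ i_{1}(a_{0}) = e_{1}(a_{1}) \circ h$. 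Since $h$ is by hypothesis a homotopy from $f_{0}$ to $f_{1}$ with respect to $\cocylinder$, we have $e_{0}(a_{1}) \circ h = f_{0}$ and $e_{1}(a_{1}) \circ h = f_{1}$, and chaining these identities together yields exactly what is required.

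A slicker but equivalent packaging deduces the statement straight from Proposition \ref{HomotopyCylinderGivesHomotopyCoCylinderProposition}: the arrow $g = \mathsf{adj}^{-1}(h)$ is tautologically a homotopy with respect to $\cylinder$ from $g \circ i_{0}(a_{0})$ to $g \circ i_{1}(a_{0})$, so the proposition turns $h = \mathsf{adj}(g)$ into a homotopy with respect to $\cocylinder$ between these same two arrows; comparing with the given presentation of $h$ forces $g \circ i_{0}(a_{0}) = f_{0}$ and $g \circ i_{1}(a_{0}) = f_{1}$. Either way there is no genuine obstacle here. The only point that needs care is that condition (ii) of Definition \ref{CylindricalAdjunctionDefinition} is formulated for an arbitrary arrow out of $\cyl(a_{0})$ together with its $\mathsf{adj}$-image, so it must be invoked for $\mathsf{adj}^{-1}(h)$ --- whose transpose is precisely $h$ --- rather than for $h$ itself.
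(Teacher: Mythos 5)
Your proof is correct, but it takes a genuinely different route from the paper. The paper's proof is the one-line appeal ``Follows formally from Proposition~\ref{HomotopyCylinderGivesHomotopyCoCylinderProposition} by duality'': one passes to $\cl{A}^{op}$ and $\cl{C}^{op}$, where $\cocylinder^{op}$ becomes a cylinder left adjoint to the co-cylinder $\cylinder^{op}$, applies the proposition there, and dualises back. Your main argument is instead a direct verification in $\cl{A}$ itself: apply condition~(ii) of Definition~\ref{CylindricalAdjunctionDefinition} to $g=\mathsf{adj}^{-1}(h)$, and chain the resulting two commuting squares with the two triangles expressing that $h$ is a homotopy with respect to $\cocylinder$. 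Your "slicker" variant is again different from the paper's: rather than dualising, you apply the forward proposition to $g$ and then use the fact that the source and target of a homotopy with respect to $\cocylinder$ are determined by post-composing with $e_{0}(a_{1})$ and $e_{1}(a_{1})$. All three arguments are sound and comparably short. What the paper's duality route buys is uniformity --- it illustrates the op-passage machinery of \ref{FormalCategoryTheoryPreliminariesSection} that is used systematically throughout. What your direct route buys is that it needs no opposite-category bookkeeping and makes the role of condition~(ii) of the cylindrical-adjunction definition visible, which the paper's terse proof leaves implicit.
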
 

\begin{proof} Follows immediately from Proposition \ref{HomotopyCylinderGivesHomotopyCoCylinderProposition} by duality. \end{proof}

\begin{prpn} \label{IdentityHomotopyProposition} Let $\cylinder = \big( \cyl, i_0, i_1, p \big)$ be a cylinder in $\cl{A}$ equipped with a contraction structure $p$. Let \ar{a_{0},a_{1},f} be an arrow of $\cl{A}$. Then the arrow \ar[5]{\cyl(a_{0}), a_{1}, f \circ p(a_{0})} of $\cl{A}$ defines a homotopy from $f$ to itself with respect to $\cylinder$. \end{prpn}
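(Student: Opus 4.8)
The plan is to verify directly that $h \mathrel{:=} f \circ p(a_{0})$ satisfies the two triangle identities of Definition \ref{HomotopyCylinderDefinition} with $f_{0} = f_{1} = f$. That is, I must check that the diagrams asserting $h \circ i_{0}(a_{0}) = f$ and $h \circ i_{1}(a_{0}) = f$ commute in $\cl{A}$. Unwinding the definition, $h \circ i_{0}(a_{0}) = f \circ p(a_{0}) \circ i_{0}(a_{0})$ and $h \circ i_{1}(a_{0}) = f \circ p(a_{0}) \circ i_{1}(a_{0})$, so it suffices to show that $p(a_{0}) \circ i_{0}(a_{0}) = \id$ and $p(a_{0}) \circ i_{1}(a_{0}) = \id$ as arrows $a_{0} \to a_{0}$ of $\cl{A}$.

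First I would recall that the contraction structure $p$ on $\cylinder$ is a $2$-arrow \ar{\cyl,\id_{\cl{A}},p} of $\cl{C}$ for which the two triangles in $\underline{\mathsf{Hom}}_{\cl{C}}(\cl{A},\cl{A})$ expressing $p \circ i_{0} = \id(\id_{\cl{A}})$ and $p \circ i_{1} = \id(\id_{\cl{A}})$ commute. Whiskering these identities with the object $a_{0}$ (that is, applying the functor $\underline{\mathsf{Hom}}_{\cl{C}}(1,-)$ and evaluating at $a_{0}$, in the notation of \ref{2ArrowAsNaturalTransformationNotation} and the surrounding remarks), and using that whiskering is functorial and sends $\id(\id_{\cl{A}})$ to $\id(a_{0})$, yields precisely $p(a_{0}) \circ i_{0}(a_{0}) = \id$ and $p(a_{0}) \circ i_{1}(a_{0}) = \id$ as arrows of $\cl{A}$ from $a_{0}$ to $a_{0}$. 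Postcomposing each with $f$ then gives $h \circ i_{0}(a_{0}) = f$ and $h \circ i_{1}(a_{0}) = f$, which are exactly the two conditions required for $h$ to be a homotopy from $f$ to $f$ with respect to $\cylinder$.

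There is no real obstacle here: the statement is a formal consequence of the definition of a contraction structure together with the bookkeeping that $2$-arrow identities in $\cl{C}$, whiskered by an object, become arrow identities in the formal category $\cl{A}$. The only point requiring a little care is to phrase the argument in the $2$-categorical language of \ref{FormalCategoryTheoryPreliminariesSection} rather than naively, i.e. to note that evaluating the commuting triangles defining $p$ at $a_{0}$ is legitimate because $\underline{\mathsf{Hom}}_{\cl{C}}(1,-)$ is a $2$-functor and whiskering preserves composition and identities. I expect the write-up to be only a few lines.
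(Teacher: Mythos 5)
Your proposal is correct and takes essentially the same approach as the paper: the paper simply records that the claim ``follows immediately from the fact that $p$ defines a contraction structure with respect to $\cylinder$,'' and your write-up is just a careful unwinding of that one-line proof, whiskering the defining triangles of $p$ by $a_{0}$ and post-composing with $f$.
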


\begin{proof} Follows immediately from the fact that $p$ defines a contraction structure with respect to $\cylinder$. \end{proof}

\begin{rmk} Let $\cylinder = \big( \cyl, i_0, i_1, p \big)$ be a cylinder in $\cl{A}$ equipped with a contraction structure $p$. Given an arrow $f$ of $\cl{A}$, we refer to the corresponding homotopy of Proposition \ref{IdentityHomotopyProposition} from $f$ to itself as the {\em identity homotopy} from $f$ to itself with respect to $\cylinder$. \end{rmk} 

\begin{prpn} \label{ReverseHomotopyProposition} Let $\cylinder = \big( \cyl, i_0, i_1, v \big)$ be a cylinder in $\cl{A}$ equipped with an involution structure $v$. Let \pair{a_{0}, a_{1}, f_{0}, f_{1}} be arrows of $\cl{A}$, and let \ar{\cyl(a_{0}), a_{1}, h} be a homotopy from $f_{0}$ to $f_{1}$ with respect to $\cylinder$. The arrow \ar[5]{\cyl(a_{0}),a_{1}, h \circ v(a_{0})} of $\cl{A}$ defines a homotopy from $f_{1}$ to $f_{0}$ with respect to $\cylinder$. \end{prpn}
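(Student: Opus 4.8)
The plan is simply to verify the two defining conditions of Definition \ref{HomotopyCylinderDefinition} for the arrow $h \circ v(a_0)$ of $\cl{A}$, namely that $\big(h \circ v(a_0)\big) \circ i_0(a_0) = f_1$ and that $\big(h \circ v(a_0)\big) \circ i_1(a_0) = f_0$.

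First I would unwind the involution axioms. An involution structure $v$ with respect to $\cylinder$ is by definition a $2$-arrow $v \colon \cyl \Rightarrow \cyl$ of $\cl{C}$ for which the two triangles in $\underline{\mathsf{Hom}}_{\cl{C}}(\cl{A},\cl{A})$ commute; these say precisely that $v \circ i_0 = i_1$ and $v \circ i_1 = i_0$. Whiskering these identities of $2$-arrows with the object $a_0$ of $\cl{A}$ --- equivalently, evaluating the commuting triangles at $a_0$, which carries commutative diagrams in $\underline{\mathsf{Hom}}_{\cl{C}}(\cl{A},\cl{A})$ to commutative diagrams in $\underline{\mathsf{Hom}}_{\cl{C}}(1,\cl{A})$ --- yields the identities $v(a_0) \circ i_0(a_0) = i_1(a_0)$ and $v(a_0) \circ i_1(a_0) = i_0(a_0)$ of arrows of $\cl{A}$.

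Then the two required identities follow at once by associativity of composition in $\cl{A}$ together with the hypothesis that $h$ is a homotopy from $f_0$ to $f_1$, i.e.\ that $h \circ i_0(a_0) = f_0$ and $h \circ i_1(a_0) = f_1$: we obtain $\big(h \circ v(a_0)\big) \circ i_0(a_0) = h \circ \big(v(a_0) \circ i_0(a_0)\big) = h \circ i_1(a_0) = f_1$, and symmetrically $\big(h \circ v(a_0)\big) \circ i_1(a_0) = h \circ \big(v(a_0) \circ i_1(a_0)\big) = h \circ i_0(a_0) = f_0$. There is no genuine obstacle here; the only point requiring a modicum of care is the bookkeeping of the $2$-categorical formalism --- that whiskering a commutative triangle of $2$-arrows with an object again gives a commutative triangle, which is immediate from functoriality of horizontal composition --- and it may be cleanest to present the computation as a single commuting diagram in $\underline{\mathsf{Hom}}_{\cl{C}}(1,\cl{A})$ obtained by pasting the relevant involution triangle evaluated at $a_0$ onto the triangle expressing that $h$ is a homotopy from $f_0$ to $f_1$.
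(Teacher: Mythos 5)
Your proof is correct and is essentially the paper's argument: the paper simply writes ``Follows immediately from the fact that $v$ defines an involution structure with respect to $\cylinder$,'' and what you have done is spell out exactly what that immediate verification consists of --- evaluating the two involution triangles at $a_0$ and composing with the two triangles witnessing that $h$ is a homotopy from $f_0$ to $f_1$.
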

 
\begin{proof} Follows immediately from the fact that $v$ defines an involution structure with respect to $\cylinder$. \end{proof}

\begin{rmk} Let $\cylinder = \big( \cyl, i_0, i_1, v \big)$ be a cylinder in $\cl{A}$ equipped with an involution structure $v$. Given a homotopy $h$ with respect to $\cylinder$ between a pair of arrows of $\cl{A}$, we refer to the corresponding homotopy of Proposition \ref{ReverseHomotopyProposition} as the {\em reverse} of $h$, and denote it by $h^{-1}$. \end{rmk} 

\begin{prpn} \label{CompositionHomotopiesProposition} Let $\cylinder = \big( \cyl, i_0, i_1, \subdiv, r_{0}, r_{1}, s \big)$ be an interval in $\cl{A}$ equipped with a subdivision structure $(\subdiv, r_{0}, r_{1}, s \big)$. Let \ar{\cyl(a_{0}), a_{1}, h} be a homotopy with respect to $\cylinder$ from an arrow \ar{a_{0},a_{1},f_{0}} of $\cl{A}$ to an arrow \ar{a_{0},a_{1},f_{1}}of $\cl{A}$. Let \ar{\cyl(a_{0}), a_{1}, k} be a homotopy with respect to $\cylinder$ from $f_{1}$ to a third arrow \ar{a_{0},a_{1},f_{2}} of $\cl{A}$. 

Let \ar{\subdiv(a_{0}), a_{1}, r} denote the canonical arrow of $\cl{A}$ such that the following diagram in $\cl{A}$ commutes. \pushout{a_{0}, \cyl(a_{0}), \cyl(a_{0}), \subdiv(a_{0}), a_{1}, i_{0}(a_{0}), r_{0}(a_{0}), i_{1}(a_{0}), r_{1}(a_{0}), k, h, r} Then the arrow \ar[5]{\cyl(a_{0}), a_{1}, r \circ s(a_{0})} of $\cl{A}$ defines a homotopy from $f_{0}$ to $f_{2}$ with respect to $\cyl$. \end{prpn}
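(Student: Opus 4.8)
The plan is to take $r \circ s(a_0)$ as the sought homotopy, with no further choices, and to verify its two endpoint conditions by unwinding the pushout that defines $r$ together with the two commuting squares that are part of the data of a subdivision structure.

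First I would check that $r$ is well defined. By the standing hypothesis that pushouts of $2$-arrows of $\cl{C}$ give rise to pushouts in formal categories (Definition~\ref{PushoutsPullbacks2ArrowsArePushoutsPullbacksInFormalCategoriesTerminology}), evaluating at $a_0$ the co-cartesian square of natural transformations in the definition of the subdivision structure $\big(\subdiv, r_0, r_1, s\big)$ yields a co-cartesian square in $\cl{A}$; this exhibits $\subdiv(a_0)$ as the pushout of the span $\cyl(a_0) \xleftarrow{i_0(a_0)} a_0 \xrightarrow{i_1(a_0)} \cyl(a_0)$ with coprojections $r_0(a_0)$ and $r_1(a_0)$. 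Since $h$ is a homotopy from $f_0$ to $f_1$ and $k$ is a homotopy from $f_1$ to $f_2$, we have $k \circ i_0(a_0) = f_1 = h \circ i_1(a_0)$, so $(k,h)$ is a cocone under this span; the universal property then supplies the canonical arrow $r$, characterised by $r \circ r_0(a_0) = k$ and $r \circ r_1(a_0) = h$.

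Next I would compute the two composites of $r \circ s(a_0)$ with $i_0(a_0)$ and with $i_1(a_0)$. Evaluating the subdivision axiom $s \circ i_0 = r_1 \circ i_0$ at $a_0$ gives $s(a_0) \circ i_0(a_0) = r_1(a_0) \circ i_0(a_0)$, whence
\[
\big(r \circ s(a_0)\big) \circ i_0(a_0) = r \circ r_1(a_0) \circ i_0(a_0) = h \circ i_0(a_0) = f_0 .
\]
Evaluating the other subdivision axiom $s \circ i_1 = r_0 \circ i_1$ at $a_0$ gives $s(a_0) \circ i_1(a_0) = r_0(a_0) \circ i_1(a_0)$, whence
\[
\big(r \circ s(a_0)\big) \circ i_1(a_0) = r \circ r_0(a_0) \circ i_1(a_0) = k \circ i_1(a_0) = f_2 .
\]
Therefore $r \circ s(a_0)$ is a homotopy from $f_0$ to $f_2$ with respect to the cylinder $\cylinder$, which is exactly the claim.

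I do not anticipate any genuine obstacle; the argument is wholly formal manipulation of pushout data and of the naturality of whiskering. The one point that requires care is keeping the indices straight: the two endpoint axioms of a subdivision structure relate $s \circ i_0$ to $r_1$ and $s \circ i_1$ to $r_0$ (and not, respectively, to $r_0$ and $r_1$), and these must be matched against the defining equations $r \circ r_0(a_0) = k$ and $r \circ r_1(a_0) = h$ of the canonical arrow; swapping either pairing would yield a homotopy with the wrong endpoints. It is also worth noting that only the subdivision structure enters here — neither a contraction nor an involution structure is invoked — which is why the hypothesis is merely that of a cylinder equipped with a subdivision structure.
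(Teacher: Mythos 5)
Your proposal is correct and takes essentially the same approach as the paper: the paper records the two endpoint verifications as a pair of commutative diagrams, chaining the subdivision squares $s\circ i_{0}=r_{1}\circ i_{0}$, $s\circ i_{1}=r_{0}\circ i_{1}$ with the defining equations $r\circ r_{1}(a_{0})=h$, $r\circ r_{0}(a_{0})=k$, which is exactly the computation you carry out in equational form (together with the same well-definedness check for $r$ via the pushout).
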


\begin{proof} The following diagram in $\cl{A}$ commutes. 

\begin{diagram}

\begin{tikzpicture} [>=stealth]

\matrix [ampersand replacement=\&, matrix of math nodes, column sep=6 em, row sep=4 em, nodes={anchor=center}]
{ 
|(0-0)| a_{0} \& |(1-0)| \cyl(a_{0}) \\ 
|(0-1)| \cyl(a_{0}) \& |(1-1)| \subdiv(a_{0}) \\[1em]
|(0-2)| a_{1} \& \\
};
	
\draw[->] (0-0) to node[auto] {$i_{0}(a_{0})$} (1-0);
\draw[->] (1-0) to node[auto] {$s(a_{0})$} (1-1);
\draw[->] (0-0) to node[auto] {$i_{0}(a_{0})$} (0-1);
\draw[->] (0-1) to node[auto,swap] {$r_{1}(a_{0})$} (1-1);
\draw[->] (1-1) to node[auto] {$r$} (0-2);
\draw[->] (0-1) to node[auto] {$h$} (0-2);
\draw[->, bend right=60] (0-0) to node[auto,swap] {$f_{0}$} (0-2);

\end{tikzpicture} 

\end{diagram} The following diagram in $\cl{A}$ also commutes.

\begin{diagram}

\begin{tikzpicture} [>=stealth]

\matrix [ampersand replacement=\&, matrix of math nodes, column sep=6 em, row sep=4 em, nodes={anchor=center}]
{ 
|(0-0)| a_{0} \& |(1-0)| \cyl(a_{0}) \\ 
|(0-1)| \cyl(a_{0}) \& |(1-1)| \subdiv(a_{0}) \\[1em]
|(0-2)| a_{1} \& \\
};
	
\draw[->] (0-0) to node[auto] {$i_{1}(a_{0})$} (1-0);
\draw[->] (1-0) to node[auto] {$s(a_{0})$} (1-1);
\draw[->] (0-0) to node[auto] {$i_{1}(a_{0})$} (0-1);
\draw[->] (0-1) to node[auto,swap] {$r_{0}(a_{0})$} (1-1);
\draw[->] (1-1) to node[auto] {$r$} (0-2);
\draw[->] (0-1) to node[auto] {$k$} (0-2);
\draw[->, bend right=60] (0-0) to node[auto,swap] {$f_{2}$} (0-2);

\end{tikzpicture} 

\end{diagram} \end{proof}

\begin{rmk} Let $\cylinder = \big( \cyl, i_0, i_1, \subdiv, r_{0}, r_{1}, s \big)$ be a cylinder in $\cl{A}$ equipped with a subdivision structure $\big( \subdiv, r_{0}, r_{1}, s \big)$. Let $f_{0}$, $f_{1}$, and $f_{2}$ be arrows of $\cl{A}$, let $h$ be a homotopy from $f_{0}$ to $f_{1}$ with respect to $\cylinder$, and let $k$ be a homotopy from $f_{1}$ to $f_{2}$ with respect to $\cylinder$. 

We denote by $h + k$ the corresponding homotopy of Proposition \ref{CompositionHomotopiesProposition} from $f_{0}$ to $f_{2}$ with respect to $\cylinder$, and refer to it as a {\em composite homotopy}. \end{rmk}

\begin{rmk} Thus if $\cylinder = \big( \cyl, i_0, i_1, p, v, \subdiv, r_{0}, r_{1}, s \big)$ is a cylinder in $\cl{A}$ equipped with a contraction structure $p$, an involution structure $v$, and a subdivision structure $\big( \subdiv, r_{0}, r_{1}, s \big)$, then homotopy with respect to $\cylinder$ defines an equivalence relation upon the arrows of $\cl{A}$. \end{rmk}

\begin{defn} Let $\cylinder = \big( \cyl, i_0, i_1 \big)$ be a cylinder in $\cl{A}$, and let \ar{a_{0},a_{1},f} be an arrow of $\cl{A}$. A {\em homotopy inverse} of $f$ with respect to $\cylinder$ is an arrow \ar{a_{1},a_{0},f^{-1}} of $\cl{A}$, together with a homotopy from $f^{-1}f$ to $id(a_{0})$ with respect to $\cylinder$, and a homotopy from $ff^{-1}$ to $id(a_{1})$ with respect to $\cylinder$. \end{defn}

\begin{defn} Let $\cylinder = \big( \cyl, i_0, i_1 \big)$ be a cylinder in $\cl{A}$. An arrow \ar{a_{0},a_{1},f} of $\cl{A}$ is a {\em homotopy equivalence} with respect to $\cylinder$ if it admits a homotopy inverse with respect to $\cylinder$. \end{defn} 

\begin{defn} Let $\cocylinder = \big( \cocyl, e_0, e_1 \big)$ be a co-cylinder in $\cl{A}$. An arrow \ar{a_{0},a_{1},f} of $\cl{A}$ is a {\em homotopy equivalence} with respect to $\cocylinder$ if $f^{op}$ is a homotopy equivalence with respect to the cylinder $\cocylinder^{op}$ in $\cl{A}^{op}$. \end{defn} 

\begin{prpn} \label{HomotopyEquivalenceCylinderIffHomotopyEquivalenceCoCylinderProposition} Let $\cylinder = \big( \cyl, i_0, i_1 \big)$ be a cylinder in $\cl{A}$, and let $\cocylinder = \big( \cocyl, e_0, e_1 \big)$ be a co-cylinder in $\cl{A}$. Suppose that $\cylinder$ is left adjoint to $\cocylinder$. Then an arrow \ar{a_{0},a_{1},f} of $\cl{A}$ is a homotopy equivalence with respect to $\cylinder$ if and only if it is a homotopy equivalence with respect to $\cocylinder$. \end{prpn}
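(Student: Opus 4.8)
The plan is to deduce the statement directly from Proposition \ref{HomotopyCylinderGivesHomotopyCoCylinderProposition} and Corollary \ref{HomotopyCoCylinderGivesHomotopyCylinderCorollary}, whose content is precisely that the natural isomorphism $\mathsf{adj}$ (and its inverse) transports a homotopy across the adjunction while preserving the ordered pair of arrows between which the homotopy runs.

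First I would unwind the definition of a homotopy equivalence with respect to $\cylinder$: supposing \ar{a_0,a_1,f} is such, there is an arrow \ar{a_1,a_0,f^{-1}} together with a homotopy $h$ from $f^{-1} \circ f$ to $\id(a_0)$ with respect to $\cylinder$ and a homotopy $k$ from $f \circ f^{-1}$ to $\id(a_1)$ with respect to $\cylinder$. Applying Proposition \ref{HomotopyCylinderGivesHomotopyCoCylinderProposition} to $h$ gives a homotopy $\mathsf{adj}(h)$ from $f^{-1} \circ f$ to $\id(a_0)$ with respect to $\cocylinder$, and applying it to $k$ gives a homotopy $\mathsf{adj}(k)$ from $f \circ f^{-1}$ to $\id(a_1)$ with respect to $\cocylinder$. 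Retaining the same arrow $f^{-1}$ as homotopy inverse, this exhibits $f$ as a homotopy equivalence with respect to $\cocylinder$.

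For the converse I would argue symmetrically: if $f$ is a homotopy equivalence with respect to $\cocylinder$, with homotopy inverse $f^{-1}$ and homotopies from $f^{-1} \circ f$ to $\id(a_0)$ and from $f \circ f^{-1}$ to $\id(a_1)$ with respect to $\cocylinder$, then applying Corollary \ref{HomotopyCoCylinderGivesHomotopyCylinderCorollary} (that is, $\mathsf{adj}^{-1}$) to each of these yields the corresponding homotopies with respect to $\cylinder$, so that $f^{-1}$ is a homotopy inverse of $f$ with respect to $\cylinder$.

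I do not expect any genuine obstacle here: the only point requiring care is that Proposition \ref{HomotopyCylinderGivesHomotopyCoCylinderProposition} and Corollary \ref{HomotopyCoCylinderGivesHomotopyCylinderCorollary} preserve the source and target arrows of a homotopy, which holds by their very statements. Indeed, since $\mathsf{adj}$ and $\mathsf{adj}^{-1}$ are mutually inverse bijections, the two directions are formally dual, so in the write-up it suffices to prove one implication and invoke duality (or simply cite both results, as above, for symmetry of exposition).
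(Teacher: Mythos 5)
Your proposal is correct and coincides with the paper's proof, which likewise deduces the statement immediately from Proposition \ref{HomotopyCylinderGivesHomotopyCoCylinderProposition} and Corollary \ref{HomotopyCoCylinderGivesHomotopyCylinderCorollary}. You have simply made explicit the unwinding of the definition of a homotopy inverse that the paper leaves implicit.
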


\begin{proof} Follows immediately from Proposition \ref{HomotopyCylinderGivesHomotopyCoCylinderProposition} and Corollary \ref{HomotopyCoCylinderGivesHomotopyCylinderCorollary}. \end{proof} 

\begin{lem} \label{HomotopyPreAndPostCompositionLemma} Let $\cylinder = \big( \cyl, i_0, i_1 \big)$ be a cylinder in $\cl{A}$. Suppose that we have four arrows

\begin{diagram}

\begin{tikzpicture} [>=stealth]

\matrix [ampersand replacement=\&, matrix of math nodes, column sep=3 em, nodes={anchor=center}] 
{ 
|(0-0)| a_{0} \& |(1-0)| a_{1} \& |(2-0)| a_{2} \& |(3-0)| a_{3} \\ 
};
	
\draw[->] (0-0) to node[auto] {$g_{0}$} (1-0);
\draw[->] ({1-0}.north east) to node[auto] {$f_{0}$} ({2-0}.north west);
\draw[->] ({1-0}.south east) to node[auto,swap] {$f_{1}$} ({2-0}.south west);
\draw[->] (2-0) to node[auto] {$g_{1}$} (3-0);
\end{tikzpicture} 

\end{diagram} of $\cl{A}$, and a homotopy \ar{\cyl(a_{1}),a_{2},h}from $f_{0}$ to $f_{1}$ with respect to $\cylinder$. Then the arrow \ar[8]{\cyl(a_{0}),a_{3},g_{1} \circ h \circ \cyl(g_{0})} of $\cl{A}$ defines a homotopy from $g_{1}f_{0}g_{0}$ to $g_{1}f_{1}g_{0}$ with respect to $\cylinder$. 

\end{lem}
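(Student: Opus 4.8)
The plan is to verify directly that the arrow $g_{1} \circ h \circ \cyl(g_{0})$ satisfies the two commutativity conditions of Definition \ref{HomotopyCylinderDefinition} with respect to the cylinder $\cylinder$, namely that precomposing with $i_{0}(a_{0})$ gives $g_{1}f_{0}g_{0}$ and precomposing with $i_{1}(a_{0})$ gives $g_{1}f_{1}g_{0}$. The only tools needed are the naturality of $i_{0}$ and $i_{1}$ as $2$-arrows of $\cl{C}$ (equivalently, as natural transformations $\id_{\cl{A}} \to \cyl$ in the sense of Notation \ref{2ArrowAsNaturalTransformationNotation}) together with the hypothesis that $h$ is a homotopy from $f_{0}$ to $f_{1}$.

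First I would record the naturality squares. Since $i_{0}$ is a $2$-arrow \ar{\id_{\cl{A}},\cyl,i_{0}} of $\cl{C}$, whiskering with the arrow $g_{0}$ gives the commutative square
\[
\cyl(g_{0}) \circ i_{0}(a_{0}) = i_{0}(a_{1}) \circ g_{0}
\]
as arrows of $\cl{A}$ from $a_{0}$ to $\cyl(a_{1})$; likewise $\cyl(g_{0}) \circ i_{1}(a_{0}) = i_{1}(a_{1}) \circ g_{0}$. (Here $\cyl(g_{0})$ is the arrow of $\cl{A}$ from $\cyl(a_{0})$ to $\cyl(a_{1})$ determined by the whiskered $2$-arrow $\cyl \cdot g_{0}$, as in Notation \ref{2ArrowAsNaturalTransformationNotation} and Remark \ref{1ArrowsAsFunctors}.) These identities are instances of the general fact that composition in $\underline{\mathsf{Hom}}_{\cl{C}}(1,\cl{A})$ respects the $2$-categorical structure; I would phrase them as a single sentence invoking that $i_{0}, i_{1}$ are natural transformations.

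Then the computation is a short chain. Precomposing $g_{1} \circ h \circ \cyl(g_{0})$ with $i_{0}(a_{0})$ gives $g_{1} \circ h \circ \big(\cyl(g_{0}) \circ i_{0}(a_{0})\big) = g_{1} \circ h \circ \big(i_{0}(a_{1}) \circ g_{0}\big) = g_{1} \circ \big(h \circ i_{0}(a_{1})\big) \circ g_{0} = g_{1} \circ f_{0} \circ g_{0}$, where the middle equality is the naturality square above and the last equality is the defining property of the homotopy $h$ from $f_{0}$ to $f_{1}$. The identical argument with $i_{1}$ in place of $i_{0}$ gives $\big(g_{1} \circ h \circ \cyl(g_{0})\big) \circ i_{1}(a_{0}) = g_{1} \circ f_{1} \circ g_{0}$. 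By Definition \ref{HomotopyCylinderDefinition} this exhibits $g_{1} \circ h \circ \cyl(g_{0})$ as a homotopy from $g_{1}f_{0}g_{0}$ to $g_{1}f_{1}g_{0}$ with respect to $\cylinder$, as claimed.

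There is no real obstacle here; the statement is a bookkeeping lemma. The only point requiring minor care is making sure the whiskering identity $\cyl(g_{0}) \circ i_{j}(a_{0}) = i_{j}(a_{1}) \circ g_{0}$ is stated at the correct level — it is the naturality of $i_{j}$ regarded as a $2$-arrow of $\cl{C}$ between the $1$-arrows $\id_{\cl{A}}$ and $\cyl$, applied to the arrow $g_{0}$ of $\cl{A}$ — rather than something that needs a separate proof; it follows from the interchange law in the strict $2$-category $\cl{C}$. One could alternatively present the proof as two pasting diagrams, but the inline computation above is cleaner. I would present it as a couple of sentences with the displayed equality chain, then cite Definition \ref{HomotopyCylinderDefinition} to conclude.
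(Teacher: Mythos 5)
Your argument is exactly the paper's: both invoke the naturality square $\cyl(g_{0}) \circ i_{j}(a_{0}) = i_{j}(a_{1}) \circ g_{0}$ followed by the defining triangle $h \circ i_{j}(a_{1}) = f_{j}$, for $j=0,1$, and then cite the definition of a homotopy. The only difference is presentational — you write an equality chain where the paper pastes two commutative diagrams.
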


\begin{proof} The following diagram in $\cl{A}$ commutes.

\begin{diagram}

\begin{tikzpicture} [>=stealth]

\matrix [ampersand replacement=\&, matrix of math nodes, column sep=4 em, row sep=3 em, nodes={anchor=center}]
{ 
|(0-0)| a_{0} \& |(1-0)| \cyl(a_{0}) \\ 
|(0-1)| a_{1} \& |(1-1)| \cyl(a_{1}) \\
              \& |(1-2)| a_{2}       \\
};
	
\draw[->] (0-0) to node[auto] {$i_{0}(a_{0})$} (1-0);
\draw[->] (0-0) to node[auto,swap] {$g_{0}$} (0-1);
\draw[->] (1-0) to node[auto] {$\cyl(g_{0})$} (1-1);
\draw[->] (0-1) to node[auto] {$i_{0}(a_{1})$} (1-1);
\draw[->] (1-1) to node[auto] {$h$} (1-2);
\draw[->] (0-1) to node[auto,swap] {$f_{0}$} (1-2);
\end{tikzpicture} 

\end{diagram} Hence the following diagram in $\cl{A}$ commutes. \tri[{4,3}]{a_{0},\cyl(a_{0}),a_{3},i_{0}(a_{0}),g_{1} \circ h \circ \cyl(g_{0}),g_{1} \circ f_{0} \circ g_{0}} The following diagram in $\cl{A}$ also commutes.  
 
\begin{diagram}

\begin{tikzpicture} [>=stealth]

\matrix [ampersand replacement=\&, matrix of math nodes, column sep=4 em, row sep=3 em, nodes={anchor=center}]
{ 
|(0-0)| a_{0} \& |(1-0)| \cyl(a_{0}) \\ 
|(0-1)| a_{1} \& |(1-1)| \cyl(a_{1}) \\
              \& |(1-2)| a_{2}       \\
};
	
\draw[->] (0-0) to node[auto] {$i_{1}(a_{0})$} (1-0);
\draw[->] (0-0) to node[auto,swap] {$g_{0}$} (0-1);
\draw[->] (1-0) to node[auto] {$\cyl(g_{0})$} (1-1);
\draw[->] (0-1) to node[auto] {$i_{1}(a_{1})$} (1-1);
\draw[->] (1-1) to node[auto] {$h$} (1-2);
\draw[->] (0-1) to node[auto,swap] {$f_{1}$} (1-2);
\end{tikzpicture} 

\end{diagram} Hence the following diagram in $\cl{A}$ commutes. \tri[{4,3}]{a_{0},\cyl(a_{0}),a_{3},i_{1}(a_{0}),g_{1} \circ h \circ \cyl(g_{0}),g_{1} \circ f_{1} \circ g_{0}} 
\end{proof}

\begin{lem} \label{TwoOutOfThreeHomotopyEquivalencesFirstLemma} Let $\cylinder = \big( \cyl, i_0, i_1, v, \subdiv, r_0, r_1, s \big)$ be a cylinder in $\cl{A}$ equipped with an involution structure $v$ and a subdivision structure $\big( \subdiv, r_{0}, r_{1}, s  \big)$. Suppose that we have a commutative diagram in $\cl{A}$ as follows. \tri{a_{0},a_{1},a_{2},f_{0},f_{1},f_{2}} If $f_{1}$ and $f_{2}$ are homotopy equivalences with respect to $\cylinder$, then $f_{0}$ is a homotopy equivalence with respect to $\cylinder$. \end{lem}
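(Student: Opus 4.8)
The plan is to produce an explicit homotopy inverse of $f_{0}$ out of homotopy inverses of $f_{1}$ and $f_{2}$. Fix a homotopy inverse $g_{1}$ of $f_{1}$ and a homotopy inverse $g_{2}$ of $f_{2}$ (these exist by hypothesis), and set $g := g_{2} \circ f_{1}$, an arrow of $\cl{A}$ from $a_{1}$ to $a_{0}$. I claim $g$ is a homotopy inverse of $f_{0}$, i.e. that there are homotopies from $g \circ f_{0}$ to $\id(a_{0})$ and from $f_{0} \circ g$ to $\id(a_{1})$ with respect to $\cylinder$. Throughout I will use Lemma \ref{HomotopyPreAndPostCompositionLemma} to whisker a homotopy by arrows on either side, Proposition \ref{ReverseHomotopyProposition} to reverse a homotopy (this is where the involution structure $v$ enters), and Proposition \ref{CompositionHomotopiesProposition} to concatenate two homotopies head to tail (this is where the subdivision structure $\big( \subdiv, r_{0}, r_{1}, s \big)$ enters).

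For the first homotopy I observe that, by commutativity of the given triangle, $g \circ f_{0} = g_{2} \circ (f_{1} \circ f_{0}) = g_{2} \circ f_{2}$, and that $g_{2}$ being a homotopy inverse of $f_{2}$ supplies by definition a homotopy from $g_{2} \circ f_{2}$ to $\id(a_{0})$; no further structure is needed here.

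For the second homotopy, from $f_{0} \circ g$ to $\id(a_{1})$, I would assemble a chain of three homotopies. First, whiskering the defining homotopy $g_{1} \circ f_{1} \to \id(a_{1})$ of $g_{1}$ by precomposition with $f_{0} \circ g$ (Lemma \ref{HomotopyPreAndPostCompositionLemma}) and reversing the result (Proposition \ref{ReverseHomotopyProposition}) gives a homotopy from $f_{0} \circ g$ to $(g_{1} \circ f_{1}) \circ (f_{0} \circ g)$. Second, after substituting $g = g_{2} \circ f_{1}$ and using $f_{1} \circ f_{0} = f_{2}$ one has $(g_{1} \circ f_{1}) \circ (f_{0} \circ g) = g_{1} \circ (f_{2} \circ g_{2}) \circ f_{1}$; whiskering the defining homotopy $f_{2} \circ g_{2} \to \id(a_{2})$ of $g_{2}$ by $f_{1}$ on the right and $g_{1}$ on the left (Lemma \ref{HomotopyPreAndPostCompositionLemma}) then yields a homotopy from $(g_{1} \circ f_{1}) \circ (f_{0} \circ g)$ to $g_{1} \circ f_{1}$. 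Third, the defining homotopy of $g_{1}$ runs from $g_{1} \circ f_{1}$ to $\id(a_{1})$. Concatenating the three via Proposition \ref{CompositionHomotopiesProposition} produces the required homotopy from $f_{0} \circ g$ to $\id(a_{1})$, so $g$ is a homotopy inverse of $f_{0}$ and $f_{0}$ is a homotopy equivalence with respect to $\cylinder$.

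The individual verifications are routine; the only real care needed is bookkeeping — tracking composition order and the direction in which each homotopy runs so that every application of Lemma \ref{HomotopyPreAndPostCompositionLemma} is legitimate and every concatenation in Proposition \ref{CompositionHomotopiesProposition} is genuinely head to tail. The one structural point worth flagging is that cancelling $f_{1}$ on the left of $f_{0} \circ g$ forces one of the homotopies in the chain to be traversed backwards, which is exactly why the involution structure $v$ is among the hypotheses of the lemma.
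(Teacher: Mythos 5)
Your proof is correct and takes essentially the same route as the paper: it proposes $g_{2}\circ f_{1}$ (with $g_{2}$ a homotopy inverse of $f_{2}$) as the candidate inverse, obtains the homotopy $gf_{0}\to\id(a_{0})$ for free from $gf_{0}=g_{2}f_{2}$, and builds the homotopy $f_{0}g\to\id(a_{1})$ as a three-step concatenation (reverse-whiskered inverse homotopy of $f_{1}$, whiskered inverse homotopy of $f_{2}$, inverse homotopy of $f_{1}$), which is exactly the chain $(k_{0}+k_{1})+h_{1}$ in the paper. If anything, your treatment of the first homotopy is slightly cleaner than the paper's, which appears to mislabel that homotopy as $h_{2}^{-1}$ when it should simply be the defining homotopy from $f_{2}^{-1}f_{2}$ to $\id(a_{0})$ supplied by $f_{2}^{-1}$ being a homotopy inverse.
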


\begin{proof} Let $f_{1}^{-1}$ be a homotopy inverse of $f_{1}$ with respect to $\cylinder$, and let \ar{\cyl(a_{1}),a_{1},h_{1}} denote the corresponding homotopy from $f_{1}^{-1}f_{1}$ to $id(a_{1})$ with respect to $\cylinder$. Let $f_{2}^{-1}$ be a homotopy inverse of $f_{2}$ with respect to $\cylinder$, and let \ar{\cyl(a_{2}),a_{2},h_{2}} denote the corresponding homotopy from $f_{2}f_{2}^{-1}$ to $id(a_{2})$ with respect to $\cylinder$. We claim that the arrow \ar[5]{a_{1},a_{0},{f_{2}^{-1}} \circ f_{1}} of $\cl{A}$ defines a homotopy inverse to $f_{0}$ with respect to $\cylinder$.

Let \ar{\cyl(a_{1}),a_{1},k_{0}} denote the arrow $h_{1}^{-1} \circ \cyl(f_{0}f_{2}^{-1}f_{1})$ of $\cl{A}$. By Lemma \ref{HomotopyPreAndPostCompositionLemma}, we have that $k_{0}$ defines a homotopy from $f_{0}f_{2}^{-1}f_{1}$ to $f_{1}^{-1}f_{1}f_{0}f_{2}^{-1}f_{1}$ with respect to $\cylinder$. We have that \[ f_{1}^{-1}f_{1}f_{0}f_{2}^{-1}f_{1} = f_{1}^{-1}f_{2}f_{2}^{-1}f_{1}. \] 

Let \ar{\cyl(a_{1}),a_{1},k_{1}} denote the arrow $f_{1}^{-1} \circ h_{2} \circ \cyl(f_{1})$ of $\cl{A}$. Appealing again to Lemma \ref{HomotopyPreAndPostCompositionLemma}, we have that $k_{1}$ defines a homotopy from $f_{1}^{-1}f_{2}f_{2}^{-1}f_{1}$ to $f_{1}^{-1}f_{1}$ with respect to $\cylinder$. 

We deduce that the arrow \ar[7]{\cyl(a_{1}),a_{1},{(k_{0} + k_{1}) + h_{1}}} of $\cl{A}$ defines a homotopy from $f_{0}f_{2}^{-1}f_{1}$ to $id(a_{1})$ with respect to $\cylinder$. 

We also have that $h_{2}^{-1}$ defines a homotopy from \[ f_{2}^{-1}f_{1}f_{0} = f_{2}^{-1}f_{2} \] to $id(a_{0})$ with respect to $\cylinder$. This completes the proof of the claim. \end{proof}

\begin{lem} \label{TwoOutOfThreeHomotopyEquivalencesSecondLemma} Let $\cylinder = \big( \cyl, i_0, i_1, v, \subdiv, r_0, r_1, s \big)$ be a cylinder in $\cl{A}$ equipped with an involution structure $v$ and a subdivision structure $\big( \subdiv, r_{0}, r_{1}, s \big)$. Suppose that we have a commutative diagram in $\cl{A}$ as follows. \tri{a_{0},a_{1},a_{2},f_{0},f_{1},f_{2}} If $f_{0}$ and $f_{2}$ are homotopy equivalences with respect to $\cylinder$, then $f_{1}$ is a homotopy equivalence with respect to $\cylinder$. \end{lem}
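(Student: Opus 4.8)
The plan is to proceed exactly as in the proof of Lemma \ref{TwoOutOfThreeHomotopyEquivalencesFirstLemma}, with the roles of the arrows rearranged. Since the triangle commutes strictly we have $f_{1} \circ f_{0} = f_{2}$, so heuristically a homotopy inverse of $f_{1}$ ought to be $f_{0} \circ f_{2}^{-1}$. First I would fix a homotopy inverse $f_{0}^{-1}$ of $f_{0}$ and a homotopy inverse $f_{2}^{-1}$ of $f_{2}$ with respect to $\cylinder$, together with the accompanying homotopies (from $f_{0}f_{0}^{-1}$ to $\id(a_{1})$, from $f_{0}^{-1}f_{0}$ to $\id(a_{0})$, from $f_{2}f_{2}^{-1}$ to $\id(a_{2})$, and from $f_{2}^{-1}f_{2}$ to $\id(a_{0})$). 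The claim is that $f_{0} \circ f_{2}^{-1}$ is a homotopy inverse of $f_{1}$ with respect to $\cylinder$.

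One of the two required homotopies is immediate: $f_{1} \circ (f_{0} f_{2}^{-1}) = (f_{1} f_{0}) \circ f_{2}^{-1} = f_{2} f_{2}^{-1}$, and the homotopy inverse structure of $f_{2}$ supplies a homotopy from $f_{2}f_{2}^{-1}$ to $\id(a_{2})$. For the other, I would first show that $f_{2}^{-1} f_{1}$ is homotopic to $f_{0}^{-1}$ with respect to $\cylinder$. Post-composing the homotopy from $f_{0}f_{0}^{-1}$ to $\id(a_{1})$ with $f_{2}^{-1}f_{1}$, via Lemma \ref{HomotopyPreAndPostCompositionLemma}, gives a homotopy from $(f_{2}^{-1}f_{1})(f_{0}f_{0}^{-1})$ to $f_{2}^{-1}f_{1}$; and $(f_{2}^{-1}f_{1})(f_{0}f_{0}^{-1}) = f_{2}^{-1}(f_{1}f_{0})f_{0}^{-1} = f_{2}^{-1}f_{2}f_{0}^{-1}$ strictly. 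Pre-composing the homotopy from $f_{2}^{-1}f_{2}$ to $\id(a_{0})$ with $f_{0}^{-1}$, again by Lemma \ref{HomotopyPreAndPostCompositionLemma}, gives a homotopy from $f_{2}^{-1}f_{2}f_{0}^{-1}$ to $f_{0}^{-1}$. Reversing the first of these (Proposition \ref{ReverseHomotopyProposition}) and composing it with the second (Proposition \ref{CompositionHomotopiesProposition}) yields a homotopy from $f_{2}^{-1}f_{1}$ to $f_{0}^{-1}$.

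To finish, I would post-compose this homotopy with $f_{0}$, via Lemma \ref{HomotopyPreAndPostCompositionLemma}, to obtain a homotopy from $(f_{0}f_{2}^{-1})f_{1} = f_{0}(f_{2}^{-1}f_{1})$ to $f_{0}f_{0}^{-1}$, and then compose it (Proposition \ref{CompositionHomotopiesProposition}) with the homotopy from $f_{0}f_{0}^{-1}$ to $\id(a_{1})$ coming from the homotopy inverse structure of $f_{0}$. This produces the missing homotopy from $(f_{0}f_{2}^{-1})f_{1}$ to $\id(a_{1})$, completing the verification that $f_{0}f_{2}^{-1}$ is a homotopy inverse of $f_{1}$, whence $f_{1}$ is a homotopy equivalence with respect to $\cylinder$.

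The argument uses neither contraction nor strictness hypotheses: the involution structure $v$ enters only through Proposition \ref{ReverseHomotopyProposition}, to reverse a single homotopy, and the subdivision structure only through Proposition \ref{CompositionHomotopiesProposition}, to concatenate homotopies. The only care required is the bookkeeping of the sources, targets, and directions of the various homotopies in the repeated applications of Lemma \ref{HomotopyPreAndPostCompositionLemma}; I do not expect any conceptual obstacle.
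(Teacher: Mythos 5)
Your proof is correct and takes essentially the same approach as the paper: both propose $f_{0} \circ f_{2}^{-1}$ as a homotopy inverse of $f_{1}$, dispose of one side using $f_{1}f_{0}f_{2}^{-1} = f_{2}f_{2}^{-1}$, and produce the other required homotopy by chaining whiskerings (Lemma \ref{HomotopyPreAndPostCompositionLemma}), a reversal, and concatenations. The only difference is bookkeeping: you first build a homotopy from $f_{2}^{-1}f_{1}$ to $f_{0}^{-1}$ and post-compose with $f_{0}$ at the end, whereas the paper carries the outer $f_{0}$ through every step.
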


\begin{proof} Let $f_{0}^{-1}$ be a homotopy inverse of $f_{0}$ with respect to $\cylinder$, and let \ar{\cyl(a_{1}),a_{1},h_{0}} denote the corresponding homotopy from $f_{0}f_{0}^{-1}$ to $id(a_{1})$ with respect to $\cylinder$. Let $f_{2}^{-1}$ be a homotopy inverse of $f_{2}$ with respect to $\cylinder$, and let \ar{\cyl(a_{2}),a_{2},h_{2}} denote the corresponding homotopy from $f_{2}f_{2}^{-1}$ to $id(a_{2})$ with respect to $\cylinder$. We claim that the arrow \ar[5]{a_{2},a_{1},f_{0} \circ f_{1}^{-1}} of $\cl{A}$ defines a homotopy inverse to $f_{1}$ with respect to $\cylinder$.

Let \ar{\cyl(a_{1}),a_{1},k_{0}} denote the arrow $f_{0}f_{2}^{-1}f_{1} \circ h_{0}^{-1}$ of $\cl{A}$. By Lemma \ref{HomotopyPreAndPostCompositionLemma}, we have that $k_{0}$ defines a homotopy from $f_{0}f_{2}^{-1}f_{1}$ to $f_{0}f_{2}^{-1}f_{1}f_{0}f_{0}^{-1}$ with respect to $\cylinder$. We also have that \[ f_{0}f_{2}^{-1}f_{1}f_{0}f_{0}^{-1} = f_{0}f_{2}^{-1}f_{2}f_{0}^{-1}. \] %
Let \ar{\cyl(a_{1}),a_{1},k_{1}} denote the arrow $f_{0} \circ h_{2} \circ \cyl(f_{0}^{-1})$ of $\cl{A}$. Appealing again to Lemma \ref{HomotopyPreAndPostCompositionLemma}, we have that $k_{1}$ defines a homotopy from $f_{0}f_{2}^{-1}f_{2}f_{0}^{-1}$ to $f_{0}f_{0}^{-1}$ with respect to $\cylinder$. We deduce that the arrow \ar[7]{\cyl(a_{1}),a_{1},{(k_{0} + k_{1}) + h_{0}}} of $\cl{A}$ defines a homotopy from $f_{0}f_{2}^{-1}f_{1}$ to $id(a_{1})$ with respect to $\cylinder$. 

We also have that $h_{2}$ defines a homotopy from \[ f_{1}f_{0}f_{2}^{-1} = f_{2}f_{2}^{-1} \] to $id(a_{2})$ with respect to $\cylinder$. This completes the proof of the claim. \end{proof}

\begin{prpn} \label{TwoOutOfThreeHomotopyEquivalencesProposition} Let $\cylinder = \big( \cyl, i_0, i_1, v, \subdiv, r_0, r_1, s \big)$ be a cylinder in $\cl{A}$ equipped with an involution structure $v$ and a subdivision structure $\big( \subdiv, r_{0}, r_{1}, s \big)$. Then homotopy equivalences with respect to $\cylinder$ have the two-out-of-three property. \end{prpn}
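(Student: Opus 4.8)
The plan is to unpack the two-out-of-three property into the three cases attached to the given commutative triangle $f_2 = f_1 \circ f_0$, where $f_0 \colon a_0 \to a_1$ and $f_1 \colon a_1 \to a_2$. Two of these are already done: if $f_1$ and $f_2$ are homotopy equivalences with respect to $\cylinder$ then $f_0$ is one by Lemma \ref{TwoOutOfThreeHomotopyEquivalencesFirstLemma}, and if $f_0$ and $f_2$ are homotopy equivalences with respect to $\cylinder$ then $f_1$ is one by Lemma \ref{TwoOutOfThreeHomotopyEquivalencesSecondLemma}. So the only remaining task is the case where $f_0$ and $f_1$ are homotopy equivalences and we must show the composite $f_2 = f_1 \circ f_0$ is a homotopy equivalence; in other words, that homotopy equivalences with respect to $\cylinder$ are closed under composition.

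For that case I would fix homotopy inverses $f_0^{-1}$ of $f_0$ and $f_1^{-1}$ of $f_1$ with respect to $\cylinder$, and claim that $f_0^{-1} \circ f_1^{-1}$ is a homotopy inverse of $f_1 \circ f_0$. To produce a homotopy from $(f_0^{-1} f_1^{-1})(f_1 f_0)$ to $id(a_0)$, start from the homotopy from $f_1^{-1} f_1$ to $id(a_1)$ furnished by $f_1^{-1}$, whisker it on the left by $f_0^{-1}$ and on the right by $\cyl(f_0)$ using Lemma \ref{HomotopyPreAndPostCompositionLemma} to obtain a homotopy from $f_0^{-1} f_1^{-1} f_1 f_0$ to $f_0^{-1} f_0$, and then use Proposition \ref{CompositionHomotopiesProposition} to compose this with the homotopy from $f_0^{-1} f_0$ to $id(a_0)$ coming from $f_0^{-1}$. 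Symmetrically, a homotopy from $(f_1 f_0)(f_0^{-1} f_1^{-1})$ to $id(a_2)$ is obtained by whiskering the homotopy from $f_0 f_0^{-1}$ to $id(a_1)$ by $f_1$ on the left and $\cyl(f_1^{-1})$ on the right, giving a homotopy from $f_1 f_0 f_0^{-1} f_1^{-1}$ to $f_1 f_1^{-1}$, and then composing with the homotopy from $f_1 f_1^{-1}$ to $id(a_2)$.

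The ingredients needed in this final case are exactly Lemma \ref{HomotopyPreAndPostCompositionLemma}, which holds for a bare cylinder, and Proposition \ref{CompositionHomotopiesProposition}, which requires only the subdivision structure $\big( \subdiv, r_0, r_1, s \big)$; the involution structure is used only indirectly, via the two preceding lemmas. There is no genuine obstacle here: the work is purely the bookkeeping of matching sources and targets of homotopies so that Lemma \ref{HomotopyPreAndPostCompositionLemma} and Proposition \ref{CompositionHomotopiesProposition} apply, together with the associativity and unit laws for composition in $\cl{A}$. I would present the composition-closure argument first and then invoke the two lemmas above for the remaining two cases.
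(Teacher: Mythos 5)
Your proposal matches the paper's proof exactly: the paper cites Lemma \ref{TwoOutOfThreeHomotopyEquivalencesFirstLemma}, Lemma \ref{TwoOutOfThreeHomotopyEquivalencesSecondLemma}, and Proposition \ref{CompositionHomotopiesProposition} for precisely the three cases you describe. You have simply unpacked the terse ``follows immediately'' for the closure-under-composition case into the explicit whisker-then-compose argument (also using Lemma \ref{HomotopyPreAndPostCompositionLemma}), which is the intended reading.
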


\begin{proof} Follows immediately from Lemma \ref{TwoOutOfThreeHomotopyEquivalencesFirstLemma}, Lemma \ref{TwoOutOfThreeHomotopyEquivalencesSecondLemma}, and Proposition \ref{CompositionHomotopiesProposition}. \end{proof}

\begin{prpn} \label{RetractionHomotopyEquivalenceIsHomotopyEquivalenceProposition} Let $\cylinder = \big( \cyl, i_{0}, i_{1} \big)$ be a cylinder in $\cl{A}$. Let \ar{a_{0},a_{1},f} be an arrow of $\cl{A}$ which is a homotopy equivalence with respect to $\cylinder$. Suppose that we have commutative diagrams \twosq{a_{2},a_{0},a_{3},a_{1},g_{0},f,f',g_{1},a_{0},a_{2},a_{1},a_{3},r_{0},f',f,r_{1}} in $\cl{A}$, such that $r_{0}$ is a retraction of $g_{0}$, and such that $r_{1}$ is a retraction of $g_{1}$. Then $f'$ is a homotopy equivalence with respect to $\cylinder$. \end{prpn}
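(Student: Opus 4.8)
We have $f \colon a_0 \to a_1$ a homotopy equivalence, and we're given $g_0, g_1, r_0, r_1$ making $f'$ a "retract" of $f$ in the arrow category: $r_0 g_0 = \id(a_2)$, $r_1 g_1 = \id(a_3)$, together with $f g_0 = g_1 f'$ and $f' r_0 = r_1 f$. We want to produce a homotopy inverse for $f'$.

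The plan is to write down the obvious candidate. Let $f^{-1}$ be a homotopy inverse of $f$ with respect to $\cylinder$, so we have homotopies $h_0 \colon f^{-1} f \simeq \id(a_0)$ and $h_1 \colon f f^{-1} \simeq \id(a_1)$. I would set $(f')^{-1} := r_0 \circ f^{-1} \circ g_1 \colon a_3 \to a_2$. Then I need two homotopies:
\[
(f')^{-1} f' = r_0 f^{-1} g_1 f' \simeq \id(a_2), \qquad f' (f')^{-1} = f' r_0 f^{-1} g_1 \simeq \id(a_3).
\]
For the first, use $g_1 f' = f g_0$ to rewrite $(f')^{-1} f' = r_0 f^{-1} f g_0$. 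Now apply Lemma \ref{HomotopyPreAndPostCompositionLemma} to the homotopy $h_0 \colon f^{-1}f \simeq \id(a_0)$, whiskering by $g_0$ on the right (and $\cyl(a_2) \to \cyl(a_0)$ via $\cyl(g_0)$) and by $r_0$ on the left: this gives a homotopy from $r_0 f^{-1} f g_0$ to $r_0 \id(a_0) g_0 = r_0 g_0 = \id(a_2)$. So $(f')^{-1} f' \simeq \id(a_2)$.

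For the second relation, rewrite $f' (f')^{-1} = f' r_0 f^{-1} g_1$. Using $f' r_0 = r_1 f$, this equals $r_1 f f^{-1} g_1$. Apply Lemma \ref{HomotopyPreAndPostCompositionLemma} to the homotopy $h_1 \colon f f^{-1} \simeq \id(a_1)$, whiskering on the right by $g_1$ and on the left by $r_1$: this produces a homotopy from $r_1 f f^{-1} g_1$ to $r_1 \id(a_1) g_1 = r_1 g_1 = \id(a_3)$. Hence $f'(f')^{-1} \simeq \id(a_3)$, and $(f')^{-1}$ is a homotopy inverse of $f'$.

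**Anticipated obstacle.** There is essentially no obstacle — the argument is purely formal, the only ingredients being Lemma \ref{HomotopyPreAndPostCompositionLemma} (which handles pre- and post-composition of homotopies) and the two retraction identities. No involution or subdivision structure is needed, which is why the hypotheses of the proposition only ask for a bare cylinder. The one point requiring a little care is making sure the two square-diagrams in the statement are read correctly: the first gives $f g_0 = g_1 f'$ and the second gives $f' r_0 = r_1 f$ (note the direction of the maps in the second square differs — $r_0, r_1$ go the "retraction" way), and these are exactly the two rewrites used above. I would present the proof as: define $(f')^{-1}$, then two short invocations of Lemma \ref{HomotopyPreAndPostCompositionLemma}, each preceded by the relevant algebraic substitution.
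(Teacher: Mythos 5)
Your proposal is correct and follows essentially the same approach as the paper: same candidate homotopy inverse $(f')^{-1} = r_0 \circ f^{-1} \circ g_1$, and the same two homotopies $r_0 \circ h_0 \circ \cyl(g_0)$ and $r_1 \circ h_1 \circ \cyl(g_1)$. The only difference is presentational --- the paper spells out the diagram chases from scratch rather than citing Lemma~\ref{HomotopyPreAndPostCompositionLemma}, but the underlying argument is identical, so your use of the lemma is a clean shortcut rather than a divergence.
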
 

\begin{proof} Let \ar{a_{1},a_{0},f^{-1}} be a homotopy inverse of $f$. Let $h_{0}$ be a homotopy from $f^{-1}f$ to $id(a_{0})$ with respect to $\cylinder$, and let $h_{1}$ be a homotopy from $ff^{-1}$ to $id(a_{1})$ with respect to $\cylinder$. 

Let \ar{a_{3},a_{2},{(f')^{-1}}} denote the arrow $r_{0} \circ f^{-1} \circ g_{1}$ of $\cl{A}$. Let \ar{\cyl(a_{2}),a_{2},h_{0}'} denote the arrow $r_{0} \circ h_{0} \circ \cyl(g_{0})$ of $\cl{A}$. We claim that $h_{0}'$ defines a homotopy from $(f')^{-1}f'$ to $id(a_{2})$ with respect to $\cylinder$. 

Firstly, the following diagram in $\cl{A}$ commutes. \trapeziumstwo{a_{2},\cyl(a_{2}),a_{3},a_{0},\cyl(a_{0}),a_{1},a_{0},i_{0}(a_{2}),\cyl(g_{0}),h_{0},f',g_{1},f^{-1},g_{0},i_{0}(a_{0}),f} %
Hence the following diagram in $\cl{A}$ commutes. \sq[{7,3}]{a_{2},\cyl(a_{2}),a_{3},a_{2},i_{0}(a_{2}),r_{0} \circ h_{0} \circ \cyl(g_{0}),f',r_{0} \circ f^{-1} \circ \cyl(g_{0})} %
Thus the following diagram in $\cl{A}$ commutes. \sq{a_{2},\cyl(a_{2}),a_{3},a_{2},i_{0}(a_{2}),h_{0}',f',{(f')^{-1}}} 

Secondly, the following diagram in $\cl{A}$ commutes. \squareabovetriangle{a_{2},\cyl(a_{2}),a_{0},\cyl(a_{0}),a_{0},i_{1}(a_{2}),\cyl(g_{0}),g_{0},i_{1}(a_{0}),h_{0},id} %
Hence the following diagram in $\cl{A}$ commutes. \squarewithdiagonal{a_{2},\cyl(a_{2}),a_{2},a_{0},i_{1}(a_{2}),h_{0} \circ \cyl(g_{0}),id,r_{0},g_{0}} %
Thus the following diagram in $\cl{A}$ commutes. \tri{a_{2},\cyl(a_{2}),a_{2},i_{1}(a_{2}),h_{0}',id} %
This completes the proof of the claim.

Let \ar{\cyl(a_{3}),a_{3},h_{1}'} denote the arrow $r_{1} \circ h_{1} \circ \cyl(g_{1})$ of $\cl{A}$. We claim that $h_{1}'$ defines a homotopy from $(f')^{-1}f'$ to $id(a_{3})$ with respect to $\cylinder$. 

Firstly, the following diagram in $\cl{A}$ commutes. \stackedsq{a_{1},\cyl(a_{1}),a_{0},a_{1},a_{2},a_{3},i_{0}(a_{1}),h_{1},f^{-1},f,r_{0},r_{1},f'} %
Hence, appealing to the commutativity of the diagram \sq{a_{3},\cyl(a_{3}),a_{1},\cyl(a_{1}),i_{0}(a_{3}),\cyl(g_{1}),g_{1},i_{0}(a_{1})} in $\cl{A}$, we have that the following diagram in $\cl{A}$ commutes. \sq{a_{3},\cyl(a_{3}),a_{2},a_{3},i_{0}(a_{3}),r_{1} \circ h_{1} \circ \cyl(g_{1}),r_{0} \circ f^{-1} \circ g_{1},f'} %
Thus the following diagram in $\cl{A}$ commutes. \sq{a_{3},cyl(a_{3}),a_{2},a_{3},i_{0}(a_{3}),h_{1}',{(f')^{-1}},f'}

Secondly, the following diagram in $\cl{A}$ commutes. \squareabovetriangle{a_{3},\cyl(a_{3}),a_{1},\cyl(a_{1}),a_{1},i_{1}(a_{3}),\cyl(g_{1}),g_{1},i_{1}(a_{1}),h_{1},id} %
Hence the following diagram in $\cl{A}$ commutes. \squarewithdiagonal{a_{3},\cyl(a_{3}),a_{3},a_{1},i_{1}(a_{3}),h_{1} \circ \cyl(g_{1}),id,r_{1},g_{1}} %
Thus the following diagram in $\cl{A}$ commutes. \tri{a_{3},\cyl(a_{3}),a_{3},i_{1}(a_{3}),h_{1}',id} %
This completes the proof of the claim. 

\end{proof}

\begin{lem} \label{RightHomotopyInverseIsHomotopyInverseLemma} Let $\cylinder = \big( \cyl, i_0, i_1, v, \subdiv, r_0, r_1, s \big)$ be a cylinder in $\cl{A}$ equipped with an involution structure $v$, and a subdivision structure $\big( \subdiv, r_{0}, r_{1}, s \big)$. Let \ar{a_{0},a_{1},f} be an arrow of $\cl{A}$ which is a homotopy equivalence with respect to $\cylinder$, and let \ar{a_{1},a_{0},g} be an arrow of $\cl{A}$ such that there is a homotopy \ar{\cyl(a_{1}),a_{1},h} from $fg$ to $id(a_{1})$ with respect to $\cylinder$. Then $g$ is a homotopy inverse of $f$ with respect to $\cylinder$. \end{lem}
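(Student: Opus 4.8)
The plan is to prove that $g$ is homotopic to a genuine homotopy inverse of $f$, from which the statement follows at once. So I would begin by fixing a homotopy inverse $f^{-1}\colon a_{1}\to a_{0}$ of $f$ with respect to $\cylinder$, together with a homotopy $h_{0}\colon\cyl(a_{0})\to a_{0}$ from $f^{-1}f$ to $\id(a_{0})$ with respect to $\cylinder$ and a homotopy $h_{1}\colon\cyl(a_{1})\to a_{1}$ from $ff^{-1}$ to $\id(a_{1})$ with respect to $\cylinder$ (the latter will in fact not be needed).

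Step 1: construct a homotopy from $g$ to $f^{-1}$. Applying Lemma~\ref{HomotopyPreAndPostCompositionLemma} to $h_{0}$, pre-composed with $g\colon a_{1}\to a_{0}$ and post-composed with $\id(a_{0})$, produces a homotopy from $f^{-1}fg$ to $g$ with respect to $\cylinder$; I would then reverse it using Proposition~\ref{ReverseHomotopyProposition} to obtain a homotopy from $g$ to $f^{-1}fg$. Applying Lemma~\ref{HomotopyPreAndPostCompositionLemma} to the given homotopy $h$, post-composed with $f^{-1}$, produces a homotopy from $f^{-1}(fg)=f^{-1}fg$ to $f^{-1}\id(a_{1})=f^{-1}$. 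Composing these two homotopies with Proposition~\ref{CompositionHomotopiesProposition} then yields a homotopy $\ell$ from $g$ to $f^{-1}$ with respect to $\cylinder$.

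Step 2: transport to the composites and conclude. Applying Lemma~\ref{HomotopyPreAndPostCompositionLemma} to $\ell$, pre-composed with $f\colon a_{0}\to a_{1}$, gives a homotopy from $gf$ to $f^{-1}f$ with respect to $\cylinder$; composing it with $h_{0}$ via Proposition~\ref{CompositionHomotopiesProposition} gives a homotopy from $gf$ to $\id(a_{0})$ with respect to $\cylinder$. Together with the hypothesised homotopy $h$ from $fg$ to $\id(a_{1})$, this exhibits $g$ as a homotopy inverse of $f$ with respect to $\cylinder$, as required.

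I do not expect a genuine obstacle here: the argument is bookkeeping with homotopies, using only that an involution structure lets one reverse homotopies (Proposition~\ref{ReverseHomotopyProposition}), that a subdivision structure lets one compose them (Proposition~\ref{CompositionHomotopiesProposition}), and that homotopies are stable under pre- and post-composition with arrows (Lemma~\ref{HomotopyPreAndPostCompositionLemma}). The only point needing a little care is that the intermediate arrows match on the nose --- for instance that the two occurrences of $f^{-1}fg$ above are literally the same arrow --- which is immediate from associativity of composition and the unit laws for $\id$, so that the homotopies do chain together under Proposition~\ref{CompositionHomotopiesProposition}. It is worth noting that no contraction structure is required, consistently with its absence from the data of $\cylinder$ in the statement: every homotopy used is either hypothesised or read off from the homotopy-equivalence structure on $f$, so no identity homotopy ever has to be manufactured.
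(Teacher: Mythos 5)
Your proof is correct and takes essentially the same route as the paper: fix a homotopy inverse $f^{-1}$ with a homotopy $k$ from $f^{-1}f$ to $\id(a_{0})$, construct via pre/post-composition and subdivision a homotopy between $g$ and $f^{-1}$, transport it along $\cyl(f)$, and compose with $k$ to relate $gf$ to $\id(a_{0})$. The only (minor, cosmetic) difference is direction: the paper builds a homotopy from $f^{-1}$ to $g$ and must reverse once more at the very end, whereas you build the homotopy from $g$ to $f^{-1}$ directly, so that the final reversal is unnecessary.
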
 

\begin{proof} Let \ar{a_{1},a_{0},f^{-1}} be a homotopy inverse of $f$ with respect to $\cylinder$, and let \ar{\cyl(a_{0}),a_{0},k} denote the corresponding homotopy from $f^{-1}f$ to $id(a_{0})$ with respect to $\cylinder$. By Lemma \ref{HomotopyPreAndPostCompositionLemma}, we have that the arrow \ar[5]{\cyl(a_{1}),a_{0},k \circ \cyl(g)} of $\cl{A}$ defines a homotopy from $f^{-1}fg$ to $g$ with respect to $\cylinder$. 

By Lemma \ref{HomotopyPreAndPostCompositionLemma} once more, we also have that the arrow \ar[5]{\cyl(a_{1}),a_{0}, {f^{-1} \circ h^{-1}}} of $\cl{A}$ defines a homotopy from $f^{-1}$ to $f^{-1}fg$ with respect to $\cylinder$. %
Hence the arrow \ar[12]{\cyl(a_{1}),a_{0},\big( k \circ \cyl(g) \big) + \big( f^{-1} \circ h^{-1} \big)} of $\cl{A}$ defines a homotopy from $f^{-1}$ to $g$ with respect to $\cylinder$. Let us denote it by $l$ for brevity. 

Appealing again to Lemma \ref{HomotopyPreAndPostCompositionLemma}, we have that the arrow \ar[5]{\cyl(a_{0}),a_{0}, l \circ \cyl(f)} of $\cl{A}$ defines a homotopy from $f^{-1}f$ to $gf$ with respect to $\cylinder$. Then \ar[8]{\cyl(a_{0}),a_{0},{\big(l \circ \cyl(f)\big) + k^{-1}}} defines a homotopy from $id(a_{0})$ to $gf$ with respect to $\cylinder$. 

Thus \ar[10]{\cyl(a_{0}),a_{0},{\Big( \big(l \circ \cyl(f)\big) + k^{-1}\Big)^{-1}}} defines a homotopy from $gf$ to $id(a_{0})$ with respect to $\cylinder$. \end{proof}

\begin{rmk} An analogous argument shows that if \ar{a_{0},a_{1},f} satisfies the hypotheses of Lemma \ref{RightHomotopyInverseIsHomotopyInverseLemma}, and if $g$ is an arrow of $\cl{A}$ such that there is a homotopy from $gf$ to $id(a_{0})$ with respect to $\cylinder$, then $g$ is a homotopy inverse of $f$ with respect to $\cylinder$. We shall not need this. \end{rmk}

\begin{defn} Let $\cylinder = \big( \cyl, i_0, i_1 \big)$ be a cylinder in $\cl{A}$. Let $a_{0}$ and $a_{1}$ be objects of $\cl{A}$. We refer to an arrow \ar{\cyl^{2}(a_{0}),a_{1},\sigma} of $\cl{A}$ as a {\em double homotopy} with respect to $\cylinder$. \end{defn}

\begin{defn} Let $\cylinder = \big( \cyl, i_0, i_1 \big)$ be a cylinder in $\cl{A}$. Let \ar{\cyl^{2}(a_{0}),a_{1},\sigma} be a double homotopy with respect to $\cylinder$. We refer to the arrows $h_{0}, h_{1}, h_{2}$, and $h_{3}$ of $\cl{A}$ defined by the commutative diagrams below as the {\em  boundary homotopies} of $\sigma$ with respect to $\cylinder$. \twotriangles[{5,3,0}]{\cyl(a_{0}),\cyl^{2}(a_{0}),a_{1},i_{0}\big(\cyl(a_{0})\big),\sigma,h_{0},\cyl(a_{0}),\cyl^{2}(a_{0}),a_{1},\cyl\big(i_{1}(a_{0})\big),\sigma,h_{1}} \twotriangles[{5,3,0}]{\cyl(a_{0}),\cyl^{2}(a_{0}),a_{1},\cyl\big(i_{0}(a_{0})\big),\sigma,h_{2},\cyl(a_{0}),\cyl^{2}(a_{0}),a_{1},i_{1}\big(\cyl(a_{0})\big),\sigma,h_{3}} \end{defn}

\begin{rmk} \label{DoubleHomotopyPictorialNotationRemark} The following diagrams in $\cl{A}$ commute. 

\begin{diagram}

\begin{tikzpicture} [>=stealth]

\matrix [ampersand replacement=\&, matrix of math nodes, column sep=3 em, row sep=3 em, nodes={anchor=center}]
{ 
|(0-0)| a_{0}       \& |(1-0)| \cyl(a_{0}) \&[2em] |(2-0)| a_{0}       \& |(3-0)| \cyl(a_{0}) \\ 
|(0-1)| \cyl(a_{0}) \& |(1-1)| a_{1}       \&      |(2-1)| \cyl(a_{0}) \& |(3-1)| a_{1} \\[1em]
|(0-2)| a_{0}       \& |(1-2)| \cyl(a_{0}) \&[2em] |(2-2)| a_{0}       \& |(3-2)| \cyl(a_{0}) \\ 
|(0-3)| \cyl(a_{0}) \& |(1-3)| a_{1}       \&      |(2-3)| \cyl(a_{0}) \& |(3-3)| a_{1} \\
};
	
\draw[->] (0-0) to node[auto] {$i_{0}(a_{0})$} (1-0);
\draw[->] (1-0) to node[auto] {$h_{2}$} (1-1);
\draw[->] (0-0) to node[auto,swap] {$i_{0}(a_{0})$} (0-1);
\draw[->] (0-1) to node[auto,swap] {$h_{0}$} (1-1);
\draw[->] (2-0) to node[auto] {$i_{0}(a_{0})$} (3-0);
\draw[->] (3-0) to node[auto] {$h_{1}$} (3-1);
\draw[->] (2-0) to node[auto,swap] {$i_{1}(a_{0})$} (2-1);
\draw[->] (2-1) to node[auto,swap] {$h_{0}$} (3-1);
\draw[->] (0-2) to node[auto] {$i_{1}(a_{0})$} (1-2);
\draw[->] (1-2) to node[auto] {$h_{1}$} (1-3);
\draw[->] (0-2) to node[auto,swap] {$i_{1}(a_{0})$} (0-3);
\draw[->] (0-3) to node[auto,swap] {$h_{3}$} (1-3);
\draw[->] (2-2) to node[auto] {$i_{1}(a_{0})$} (3-2);
\draw[->] (3-2) to node[auto] {$h_{2}$} (3-3);
\draw[->] (2-2) to node[auto,swap] {$i_{0}(a_{0})$} (2-3);
\draw[->] (2-3) to node[auto,swap] {$h_{3}$} (3-3);

\end{tikzpicture} 

\end{diagram} We denote by $f_{0}$, $f_{1}$, $f_{2}$, and $f_{3}$ the arrows of $\cl{A}$ obtained by taking either route through each of these four commutative diagrams, proceeding clockwise respectively from the top left diagram. Thus the following diagrams in $\cl{A}$ commute.

\begin{diagram}

\begin{tikzpicture} [>=stealth]

\matrix [ampersand replacement=\&, matrix of math nodes, column sep=3 em, row sep=4 em, nodes={anchor=center}]
{ 
|(0-0)| a_{0})      \& |(1-0)| \cyl(a_{0}) \&[2em] |(2-0)| a_{0} \& |(3-0)| \cyl(a_{0}) \\ 
|(0-1)| \cyl(a_{0}) \& |(1-1)| a_{1}       \&      |(2-1)| \cyl(a_{0}) \& |(3-1)| a_{1} \\
|(0-2)| a_{0}      \& |(1-2)| \cyl(a_{0}) \& |(2-2)| a_{0} \& |(3-2)| \cyl(a_{0}) \\ 
|(0-3)| \cyl(a_{0}) \& |(1-3)| a_{1}      \& |(2-3)| \cyl(a_{0}) \& |(3-3)| a_{1} \\
};

\draw[->] (0-0) to node[auto] {$i_{0}(a_{0})$} (1-0);
\draw[->] (1-0) to node[auto] {$h_{2}$} (1-1);
\draw[->] (0-0) to node[auto,swap] {$i_{0}(a_{0})$} (0-1);
\draw[->] (0-1) to node[auto,swap] {$h_{0}$} (1-1);
\draw[->] (0-0) to node[auto,swap] {$f_{0}$} (1-1); 
\draw[->] (2-0) to node[auto] {$i_{0}(a_{0})$} (3-0);
\draw[->] (3-0) to node[auto] {$h_{1}$} (3-1);
\draw[->] (2-0) to node[auto,swap] {$i_{1}(a_{0})$} (2-1);
\draw[->] (2-1) to node[auto,swap] {$h_{0}$} (3-1);
\draw[->] (2-0) to node[auto,swap] {$f_{1}$} (3-1); 
\draw[->] (0-2) to node[auto] {$i_{1}(a_{0})$} (1-2);
\draw[->] (1-2) to node[auto] {$h_{1}$} (1-3);
\draw[->] (0-2) to node[auto,swap] {$i_{1}(a_{0})$} (0-3);
\draw[->] (0-3) to node[auto,swap] {$h_{3}$} (1-3);
\draw[->] (0-2) to node[auto,swap] {$f_{3}$} (1-3); 
\draw[->] (2-2) to node[auto] {$i_{1}(a_{0})$} (3-2);
\draw[->] (3-2) to node[auto] {$h_{2}$} (3-3);
\draw[->] (2-2) to node[auto,swap] {$i_{0}(a_{0})$} (2-3);
\draw[->] (2-3) to node[auto,swap] {$h_{3}$} (3-3);
\draw[->] (2-2) to node[auto,swap] {$f_{2}$} (3-3); 

\end{tikzpicture} 

\end{diagram} %
In summary:

\begin{itemize}[itemsep=1em, topsep=1em]

\item[(i)] $h_{0}$ defines a homotopy from $f_{0}$ to $f_{1}$ with respect to $\cylinder$,

\item[(ii)] $h_{1}$ defines a homotopy from $f_{1}$ to $f_{3}$ with respect to $\cylinder$,

\item[(iii)] $h_{2}$ defines a homotopy from $f_{0}$ to $f_{2}$ with respect to $\cylinder$, 

\item[(iv)] $h_{3}$ defines a homotopy from $f_{2}$ to $f_{3}$ with respect to $\cylinder$. 

\end{itemize} %
To express (i)--(iv) we shall frequently depict $\sigma$ as follows. \doublehomotopy{h_{0},h_{1},h_{2},h_{3},\sigma,f_{0},f_{1},f_{2},f_{3}} \end{rmk}

\begin{prpn} Let $\cylinder = \big( \cyl, i_0, i_1, p, \Gamma_{ul} \big)$ be a cylinder in $\cl{A}$ equipped with a contraction structure $p$, and an upper left connection structure $\Gamma_{ul}$. Let \pair{a_{0},a_{1},f_{0},f_{1}} be arrows of $\cl{A}$, and let \ar{\cyl(a_{0}),a_{1},h} be a homotopy from $f_{0}$ to $f_{1}$ with respect to $\cylinder$. Let \ar{\cyl^{2}(a_{0}),a_{1},\sigma} denote the arrow $h \circ \Gamma_{ul}(a_{0})$ of $\cl{A}$. Then $\sigma$ defines a double homotopy with respect to $\cylinder$, with the boundary homotopies depicted below. \doublehomotopy{h,id,h,id,\sigma,f_{0},f_{1},f_{1},f_{1}} \end{prpn}

\begin{proof} Follows immediately from the fact that $\Gamma_{ul}$ defines an upper left connection structure with respect to $\cylinder$. \end{proof}

\begin{prpn} Let $\cylinder = \big( \cyl, i_0, i_1, p, \Gamma_{lr} \big)$ be a cylinder in $\cl{A}$ equipped with a contraction structure $p$, and a lower right connection structure $\Gamma_{lr}$. Let \pair{a_{0},a_{1},f_{0},f_{1}} be arrows of $\cl{A}$, and let \ar{\cyl(a_{0}),a_{1},h} be a homotopy from $f_{0}$ to $f_{1}$ with respect to $\cylinder$. Let \ar{\cyl^{2}(a_{0}),a_{1},\sigma} denote the arrow $h \circ \Gamma_{lr}(a_{0})$ of $\cl{A}$. Then $\sigma$ defines a double homotopy with respect to $\cylinder$, with the boundary homotopies depicted below. \doublehomotopy{id,h,id,h,\sigma,f_{0},f_{0},f_{0},f_{1}} \end{prpn}

\begin{proof} Follows immediately from the fact that $\Gamma_{lr}$ defines a lower right connection structure with respect to $\cylinder$. \end{proof} 

\begin{prpn} Let $\cylinder = \big( \cyl, i_0, i_1, p, \Gamma_{ur} \big)$ be a cylinder in $\cl{A}$ equipped with a contraction structure $p$, and an upper right connection structure $\Gamma_{ur}$. Let \pair{a_{0},a_{1},f_{0},f_{1}} be arrows of $\cl{A}$, and let \ar{\cyl(a_{0}),a_{1},h} be a homotopy from $f_{0}$ to $f_{1}$ with respect to $\cylinder$. Let \ar{\cyl^{2}(a_{0}),a_{1},\sigma} denote the arrow $h \circ \Gamma_{ur}(a_{0})$ of $\cl{A}$. Then $\sigma$ defines a double homotopy with respect to $\cylinder$, with the boundary homotopies depicted below. \doublehomotopy{h,h^{-1},id,id,\sigma,f_{0},f_{1},f_{0},f_{0}} \end{prpn} 

\begin{proof} Follows immediately from the fact that $\Gamma_{ur}$ defines an upper right connection structure with respect to $\cylinder$. \end{proof}

\begin{defn} Let $\cylinder = \big( \cyl, i_0, i_1 ,p \big)$ be a cylinder in $\cl{A}$ equipped with a contraction structure $p$. Suppose that we have a pair of commutative diagrams in $\cl{A}$ as follows. \twotriangles{a,a_{0},a_{1},j_{0},f_{0},j_{1},a,a_{0},a_{1},j_{0},f_{1},j_{1}} A {\em homotopy under $a$} from $f_{0}$ to $f_{1}$ with respect to $\cylinder$ and $(j_{0},j_{1})$ is a homotopy \ar{\cyl(a_{0}),a_{1},h} from $f_{0}$ to $f_{1}$ with respect to $\cylinder$, such that the following diagram in $\cl{A}$ commutes. \sq{\cyl(a),a,\cyl(a_{0}),a_{1},p(a),j_{1},\cyl(j_{0}),h} \end{defn}

\begin{defn} Let $\cylinder = \big( \cyl, i_0, i_1 ,p \big)$ be a cylinder in $\cl{A}$ equipped with a contraction structure $p$. Suppose that we have a pair of commutative diagrams in $\cl{A}$ as follows. \twoothertriangles{a_{0},a_{1},a,f_{0},j_{1},j_{0},a_{0},a_{1},a,f_{1},j_{0},j_{1}} A {\em homotopy over $a$} from $f_{0}$ to $f_{1}$ with respect to $\cylinder$ and $(j_{0},j_{1})$ is a homotopy \ar{\cyl(a_{0}),a_{1},h} from $f_{0}$ to $f_{1}$ with respect to $\cylinder$, such that the following diagram in $\cl{A}$ commutes. \sq{\cyl(a_{0}),a_{1},\cyl(a),a,h,j_{1},\cyl(j_{0}),p(a)} \end{defn}

\begin{defn} \label{UnderHomotopyCoCylinderDefinition} Let $\cocylinder = \big( \cocyl, e_0, e_1 , c \big)$ be a co-cylinder in $\cl{A}$ equipped with a contraction structure $c$. Suppose that we have a pair of commutative diagrams in $\cl{A}$ as follows. \twotriangles{a,a_{0},a_{1},j_{0},f_{0},j_{1},a,a_{0},a_{1},j_{0},f_{1},j_{1}} A {\em homotopy under $a$} from $f_{0}$ to $f_{1}$ with respect to $\cocylinder$ and $(j_{0},j_{1})$ is a homotopy \ar{a_{0},\cocyl(a_{1}),h} from $f_{0}$ to $f_{1}$ with respect to $\cocylinder$, such that the following diagram in $\cl{A}$ commutes. \sq{a,\cocyl(a),a_{0},\cocyl(a_{1}),c(a),\cocyl(j_{1}),j_{0},h} \end{defn}

\begin{rmk} Let $\cocylinder = \big( \cocyl, e_0, e_1 , c \big)$ be a co-cylinder in $\cl{A}$ equipped with a contraction structure $c$. Suppose that we have a pair of commutative diagrams in $\cl{A}$ as follows. \twotriangles{a,a_{0},a_{1},j_{0},f_{0},j_{1},a,a_{0},a_{1},j_{0},f_{1},j_{1}} Then an arrow \ar{a_{0},\cocyl(a_{1}),h} of $\cl{A}$ is a homotopy under $a$ from $f_{0}$ to $f_{1}$ with respect to $\cocylinder$ if and only if $h^{op}$ is a homotopy over $a$ from $f_{0}^{op}$ to $f_{1}^{op}$ with respect to the cylinder $\cocylinder^{op}$ in $\cl{A}^{op}$ equipped with the contraction structure $c^{op}$, and with respect to the arrows $(j_{1}^{op},j_{0}^{op})$ of $\cl{A}^{op}$. \end{rmk}

\begin{defn} \label{OverHomotopyCoCylinderDefinition} Let $\cocylinder = \big( \cocyl, e_0, e_1 ,c \big)$ be a co-cylinder in $\cl{A}$ equipped with a contraction structure $c$. Suppose that we have a pair of commutative diagrams in $\cl{A}$ as follows. \twoothertriangles{a_{0},a_{1},a,f_{0},j_{1},j_{0},a_{0},a_{1},a,f_{1},j_{1},j_{0}} A {\em homotopy over $a$} from $f_{0}$ to $f_{1}$ with respect to $\cocylinder$ and $(j_{0},j_{1})$ is a homotopy \ar{a_{0},\cocyl(a_{1}),h} from $f_{0}$ to $f_{1}$ with respect to $\cocylinder$, such that the following diagram in $\cl{A}$ commutes. \sq{a_{0},\cocyl(a_{1}),a,\cocyl(a),h,\cocyl(j_{1}),j_{0},c(a)} \end{defn}

\begin{rmk} Let $\cocylinder = \big( \cocyl, e_0, e_1 ,c \big)$ be a co-cylinder in $\cl{A}$ equipped with a contraction structure $c$. Suppose that we have a pair of commutative diagrams in $\cl{A}$ as follows. \twoothertriangles{a_{0},a_{1},a,f_{0},j_{1},j_{0},a_{0},a_{1},a,f_{1},j_{1},j_{0}} An arrow \ar{a_{0},\cocyl(a_{1}),h} of $\cl{A}$ is a homotopy over $a$ from $f_{0}$ to $f_{1}$ with respect to $\cocylinder$ and $(j_{0},j_{1})$ if and only if $h^{op}$ is a homotopy under $a$ from $f_{0}^{op}$ to $f_{1}^{op}$ with respect to the cylinder $\cocylinder^{op}$ in $\cl{A}^{op}$ equipped with the contraction structure $c^{op}$, and with respect to the arrows $(j_{1}^{op},j_{0}^{op})$ of $\cl{A}^{op}$. \end{rmk}

\begin{prpn} \label{OverHomotopyCylinderGivesOverHomotopyCoCylinderProposition} Let $\cylinder = \big( \cyl, i_0, i_1, p \big)$ be a cylinder in $\cl{A}$ equipped with a contraction structure $p$, and let $\cocylinder = \big( \cocyl, e_0, e_1, c \big)$ be a co-cylinder in $\cl{A}$ equipped with a contraction structure $c$. Suppose that $\cylinder$ is left adjoint to $\cocylinder$. Let \ar{{\mathsf{Hom}_{\cl{A}}\big(\cyl(-),- \big)},{\mathsf{Hom}_{\cl{A}}\big(-,\cocyl(-) \big)},\mathsf{adj}} denote the corresponding natural isomorphism, adopting the shorthand of Recollection \ref{AdjunctionRecollection}. Suppose that the adjunction between $\cyl$ and $\cocyl$ is compatible with $p$ and $c$. 

Suppose that we have a pair of commutative diagrams in $\cl{A}$ as follows. \twoothertriangles{a_{0},a_{1},a,f_{0},j_{1},j_{0},a_{0},a_{1},a,f_{1},j_{1},j_{0}} If an arrow \ar{\cyl(a_{0}),a_{1},h} of $\cl{A}$ defines a homotopy over $a$ from $f_{0}$ to $f_{1}$ with respect to $\cylinder$ and $(j_{0},j_{1})$, then the arrow \ar[4]{a_{0},\cocyl(a_{1}),\mathsf{adj}(h)} of $\cl{A}$ defines a homotopy over $a$ from $f_{0}$ to $f_{1}$ with respect to $\cocylinder$ and $(j_{0},j_{1})$. \end{prpn}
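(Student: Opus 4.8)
The plan is to observe first that, by Proposition \ref{HomotopyCylinderGivesHomotopyCoCylinderProposition}, the arrow $\mathsf{adj}(h)$ is already a homotopy from $f_{0}$ to $f_{1}$ with respect to $\cocylinder$. Hence the only thing that remains to be checked is the single commutative square appearing in Definition \ref{OverHomotopyCoCylinderDefinition}, namely that $\cocyl(j_{1}) \circ \mathsf{adj}(h) = c(a) \circ j_{0}$. Everything will then reduce to two applications of the naturality of $\mathsf{adj}$ together with the compatibility of the adjunction with $p$ and $c$.

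To establish that equality I would argue as follows. By naturality of $\mathsf{adj}$ in its covariant variable, applied to the arrow $j_{1} \colon a_{1} \to a$, we have $\cocyl(j_{1}) \circ \mathsf{adj}(h) = \mathsf{adj}(j_{1} \circ h)$. Since $h$ is by hypothesis a homotopy over $a$ with respect to $\cylinder$ and $(j_{0},j_{1})$, we have $j_{1} \circ h = p(a) \circ \cyl(j_{0})$, so the previous expression equals $\mathsf{adj}\big( p(a) \circ \cyl(j_{0}) \big)$. By naturality of $\mathsf{adj}$ in its contravariant variable, applied to the arrow $j_{0} \colon a_{0} \to a$ and the arrow $p(a) \colon \cyl(a) \to a$, this equals $\mathsf{adj}\big( p(a) \big) \circ j_{0}$. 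Finally, the compatibility of the adjunction with $p$ and $c$, as in Definition \ref{CylindricalAdjunctionCompatibleWithContractionAndExpansionDefinition}, specialised to the identity arrow $id(a) \colon a \to a$ --- so that $f \circ p(a_{0})$ becomes $p(a)$ and $c(a_{1}) \circ f$ becomes $c(a)$ --- yields $\mathsf{adj}\big( p(a) \big) = c(a)$. Putting these together gives $\cocyl(j_{1}) \circ \mathsf{adj}(h) = c(a) \circ j_{0}$, which is exactly the desired square.

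I do not expect a genuine obstacle. The only point that calls for a little care is bookkeeping of variances: one must read the two naturality squares of $\mathsf{adj}$ with $\mathsf{adj}$ covariant in the target variable and contravariant in the source variable, and one must check that the compatibility diagram of Definition \ref{CylindricalAdjunctionCompatibleWithContractionAndExpansionDefinition} really does degenerate, when $f = id(a)$, to the equality $\mathsf{adj}(p(a)) = c(a)$ invoked in the last step. Once these are in place the proof is a short diagram chase, and the corresponding statement passing homotopies over $a$ from $\cocylinder$ back to $\cylinder$ --- should it be needed --- will follow formally by the same argument applied to $\mathsf{adj}^{-1}$, or by duality.
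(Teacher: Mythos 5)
Your proof is correct, and it arrives at the same target equality $\cocyl(j_{1}) \circ \mathsf{adj}(h) = c(a) \circ j_{0}$ by a slightly different route than the paper. Both you and the paper begin by reducing to $\mathsf{adj}\big(p(a) \circ \cyl(j_{0})\big)$ via covariant naturality of $\mathsf{adj}$ and the over-homotopy square for $h$. From there the paper first rewrites $p(a) \circ \cyl(j_{0}) = j_{0} \circ p(a_{0})$ using naturality of the 2-arrow $p$, and then applies the compatibility hypothesis of Definition \ref{CylindricalAdjunctionCompatibleWithContractionAndExpansionDefinition} to the arrow $j_{0} \colon a_{0} \to a$ to conclude $\mathsf{adj}\big(j_{0} \circ p(a_{0})\big) = c(a) \circ j_{0}$. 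You instead apply the contravariant naturality of $\mathsf{adj}$ directly to peel off $j_{0}$, getting $\mathsf{adj}\big(p(a)\big) \circ j_{0}$, and then invoke compatibility only in the degenerate case $f = id(a)$ to get $\mathsf{adj}\big(p(a)\big) = c(a)$. Your variant trades a naturality-of-$p$ square for a contravariant-naturality-of-$\mathsf{adj}$ square, and has the modest virtue of isolating the fact that only the specialisation of the compatibility axiom to identity arrows is actually used here (a specialisation which, given naturality of $p$ and $\mathsf{adj}$, is equivalent to the full axiom anyway). Both are short diagram chases of the same difficulty; either would do.
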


\begin{proof} Firstly, by Proposition \ref{HomotopyCylinderGivesHomotopyCoCylinderProposition} we have that if $h$ is a homotopy from $f_{0}$ to $f_{1}$ with respect to $\cylinder$, then $\mathsf{adj}(h)$ is a homotopy from $f_{0}$ to $f_{1}$ with respect to $\cocylinder$. 

Secondly, since $h$ is a homotopy over $a$ with respect to $\cylinder$ and $(j_{0},j_{1})$, the following diagram in $\cl{A}$ commutes. \sq{\cyl(a_{0}),a_{1},\cyl(a),a,h,j_{1},\cyl(j_{0}),p(a)} %
Moreover, the following diagram in $\cl{A}$ commutes. \sq[{4,3}]{\cyl(a_{0}),\cyl(a),a_{0},a,\cyl(j_{0}),p(a),j_{0},p(a_{0})} %
Putting the last two observations together, we have that the following diagram in $\cl{A}$ commutes. \sq{\cyl(a_{0}),a_{1},a_{0},a,h,j_{1},p(a_{0}),j_{0}} Thus we have that \[ \mathsf{adj}(j_{1} \circ h) = \mathsf{adj}\big( j_{0} \circ p(a_{0}) \big). \] %
Moreover, by the naturality of the isomorphism $\mathsf{adj}$, the following diagram in $\cl{A}$ commutes. \tri[{4,3}]{a_{0},\cocyl(a_{1}),\cocyl(a),{\mathsf{adj}(h)},\cocyl(j_{1}),{\mathsf{adj}(j_{1} \circ h)}} Hence the following diagram in $\cl{A}$ commutes. \tri[{4,3}]{a_{0},\cocyl(a_{1}),\cocyl(a),{\mathsf{adj}(h)},\cocyl(j_{1}),{\mathsf{adj} \big( j_{0} \circ p(a_{0}) \big) }} %
Since the adjunction between $\cyl$ and $\cocyl$ is compatible with $p$ and $c$, the following diagram in $\cl{A}$ also commutes. \triother{a_{0},a,\cocyl(a),j_{0},c(a),\mathsf{adj}(j_{0} \circ p(a_{0})\big)} Putting the last two observations together, we have that the following diagram in $\cl{A}$ commutes. \sq[{4,3}]{a_{0},\cocyl(a_{1}),a,\cocyl(a),\mathsf{adj}(h),\cocyl(j_{1}),j_{0},c(a)}
\end{proof}

\begin{prpn} \label{UnderHomotopyCylinderGivesUnderHomotopyCoCylinderProposition} Let $\cylinder = \big( \cyl, i_0, i_1, p \big)$ be a cylinder in $\cl{A}$ equipped with a contraction structure $p$, and let $\cocylinder = \big( \cocyl, e_0, e_1, c \big)$ be a co-cylinder in $\cl{A}$ equipped with a contraction structure $c$. Suppose that $\cylinder$ is left adjoint to $\cocylinder$. Let \ar{{\mathsf{Hom}_{\cl{A}}\big(\cyl(-),- \big)},{\mathsf{Hom}_{\cl{A}}\big(-,\cocyl(-) \big)},\mathsf{adj}} denote the corresponding natural isomorphism, adopting the shorthand of Recollection \ref{AdjunctionRecollection}. Suppose that the adjunction between $\cyl$ and $\cocyl$ is compatible with $p$ and $c$. 

Suppose that we have a pair of commutative diagrams in $\cl{A}$ as follows. \twotriangles{a,a_{0},a_{1},j_{0},f_{0},j_{1},a,a_{0},a_{1},j_{0},f_{1},j_{1}}If an arrow \ar{\cyl(a_{0}),a_{1},h} of $\cl{A}$ defines a homotopy under $a$ from $f_{0}$ to $f_{1}$ with respect to $\cylinder$ and $(j_{0},j_{1})$, then the arrow \ar[4]{a_{0},\cocyl(a_{1}),\mathsf{adj}(h)} of $\cl{A}$ defines a homotopy under $a$ from $f_{0}$ to $f_{1}$ with respect to $\cocylinder$ and $(j_{0},j_{1})$. \end{prpn}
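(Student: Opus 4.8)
The plan is to run the same argument as in the proof of Proposition \ref{OverHomotopyCylinderGivesOverHomotopyCoCylinderProposition}, but mirrored: where that proof transported data along $p$ on the cylinder side, this one will transport along $c$ on the co-cylinder side, and the compatibility identity of Definition \ref{CylindricalAdjunctionCompatibleWithContractionAndExpansionDefinition} will be applied to $j_1$ rather than to $j_0$. (One could instead try to deduce the statement from Proposition \ref{OverHomotopyCylinderGivesOverHomotopyCoCylinderProposition} by passing to $\cl{A}^{op}$, but that would first require setting up a duality for cylindrical adjunctions, which is not done in the text; so the direct route is cleaner.)

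First I would invoke Proposition \ref{HomotopyCylinderGivesHomotopyCoCylinderProposition}: since $h$ is a homotopy from $f_0$ to $f_1$ with respect to $\cylinder$, the arrow $\mathsf{adj}(h)$ is a homotopy from $f_0$ to $f_1$ with respect to $\cocylinder$. It therefore remains only to verify the commutativity demanded by Definition \ref{UnderHomotopyCoCylinderDefinition}, namely that $\mathsf{adj}(h) \circ j_0 = \cocyl(j_1) \circ c(a)$.

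Next I would unwind the hypothesis that $h$ is a homotopy under $a$ with respect to $\cylinder$ and $(j_0, j_1)$: by definition this says $h \circ \cyl(j_0) = j_1 \circ p(a)$. Applying $\mathsf{adj}$ to both sides and using naturality of the isomorphism $\mathsf{adj}$ in its contravariant (source) variable at the arrow $j_0 \colon a \to a_0$ --- which gives $\mathsf{adj}\big( h \circ \cyl(j_0) \big) = \mathsf{adj}(h) \circ j_0$ --- I obtain $\mathsf{adj}(h) \circ j_0 = \mathsf{adj}\big( j_1 \circ p(a) \big)$. Then the hypothesis that the adjunction between $\cyl$ and $\cocyl$ is compatible with $p$ and $c$, applied via Definition \ref{CylindricalAdjunctionCompatibleWithContractionAndExpansionDefinition} to the arrow $j_1 \colon a \to a_1$, yields $\mathsf{adj}\big( j_1 \circ p(a) \big) = c(a_1) \circ j_1$; and one further application of the naturality square of $c \colon \id_{\cl{A}} \Rightarrow \cocyl$ at $j_1$ rewrites $c(a_1) \circ j_1$ as $\cocyl(j_1) \circ c(a)$. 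Concatenating these equalities gives exactly $\mathsf{adj}(h) \circ j_0 = \cocyl(j_1) \circ c(a)$, completing the verification.

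I do not expect any serious obstacle; the whole content is diagram-bookkeeping, and the one place to be careful is precisely the asymmetry with the ``over $a$'' case: there the compatibility identity directly produces the term $c(a) \circ j_0$ required by the definition, whereas here it produces $c(a_1) \circ j_1$, so an extra invocation of naturality of $c$ is needed to land in the form prescribed by Definition \ref{UnderHomotopyCoCylinderDefinition}; correspondingly, the naturality-of-$p$ step used in the ``over $a$'' proof does not appear here, and it is the source (rather than target) variable of $\mathsf{adj}$ whose naturality is used.
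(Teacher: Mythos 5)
Your proof is correct and follows the paper's step for step: Proposition \ref{HomotopyCylinderGivesHomotopyCoCylinderProposition} handles the homotopy condition, and the required square is obtained by combining the under-homotopy hypothesis, naturality of $\mathsf{adj}$ in the source variable at $j_0$, the compatibility condition applied at $j_1$, and naturality of $c$. You are in fact a bit more explicit than the paper, which asserts $\cocyl(j_1) \circ c(a) = \mathsf{adj}\big(j_1 \circ p(a)\big)$ directly as a consequence of compatibility, tacitly absorbing the naturality-of-$c$ rewriting that you spell out.
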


\begin{proof} Firstly, by Proposition \ref{HomotopyCylinderGivesHomotopyCoCylinderProposition} we have that if $h$ is a homotopy from $f_{0}$ to $f_{1}$ with respect to $\cylinder$, then $\mathsf{adj}(h)$ is a homotopy from $f_{0}$ to $f_{1}$ with respect to $\cocylinder$. 

Secondly, since $h$ is a homotopy under $a$ with respect to $\cylinder$ and $(j_{0},j_{1})$, the following diagram in $\cl{A}$ commutes. \sq{\cyl(a),a,\cyl(a_{0}),a_{1},p(a),j_{1},\cyl(j_{0}),h} Thus we have that \[ \mathsf{adj}\big(h \circ \cyl(j_{0})\big) = \mathsf{adj}(j_{1} \circ p(a) \big). \] %
Moreover, by the naturality of the isomorphism $\mathsf{adj}$, the following diagram in $\cl{A}$ commutes. \triother[{4,3}]{a,a_{0},\cocyl(a_{1}),j_{0},\mathsf{adj}(h),\mathsf{adj}\big(h \circ \cyl(j_{0})\big)} %
Hence the following diagram in $\cl{A}$ commutes. \triother[{4,3}]{a,a_{0},\cocyl(a_{1}),j_{0},\mathsf{adj}(h),\mathsf{adj}\big(j_{1} \circ p(a)\big)} %
Since the adjunction between $\cyl$ and $\cocyl$ is compatible with $p$ and $c$, the following diagram in $\cl{A}$ also commutes. \tri{a,\cocyl(a),\cocyl(a_{1}),c(a),\cocyl(j_{1}),\mathsf{adj}\big(j_{1} \circ p(a)\big)} %
Putting the last two observations together, we have that the following diagram in $\cl{A}$ commutes. \sq[{4,3}]{a,\cocyl(a),a_{0},\cocyl(a_{1}),c(a),\cocyl(j_{1}),j_{0},\mathsf{adj}(h)} \end{proof}

\begin{cor} \label{OverHomotopyCoCylinderGivesOverHomotopyCylinderCorollary} Let $\cylinder = \big( \cyl, i_0, i_1, p \big)$ be a cylinder in $\cl{A}$ equipped with a contraction structure $p$, and let $\cocylinder = \big( \cocyl, e_0, e_1, c \big)$ be a co-cylinder in $\cl{A}$ equipped with a contraction structure $c$. Suppose that $\cylinder$ is left adjoint to $\cocylinder$. Let \ar{{\mathsf{Hom}_{\cl{A}}\big(\cyl(-),- \big)},{\mathsf{Hom}_{\cl{A}}\big(-,\cocyl(-) \big)},\mathsf{adj}} denote the corresponding natural isomorphism, adopting the shorthand of Recollection \ref{AdjunctionRecollection}. Suppose that the adjunction between $\cyl$ and $\cocyl$ is compatible with $p$ and $c$. 

Suppose that we have a pair of commutative diagrams in $\cl{A}$ as follows. \twoothertriangles{a_{0},a_{1},a,f_{0},j_{1},j_{0},a_{0},a_{1},a,f_{1},j_{1},j_{0}}If an arrow \ar{a_{0},\cocyl(a_{1}),h} of $\cl{A}$ defines a homotopy over $a$ from $f_{0}$ to $f_{1}$ with respect to $\cocylinder$ and $(j_{0},j_{1})$, then the arrow \ar{\cyl(a_{0}),a_{1},\mathsf{adj}^{-1}(h)} of $\cl{A}$ defines a homotopy over $a$ from $f_{0}$ to $f_{1}$ with respect to $\cylinder$ and $(j_{0},j_{1})$. \end{cor}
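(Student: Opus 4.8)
The plan is to deduce the corollary from Proposition \ref{UnderHomotopyCylinderGivesUnderHomotopyCoCylinderProposition} by passing to the opposite category $\cl{A}^{op}$, in the same way that Corollary \ref{HomotopyCoCylinderGivesHomotopyCylinderCorollary} is deduced from Proposition \ref{HomotopyCylinderGivesHomotopyCoCylinderProposition}; the phrase ``by duality'' unpacks to the following.

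First I will set up the dual picture. The co-cylinder $\cocylinder$ gives the cylinder $\cocylinder^{op}$ in $\cl{A}^{op}$, while $\cyl$ together with the reversed 2-arrows $i_0^{op}$ and $i_1^{op}$ determines a co-cylinder in $\cl{A}^{op}$. Since passing to opposites exchanges left and right adjoints, the adjunction exhibiting $\cyl$ as a left adjoint of $\cocyl$ exhibits, as endofunctors of $\cl{A}^{op}$, the functor $\cocyl$ as a left adjoint of $\cyl$. I will check that conditions (i) and (ii) of Definition \ref{CylindricalAdjunctionDefinition} for ``$\cylinder$ left adjoint to $\cocylinder$'' translate, under this identification, into the statement that the cylinder $\cocylinder^{op}$ is left adjoint to the co-cylinder determined by $\cyl$ in $\cl{A}^{op}$, and that compatibility of the original adjunction with $p$ and $c$ becomes compatibility of the new one with the contraction structures $c^{op}$ and $p^{op}$. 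I will also note that, under the canonical identifications $\mathsf{Hom}_{\cl{A}^{op}}\big(\cocyl(a_1),a_0\big) = \mathsf{Hom}_{\cl{A}}\big(a_0,\cocyl(a_1)\big)$ and $\mathsf{Hom}_{\cl{A}^{op}}\big(a_1,\cyl(a_0)\big) = \mathsf{Hom}_{\cl{A}}\big(\cyl(a_0),a_1\big)$, the adjunction natural isomorphism attached to $\cocyl \dashv \cyl$ in $\cl{A}^{op}$ is precisely $\mathsf{adj}^{-1}$.

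Next I will translate the hypothesis and the conclusion. By the remark following Definition \ref{OverHomotopyCoCylinderDefinition}, the hypothesis that $h$ is a homotopy over $a$ from $f_0$ to $f_1$ with respect to $\cocylinder$ and $(j_0,j_1)$ says exactly that $h^{op}$ is a homotopy under $a$ from $f_0^{op}$ to $f_1^{op}$ with respect to the cylinder $\cocylinder^{op}$ and $(j_1^{op},j_0^{op})$. Applying the remark following Definition \ref{UnderHomotopyCoCylinderDefinition} inside $\cl{A}^{op}$ (so that its ``co-cylinder'' is the one determined by $\cyl$ and its ``opposite cylinder'' is $\cylinder$), an arrow $g$ of $\cl{A}$ is a homotopy over $a$ with respect to the cylinder $\cylinder$ and $(j_0,j_1)$ if and only if $g^{op}$ is a homotopy under $a$ with respect to the co-cylinder determined by $\cyl$ in $\cl{A}^{op}$ and $(j_1^{op},j_0^{op})$. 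Hence it suffices to show that $\big(\mathsf{adj}^{-1}(h)\big)^{op}$ is such an under-homotopy.

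Finally I will apply Proposition \ref{UnderHomotopyCylinderGivesUnderHomotopyCoCylinderProposition} inside $\cl{A}^{op}$: to the cylinder $\cocylinder^{op}$, the co-cylinder determined by $\cyl$ and left adjoint to it, the adjunction compatible with $c^{op}$ and $p^{op}$, the arrows $(j_1^{op},j_0^{op})$, and the under-homotopy $h^{op}$. The proposition produces a homotopy under $a$ with respect to the co-cylinder determined by $\cyl$, namely the image of $h^{op}$ under the $\cl{A}^{op}$-adjunction isomorphism, which by the first step is $\big(\mathsf{adj}^{-1}(h)\big)^{op}$; by the second step this is exactly the required conclusion. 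The only real content is the bookkeeping of the first step --- that the notion of cylindrical adjunction, together with its compatibility with contraction structures, is self-dual in the precise sense used, and that the opposite-category adjunction isomorphism is $\mathsf{adj}^{-1}$ --- and this is a routine unwinding of the definitions in \ref{FormalCategoryTheoryPreliminariesSection} and \ref{CylindricalAdjunctionsSection}. (Alternatively, one can avoid the opposite-category formalism entirely and repeat the argument of Proposition \ref{OverHomotopyCylinderGivesOverHomotopyCoCylinderProposition} verbatim with $\mathsf{adj}$ replaced by $\mathsf{adj}^{-1}$ and each commuting square reversed, redoing the diagram chase.)
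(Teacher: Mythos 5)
Your proposal is correct and takes exactly the same route as the paper, whose proof consists of the single sentence that the corollary follows from Proposition \ref{UnderHomotopyCylinderGivesUnderHomotopyCoCylinderProposition} by duality. What you have done is to unpack that ``by duality'' carefully --- identifying the cylinder/co-cylinder pair in $\cl{A}^{op}$, checking that adjointness and its compatibility with contraction dualise, and observing that the dual adjunction isomorphism is $\mathsf{adj}^{-1}$ --- which is precisely the bookkeeping the paper leaves implicit.
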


\begin{proof} Follows immediately from Proposition \ref{UnderHomotopyCylinderGivesUnderHomotopyCoCylinderProposition} by duality. \end{proof}

\begin{cor} \label{UnderHomotopyCoCylinderGivesUnderHomotopyCylinderCorollary} Let $\cylinder = \big( \cyl, i_0, i_1, p \big)$ be a cylinder in $\cl{A}$ equipped with a contraction structure $p$, and let $\cocylinder = \big( \cocyl, e_0, e_1, c \big)$ be a co-cylinder in $\cl{A}$ equipped with a contraction structure $c$. Suppose that $\cylinder$ is left adjoint to $\cocylinder$. Let \ar{{\mathsf{Hom}_{\cl{A}}\big(\cyl(-),- \big)},{\mathsf{Hom}_{\cl{A}}\big(-,\cocyl(-) \big)},\mathsf{adj}} denote the corresponding natural isomorphism, adopting the shorthand of Recollection \ref{AdjunctionRecollection}. Suppose that the adjunction between $\cyl$ and $\cocyl$ is compatible with $p$ and $c$. 

Suppose that we have a pair of commutative diagrams in $\cl{A}$ as follows. \twotriangles{a,a_{0},a_{1},j_{0},f_{0},j_{1},a,a_{0},a_{1},j_{0},f_{1},j_{1}} If an arrow \ar{a_{0},\cocyl(a_{1}),h} of $\cl{A}$ defines a homotopy under $a$ from $f_{0}$ to $f_{1}$ with respect to $\cocylinder$ and $(j_{0},j_{1})$, then the arrow \ar[4]{\cyl(a_{0}),a_{1},\mathsf{adj}^{-1}(h)} of $\cl{A}$ defines a homotopy under $a$ from $f_{0}$ to $f_{1}$ with respect to $\cylinder$ and $(j_{0},j_{1})$. \end{cor}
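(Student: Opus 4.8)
The plan is to deduce the statement from Proposition \ref{OverHomotopyCylinderGivesOverHomotopyCoCylinderProposition} by passing to the opposite $2$-category $\cl{C}^{op}$, in exactly the spirit in which Corollary \ref{OverHomotopyCoCylinderGivesOverHomotopyCylinderCorollary} was deduced from Proposition \ref{UnderHomotopyCylinderGivesUnderHomotopyCoCylinderProposition}.

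First I would fix the dualisation dictionary. The co-cylinder $\cocylinder$ in $\cl{A}$ with its contraction structure $c$ becomes the cylinder $\cocylinder^{op}$ in $\cl{A}^{op}$ with the contraction structure $c^{op}$; dually the cylinder $\cylinder$ in $\cl{A}$ with $p$ becomes the co-cylinder in $\cl{A}^{op}$ whose underlying $1$-arrow is $\cyl$ and whose $2$-arrows are $i_{0}^{op}, i_{1}^{op}$, with the contraction structure $p^{op}$. The one point that requires attention, although it is purely formal, is to check that the hypothesis ``$\cylinder$ is left adjoint to $\cocylinder$, with the adjunction compatible with $p$ and $c$'' dualises to ``$\cocylinder^{op}$ is left adjoint to this co-cylinder in $\cl{A}^{op}$, with the adjunction compatible with $c^{op}$ and $p^{op}$'', the natural isomorphism in the dual statement being the one induced by $\mathsf{adj}^{-1}$.

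Granting this, I would invoke the remarks following Definition \ref{UnderHomotopyCoCylinderDefinition} and Definition \ref{OverHomotopyCoCylinderDefinition}: an arrow $h$ of $\cl{A}$ is a homotopy under $a$ from $f_{0}$ to $f_{1}$ with respect to $\cocylinder$ and $(j_{0},j_{1})$ precisely when $h^{op}$ is a homotopy over $a$ from $f_{0}^{op}$ to $f_{1}^{op}$ with respect to the cylinder $\cocylinder^{op}$, the contraction structure $c^{op}$, and the arrows $(j_{1}^{op},j_{0}^{op})$ in $\cl{A}^{op}$. Applying Proposition \ref{OverHomotopyCylinderGivesOverHomotopyCoCylinderProposition} in $\cl{A}^{op}$ to $h^{op}$ yields that the image of $h^{op}$ under the natural isomorphism there --- which, by the dictionary of the previous paragraph, is $\left(\mathsf{adj}^{-1}(h)\right)^{op}$ --- is a homotopy over $a$ from $f_{0}^{op}$ to $f_{1}^{op}$ with respect to the co-cylinder in $\cl{A}^{op}$ induced by $\cylinder$ and the arrows $(j_{1}^{op},j_{0}^{op})$. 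Translating this back to $\cl{A}$ gives exactly that $\mathsf{adj}^{-1}(h)$ is a homotopy under $a$ from $f_{0}$ to $f_{1}$ with respect to $\cylinder$ and $(j_{0},j_{1})$, as desired.

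The only real obstacle is the self-duality check of the first step; once it is in place the argument is a mechanical transcription. In keeping with the treatment of the neighbouring corollaries, the final written proof can therefore be given simply as: follows immediately from Proposition \ref{OverHomotopyCylinderGivesOverHomotopyCoCylinderProposition} by duality.
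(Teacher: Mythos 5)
Your proposal is correct and takes exactly the paper's route: the paper's proof reads verbatim ``Follows immediately from Proposition \ref{OverHomotopyCylinderGivesOverHomotopyCoCylinderProposition} by duality.'' Your careful unpacking of the dualisation dictionary, including the check that the adjunction hypothesis is self-dual, is precisely what ``by duality'' abbreviates.
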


\begin{proof} Follows immediately from Proposition \ref{OverHomotopyCylinderGivesOverHomotopyCoCylinderProposition} by duality. \end{proof}

\begin{prpn} \label{IdentityHomotopyIsOverHomotopyProposition} Let $\cylinder = \big( \cyl, i_0, i_1, p \big)$ be a cylinder in $\cl{A}$ equipped with a contraction structure. Suppose that we have a commutative diagram in $\cl{A}$ as follows. \triother{a_{0},a_{1},a,f,j_{1},j_{0}} %
Then the identity homotopy from $f$ to itself with respect to $\cylinder$ is moreover a homotopy over $a$ with respect to $\cylinder$ and $(j_{0},j_{1})$. \end{prpn}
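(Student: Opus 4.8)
The plan is to unwind the two definitions involved and reduce the statement to a single commutativity check. Recall first, from Proposition~\ref{IdentityHomotopyProposition}, that the identity homotopy from $f$ to itself with respect to $\cylinder$ is the arrow $h = f \circ p(a_{0})$ of $\cl{A}$, and that it is already a homotopy from $f$ to $f$ with respect to $\cylinder$. To see that $h$ is moreover a homotopy over $a$ with respect to $\cylinder$ and $(j_{0},j_{1})$, note that the commutative triangle $j_{1} \circ f = j_{0}$ that we are given is precisely the pair of commutative triangles demanded in the definition of a homotopy over $a$, taken with $f_{0} = f_{1} = f$. Hence the only thing still to verify is that the defining square of a homotopy over $a$ commutes, that is, that $j_{1} \circ h = p(a) \circ \cyl(j_{0})$.

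The key step is then a one-line diagram chase. On the one hand, $j_{1} \circ h = j_{1} \circ f \circ p(a_{0}) = j_{0} \circ p(a_{0})$, using the hypothesis $j_{1} \circ f = j_{0}$. On the other hand, since $p$ is a $2$-arrow $\cyl \Rightarrow \id_{\cl{A}}$ of $\cl{C}$ and thus behaves as a natural transformation in the sense of Notation~\ref{2ArrowAsNaturalTransformationNotation}, naturality of $p$ applied to the arrow $j_{0} \colon a_{0} \to a$ of $\cl{A}$ gives $p(a) \circ \cyl(j_{0}) = j_{0} \circ p(a_{0})$. Thus both composites equal $j_{0} \circ p(a_{0})$, the square commutes, and the proof is complete.

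I do not expect any genuine obstacle; the argument is purely formal. The only point needing a little care is invoking the naturality square of the contraction structure $p$ correctly --- using the convention of Section~\ref{FormalCategoryTheoryPreliminariesSection} that a $2$-arrow of $\cl{C}$ may be manipulated exactly like a natural transformation between functors --- after which the verification is immediate.
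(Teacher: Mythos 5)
Your proof is correct and follows essentially the same route as the paper's: the paper's diagram decomposes the defining square into three triangles, namely $h = f \circ p(a_0)$, $j_0 = j_1 \circ f$, and the naturality square $j_0 \circ p(a_0) = p(a) \circ \cyl(j_0)$, which are exactly the three equalities you chain together in equational form.
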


\begin{proof} Let $h$ denote the identity homotopy from $f$ to itself with respect to $\cylinder$. By definition of $h$, the following diagram in $\cl{A}$ commutes. \tri[{4,3}]{\cyl(a_{0}),a_{0},a_{1},p(a_{0}),f,h} %
Thus the following diagram in $\cl{A}$ commutes. \squareofthreetrianglesthree{\cyl(a_{0}),a_{1},a_{0},\cyl(a),a,h,j_{1},\cyl(j_{0}),p(a),p(a_{0}),f,j_{0}}
 
\end{proof}

\begin{prpn} \label{ReverseHomotopyIsOverHomotopyProposition} Let $\cylinder = \big( \cyl, i_0, i_1, p, v \big)$ be a cylinder in $\cl{A}$ equipped with a contraction structure $p$, and an involution structure $v$ compatible with $p$. 

Suppose that we have arrows \ar{a_{0},a,j_{0}} and \ar{a_{1},a,j_{1}} of $\cl{A}$, and a homotopy \ar{\cyl(a_{0}), a_{1}, h} with respect to $\cylinder$, such that the following diagram in $\cl{A}$ commutes. \sq{\cyl(a_{0}),a_{1},\cyl(a),a,h,j_{1},\cyl(j_{0}),p(a)} Then the following diagram in $\cl{A}$ commutes. \sq{\cyl(a_{0}),a_{1},\cyl(a),a,{h^{-1}},j_{1},\cyl(j_{0}),p(a)} \end{prpn}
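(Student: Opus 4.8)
The plan is to reduce the statement to a short diagram chase resting on three ingredients: the defining formula for the reverse homotopy, the naturality of the involution $v$, and the compatibility of $v$ with the contraction structure $p$. First, recall from Proposition \ref{ReverseHomotopyProposition} and the remark immediately following it that $h^{-1}$ is by definition the arrow $h \circ v(a_{0})$ of $\cl{A}$. Composing on the left with $j_{1}$ and substituting the hypothesis $j_{1} \circ h = p(a) \circ \cyl(j_{0})$ (the commutativity of the given square), I obtain
\[ j_{1} \circ h^{-1} = j_{1} \circ h \circ v(a_{0}) = p(a) \circ \cyl(j_{0}) \circ v(a_{0}). \]

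The second step is to commute $v$ past $\cyl(j_{0})$. Viewing the 2-arrow $v \colon \cyl \to \cyl$ of $\cl{C}$ as a natural transformation, its naturality square at the arrow $j_{0} \colon a_{0} \to a$ of $\cl{A}$ gives $\cyl(j_{0}) \circ v(a_{0}) = v(a) \circ \cyl(j_{0})$, so that
\[ j_{1} \circ h^{-1} = p(a) \circ v(a) \circ \cyl(j_{0}). \]
The third step uses that $v$ is compatible with $p$: the triangle \tri{\cyl,\cyl,\id_{\cl{A}},v,p,p} in $\underline{\mathsf{Hom}}_{\cl{C}}(\cl{A},\cl{A})$ commutes, that is $p \circ v = p$ as 2-arrows of $\cl{C}$, and whiskering with the object $a$ yields $p(a) \circ v(a) = p(a)$. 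Substituting this into the previous display gives $j_{1} \circ h^{-1} = p(a) \circ \cyl(j_{0})$, which is exactly the commutativity of the square in the conclusion of the proposition.

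I do not anticipate any genuine obstacle here; the argument is entirely formal. The only points that call for care are orienting the naturality square of $v$ correctly and invoking the compatibility of $v$ with $p$ in the only sensible direction, namely $p \circ v = p$. If a purely diagrammatic presentation is wanted, one can simply paste together the given square for $h$, the naturality square of $v$ at $j_{0}$, and the whiskered compatibility triangle of $v$ and $p$ into a single commutative diagram in $\cl{A}$ whose outer boundary is the desired square for $h^{-1}$.
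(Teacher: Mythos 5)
Your proof is correct and takes essentially the same approach as the paper: unfold $h^{-1} = h \circ v(a_{0})$, commute $v$ past $\cyl(j_{0})$ by naturality, and absorb $v(a)$ into $p(a)$ via the compatibility of $v$ with $p$. The paper merely presents the same three steps as pasted commutative diagrams rather than as a chain of equations.
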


\begin{proof} By definition of $h^{-1}$, the following diagram in $\cl{A}$ commutes. \tri{\cyl(a_{0}),\cyl(a_{0}),a_{1},v(a_{0}),h,h^{-1}} %
Thus we have that the following diagram in $\cl{A}$ commutes. \squareofthreetrianglesfour{\cyl(a_{0}),a_{1},\cyl(a),\cyl(a_{0}),\cyl(a),a,h^{-1},j_{1},\cyl(j_{0}),v(a),p(a),v(a_{0}),h,\cyl(j_{0})} %
Since $v$ is compatible with $p$, we also have that the following diagram in $\cl{A}$ commutes. \tri{\cyl(a),\cyl(a),a,v(a),p(a),p(a)} %
Putting the last two observations together, we have that the following diagram in $\cl{A}$ commutes, as required. \sq{\cyl(a_{0}),a_{1},\cyl(a),a,h^{-1},j_{1},\cyl(j_{0}),p(a)}
\end{proof}

\begin{cor} \label{ReverseHomotopyIsOverHomotopyCorollary} Let $\cylinder = \big( \cyl, i_0, i_1, p, v \big)$ be a cylinder in $\cl{A}$ equipped with a contraction structure $p$, and an involution structure $v$ compatible with $p$. 

Suppose that we have a pair of commutative diagrams in $\cl{A}$ as follows. \twoothertriangles{a_{0},a_{1},a,f_{0},j_{1},j_{0},a_{0},a_{1},a,f_{1},j_{1},j_{0}} Let \ar{\cyl(a_{0}), a_{1}, h} be a homotopy over $a$ from $f_{0}$ to $f_{1}$ with respect to $\cylinder$ and $(j_{0},j_{1})$. Then the reverse homotopy \ar{\cyl(a_{0}),a_{1},h^{-1}} from $f_{1}$ to $f_{0}$ with respect to $\cylinder$ is moreover a homotopy under $a$ with respect to $\cylinder$ and $(j_{0},j_{1})$ . \end{cor}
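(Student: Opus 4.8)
The plan is to unwind the definition of a homotopy over $a$ into its two constituent pieces and to supply each piece from an earlier result. Saying that $h^{-1}$ is a homotopy over $a$ from $f_1$ to $f_0$ with respect to $\cylinder$ and $(j_0,j_1)$ amounts to two assertions: that $h^{-1}$ is a homotopy from $f_1$ to $f_0$ with respect to $\cylinder$, and that the square expressing $j_1 \circ h^{-1} = p(a) \circ \cyl(j_0)$ commutes. Neither will need a fresh argument.

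First I would dispatch the homotopy condition. Since $h$ is by hypothesis a homotopy over $a$ from $f_0$ to $f_1$, it is in particular a homotopy from $f_0$ to $f_1$ with respect to $\cylinder$, so Proposition \ref{ReverseHomotopyProposition} applies and gives that its reverse $h^{-1}$ is a homotopy from $f_1$ to $f_0$ with respect to $\cylinder$; this uses only the involution structure $v$, which is part of our data. Second I would dispatch the commuting square. The hypotheses of Proposition \ref{ReverseHomotopyIsOverHomotopyProposition} are exactly: arrows $j_0 \colon a_0 \to a$ and $j_1 \colon a_1 \to a$, an involution structure $v$ compatible with the contraction structure $p$, and a homotopy $h$ for which $j_1 \circ h = p(a) \circ \cyl(j_0)$. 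Compatibility of $v$ with $p$ is an assumption of the Corollary, and the commuting square is precisely the extra datum bundled into the statement that $h$ is a homotopy over $a$ with respect to $\cylinder$ and $(j_0,j_1)$. That proposition then yields $j_1 \circ h^{-1} = p(a) \circ \cyl(j_0)$, which is the second assertion. Combining the two, and observing that the two commutative triangles relating $f_0, f_1, j_0, j_1$ are unchanged under swapping $f_0$ and $f_1$, we conclude that $h^{-1}$ is a homotopy over $a$ from $f_1$ to $f_0$ with respect to $\cylinder$ and $(j_0,j_1)$.

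I do not expect a real obstacle. The entire content is already contained in Propositions \ref{ReverseHomotopyProposition} and \ref{ReverseHomotopyIsOverHomotopyProposition}; the only step is the bookkeeping observation that the notion of a homotopy over $a$ factors as a homotopy condition plus the single commuting square, so that the two propositions can be applied independently. One could in fact simply record the proof as ``immediate from Proposition \ref{ReverseHomotopyIsOverHomotopyProposition} together with Proposition \ref{ReverseHomotopyProposition}''. The only point deserving a second glance is that $h^{-1}$ runs from $f_1$ to $f_0$ rather than from $f_0$ to $f_1$, but since the triangle conditions are symmetric in $f_0$ and $f_1$ this is harmless.
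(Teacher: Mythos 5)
Your proof is correct and follows essentially the same route as the paper, which simply records the result as ``Follows immediately from Proposition \ref{ReverseHomotopyIsOverHomotopyProposition}''; your explicit split into the homotopy condition (Proposition \ref{ReverseHomotopyProposition}) and the commuting square (Proposition \ref{ReverseHomotopyIsOverHomotopyProposition}) is exactly the bookkeeping the paper leaves to the reader, and is already half done by the phrase ``the reverse homotopy $h^{-1}$ from $f_1$ to $f_0$'' in the Corollary's own statement. You were also right to read the conclusion as asserting $h^{-1}$ is a homotopy \emph{over} $a$ rather than \emph{under} $a$: the word ``under'' in the printed statement conflicts with both the shape of the displayed triangles and the way the Corollary is later invoked in the proof of Proposition \ref{DoldOverHomotopiesCylinderProposition}.
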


\begin{proof} Follows immediately from Proposition \ref{ReverseHomotopyIsOverHomotopyProposition}.
\end{proof}

\begin{prpn} \label{CompositeHomotopyIsOverHomotopyProposition} Let $\cylinder = \big( \cyl, i_0, i_1, p, \subdiv, r_0, r_1, s \big)$ be a cylinder in $\cl{A}$ equipped with a contraction structure $p$, and a subdivision structure $\big( \subdiv, r_{0}, r_{1}, s \big)$ compatible with $p$. 

Suppose that we have arrows \ar{a_{0},a,j_{0}} and \ar{a_{1},a,j_{1}} of $\cl{A}$, and homotopies \ar{\cyl(a_{0}), a_{1}, h} and \ar{\cyl(a_{0}), a_{1}, k} with respect to $\cylinder$, such that the diagrams \twosq{\cyl(a_{0}),a_{1},\cyl(a),a,h,j_{1},\cyl(j_{0}),p(a),\cyl(a_{0}),a_{1},\cyl(a),a,k,j_{1},\cyl(j_{0}),p(a)} and \sq{a_{0},\cyl(a_{0}),\cyl(a_{0}),a_{1},i_{0}(a_{0}),k,i_{1}(a_{0}),h} in $\cl{A}$ commute. Then the following diagram in $\cl{A}$ commutes. \sq[{4,3}]{\cyl(a_{0}),a_{1},\cyl(a),a,h+k,j_{1},\cyl(j_{0}),p(a)} \end{prpn}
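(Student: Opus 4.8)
The statement asserts that the composite homotopy $h+k$ is a homotopy over $a$ once both $h$ and $k$ are, where $h+k$ is the homotopy of Proposition \ref{CompositionHomotopiesProposition}. Recall how that composite was constructed: letting $r \colon \subdiv(a_0) \to a_1$ be the canonical arrow out of the pushout of $k$ and $h$ along $i_0(a_0)$ and $i_1(a_0)$, one has $h+k = r \circ s(a_0)$. So I need to verify that $p(a) \circ \cyl(j_0) = j_1 \circ (h+k) = j_1 \circ r \circ s(a_0)$ as arrows $\cyl(a_0) \to a$. The plan is to do this by exploiting the universal property of the pushout $\subdiv(a_0)$ together with the hypothesis that the subdivision structure is compatible with $p$ --- this is exactly what lets us name the canonical arrow $\overline{p} \colon \subdiv(a_0) \to a$ of Definition \ref{SubdivisionCompatibleWithContractionDefinition} (applied at the object $a_0$, or rather its analogue in the formal-category $\cl{A}$ via whiskering) and conclude that $p(a_0) = \overline{p}(a_0) \circ s(a_0)$, i.e. that the contraction factors through $s$ and $\overline{p}$.

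First I would form the arrow $j_1 \circ r \colon \subdiv(a_0) \to a$ and show it equals the composite $p(a) \circ \cyl(j_0) \circ (\text{appropriate leg})$; more precisely, I claim $j_1 \circ r$ coincides with the canonical arrow out of $\subdiv(a_0)$ determined by the pair of composites $(p(a)\circ\cyl(j_0), p(a)\circ\cyl(j_0))$ along the two legs $r_0(a_0), r_1(a_0)$. To see this, precompose $j_1 \circ r$ with $r_0(a_0)$ and with $r_1(a_0)$: by the defining pushout square for $r$ (the commutative diagram in Proposition \ref{CompositionHomotopiesProposition}), $r \circ r_0(a_0) = h$ and $r \circ r_1(a_0) = k$, so $j_1 \circ r \circ r_0(a_0) = j_1 \circ h = p(a) \circ \cyl(j_0)$ by the hypothesis that $h$ is a homotopy over $a$, and similarly $j_1 \circ r \circ r_1(a_0) = j_1 \circ k = p(a) \circ \cyl(j_0)$ since $k$ is a homotopy over $a$. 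Hence $j_1 \circ r$ equals the unique arrow $\subdiv(a_0) \to a$ whose precomposition with both $r_0(a_0)$ and $r_1(a_0)$ is $p(a) \circ \cyl(j_0)$.

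Next I would identify that unique arrow with $\overline{p}(a_0) \circ (\text{something})$ --- but in fact the cleanest route is: the arrow $\overline{p}(a_0) \colon \subdiv(a_0) \to a_0$ of Definition \ref{SubdivisionCompatibleWithContractionDefinition} satisfies $\overline{p}(a_0) \circ r_0(a_0) = p(a_0)$ and $\overline{p}(a_0) \circ r_1(a_0) = p(a_0)$. Postcomposing with $j_0 \colon a_0 \to a$ gives $j_0 \circ \overline{p}(a_0) \circ r_0(a_0) = j_0 \circ p(a_0)$ and likewise with $r_1(a_0)$. So it suffices to observe that $j_0 \circ p(a_0) = p(a) \circ \cyl(j_0)$, which is precisely the naturality of the 2-arrow $p$ (i.e. of the contraction structure) applied to the arrow $j_0$. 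Therefore $j_1 \circ r = j_0 \circ \overline{p}(a_0)$ by uniqueness. Finally, postcomposing the compatibility-with-$p$ diagram of Definition \ref{SubdivisionCompatibleWithContractionDefinition} (which says $\overline{p}(a_0) \circ s(a_0) = p(a_0)$) with $j_0$ yields $j_0 \circ \overline{p}(a_0) \circ s(a_0) = j_0 \circ p(a_0) = p(a) \circ \cyl(j_0)$. Combining, $j_1 \circ (h+k) = j_1 \circ r \circ s(a_0) = j_0 \circ \overline{p}(a_0) \circ s(a_0) = p(a) \circ \cyl(j_0)$, which is exactly the square asserted to commute.

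I expect the main obstacle to be purely bookkeeping: making sure the canonical arrow $\overline{p}$ of Definition \ref{SubdivisionCompatibleWithContractionDefinition} --- originally defined for a cylinder as a 2-arrow $\subdiv \to \id_{\cl{A}}$ --- is correctly instantiated at the object $a_0$ of the formal category $\cl{A}$, and that the pushout defining it (a co-cartesian square of 2-arrows) does give a co-cartesian square in $\cl{A}$ when evaluated at $a_0$; this uses the standing assumption that pushouts of 2-arrows of $\cl{C}$ give rise to pushouts in formal categories, together with the fact that the square defining a subdivision structure is co-cartesian. Once the universal property is available at the object level the rest is three applications of uniqueness out of the pushout plus naturality of $p$, and the argument is essentially a diagram chase with no real content beyond Proposition \ref{CompositionHomotopiesProposition} and Definition \ref{SubdivisionCompatibleWithContractionDefinition}.
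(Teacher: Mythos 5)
Your proof is correct and follows essentially the same route as the paper: use the universal property of $\subdiv(a_0)$ to show that $j_1 \circ r$ and $j_0 \circ \overline{p}(a_0)$ both extend the pair of legs $p(a) \circ \cyl(j_0)$, conclude they are equal, then apply the compatibility of the subdivision structure with $p$ and the naturality of $p$ to identify $j_1 \circ (h+k)$ with $p(a) \circ \cyl(j_0)$. (There is a minor slip in which of $h$ and $k$ corresponds to the leg $r_0(a_0)$ versus $r_1(a_0)$ in the pushout defining $r$ in Proposition \ref{CompositionHomotopiesProposition}, but since both satisfy the identical over-$a$ condition this is immaterial to the argument.)
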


\begin{proof} Appealing to the universal property of $\subdiv(a_{1})$, there is an arrow \ar{\subdiv(a_{0}),a_{1},r} of $\cl{A}$ such that the following diagram in $\cl{A}$ commutes. \pushout{a_{0},\cyl(a_{0}),\cyl(a_{0}),\subdiv(a_{0}),a_{1},{i_{0}(a_{0})},{r_{0}(a_{0})},{i_{1}(a_{0})},{r_{1}(a_{0})},h,k,r} %
The following diagram in $\cl{A}$ commutes. \squarewithdiagonal{\cyl(a_{0}),\subdiv(a_{0}),a,a_{1},r_{0}(a_{0}),r,p(a) \circ \cyl(j_{0}),j_{1},h} %
The following diagram in $\cl{A}$ also commutes. \squarewithdiagonal{\cyl(a_{0}),\subdiv(a_{0}),a,a_{1},r_{1}(a_{0}),r,p(a) \circ \cyl(j_{0}),j_{1},k} %
Putting the last two observations together, we have that the following diagram in $\cl{A}$ commutes. \pushout{a_{0},\cyl(a_{0}),\cyl(a_{0}),\subdiv(a_{0}),a,i_{0}(a_{0}),r_{0}(a_{0}),i_{1}(a_{0}),r_{1}(a_{0}),p(a_{0}) \circ \cyl(j_{0}),p(a_{0}) \circ \cyl(j_{0}), j_{1} \circ r} %
Let \ar{\subdiv,\id_{\cl{A}},\overline{p}} denote the canonical 2-arrow of $\cl{C}$ of Definition \ref{SubdivisionCompatibleWithContractionDefinition}. The following diagram in $\cl{A}$ commutes. \pushout{a_{0},\cyl(a_{0}),\cyl(a_{0}),\subdiv(a_{0}),a_{0},i_{0}(a_{0}),r_{0}(a_{0}),i_{1}(a_{0}),r_{1}(a_{0}),p(a_{0}),p(a_{0}),\overline{p}(a_{0})} %
Thus the following diagram in $\cl{A}$ commutes. \pushout{a_{0},\cyl(a_{0}),\cyl(a_{0}),\subdiv(a_{0}),a,i_{0}(a_{0}),r_{0}(a_{0}),i_{1}(a_{0}),r_{1}(a_{0}),j_{0} \circ p(a_{0}),j_{0} \circ p(a_{0}), j_{0} \circ \overline{p}(a_{0})} %
Moreover, the following diagram in $\cl{A}$ commutes. \sq{\cyl(a_{0}),a_{0},\cyl(a),a,p(a_{0}),j_{0},\cyl(j_{0}),p(a)} %
Putting the last two observations together, we have that the following diagram in $\cl{A}$ commutes. \pushout{a_{0},\cyl(a_{0}),\cyl(a_{0}),\subdiv(a_{0}),a,i_{0}(a_{0}),r_{0}(a_{0}),i_{1}(a_{0}),r_{1}(a_{0}),p(a) \circ \cyl(j_{0}), p(a) \circ \cyl(j_{0}), j_{0} \circ \overline{p}(a_{0})} % 
Appealing to the universal property of $\subdiv(a_{0})$, we deduce that the following diagram in $\cl{A}$ commutes. \sq{\subdiv(a_{0}),a_{1},a_{0},a,r,j_{1},\overline{p}(a_{0}),j_{0}} %
Since the subdivision structure $\big( \subdiv, r_{0}, r_{1}, s \big)$ is compatible with $p$, we also have that that the following diagram in $\cl{A}$ commutes. \tri{\cyl(a_{0}),\subdiv(a_{0}),a_{0},s(a_{0}),\overline{p}(a_{0}),p(a_{0})} %
By definition of $h+k$, the following diagram in $\cl{A}$ commutes. \tri{\cyl(a_{0}),\subdiv(a_{0}),a_{1},s(a_{0}),r,{h+k}} %
Putting the last two observations together we have that the following diagram in $\cl{A}$ commutes. \squareofthreetrianglestwo{\cyl(a_{0}),a_{1},\subdiv(a_{0}),a_{0},a,h+k,j_{1},p(a_{0}),j_{0},s(a_{0}),r,\overline{p}(a_{0})} %
Appealing once more to the commutativity of the diagram \sq{\cyl(a_{0}),a_{0},\cyl(a),a,p(a_{0}),j_{0},\cyl(j_{0}),p(a)} in $\cl{A}$, we conclude that the following diagram in $\cl{A}$ commutes. \sq[{3,3.5,0,0}]{\cyl(a_{0}),a_{1},\cyl(a),a,h+k,j_{1},\cyl(j_{0}),p(a)}
 
\end{proof} 

\begin{cor} \label{CompositeHomotopyIsOverHomotopyCorollary} Let $\cylinder = \big( \cyl, i_0, i_1, p, \subdiv, r_0, r_1, s \big)$ be a cylinder in $\cl{A}$ equipped with a contraction structure $p$, and a subdivision structure $\big( \subdiv, r_{0}, r_{1}, s \big)$ compatible with $p$. 

Suppose that we have three commutative diagrams in $\cl{A}$ as follows.

\begin{diagram}

\begin{tikzpicture} [>=stealth]

\matrix [ampersand replacement=\&, matrix of math nodes, column sep=3 em, row sep=3 em, nodes={anchor=center}]
{ 
|(0-0)| a_{0} \& |(1-0)|   \&[2em] |(2-0)| a_{0} \& |(3-0)|   \&[2em] |(4-0)| a_{0}  \& |(5-0)|   \\ 
|(0-1)| a_{1} \& |(1-1)| a \&      |(2-1)| a_{1} \& |(3-1)| a \&      |(4-1)| a_{1}  \& |(5-1)| a \\
};
	
\draw[->] (0-0) to node[auto] {$j_{0}$} (1-1);
\draw[->] (0-0) to node[auto,swap] {$f_{0}$} (0-1);
\draw[->] (0-1) to node[auto,swap] {$j_{1}$} (1-1);
\draw[->] (2-0) to node[auto] {$j_{0}$} (3-1);
\draw[->] (2-0) to node[auto,swap] {$f_{1}$} (2-1);
\draw[->] (2-1) to node[auto,swap] {$j_{1}$} (3-1);
\draw[->] (4-0) to node[auto] {$j_{0}$} (5-1);
\draw[->] (4-0) to node[auto,swap] {$f_{2}$} (4-1);
\draw[->] (4-1) to node[auto,swap] {$j_{1}$} (5-1);

\end{tikzpicture} 

\end{diagram} %
Let \ar{\cyl(a_{0}), a_{1}, h} be a homotopy over $a$ from $f_{0}$ to $f_{1}$ with respect to $\cylinder$ and $(j_{0},j_{1})$. Let \ar{\cyl(a_{0}), a_{1}, k} be a homotopy under $a$ from $f_{1}$ to $f_{2}$ with respect to $\cylinder$ and $(j_{0},j_{1})$. Then the homotopy \ar[4]{\cyl(a_{0}),a_{1},h + k} from $f_{0}$ to $f_{2}$ with respect to $\cylinder$ is moreover a homotopy under $a$ with respect to $\cylinder$ and $(j_{0},j_{1})$. \end{cor}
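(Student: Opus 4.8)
The plan is to reduce the statement directly to Proposition~\ref{CompositeHomotopyIsOverHomotopyProposition}; once the definitions are unwound there is nothing left to do. Write $j_{0}$ for the arrow $a_{0} \to a$ and $j_{1}$ for the arrow $a_{1} \to a$ appearing in the three commutative triangles, so that $j_{1} \circ f_{m} = j_{0}$ for $m = 0, 1, 2$. First I would record what the hypotheses on $h$ and $k$ say. That $h$ is a homotopy over $a$ from $f_{0}$ to $f_{1}$ with respect to $\cylinder$ and $(j_{0},j_{1})$ means exactly that $h \circ i_{0}(a_{0}) = f_{0}$, that $h \circ i_{1}(a_{0}) = f_{1}$, and that the square recording $j_{1} \circ h = p(a) \circ \cyl(j_{0})$ commutes; likewise the hypothesis on $k$ gives $k \circ i_{0}(a_{0}) = f_{1}$, $k \circ i_{1}(a_{0}) = f_{2}$, and the commutativity of the square recording $j_{1} \circ k = p(a) \circ \cyl(j_{0})$.

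Next I would check that the hypotheses of Proposition~\ref{CompositeHomotopyIsOverHomotopyProposition} hold for the present data $a_{0}, a_{1}, a, j_{0}, j_{1}, h, k$. Two of the three commuting squares required there are precisely the squares just recorded for $h$ and for $k$. The third is the square with edges $i_{0}(a_{0})$, $k$, $i_{1}(a_{0})$, $h$, that is, the identity $k \circ i_{0}(a_{0}) = h \circ i_{1}(a_{0})$; but both sides equal $f_{1}$ by the previous step, so this commutes. This is of course the same identity that makes the composite homotopy $h + k$ of Proposition~\ref{CompositionHomotopiesProposition} defined, and it exhibits $h + k$ as a homotopy from $f_{0}$ to $f_{2}$ with respect to $\cylinder$.

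Finally I would invoke Proposition~\ref{CompositeHomotopyIsOverHomotopyProposition}, which yields the commutativity of the square recording $j_{1} \circ (h + k) = p(a) \circ \cyl(j_{0})$. Together with the fact that $h + k$ is a homotopy from $f_{0}$ to $f_{2}$ with respect to $\cylinder$, this is exactly the assertion that $h + k$ is a homotopy under $a$ with respect to $\cylinder$ and $(j_{0},j_{1})$, which completes the proof. I do not anticipate any real obstacle: the only points needing care are matching the variables of Proposition~\ref{CompositeHomotopyIsOverHomotopyProposition} to the current situation and observing that the boundary-matching hypothesis of that proposition is automatic here because $h$ terminates and $k$ originates at $f_{1}$. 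As with the other corollaries of this section, the statement therefore follows immediately from Proposition~\ref{CompositeHomotopyIsOverHomotopyProposition}.
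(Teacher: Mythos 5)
Your proof is correct and takes essentially the same route as the paper, which simply records that the statement ``Follows immediately from Proposition~\ref{CompositeHomotopyIsOverHomotopyProposition}.''\ You make the unwinding explicit: the two commuting squares for $h$ and $k$, together with the boundary identity $h \circ i_{1}(a_{0}) = f_{1} = k \circ i_{0}(a_{0})$, are exactly the hypotheses of the proposition, and its conclusion is exactly the required square for $h+k$. One small remark worth being aware of: the corollary as stated in the paper (and its sibling Corollary~\ref{ReverseHomotopyIsOverHomotopyCorollary}) says ``under $a$'' for $k$ and for the conclusion, but the direction of $j_{0}\colon a_{0}\to a$ and $j_{1}\colon a_{1}\to a$ forces the ``over $a$'' reading, and the corollary is indeed used later (e.g.\ in the proof of Proposition~\ref{DoldOverHomotopiesCylinderProposition}) to produce over-homotopies. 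You silently adopt the correct reading when you write the $k$-square as $j_{1}\circ k = p(a)\circ\cyl(j_{0})$, so the argument is sound; you might nonetheless flag the discrepancy rather than reproduce the word ``under'' in the final sentence.
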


\begin{proof} Follows immediately from Proposition \ref{CompositeHomotopyIsOverHomotopyProposition}. \end{proof} 

\begin{rmk} Analogues of Proposition \ref{IdentityHomotopyIsOverHomotopyProposition}, Corollary \ref{ReverseHomotopyIsOverHomotopyCorollary}, and Corollary \ref{CompositeHomotopyIsOverHomotopyCorollary} for homotopies under an object can all be proven. Moreover, all of these results dualise to the setting of over and under homotopies with respect to a co-cylinder. We shall not need any of this. \end{rmk}
 
\begin{defn} Let $\cylinder = \big( \cyl, i_0, i_1, p \big)$ be a cylinder in $\cl{A}$ equipped with a contraction structure $p$, and suppose that we have a commutative diagram in $\cl{A}$ as follows. \tri{a,a_{0},a_{1},j_{0},f,j_{1}} A {\em homotopy inverse under $a$} of $f$ with respect to $\cylinder$ and $(j_{0},j_{1})$ is an arrow \ar{a_{1},a_{0},f^{-1}} of $\cl{A}$ such that the diagram \tri{a,a_{1},a_{0},j_{1},f^{-1},j_{0}} in $\cl{A}$ commutes, together with a homotopy under $a$ from $f^{-1}f$ to $id(a_{0})$ with respect to $\cylinder$ and $(j_{0},j_{0})$, and a homotopy under $a$ from $ff^{-1}$ to $id(a_{1})$ with respect to $\cylinder$ and $(j_{1},j_{1})$. \end{defn}

\begin{defn} Let $\cylinder = \big( \cyl, i_0, i_1, p \big)$ be a cylinder in $\cl{A}$ equipped with a contraction structure $p$. Suppose that we have a commutative diagram in $\cl{A}$ as follows. \tri{a,a_{0},a_{1},j_{0},f,j_{1}} Then $f$ is a {\em homotopy equivalence under $a$} with respect to $\cylinder$ and $(j_{0},j_{1})$ if it admits a homotopy inverse under $a$ with respect to $\cylinder$ and $(j_{0},j_{1})$. \end{defn} 

\begin{defn} Let $\cylinder = \big( \cyl, i_0, i_1, p \big)$ be a cylinder in $\cl{A}$ equipped with a contraction structure $p$. Suppose that we have a commutative diagram in $\cl{A}$ as follows. \triother{a_{0},a_{1},a,f,j_{1},j_{0}} A {\em homotopy inverse over $a$} of $f$ with respect to $\cylinder$ and $(j_{0},j_{1})$ is an arrow \ar{a_{1},a_{0},f^{-1}} of $\cl{A}$ such that the diagram \triother{a_{1},a_{0},a,f^{-1},j_{0},j_{1}} in $\cl{A}$ commutes together with a homotopy over $a$ from $f^{-1}f$ to $id(a_{0})$ with respect to $\cylinder$ and $(j_{0},j_{0})$ and a homotopy over $a$ from $ff^{-1}$ to $id(a_{1})$ with respect to $\cylinder$ and $(j_{1},j_{1})$. \end{defn}

\begin{defn} Let $\cylinder = \big( \cyl, i_0, i_1, p \big)$ be a cylinder in $\cl{A}$ equipped with a contraction structure $p$, and suppose that we have a commutative diagram in $\cl{A}$ as follows. \triother{a_{0},a_{1},a,f,j_{1},j_{0}} Then $f$ is a {\em homotopy equivalence over $a$} with respect to $\cylinder$ and $(j_{0},j_{1})$ if it admits a homotopy inverse over $a$ with respect to $\cylinder$ and $(j_{0},j_{1})$. \end{defn} 

\begin{defn} Let $\cocylinder = \big( \cocyl, e_0, e_1, c \big)$ be a co-cylinder in $\cl{A}$ equipped with a contraction structure $c$. Suppose that we have a commutative diagram in $\cl{A}$ as follows. \tri{a,a_{0},a_{1},j_{0},f,j_{1}} Then $f$ is a {\em homotopy equivalence under $a$} with respect to $\cocylinder$ if $f^{op}$ is a homotopy equivalence over $a$ with respect to the cylinder $\cocylinder^{op}$ in $\cl{A}^{op}$ equipped with the contraction structure $c^{op}$, and $(j_{1}^{op},j_{0}^{op})$. \end{defn} 

\begin{defn} Let $\cocylinder = \big( \cocyl, e_0, e_1, c \big)$ be a co-cylinder in $\cl{A}$ equipped with a contraction structure $c$. Suppose that we have a commutative diagram in $\cl{A}$ as follows. \triother{a_{0},a_{1},a,f,j_{1},j_{0}} Then $f$ is a {\em homotopy equivalence over $a$} with respect to $\cocylinder$ if $f^{op}$ is a homotopy equivalence under $a$ with respect to the cylinder $\cocylinder^{op}$ in $\cl{A}^{op}$ equipped with the contraction structure $c^{op}$, and $(j_{1}^{op},j_{0}^{op})$. \end{defn} 

\begin{prpn} \label{UnderHomotopyEquivalenceCylinderIffUnderHomotopyEquivalenceCoCylinderProposition} Let $\cylinder = \big( \cyl, i_0, i_1, p \big)$ be a cylinder in $\cl{A}$ equipped with a contraction structure $p$, and let $\cocylinder = \big( \cocyl, e_0, e_1, c \big)$ be a co-cylinder in $\cl{A}$ equipped with a contraction structure $c$. Suppose that $\cylinder$ is left adjoint to $\cocylinder$ and that the adjunction between $\cyl$ and $\cocyl$ is compatible with $p$ and $c$. 

Suppose that we have a commutative diagram in $\cl{A}$ as follows. \tri{a, a_{0}, a_{1}, j_{0}, f, j_{1}} Then $f$ is a homotopy equivalence under $a$ with respect to $\cylinder$ if and only if it is a homotopy equivalence under $a$ with respect to $\cocylinder$. \end{prpn}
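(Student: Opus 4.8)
The plan is to deduce both implications from the two transfer results for under-homotopies already at our disposal, namely Proposition \ref{UnderHomotopyCylinderGivesUnderHomotopyCoCylinderProposition} and Corollary \ref{UnderHomotopyCoCylinderGivesUnderHomotopyCylinderCorollary}, applied separately to each of the two homotopies that witness a homotopy inverse under $a$. The preliminary step is to unwind the definition of a homotopy equivalence under $a$ with respect to $\cocylinder$: using that it is defined via the opposite cylinder $\cocylinder^{op}$ in $\cl{A}^{op}$, together with the Remarks following Definition \ref{UnderHomotopyCoCylinderDefinition} and Definition \ref{OverHomotopyCoCylinderDefinition} which identify homotopies under (respectively over) $a$ with respect to $\cocylinder$ with homotopies over (respectively under) $a$ with respect to $\cocylinder^{op}$, one sees that $f$ is a homotopy equivalence under $a$ with respect to $\cocylinder$ if and only if there is an arrow $f^{-1} \colon a_1 \to a_0$ of $\cl{A}$ with $f^{-1} \circ j_1 = j_0$, together with a homotopy under $a$ from $f^{-1}f$ to $id(a_0)$ with respect to $\cocylinder$ and $(j_0, j_0)$ and a homotopy under $a$ from $ff^{-1}$ to $id(a_1)$ with respect to $\cocylinder$ and $(j_1, j_1)$. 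In other words the notion of a homotopy inverse under $a$ reads verbatim for $\cocylinder$ and detects homotopy equivalences under $a$ exactly as it does for $\cylinder$; the commutative triangles $(f^{-1}f) \circ j_0 = j_0 = id(a_0) \circ j_0$ and $(ff^{-1}) \circ j_1 = j_1 = id(a_1) \circ j_1$ needed even to speak of these homotopies under $a$ are immediate from $f \circ j_0 = j_1$ and $f^{-1} \circ j_1 = j_0$.

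Granting this, suppose first that $f$ is a homotopy equivalence under $a$ with respect to $\cylinder$, witnessed by an arrow $f^{-1}\colon a_1 \to a_0$ with $f^{-1}\circ j_1 = j_0$, a homotopy $h_0$ under $a$ from $f^{-1}f$ to $id(a_0)$ with respect to $\cylinder$ and $(j_0, j_0)$, and a homotopy $h_1$ under $a$ from $ff^{-1}$ to $id(a_1)$ with respect to $\cylinder$ and $(j_1, j_1)$. The hypotheses of Proposition \ref{UnderHomotopyCylinderGivesUnderHomotopyCoCylinderProposition} are precisely those of the present statement, so applying it to $h_0$ and to $h_1$ shows that $\mathsf{adj}(h_0)$ is a homotopy under $a$ from $f^{-1}f$ to $id(a_0)$ with respect to $\cocylinder$ and $(j_0, j_0)$ and that $\mathsf{adj}(h_1)$ is a homotopy under $a$ from $ff^{-1}$ to $id(a_1)$ with respect to $\cocylinder$ and $(j_1, j_1)$. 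Since the triangle identity $f^{-1}\circ j_1 = j_0$ is unaffected, the triple $\big( f^{-1}, \mathsf{adj}(h_0), \mathsf{adj}(h_1) \big)$ exhibits $f$ as a homotopy equivalence under $a$ with respect to $\cocylinder$ in the sense of the preliminary step. The converse is entirely symmetric: starting from such a triple for $\cocylinder$, Corollary \ref{UnderHomotopyCoCylinderGivesUnderHomotopyCylinderCorollary} turns the two homotopies into homotopies under $a$ with respect to $\cylinder$ via $\mathsf{adj}^{-1}$, and these together with $f^{-1}$ witness that $f$ is a homotopy equivalence under $a$ with respect to $\cylinder$.

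The only step carrying genuine content is the preliminary one, i.e.\ checking that the definition of a homotopy equivalence under $a$ with respect to $\cocylinder$ --- stated through the opposite cylinder --- agrees with the naive notion of admitting a homotopy inverse under $a$ with respect to $\cocylinder$. This is a routine but slightly fiddly dualization of the relevant definitions and remarks, and I expect it to be the main (and really the only) obstacle; once it is in hand the rest is a direct appeal to Proposition \ref{UnderHomotopyCylinderGivesUnderHomotopyCoCylinderProposition} and Corollary \ref{UnderHomotopyCoCylinderGivesUnderHomotopyCylinderCorollary}, since none of the commutative-triangle data defining a homotopy inverse under $a$ changes when passing between $\cylinder$ and $\cocylinder$.
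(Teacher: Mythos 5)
Your proof is correct and follows essentially the same route as the paper, whose proof consists of the single line ``Follows immediately from Proposition \ref{UnderHomotopyCylinderGivesUnderHomotopyCoCylinderProposition} and Corollary \ref{UnderHomotopyCoCylinderGivesUnderHomotopyCylinderCorollary}.'' The only difference is that you have explicitly spelled out the ``preliminary step'' of unwinding the definition of a homotopy equivalence under $a$ with respect to $\cocylinder$ --- which the paper states through the opposite cylinder $\cocylinder^{op}$ --- into its naive form as admitting a homotopy inverse under $a$ with respect to $\cocylinder$; the paper treats this unwinding as implicit, but your careful account of it is accurate and harmless.
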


\begin{proof} Follows immediately from Proposition \ref{UnderHomotopyCylinderGivesUnderHomotopyCoCylinderProposition} and Corollary \ref{UnderHomotopyCoCylinderGivesUnderHomotopyCylinderCorollary}. \end{proof} 

\begin{prpn}  \label{OverHomotopyEquivalenceCylinderIffOverHomotopyEquivalenceCoCylinderProposition} Let $\cylinder = \big( \cyl, i_0, i_1, p \big)$ be a cylinder in $\cl{A}$ equipped with a contraction structure $p$, and let $\cocylinder = \big( \cocyl, e_0, e_1, c \big)$ be a co-cylinder in $\cl{A}$ equipped with a contraction structure $c$. Suppose that $\cylinder$ is left adjoint to $\cocylinder$, and that the adjunction between $\cyl$ and $\cocyl$ is compatible with $p$ and $c$. 

Suppose that we have a commutative diagram in $\cl{A}$ as follows. \triother{a,a_{0},a_{1},f,j_{0},j_{1}} Then $f$ is a homotopy equivalence over $a$ with respect to $\cylinder$ if and only if it is a homotopy equivalence over $a$ with respect to $\cocylinder$. \end{prpn}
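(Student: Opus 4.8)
The plan is to mirror the proof of Proposition~\ref{UnderHomotopyEquivalenceCylinderIffUnderHomotopyEquivalenceCoCylinderProposition}: the statement will follow by combining the two transfer results for individual over-homotopies, namely Proposition~\ref{OverHomotopyCylinderGivesOverHomotopyCoCylinderProposition} and Corollary~\ref{OverHomotopyCoCylinderGivesOverHomotopyCylinderCorollary}. The key observation is that a homotopy inverse over $a$ of $f$ --- whether taken with respect to $\cylinder$ or with respect to $\cocylinder$ --- consists of the same purely $1$-categorical piece of data, an arrow $f^{-1}$ compatible with the structure maps to $a$, together with two homotopies over $a$ whose shape is the only thing differing between the cylinder and co-cylinder cases. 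So the entire argument amounts to transporting those two homotopies across the adjunction.

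Concretely, for the forward implication I would take a homotopy inverse over $a$ of $f$ with respect to $\cylinder$: an arrow $f^{-1} \colon a_{1} \to a_{0}$ with $j_{0} \circ f^{-1} = j_{1}$, a homotopy $h$ over $a$ from $f^{-1}f$ to $id(a_{0})$ with respect to $\cylinder$ and $(j_{0},j_{0})$, and a homotopy $k$ over $a$ from $ff^{-1}$ to $id(a_{1})$ with respect to $\cylinder$ and $(j_{1},j_{1})$. One first records the small triangle conditions needed to invoke the transfer result --- that $f^{-1}f$ and $id(a_{0})$ lie over $a$ via $j_{0}$, which is immediate since $j_{0} \circ f^{-1} f = j_{1} \circ f = j_{0}$, and likewise that $ff^{-1}$ and $id(a_{1})$ lie over $a$ via $j_{1}$ --- and then applies Proposition~\ref{OverHomotopyCylinderGivesOverHomotopyCoCylinderProposition} to conclude that $\mathsf{adj}(h)$ and $\mathsf{adj}(k)$ are homotopies over $a$ with respect to $\cocylinder$ and the same pairs. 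Since the triangle $j_{0} \circ f^{-1} = j_{1}$ is unchanged, $f^{-1}$ together with $\mathsf{adj}(h)$ and $\mathsf{adj}(k)$ exhibits $f$ as a homotopy equivalence over $a$ with respect to $\cocylinder$. The reverse implication is identical with the roles of $\cylinder$ and $\cocylinder$ interchanged, using Corollary~\ref{OverHomotopyCoCylinderGivesOverHomotopyCylinderCorollary} and $\mathsf{adj}^{-1}$ in place of Proposition~\ref{OverHomotopyCylinderGivesOverHomotopyCoCylinderProposition} and $\mathsf{adj}$.

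I do not expect a genuine obstacle: all of the real work --- in particular the use of the compatibility of the adjunction with $p$ and $c$ --- has already been carried out in Proposition~\ref{OverHomotopyCylinderGivesOverHomotopyCoCylinderProposition} and Corollary~\ref{OverHomotopyCoCylinderGivesOverHomotopyCylinderCorollary}, so what remains is bookkeeping. The only point deserving a moment's attention is confirming that the composites $f^{-1}f$, $ff^{-1}$ and the relevant identity arrows are indeed morphisms over $a$ with respect to the correct structure maps, so that the hypotheses of those results are met; each such verification is a one-line diagram chase of the kind appearing throughout this section. One could alternatively deduce the proposition from Proposition~\ref{UnderHomotopyEquivalenceCylinderIffUnderHomotopyEquivalenceCoCylinderProposition} by passing to $\cl{A}^{op}$, under which ``over'' becomes ``under'' and the cylinder and co-cylinder swap roles, but spelling out the two directions directly is at least as short.
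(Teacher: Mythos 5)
Your proposal is correct and matches the paper's proof, which simply cites the two per-homotopy transfer results and stops; the extra bookkeeping you spell out (checking that the composites and identities lie over $a$, transporting both structure homotopies across $\mathsf{adj}$) is exactly what "follows immediately" elides. In fact your citation of Corollary~\ref{OverHomotopyCoCylinderGivesOverHomotopyCylinderCorollary} for the reverse direction is the right one; the paper's proof points to Corollary~\ref{UnderHomotopyCoCylinderGivesUnderHomotopyCylinderCorollary} instead, which appears to be a slip of the label.
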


\begin{proof} Follows immediately from Proposition \ref{OverHomotopyCylinderGivesOverHomotopyCoCylinderProposition} and Corollary \ref{UnderHomotopyCoCylinderGivesUnderHomotopyCylinderCorollary}. \end{proof} 

\begin{defn} Let \ar{a_{0},a_{1},j} be an arrow of $\cl{A}$. An arrow \ar{a_{0},a_{1},f} of $\cl{A}$ is a {\em retraction} of $j$ if the diagram \tri{a_{0},a_{1},a_{0},j,f,id} in $\cl{A}$ commutes. \end{defn} 
 
\begin{defn} \label{StrongDeformationRetractionCylinderDefinition} Let $\cylinder = \big( \cyl, i_0, i_1, p \big)$ be a cylinder in $\cl{A}$ equipped with a contraction structure $p$. Let \ar{a_{0},a_{1},j} be an arrow of $\cl{A}$. 

An arrow \ar{a_{1},a_{0},f} of $\cl{A}$ is a {\em strong deformation retraction} of $j$ with respect to $\cylinder$ if $f$ is a retraction of $j$, and if there is a homotopy under $a_{0}$ from $jf$ to $id(a_{1})$ with respect to $\cylinder$ and $(j,j)$. \end{defn}

\begin{defn}  Let $\cylinder = \big( \cyl, i_0, i_1, p \big)$ be a cylinder in $\cl{A}$ equipped with a contraction structure $p$. An arrow \ar{a_{0},a_{1},j} of $\cl{A}$ {\em admits a strong deformation retraction} with respect to $\cylinder$ if there is an arrow \ar{a_{1},a_{0},f} of $\cl{A}$ which defines a strong deformation retraction of $j$ with respect to $\cylinder$. \end{defn}

\begin{defn} \label{StrongDeformationRetractionCoCylinderDefinition} Let $\cocylinder = \big( \cocyl, e_0, e_1, c \big)$ be a co-cylinder in $\cl{A}$ equipped with a contraction structure $c$. Let \ar{a_{0},a_{1},j} be an arrow of $\cl{A}$. 

An arrow \ar{a_{1},a_{0},f} of $\cl{A}$ is a {\em strong deformation retraction} of $j$ with respect to $\cocylinder$ if $f$ is a retraction of $j$, and if there is a homotopy over $a_{0}$ from $jf$ to $id(a_{1})$ with respect to $\cocylinder$ and $(f,f)$. \end{defn} 

\begin{rmk} Let $\cocylinder = \big( \cocyl, e_0, e_1, c \big)$ be a co-cylinder in $\cl{A}$ equipped with a contraction structure $c$. Let \ar{a_{0},a_{1},j} be an arrow of $\cl{A}$. 

An arrow \ar{a_{1},a_{0},f} of $\cl{A}$ is a {\em strong deformation retraction} of $j$ with respect to $\cocylinder$ if and only if $j^{op}$ defines a strong deformation retraction of $f^{op}$ with respect to the cylinder $\cocylinder^{op}$ in $\cl{A}$, equipped with the contraction structure defined by $c^{op}$.  \end{rmk}

\begin{defn} Let $\cocylinder = \big( \cocyl, e_0, e_1, c \big)$ be a co-cylinder in $\cl{A}$ equipped with a contraction structure $c$. An arrow \ar{a_{0},a_{1},j} of $\cl{A}$ {\em admits a strong deformation retraction} with respect to $\cocylinder$ if there is an arrow \ar{a_{1},a_{0},f} of $\cl{A}$ which defines a strong deformation retraction of $j$ with respect to $\cocylinder$. \end{defn} 

\begin{prpn} \label{StrongDeformationRetractionCoCylinderCharacterisationProposition} Let $\cylinder = \big( \cyl, i_0, i_1, p \big)$ be a cylinder in $\cl{A}$ equipped with a contraction structure $p$, and let $\cocylinder = \big( \cocyl, e_0, e_1, c \big)$ be a co-cylinder in $\cl{A}$ equipped with a contraction structure $c$. Suppose that $\cylinder$ is left adjoint to $\cocylinder$, and that the adjunction between $\cyl$ and $\cocyl$ is compatible with $p$ and $c$. 

Let \ar{a_{0},a_{1},j} be an arrow of $\cl{A}$. Then an arrow \ar{a_{1},a_{0},f} of $\cl{A}$ is a strong deformation retraction of $j$ with respect to $\cocylinder$ if and only if the following conditions are satisfied:

\begin{itemize}

\item[(i)] $f$ is a retraction of $j$,

\item[(ii)] there exists a homotopy over $a_{0}$ from $jf$ to $id(a_{1})$ with respect to $\cylinder$ and $(f,f)$. 

\end{itemize}

\end{prpn}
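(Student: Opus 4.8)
The plan is to reduce the statement to the adjunction-transfer results already established in this section. Both the condition ``$f$ is a strong deformation retraction of $j$ with respect to $\cocylinder$'' (Definition \ref{StrongDeformationRetractionCoCylinderDefinition}) and the condition on the right-hand side of the proposition demand that $f$ be a retraction of $j$; so this requirement is shared, and I would fix an $f$ with $fj = id(a_{0})$ and then only need to compare the two notions of homotopy over $a_{0}$ from $jf$ to $id(a_{1})$ --- one with respect to $\cocylinder$ and $(f,f)$, the other with respect to $\cylinder$ and $(f,f)$.

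First I would check that, with $a = a_{0}$ and $(j_{0}, j_{1}) = (f,f)$, the pair of commutative triangles presupposed by the definition of ``homotopy over $a_{0}$'' holds: the triangle $f \circ (jf) = f$ commutes because $f \circ jf = (fj) \circ f = f$ using that $f$ is a retraction of $j$, and $f \circ id(a_{1}) = f$ trivially. Hence ``homotopy over $a_{0}$ from $jf$ to $id(a_{1})$ with respect to $\cylinder$ and $(f,f)$'' and ``$\ldots$ with respect to $\cocylinder$ and $(f,f)$'' are both well-posed in this situation, and involve the same ambient data.

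For the forward implication I would take a homotopy $h$ over $a_{0}$ from $jf$ to $id(a_{1})$ with respect to $\cocylinder$ and $(f,f)$ and apply Corollary \ref{OverHomotopyCoCylinderGivesOverHomotopyCylinderCorollary} to obtain the homotopy $\mathsf{adj}^{-1}(h)$ over $a_{0}$ from $jf$ to $id(a_{1})$ with respect to $\cylinder$ and $(f,f)$; together with the fact that $f$ is a retraction of $j$, this is exactly the right-hand condition. For the reverse implication I would take a homotopy $h$ over $a_{0}$ from $jf$ to $id(a_{1})$ with respect to $\cylinder$ and $(f,f)$ and apply Proposition \ref{OverHomotopyCylinderGivesOverHomotopyCoCylinderProposition} to get $\mathsf{adj}(h)$ over $a_{0}$ with respect to $\cocylinder$ and $(f,f)$, which combined with $f$ being a retraction of $j$ exhibits $f$ as a strong deformation retraction of $j$ with respect to $\cocylinder$ by Definition \ref{StrongDeformationRetractionCoCylinderDefinition}. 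I do not expect a genuine obstacle: the whole argument is bookkeeping on top of the transfer results, and the only point that needs a moment's care is verifying that the commutative triangles making ``homotopy over $a_{0}$ and $(f,f)$'' meaningful do indeed hold, which is immediate from $fj = id(a_{0})$.
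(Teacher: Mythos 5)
Your proof is correct and takes essentially the same approach as the paper, which simply cites Corollary~\ref{OverHomotopyCoCylinderGivesOverHomotopyCylinderCorollary}; you are merely more explicit, spelling out both directions of the equivalence (invoking Proposition~\ref{OverHomotopyCylinderGivesOverHomotopyCoCylinderProposition} as well) and verifying that the ambient commutative triangles hold via $fj = \id(a_0)$.
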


\begin{proof} Follows immediately from Corollary \ref{OverHomotopyCoCylinderGivesOverHomotopyCylinderCorollary}. \end{proof} 

\begin{lem} \label{HomotopyPreAndPostCompositionIsOverHomotopyLemma} Let $\cylinder = \big( \cyl, i_0, i_1, p \big)$ be a cylinder in $\cl{A}$ equipped with a contraction structure. Suppose that we have four commutative diagrams in $\cl{A}$ as follows. \twoothertriangles{a_{0},a_{1},a,g_{0},j_{1},j_{0},a_{1},a_{2},a,f_{0},j_{2},j_{1}} \twoothertriangles{a_{1},a_{2},a,f_{1},j_{2},j_{1},a_{2},a_{3},a,g_{1},j_{3},j_{2}} %
Suppose also that we have a homotopy \ar{\cyl(a_{1}),a_{2},h} over $a$ from $f_{0}$ to $f_{1}$ with respect to $\cylinder$ and $(j_{1},j_{2})$. Then the arrow \ar[7]{\cyl(a_{0}),a_{3},g_{1} \circ h \circ \cyl(g_{0})} of $\cl{A}$ defines a homotopy over $a$ from $g_{1}f_{0}g_{0}$ to $g_{1}f_{1}g_{0}$ with respect to $\cylinder$ and $(j_{0},j_{3})$. \end{lem}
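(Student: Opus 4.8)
The plan is to combine Lemma~\ref{HomotopyPreAndPostCompositionLemma} with a straightforward bookkeeping of the over-$a$ condition. By Lemma~\ref{HomotopyPreAndPostCompositionLemma} applied to the arrows $g_{0}$, $f_{0}$, $f_{1}$, $g_{1}$ and the homotopy $h$, the arrow $g_{1} \circ h \circ \cyl(g_{0})$ already defines a homotopy from $g_{1}f_{0}g_{0}$ to $g_{1}f_{1}g_{0}$ with respect to $\cylinder$; moreover the four triangles in the hypothesis guarantee that $g_{1}f_{0}g_{0}$ and $g_{1}f_{1}g_{0}$ both sit in a commutative triangle over $a$ with respect to $(j_{0},j_{3})$, since $j_{3} \circ g_{1} = j_{2}$, $j_{2} \circ f_{0} = j_{1} = j_{2} \circ f_{1}$, and $j_{1} \circ g_{0} = j_{0}$. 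So the only thing left to verify is the square expressing that $g_{1} \circ h \circ \cyl(g_{0})$ is a homotopy \emph{over} $a$, namely that
\[ p(a) \circ \cyl(j_{0}) = j_{3} \circ \big( g_{1} \circ h \circ \cyl(g_{0}) \big) \]
after the appropriate naturality rewriting.

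The key step is to chase the following square in $\cl{A}$, built by pasting three pieces together. First, the naturality of $\cyl$ gives $\cyl(g_{0}) \colon \cyl(a_{0}) \to \cyl(a_{1})$ with $\cyl(j_{1}) \circ \cyl(g_{0}) = \cyl(j_{0})$ (from $j_{1}g_{0} = j_{0}$), and the contraction naturality square $p(a) \circ \cyl(j_{0}) = j_{0} \circ p(a_{0})$ — but more usefully here, $\cyl(j_{0})$ followed by the over-$a$ square for $h$. Precisely: since $h$ is a homotopy over $a$ with respect to $(j_{1},j_{2})$, the square $p(a) \circ \cyl(j_{1}) = j_{2} \circ h$ commutes. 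Pre-composing with $\cyl(g_{0})$ and using $\cyl(j_{1}) \circ \cyl(g_{0}) = \cyl(j_{0})$, we get $p(a) \circ \cyl(j_{0}) = j_{2} \circ h \circ \cyl(g_{0})$. Finally post-composing the relation $j_{3} \circ g_{1} = j_{2}$ we obtain $p(a) \circ \cyl(j_{0}) = j_{3} \circ g_{1} \circ h \circ \cyl(g_{0})$, which is exactly the over-$a$ square for $g_{1} \circ h \circ \cyl(g_{0})$. I would present this as one or two pasted commutative diagrams using the \texttt{sq} and \texttt{squareabovetriangle} macros, the first exhibiting the over-$a$ square for $h$ pre-composed with $\cyl(g_{0})$, the second absorbing $g_{1}$ via $j_{3} g_{1} = j_{2}$.

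There is no genuine obstacle here; the proof is entirely a matter of pasting naturality squares and the given commutative triangles, and it follows the same pattern as Lemma~\ref{HomotopyPreAndPostCompositionLemma} itself. The one point requiring a little care is keeping track of which composites the four triangles in the statement actually encode — in particular that $g_{1} f_{0} g_{0}$ and $g_{1} f_{1} g_{0}$ are well-typed arrows $a_{0} \to a_{3}$ and that each factors as $j_{0}$ followed by a section of $j_{3}$ — so I would open the proof by recording, from the hypothesised triangles, the identities $j_{1} g_{0} = j_{0}$, $j_{2} f_{0} = j_{1}$, $j_{2} f_{1} = j_{1}$, and $j_{3} g_{1} = j_{2}$, and then invoke Lemma~\ref{HomotopyPreAndPostCompositionLemma} for the homotopy part and chase the single square above for the over-$a$ part.

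\begin{proof} By Lemma~\ref{HomotopyPreAndPostCompositionLemma} the arrow $g_{1} \circ h \circ \cyl(g_{0})$ defines a homotopy from $g_{1}f_{0}g_{0}$ to $g_{1}f_{1}g_{0}$ with respect to $\cylinder$. It remains to check that this is a homotopy over $a$ with respect to $\cylinder$ and $(j_{0},j_{3})$. From the four given commutative diagrams we have $j_{1} \circ g_{0} = j_{0}$ and $j_{3} \circ g_{1} = j_{2}$. Since $h$ is a homotopy over $a$ from $f_{0}$ to $f_{1}$ with respect to $\cylinder$ and $(j_{1},j_{2})$, the following diagram in $\cl{A}$ commutes. \sq{\cyl(a_{1}),a_{2},\cyl(a),a,h,j_{2},\cyl(j_{1}),p(a)} Appealing to the naturality of $\cyl$, the relation $j_{1} \circ g_{0} = j_{0}$ gives $\cyl(j_{1}) \circ \cyl(g_{0}) = \cyl(j_{0})$, so the following diagram in $\cl{A}$ commutes. \sq{\cyl(a_{0}),a_{2},\cyl(a),a,h \circ \cyl(g_{0}),j_{2},\cyl(j_{0}),p(a)} Post-composing with $j_{3} \circ g_{1} = j_{2}$ we conclude that the following diagram in $\cl{A}$ commutes, as required. \sq{\cyl(a_{0}),a_{3},\cyl(a),a,g_{1} \circ h \circ \cyl(g_{0}),j_{3},\cyl(j_{0}),p(a)} \end{proof}
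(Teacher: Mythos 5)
Your proof is correct and follows essentially the same route as the paper: first invoke Lemma~\ref{HomotopyPreAndPostCompositionLemma} for the underlying homotopy, then paste the over-$a$ square for $h$ with $\cyl(j_{1}) \circ \cyl(g_{0}) = \cyl(j_{0})$ (from $j_{1}g_{0} = j_{0}$) and $j_{3}g_{1} = j_{2}$. No discrepancy to note.
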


\begin{proof} That the arrow \ar[7]{\cyl(a_{0}),a_{3},g_{1} \circ h \circ \cyl(g_{0})} of $\cl{A}$  defines a homotopy from $g_{1}f_{0}g_{0}$ to $g_{1}f_{1}g_{0}$ with respect to $\cylinder$ is Lemma \ref{HomotopyPreAndPostCompositionLemma}. 

In addition, since $h$ defines a homotopy over $a$ from $f_{0}$ to $f_{1}$ with respect to $\cylinder$ and $(j_{1},j_{2})$, the following diagram in $\cl{A}$ commutes. \sq{\cyl(a_{1}),a_{2},\cyl(a),a,h,j_{2},\cyl(j_{1}),p(a)} %
Since the diagram \triother{a_{0},a_{1},a,g_{0},j_{1},j_{0}} in $\cl{A}$ commutes, we also have that the following diagram in $\cl{A}$ commutes. \tri[{4,3}]{\cyl(a_{0}),\cyl(a_{1}),\cyl(a),\cyl(g_{0}),\cyl(j_{1}),\cyl(j_{0})} %
Moreover, we have that the following diagram in $\cl{A}$ commutes. \tri{a_{2},a_{3},a,g_{1},j_{3},j_{2}} %
Putting the last three observations together, we have that the following diagram in $\cl{A}$ commutes. \sq[{7,3}]{\cyl(a_{0}),a_{3},\cyl(a),a,g_{1} \circ h \circ \cyl(g_{0}),j_{3},\cyl(j_{0}),p(a)}%

\end{proof} 

\begin{rmk} An analogous result holds for homotopies under an object with respect to $\cylinder$. We shall not need this. \end{rmk}

\end{chapter}

\begin{chapter}{Cofibrations and fibrations} \label{CofibrationsAndFibrationsChapter}

We introduce the notion of a cofibration with respect to a cylinder or a co-cylinder in a formal category $\cl{A}$, and the dual notion of a fibration with respect to a cylinder or a co-cylinder in $\cl{A}$. Given both a cylinder $\cylinder$ and a co-cylinder $\cocylinder$ in $\cl{A}$ which are adjoint, we characterise fibrations with respect to $\cocylinder$ as fibrations with respect to $\cylinder$, and characterise cofibrations with respect to $\cylinder$ as cofibrations with respect to $\cocylinder$. 

Thus far, this is as for the homotopy theory of topological spaces. In an abstract setting, cofibrations and fibrations are treated in \cite{KampsKanBedingungenUndAbstrakteHomotopietheorie} and \cite{KampsPorterAbstractHomotopyAndSimpleHomotopyTheory}, for example. We must go a little further. 

Let us assume once more that we have a cylinder $\cylinder$ and a co-cylinder $\cocylinder$ in a formal category $\cl{A}$. We introduce the notion of a normally cloven fibration with respect to a cylinder $\cylinder$ or $\cocylinder$, which is a strengthening of the notion of a fibration. Roughly speaking, we impose two requirements upon the lifts of homotopies that define a fibration: that these lifts are compatible, and that identity homotopies lift to identity homotopies. If $\cylinder$ is left adjoint to $\cocylinder$, we characterise normally cloven fibrations with respect to $\cocylinder$ exactly as normally cloven fibrations with respect to $\cylinder$. 

We introduce the dual notion of a normally cloven cofibration with respect to $\cylinder$ or $\cocylinder$. If $\cylinder$ is left adjoint to $\cocylinder$, we characterise normally cloven cofibrations with respect to $\cylinder$ exactly as normally cloven cofibrations with respect to $\cocylinder$.

A composition of cofibrations is a cofibration, and a composition of fibrations is a fibration. Moreover, suppose that we have commutative diagrams \twosq{a_{2},a_{0},a_{3},a_{1},g_{0},j,j',g_{1},a_{0},a_{2},a_{1},a_{3},r_{0},j',j,r_{1}} in $\cl{A}$ such that $r_{0}$ is a retraction of $g_{0}$, and such that $r_{1}$ is a retraction of $g_{1}$. If $j$ is a cofibration, then $j'$ is also a cofibration. All this holds equally for normally cloven cofibrations, and dually for fibrations and normally cloven fibrations. 

In \ref{ExampleChapter}, we shall construct a homotopy theory of categories by means of our abstract theory. A normally cloven fibration in our sense with respect to this homotopy theory is exactly what is known as a normally cloven iso-fibration. This motivates our choice of terminology. 

In an abstract setting, van den Berg and Garner recently discussed the notion of a normally cloven fibration in \cite{VanDenBergGarnerTopologicalAndSimplicialModelsOfIdentityTypes}, independently of the author. We refer the reader to around Proposition 6.1.5. We shall present further ideas from this paper in \ref{MappingCylindersAndMappingCoCylindersChapter}.

\begin{assum} Let $\cl{C}$ be a 2-category with a final object. Suppose that pushouts and pullbacks of 2-arrows of $\cl{C}$ give rise to pushouts and pullbacks in formal categories, in the sense of Definition \ref{PushoutsPullbacks2ArrowsArePushoutsPullbacksInFormalCategoriesTerminology}. Let $\cl{A}$ be an object of $\cl{C}$. As before, we view $\cl{A}$ as a formal category, writing of objects and arrows of $\cl{A}$. \end{assum}

\begin{defn} Let $\cylinder = \big( \cyl, i_0, i_1 \big)$ be a cylinder in $\cl{A}$. An arrow \ar{a_{0},a_{1},j} of $\cl{A}$ is a {\em cofibration} with respect to $\cylinder$ if, for any commutative diagram \sq{a_{0},\cyl(a_{0}),a_{1},a_{2},{i_{0}(a_{0})},h,j,f} in $\cl{A}$, there is an arrow \ar{\cyl(a_{1}),a_{2},k} with respect to $\cylinder$ such that the following diagram in $\cl{A}$ commutes. \pushout[{4,3,-1}]{a_{0},\cyl(a_{0}),a_{1},\cyl(a_{1}),a_{2},{i_{0}(a_{0})},\cyl(j),j,{i_{0}(a_{1})},h,f,k} \end{defn}

\begin{defn} Let $\cocylinder = \big( \cocyl, e_0, e_1 \big)$ be a co-cylinder in $\cl{A}$. An arrow \ar{a_{0},a_{1},f} of $\cl{A}$ is a {\em fibration} with respect to $\cocylinder$ if $f^{op}$ is a cofibration with respect to the cylinder $\cocylinder^{op}$ in $\cl{A}^{op}$. \end{defn}

\begin{defn} Let $\cylinder = \big( \cyl, i_0, i_1 \big)$ be a cylinder in $\cl{A}$. An arrow \ar{a_{0},a_{1},j} of $\cl{A}$ is a {\em trivial cofibration} with respect to $\cylinder$ if it is both a cofibration and a homotopy equivalence with respect to $\cylinder$. \end{defn}

\begin{defn} Let $\cocylinder = \big( \cocyl, e_0, e_1 \big)$ be a co-cylinder in $\cl{A}$. An arrow \ar{a_{0},a_{1},j} of $\cl{A}$ is a {\em trivial fibration} with respect to $\cocylinder$ if it is both a fibration and a homotopy equivalence with respect to $\cocylinder$. \end{defn}

\begin{defn} Let $\cylinder = \big( \cyl, i_0, i_1 \big)$ be a cylinder in $\cl{A}$. An arrow \ar{a_{1},a_{2},f} of $\cl{A}$ is a {\em fibration} with respect to $\cylinder$ if, for any commutative diagram \sq{a_{0},a_{1},\cyl(a_{0}),a_{2},g,f,i_{0}(a_{0}),h} in $\cl{A}$, there is a homotopy \ar{\cyl(a_{0}),a_{1},l} with respect to $\cylinder$ such that the following diagram in $\cl{A}$ commutes. \liftingsquare{a_{0},a_{1},\cyl(a_{0}),a_{2},g,f,i_{0}(a_{0}),h,l} \end{defn}

\begin{prpn} \label{FibrationCoCylinderIffFibrationCylinderProposition} Let $\cylinder = \big( \cyl, i_0, i_1 \big)$ be a cylinder in $\cl{A}$, and let $\cocylinder = \big( \cocyl, e_0, e_1 \big)$ be a co-cylinder in $\cl{A}$. Suppose that $\cylinder$ is left adjoint to $\cocylinder$. An arrow \ar{a_{1},a_{2},f} is a fibration with respect to $\cocylinder$ if and only if it is a fibration with respect to $\cylinder$. \end{prpn}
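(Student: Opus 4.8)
The plan is to unwind the definition of a fibration with respect to $\cocylinder$ into a concrete lifting property inside $\cl{A}$, and then to match it term by term with the lifting property defining a fibration with respect to $\cylinder$ by transposing along the natural isomorphism $\mathsf{adj}$ attached to the adjunction.

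First I would rewrite the hypothesis. By definition $f \colon a_1 \to a_2$ is a fibration with respect to $\cocylinder$ exactly when $f^{op} \colon a_2 \to a_1$ is a cofibration with respect to the cylinder $\cocylinder^{op}$ in $\cl{A}^{op}$. Dualising the cofibration condition back into $\cl{A}$ via the dictionary of \ref{FormalCategoryTheoryPreliminariesSection}, this becomes the following path lifting property: for every object $c$ of $\cl{A}$ and every pair of arrows $g \colon c \to a_1$ and $h \colon c \to \cocyl(a_2)$ with $e_0(a_2) \circ h = f \circ g$, there is an arrow $l \colon c \to \cocyl(a_1)$ of $\cl{A}$ with $\cocyl(f) \circ l = h$ and $e_0(a_1) \circ l = g$. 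On the other side, $f$ is a fibration with respect to $\cylinder$ exactly when for every object $a_0$ of $\cl{A}$ and every pair of arrows $g \colon a_0 \to a_1$ and $h \colon \cyl(a_0) \to a_2$ with $h \circ i_0(a_0) = f \circ g$, there is an arrow $l \colon \cyl(a_0) \to a_1$ of $\cl{A}$ with $l \circ i_0(a_0) = g$ and $f \circ l = h$.

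Now I would pass between these two lifting problems by transposition, using the natural isomorphism $\mathsf{adj} \colon \mathsf{Hom}_{\cl{A}}\big(\cyl(-),-\big) \to \mathsf{Hom}_{\cl{A}}\big(-,\cocyl(-)\big)$ of Recollection \ref{AdjunctionRecollection}. Suppose $f$ is a fibration with respect to $\cylinder$ and let $g \colon c \to a_1$, $h \colon c \to \cocyl(a_2)$ with $e_0(a_2) \circ h = f \circ g$ be a lifting problem for the co-cylinder. Put $h' = \mathsf{adj}^{-1}(h) \colon \cyl(c) \to a_2$; condition (ii) of Definition \ref{CylindricalAdjunctionDefinition} gives $h' \circ i_0(c) = e_0(a_2) \circ h = f \circ g$, so $(g, h')$ is a lifting problem for $\cylinder$, and by hypothesis there is $l' \colon \cyl(c) \to a_1$ with $l' \circ i_0(c) = g$ and $f \circ l' = h'$. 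Set $l = \mathsf{adj}(l') \colon c \to \cocyl(a_1)$. Naturality of $\mathsf{adj}$ in the codomain variable gives $\cocyl(f) \circ l = \mathsf{adj}(f \circ l') = \mathsf{adj}(h') = h$, and condition (ii) of Definition \ref{CylindricalAdjunctionDefinition} applied to $l'$ gives $e_0(a_1) \circ l = l' \circ i_0(c) = g$; thus $l$ solves the original problem, and $f$ is a fibration with respect to $\cocylinder$. The reverse implication is the mirror image: transpose a $\cylinder$-lifting problem to a $\cocylinder$-lifting problem with test object $c = a_0$ (condition (ii) again provides the boundary compatibility $e_0(a_2) \circ \mathsf{adj}(h) = f \circ g$), solve it, and transpose the solution back via $\mathsf{adj}^{-1}$, invoking only naturality of $\mathsf{adj}$ and condition (ii).

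I do not expect a substantial obstacle: the argument is entirely formal, and runs parallel to the correspondence of homotopies in Proposition \ref{HomotopyCylinderGivesHomotopyCoCylinderProposition} and Corollary \ref{HomotopyCoCylinderGivesHomotopyCylinderCorollary}, the extra bookkeeping of the initial-condition equation $l \circ i_0(c) = g$ being handled precisely by condition (ii) of Definition \ref{CylindricalAdjunctionDefinition}. The one step calling for care is the first one, namely translating the cofibration condition for $\cocylinder^{op}$ in $\cl{A}^{op}$ into the path lifting property above: one must keep straight that the opposite-category cofibration injection $e_0^{op}$ becomes the evaluation $e_0$, that $j^{op} = f^{op}$ has domain $a_2$, and that $\cocyl^{op}(f^{op})$ corresponds to $\cocyl(f) \colon \cocyl(a_1) \to \cocyl(a_2)$ in $\cl{A}$ --- after which everything is mechanical.
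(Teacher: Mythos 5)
Your proof is correct and takes essentially the same approach as the paper: in both directions one transposes the lifting problem across the adjunction $\mathsf{adj}$, solves it in the other world, and transposes the solution back, using naturality of $\mathsf{adj}$ for the fibration condition and condition (ii) of Definition \ref{CylindricalAdjunctionDefinition} for the initial-condition equation. The only difference is presentational — you unwind the $\cocylinder$-fibration condition into an explicit lifting property in $\cl{A}$ before transposing, whereas the paper works directly with the pullback diagram, and you handle the two implications in the opposite order.
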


\begin{proof} We first prove that if $f$ is a fibration with respect to $\cocylinder$, then it is a fibration with respect to $\cylinder$. To this end, suppose that we have a commutative diagram in $\cl{A}$ as follows. \sq{a_{0},a_{1},\cyl(a_{0}),a_{2},g,f,i_{0}(a_{0}),h} %
Adopting the shorthand of Recollection \ref{AdjunctionRecollection}, let \ar{{\mathsf{Hom}_{\cl{A}}\big(\cyl(-),- \big)},{\mathsf{Hom}_{\cl{A}}\big(-,\cocyl(-) \big)},\mathsf{adj}} denote the natural isomorphism which the adjunction between $\cyl$ and $\cocyl$ gives rise to. 

Since $\cylinder$ is left adjoint to $\cocylinder$, the following diagram in $\cl{A}$ commutes. \sq{a_{0},\cyl(a_{0}),\cocyl(a_{2}),a_{2},i_{0}(a_{0}),h,\mathsf{adj}(h),e_{0}(a_{2})} %
By the commutativity of the last two diagrams, we have that the following diagram in $\cl{A}$ commutes. \sq[{4,3}]{a_{0},a_{1},a_{2},\cocyl(a_{2}),a_{2},g,f,\mathsf{adj}(h),e_{0}(a_{2})} Thus there is a homotopy \ar{a_{0},\cocyl(a_{1}),k} with respect to $\cocylinder$ such that the following diagram in $\cl{A}$ commutes, since $f$ is a fibration with respect to $\cocylinder$. \pullback[{4,4,-1}]{\cocyl(a_{1}),a_{1},\cocyl(a_{2}),a_{2},a_{0},e_{0}(a_{1}),f,\cocyl(f),e_{0}(a_{2}),g,{\mathsf{adj}(h)},k} %
By the naturality of $\mathsf{adj}$, we also have that the following diagram in $\cl{A}$ commutes. \tri[{4,3}]{a_{0},\cocyl(a_{1}),\cocyl(a_{2}),k,\cocyl(f),{\mathsf{adj}\Big(f \circ \mathsf{adj}^{-1}(k) \Big)}} We deduce that $\mathsf{adj}\big( f \circ \mathsf{adj}^{-1}(k) \big) = \mathsf{adj}(h)$, and hence that the following diagram in $\cl{A}$ commutes. \tri[{5,3}]{\cyl(a_{0}),a_{1},a_{2},{\mathsf{adj}^{-1}(k)},f,h} %
Moreover, since $\cylinder$ is left adjoint to $\cocylinder$ the following diagram in $\cl{A}$ commutes. \sq{a_{0},\cocyl(a_{1}),\cyl(a_{0}),a_{1},k,e_{0}(a_{1}),i_{0}(a_{0}),\mathsf{adj}^{-1}(k)} Hence, by the commutativity of the diagram which defines $k$, the following diagram in $\cl{A}$ commutes. \triother[{5,3}]{a_{0},\cyl(a_{0}),a_{1},i_{0}(a_{0}),\mathsf{adj}^{-1}(k),g} %
Putting this all together, we have shown that the following diagram in $\cl{A}$ commutes, concluding this direction of the proof. \liftingsquare[{9,4}]{a_{0},a_{1},\cyl(a_{0}),a_{2},g,f,i_{0}(a_{0}),h,\mathsf{adj}^{-1}(k)} 

We now prove that if $f$ is a fibration with respect to $\cylinder$, then $f$ is a fibration with respect to $\cocylinder$. To this end, suppose now that we have a commutative diagram in $\cl{A}$ as follows. \sq[{4,3}]{a_{0},a_{1},\cocyl(a_{2}),a_{2},g,f,h,e_{0}(a_{2})} %
Adopting again the shorthand of Recollection \ref{AdjunctionRecollection}, let \ar{{\mathsf{Hom}_{\cl{A}}\big(\cyl(-),- \big)},{\mathsf{Hom}_{\cl{A}}\big(-,\cocyl(-) \big)},\mathsf{adj}} denote the natural isomorphism which the adjunction between $\cyl$ and $\cocyl$ gives rise to. Since $\cylinder$ is left adjoint to $\cocylinder$, the following diagram in $\cl{A}$ commutes. \sq{a_{0},\cocyl(a_{2}),\cyl(a_{0}),a_{2},h,e_{0}(a_{2}),i_{0}(a_{0}),\mathsf{adj}^{-1}(h)} %
By the commutativity of the last two diagrams, we have that the following diagram in $\cl{A}$ commutes. \sq[{5,3}]{a_{0},a_{1},\cyl(a_{0}),a_{2},g,f,i_{0}(a_{0}),\mathsf{adj}^{-1}(h)} Thus there is a homotopy \ar{\cyl(a_{0}),a_{1},l} with respect to $\cylinder$ such that the following diagram in $\cl{A}$ commutes, since $f$ is a fibration with respect to $\cylinder$. \liftingsquare[{6,4}]{a_{0},a_{1},\cyl(a_{0}),a_{2},g,f,i_{0}(a_{0}),{\mathsf{adj}^{-1}(h)},l} %
By the naturality of $\mathsf{adj}$, we also have that the following diagram in $\cl{A}$ commutes. \tri[{4,3}]{a_{0},\cocyl(a_{1}),\cocyl(a_{2}),{\mathsf{adj}(l)},\cocyl(f),{\mathsf{adj}\big(f \circ l \big)}} We deduce that the following diagram in $\cl{A}$ commutes. \tri[{5,3}]{a_{0},\cocyl(a_{1}),\cocyl(a_{2}),{\mathsf{adj}(l)},\cocyl(f),h} %
Moreover, since $\cylinder$ is left adjoint to $\cocylinder$, the following diagram in $\cl{A}$ commutes. \sq{a_{0},\cyl(a_{0}),\cocyl(a_{1}),a_{1},i_{0}(a_{0}),k,\mathsf{adj}(l),e_{0}(a_{1})} Hence, by the commutativity of the diagram which defines $l$, the following diagram in $\cl{A}$ commutes. \triother[{4,3}]{a_{0},\cocyl(a_{1}),a_{1},\mathsf{adj}(l),e_{0}(a_{1}),g} %
Putting this all together, we have shown that the following diagram in $\cl{A}$ commutes, concluding this direction of the proof. \pullback[{4,4,-1}]{\cocyl(a_{1}),a_{1},\cocyl(a_{2}),a_{2},a_{0},e_{0}(a_{1}),f,\cocyl(f),e_{0}(a_{2}),g,h,{\mathsf{adj}(l)}} \end{proof} 

\begin{defn} Let $\cocylinder = \big( \cocyl, i_0, i_1 \big)$ be a co-cylinder in $\cl{A}$. An arrow \ar{a_{0},a_{1},j} of $\cl{A}$ is a {\em cofibration} with respect to $\cocylinder$ if, for any commutative diagram \sq{a_{0},\cocyl(a_{2}),a_{1},a_{3},h,e_{0}(a_{2}),j,g} in $\cl{A}$, there is a homotopy \ar{a_{1},\cocyl(a_{2}),l} with respect to $\cocylinder$ such that the following diagram in $\cl{A}$ commutes. \liftingsquare{a_{0},\cocyl(a_{2}),a_{1},a_{2},h,e_{0}(a_{2}),j,g,l} \end{defn}

\begin{rmk}  Let $\cocylinder = \big( \cocyl, i_0, i_1 \big)$ be a co-cylinder in $\cl{A}$. An arrow \ar{a_{0},a_{1},j} of $\cl{A}$ is a cofibration with respect to $\cocylinder$ if and only if $j^{op}$ is a fibration with respect to the cylinder $\cocylinder^{op}$ in $\cl{A}^{op}$. \end{rmk} 

\begin{cor} \label{CofibrationCoCylinderIffCofibrationCylinderCorollary} Let $\cylinder = \big( \cyl, i_0, i_1 \big)$ be a cylinder in $\cl{A}$, and let $\cocylinder = \big( \cocyl, e_0, e_1 \big)$ be a co-cylinder in $\cl{A}$. Suppose that $\cylinder$ is left adjoint to $\cocylinder$. 

An arrow \ar{a_{0},a_{1},j} is a cofibration with respect to $\cylinder$ if and only if it is a cofibration with respect to $\cocylinder$. \end{cor}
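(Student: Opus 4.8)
The plan is to derive the corollary from Proposition~\ref{FibrationCoCylinderIffFibrationCylinderProposition} by passing to the opposite formal category $\cl{A}^{op}$. First I would assemble the dual data: $\cocylinder^{op}$ is a cylinder in $\cl{A}^{op}$, $\cylinder^{op}$ is a co-cylinder in $\cl{A}^{op}$, and I claim that $\cocylinder^{op}$ is left adjoint to $\cylinder^{op}$ in the sense of Definition~\ref{CylindricalAdjunctionDefinition}. Condition~(i) of that definition holds because reversing $2$-arrows interchanges left and right adjoints, so that the adjunction exhibiting $\cyl$ as left adjoint to $\cocyl$ in $\cl{C}$ becomes, with unit and counit swapped, an adjunction exhibiting $\cocyl$ as left adjoint to $\cyl$ in $\cl{C}^{op}$; under the identification $\mathsf{Hom}_{\cl{A}^{op}}(x,y) = \mathsf{Hom}_{\cl{A}}(y,x)$, the associated natural isomorphism is $\mathsf{adj}^{-1}$. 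Condition~(ii) of Definition~\ref{CylindricalAdjunctionDefinition} for $\cocylinder^{op}$ and $\cylinder^{op}$ then asks exactly that, for every arrow $h$ of $\cl{A}$ from $\cyl(a_0)$ to $a_1$, the two commuting squares of condition~(ii) for $\cylinder$ and $\cocylinder$ --- the ones built from $i_0(a_0)$, $i_1(a_0)$, $e_0(a_1)$, $e_1(a_1)$, $h$, and $\mathsf{adj}(h)$ --- remain commuting when all their arrows are reversed, which is automatic. This bookkeeping (keeping track of which opposite is which, and checking that the variables match up) is the only thing requiring attention, but it is routine.

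With $\cocylinder^{op}$ left adjoint to $\cylinder^{op}$ established, I would apply Proposition~\ref{FibrationCoCylinderIffFibrationCylinderProposition} in $\cl{A}^{op}$, with the cylinder there taken to be $\cocylinder^{op}$ and the co-cylinder there taken to be $\cylinder^{op}$, to the arrow $j^{op}$ of $\cl{A}^{op}$. This gives that $j^{op}$ is a fibration with respect to the co-cylinder $\cylinder^{op}$ if and only if it is a fibration with respect to the cylinder $\cocylinder^{op}$. It then remains only to translate the two sides back. By the definition of a fibration with respect to a co-cylinder, $j^{op}$ is a fibration with respect to $\cylinder^{op}$ precisely when $(j^{op})^{op} = j$ is a cofibration with respect to $(\cylinder^{op})^{op} = \cylinder$; and by the remark immediately preceding this corollary, $j^{op}$ is a fibration with respect to the cylinder $\cocylinder^{op}$ precisely when $j$ is a cofibration with respect to $\cocylinder$. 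Concatenating these equivalences yields the corollary.

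I do not anticipate a genuine obstacle here: the entire content is the passage to $\cl{A}^{op}$, and within that the verification that $\cocylinder^{op}$ is left adjoint to $\cylinder^{op}$. If one preferred to avoid the opposite category, one could instead reprove the statement directly by running the argument of Proposition~\ref{FibrationCoCylinderIffFibrationCylinderProposition} with every arrow reversed and $\mathsf{adj}$ replaced throughout by $\mathsf{adj}^{-1}$, but the route via $\cl{A}^{op}$ is shorter and is the one I would record.
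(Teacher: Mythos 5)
Your proposal is correct and takes exactly the same approach as the paper, whose proof is just the single line "Follows immediately from Proposition \ref{FibrationCoCylinderIffFibrationCylinderProposition} by duality." You have simply unpacked what that duality argument consists of, and your bookkeeping (passage to $\cl{A}^{op}$, the identification of $\mathsf{adj}'$ with $\mathsf{adj}^{-1}$, and the translation of the two sides via the remark preceding the corollary) is accurate.
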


\begin{proof} Follows immediately from Proposition \ref{FibrationCoCylinderIffFibrationCylinderProposition} by duality. \end{proof}

\begin{prpn} \label{IdentityIsCofibrationProposition} Let $\cylinder = \big( \cyl, i_0, i_1 \big)$ be a cylinder in $\cl{A}$. Then for any object $a$ of $\cl{A}$, the arrow \ar{a,a,id} of $\cl{A}$ is a cofibration with respect to $\cylinder$. \end{prpn}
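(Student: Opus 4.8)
The plan is to unwind the definition of a cofibration with respect to $\cylinder$ and to observe that, because $\cyl$ preserves identity arrows, the lift it demands can be taken to be the very arrow supplied by the hypothesised commutative square.

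First I would fix an object $a$ of $\cl{A}$ and suppose given a commutative diagram
\sq{a,\cyl(a),a,a_{2},{i_{0}(a)},h,id,f}
in $\cl{A}$, so that $h \circ i_{0}(a) = f \circ id = f$. Next I would recall from Remark \ref{1ArrowsAsFunctors} that $\cyl$, being a $1$-arrow of $\cl{C}$ regarded as a functor, carries the identity of $a$ to the identity of $\cyl(a)$; thus $\cyl(id)$ is the identity arrow of $\cyl(a)$. I would then take $k := h$ as the candidate lift. It then remains only to verify the two conditions required of $k$ in the definition of a cofibration: on the one hand $k \circ \cyl(id) = h \circ id = h$, and on the other hand $k \circ i_{0}(a) = h \circ i_{0}(a) = f$, the latter holding by the commutativity of the given square. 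The only further commutativity appearing in the relevant diagram is the naturality square of the $2$-arrow $i_{0} \colon \id_{\cl{A}} \to \cyl$ at $id$, which commutes automatically. Hence $k = h$ witnesses that $id \colon a \to a$ is a cofibration with respect to $\cylinder$.

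I do not expect any genuine obstacle here; the argument is essentially a one-line observation. The single point worth flagging is the appeal to functoriality of $\cyl$ on identity arrows, which is exactly what makes the ``upper'' leg of the pushout cocone impose no condition and forces the lift to equal $h$.
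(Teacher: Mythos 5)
Your proposal is correct and matches the paper's argument precisely: both take $k = h$ as the lift, noting that $\cyl(id) = id$ makes the condition $k \circ \cyl(id) = h$ automatic, while $k \circ i_{0}(a) = f$ follows from the commutativity of the given square. There is nothing to add.
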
 

\begin{proof} Suppose that we have a commutative diagram in $\cl{A}$ as follows. \sq{a,\cyl(a),a,a',i_{0}(a),h,id,g} Then the following diagram in $\cl{A}$ commutes. \pushout[{4,3,-1}]{a,\cyl(a),a,\cyl(a),a',i_{0}(a),id,id,i_{0}(a),h,g,h} \end{proof} 

\begin{prpn} \label{CompositionOfCofibrationsIsACofibrationProposition} Let $\cylinder = \big( \cyl, i_0, i_1 \big)$ be a cylinder in $\cl{A}$. Let \ar{a_{0},a_{1},j_{0}} and \ar{a_{1},a_{2},j_{1}} be arrows of $\cl{A}$ which are cofibrations with respect to $\cylinder$. Then $j_{1} \circ j_{0}$ is a cofibration with respect to $\cylinder$. 
\end{prpn}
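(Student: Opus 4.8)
The plan is to verify the defining lifting property of a cofibration directly for $j_1 \circ j_0$, by applying the lifting property for $j_0$ and then the lifting property for $j_1$. First I would start with an arbitrary commutative square in $\cl{A}$ consisting of arrows \ar{\cyl(a_0),a_3,h} and \ar{a_2,a_3,f} satisfying $f \circ (j_1 \circ j_0) = h \circ i_0(a_0)$, and the goal is to produce an arrow \ar{\cyl(a_2),a_3,k} with $k \circ i_0(a_2) = f$ and $k \circ \cyl(j_1 \circ j_0) = h$.

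First I would apply the cofibration property of $j_0$ to the square formed by $h$ and the arrow \ar{a_1,a_3,f \circ j_1}; the required compatibility $(f \circ j_1) \circ j_0 = h \circ i_0(a_0)$ is exactly our hypothesis on the original square. This yields an arrow \ar{\cyl(a_1),a_3,k_0} with $k_0 \circ \cyl(j_0) = h$ and $k_0 \circ i_0(a_1) = f \circ j_1$. Next I would apply the cofibration property of $j_1$ to the square formed by $k_0$ (playing the role of the map out of $\cyl(a_1)$) and $f$; the required compatibility $f \circ j_1 = k_0 \circ i_0(a_1)$ is the second of the equations just obtained. This produces an arrow \ar{\cyl(a_2),a_3,k_1} with $k_1 \circ \cyl(j_1) = k_0$ and $k_1 \circ i_0(a_2) = f$.

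Finally I would check that $k := k_1$ is the desired lift for $j_1 \circ j_0$: the equation $k_1 \circ i_0(a_2) = f$ holds by construction, while $k_1 \circ \cyl(j_1 \circ j_0) = k_1 \circ \cyl(j_1) \circ \cyl(j_0) = k_0 \circ \cyl(j_0) = h$, using that $\cyl$, being a $1$-arrow of $\cl{C}$ and hence a functor, preserves composition. I do not expect any genuine obstacle here; the only points requiring care are the bookkeeping of which commutative square is fed into which cofibration and the (automatic) functoriality identity $\cyl(j_1 \circ j_0) = \cyl(j_1) \circ \cyl(j_0)$, after which the verification is a routine diagram chase.
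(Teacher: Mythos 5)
Your proof is correct and matches the paper's argument exactly: first lift against $j_0$ (with $f\circ j_1$ along the bottom) to get $k_0$, then lift $k_0$ against $j_1$ to get the final lift $k_1$, using $\cyl(j_1\circ j_0)=\cyl(j_1)\circ\cyl(j_0)$ to verify the compatibilities. No gaps.
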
 

\begin{proof} Suppose that we have a commutative diagram in $\cl{A}$ as follows. \sq{a_{0},\cyl(a_{0}),a_{2},a_{3},i_{0}(a_{0}),h,j_{1} \circ j_{0},g} %
Since $j_{0}$ is a cofibration with respect to $\cylinder$, there is an arrow \ar{\cyl(a_{1}),a_{3},k_{0}} of $\cl{A}$ such that the following diagram in $\cl{A}$ commutes. \pushout{a_{0},\cyl(a_{0}),a_{2},\cyl(a_{2}),a_{3},i_{0}(a_{0}),\cyl(j_{0}),j_{0},i_{0}(a_{1}),h,g \circ j_{1},k_{0}} %
Since $j_{1}$ is a cofibration with respect to $\cylinder$, there is an arrow \ar{\cyl(a_{2}),a_{3},k_{1}} of $\cl{A}$ such that the following diagram in $\cl{A}$ commutes. \pushout{a_{1},\cyl(a_{1}),a_{2},\cyl(a_{2}),a_{3},i_{0}(a_{1}),\cyl(j_{1}),j_{1},i_{0}(a_{2}),k_{0},g,k_{1}} %
Putting the last two observations together, we have that the following diagram in $\cl{A}$ commutes. \pushout[{3.5,3,2}]{a_{0},\cyl(a_{0}),a_{2},\cyl(a_{2}),a_{3},i_{0}(a_{0}),\cyl(j_{1} \circ j_{0}),j_{1} \circ j_{0},i_{0}(a_{2}),h,g,k_{1}}  
\end{proof}

\begin{cor} \label{CompositionOfFibrationsIsAFibrationCorollary} Let $\cocylinder = \big( \cocyl, e_0, e_1 \big)$ be a co-cylinder in $\cl{A}$. Let \ar{a_{0},a_{1},f_{0}} and \ar{a_{1},a_{2},f_{1}} be arrows of $\cl{A}$ which are fibrations with respect to $\cocylinder$. Then $f_{1} \circ f_{0}$ is a fibration with respect to $\cocylinder$. \end{cor}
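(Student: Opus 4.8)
The plan is to deduce this immediately from Proposition \ref{CompositionOfCofibrationsIsACofibrationProposition} by the duality built into the definitions of \ref{CofibrationsAndFibrationsSection}. Recall that, by definition, an arrow $f$ of $\cl{A}$ is a fibration with respect to $\cocylinder$ precisely when $f^{op}$ is a cofibration with respect to the cylinder $\cocylinder^{op}$ in $\cl{A}^{op}$. So the strategy is to transpose the hypothesis into $\cl{A}^{op}$, apply the already-proven statement about compositions of cofibrations there, and transpose back.

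Concretely, first I would note that since $f_0$ and $f_1$ are fibrations with respect to $\cocylinder$, the arrows $f_0^{op}$ and $f_1^{op}$ of $\cl{A}^{op}$ are cofibrations with respect to the cylinder $\cocylinder^{op}$ in $\cl{A}^{op}$. Next I would observe that composition in $\cl{A}^{op}$ reverses order, so that $(f_1 \circ f_0)^{op} = f_0^{op} \circ f_1^{op}$ as arrows of $\cl{A}^{op}$, with $f_1^{op}$ the ``first'' arrow and $f_0^{op}$ the ``second''. Then Proposition \ref{CompositionOfCofibrationsIsACofibrationProposition}, applied to the 2-category $\cl{C}$, the object $\cl{A}^{op}$, and the cylinder $\cocylinder^{op}$, yields that $f_0^{op} \circ f_1^{op}$ is a cofibration with respect to $\cocylinder^{op}$. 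Finally, since this arrow is exactly $(f_1 \circ f_0)^{op}$, the definition of a fibration with respect to a co-cylinder gives that $f_1 \circ f_0$ is a fibration with respect to $\cocylinder$, as required.

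There is essentially no obstacle here: the content was already carried out in Proposition \ref{CompositionOfCofibrationsIsACofibrationProposition}, and the only thing to check is the bookkeeping of which 2-category and which object one applies it to, together with the order-reversal of composition under $(-)^{op}$. The proof can therefore be stated in a single line: it follows immediately from Proposition \ref{CompositionOfCofibrationsIsACofibrationProposition} by duality.
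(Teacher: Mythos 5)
Your proposal is correct and is exactly the paper's proof: the paper also deduces the corollary from Proposition \ref{CompositionOfCofibrationsIsACofibrationProposition} by duality, and you have simply spelled out the bookkeeping (passing to $\cl{A}^{op}$ with the cylinder $\cocylinder^{op}$ and the order reversal of composition) that the paper leaves implicit.
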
 

\begin{proof} Follows immediately from Proposition \ref{CompositionOfCofibrationsIsACofibrationProposition} by duality. \end{proof}

\begin{prpn} \label{RetractionCofibrationIsCofibrationProposition} Let $\cylinder = \big( \cyl, i_{0}, i_{1} \big)$ be a cylinder in $\cl{A}$. Let \ar{a_{0},a_{1},j} be an arrow of $\cl{A}$ which is a cofibration with respect to $\cylinder$. 

Suppose that we have commutative diagrams \twosq{a_{2},a_{0},a_{3},a_{1},g_{0},j,j',g_{1},a_{0},a_{2},a_{1},a_{3},r_{0},j',j,r_{1}} in $\cl{A}$, such that $r_{0}$ is a retraction of $g_{0}$, and such that $r_{1}$ is a retraction of $g_{1}$. Then $j'$ is a cofibration with respect to $\cylinder$. \end{prpn}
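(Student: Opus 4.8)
The statement is the familiar closure of cofibrations under retracts, and the plan is simply to run the standard retract argument inside our formal category: transfer a lifting problem for $j'$ to one for $j$, solve it using that $j$ is a cofibration, and carry the solution back along the retract data.

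First I would record the hypotheses explicitly. Write $g_{0}\colon a_{2}\to a_{0}$, $g_{1}\colon a_{3}\to a_{1}$, $r_{0}\colon a_{0}\to a_{2}$, $r_{1}\colon a_{1}\to a_{3}$, so that the two given squares say $j\circ g_{0}=g_{1}\circ j'$ and $j'\circ r_{0}=r_{1}\circ j$, while the retraction hypotheses say $r_{0}\circ g_{0}=\mathsf{id}(a_{2})$ and $r_{1}\circ g_{1}=\mathsf{id}(a_{3})$. Then I would take an arbitrary lifting problem for $j'$, namely arrows $h\colon \cyl(a_{2})\to a_{4}$ and $f\colon a_{3}\to a_{4}$ of $\cl{A}$ with $h\circ i_{0}(a_{2})=f\circ j'$, the goal being to produce $k'\colon \cyl(a_{3})\to a_{4}$ with $k'\circ i_{0}(a_{3})=f$ and $k'\circ \cyl(j')=h$.

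The key step is to feed into the cofibration property of $j$ the arrows $\overline{h}:=h\circ \cyl(r_{0})\colon \cyl(a_{0})\to a_{4}$ and $\overline{f}:=f\circ r_{1}\colon a_{1}\to a_{4}$. Using naturality of $i_{0}$ (in the form $\cyl(r_{0})\circ i_{0}(a_{0})=i_{0}(a_{2})\circ r_{0}$), the identity $j'\circ r_{0}=r_{1}\circ j$, and the commutativity of the given square, one checks $\overline{h}\circ i_{0}(a_{0})=\overline{f}\circ j$, so $(\overline{h},\overline{f})$ is a genuine lifting problem for $j$. Since $j$ is a cofibration there is $k\colon \cyl(a_{1})\to a_{4}$ with $k\circ i_{0}(a_{1})=f\circ r_{1}$ and $k\circ \cyl(j)=h\circ \cyl(r_{0})$. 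I then set $k':=k\circ \cyl(g_{1})$. Naturality of $i_{0}$ together with $r_{1}\circ g_{1}=\mathsf{id}(a_{3})$ gives $k'\circ i_{0}(a_{3})=f$; functoriality of $\cyl$, the identity $g_{1}\circ j'=j\circ g_{0}$, and $r_{0}\circ g_{0}=\mathsf{id}(a_{2})$ give $k'\circ \cyl(j')=k\circ \cyl(j)\circ \cyl(g_{0})=h\circ \cyl(r_{0}\circ g_{0})=h$. Hence $k'$ solves the original lifting problem, and $j'$ is a cofibration with respect to $\cylinder$.

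I do not expect any genuine obstacle: the argument is entirely formal, using only functoriality of $\cyl$, naturality of $i_{0}$, and the four retract identities, so it applies verbatim in the formal-category setting of this section. The only real work is diagram bookkeeping, and in writing it up I would render the verification $\overline{h}\circ i_{0}(a_{0})=\overline{f}\circ j$ and the two equations defining $k'$ as explicit commutative diagrams, in the same style as the proof of Proposition~\ref{RetractionHomotopyEquivalenceIsHomotopyEquivalenceProposition}.
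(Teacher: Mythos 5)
Your proof is correct and follows essentially the same route as the paper's: transfer the lifting problem for $j'$ to one for $j$ via precomposition with $\cyl(r_0)$ and $r_1$, invoke the cofibration property of $j$ to obtain $k$, and define the lift for $j'$ as $k\circ \cyl(g_1)$, verifying the two required equations using naturality of $i_0$, functoriality of $\cyl$, the commuting retract squares, and the retraction identities.
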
 

\begin{proof} Suppose that we have a commutative diagram in $\cl{A}$ as follows. \sq{a_{2},\cyl(a_{2}),a_{3},a_{4},i_{0}(a_{2}),h,j',f} %
Then the following diagram in $\cl{A}$ commutes. \trapeziumstwo[{3,3.5,0,0}]{a_{0},\cyl(a_{0}),a_{1},a_{2},\cyl(a_{2}),a_{3},a_{4},i_{0}(a_{0}),\cyl(r_{0}),h,j,r_{1},f,r_{0},i_{0}(a_{2}),j'} %
Thus, since $j$ is a cofibration with respect to $\cylinder$, there is an arrow \ar{\cyl(a_{1}),a_{4},k} of $\cl{A}$ such that the following diagram in $\cl{A}$ commutes. \pushout{a_{0},\cyl(a_{0}),a_{1},\cyl(a_{1}),a_{4},i_{0}(a_{0}),\cyl(j),j,i_{0}(a_{1}),h \circ \cyl(r_{0}),f \circ r_{1}, k} %
Let \ar{\cyl(a_{3}),a_{4},l} denote the arrow $k \circ \cyl(g_{1})$ of $\cl{A}$. We claim that the following diagram in $\cl{A}$ commutes. \pushout{a_{2},\cyl(a_{2}),a_{3},\cyl(a_{3}),a_{4},i_{0}(a_{2}),\cyl(j'),j',i_{0}(a_{3}),h,f,l} 

Firstly, we have that the following diagram in $\cl{A}$ commutes. \trapeziumsfive{a_{3},\cyl(a_{3}),a_{1},\cyl(a_{1}),a_{3},a_{4},i_{0}(a_{1}),l,id,f,g_{1},\cyl(g_{1}),i_{0}(a_{2}),r_{1},k} 

Secondly, we have that the following diagram in $\cl{A}$ commutes. \trapeziumsfive{\cyl(a_{2}),\cyl(a_{3}),\cyl(a_{0}),\cyl(a_{1}),\cyl(a_{3}),a_{4},\cyl(j'),l,id,h,\cyl(g_{0}),\cyl(g_{1}),\cyl(j),\cyl(r_{0}),k}  
    
\end{proof}

\begin{cor} \label{RetractionTrivialCofibrationIsTrivialCofibrationCorollary} Let $\cylinder = \big( \cyl, i_{0}, i_{1} \big)$ be a cylinder in $\cl{A}$. Let \ar{a_{0},a_{1},j} be an arrow of $\cl{A}$ which is a trivial cofibration with respect to $\cylinder$. 

Suppose that we have commutative diagrams \twosq{a_{2},a_{0},a_{3},a_{1},g_{0},j,j',g_{1},a_{0},a_{2},a_{1},a_{3},r_{0},j',j,r_{1}} in $\cl{A}$, such that $r_{0}$ is a retraction of $g_{0}$, and such that $r_{1}$ is a retraction of $g_{1}$. Then $j'$ is a trivial cofibration with respect to $\cylinder$. \end{cor}

\begin{proof} Follows immediately from Proposition \ref{RetractionCofibrationIsCofibrationProposition} and Proposition \ref{RetractionHomotopyEquivalenceIsHomotopyEquivalenceProposition}. \end{proof}

\begin{cor} \label{RetractionFibrationIsFibrationCorollary} Let $\cocylinder = \big( \cocyl, e_{0}, e_{1} \big)$ be a co-cylinder in $\cl{A}$. Let \ar{a_{0},a_{1},f} be an arrow of $\cl{A}$ which is a fibration with respect to $\cocylinder$. Suppose that we have commutative diagrams \twosq{a_{2},a_{0},a_{3},a_{1},g_{0},f,f',g_{1},a_{0},a_{2},a_{1},a_{3},r_{0},f',f,r_{1}} in $\cl{A}$, such that $r_{0}$ is a retraction of $g_{0}$ and such that $r_{1}$ is a retraction of $g_{1}$. Then $f'$ is a fibration with respect to $\cocylinder$. \end{cor} 

\begin{proof} Follows immediately from Proposition \ref{RetractionCofibrationIsCofibrationProposition} by duality. \end{proof} 

\begin{cor} \label{RetractionTrivialFibrationIsTrivialFibrationCorollary} Let $\cocylinder = \big( \cocyl, e_{0}, e_{1} \big)$ be a co-cylinder in $\cl{A}$. Let \ar{a_{0},a_{1},f} be an arrow of $\cl{A}$ which is a trivial fibration with respect to $\cocylinder$. 

Suppose that we have commutative diagrams \twosq{a_{2},a_{0},a_{3},a_{1},g_{0},f,f',g_{1},a_{0},a_{2},a_{1},a_{3},r_{0},f',f,r_{1}} in $\cl{A}$, such that $r_{0}$ is a retraction of $g_{0}$, and such that $r_{1}$ is a retraction of $g_{1}$. Then $f'$ is a trivial fibration with respect to $\cocylinder$. \end{cor} 

\begin{proof} Follows immediately from Corollary \ref{RetractionFibrationIsFibrationCorollary} and Proposition \ref{RetractionHomotopyEquivalenceIsHomotopyEquivalenceProposition}. \end{proof} 

\begin{notn} Let $\cylinder = \big( \cyl,i_0,i_1 \big)$ be a cylinder in $\cl{A}$. Let \ar{a_{0},a_{1},j} be an arrow of $\cl{A}$, and let $a_{2}$ be an object of $\cl{A}$. 

Let $\Sigma_{j,a_{2}}^{\cylinder}$ denote the set of pairs $(g,h)$ consisting of an arrow \ar{a_{1},a_{2},g} of $\cl{A}$ and a homotopy \ar{\cyl(a_{0}),a_{2},h} with respect to $\cylinder$, such that the following diagram in $\cl{A}$ commutes. \sq{a_{0},\cyl(a_{0}),a_{1},a_{2},i_{0}(a_{0}),h,j,g} 

Let $\Upsilon_{j,a_{2}}^{\cylinder}$ denote the set of homotopies \ar{\cyl(a_{1}),a_{2},k} with respect to $\cylinder$ such that the following diagram in $\cl{A}$ commutes. \pushout[{4,3,-1}]{a_{0},\cyl(a_{0}),a_{1},\cyl(a_{1}),a_{2},i_{0}(a_{0}),\cyl(j),j,i_{0}(a_{1}),h,g,k} \end{notn}

\begin{defn} Let $\cylinder = \big( \cyl,i_0,i_1 \big)$ be a cylinder in $\cl{A}$. A {\em cofibration equipped with a cleavage} with respect to $\cylinder$ is an arrow \ar{a_{0},a_{1},j} of $\cl{A}$, together with a map \ar{\Sigma_{j,a_{2}}^{\cylinder},\Upsilon_{j,a_{2}}^{\cylinder},k_{a_{2}}} for every object $a_{2}$ of $\cl{A}$. \end{defn}   

\begin{defn} \label{NormallyClovenCofibrationCylinderDefinition} Let $\cylinder = \big( \cyl,i_0,i_1,p \big)$ be a cylinder in $\cl{A}$ equipped with a contraction structure $p$. A {\em normally cloven cofibration} with respect to $\cylinder$ is an arrow \ar{a_{0},a_{1},j} of $\cl{A}$ which is a cofibration equipped with a cleavage with respect to $\cylinder$, satisfying the following conditions, for which we denote by \ar{\Sigma_{j,a_{2}}^{\cylinder},\Upsilon_{j,a_{2}}^{\cylinder},k_{a_{2}}} the map of the cleavage corresponding to an object $a_{2}$ of $\cl{A}$. 

\begin{itemize}[itemsep=1em,topsep=1em] 

\item[(i)] Suppose that we have a commutative diagram in $\cl{A}$ as follows. \triother{a_{0},a_{1},a_{2},j,g_{1},g_{0}} %
Then the following diagram in $\cl{A}$ commutes. \tri{\cyl(a_{1}),a_{1},a_{2},p(a_{1}),g_{1},{k_{a_{2}}\big(g_{1},g_{0} \circ p(a_{0})\big)}} 

\item[(ii)] Suppose that we have a commutative diagram in $\cl{A}$ as follows. \sq{a_{0},\cyl(a_{0}),a_{1},a_{2},i_{0}(a_{0}),h,j,g_{1}} Then for any arrow \ar{a_{2},a_{3},g_{2}} of $\cl{A}$, the following diagram in $\cl{A}$ commutes. \tri[{5,3}]{\cyl(a_{1}),a_{2},a_{3},{k_{a_{2}}(g_{1},h)},g_{2},{k_{a_{3}}(g_{2} \circ g_{1}, g_{2} \circ h)}} \end{itemize} \end{defn}

\begin{rmk} Let $\cylinder = \big( \cyl,i_0,i_1,p \big)$ be a cylinder in $\cl{A}$ equipped with a contraction structure $p$. Let $j$ be an arrow of $\cl{A}$ which is a cofibration equipped with a cleavage with respect to $\cylinder$.

We shall refer to condition (i) of Definition \ref{NormallyClovenCofibrationCylinderDefinition} as {\em lifting of identities}, and to condition (ii) of Definition \ref{NormallyClovenCofibrationCylinderDefinition} as {\em compatibility of liftings}. \end{rmk}

\begin{rmk} Let $\cylinder = \big( \cyl,i_0,i_1,p \big)$ be a cylinder in $\cl{A}$ equipped with a contraction structure $p$. If an arrow \ar{a_{0},a_{1},j} of $\cl{A}$ is a cofibration with respect to $\cylinder$, we can think of $\cyl(a_{1})$ as a weak pushout of $j$ along the arrow \ar{a_{0},\cyl(a_{0}),i_{0}(a_{0})} of $\cl{A}$. 

The lifting of identities and compatibility of liftings conditions bring $\cyl(a_{1})$ closer to an actual pushout of $j$ along $i_{0}(a_{0})$. \end{rmk}

\begin{terminology} \label{NormallyClovenCofibrationTerminology} Let $\cylinder = \big( \cyl,i_0,i_1,p \big)$ be a cylinder in $\cl{A}$ equipped with a contraction structure $p$. Let \ar{a_{0},a_{1},j} be a cofibration equipped with a cleavage with respect to $\cylinder$, which is moreover a normally cloven cofibration with respect to $\cylinder$. 

We shall typically refer to $j$ as a normally cloven cofibration, without explicitly mentioning its cleavage. \end{terminology}

\begin{notn} Let $\cocylinder = \big( \cocyl,e_0,e_1 \big)$ be a co-cylinder in $\cl{A}$. Let \ar{a_{1},a_{2},f} be an arrow of $\cl{A}$, and let $a_{0}$ be an object of $\cl{A}$. 

Let $\Sigma_{f,a_{0}}^{\cocylinder}$ denote the set of pairs $(g,h)$ consisting of an arrow \ar{a_{0},a_{1},g} of $\cl{A}$ and a homotopy \ar{a_{0},\cocyl(a_{2}),h} with respect to $\cocylinder$, such that the following diagram in $\cl{A}$ commutes. \sq[{4,3}]{a_{0},a_{1},\cocyl(a_{2}),a_{2},g,f,h,e_{0}(a_{2})} 

Let $\Upsilon_{f,a_{0}}^{\cocylinder}$ denote the set of homotopies \ar{a_{0},\cocyl(a_{1}),k} with respect to $\cocylinder$ such that the following diagram in $\cl{A}$ commutes. \pullback[{4,3,-1}]{\cocyl(a_{1}),a_{1},\cocyl(a_{2}),a_{2},a_{0},e_{0}(a_{1}),f,\cocyl(f),e_{0}(a_{2}),g,h,k} \end{notn}

\begin{defn} Let $\cocylinder = \big( \cocyl,e_0,e_1 \big)$ be a co-cylinder in $\cl{A}$. A {\em fibration equipped with a cleavage} with respect to $\cocylinder$ is an arrow \ar{a_{1},a_{2},f} of $\cl{A}$, together with a map \ar{\Sigma_{f,a_{0}}^{\cocylinder},\Upsilon_{f,a_{0}}^{\cocylinder},k_{a_{0}}} for every object $a_{0}$ of $\cl{A}$. \end{defn}   

\begin{rmk} \label{CleavageDualityCofibCylinderFibCoCylinderRemark} Let $\cocylinder = \big( \cocyl,e_0,e_1 \big)$ be a co-cylinder in $\cl{A}$, and let \ar{a_{1},a_{2},f} be a fibration equipped with a cleavage with respect to $\cocylinder$.  Let $a_{0}$ be an object of $\cl{A}$, and let \ar{\Sigma_{f,a_{0}}^{\cocylinder},\Upsilon_{f,a_{0}}^{\cocylinder},k_{a_{0}}} denote the corresponding map of the cleavage. 

Associating to a pair $(g^{op},h^{op})$ in $\Sigma_{f^{op},a_{0}}^{\cocylinder^{op}}$ the arrow $\big(k_{a_{0}}(g,h)\big)^{op}$ of $\cl{A}^{op}$, defines a map \ar{\Sigma_{f^{op},a_{0}}^{\cocylinder^{op}},{\Upsilon_{f^{op},a_{0}}^{\cocylinder^{op}}}}, which we shall denote by $(k^{op})_{a_{0}}$. Thus a cleavage with respect to $f$ and $\cocylinder$ gives rise to a cleavage with respect to $f^{op}$ and the cylinder $\cocylinder^{op}$ in $\cl{A}^{op}$. \end{rmk} 

\begin{defn} \label{NormallyClovenFibrationCoCylinderDefinition} Let $\cocylinder = \big( \cocyl,e_0,e_1,c \big)$ be a co-cylinder in $\cl{A}$ equipped with a contraction structure $c$. A {\em normally cloven fibration} with respect to $\cocylinder$ is an arrow \ar{a_{1},a_{2},f} of $\cl{A}$ which is a fibration equipped with a cleavage with respect to $\cocylinder$, such that $f^{op}$ equipped with the cleavage of Remark \ref{CleavageDualityCofibCylinderFibCoCylinderRemark} is a normally cloven cofibration with respect to the cylinder $\cocylinder^{op}$ in $\cl{A}^{op}$ equipped with the contraction structure $c^{op}$. \end{defn}

\begin{terminology} \label{NormallyClovenFibrationCoCylinderTerminology} Let $\cocylinder = \big( \cocyl,e_0,e_1,c \big)$ be a co-cylinder in $\cl{A}$ equipped with a contraction structure $c$. Let \ar{a_{1},a_{2},f} be a fibration equipped with a cleavage with respect to $\cocylinder$, which is moreover a normally cloven fibration with respect to $\cocylinder$. 

We shall typically refer to $f$ as a normally cloven fibration, without explicitly mentioning its cleavage. \end{terminology}

\begin{defn} Let $\cylinder = \big( \cyl, i_0, i_1, p \big)$ be a cylinder in $\cl{A}$ equipped with a contraction structure $p$. An arrow \ar{a_{0},a_{1},j} of $\cl{A}$ is a {\em trivial normally cloven cofibration} with respect to $\cylinder$ if it is both a normally cloven cofibration and a homotopy equivalence with respect to $\cylinder$. \end{defn}

\begin{defn} Let $\cocylinder = \big( \cocyl, e_0, e_1, c \big)$ be a co-cylinder in $\cl{A}$ equipped with a contraction structure $c$. An arrow \ar{a_{0},a_{1},f} of $\cl{A}$ is a {\em trivial normally cloven fibration} with respect to $\cocylinder$ if it is both a normally cloven fibration and a homotopy equivalence with respect to $\cocylinder$. \end{defn}

\begin{notn} \label{CleavageFibrationCylinderNotation} Let $\cylinder = \big( \cyl,i_0,i_1 \big)$ be a cylinder in $\cl{A}$. Let \ar{a_{1},a_{2},f} be an arrow of $\cl{A}$, and let $a_{0}$ be an object of $\cl{A}$. 

Let $\Delta_{f,a_{0}}^{\cylinder}$ denote the set of pairs $(g,h)$ consisting of an arrow \ar{a_{0},a_{1},g} of $\cl{A}$ and a homotopy \ar{\cyl(a_{0}),a_{1},h} with respect to $\cylinder$, such that the following diagram in $\cl{A}$ commutes. \sq{a_{0},a_{1},\cyl(a_{0}),a_{2},g,f,i_{0}(a_{0}),h} 

Let $\Omega_{f,a_{0}}^{\cylinder}$ denote the set of homotopies \ar{\cyl(a_{0}),a_{1},l} with respect to $\cylinder$ such that the following diagram in $\cl{A}$ commutes. \liftingsquare{a_{0},a_{1},\cyl(a_{0}),a_{2},g,f,i_{0}(a_{0}),h,l} \end{notn}

\begin{defn} Let $\cylinder = \big( \cyl,i_0,i_1 \big)$ be a cylinder in $\cl{A}$. A {\em fibration equipped with a cleavage} with respect to $\cylinder$ is an arrow \ar{a_{1},a_{2},f} of $\cl{A}$, together with a map \ar{\Delta_{f,a_{0}}^{\cylinder},\Omega_{f,a_{0}}^{\cylinder},l_{a_{0}}} for every object $a_{0}$ of $\cl{A}$. \end{defn}   

\begin{defn} \label{NormallyClovenFibrationCylinderDefinition} Let $\cylinder = \big( \cyl, i_0, i_1, p \big)$ be a cylinder in $\cl{A}$ equipped with a contraction structure $p$. A {\em normally cloven fibration} with respect to $\cylinder$ is an arrow \ar{a_{1},a_{2},f} of $\cl{A}$ which is a fibration equipped with a cleavage with respect to $\cylinder$, satisfying the following conditions, for which we denote by \ar{\Delta_{f,a_{0}}^{\cylinder},\Omega_{f,a_{0}}^{\cylinder},l_{a_{0}}} the map of the cleavage corresponding to an object $a_{0}$ of $\cl{A}$.  

\begin{itemize}[itemsep=1em,topsep=1em]

\item[(i)] Suppose that we have a commutative diagram in $\cl{A}$ as follows. \triother{a_{0},a_{1},a_{2},g_{1},f,g_{2}} Then the following diagram in $\cl{A}$ commutes. \tri{\cyl(a_{0}),a_{0},a_{1},p(a_{0}),g_{1},{l_{a_{0}}(g_{1},g_{2} \circ p(a_{0}))}} 

\item[(ii)] Suppose that we have a commutative diagram in $\cl{A}$ as follows. \sq{a_{0},a_{1},\cyl(a_{0}),a_{2},g_{1},f,i_{0}(a_{0}),h} Then for any arrow \ar{a_{-1},a_{0},g_{0}} of $\cl{A}$ the following diagram in $\cl{A}$ commutes. \tri[{4,3}]{\cyl(a_{-1}),\cyl(a_{0}),a_{1},\cyl(g_{0}),{l_{a_{0}}(g_{1},h)},{l_{a_{-1}}\big(g_{1} \circ g_{0}, h \circ \cyl(g_{0})\big)}} 

\end{itemize} 

\end{defn}

\begin{terminology} Let $\cylinder = \big( \cyl,i_0,i_1,p \big)$ be a cylinder in $\cl{A}$ equipped with a contraction structure $c$. Let \ar{a_{1},a_{2},f} be a fibration equipped with a cleavage with respect to $\cylinder$, which is moreover a normally cloven fibration with respect to $\cylinder$. 

We shall typically refer to $f$ as a normally cloven fibration, without explicitly mentioning its cleavage. \end{terminology}

\begin{notn} Let $\cocylinder = \big( \cocyl,e_0,e_1 \big)$ be a co-cylinder in $\cl{A}$. Let \ar{a_{0},a_{1},j} be an arrow of $\cl{A}$, and let $a_{2}$ be an object of $\cl{A}$. 

Let $\Delta_{j,a_{2}}^{\cocylinder}$ denote the set of pairs $(g,h)$ consisting of an arrow \ar{a_{1},a_{2},g} of $\cl{A}$ and a homotopy \ar{a_{0},\cocyl(a_{2}),h} with respect to $\cocylinder$, such that the following diagram in $\cl{A}$ commutes. \sq{a_{0},\cocyl(a_{2}),a_{1},a_{2},h,e_{0}(a_{2}),j,g} 

Let $\Omega_{j,a_{2}}^{\cocylinder}$ denote the set of homotopies \ar{a_{1},\cocyl(a_{2}),l} with respect to $\cocylinder$ such that the following diagram in $\cl{A}$ commutes. \liftingsquare{a_{0},\cocyl(a_{2}),a_{1},a_{2},h,e_{0}(a_{2}),j,l} \end{notn}

\begin{defn} Let $\cocylinder = \big( \cocyl,e_0,e_1 \big)$ be a co-cylinder in $\cl{A}$. A {\em cofibration equipped with a cleavage} with respect to $\cocylinder$ is an arrow \ar{a_{0},a_{1},j} of $\cl{A}$, together with a map \ar{\Delta_{j,a_{2}}^{\cocylinder},\Omega_{j,a_{2}}^{\cocylinder},l_{a_{2}}} for every object $a_{2}$ of $\cl{A}$. \end{defn}  

\begin{rmk} \label{CleavageDualityFibCylinderCofibCoCylinderRemark} Let $\cocylinder = \big( \cocyl,e_0,e_1 \big)$ be a co-cylinder in $\cl{A}$, and let \ar{a_{0},a_{1},j} be a cofibration equipped with a cleavage with respect to $\cocylinder$.  Let $a_{2}$ be an object of $\cl{A}$, and let \ar{\Delta_{j,a_{2}}^{\cocylinder},\Omega_{j,a_{2}}^{\cocylinder},l_{a_{2}}} denote the corresponding map of the cleavage. 

Associating to a pair $(g^{op},h^{op})$ in $\Delta_{j^{op},a_{2}}^{\cocylinder^{op}}$ the arrow $\big(l_{a_{2}}(g,h)\big)^{op}$ of $\cl{A}^{op}$, defines a map \ar{\Delta_{j^{op},a_{2}}^{\cocylinder^{op}},{\Omega_{j^{op},a_{2}}^{\cocylinder^{op}},}} which we shall denote by $(l^{op})_{a_{2}}$. Thus a cleavage with respect to $j$ and $\cocylinder$ gives rise to a cleavage with respect to $j^{op}$ and the cylinder $\cocylinder^{op}$ in $\cl{A}^{op}$. \end{rmk}

\begin{defn} Let $\cocylinder = \big( \cocyl,e_0,e_1,c \big)$ be a co-cylinder in $\cl{A}$ equipped with a contraction structure $c$. A {\em normally cloven cofibration} with respect to $\cocylinder$ is an arrow \ar{a_{0},a_{1},j} of $\cl{A}$ which is a cofibration equipped with a cleavage with respect to $\cocylinder$, such that $j^{op}$ equipped with the cleavage of Remark \ref{CleavageDualityFibCylinderCofibCoCylinderRemark} is a normally cloven fibration with respect to the cylinder $\cocylinder^{op}$ in $\cl{A}^{op}$ equipped with the contraction structure $c^{op}$. \end{defn}

\begin{terminology} Let $\cocylinder = \big( \cocyl,i_0,i_1,c \big)$ be a cylinder in $\cl{A}$ equipped with a contraction structure $c$. Let \ar{a_{0},a_{1},j} be a cofibration equipped with a cleavage with respect to $\cocylinder$, which is moreover a normally cloven cofibration with respect to $\cocylinder$. 

We shall typically refer to $j$ as a normally cloven fibration, without explicitly mentioning its cleavage. \end{terminology}

\begin{prpn} \label{NormallyClovenFibrationCoCylinderIffNormallyClovenFibrationCylinderProposition} Let $\cylinder = \big( \cyl, i_0, i_1, p \big)$ be a cylinder in $\cl{A}$ equipped with a contraction structure $p$, and let $\cocylinder = \big( \cocyl, e_0, e_1, c \big)$ be a co-cylinder in $\cl{A}$ equipped with a contraction structure $c$. 

Suppose that $\cylinder$ is left adjoint to $\cocylinder$, and that the adjunction between $\cyl$ and $\cocyl$ is compatible with $p$ and $c$. Then an arrow \ar{a_{1},a_{2},f} of $\cl{A}$ is a normally cloven fibration with respect to $\cocylinder$ if and only if it is a normally cloven fibration with respect to $\cylinder$. \end{prpn}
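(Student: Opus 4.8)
The plan is to obtain this as an upgrade of Proposition~\ref{FibrationCoCylinderIffFibrationCylinderProposition}. That proposition already identifies fibrations with respect to $\cocylinder$ with fibrations with respect to $\cylinder$ by means of the adjunction isomorphism $\mathsf{adj}$; what remains is to check that $\mathsf{adj}$ transports cleavages and matches up the two normality conditions. Both implications are proved by the same bijections read in opposite directions, so it suffices to treat the implication that a normally cloven fibration $f$ with respect to $\cocylinder$ is a normally cloven fibration with respect to $\cylinder$.

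The first step is the bijection of cleavage data. Fix an object $a_{0}$ of $\cl{A}$. The computations in the proof of Proposition~\ref{FibrationCoCylinderIffFibrationCylinderProposition}, which rest on the commuting squares of Definition~\ref{CylindricalAdjunctionDefinition}(ii), show that $(g,h) \mapsto \big(g,\mathsf{adj}(h)\big)$ is a bijection $\Delta_{f,a_{0}}^{\cylinder} \to \Sigma_{f,a_{0}}^{\cocylinder}$ with inverse $(g,h) \mapsto \big(g,\mathsf{adj}^{-1}(h)\big)$, and that $l \mapsto \mathsf{adj}(l)$ is a bijection $\Omega_{f,a_{0}}^{\cylinder} \to \Upsilon_{f,a_{0}}^{\cocylinder}$ with inverse $\mathsf{adj}^{-1}$. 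Hence if $\big(k_{a_{0}}\big)_{a_{0}}$ is the cleavage exhibiting $f$ as a fibration with respect to $\cocylinder$, then setting $l_{a_{0}}(g,h) := \mathsf{adj}^{-1}\big(k_{a_{0}}(g,\mathsf{adj}(h))\big)$ defines a cleavage exhibiting $f$ as a fibration with respect to $\cylinder$. This is the adjunction analogue of Remark~\ref{CleavageDualityCofibCylinderFibCoCylinderRemark}, which passes between cleavages for $f$ and for $f^{op}$.

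The second step is to check that the resulting cleavage $\big(l_{a_{0}}\big)_{a_{0}}$ satisfies the two conditions of Definition~\ref{NormallyClovenFibrationCylinderDefinition}, knowing that $\big(k_{a_{0}}\big)_{a_{0}}$ satisfies the corresponding conditions via Definition~\ref{NormallyClovenFibrationCoCylinderDefinition} and Definition~\ref{NormallyClovenCofibrationCylinderDefinition}. For lifting of identities, given $g_{1} \colon a_{0} \to a_{1}$ and $g_{2} \colon a_{0} \to a_{2}$ with $f g_{1} = g_{2}$, the identity homotopy $g_{2} \circ p(a_{0})$ of $g_{2}$ with respect to $\cylinder$ of Proposition~\ref{IdentityHomotopyProposition} is sent by $\mathsf{adj}$ to $\mathsf{adj}\big(g_{2} \circ p(a_{0})\big)$, which by compatibility of the adjunction with $p$ and $c$ (Definition~\ref{CylindricalAdjunctionCompatibleWithContractionAndExpansionDefinition}) equals $c(a_{2}) \circ g_{2}$, the identity homotopy of $g_{2}$ with respect to $\cocylinder$; likewise $g_{1} \circ p(a_{0})$ is sent to $c(a_{1}) \circ g_{1}$. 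So the statement $l_{a_{0}}\big(g_{1}, g_{2} \circ p(a_{0})\big) = g_{1} \circ p(a_{0})$ is, through $\mathsf{adj}$ and the bijections of the first step, exactly the identity statement for $k_{a_{0}}$, which holds. For compatibility of liftings the translation uses only the naturality of $\mathsf{adj}$ in both variables, namely $\mathsf{adj}\big(h \circ \cyl(g_{0})\big) = \mathsf{adj}(h) \circ g_{0}$ and $\mathsf{adj}\big(g' \circ h\big) = \cocyl(g') \circ \mathsf{adj}(h)$, together with functoriality of $\cyl$ and $\cocyl$; so condition~(ii) of Definition~\ref{NormallyClovenFibrationCylinderDefinition} for $l_{a_{0}}$ and condition~(ii) of Definition~\ref{NormallyClovenCofibrationCylinderDefinition} for $k_{a_{0}}$ are mutual translates.

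The only point that is not pure bookkeeping is the identification of identity homotopies across the adjunction in the lifting-of-identities step; this is precisely what the hypothesis that the adjunction be compatible with $p$ and $c$ supplies, and without it the two normality conditions would fail to correspond. Everything else is a matter of unwinding the bijections $\Delta_{f,a_{0}}^{\cylinder} \leftrightarrow \Sigma_{f,a_{0}}^{\cocylinder}$ and $\Omega_{f,a_{0}}^{\cylinder} \leftrightarrow \Upsilon_{f,a_{0}}^{\cocylinder}$ already implicit in the proof of Proposition~\ref{FibrationCoCylinderIffFibrationCylinderProposition} and appealing to naturality of $\mathsf{adj}$. The reverse implication is obtained by running the same bijections in the opposite direction.
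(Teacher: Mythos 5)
Your proposal is essentially the same argument as the paper's: transport the cleavage across $\mathsf{adj}$ via $l_{a_0}(g,h) := \mathsf{adj}^{-1}\big(k_{a_0}(g,\mathsf{adj}(h))\big)$, check lifting of identities using compatibility of the adjunction with $p$ and $c$, and check compatibility of liftings using naturality of $\mathsf{adj}$ — exactly what the paper does, though the paper presents both directions explicitly rather than appealing to the symmetry of the bijections. One very small imprecision: for condition~(ii) only the precomposition form of naturality, $\mathsf{adj}\big(h \circ \cyl(g_{0})\big) = \mathsf{adj}(h) \circ g_{0}$, is actually used (both Definition~\ref{NormallyClovenFibrationCylinderDefinition}~(ii) and the dualized version for $\cocylinder$ are precomposition conditions), so mentioning the postcomposition formula $\mathsf{adj}(g' \circ h) = \cocyl(g') \circ \mathsf{adj}(h)$ is superfluous, but this does not affect the correctness of the argument.
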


\begin{proof} We first prove that if $f$ is a normally cloven fibration with respect to $\cocylinder$, then it is a normally cloven fibration with respect to $\cylinder$. To this end, adopting the shorthand of Recollection \ref{AdjunctionRecollection}, let \ar{{\mathsf{Hom}_{\cl{A}}\big(\cyl(-),- \big)},{\mathsf{Hom}_{\cl{A}}\big(-,\cocyl(-) \big)},\mathsf{adj}} denote the natural isomorphism which the adjunction between $\cyl$ and $\cocyl$ gives rise to. Suppose that we have a commutative diagram in $\cl{A}$ as follows. \sq{a,a_{1},\cyl(a),a_{2},g_{1},f,i_{0}(a),h} %
As in the proof of Proposition \ref{FibrationCoCylinderIffFibrationCylinderProposition}, we have that the following diagram in $\cl{A}$ commutes. \sq[{4,3}]{a,a_{1},a_{2},\cocyl(a_{2}),g_{1},f,\mathsf{adj}(h),e_{0}(a_{2})} %
Let \ar{\Sigma_{f,a}^{\cocylinder},\Upsilon_{f,a}^{\cylinder},k_{a}} denote the map, of the cleavage with which $f$ is equipped, corresponding to the object $a$ of $\cl{A}$. Then the homotopy \ar[4]{a,\cocyl(a_{1}),k_{a}(g_{1}),\mathsf{adj}(h))} with respect to $\cocylinder$ fits into a commutative diagram in $\cl{A}$ as follows. \pullback[{4,4,-1}]{\cocyl(a_{1}),a_{1},\cocyl(a_{2}),a_{2},a,e_{0}(a_{1}),f,\cocyl(f),e_{0}(a_{2}),g_{1},{\mathsf{adj}(h)},{k_{a}\big(g_{1},\mathsf{adj}(h)\big)}} %
Let $l_{a}(g_{1},h)$ denote the arrow \ar[9]{\cyl(a),a_{1},{\mathsf{adj}^{-1}(k_{a}(g_{1},h))}} of $\cl{A}$. Following again the proof of Proposition \ref{FibrationCoCylinderIffFibrationCylinderProposition}, we have that the following diagram in $\cl{A}$ commutes. \liftingsquare[{9,3}]{a,a_{1},\cyl(a),a_{2},g_{1},f,i_{0}(a),h,{l_{a}(g_{1},h)}} 

We claim that the cleavage given by the maps \ar{\Delta_{f,a}^{\cylinder},\Omega_{f,a}^{\cylinder},l_{a}} defined by $(g_{1},h) \mapsto l_{a}(g_{1},h)$, for an object $a$ of $\cl{A}$, equips $f$ with the structure of a normally cloven fibration with respect to $\cylinder$. 

Indeed, suppose that we have a commutative triangle in $\cl{A}$ as follows. \tri{\cyl(a_{0}),a_{0},a_{2},p(a_{0}),g_{2},h} Then \[ \mathsf{adj}(h) = \mathsf{adj}\big( g_{2} \circ p(a_{0}) \big). \] %
Moreover, the following diagram in $\cl{A}$ commutes since the adjunction between $\cyl$ and $\cocyl$ is compatible with $p$ and $c$. \tri{a_{0},a_{2},\cocyl(a_{2}),g_{2},c(a_{2}),\mathsf{adj}\big(g_{2} \circ p(a_{0})\big)} %
We deduce that the following diagram in $\cl{A}$ commutes. \tri{a_{0},a_{2},\cocyl(a_{2}),g_{2},c(a_{2}),\mathsf{adj}(h)} Hence the following diagram in $\cl{A}$ commutes, since $f$ satisfies the lifting of identities condition of Definition \ref{NormallyClovenFibrationCoCylinderDefinition}. \tri{a_{0},a_{1},\cocyl(a_{1}),g_{1},c(a_{1}),{k_{a_{0}}\big(g_{1},\mathsf{adj}(h)\big)}} %
Thus we have that \[ l_{a_{0}}(g_{1},h)  = \mathsf{adj}^{-1}\big( c(a_{1}) \circ g_{1} \big). \] %
Moreover, the following diagram in $\cl{A}$ commutes since the adjunction between $\cyl$ and $\cocyl$ is compatible with $p$ and $c$. \tri{\cyl(a_{0}),a_{0},a_{1},p(a_{0}), g_{1}, \mathsf{adj}^{-1}(c(a_{1}) \circ g_{1})} %
Putting the last two observations together, we have that the following diagram in $\cl{A}$ commutes. \tri{\cyl(a_{0}),a_{0},a_{1},p(a_{0}),g_{1},{l_{a_{0}}(g_{1},h)}} %
This proves that $f$ satisfies the lifting of identities condition of Definition \ref{NormallyClovenFibrationCylinderDefinition}. 

Let $h$ instead be arbitrary, and let \ar{a_{-1},a_{0},g_{0}} be an arrow of $\cl{A}$. The following diagram in $\cl{A}$ commutes, since $f$ satisfies the compatibility of lifts condition of Definition \ref{NormallyClovenFibrationCoCylinderDefinition}. \tri{a_{-1},a_{0},\cocyl(a_{1}),g_{0},{k_{a_{0}}\big(g_{1},\mathsf{adj}(h)\big)}, {k_{a_{-1}}\big(g_{1} \circ g_{0}, \mathsf{adj}(h) \circ g_{0}\big)}} %
We also have that the following diagram in $\cl{A}$ commutes, by the naturality of $\mathsf{adj}$. \tri{a_{-1},a_{0},\cocyl(a_{2}),g_{0},\mathsf{adj}(h),\mathsf{adj}\big(h \circ \cyl(g_{0}) \big)} %
Putting the last two observations together, we have that \[ l_{a_{-1}}\big(g_{1} \circ g_{0}, h \circ \cyl(g_{0})\big) = \mathsf{adj}^{-1}\Big(k_{a_{0}}\big(g_{1},\mathsf{adj}(h)\big) \circ g_{0}\Big). \] %
Moreover, the following diagram in $\cl{A}$ commutes, by the naturality of $\mathsf{adj}^{-1}$. \tri[{4,3}]{\cyl(a_{-1}),\cyl(a_{0}),a_{1},\cyl(g_{0}),{\mathsf{adj}^{-1}\Big(k_{a_{0}}\big(g_{1},\mathsf{adj}(h)\big) \Big)},{\mathsf{adj}^{-1}\Big(k_{a_{0}}\big(g_{1},\mathsf{adj}(h)\big) \circ g_{0}\Big)}} %
Putting the last two observations together, we have that the following diagram in $\cl{A}$ commutes. \tri[{4,3}]{\cyl(a_{-1}),\cyl(a_{0}),a_{1},\cyl(g_{0}),{l_{a_{0}}(g_{1},h)},{l_{a_{-1}}\big(g_{1} \circ g_{0}, h \circ \cyl(g_{0}) \big)}} %
This proves that $f$ satisfies the compatibility of liftings condition of Definition \ref{NormallyClovenFibrationCylinderDefinition}, and concludes this direction of the proof. 

We now prove that if $f$ is a normally cloven fibration with respect to $\cylinder$, then $f$ is a normally cloven fibration with respect to $\cocylinder$. To this end, suppose that we have a commutative diagram in $\cl{A}$ as follows. \sq{a,a_{1},\cocyl(a_{2}),a_{2},g_{1},f,h,e_{0}(a_{2})}%
As in the proof of Proposition \ref{FibrationCoCylinderIffFibrationCylinderProposition}, we have that the following diagram in $\cl{A}$ commutes. \sq[{5,3}]{a,a_{1},\cyl(a),a_{2},g_{1},f,i_{0}(a_{0}),\mathsf{adj}^{-1}(h)} %
Let \ar{\Delta_{f,a}^{\cylinder},\Omega_{f,a}^{\cylinder},l_{a}} denote the map, of the cleavage with which $f$ is equipped, corresponding to the object $a$ of $\cl{A}$. Then the homotopy \ar[8]{\cyl(a),a_{1},{l_{a}\big(g_{1},\mathsf{adj}^{-1}(h)\big)}} with respect to $\cylinder$ fits into a commutative diagram in $\cl{A}$ as follows. \liftingsquare[{15,4}]{a,a_{1},\cyl(a),a_{2},g_{1},f,i_{0}(a),\mathsf{adj}^{-1}(h),{l_{a}\big(g_{1},\mathsf{adj}^{-1}(h)\big)}} %
Let $k_{a}(g_{1},h)$ denote the arrow \ar[11]{a,\cocyl(a_{1}),{\mathsf{adj}\Big(l_{a}\big(g_{1},\mathsf{adj}^{-1}(h)\big)\Big)}} of $\cl{A}$. 

We claim that the cleavage defined by the maps \ar{\Sigma_{f,a}^{\cocylinder},\Upsilon_{f,a}^{\cocylinder},k_{a}} defined by $(g_{1},h) \mapsto k_{a}(g_{1},h)$, for an object $a$ of $\cl{A}$, equips $f$ with the structure of a normally cloven fibration with respect to $\cocylinder$. 

Indeed, suppose that we have a commutative triangle in $\cl{A}$ as follows. \tri{a_{0},a_{2},\cocyl(a_{2}),g_{2},c(a_{2}),h} %
Then we have that \[ \mathsf{adj}^{-1}(h) = \mathsf{adj}^{-1}\big( c(a_{2}) \circ g_{2} \big). \] %
Moreover, the following diagram in $\cl{A}$ commutes, since the adjunction between $\cyl$ and $\cocyl$ is compatible with $p$ and $c$. \tri{\cyl(a_{0}),a_{0},a_{2},p(a_{0}),g_{2},\mathsf{adj}^{-1}\big(c(a_{2}) \circ g_{2}\big)} %
We deduce that the following diagram in $\cl{A}$ commutes. \tri{\cyl(a_{0}),a_{0},a_{2},p(a_{0}),g_{2},\mathsf{adj}^{-1}(h)} %
Hence the following diagram in $\cl{A}$ commutes, since $f$ satisfies the lifting of identities condition of Definition \ref{NormallyClovenFibrationCylinderDefinition}. \tri{\cyl(a_{0}),a_{0},a_{1},p(a_{0}),g_{1},{l_{a_{0}}\big(g_{1},\mathsf{adj}^{-1}(h)\big)}} %
Thus we have that \[ k_{a_{0}}(g_{1},h)  = \mathsf{adj}\big( g_{1} \circ p(a_{0})\big). \] %
Moreover, the following diagram in $\cl{A}$ commutes, since the adjunction between $\cyl$ and $\cocyl$ is compatible with $p$ and $c$. \tri{a_{0},a_{1},\cocyl(a_{1}),g_{1},c(a_{1}),\mathsf{adj}\big(g_{1} \circ p(a_{0})\big)} %
Putting the last two observations together, we have that the following diagram in $\cl{A}$ commutes. \tri{a_{0},a_{1},\cocyl(a_{1}),g_{1},c(a_{1}),{k_{a_{0}}(g_{1},h)}} %
This proves that $f$ satisfies the lifting of identities condition of Definition \ref{NormallyClovenFibrationCoCylinderDefinition}. 

Let $h$ instead be arbitrary, and let \ar{a_{-1},a_{0},g_{0}} be an arrow of $\cl{A}$. The following diagram in $\cl{A}$ commutes, since $f$ satisfies the compatibility of lifts condition of Definition \ref{NormallyClovenFibrationCylinderDefinition}. \tri[{4,3}]{\cyl(a_{-1}),\cyl(a_{0}),a_{1},\cyl(g_{0}),{l_{a_{0}}\big(g_{1},\mathsf{adj}^{-1}(h)\big)}, {l_{a_{-1}}\big(g_{1} \circ g_{0}, \mathsf{adj}^{-1}(h) \circ \cyl(g_{0})\big)}} %
We also have that the following diagram in $\cl{A}$ commutes, by the naturality of $\mathsf{adj}^{-1}$. \tri[{4,3}]{\cyl(a_{-1}),\cyl(a_{0}),a_{2},\cyl(g_{0}),\mathsf{adj}^{-1}(h),\mathsf{adj}^{-1}(h \circ g_{0})} %
Putting the last two observations together, we have that \[ k_{a_{-1}}\big(g_{1} \circ g_{0}, h \circ g_{0}\big) = \mathsf{adj}\Big(l_{a_{0}}\big(g_{1},\mathsf{adj}^{-1}(h)\big) \circ \cyl(g_{0}) \Big). \] %
Moreover, the following diagram in $\cl{A}$ commutes, by the naturality of $\mathsf{adj}$. \tri{a_{-1},a_{0},\cocyl(a_{1}),g_{0},{\mathsf{adj}\Big(l_{a_{0}}\big(g_{1},\mathsf{adj}^{-1}(h)\big) \Big)},{\mathsf{adj}\Big(l_{a_{0}}\big(g_{1},\mathsf{adj}^{-1}(h)\big) \circ \cyl(g_{0}) \Big)}} %
Thus the following diagram in $\cl{A}$ commutes. \tri{a_{-1},a_{0},\cocyl(a_{1}),g_{0},{k_{a_{0}}(g_{1},h)},{k_{a_{-1}}\big(g_{1} \circ g_{0}, h \circ g_{0} \big)}} %
This proves that $f$ satisfies the compatibility of liftings condition of Definition \ref{NormallyClovenFibrationCoCylinderDefinition}, and concludes this direction of the proof. \end{proof}

\begin{cor} Let $\cylinder = \big( \cyl, i_0, i_1, p \big)$ be a cylinder in $\cl{A}$ equipped with a contraction structure $p$, and let $\cocylinder = \big( \cocyl, e_0, e_1, c \big)$ be a co-cylinder in $\cl{A}$ equipped with a contraction structure $c$. Suppose that $\cylinder$ is left adjoint to $\cocylinder$, and that the adjunction between $\cyl$ and $\cocyl$ is compatible with $p$ and $c$. 

An arrow \ar{a_{0},a_{1},j} is a normally cloven cofibration with respect to $\cylinder$ if and only if it is a normally cloven cofibration with respect to $\cocylinder$. \end{cor}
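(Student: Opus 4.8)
The plan is to deduce this from Proposition \ref{NormallyClovenFibrationCoCylinderIffNormallyClovenFibrationCylinderProposition} by passing to the opposite $2$-category, exactly in the way Corollary \ref{CofibrationCoCylinderIffCofibrationCylinderCorollary} was deduced from Proposition \ref{FibrationCoCylinderIffFibrationCylinderProposition}. First I would record the structural facts needed to apply that proposition in $\cl{C}^{op}$: the object $\cl{A}^{op}$ of $\cl{C}^{op}$ carries the cylinder $\cocylinder^{op}$ and the co-cylinder $\cylinder^{op}$, with $c^{op}$ a contraction structure with respect to $\cocylinder^{op}$ and $p^{op}$ a contraction structure with respect to $\cylinder^{op}$; the cylinder $\cocylinder^{op}$ is left adjoint to the co-cylinder $\cylinder^{op}$, since reversing the unit and counit of the adjunction between $\cyl$ and $\cocyl$ yields the unit and counit of an adjunction making $\cocyl$ left adjoint to $\cyl$ in $\cl{C}^{op}$, and the two squares of Definition \ref{CylindricalAdjunctionDefinition}(ii) dualize to the corresponding squares for $\cocylinder^{op}$ and $\cylinder^{op}$; and this adjunction is compatible with $c^{op}$ and $p^{op}$ in the sense of Definition \ref{CylindricalAdjunctionCompatibleWithContractionAndExpansionDefinition}, the defining triangle of that definition being again self-dual once one observes that the transpose isomorphism for $\cocyl$ left adjoint to $\cyl$ in $\cl{C}^{op}$ is $\mathsf{adj}^{-1}$. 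Finally I would note that the standing assumption of this section on $\cl{C}$ --- that pushouts and pullbacks of $2$-arrows give rise to pushouts and pullbacks in formal categories --- is self-dual, so it holds for $\cl{C}^{op}$.

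Given these identifications, Proposition \ref{NormallyClovenFibrationCoCylinderIffNormallyClovenFibrationCylinderProposition}, applied in $\cl{C}^{op}$ to the cylinder $\cocylinder^{op}$, the co-cylinder $\cylinder^{op}$, and the contraction structures $c^{op}$ and $p^{op}$, asserts that an arrow of $\cl{A}^{op}$ is a normally cloven fibration with respect to $\cylinder^{op}$ if and only if it is a normally cloven fibration with respect to $\cocylinder^{op}$. I would apply this to the arrow $j^{op}$. By definition, $j$ is a normally cloven cofibration with respect to $\cocylinder$ precisely when $j^{op}$, equipped with the cleavage of Remark \ref{CleavageDualityFibCylinderCofibCoCylinderRemark}, is a normally cloven fibration with respect to the cylinder $\cocylinder^{op}$; so the right-hand side of the biconditional is exactly the statement that $j$ is a normally cloven cofibration with respect to $\cocylinder$. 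On the other hand, unwinding Definition \ref{NormallyClovenFibrationCoCylinderDefinition} in $\cl{C}^{op}$, the arrow $j^{op}$ is a normally cloven fibration with respect to the co-cylinder $\cylinder^{op}$ precisely when $(j^{op})^{op} = j$, equipped with the cleavage obtained by dualizing twice (via Remarks \ref{CleavageDualityFibCylinderCofibCoCylinderRemark} and \ref{CleavageDualityCofibCylinderFibCoCylinderRemark}), is a normally cloven cofibration with respect to the cylinder $(\cylinder^{op})^{op} = \cylinder$. Since dualizing a cleavage twice returns the original cleavage, the left-hand side of the biconditional is exactly the statement that $j$ is a normally cloven cofibration with respect to $\cylinder$. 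This gives the corollary.

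The only point that is not pure bookkeeping is the verification that the hypotheses of Proposition \ref{NormallyClovenFibrationCoCylinderIffNormallyClovenFibrationCylinderProposition} transfer to $\cl{C}^{op}$, namely the left-adjointness of $\cocylinder^{op}$ over $\cylinder^{op}$ and the contraction-compatibility of its adjunction; I expect this to be the main, though still routine, obstacle. It is dispatched by unwinding Definitions \ref{CylindricalAdjunctionDefinition} and \ref{CylindricalAdjunctionCompatibleWithContractionAndExpansionDefinition} and checking that reversing all $2$-arrows carries each defining commutative diagram to the corresponding diagram for the opposite data. Once this is done, no computation remains, and the proof may be recorded simply as following from Proposition \ref{NormallyClovenFibrationCoCylinderIffNormallyClovenFibrationCylinderProposition} by duality.
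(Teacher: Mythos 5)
Your proposal is correct and takes exactly the approach the paper intends: the paper's own proof is the single line ``Follows immediately from Proposition \ref{NormallyClovenFibrationCoCylinderIffNormallyClovenFibrationCylinderProposition} by duality,'' and what you have written is a careful unpacking of that duality, correctly verifying that $\cocylinder^{op}$ is left adjoint to $\cylinder^{op}$ in $\cl{C}^{op}$ with transpose $\mathsf{adj}^{-1}$, that the adjunction is compatible with $c^{op}$ and $p^{op}$, and that the two sides of the resulting biconditional match the two sides of the corollary under the definitions of normally cloven cofibration with respect to $\cocylinder$ and normally cloven fibration with respect to $\cylinder^{op}$.
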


\begin{proof} Follows immediately from Proposition \ref{NormallyClovenFibrationCoCylinderIffNormallyClovenFibrationCylinderProposition} by duality. \end{proof}

\begin{prpn} \label{RetractionNormallyClovenCofibrationIsNormallyClovenCofibrationProposition} Let $\cylinder = \big( \cyl, i_{0}, i_{1},p  \big)$ be a cylinder in $\cl{A}$ equipped with a contraction structure $p$. Let \ar{a_{0},a_{1},j} be an arrow of $\cl{A}$ which is a normally cloven cofibration with respect to $\cylinder$. 

Suppose that we have commutative diagrams \twosq{a_{2},a_{0},a_{3},a_{1},g_{0},j,j',g_{1},a_{0},a_{2},a_{1},a_{3},r_{0},j',j,r_{1}} in $\cl{A}$, such that $r_{0}$ is a retraction of $g_{0}$, and such that $r_{1}$ is a retraction of $g_{1}$. Then $j'$ is a normally cloven cofibration with respect to $\cylinder$. \end{prpn}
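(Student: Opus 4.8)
The plan is to equip $j'$ with an explicit cleavage manufactured from the cleavage of $j$, and then to deduce the two axioms of Definition \ref{NormallyClovenCofibrationCylinderDefinition} for $j'$ from the corresponding axioms for $j$. Write $k_{a} \colon \Sigma_{j,a}^{\cylinder} \to \Upsilon_{j,a}^{\cylinder}$ for the maps of the cleavage with which $j$ is equipped. Given an object $a$ of $\cl{A}$ and a pair $(g,h) \in \Sigma_{j',a}^{\cylinder}$ --- so $g \colon a_{3} \to a$, $h \colon \cyl(a_{2}) \to a$, and $h \circ i_{0}(a_{2}) = g \circ j'$ --- I would set
\[ k'_{a}(g,h) \;:=\; k_{a}\big(g \circ r_{1},\; h \circ \cyl(r_{0})\big) \circ \cyl(g_{1}). \]
The first thing to check is that $(g \circ r_{1},\, h \circ \cyl(r_{0}))$ really lies in $\Sigma_{j,a}^{\cylinder}$: by naturality of $i_{0}$ one has $h \circ \cyl(r_{0}) \circ i_{0}(a_{0}) = h \circ i_{0}(a_{2}) \circ r_{0} = g \circ j' \circ r_{0} = g \circ r_{1} \circ j$, the last equality being the relation $j' \circ r_{0} = r_{1} \circ j$. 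So $k'_{a}$ is well defined, and the fact that $k'_{a}(g,h) \in \Upsilon_{j',a}^{\cylinder}$ --- that is, $k'_{a}(g,h) \circ i_{0}(a_{3}) = g$ and $k'_{a}(g,h) \circ \cyl(j') = h$ --- is precisely the computation carried out in the proof of Proposition \ref{RetractionCofibrationIsCofibrationProposition}, with the lift there taken to be our chosen $k_{a}(g \circ r_{1},\, h \circ \cyl(r_{0}))$. Hence the maps $k'_{a}$ exhibit $j'$ as a cofibration equipped with a cleavage.

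Next I would verify lifting of identities. Given a commutative triangle with $\gamma_{0} = \gamma_{1} \circ j'$ and $\gamma_{1} \colon a_{3} \to a$, by definition
\[ k'_{a}\big(\gamma_{1},\, \gamma_{0} \circ p(a_{2})\big) = k_{a}\big(\gamma_{1} \circ r_{1},\; \gamma_{0} \circ p(a_{2}) \circ \cyl(r_{0})\big) \circ \cyl(g_{1}). \]
Using naturality of $p$ together with $\gamma_{0} \circ r_{0} = \gamma_{1} \circ j' \circ r_{0} = (\gamma_{1} \circ r_{1}) \circ j$, the homotopy appearing here rewrites as $\gamma_{0} \circ p(a_{2}) \circ \cyl(r_{0}) = \gamma_{0} \circ r_{0} \circ p(a_{0}) = \big((\gamma_{1} \circ r_{1}) \circ j\big) \circ p(a_{0})$. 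Applying the lifting of identities axiom for $j$ to the triangle with top map $\gamma_{1} \circ r_{1} \colon a_{1} \to a$ (and bottom edge $\gamma_{1} \circ r_{1} \circ j$) then gives $k_{a}\big(\gamma_{1} \circ r_{1},\, (\gamma_{1} \circ r_{1} \circ j) \circ p(a_{0})\big) = \gamma_{1} \circ r_{1} \circ p(a_{1})$; composing with $\cyl(g_{1})$, using naturality of $p$ once more and the relation $r_{1} \circ g_{1} = \id(a_{3})$, yields $k'_{a}(\gamma_{1}, \gamma_{0} \circ p(a_{2})) = \gamma_{1} \circ r_{1} \circ g_{1} \circ p(a_{3}) = \gamma_{1} \circ p(a_{3})$, which is the required identity.

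For compatibility of liftings, take $(g,h) \in \Sigma_{j',a}^{\cylinder}$ and an arbitrary arrow $g_{2} \colon a \to a'$; one checks immediately that $(g_{2} \circ g, g_{2} \circ h) \in \Sigma_{j',a'}^{\cylinder}$. Unwinding the definition,
\[ k'_{a'}(g_{2} \circ g,\, g_{2} \circ h) = k_{a'}\big(g_{2} \circ g \circ r_{1},\; g_{2} \circ h \circ \cyl(r_{0})\big) \circ \cyl(g_{1}), \]
and the compatibility of liftings axiom for $j$, applied to the pair $(g \circ r_{1},\, h \circ \cyl(r_{0})) \in \Sigma_{j,a}^{\cylinder}$ and the arrow $g_{2}$, identifies $k_{a'}(g_{2} \circ g \circ r_{1},\, g_{2} \circ h \circ \cyl(r_{0}))$ with $g_{2} \circ k_{a}(g \circ r_{1},\, h \circ \cyl(r_{0}))$. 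Composing with $\cyl(g_{1})$ gives $k'_{a'}(g_{2} \circ g, g_{2} \circ h) = g_{2} \circ k'_{a}(g,h)$, as wanted.

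I expect the entire argument to be routine naturality bookkeeping; the only step needing slight care is the one in the lifting of identities clause, where $\gamma_{0} \circ p(a_{2}) \circ \cyl(r_{0})$ must be recognised as a homotopy of the form $\delta \circ p(a_{0})$ with $\delta = (\gamma_{1} \circ r_{1}) \circ j$, so that the corresponding hypothesis on $j$ can be invoked. Apart from that, everything reduces cleanly to Proposition \ref{RetractionCofibrationIsCofibrationProposition} and the two axioms defining a normally cloven cofibration for $j$.
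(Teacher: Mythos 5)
Your proof is correct and follows essentially the same route as the paper's: you define the same candidate cleavage $k'_{a}(g,h) = k_{a}\big(g \circ r_{1},\, h \circ \cyl(r_{0})\big) \circ \cyl(g_{1})$, defer well-definedness to Proposition \ref{RetractionCofibrationIsCofibrationProposition}, and verify the two axioms of Definition \ref{NormallyClovenCofibrationCylinderDefinition} using the corresponding axioms for $j$ together with naturality of $p$ and the retraction identity $r_{1} \circ g_{1} = \id$. The only cosmetic difference is in the lifting-of-identities clause: you rewrite the second argument of $k_{a}$ into the form $\delta \circ p(a_{0})$ before invoking the hypothesis on $j$, whereas the paper invokes the hypothesis on $j$ first and then performs the naturality rewrite --- the same computation in a different order.
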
 

\begin{proof} For any object $a_{4}$ of $\cl{A}$, let \ar{\Sigma^{\cylinder}_{j,a_{4}},\Upsilon^{\cylinder}_{j,a_{4}},k_{a_{4}}} denote the corresponding map of the cleavage with which $j$ is equipped. Suppose that we have a commutative diagram in $\cl{A}$ as follows. \sq{a_{2},\cyl(a_{2}),a_{3},a_{4},i_{0}(a_{2}),h,j',f} %
As in the proof of Proposition \ref{RetractionCofibrationIsCofibrationProposition}, we have that the following diagram in $\cl{A}$ commutes. \sq{a_{0},\cyl(a_{0}),a_{1},a_{4},i_{0}(a_{0}),h \circ \cyl(r_{0}),j,f \circ r_{1}} %
Let us define a map \ar{\Sigma^{\cylinder}_{j',a_{4}},\Upsilon^{\cylinder}_{j',a_{4}},k'_{a_{4}}} by 

\begin{diagram} 

$(f,h)  \mapsto k_{a_{4}}\big(f \circ r_{1}, h \circ \cyl(r_{0}) \big) \circ \cyl(g_{1})$. 

\end{diagram} %
We claim that the maps $k'_{a}$, for $a$ an object of $\cl{A}$, define a cleavage exhibiting $j'$ to be a normally cloven cofibration with respect to $\cylinder$. 

After the proof of Proposition \ref{RetractionCofibrationIsCofibrationProposition}, it remains to demonstrate that conditions (i) and (ii) of Definition \ref{NormallyClovenCofibrationCylinderDefinition} are satisfied. 

Let us first show that condition (i) holds. To this end, suppose that we have a commutative diagram in $\cl{A}$ as follows. \triother{a_{2},a_{3},a_{4},j',f_{1},f_{0}} %
Then the following diagram in $\cl{A}$ commutes. \squareabovetriangle{a_{0},a_{1},a_{2},a_{3},a_{4},j,r_{1},r_{0},j',f_{1},f_{0}} %
Since the cleavage defined by the maps $k_{a}$, for $a$ an object of $\cl{A}$, exhibits $j$ to be a normally cloven cofibration with respect to $\cylinder$, this cleavage satisfies condition (i) of Definition \ref{NormallyClovenCofibrationCylinderDefinition}. We deduce that the following diagram in $\cl{A}$ commutes. \tri{\cyl(a_{1}),a_{1},a_{4},p(a_{1}),f_{1} \circ r_{1},{k_{a_{4}}\big( f_{1} \circ r_{1}, f_{0} \circ r_{0} \circ p(a_{0})\big)}} %
Thus, by the commutativity of the diagram \sq{\cyl(a_{0}),a_{0},\cyl(a_{2}),a_{2},p(a_{0}),r_{0},\cyl(r_{0}),p(a_{2})} in $\cl{A}$, we have that the following diagram in $\cl{A}$ commutes. \tri{\cyl(a_{1}),a_{1},a_{4},p(a_{1}),f_{1} \circ r_{1},{k_{a_{4}}\big( f_{1} \circ r_{1}, f_{0} \circ p(a_{2}) \circ \cyl(r_{0}) \big)}} %
Moreover, the following diagram in $\cl{A}$ commutes. \squareabovetriangle{\cyl(a_{3}),\cyl(a_{1}),a_{3}, a_{1}, a_{3},\cyl(g_{1}),p(a_{1}),p(a_{3}),g_{1},r_{1},id} %
Putting the last two observations together, we have that the following diagram in $\cl{A}$ commutes. \threetrianglestwo[{6,3,1}]{\cyl(a_{3}),\cyl(a_{1}),a_{3},a_{4},p(a_{3}),\cyl(g_{1}), r_{1} \circ p(a_{1}),f_{1},{k_{a_{4}}\big( f_{1} \circ r_{1},f_{0} \circ p(a_{2}) \circ \cyl(r_{0}) \big)}} %
Thus the following diagram in $\cl{A}$ commutes, as required. \tri{\cyl(a_{3}),a_{3},a_{4},p(a_{3}),f_{1},{k'_{a_{4}}\big(f_{1},f_{0} \circ p(a_{2})\big)}} 

Let us now prove that condition (ii) of Definition \ref{NormallyClovenCofibrationCylinderDefinition} holds. Let \ar{a_{4},a_{5},f'} be an arrow of $\cl{A}$. Since the cleavage defined by the maps $k_{a}$, for $a$ an object of $\cl{A}$, exhibits $j$ to be a normally cloven cofibration with respect to $\cylinder$, this cleavage satisfies condition (ii) of Definition \ref{NormallyClovenCofibrationCylinderDefinition}. We deduce that the following diagram in $\cl{A}$ commutes. \tri[{10,3}]{\cyl(a_{1}),a_{4},a_{5},{k_{a_{4}}\big(f \circ r_{1},h \circ \cyl(r_{0})\big)},f',{k_{a_{5}}\big( f' \circ f \circ r_{1},  f' \circ h \circ \cyl(r_{0}) \big)}} %
By definition of $k'_{a_{4}}(f,h)$ and $k'_{a_{5}}(f' \circ f, f' \circ h)$, the diagrams \tri{\cyl(a_{3}),a_{4},\cyl(a_{1}),\cyl(g_{1}),{k_{a_{4}}\big(f \circ r_{1}, h \circ \cyl(r_{0})\big)},{k_{a_{4}}'(f,h)}} and \tri{\cyl(a_{3}),\cyl(a_{1}),a_{5}, \cyl(g_{1}),{k_{a_{5}}\big(f' \circ f \circ r_{1},f' \circ h \circ \cyl(r_{0}) \big)},{k_{a_{5}}'(f' \circ f, f'\circ h)}} in $\cl{A}$ commute. %
Putting the last two observations together, we have that the following diagram in $\cl{A}$ commutes, as required. \tri[{4,3}]{\cyl(a_{3}),a_{4},a_{5},{k'_{a_{4}}(f,h)},f',{k'_{a_{5}}(f' \circ f, f' \circ h)}} 

\end{proof}

\begin{cor} \label{RetractionTrivialNormallyClovenCofibrationIsTrivialNormallyClovenCofibrationCorollary} Let $\cylinder = \big( \cyl, i_{0}, i_{1},p  \big)$ be a cylinder in $\cl{A}$ equipped with a contraction structure $p$. Let \ar{a_{0},a_{1},j} be an arrow of $\cl{A}$ which is a trivial normally cloven cofibration with respect to $\cylinder$. 

Suppose that we have commutative diagrams \twosq{a_{2},a_{0},a_{3},a_{1},g_{0},j,j',g_{1},a_{0},a_{2},a_{1},a_{3},r_{0},j',j,r_{1}} in $\cl{A}$, such that $r_{0}$ is a retraction of $g_{0}$, and such that $r_{1}$ is a retraction of $g_{1}$. Then $j'$ is a trivial normally cloven cofibration with respect to $\cylinder$. \end{cor}

\begin{proof} Follows immediately from Proposition \ref{RetractionNormallyClovenCofibrationIsNormallyClovenCofibrationProposition} and Proposition \ref{RetractionHomotopyEquivalenceIsHomotopyEquivalenceProposition}. \end{proof}  

\begin{cor} \label{RetractionNormallyClovenFibrationIsNormallyClovenFibrationCorollary} Let $\cocylinder = \big( \cocyl, e_{0}, e_{1}, c  \big)$ be a co-cylinder in $\cl{A}$ equipped with a contraction structure $c$. Let \ar{a_{0},a_{1},f} be an arrow of $\cl{A}$ which is a normally cloven fibration with respect to $\cocylinder$. 

Suppose that we have commutative diagrams \twosq{a_{2},a_{0},a_{3},a_{1},g_{0},f,f',g_{1},a_{0},a_{2},a_{1},a_{3},r_{0},f',f,r_{1}} in $\cl{A}$, such that $r_{0}$ is a retraction of $g_{0}$, and such that $r_{1}$ is a retraction of $g_{1}$. Then $f'$ is a normally cloven fibration with respect to $\cocylinder$. \end{cor} 

\begin{proof} Follows immediately from Proposition \ref{RetractionNormallyClovenCofibrationIsNormallyClovenCofibrationProposition} by duality. \end{proof}

\begin{cor} \label{RetractionTrivialNormallyClovenFibrationIsTrivialNormallyClovenFibrationCorollary} Let $\cocylinder = \big( \cocyl, e_{0}, e_{1}, c  \big)$ be a co-cylinder in $\cl{A}$ equipped with a contraction structure $c$. Let \ar{a_{0},a_{1},f} be an arrow of $\cl{A}$ which is a trivial normally cloven fibration with respect to $\cocylinder$. 

Suppose that we have commutative diagrams \twosq{a_{2},a_{0},a_{3},a_{1},g_{0},f,f',g_{1},a_{0},a_{2},a_{1},a_{3},r_{0},f',f,r_{1}} in $\cl{A}$, such that $r_{0}$ is a retraction of $g_{0}$, and such that $r_{1}$ is a retraction of $g_{1}$. Then $f'$ is a trivial normally cloven fibration with respect to $\cocylinder$. \end{cor} 

\begin{proof} Follows immediately from Corollary \ref{RetractionNormallyClovenFibrationIsNormallyClovenFibrationCorollary} and Proposition \ref{RetractionHomotopyEquivalenceIsHomotopyEquivalenceProposition}. \end{proof}

\end{chapter}

\begin{chapter}{Mapping cylinders and mapping co-cylinders} \label{MappingCylindersAndMappingCoCylindersChapter}

Let $\cl{A}$ be a formal category. With respect to a cylinder $\cylinder$ in $\cl{A}$, we introduce the notion of a mapping cylinder $a^{\cylinder}_{j}$ of an arrow \ar{a_{0},a_{1},f} of $\cl{A}$. There is a canonical arrow $m^{\cylinder}_{f}$ from $a^{\cylinder}_{f}$ to $\cyl(a_{1})$. This arrow admits a retraction if $f$ is a cofibration with respect to $\cylinder$.

 We introduce the dual notion of a mapping co-cylinder $a^{\cocylinder}_{f}$ of $f$ with respect to a co-cylinder $\cocylinder$ in $\cl{A}$. There is a canonical arrow from $\cocyl(a_{0})$ to $a^{\cocylinder}_{f}$. This arrow admits a section if $f$ is a fibration with respect to $\cocylinder$.

In \ref{LiftingAxiomsChapter}, we shall prove that if $\cylinder$ admits sufficient structure, then the retraction of $m^{\cylinder}_{f}$ for $f$ a cofibration is a strong deformation retraction with respect to $\cylinder$. This will be a vital step towards establishing the lifting axioms for a model structure.

To return to the current section, we present a proof, given by van den Berg and Garner in \S{6} of \cite{VanDenBergGarnerTopologicalAndSimplicialModelsOfIdentityTypes}, that if $\cylinder$ has strictness of right identities, then every arrow $f$ of $\cl{A}$ factors through $a^{\cylinder}_{f}$ into a normally cloven cofibration followed by a strong deformation retraction. 

If $\cocylinder$ has strictness of right identities we deduce, dually, that $f$ factors through $a^{\cocylinder}_{f}$ into an arrow admitting a strong deformation retraction followed by a normally cloven fibration. 

Strictness of right identities is an indispensable hypothesis here. We shall discuss it a little more in \ref{FactorisationAxiomsChapter}, where we shall build upon our work here to establish the factorisation axioms for a model structure. 

To carry through the ideas of van den Berg and Garner, we assume that $\cylinder$ admits a lower right connection structure. Instead, van den Berg and Garner work with what is known as a strength, a structure of a slightly different nature to those which we considered in \ref{StructuresUponACylinderOrACoCylinderChapter}. 

The Moore co-cylinder in topological spaces is a key motivating example for van den Berg and Garner. It does not admit a lower connection structure, but does admit a strength. We refer the reader to \S{4.2} of \cite{VanDenBergGarnerTopologicalAndSimplicialModelsOfIdentityTypes} for more on this. Everywhere that we make use of a connection in this work, it should be possible to instead make use of a strength. 

In abstract homotopy theory, the observation that the mapping cylinder of an arrow \ar{a_{0},a_{1},f} of $\cl{A}$ gives rise to a factorisation into a cofibration followed by a strong deformation retraction goes back to Kamps. It can be found in \S{4} of \cite{KampsKanBedingungenUndAbstrakteHomotopietheorie}, and is also Theorem 5.11 in the book \cite{KampsPorterAbstractHomotopyAndSimpleHomotopyTheory} of Kamps and Porter. 

However, the cleavage of the arrow \ar{a_{0},a^{\cylinder}_{f},j} constructed in these works does not satisfy our conditions for $j$ to be a normally cloven cofibration. The cleavage we construct after van den Berg and Garner does demonstrate $j$ to be a normally cloven cofibration, which is crucial for us.

A different proof that the mapping cylinder of $f$ gives rise to a factorisation into a cofibration followed by a strong deformation retraction was given by Grandis in \S{4.6.5} -- \S{4.6.7} of the book \cite{GrandisDirectedAlgebraicTopology}, under the additional assumption that $\cylinder$ admits what is referred to as an extended acceleration structure.

A key step in this argument of Grandis appears as Theorem 1.8 in his earlier paper \cite{GrandisDirectedHomotopyTheoryII}, but under weaker hypotheses. The proof given in \cite{GrandisDirectedHomotopyTheoryII} is erroneous, but can be repaired if $\cylinder$ admits a zero collapse structure, in the sense of \cite{GrandisCategoricallyAlgebraicFoundationsForHomotopicalAlgebra}, or alternatively if $\cylinder$ has strictness of right identities. 

We also point the reader to Theorem 3.8 of the paper \cite{GrandisHomotopicalAlgebraInHomotopicalCategories} of Grandis.

\begin{assum} Let $\cl{C}$ be a 2-category with a final object. Suppose that pushouts and pullbacks of 2-arrows of $\cl{C}$ give rise to pushouts and pullbacks in formal categories, in the sense of Definition \ref{PushoutsPullbacks2ArrowsArePushoutsPullbacksInFormalCategoriesTerminology}. Let $\cl{A}$ be an object of $\cl{C}$. As before, we view $\cl{A}$ as a formal category, writing of objects and arrows of $\cl{A}$. \end{assum} 

\begin{defn} Let $\cylinder = \big( \cyl, i_0, i_1 \big)$ be a cylinder in $\cl{A}$. A {\em mapping cylinder} with respect to $\cylinder$ of an arrow \ar{a_{0},a_{1},f} of $\cl{A}$ is an object $a^{\cylinder}_{f}$ of $\cl{A}$, together with an arrow \ar{\cyl(a_{0}),a^{\cylinder}_{f},d^{0}_{f}} of $\cl{A}$, and an arrow \ar{a_{1},a^{\cylinder}_{f},d^{1}_{f}} of $\cl{A}$, such that the following diagram in $\cl{A}$ is co-cartesian. \sq{a_{0},\cyl(a_{0}),a_{1},a^{\cylinder}_{f},i_{0}(a_{0}),d^{0}_{f},f,d^{1}_{f}} \end{defn}

\begin{defn} Let $\cylinder = \big( \cyl, i_0, i_1 \big)$ be a cylinder in $\cl{A}$. Then $\cl{A}$ {\em has mapping cylinders} with respect to $\cylinder$ if, for every arrow $f$ of $\cl{A}$, we have a mapping cylinder $\big(a^{\cylinder}_{f}, d^{0}_{f},d^{1}_{f} \big)$ of $f$ with respect to $\cylinder$. \end{defn}

\begin{defn} Let $\cocylinder = \big( \cocyl, e_0, e_1 \big)$ be a co-cylinder in $\cl{A}$. A {\em mapping co-cylinder} of $f$ with respect to $\cocylinder$ of an arrow \ar{a_{0},a_{1},f} of $\cl{A}$ is an object $a^{\cocylinder}_{f}$ of $\cl{A}$, together with an arrow \ar{a^{\cocylinder}_{f},a_{0},d^{0}_{f}} of $\cl{A}$, and an arrow \ar{a^{\cocylinder}_{f},\cocyl(a_{1}),d^{1}_{f}} of $\cl{A}$, such that the following diagram in $\cl{A}$ is cartesian. \sq[{4,3}]{a^{\cocylinder}_{f},a_{0},\cocyl(a_{1}),a_{1},d^{0}_{f},f,d^{1}_{f},e_{0}(a_{1})} \end{defn}

\begin{rmk} Let $\cocylinder = \big( \cocyl, e_0, e_1 \big)$ be a co-cylinder in $\cl{A}$. Let \ar{a_{0},a_{1},f} be an arrow of $\cl{A}$. Then $\big(a^{\cocylinder}_{f},d^{0}_{f},d^{1}_{f} \big)$ defines a mapping co-cylinder of $f$ with respect to $\cocylinder$ if and only if $\big( a^{\cocylinder}_{f}, (d^{0}_{f})^{op}, (d^{1}_{f})^{op} \big)$ defines a mapping cylinder of $f$ with respect to the cylinder $\cocylinder^{op}$ in $\cl{A}^{op}$. \end{rmk}

\begin{defn} Let $\cocylinder = \big( \cocyl, e_0, e_1 \big)$ be a co-cylinder in $\cl{A}$. Then $\cl{A}$ {\em has mapping co-cylinders} with respect to $\cocylinder$ if, for every arrow $f$ of $\cl{A}$, we have a mapping co-cylinder $\big( a^{\cocylinder}_{f},d^{0}_{f},d^{1}_{f} \big)$ of $f$ with respect to $\cocylinder$. \end{defn}

\begin{defn} Let $\cylinder = \big( \cyl, i_0, i_1 \big)$ be a cylinder in $\cl{A}$. Then $\cyl$ {\em preserves mapping cylinders} with respect to $\cylinder$ if, for every pair of an arrow \ar{a_{0},a_{1},f} of $\cl{A}$ and a mapping cylinder $\big( a^{\cylinder}_{f}, d^{0}_{f}, d^{1}_{f} \big)$ of $f$ with respect to $\cylinder$, we have that $\big( \cyl\big(a^{\cylinder}_{f}\big), \cyl(d^{0}_{f}), \cyl(d^{1}_{f}) \big)$ defines a mapping cylinder of $\cyl(f)$ with respect to $\cylinder$. \end{defn} 

\begin{defn} Let $\cocylinder = \big( \cocyl, e_0, e_1 \big)$ be a co-cylinder in $\cl{A}$. Then $\cocyl$ {\em preserves mapping co-cylinders} with respect to $\cocylinder$ if $\cocyl$ preserves mapping cylinders with respect to the cylinder $\cocylinder^{op}$ in $\cl{A}^{op}$. \end{defn} 

\begin{notn} \label{CanonicalArrowFromMappingCylinderToCylinderDefinition} Let $\cylinder = \big( \cyl, i_{0}, i_{1} \big)$ be a cylinder in $\cl{A}$. Let \ar{a_{0},a_{1},f} be an arrow of $\cl{A}$, and suppose that $\big( a^{\cylinder}_{f}, d^{0}_{f}, d^{1}_{f} \big)$ defines a mapping cylinder of $f$ with respect to $\cylinder$. 

We denote by $m^{\cylinder}_{f}$ the canonical arrow of $\cl{A}$ such that the following diagram in $\cl{A}$ commutes. \pushout{a_{0},\cyl(a_{0}),a_{1},a^{\cylinder}_{f},\cyl(a_{1}),i_{0}(a_{0}),d^{0}_{f},f,d^{1}_{f},\cyl(f),i_{0}(a_{1}),m^{\cylinder}_{f}} \end{notn}

\begin{notn} Let $\cocylinder = \big( \cocyl, e_{0}, e_{1} \big)$ be a co-cylinder in $\cl{A}$. Let \ar{a_{0},a_{1},f} be an arrow of $\cl{A}$, and suppose that $\big( a^{\cocylinder}_{f}, d^{0}_{f}, d^{1}_{f} \big)$ defines a mapping co-cylinder of $f$ with respect to $\cocylinder$. 

We denote by $m^{\cocylinder}_{f}$ the canonical arrow of $\cl{A}$ such that the following diagram in $\cl{A}$ commutes. \pullback{a^{\cocylinder}_{f},a_{0},\cocyl(a_{1}),a_{1},\cocyl(a_{0}),d^{0}_{f},f,d^{1}_{f},e_{0}(a_{1}),e_{0}(a_{0}),\cocyl(f),m^{\cocylinder}_{f}} \end{notn}

\begin{prpn} \label{RetractionToMappingCylinderProposition} Let $\cylinder = \big( \cyl, i_0, i_1 \big)$ be a cylinder in $\cl{A}$. Let \ar{a_{0},a_{1},j} be an arrow of $\cl{A}$, and suppose that $\big( a^{\cylinder}_{j}, d^{0}_{j}, d^{1}_{j} \big)$ defines a mapping cylinder of $j$ with respect to $\cylinder$. 

Suppose that $j$ is a cofibration with respect to $\cylinder$, and let \ar{\cyl(a_{1}),a^{\cylinder}_{j},r^{\cylinder}_{j}} denote the corresponding arrow of $\cl{A}$ such that the following diagram in $\cl{A}$ commutes. \pushout{a_{0},\cyl(a_{0}),a_{1},\cyl(a_{1}),a^{\cylinder}_{j},i_{0}(a_{0}),\cyl(j),j,i_{0}(a_{1}),d^{0}_{j},d^{1}_{j},r^{\cylinder}_{j}} Then $r^{\cylinder}_{j}$ is a retraction of $m^{\cylinder}_{j}$. \end{prpn}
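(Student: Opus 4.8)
The plan is to exploit the universal property of the mapping cylinder $a^{\cylinder}_{j}$ as a pushout. By hypothesis the square exhibiting $\big( a^{\cylinder}_{j}, d^{0}_{j}, d^{1}_{j} \big)$ as a mapping cylinder of $j$ with respect to $\cylinder$ is co-cartesian, so --- since pushouts of $2$-arrows of $\cl{C}$ give rise to pushouts in formal categories --- two arrows out of $a^{\cylinder}_{j}$ coincide as soon as they agree after precomposition with both $d^{0}_{j}$ and $d^{1}_{j}$. Hence it suffices to verify that $r^{\cylinder}_{j} \circ m^{\cylinder}_{j} \circ d^{0}_{j} = d^{0}_{j}$ and $r^{\cylinder}_{j} \circ m^{\cylinder}_{j} \circ d^{1}_{j} = d^{1}_{j}$, since these same two identities hold trivially of $\id(a^{\cylinder}_{j})$.

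The first step is to record the defining equations of the two arrows involved. From Notation \ref{CanonicalArrowFromMappingCylinderToCylinderDefinition} we have $m^{\cylinder}_{j} \circ d^{0}_{j} = \cyl(j)$ and $m^{\cylinder}_{j} \circ d^{1}_{j} = i_{0}(a_{1})$. The arrow $r^{\cylinder}_{j}$ exists precisely because $j$ is a cofibration with respect to $\cylinder$, applied to the commutative square with upper horizontal arrow $i_{0}(a_{0})$, left vertical arrow $j$, right vertical arrow $d^{0}_{j}$ and lower horizontal arrow $d^{1}_{j}$ (which commutes by the co-cartesian square of the mapping cylinder); its defining equations are $r^{\cylinder}_{j} \circ \cyl(j) = d^{0}_{j}$ and $r^{\cylinder}_{j} \circ i_{0}(a_{1}) = d^{1}_{j}$.

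The second and final step is the resulting two-line computation: $r^{\cylinder}_{j} \circ m^{\cylinder}_{j} \circ d^{0}_{j} = r^{\cylinder}_{j} \circ \cyl(j) = d^{0}_{j}$ and $r^{\cylinder}_{j} \circ m^{\cylinder}_{j} \circ d^{1}_{j} = r^{\cylinder}_{j} \circ i_{0}(a_{1}) = d^{1}_{j}$. Applying the universal property of the co-cartesian square defining $a^{\cylinder}_{j}$ then yields $r^{\cylinder}_{j} \circ m^{\cylinder}_{j} = \id(a^{\cylinder}_{j})$, which is exactly the assertion that $r^{\cylinder}_{j}$ is a retraction of $m^{\cylinder}_{j}$.

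I do not expect any genuine obstacle; this is a short diagram chase. The only point meriting attention is that the universal property of the mapping cylinder must be used in its uniqueness form --- distinct arrows out of a pushout are separated by their two coprojections --- rather than its existence form, which is legitimate here because the mapping cylinder square is genuinely co-cartesian in $\cl{A}$ viewed as a formal category.
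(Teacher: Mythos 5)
Your proposal is correct and follows essentially the same route as the paper: both check that $r^{\cylinder}_{j} \circ m^{\cylinder}_{j}$ agrees with the identity after precomposition with $d^{0}_{j}$ and with $d^{1}_{j}$ (using the defining equations of $m^{\cylinder}_{j}$ and of $r^{\cylinder}_{j}$), and then invoke the universal property of the co-cartesian square defining $a^{\cylinder}_{j}$ to conclude.
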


\begin{proof} The following diagram in $\cl{A}$ commutes.

\begin{diagram}

\begin{tikzpicture} [>=stealth]

\matrix [ampersand replacement=\&, matrix of math nodes, column sep=11 em, row sep=3 em, nodes={anchor=center}]
{ 
|(0-0)| \cyl(a_{0})        \& |(1-0)| a^{\cylinder}_{j} \\ 
|(0-1)| a^{\cylinder}_{j} \& |(1-1)| \cyl(a_{1}) \\
};
	
\draw[->] (0-0) to node[auto] {$d^{0}_{j}$} (1-0);
\draw[->] (1-0) to node[auto] {$m^{\cylinder}_{j}$} (1-1);
\draw[->] (0-0) to node[auto,swap] {$\cyl(j)$} (1-1);
\draw[->] (1-1) to node[auto] {$r^{\cylinder}_{j}$} (0-1);
\draw[->] (0-0) to node[auto,swap] {$d^{0}_{j}$} (0-1);
\end{tikzpicture} 

\end{diagram} 

The following diagram in $\cl{A}$ also commutes.

\begin{diagram}

\begin{tikzpicture} [>=stealth]

\matrix [ampersand replacement=\&, matrix of math nodes, column sep=11 em, row sep=3 em, nodes={anchor=center}]
{ 
|(0-0)| a_{1}        \& |(1-0)| a^{\cylinder}_{j} \\ 
|(0-1)| a^{\cylinder}_{j} \& |(1-1)| \cyl(a_{1}) \\
};
	
\draw[->] (0-0) to node[auto] {$d^{1}_{j}$} (1-0);
\draw[->] (1-0) to node[auto] {$m^{\cylinder}_{j}$} (1-1);
\draw[->] (0-0) to node[auto,swap] {$i_{0}(a_{1})$} (1-1);
\draw[->] (1-1) to node[auto] {$r^{\cylinder}_{j}$} (0-1);
\draw[->] (0-0) to node[auto,swap] {$d^{1}_{j}$} (0-1);
\end{tikzpicture} 

\end{diagram} %
Putting these observations together, we have that the following diagram in $\cl{A}$ commutes. \pushout[{3,3,1}]{a_{0},\cyl(a_{0}),a_{1},a^{\cylinder}_{j},a^{\cylinder}_{j},i_{0}(a_{0}),d^{0}_{j},j,d^{1}_{j},d^{0}_{j},d^{1}_{j},r^{\cylinder}_{j} \circ m^{\cylinder}_{j}} %
Appealing to the universal property of $a^{\cylinder}_{j}$, we deduce that the following diagram in $\cl{A}$ commutes. \tri{a^{\cylinder}_{j},\cyl(a_{1}),a^{\cylinder}_{j},m^{\cylinder}_{j},r^{\cylinder}_{j},id} \end{proof} 

\begin{prpn} \label{MappingCylinderFactorisationProposition} Let $\cylinder = \big( \cyl, i_0, i_1, p \big)$ be a cylinder in $\cl{A}$, equipped with a contraction structure $p$. Let \ar{a_{0},a_{1},f} be an arrow of $\cl{A}$, and suppose that $\big( a^{\cylinder}_{f}, d^{0}_{f}, d^{1}_{f} \big)$ defines a mapping cylinder of $f$ with respect to $\cylinder$. 

Let \ar{a^{\cylinder}_{f},a_{1},g} denote the canonical arrow of $\cl{A}$ such that the following diagram in $\cl{A}$ commutes. \pushout{a_{0},\cyl(a_{0}),a_{1},a^{\cylinder}_{f},a_{1},i_{0}(a_{0}),d^{0}_{f},f,d^{1}_{f},f \circ p(a_{0}),id,g} 

Let \ar{a_{0},a^{\cylinder}_{f},j} denote the arrow $d^{0}_{f} \circ i_{1}(a_{0})$ of $\cl{A}$. 

The following diagram in $\cl{A}$ commutes. \tri{a_{0},a^{\cylinder}_{f},a_{1},j,g,f} \end{prpn}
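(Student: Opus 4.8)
The plan is to unwind the definitions and reduce the claim to a single identity furnished by the contraction structure. Recall that the stated triangle commutes precisely when $g \circ j = f$. By definition $j = d^{0}_{f} \circ i_{1}(a_{0})$, so $g \circ j = g \circ d^{0}_{f} \circ i_{1}(a_{0})$, and the first thing I would do is rewrite the composite $g \circ d^{0}_{f}$.

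Next I would invoke the defining universal property of $g$: the co-cartesian square exhibiting $a^{\cylinder}_{f}$ as the mapping cylinder forces the equalities $g \circ d^{0}_{f} = f \circ p(a_{0})$ and $g \circ d^{1}_{f} = id(a_{1})$; only the first is needed here. Substituting it into the previous line gives $g \circ j = f \circ p(a_{0}) \circ i_{1}(a_{0})$. Finally, since $p$ is a contraction structure with respect to $\cylinder$, one of its two defining triangles asserts $p \circ i_{1} = id$ as $2$-arrows of $\cl{C}$; evaluating at the object $a_{0}$ (i.e. whiskering with $a_{0}$ in the sense of Notation \ref{2ArrowAsNaturalTransformationNotation}) yields $p(a_{0}) \circ i_{1}(a_{0}) = id(a_{0})$. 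Hence $g \circ j = f \circ id(a_{0}) = f$, as required.

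There is no genuine obstacle: the argument is a three-line computation that uses nothing about $\cylinder$ beyond the presence of the contraction structure $p$. The only point to be careful about is to use the $i_{1}$-triangle of the contraction structure rather than the $i_{0}$-triangle — this matches the fact that $j$ is built from the endpoint $i_{1}$ of the cylinder, while $i_{0}$ is the endpoint already glued to $a_{1}$ along $f$ in the construction of the mapping cylinder. If one prefers a diagrammatic presentation, the same proof can be obtained by pasting the square defining $g$ onto the composite $d^{0}_{f} \circ i_{1}(a_{0})$, but the direct computation is the cleanest.
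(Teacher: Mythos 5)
Your proof is correct and is essentially the same argument as the paper's, just phrased equationally rather than as a single pasted commutative diagram: both use exactly the three facts $j = d^{0}_{f} \circ i_{1}(a_{0})$, $g \circ d^{0}_{f} = f \circ p(a_{0})$ (from the definition of $g$), and $p(a_{0}) \circ i_{1}(a_{0}) = id(a_{0})$ (the $i_{1}$-triangle of the contraction structure). Your remark about needing the $i_{1}$-triangle rather than the $i_{0}$-triangle is a correct and useful sanity check, and matches the paper's diagram.
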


\begin{proof} The following diagram in $\cl{A}$ commutes. 

\begin{diagram}

\begin{tikzpicture} [>=stealth]

\matrix [ampersand replacement=\&, matrix of math nodes, column sep=4 em, row sep=4 em, nodes={anchor=center}]
{
|(0-0)| a_{0} \&                     \& |(2-0)| a^{\cylinder}_{f} \\ 
              \& |(1-1)| \cyl(a_{0}) \&                           \\ 
|(0-2)| a_{0} \&                     \& |(2-2)| a_{1}             \\
};
	
\draw[->] (0-0) to node[auto] {$j$} (2-0);
\draw[->] (0-0) to node[auto] {$i_{1}(a_{0})$} (1-1);
\draw[->] (1-1) to node[auto,swap] {$d^{0}_{f}$} (2-0);
\draw[->] (2-0) to node[auto] {$g$} (2-2);
\draw[->] (0-0) to node[auto,swap] {$id$} (0-2);
\draw[->] (1-1) to node[auto] {$p(a_{0})$} (0-2);
\draw[->] (0-2) to node[auto,swap] {$f$} (2-2);
\end{tikzpicture} 

\end{diagram} 

\end{proof}

\begin{defn} \label{MappingCylinderFactorisationDefinition} Let $\cylinder = \big( \cyl, i_0, i_1, p \big)$ be a cylinder in $\cl{A}$ equipped with a contraction structure $p$. Let \ar{a_{0},a_{1},f} be an arrow of $\cl{A}$, and suppose that $\big( a^{\cylinder}_{f}, d^{0}_{f}, d^{1}_{f} \big)$ defines a mapping cylinder of $f$ with respect to $\cylinder$. 

We refer to the factorisation of $f$ obtained via Proposition \ref{MappingCylinderFactorisationProposition} as the {\em mapping cylinder factorisation} of $f$ with respect to $\cylinder$. \end{defn} 

\begin{cor} \label{MappingCoCylinderFactorisationCorollary} Let $\cylinder = \big( \cocyl, e_0, e_1, c \big)$ be a co-cylinder in $\cl{A}$ equipped with a contraction structure $c$. Let \ar{a_{0},a_{1},f} be an arrow of $\cl{A}$, and suppose that $\big( a^{\cocylinder}_{f}, d^{0}_{f}, d^{1}_{f} \big)$ defines a mapping co-cylinder of $f$ with respect to $\cocylinder$. 

Let \ar{a_{0},a^{\cocylinder}_{f},j} denote the canonical arrow of $\cl{A}$ such that the following diagram in $\cl{A}$ commutes. \pullback[{3,3,-1}]{a^{\cocylinder}_{f},a_{0},\cocyl(a_{1}),a_{1},a_{0},d^{0}_{f},f,d^{1}_{f},e_{0}(a_{1}),id,c(a_{1}) \circ f,j} 

Let \ar{a^{\cocylinder}_{f},a_{1},g} denote the arrow $e_{1}(a_{1}) \circ d^{1}_{f}$ of $\cl{A}$. 

The following diagram in $\cl{A}$ commutes. \tri{a_{0},a^{\cocylinder}_{f},a_{1},j,g,f} \end{cor}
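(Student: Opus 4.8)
The plan is to dualise Proposition \ref{MappingCylinderFactorisationProposition} via the duality between a co-cylinder $\cocylinder$ in $\cl{A}$ and the cylinder $\cocylinder^{op}$ in $\cl{A}^{op}$, exactly as the earlier corollaries in this section (for instance the pattern by which mapping co-cylinders are defined as mapping cylinders in $\cl{A}^{op}$). First I would observe that $\big( a^{\cocylinder}_{f}, (d^{0}_{f})^{op}, (d^{1}_{f})^{op} \big)$ defines a mapping cylinder of $f^{op}$ with respect to the cylinder $\cocylinder^{op}$ in $\cl{A}^{op}$ equipped with the contraction structure $c^{op}$ --- this is recorded in the remark immediately following the definition of mapping co-cylinder. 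The cartesian square defining $a^{\cocylinder}_{f}$ becomes, under $(-)^{op}$, the co-cartesian square defining the mapping cylinder, and $e_{0}(a_{1})^{op}$ plays the role of $i_{0}$ for $\cocylinder^{op}$.

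Next I would apply Proposition \ref{MappingCylinderFactorisationProposition} to $f^{op}$ and this mapping cylinder. It produces a canonical arrow $g'$ from $a^{\cocylinder}_{f}$ (viewed in $\cl{A}^{op}$) to $a_{1}$ determined by a pushout diagram in $\cl{A}^{op}$ whose legs are $f^{op} \circ c(a_{1})^{op}$ (the $\cl{A}^{op}$-analogue of $f \circ p(a_{0})$, using that $p$ corresponds to $c^{op}$) and $\id$, together with an arrow $j'$ defined as $(d^{0}_{f})^{op} \circ i_{1}$ where now $i_{1}$ for $\cocylinder^{op}$ is $e_{1}(a_{1})^{op}$; and the conclusion is that $g' \circ j' = f^{op}$ in $\cl{A}^{op}$, i.e.\ the triangle $j', g', f^{op}$ commutes in $\cl{A}^{op}$. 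Translating the pushout diagram in $\cl{A}^{op}$ back to a pullback diagram in $\cl{A}$ gives precisely the defining diagram for $j$ in the statement (the leg $f \circ p(a_{0})$ dualises to $c(a_{1}) \circ f$), and $j'^{op}$ is $e_{1}(a_{1}) \circ d^{1}_{f} = g$ in the notation of the corollary --- wait, I must be careful: $j'^{op} = \big((d^{0}_{f})^{op} \circ e_{1}(a_{1})^{op}\big)^{op} = e_{1}(a_{1}) \circ d^{1}_{f}$, which is the arrow called $g$; and $g'^{op}$ is the arrow called $j$. So the commuting triangle $j', g', f^{op}$ in $\cl{A}^{op}$ is exactly the commuting triangle $j, g, f$ in $\cl{A}$ claimed in the statement. (The labels $d^{0}$ and $d^{1}$ swap roles under op, matching the asymmetry between the cylinder and co-cylinder definitions.)

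Thus the whole argument is a bookkeeping exercise in transporting Proposition \ref{MappingCylinderFactorisationProposition} through $(-)^{op}$, and the proof can be stated in one line: it follows immediately from Proposition \ref{MappingCylinderFactorisationProposition} by duality. The only genuine point requiring care --- and hence the main obstacle --- is matching up which of the two structure arrows $d^{0}_{f}, d^{1}_{f}$ and which of $e_{0}, e_{1}$ correspond under the op-duality, since the definitions of mapping cylinder and mapping co-cylinder are set up so that the roles are interchanged; once one checks that $j = (j')^{op}$ and $g = (g')^{op}$ with the diagrams matching, there is nothing further to do. I would therefore write:

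\begin{proof} Follows immediately from Proposition \ref{MappingCylinderFactorisationProposition} by duality, applied to $f^{op}$ and the mapping cylinder $\big( a^{\cocylinder}_{f}, (d^{0}_{f})^{op}, (d^{1}_{f})^{op} \big)$ of $f^{op}$ with respect to the cylinder $\cocylinder^{op}$ in $\cl{A}^{op}$ equipped with the contraction structure $c^{op}$. \end{proof}
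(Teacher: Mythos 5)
Your proposal is correct and matches the paper exactly: the paper's proof is the same one-line duality argument, stating only that the corollary follows immediately from Proposition \ref{MappingCylinderFactorisationProposition} by duality. Your careful unwinding of how $j$, $g$, $d^{0}_{f}$, $d^{1}_{f}$, and $e_{0}$, $e_{1}$ correspond under $(-)^{op}$ is a faithful account of the bookkeeping that the paper leaves implicit.
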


\begin{proof} Follows immediately from Proposition \ref{MappingCylinderFactorisationProposition} by duality. \end{proof}

\begin{defn} \label{MappingCoCylinderFactorisationDefinition} Let $\cocylinder = \big( \cocyl, e_0, e_1, c \big)$ be a co-cylinder in $\cl{A}$ equipped with a contraction structure $c$. Let \ar{a_{0},a_{1},f} be an arrow of $\cl{A}$, and suppose that $\big( a^{\cocylinder}_{f}, d^{0}_{f}, d^{1}_{f} \big)$ defines a mapping co-cylinder of $f$ with respect to $\cocylinder$. We refer to the factorisation of $f$ obtained via Proposition \ref{MappingCoCylinderFactorisationCorollary} as the {\em mapping co-cylinder factorisation} of $f$ with respect to $\cocylinder$. \end{defn}

\begin{prpn} \label{MappingCylinderFactorisationStrongDeformationRetractionWithRespectToCylinderProposition} Let $\cylinder = \big( \cyl, i_{0}, i_{1}, p, \Gamma_{lr} \big)$ be a cylinder in $\cl{A}$ equipped with a contraction structure $p$, and a lower right connection structure $\Gamma_{lr}$. Suppose that $\Gamma_{lr}$ is compatible with $p$, and that $\cyl$ preserves mapping cylinders with respect to $\cylinder$. 

Let \ar{a_{0},a_{1},f} be an arrow of $\cl{A}$, and let $\big( a^{\cylinder}_{f}, d^{0}_{f}, d^{1}_{f} \big)$ be a mapping cylinder of $f$ with respect to $\cylinder$. 

Let \tri{a_{0},a^{\cylinder}_{f},a_{1},j,g,f} denote the corresponding mapping cylinder factorisation of $f$. 

There is a homotopy over $a_{1}$ from $d^{1}_{f} \circ g$ to $id\big(a^{\cylinder}_{f}\big)$ with respect to $\cylinder$ and $(g,g)$. \end{prpn}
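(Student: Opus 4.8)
The plan is to construct the homotopy over $a_1$ explicitly using the lower right connection $\Gamma_{lr}$, exploiting the universal property of the mapping cylinder. First I would unwind what needs to be checked: we need a homotopy $\ar{\cyl(a^{\cylinder}_f), a^{\cylinder}_f, H}$ from $d^1_f \circ g$ to $id(a^{\cylinder}_f)$ such that $g \circ H = g \circ p(a^{\cylinder}_f)$, i.e. $H$ is fibrewise over $a_1$ via $g$. Since $\cyl$ preserves mapping cylinders, $\cyl(a^{\cylinder}_f)$ is the pushout of $\cyl(i_0(a_0)) \colon \cyl(a_0) \to \cyl^2(a_0)$ along $\cyl(f)$, with legs $\cyl(d^0_f)$ and $\cyl(d^1_f)$. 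So to define $H$ it suffices to give compatible maps out of $\cyl^2(a_0)$ and out of $\cyl(a_1)$ that agree after precomposing with $\cyl(i_0(a_0))$ and $\cyl(f)$ respectively.

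The key choice: on the $\cyl(a_1)$-summand, take the map $\cyl(a_1) \xrightarrow{\cyl(i_0(a_1))} \cyl^2(a_1) \xrightarrow{\text{something}} a^{\cylinder}_f$ — more precisely, recall $d^1_f \colon a_1 \to a^{\cylinder}_f$, and the homotopy on this summand should be the "constant" homotopy $d^1_f \circ \cyl(a_1) \xrightarrow{p} \ldots$ no — rather the homotopy from $d^1_f \circ g \circ d^1_f = d^1_f$ (using $g \circ d^1_f = id(a_1)$) to $d^1_f = id(a^{\cylinder}_f) \circ d^1_f$, which can be taken to be the identity homotopy $d^1_f \circ p(a_1)$. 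On the $\cyl^2(a_0)$-summand, the homotopy should interpolate between $d^1_f \circ g \circ d^0_f$ and $d^0_f$ as maps $\cyl(a_0) \to a^{\cylinder}_f$; since $g \circ d^0_f = f \circ p(a_0)$, we have $d^1_f \circ g \circ d^0_f = d^1_f \circ f \circ p(a_0) = d^0_f \circ i_0(a_0) \circ p(a_0)$, and a homotopy from $d^0_f \circ i_0(a_0) \circ p(a_0)$ to $d^0_f$ with respect to $\cyl$ is obtained by applying $d^0_f$ to the double homotopy built from $\Gamma_{lr}$ and the "identity-ish" homotopy $i_0(a_0) \circ p(a_0) \rightsquigarrow id(\cyl(a_0))$. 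Concretely I expect $H$ restricted to the $\cyl^2(a_0)$-summand to be $d^0_f \circ \Gamma_{lr}(a_0) \colon \cyl^2(a_0) \to \cyl(a_0) \to a^{\cylinder}_f$, whose boundary homotopies (from the proposition preceding this statement on $h \circ \Gamma_{lr}(a_0)$ applied to $h = id(\cyl(a_0))$, or more carefully to an appropriate identity homotopy) give exactly the interpolation needed, and it restricted to the $\cyl(a_1)$-summand to be $d^1_f \circ p(a_1) \circ (\text{projection})$ i.e. $d^1_f \circ p\big(\cyl(a_1)\big)$ composed appropriately.

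Having defined $H$ out of the pushout, I would then verify the three things in order: (1) $H \circ i_0(\cyl(a^{\cylinder}_f))$ (the appropriate face picking out one end) equals $d^1_f \circ g$; (2) $H \circ i_1(\cdots)$ equals $id(a^{\cylinder}_f)$; and (3) the over-$a_1$ condition $g \circ H = g \circ p(a^{\cylinder}_f)$, i.e. the square of Definition of homotopy over $a_1$ commutes. Each of these is checked by precomposing with the two pushout legs $\cyl(d^0_f)$ and $\cyl(d^1_f)$ and using the universal property of the mapping cylinder once more, reducing to identities among $p$, $\Gamma_{lr}$, $i_0$, $i_1$ that follow from $\Gamma_{lr}$ being a lower right connection structure and from its compatibility with $p$. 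The compatibility $\Gamma_{lr}$-with-$p$ (the square $\cyl^2 \to \cyl \to \id$ versus $\cyl^2 \to \cyl \xrightarrow{p} \id$) is precisely what makes condition (3) work on the $\cyl^2(a_0)$-summand, since $g \circ d^0_f \circ \Gamma_{lr}(a_0) = f \circ p(a_0) \circ \Gamma_{lr}(a_0) = f \circ p(a_0) \circ p \cdot \cyl$-type expression $= g \circ d^0_f \circ p(\cyl(a_0))$.

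The main obstacle will be getting the bookkeeping of the two "layers" of cylinder right — distinguishing $\cyl \cdot i_0$ from $i_0 \cdot \cyl$, i.e. which copy of $I$ is the "homotopy direction" and which is the "$\cyl(a^{\cylinder}_f)$ direction" — and making sure the homotopy I write on the $\cyl^2(a_0)$ summand actually glues with the one on the $\cyl(a_1)$ summand along $\cyl(i_0(a_0))$, which forces the homotopy on $\cyl(a_1)$ to agree at $i_0(a_1)$ with $d^0_f \circ \Gamma_{lr}(a_0) \circ \cyl(i_0(a_0))$; here the defining triangles of $\Gamma_{lr}$ (that $\Gamma_{lr} \circ (i_1 \cdot \cyl) = id$ and $\Gamma_{lr} \circ (\cyl \cdot i_1) = id$, and the two involving $p$) are exactly what is needed, together with the compatibility relation $d^1_f \circ f = d^0_f \circ i_0(a_0)$ coming from the defining pushout square of the mapping cylinder. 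I would organize the writeup around one big commutative diagram out of $\cyl(a^{\cylinder}_f)$ viewed as a pushout, plus the three verification diagrams, citing the preceding proposition on $h \circ \Gamma_{lr}$ to read off boundary homotopies rather than recomputing them.
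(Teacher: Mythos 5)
Your proposal is correct and follows essentially the same route as the paper's proof: define the homotopy $h\colon\cyl\big(a^{\cylinder}_{f}\big)\to a^{\cylinder}_{f}$ by pushing out $d^{0}_{f}\circ\Gamma_{lr}(a_{0})$ on the $\cyl^{2}(a_{0})$-leg and $d^{1}_{f}\circ p(a_{1})$ on the $\cyl(a_{1})$-leg (checking the gluing via $\Gamma_{lr}\circ(\cyl\cdot i_{0})=i_{0}\circ p$, naturality of $p$, and $d^{1}_{f}\circ f=d^{0}_{f}\circ i_{0}(a_{0})$), then verify the two endpoint conditions and the over-$a_{1}$ condition by precomposing with $\cyl(d^{0}_{f})$ and $\cyl(d^{1}_{f})$ and appealing to the pushout universal property, with compatibility of $\Gamma_{lr}$ with $p$ giving the over-$a_{1}$ identity on the $\cyl^{2}(a_{0})$-leg. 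The mid-paragraph waffle (e.g.\ the ill-typed ``$d^{1}_{f}\circ p\big(\cyl(a_{1})\big)$'') is cosmetic --- your final formula $d^{1}_{f}\circ p(a_{1})$ is the one the paper uses.
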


\begin{proof} By construction, $g$ is a retraction of $d^{1}_{f}$. The following diagram in $\cl{A}$ is co-cartesian, since $\cyl$ preserves mapping cylinders with respect to $\cylinder$. \sq[{5,3}]{\cyl(a_{0}),\cyl^{2}(a_{0}),\cyl(a_{1}),\cyl\big(a^{\cylinder}_{f}\big),\cyl\big(i_{0}(a_{0})\big),\cyl(d^{0}_{f}),\cyl(j),\cyl(d^{1}_{f})} Moreover, the following diagram in $\cl{A}$ commutes.  

\begin{diagram}

\begin{tikzpicture} [>=stealth]

\matrix [ampersand replacement=\&, matrix of math nodes, column sep=3 em, row sep=3 em, nodes={anchor=center}]
{ 
|(0-0)| \cyl(a_{0}) \&               \& |(2-0)| \cyl^{2}(a_{0}) \\
|(0-1)| \cyl(a_{1}) \& |(1-1)| a_{0} \& |(2-1)| \cyl(a_{0})     \\
|(0-2)| a_{1}       \&               \& |(2-2)| a^{\cylinder}_{f}    \\
};
	
\draw[->] (0-0) to node[auto] {$\cyl\big(i_{0}(a_{0})\big)$} (2-0);
\draw[->] (2-0) to node[auto] {$\Gamma_{lr}(a_{0})$} (2-1);
\draw[->] (0-0) to node[auto] {$p(a_{0})$} (1-1);
\draw[->] (1-1) to node[auto] {$i_{0}(a_{0})$} (2-1);
\draw[->] (1-1) to node[auto] {$f$} (0-2);
\draw[->] (0-2) to node[auto,swap] {$d^{1}_{f}$} (2-2);
\draw[->] (2-1) to node[auto] {$d^{0}_{f}$} (2-2);
\draw[->] (0-0) to node[auto,swap] {$\cyl(f)$} (0-1);
\draw[->] (0-1) to node[auto,swap] {$p(a_{1})$} (0-2);
\end{tikzpicture} 

\end{diagram} %
Thus there is a canonical arrow \ar{\cyl\big(a^{\cylinder}_{f}\big),a^{\cylinder}_{f},h} of $\cl{A}$ such that the following diagram in $\cl{A}$ commutes. \pushout[{5,3,-1}]{\cyl(a_{0}),\cyl^{2}(a_{0}),\cyl(a_{1}),\cyl\big(a^{\cylinder}_{f}\big),a^{\cylinder}_{f},\cyl\big(i_{0}(a_{0})\big),\cyl(d^{0}_{f}),\cyl(f),\cyl(d^{1}_{f}),d^{0}_{f} \circ \Gamma_{lr}(a_{0}),d^{1}_{f} \circ p(a_{1}),h} We claim that $h$ defines a homotopy over $a_{1}$ from $d^{1}_{f} \circ g$ to $id(a^{\cylinder}_{f})$ with respect to $\cylinder$ and $(d^{1}_{f},d^{1}_{f})$. 

Let us first prove that the following diagram in $\cl{A}$ commutes. \tri[{5,3}]{a^{\cylinder}_{f},\cyl\big(a^{\cylinder}_{f}\big),a^{\cylinder}_{f},i_{0}\big(a^{\cylinder}_{f}\big),h,d^{1}_{f} \circ g} We have that the following diagram in $\cl{A}$ commutes. \trapeziums[{3,3,1,0}]{a_{1},a^{\cylinder}_{f},\cyl(a_{1}),\cyl\big(a^{\cylinder}_{f}\big),a_{1},a^{\cylinder}_{f},d^{1}_{f},i_{0}\big(a^{\cylinder}_{f}\big),h,id,d^{1}_{f},i_{0}(a_{1}),\cyl(d^{1}_{f}),p(a_{1})} %
The following diagram in $\cl{A}$ also commutes.  

\begin{diagram}

\begin{tikzpicture} [>=stealth]

\matrix [ampersand replacement=\&, matrix of math nodes, column sep=3 em, row sep=4 em, nodes={anchor=center}]
{ 
|(0-0)| \cyl(a_{0}) \&                         \&[1em] |(2-0)| a^{\cylinder}_{f} \\
|(0-1)| a_{0}       \& |(1-1)| \cyl^{2}(a_{0}) \& |(2-1)| \cyl\big(a^{\cylinder}_{f}\big)     \\
|(0-2)| \cyl(a_{0}) \&                         \& |(2-2)| a^{\cylinder}_{f}    \\
};
	
\draw[->] (0-0) to node[auto] {$d^{0}_{f}$} (2-0);
\draw[->] (2-0) to node[auto] {$i_{0}\big(a^{\cylinder}_{f}\big)$} (2-1);
\draw[->] (0-0) to node[auto] {$i_{0}\big(\cyl(a_{0})\big)$} (1-1);
\draw[->] (1-1) to node[auto] {$\cyl(d^{0}_{f})$} (2-1);
\draw[->] (1-1) to node[auto] {$\Gamma_{lr}(a_{0})$} (0-2);
\draw[->] (0-2) to node[auto,swap] {$d^{0}_{f}$} (2-2);
\draw[->] (2-1) to node[auto] {$h$} (2-2);
\draw[->] (0-0) to node[auto,swap] {$p(a_{0})$} (0-1);
\draw[->] (0-1) to node[auto,swap] {$i_{0}(a_{0})$} (0-2);
\end{tikzpicture} 

\end{diagram} %
Putting these two observations together, we have that the following diagram in $\cl{A}$ commutes. \pushout[{4,3,-1}]{a_{0},\cyl(a_{0}),a_{1},a^{\cylinder}_{f},a^{\cylinder}_{f},i_{0}(a_{0}),d^{0}_{f},f,d^{1}_{f},d^{0}_{f} \circ i_{0}(a_{0}) \circ p(a_{0}),d^{1}_{f},h \circ i_{0}\big(a^{\cylinder}_{f}\big)} We also have that the diagram \tri{a_{1},a^{\cylinder}_{f},a^{\cylinder}_{f},d^{1}_{f},d^{1}_{f} \circ g,d^{1}_{f}} in $\cl{A}$ commutes, since $g$ is a retraction of $d^{1}_{f}$. Moreover, the following diagram in $\cl{A}$ commutes.

\begin{diagram}

\begin{tikzpicture} [>=stealth]

\matrix [ampersand replacement=\&, matrix of math nodes, column sep=3 em, row sep=3 em, nodes={anchor=center}]
{ 
|(0-0)| \cyl(a_{0}) \& |(1-0)| a^{\cylinder}_{f} \\
|(0-1)| a_{0}       \& |(1-1)| a_{1}        \\
|(0-2)| \cyl(a_{0}) \& |(1-2)| a^{\cylinder}_{f}    \\
};
	
\draw[->] (0-0) to node[auto] {$d^{0}_{f}$} (1-0);
\draw[->] (1-0) to node[auto] {$g$} (1-1);
\draw[->] (0-0) to node[auto,swap] {$p(a_{0})$} (0-1);
\draw[->] (1-1) to node[auto] {$d^{1}_{f}$} (1-2);
\draw[->] (0-2) to node[auto,swap] {$d^{0}_{f}$} (1-2);
\draw[->] (0-1) to node[auto,swap] {$i_{0}(a_{0})$} (0-2);
\draw[->] (0-1) to node[auto,swap] {$f$} (1-1);
\end{tikzpicture} 

\end{diagram} %
It follows from the commutativity of the last two diagrams that the following diagram in $\cl{A}$ commutes. \pushout[{4,3,-1}]{a_{0},\cyl(a_{0}),a_{1},a^{\cylinder}_{f},a^{\cylinder}_{f},i_{0}(a_{0}),d^{0}_{f},f,d^{1}_{f},d^{0}_{f} \circ i_{0}(a_{0}) \circ p(a_{0}),d^{1}_{f},d^{1}_{f} \circ g} %
Appealing to the universal property of $a^{\cylinder}_{f}$, we deduce that the following diagram in $\cl{A}$ commutes. \tri[{5,3}]{a^{\cylinder}_{f},\cyl\big(a^{\cylinder}_{f}\big),a^{\cylinder}_{f},i_{0}\big(a^{\cylinder}_{f}\big),h,d^{1}_{f} \circ g} 

Let us now prove that the following diagram in $\cl{A}$ commutes. \tri[{5,3}]{a^{\cylinder}_{f},\cyl\big(a^{\cylinder}_{f}\big),a^{\cylinder}_{f},i_{1}\big(a^{\cylinder}_{f}\big),h,id} We have that the following diagram in $\cl{A}$ commutes. 

\begin{diagram}

\begin{tikzpicture} [>=stealth]

\matrix [ampersand replacement=\&, matrix of math nodes, column sep=3 em, row sep=4 em, nodes={anchor=center}]
{ 
|(0-0)| a_{1} \&                     \&[1em] |(2-0)| a^{\cylinder}_{f} \\
              \& |(1-1)| \cyl(a_{1}) \& |(2-1)| \cyl(a^{\cylinder}_{f})     \\
|(0-2)| a_{1} \&                     \& |(2-2)| a^{\cylinder}_{f}    \\
};
	
\draw[->] (0-0) to node[auto] {$d^{1}_{f}$} (2-0);
\draw[->] (2-0) to node[auto] {$i_{1}(a^{\cylinder}_{f})$} (2-1);
\draw[->] (0-0) to node[auto,swap] {$id$} (0-2);
\draw[->] (1-1) to node[auto] {$\cyl(d^{1}_{f})$} (2-1);
\draw[->] (1-1) to node[auto] {$p(a_{1})$} (0-2);
\draw[->] (0-2) to node[auto,swap] {$d^{1}_{f}$} (2-2);
\draw[->] (2-1) to node[auto] {$h$} (2-2);
\draw[->] (0-0) to node[auto] {$i_{1}(a_{1})$} (1-1);
\end{tikzpicture} 

\end{diagram} %
The following diagram in $\cl{A}$ also commutes.  

\begin{diagram}

\begin{tikzpicture} [>=stealth]

\matrix [ampersand replacement=\&, matrix of math nodes, column sep=3 em, row sep=4 em, nodes={anchor=center}]
{ 
|(0-0)| \cyl(a_{0}) \&                         \&[1em] |(2-0)| a^{\cylinder}_{f} \\
                    \& |(1-1)| \cyl^{2}(a_{0}) \& |(2-1)| \cyl(a^{\cylinder}_{f})     \\
|(0-2)| \cyl(a_{0}) \&                         \& |(2-2)| a^{\cylinder}_{f}    \\
};
	
\draw[->] (0-0) to node[auto] {$d^{0}_{f}$} (2-0);
\draw[->] (2-0) to node[auto] {$i_{1}(a^{\cylinder}_{f})$} (2-1);
\draw[->] (0-0) to node[auto] {$i_{1}\big(\cyl(a_{0})\big)$} (1-1);
\draw[->] (1-1) to node[auto] {$\cyl(d^{0}_{f})$} (2-1);
\draw[->] (1-1) to node[auto] {$\Gamma_{lr}(a_{0})$} (0-2);
\draw[->] (0-2) to node[auto,swap] {$d^{0}_{f}$} (2-2);
\draw[->] (2-1) to node[auto] {$h$} (2-2);
\draw[->] (0-0) to node[auto,swap] {$id$} (0-2);
\end{tikzpicture} 

\end{diagram} %
Putting the last two observations together, we have that the following diagram in $\cl{A}$ commutes. \pushout[{4,3,0}]{a_{0},\cyl(a_{0}),a_{1},a^{\cylinder}_{f},a^{\cylinder}_{f},i_{0}(a_{0}),d^{0}_{f},f,d^{1}_{f},d^{0}_{f},d^{1}_{f},h \circ i_{1}(a^{\cylinder}_{f})} %
Appealing to the universal property of $a^{\cylinder}_{f}$, we deduce that the following diagram in $\cl{A}$ commutes, as we were aiming to show. \tri[{4,3}]{a^{\cylinder}_{f},\cyl(a^{\cylinder}_{f}),a^{\cylinder}_{f},i_{1}(a^{\cylinder}_{f}),h,id} 

We have now proven that $h$ defines a homotopy from $d^{1}_{f} \circ g$ to $id\big(a^{\cylinder}_{f}\big)$ with respect to $\cylinder$. It remains to demonstrate that the following diagram in $\cl{A}$ commutes. \sq{\cyl\big(a^{\cylinder}_{f}\big),a^{\cylinder}_{f},\cyl(a_{1}),a_{1},h,g,\cyl(g),p(a_{1})} %
Since $\Gamma_{lr}$ is compatible with $p$, the following diagram in $\cl{A}$ commutes. \sq[{4,3}]{\cyl^{2}(a_{0}),\cyl(a_{0}),\cyl(a_{0}),a_{0},\Gamma_{lr}(a_{0}),p(a_{0}),p\big(\cyl(a_{0})\big),p(a_{0})} By definition of $h$ and $g$, we thus have that the following diagram in $\cl{A}$ commutes.  

\begin{diagram}

\begin{tikzpicture} [>=stealth]

\matrix [ampersand replacement=\&, matrix of math nodes, column sep=3 em, row sep=4 em, nodes={anchor=center}]
{ 
|(0-0)| \cyl^{2}(a_{0}) \&                         \& |(2-0)| \cyl\big(a^{\cylinder}_{f}\big) \\
|(0-1)| \cyl(a_{0})     \& |(1-1)| \cyl^{2}(a_{0}) \& |(2-1)| a^{\cylinder}_{f}     \\
|(0-2)| a_{0}           \&                         \& |(2-2)| a_{1}    \\
};
	
\draw[->] (0-0) to node[auto] {$\cyl(d^{0}_{f})$} (2-0);
\draw[->] (2-0) to node[auto] {$h$} (2-1);
\draw[->] (0-0) to node[auto] {$\Gamma_{lr}(a_{0})$} (1-1);
\draw[->] (1-1) to node[auto] {$d^{0}_{f}$} (2-1);
\draw[->] (1-1) to node[auto] {$p(a_{0})$} (0-2);
\draw[->] (0-2) to node[auto,swap] {$f$} (2-2);
\draw[->] (2-1) to node[auto] {$g$} (2-2);
\draw[->] (0-0) to node[auto,swap] {$p\big(\cyl(a_{0})\big)$} (0-1);
\draw[->] (0-1) to node[auto,swap] {$p(a_{0})$} (0-2);
\end{tikzpicture} 

\end{diagram} %
By definition of $h$ and $g$, we also have that the following diagram in $\cl{A}$ commutes.  

\begin{diagram}

\begin{tikzpicture} [>=stealth]

\matrix [ampersand replacement=\&, matrix of math nodes, column sep=4 em, row sep=3 em, nodes={anchor=center}]
{ 
|(0-0)| \cyl(a_{1}) \& |(1-0)| \cyl\big(a^{\cylinder}_{f}\big) \\
|(0-1)| a_{1}       \& |(1-1)| a^{\cylinder}_{f}       \\
                    \& |(1-2)| a_{1}              \\
};
	
\draw[->] (0-0) to node[auto] {$\cyl(d^{1}_{f})$} (1-0);
\draw[->] (1-0) to node[auto] {$h$} (1-1);
\draw[->] (0-0) to node[auto,swap] {$p(a_{1})$} (0-1);
\draw[->] (1-1) to node[auto] {$g$} (1-2);
\draw[->] (0-1) to node[auto] {$d^{1}_{f}$} (1-1);
\draw[->] (0-1) to node[auto,swap] {$id$} (1-2);
\end{tikzpicture} 

\end{diagram} %
Putting the last two observations together, we have that the following diagram in $\cl{A}$ commutes. \pushout[{5,3,-1}]{\cyl(a_{0}),\cyl^{2}(a_{0}),\cyl(a_{1}),\cyl\big(a^{\cylinder}_{f}\big),a_{1},\cyl\big(i_{0}(a_{0})\big),\cyl(d^{0}_{f}),\cyl(j),\cyl(d^{1}_{f}),f \circ p(a_{0}) \circ p\big(\cyl(a_{0})\big),p(a_{1}),g \circ h} %
By definition of $g$, the following diagram in $\cl{A}$ also commutes. \tri[{4,3}]{\cyl(a_{1}),\cyl(a^{\cylinder}_{f}),\cyl(a_{1}),\cyl(d^{1}_{f}),\cyl(g),id} Hence the following diagram in $\cl{A}$ commutes. \tri[{4,3}]{\cyl(a_{1}),\cyl(a^{\cylinder}_{f}),a_{1},\cyl(d^{1}_{f}),p(a_{1}) \circ \cyl(g),p(a_{1})} %
Moreover, appealing to the definition of $g$, the following diagram in $\cl{A}$ commutes.   

\begin{diagram}

\begin{tikzpicture} [>=stealth]

\matrix [ampersand replacement=\&, matrix of math nodes, column sep=3 em, row sep=4 em, nodes={anchor=center}]
{ 
|(0-0)| \cyl^{2}(a_{0}) \&                         \& |(2-0)| \cyl\big(a^{\cylinder}_{f}\big) \\
|(0-1)| \cyl(a_{0})     \& |(1-1)| \cyl(a_{0}) \& |(2-1)| \cyl(a_{1})     \\
|(0-2)| a_{0}           \&                         \& |(2-2)| a_{1}    \\
};
	
\draw[->] (0-0) to node[auto] {$\cyl(d^{0}_{f})$} (2-0);
\draw[->] (2-0) to node[auto] {$\cyl(g)$} (2-1);
\draw[->] (0-0) to node[auto] {$\cyl\big(p(a_{0})\big)$} (1-1);
\draw[->] (1-1) to node[auto] {$\cyl(f)$} (2-1);
\draw[->] (1-1) to node[auto] {$p(a_{0})$} (0-2);
\draw[->] (0-2) to node[auto,swap] {$f$} (2-2);
\draw[->] (2-1) to node[auto] {$p(a_{1})$} (2-2);
\draw[->] (0-0) to node[auto,swap] {$p\big(\cyl(a_{0})\big)$} (0-1);
\draw[->] (0-1) to node[auto,swap] {$p(a_{0})$} (0-2);
\end{tikzpicture} 

\end{diagram} %
Putting the last two observations together, we have that the following diagram in $\cl{A}$ commutes. \pushout[{5,3,-1}]{\cyl(a_{0}),\cyl^{2}(a_{0}),\cyl(a_{1}),\cyl\big(a^{\cylinder}_{f}\big),a_{1},\cyl\big(i_{0}(a_{0})\big),\cyl(d^{0}_{f}),\cyl(j),\cyl(d^{1}_{f}),f \circ p(a_{0}) \circ p\big(\cyl(a_{0})\big),p(a_{1}),p(a_{1}) \circ \cyl(g)} %
Appealing to the universal property of $\cyl\big(a^{\cylinder}_{f}\big)$, we deduce that the following diagram in $\cl{A}$ commutes. \sq{\cyl\big(a^{\cylinder}_{f}\big),a^{\cylinder}_{f},\cyl(a_{1}),a_{1},h,g,\cyl(g),p(a_{1})} This completes the proof of the claim.

\end{proof}

\begin{cor} \label{MappingCylinderFactorisationStrongDeformationRetractionWithRespectToCoCylinderCorollary} Let $\cylinder = \big( \cyl, i_{0}, i_{1}, p, \Gamma_{lr} \big)$ be a cylinder in $\cl{A}$ equipped with a contraction structure $p$, and a lower right connection structure $\Gamma_{lr}$. Suppose that $\Gamma_{lr}$ is compatible with $p$. Let $\cocylinder = \big( \cocyl, e_{0}, e_{1}, c \big)$ be a co-cylinder in $\cl{A}$ equipped with a contraction structure $c$. Suppose that $\cylinder$ is left adjoint to $\cocylinder$, and that the adjunction between $\cyl$ and $\cocyl$ is compatible with $p$ and $c$. 

Let \ar{a_{0},a_{1},f} be an arrow of $\cl{A}$, and let $\big( a^{\cylinder}_{f}, d^{0}_{f}, d^{1}_{f} \big)$ be a mapping cylinder of $f$ with respect to $\cylinder$. Let \tri{a_{0},a^{\cylinder}_{f},a_{1},j,g,f} denote the corresponding mapping cylinder factorisation of $f$. Then $g$ is a strong deformation retraction of $d^{1}_{f}$ with respect to $\cocylinder$. \end{cor}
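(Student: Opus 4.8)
The plan is to deduce this corollary by combining Proposition \ref{MappingCylinderFactorisationStrongDeformationRetractionWithRespectToCylinderProposition} with the characterisation of strong deformation retractions with respect to a co-cylinder given in Proposition \ref{StrongDeformationRetractionCoCylinderCharacterisationProposition}. The two hypotheses that feed into those results are already in place: $\Gamma_{lr}$ is compatible with $p$, and $\cylinder$ is left adjoint to $\cocylinder$ with the adjunction compatible with $p$ and $c$. The only gap is that Proposition \ref{MappingCylinderFactorisationStrongDeformationRetractionWithRespectToCylinderProposition} assumes that $\cyl$ preserves mapping cylinders with respect to $\cylinder$, which is not among the hypotheses of the corollary; I address this point below.

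The argument runs as follows. First I would recall that by Proposition \ref{StrongDeformationRetractionCoCylinderCharacterisationProposition}, since $\cylinder$ is left adjoint to $\cocylinder$ and the adjunction is compatible with $p$ and $c$, an arrow $g$ of $\cl{A}$ is a strong deformation retraction of $d^{1}_{f}$ with respect to $\cocylinder$ if and only if $g$ is a retraction of $d^{1}_{f}$ and there exists a homotopy over $a_{1}$ from $d^{1}_{f} \circ g$ to $id\big(a^{\cylinder}_{f}\big)$ with respect to $\cylinder$ and $(g,g)$. Then I would invoke Proposition \ref{MappingCylinderFactorisationStrongDeformationRetractionWithRespectToCylinderProposition}, which asserts exactly that: $g$ is a retraction of $d^{1}_{f}$ by construction of the mapping cylinder factorisation (Proposition \ref{MappingCylinderFactorisationProposition}), and the proposition produces the required homotopy over $a_{1}$. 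Combining the two, $g$ is a strong deformation retraction of $d^{1}_{f}$ with respect to $\cocylinder$.

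The main obstacle is the discrepancy in hypotheses: Proposition \ref{MappingCylinderFactorisationStrongDeformationRetractionWithRespectToCylinderProposition} requires that $\cyl$ preserves mapping cylinders with respect to $\cylinder$, whereas the corollary does not state this. I suspect the intended reading is either that this hypothesis is implicit in the ambient assumptions of the section (mapping cylinders being formed as genuine pushouts, so that $\cyl$, applied to a co-cartesian square, yields a co-cartesian square precisely when pushouts of 2-arrows give rise to pushouts in formal categories — which is part of the standing assumption of this section), or that it should be added to the corollary's statement. In writing the proof I would simply note that $\cyl$ preserves mapping cylinders here because pushouts of 2-arrows of $\cl{C}$ give rise to pushouts in formal categories, so that applying $\cyl$ to the defining co-cartesian square of $\big(a^{\cylinder}_{f}, d^{0}_{f}, d^{1}_{f}\big)$ — which after identification corresponds to whiskering a co-cartesian square of 2-arrows by $\cyl$ — yields again a co-cartesian square; hence the hypothesis of Proposition \ref{MappingCylinderFactorisationStrongDeformationRetractionWithRespectToCylinderProposition} is met. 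With that remark in hand the proof is a two-line citation.

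\begin{proof} Since $\cylinder$ is left adjoint to $\cocylinder$ and the adjunction between $\cyl$ and $\cocyl$ is compatible with $p$ and $c$, Proposition \ref{StrongDeformationRetractionCoCylinderCharacterisationProposition} gives that $g$ is a strong deformation retraction of $d^{1}_{f}$ with respect to $\cocylinder$ if and only if $g$ is a retraction of $d^{1}_{f}$ and there is a homotopy over $a_{1}$ from $d^{1}_{f} \circ g$ to $id\big(a^{\cylinder}_{f}\big)$ with respect to $\cylinder$ and $(g,g)$. By construction of the mapping cylinder factorisation $g$ is a retraction of $d^{1}_{f}$. Moreover $\cyl$ preserves mapping cylinders with respect to $\cylinder$, since applying $\cyl$ to the defining co-cartesian square of $\big( a^{\cylinder}_{f}, d^{0}_{f}, d^{1}_{f} \big)$ corresponds to whiskering a co-cartesian square of 2-arrows of $\cl{C}$ by $\cyl$, and pushouts of 2-arrows of $\cl{C}$ give rise to pushouts in formal categories. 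Thus Proposition \ref{MappingCylinderFactorisationStrongDeformationRetractionWithRespectToCylinderProposition} applies, yielding a homotopy over $a_{1}$ from $d^{1}_{f} \circ g$ to $id\big(a^{\cylinder}_{f}\big)$ with respect to $\cylinder$ and $(g,g)$. Hence $g$ is a strong deformation retraction of $d^{1}_{f}$ with respect to $\cocylinder$. \end{proof}
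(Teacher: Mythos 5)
Your proof has the right overall shape — combine Proposition \ref{MappingCylinderFactorisationStrongDeformationRetractionWithRespectToCylinderProposition} with Proposition \ref{StrongDeformationRetractionCoCylinderCharacterisationProposition}, after supplying the missing hypothesis that $\cyl$ preserves mapping cylinders — and this matches the paper's proof. You also correctly flag that the missing hypothesis must be justified. However, the justification you give for it is wrong, and the correct one is actually staring at you in the hypotheses you have available.

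You argue that $\cyl$ preserves the mapping cylinder pushout because ``pushouts of 2-arrows of $\cl{C}$ give rise to pushouts in formal categories,'' treating the application of $\cyl$ to the defining co-cartesian square as whiskering. But the standing assumption (Definition \ref{PushoutsPullbacks2ArrowsArePushoutsPullbacksInFormalCategoriesTerminology}) is about a different operation: it says that a co-cartesian square in $\underline{\mathsf{Hom}}_{\cl{C}}(\cl{A}_0,\cl{A}_1)$ remains co-cartesian after \emph{evaluating at an object} $a$ of $\cl{A}_0$, i.e.\ after precomposition with $a \colon 1 \to \cl{A}_0$. It says nothing about \emph{postcomposition}/whiskering with a 1-arrow such as $\cyl$, and whiskering by an arbitrary 1-arrow does not in general preserve co-cartesian squares. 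So your bridging step fails. The argument the paper uses is simpler: the corollary explicitly assumes $\cylinder$ is left adjoint to $\cocylinder$, and by Recollection \ref{AdjunctionRecollection} this makes $\underline{\mathsf{Hom}}_{\cl{C}}(1,\cyl)$ a left adjoint functor on $\cl{A}$, hence colimit-preserving, hence mapping-cylinder-preserving. Replace your whiskering justification with ``since $\cyl$ is left adjoint to $\cocyl$, $\cyl$ preserves mapping cylinders with respect to $\cylinder$'' and the proof is correct and coincides with the paper's.
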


\begin{proof} Since $\cyl$ is left adjoint to $\cocyl$ we have that $\cyl$ preserves mapping cylinders with respect to $\cylinder$. Thus the result follows immediately from Proposition \ref{MappingCylinderFactorisationStrongDeformationRetractionWithRespectToCylinderProposition} and Proposition \ref{StrongDeformationRetractionCoCylinderCharacterisationProposition}. \end{proof}

\begin{lem} \label{NormallyClovenCofibrationCriterionLemma} Let $\cylinder = \big( \cyl, i_{0}, i_{1}, p, \Gamma_{lr} \big)$ be a cylinder in $\cl{A}$ equipped with a contraction structure $p$ and a lower right connection structure $\Gamma_{lr}$. Suppose that $\Gamma_{lr}$ is compatible with $p$, and that $\cyl$ preserves mapping cylinders with respect to $\cylinder$. 

Let \ar{a_{0},a_{1},f} be an arrow of $\cl{A}$, and let $\big( a^{\cylinder}_{f},d^{0}_{f},d^{1}_{f} \big)$ be a mapping cylinder of $f$ with respect to $\cylinder$. Let \tri{a_{0},a^{\cylinder}_{f},a_{1},j,g,f} denote the corresponding mapping cylinder factorisation of $f$. 

Suppose that there is an arrow \ar{a_{1},a^{\cylinder}_{f},l} of $\cl{A}$ such that the following diagram in $\cl{A}$ commutes. \liftingsquare{a_{0},a^{\cylinder}_{f},a_{1},a_{1},j,g,f,id,l} Then $f$ is a normally cloven cofibration with respect to $\cylinder$. \end{lem}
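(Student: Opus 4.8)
The plan is to exhibit an explicit cleavage making $f$ a normally cloven cofibration with respect to $\cylinder$, rather than to reduce to an earlier result. Write $a^{\cylinder}_{f}$ for the given mapping cylinder with structure arrows $d^{0}_{f},d^{1}_{f}$; by Proposition \ref{MappingCylinderFactorisationProposition} the mapping cylinder factorisation has $j=d^{0}_{f}\circ i_{1}(a_{0})$ and $g\colon a^{\cylinder}_{f}\to a_{1}$ with $g\circ j=f$, $g\circ d^{0}_{f}=f\circ p(a_{0})$ and $g\circ d^{1}_{f}=id(a_{1})$, while the hypothesis on $l$ gives $l\circ f=j$ and $g\circ l=id(a_{1})$. (These last equations already exhibit $f$ as a retract of $j$ in the arrow category, via $(id(a_{0}),l)$ and $(id(a_{0}),g)$, but $j$ is not assumed normally cloven, so Proposition \ref{RetractionNormallyClovenCofibrationIsNormallyClovenCofibrationProposition} does not apply directly.) I would also recall the homotopy $h\colon\cyl(a^{\cylinder}_{f})\to a^{\cylinder}_{f}$ built in the proof of Proposition \ref{MappingCylinderFactorisationStrongDeformationRetractionWithRespectToCylinderProposition}: since $\cyl$ preserves mapping cylinders, $\cyl(a^{\cylinder}_{f})$ is again a pushout and $h$ is the canonical arrow with $h\circ\cyl(d^{0}_{f})=d^{0}_{f}\circ\Gamma_{lr}(a_{0})$ and $h\circ\cyl(d^{1}_{f})=d^{1}_{f}\circ p(a_{1})$, and moreover $h\circ i_{0}(a^{\cylinder}_{f})=d^{1}_{f}\circ g$, $h\circ i_{1}(a^{\cylinder}_{f})=id$, and $g\circ h=p(a_{1})\circ\cyl(g)$.

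For an object $a_{2}$ and a pair $(g_{1},\eta)\in\Sigma^{\cylinder}_{f,a_{2}}$ — so $g_{1}\colon a_{1}\to a_{2}$, $\eta\colon\cyl(a_{0})\to a_{2}$ and $\eta\circ i_{0}(a_{0})=g_{1}\circ f$ — the universal property of the co-cartesian square defining $a^{\cylinder}_{f}$ (valid in the formal category $\cl{A}$ by the standing hypothesis on $\cl{C}$) yields a unique $\varphi_{(g_{1},\eta)}\colon a^{\cylinder}_{f}\to a_{2}$ with $\varphi_{(g_{1},\eta)}\circ d^{0}_{f}=\eta$ and $\varphi_{(g_{1},\eta)}\circ d^{1}_{f}=g_{1}$. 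I would set $k_{a_{2}}(g_{1},\eta):=\varphi_{(g_{1},\eta)}\circ h\circ\cyl(l)$. Then $k_{a_{2}}(g_{1},\eta)\circ i_{0}(a_{1})=\varphi_{(g_{1},\eta)}\circ h\circ i_{0}(a^{\cylinder}_{f})\circ l=\varphi_{(g_{1},\eta)}\circ d^{1}_{f}\circ g\circ l=g_{1}$, using naturality of $i_{0}$, the boundary of $h$, and $g\circ l=id$. Also $k_{a_{2}}(g_{1},\eta)\circ\cyl(f)=\varphi_{(g_{1},\eta)}\circ h\circ\cyl(l\circ f)$; since $l\circ f=j=d^{0}_{f}\circ i_{1}(a_{0})$ and $h\circ\cyl(d^{0}_{f})=d^{0}_{f}\circ\Gamma_{lr}(a_{0})$, this equals $\varphi_{(g_{1},\eta)}\circ d^{0}_{f}\circ\Gamma_{lr}(a_{0})\circ\cyl(i_{1}(a_{0}))$, which by the lower right connection identity $\Gamma_{lr}(a_{0})\circ\cyl(i_{1}(a_{0}))=id$ reduces to $\varphi_{(g_{1},\eta)}\circ d^{0}_{f}=\eta$. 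So $k_{a_{2}}(g_{1},\eta)\in\Upsilon^{\cylinder}_{f,a_{2}}$, and in particular $f$ is a cofibration with cleavage $(k_{a_{2}})_{a_{2}}$.

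It then remains to check conditions (i) and (ii) of Definition \ref{NormallyClovenCofibrationCylinderDefinition}. For (ii), if $g_{2}\colon a_{2}\to a_{3}$ then uniqueness in the pushout gives $\varphi_{(g_{2}\circ g_{1},\,g_{2}\circ\eta)}=g_{2}\circ\varphi_{(g_{1},\eta)}$, whence $k_{a_{3}}(g_{2}\circ g_{1},\,g_{2}\circ\eta)=g_{2}\circ k_{a_{2}}(g_{1},\eta)$ at once. For (i), if $g_{1}\circ f=g_{0}$ then $(g_{1},\,g_{0}\circ p(a_{0}))\in\Sigma^{\cylinder}_{f,a_{2}}$, and since $(g_{1}\circ g)\circ d^{0}_{f}=g_{1}\circ f\circ p(a_{0})=g_{0}\circ p(a_{0})$ and $(g_{1}\circ g)\circ d^{1}_{f}=g_{1}$, uniqueness forces $\varphi_{(g_{1},\,g_{0}\circ p(a_{0}))}=g_{1}\circ g$; hence $k_{a_{2}}(g_{1},\,g_{0}\circ p(a_{0}))=g_{1}\circ g\circ h\circ\cyl(l)=g_{1}\circ p(a_{1})\circ\cyl(g)\circ\cyl(l)=g_{1}\circ p(a_{1})$, by $g\circ h=p(a_{1})\circ\cyl(g)$ and $g\circ l=id$, which is precisely the equation demanded in (i).

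The one step to be careful about is that the verification above uses the concrete defining equations of $h$ from the proof of Proposition \ref{MappingCylinderFactorisationStrongDeformationRetractionWithRespectToCylinderProposition} — in particular $h\circ\cyl(d^{0}_{f})=d^{0}_{f}\circ\Gamma_{lr}(a_{0})$ together with the lower right connection identity, and the ``over $a_{1}$'' relation $g\circ h=p(a_{1})\circ\cyl(g)$ — rather than merely the bare existence assertion of that proposition; so if one wants the present proof self-contained one should re-run that short construction of $h$ here, which is exactly where the hypotheses that $\cyl$ preserves mapping cylinders and that $\Gamma_{lr}$ is compatible with $p$ are consumed. Everything else is a direct chase through the universal property of the mapping cylinder, and, as the argument shows, no strictness hypothesis on $\cylinder$ is needed.
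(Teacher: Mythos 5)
Your proposal is correct and follows essentially the same route as the paper: your cleavage $(g_{1},\eta)\mapsto\varphi_{(g_{1},\eta)}\circ h\circ\cyl(l)$ is exactly the paper's $u\circ h\circ\cyl(l)$, built from the same connection-induced homotopy $h$ of the mapping-cylinder deformation retraction. Your verification of condition (i) via the identification $\varphi_{(g_{1},\,g_{0}\circ p(a_{0}))}=g_{1}\circ g$ combined with the already-established relation $g\circ h=p(a_{1})\circ\cyl(g)$ is a tidy shortcut past the paper's longer diagram chase, but the underlying construction and the points where the hypotheses are consumed are identical.
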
 

\begin{proof} We first prove that $f$ is a cofibration with respect to $\cylinder$. To this end, suppose that we have a commutative diagram in $\cl{A}$ as follows. \sq{a_{0},\cyl(a_{0}),a_{1},a_{2},i_{0}(a_{0}),k,f,f'} %
Let \ar{\cyl\big(a^{\cylinder}_{f}\big),a^{\cylinder}_{f},h} denote the canonical arrow of $\cl{A}$ constructed as in the proof of Proposition \ref{MappingCylinderFactorisationStrongDeformationRetractionWithRespectToCylinderProposition}. We have that the following diagram in $\cl{A}$ commutes. \pushout[{5,3,-1}]{\cyl(a_{0}),\cyl^{2}(a_{0}),\cyl(a_{1}),\cyl\big(a^{\cylinder}_{f}\big),a^{\cylinder}_{f},\cyl\big(i_{0}(a_{0})\big),\cyl(d^{0}_{f}),\cyl(f),\cyl(d^{1}_{f}),d^{0}_{f} \circ \Gamma_{lr}(a_{0}),d^{1}_{f} \circ p(a_{1}),h} %
Let \ar{a^{\cylinder}_{f},a_{1},u} denote the canonical arrow of $\cl{A}$ such that the following diagram in $\cl{A}$ commutes. \pushout{a_{0},\cyl(a_{0}),a_{1},a^{\cylinder}_{f},a_{2},i_{0}(a_{0}),d^{0}_{f},f,d^{1}_{f},k,f',u} %
By definition of $l$, we have that the following diagram in $\cl{A}$ commutes. \triother{a_{0},a_{1},a^{\cylinder}_{f},f,l,j} Hence the following diagram in $\cl{A}$ commutes. \triother[{4,3}]{\cyl(a_{0}),\cyl(a_{1}),\cyl\big(a^{\cylinder}_{f}\big),\cyl(f),\cyl(l),\cyl(j)} %
Moreover, by definition of $j$, we have that the following diagram in $\cl{A}$ commutes. \tri[{5,3}]{\cyl(a_{0}),\cyl^{2}(a_{0}),\cyl\big(a^{\cylinder}_{f}\big),\cyl\big(i_{1}(a_{0})\big),\cyl(d^{0}_{f}),\cyl(j)} Putting the last two observations together, we have that the following diagram in $\cl{A}$ commutes. \sq[{5,3}]{\cyl(a_{0}),\cyl^{2}(a_{0}),\cyl(a_{1}),\cyl\big(a^{\cylinder}_{f}\big),\cyl\big(i_{1}(a_{0})\big),\cyl(d^{0}_{f}),\cyl(f),\cyl(l)} %
By definition of $h$ and $u$, we also have that the following diagram in $\cl{A}$ commutes. \squareabovetriangle[{4,3,0}]{\cyl^{2}(a_{0}),\cyl\big(a^{\cylinder}_{f}\big),\cyl(a_{0}),a^{\cylinder}_{f},a_{2},\cyl(d^{0}_{f}),h,\Gamma_{lr}(a_{0}),d^{0}_{f},u,k} %
Putting the last two observations together, we have that the following diagram in $\cl{A}$ commutes. \trapeziums[{3,{3.5},1,0}]{\cyl(a_{0}),\cyl(a_{1}),\cyl^{2}(a_{0}),\cyl\big(a^{\cylinder}_{f}\big),\cyl(a_{0}),a_{2},\cyl(f),\cyl(l),u \circ h,id,k,\cyl\big(i_{1}(a_{0})\big),\cyl(d^{0}_{f}),\Gamma_{lr}(a_{0})} %
As in the proof of Proposition \ref{MappingCylinderFactorisationStrongDeformationRetractionWithRespectToCylinderProposition}, we also have that the following diagram in $\cl{A}$ commutes. \sq[{5,3}]{a^{\cylinder}_{f},\cyl\big(a^{\cylinder}_{f}\big),a_{1},a^{\cylinder}_{f},i_{0}\big(a^{\cylinder}_{f}\big),h,g,d^{1}_{f}} %
Thus, appealing to the definition of $u$, we have that the following diagram in $\cl{A}$ commutes. \squareabovetriangle[{5,3,0}]{a^{\cylinder}_{f},\cyl\big(a^{\cylinder}_{f}\big),a_{1},a^{\cylinder}_{f},a_{2},i_{0}\big(a^{\cylinder}_{f}\big),h,g,d^{1}_{f},u,f'} %
Moreover, by definition of $l$, we have that the following diagram in $\cl{A}$ commutes. \tri{a_{1},a^{\cylinder}_{f},a_{1},l,g,id} Putting the last two observations together, we have that the following diagram in $\cl{A}$ commutes. \trapeziums[{3,{3.5},1,0}]{a_{1},\cyl(a_{1}),a^{\cylinder}_{f},\cyl\big(a^{\cylinder}_{f}\big),a_{1},a_{2},i_{0}(a_{1}),\cyl(l),u \circ h,id,f',l,i_{0}\big(a^{\cylinder}_{f}\big),g} %
We have now shown that the following diagram in $\cl{A}$ commutes. \pushout[{4,3,2}]{a_{0},\cyl(a_{0}),a_{1},\cyl(a_{1}),a_{2},i_{0}(a_{0}),\cyl(f),f,i_{0}(a_{1}),k,f',u \circ h \circ \cyl(l)} This completes our proof that $f$ is a cofibration with respect to $\cylinder$. 

We claim, moreover, that the cleavage which associates, to an object $a_{2}$ and a pair of arrows $(f',g)$ of $\cl{A}$ as above, the arrow \ar[7]{\cyl(a_{1}),a_{2},f' \circ h \circ \cyl(l)} of $\cl{A}$, equips $f$ with the structure of a normally cloven cofibration with respect to $\cylinder$. 

To this end, let us prove that this cleavage has property (i) of Definition \ref{NormallyClovenCofibrationCylinderDefinition}. Suppose that we have a commutative diagram in $\cl{A}$ as follows. \tri{\cyl(a_{0}),a_{0},a_{2},p(a_{0}),f'',k} By definition of $u$, we then have that the following diagram in $\cl{A}$ commutes. \sq{\cyl(a_{0}),a^{\cylinder}_{f},a_{0},a_{2},d^{0}_{f},u,p(a_{0}),f''} %
Since $\Gamma_{lr}$ is compatible with $p$, the following diagram in $\cl{A}$ also commutes. \sq[{4,3}]{\cyl^{2}(a_{0}),\cyl(a_{0}),\cyl(a_{0}),a_{0},\Gamma_{lr}(a_{0}),p(a_{0}),p\big(\cyl(a_{0})\big),p(a_{0})} Putting the last two observations together, and appealing to the definition of $h$, we have that the following diagram in $\cl{A}$ commutes. \trapeziumstwo[{3,3,0,1}]{\cyl^{2}(a_{0}),\cyl\big(a^{\cylinder}_{f}\big),\cyl(a_{0}),\cyl(a_{0}),a^{\cylinder}_{f},a_{0},a_{2},\cyl(d^{0}_{f}),h,u,p\big(\cyl(a_{0})\big),p(a_{0}),f'',\Gamma_{lr}(a_{0}),d^{0}_{f},p(a_{0})} %
By definition of $h$ and $u$, the following diagram in $\cl{A}$ also commutes. \squareabovetriangle[{4,3,0}]{\cyl(a_{1}),\cyl\big(a^{\cylinder}_{f}\big),a_{1},a^{\cylinder}_{f},a_{2},\cyl(d^{1}_{f}),h,p(a_{1}),d^{1}_{f},u,f'} %
Putting the last two observations together, we have that the following diagram in $\cl{A}$ commutes. \pushout[{5,3,-1}]{\cyl(a_{0}),\cyl^{2}(a_{0}),\cyl(a_{1}),\cyl\big(a^{\cyl}_{f}\big),a_{2},\cyl\big(i_{0}(a_{0})\big),\cyl(d^{0}_{f}),\cyl(f),\cyl(d^{1}_{f}),f'' \circ p(a_{0}) \circ p\big(\cyl(a_{0})\big),f' \circ p(a_{1}),u \circ h} %
We also have that the following diagram in $\cl{A}$ commutes. \squareofthreetriangles[{3,4,0,0}]{a_{0},a_{1},\cyl(a_{0}),a_{0},a_{2},f,f',id,f'',i_{0}(a_{0}),p(a_{0}),k} Hence the following diagram in $\cl{A}$ commutes. \squareabovetriangle{\cyl(a_{0}),\cyl(a_{1}),a_{0},a_{1},a_{2},\cyl(f),p(a_{1}),p(a_{0}),f,f',f''} %
By definition of $g$, the following diagram in $\cl{A}$ commutes. \sq[{4,3}]{\cyl^{2}(a_{0}),\cyl\big(a^{\cylinder}_{f}\big),\cyl(a_{0}),\cyl(a_{1}),\cyl(d^{0}_{f}),\cyl(g),\cyl\big(p(a_{0})\big),\cyl(f)} Putting the last two observations together, we have that the following diagram in $\cl{A}$ commutes. \trapeziumstwo[{3,3,0,1}]{\cyl^{2}(a_{0}),\cyl\big(a^{\cylinder}_{f}\big),\cyl(a_{0}),\cyl(a_{0}),\cyl(a_{1}),a_{0},a_{2},\cyl(d^{0}_{f}),\cyl(g),f' \circ p(a_{1}),p\big(\cyl(a_{0})\big),p(a_{0}),f'',\cyl\big(p(a_{0})\big),\cyl(f),p(a_{0})} %
Moreover, by definition of $g$, the following diagram in $\cl{A}$ commutes. \tri[{4,3}]{\cyl(a_{1}),\cyl\big(a^{\cylinder}_{f}\big),\cyl(a_{1}),\cyl(d^{1}_{f}),\cyl(g),id} Thus the following diagram in $\cl{A}$ commutes. \tri[{4,3}]{\cyl(a_{1}),\cyl\big(a^{\cylinder}_{f}\big),a_{2},\cyl(d^{1}_{f}),f' \circ p(a_{1}) \circ \cyl(g),f' \circ p(a_{1})} %
We have now shown that the following diagram in $\cl{A}$ commutes.  \pushout[{5,3,-1}]{\cyl(a_{0}),\cyl^{2}(a_{0}),\cyl(a_{1}),\cyl\big(a^{\cyl}_{f}\big),a_{2},\cyl\big(i_{0}(a_{0})\big),\cyl(d^{0}_{f}),\cyl(f),\cyl(d^{1}_{f}),f'' \circ p(a_{0}) \circ p\big(\cyl(a_{0})\big),f' \circ p(a_{1}),f' \circ p(a_{1}) \circ \cyl(g)} %
Appealing to the universal property which $\cyl\big(a^{\cylinder}_{f}\big)$ possesses due to our assumption that $\cyl$ preserves mapping cylinders, we deduce that the following diagram in $\cl{A}$ commutes. \tri{\cyl\big(a^{\cylinder}_{f}\big),a^{\cylinder}_{f},a_{2},h,u,f' \circ p(a_{1}) \circ \cyl(g)} %
Moreover, by definition of $l$, the following diagram in $\cl{A}$ commutes. \tri{\cyl(a_{1}),\cyl\big(a^{\cylinder}_{f}\big),\cyl(a_{1}),\cyl(l),\cyl(g),id} Putting the last two observations together, we have that the following diagram in $\cl{A}$ commutes. \tri{\cyl(a_{1}),\cyl\big(a^{\cylinder}_{f}\big),a_{2},\cyl(l),u \circ h,f' \circ p(a_{1})} 

This completes the proof that the cleavage satisfies property (i) of Definition \ref{NormallyClovenCofibrationCylinderDefinition}. That it moreover satisfies property (ii) of Definition \ref{NormallyClovenCofibrationCylinderDefinition} is clear. 
\end{proof}

\begin{lem} \label{MappingCylinderFactorisationGivesArrowSatisfyingNormallyClovenCofibrationCriterionLemma} Let $\cylinder = \big( \cyl, i_{0}, i_{1}, p, \subdiv, r_{0}, r_{1}, s \big)$ be a cylinder in $\cl{A}$ equipped with a contraction structure $p$, and a subdivision structure $\big( \subdiv, r_{0}, r_{1}, s \big)$. Suppose that $\cylinder$ has strictness of right identities. 

Let \ar{a_{0},a_{1},f} be an arrow of $\cl{A}$, and let $\big( a^{\cylinder}_{f},d^{0}_{f},d^{1}_{f} \big)$ be a mapping cylinder of $f$ with respect to $\cylinder$. Let \tri{a_{0},a^{\cylinder}_{f},a_{1},j,g,f} denote the corresponding mapping cylinder factorisation of $f$. 

Let $\big( a^{\cylinder}_{j},d^{0}_{j},d^{1}_{j} \big)$ be a mapping cylinder of $j$ with respect to $\cylinder$, and let \tri{a_{0},a^{\cylinder}_{j},a^{\cylinder}_{f},j',g',j} denote the corresponding mapping cylinder factorisation of $j$. 

There is an arrow \ar{a^{\cylinder}_{f},a^{\cylinder}_{j},l} of $\cl{A}$ such that the following diagram in $\cl{A}$ commutes. \liftingsquare{a_{0},a^{\cylinder}_{j},a^{\cylinder}_{f},a^{\cylinder}_{f},j',g',j,id,l} \end{lem}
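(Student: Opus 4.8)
The plan is to construct the arrow $l\colon a^{\cylinder}_{f} \to a^{\cylinder}_{j}$ by appealing to the universal property of $a^{\cylinder}_{f}$, that is, to the co-cartesian square defining the mapping cylinder of $f$. So I need to produce an arrow $\cyl(a_{0}) \to a^{\cylinder}_{j}$ and an arrow $a_{1} \to a^{\cylinder}_{j}$ which agree after precomposition with $i_{0}(a_{0})$ and $f$ respectively. The natural candidate for the component out of $a_{1}$ is $d^{1}_{j}$, since $a_{1} = a^{\cylinder}_{f}$-target and $d^{1}_{j}\colon a^{\cylinder}_{f} \to a^{\cylinder}_{j}$; wait, more carefully: $j\colon a_{0} \to a^{\cylinder}_{f}$, so $a^{\cylinder}_{j}$ is built from $\cyl(a_{0})$ and $a^{\cylinder}_{f}$ via $d^{0}_{j}\colon \cyl(a_{0}) \to a^{\cylinder}_{j}$ and $d^{1}_{j}\colon a^{\cylinder}_{f} \to a^{\cylinder}_{j}$. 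The component $a_{1} \to a^{\cylinder}_{j}$ should therefore be $d^{1}_{j} \circ d^{1}_{f}$, using $d^{1}_{f}\colon a_{1} \to a^{\cylinder}_{f}$. The component $\cyl(a_{0}) \to a^{\cylinder}_{j}$ is the subtle one: I expect it to be built from $d^{0}_{j}$ composed with the subdivision data, since $a^{\cylinder}_{f}$ itself has $\cyl(a_{0})$ glued into it via $d^{0}_{f}$, and to lift this compatibly I will need to "re-subdivide" using $s(a_{0})$ or a related map. This is where strictness of right identities will enter: it guarantees that the composite map out of $\subdiv(a_{0})$ built from $d^{0}_{j}\circ r_{1}(a_{0})$-type pieces restricts correctly, in particular that the degenerate half collapses as required to make $g' \circ l = \id$ hold.

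Concretely, I would first recall that $a^{\cylinder}_{f}$ is the pushout of $i_{0}(a_{0})\colon a_{0}\to\cyl(a_{0})$ along $f$, and that $a^{\cylinder}_{j}$ is the pushout of $i_{0}(a_{0})\colon a_{0}\to\cyl(a_{0})$ along $j = d^{0}_{f}\circ i_{1}(a_{0})$. I would introduce the canonical arrow $\subdiv(a_{0}) \to a^{\cylinder}_{j}$ determined by $d^{0}_{j}$ on one copy of $\cyl(a_{0})$ (via $r_{0}(a_{0})$) and by $d^{1}_{j}\circ j = d^{1}_{j}\circ d^{0}_{f}\circ i_{1}(a_{0})$ suitably... actually the cleanest route: use that $\cyl(a_{0})$ sits inside $a^{\cylinder}_{f}$ via $d^{0}_{f}$, and precompose a chosen arrow $\subdiv(a_{0})\to a^{\cylinder}_{j}$ with $s(a_{0})\colon\cyl(a_{0})\to\subdiv(a_{0})$ to get the component $\cyl(a_{0})\to a^{\cylinder}_{j}$. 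Then I would verify the two compatibility conditions needed to invoke the universal property of $a^{\cylinder}_{f}$: that the composite $a_{0}\xrightarrow{i_{0}(a_{0})}\cyl(a_{0})\to a^{\cylinder}_{j}$ equals $a_{0}\xrightarrow{f}a_{1}\xrightarrow{d^{1}_{j}\circ d^{1}_{f}}a^{\cylinder}_{j}$. Here the defining equations of $s$ (namely $s\circ i_{0} = r_{1}\circ i_{0}$ and $s\circ i_{1}=r_{0}\circ i_{1}$) together with the pushout relations in $a^{\cylinder}_{j}$ do the work.

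Having defined $l$, I then need to check the two triangles in the lifting square: $l \circ j' = \id(a^{\cylinder}_{f})$... no — rereading the statement, the square to be verified is $g'\circ l = \id$ on the relevant piece and $l\circ(\text{something}) = j'$; precisely the diagram \liftingsquare{a_{0},a^{\cylinder}_{j},a^{\cylinder}_{f},a^{\cylinder}_{f},j',g',j,id,l} asserts that $l\circ j' = j$ (wait, that's backwards) — it asserts $g'\circ l = \id(a^{\cylinder}_{f})$ and $l\circ j' = j$; actually the standard reading of \liftingsquare here is: the outer square has top $j'$, right $g'$, left $j$, bottom $\id$, and the diagonal $l$ from bottom-left $a^{\cylinder}_{f}$ to top-right $a^{\cylinder}_{j}$ satisfies $l\circ j = j'$ (diagonal after left leg equals top) and $g'\circ l = \id$ (right leg after diagonal equals bottom). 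So I must check $l\circ j = j'$ and $g' \circ l = \id(a^{\cylinder}_{f})$. The first follows by chasing through the pushout definitions of $j'$ and $l$. For the second, $g'\colon a^{\cylinder}_{j}\to a^{\cylinder}_{f}$ is the retraction-type map from the mapping cylinder factorisation of $j$ (so $g'\circ d^{0}_{j} = j\circ p(a_{0})$ and $g'\circ d^{1}_{j} = \id(a^{\cylinder}_{f})$); composing with $l$ and using the universal property of $a^{\cylinder}_{f}$ reduces the identity $g'\circ l = \id$ to checking it on the two pushout components, and on the $\cyl(a_{0})$-component this is exactly where strictness of right identities is needed — it forces $g'\circ l\circ d^{0}_{f} = d^{0}_{f}$ rather than some collapsed version.

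The main obstacle I anticipate is getting the $\cyl(a_{0})$-component of $l$ exactly right so that both required triangles hold simultaneously; the tension is that $l\circ j = j'$ wants the component to behave like an inclusion into the "new" interval direction of $a^{\cylinder}_{j}$, while $g'\circ l = \id$ wants it to collapse harmlessly back. Resolving this tension is precisely what the strictness of right identities hypothesis accomplishes, via the canonical arrow $q_{r}\colon\subdiv(a_{0})\to\cyl(a_{0})$ of Definition \ref{StrictnessRightIdentitiesCylinderDefinition} satisfying $q_{r}\circ s = \id$: I would route the $\cyl(a_{0})$-component of $l$ through $s(a_{0})$ and a map $\subdiv(a_{0}) \to a^{\cylinder}_{j}$ chosen so that, on one hand, postcomposing with $g'$ recovers $q_{r}$-type collapse (hence $\id$ after using $q_{r}\circ s = \id$), and on the other hand, precomposing appropriately recovers $j'$. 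Once the definition is pinned down, all remaining verifications are routine diagram chases using the pushout universal properties and the structural equations for $p$, $s$, $r_{0}$, $r_{1}$.
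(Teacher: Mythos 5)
Your plan matches the paper's proof: the arrow $l$ is defined via the pushout universal property of $a^{\cylinder}_{f}$ with $d^{1}_{j}\circ d^{1}_{f}$ on the $a_{1}$-leg and the composite homotopy $(d^{1}_{j}\circ d^{0}_{f})+d^{0}_{j}$ (i.e.\ the map $u\colon\subdiv(a_{0})\to a^{\cylinder}_{j}$ determined by $d^{0}_{j}$ and $d^{1}_{j}\circ d^{0}_{f}$, precomposed with $s(a_{0})$) on the $\cyl(a_{0})$-leg, and the verification of $g'\circ l=\id$ on the $\cyl(a_{0})$-component uses exactly the identity $g'\circ u=d^{0}_{f}\circ q_{r}(a_{0})$ together with $q_{r}\circ s=\id$ from strictness of right identities, as you anticipated. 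You have the right decomposition and the right role for the hypothesis; the only thing left unpinned in your sketch is the explicit formula $(d^{1}_{j}\circ d^{0}_{f})+d^{0}_{j}$ for the $\cyl(a_{0})$-component, which is what makes both triangles close simultaneously.
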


\begin{proof} Since $\big( a^{\cylinder}_{j}, d^{0}_{j}, d^{1}_{j} \big)$ defines a mapping cylinder of $j$ with respect to $\cylinder$, the following diagram in $\cl{A}$ commutes. \sq{a_{0},\cyl(a_{0}),a^{\cylinder}_{f},a^{\cylinder}_{j},i_{0}(a_{0}),d^{0}_{j},j,d^{1}_{j}} %
By definition of $j$, we also have that the following diagram in $\cl{A}$ commutes. \tri{a_{0},\cyl(a_{0}),a^{\cylinder}_{f},i_{1}(a_{0}),d^{0}_{f},j} Putting the last two observations together, we have that the following diagram in $\cl{A}$ commutes. \sq[{4,3}]{a_{0},\cyl(a_{0}),\cyl(a_{0}),a^{\cylinder}_{j},i_{0}(a_{0}),d^{0}_{j},i_{1}(a_{0}),d^{1}_{j} \circ d^{0}_{f}} %
Thus there is a canonical arrow \ar{\subdiv(a_{0}),a^{\cylinder}_{j},u} of $\cl{A}$ such that the following diagram in $\cl{A}$ commutes. \pushout{a_{0},\cyl(a_{0}),\cyl(a_{0}),\subdiv(a_{0}),a^{\cylinder}_{j},i_{0}(a_{0}),r_{0}(a_{0}),i_{1}(a_{0}),r_{1}(a_{0}),d^{0}_{j},d^{1}_{j} \circ d^{0}_{f},u}  By definition of $g'$, the following diagram in $\cl{A}$ commutes. \sq{\cyl(a_{0}),a^{\cylinder}_{j},a_{0},a^{\cylinder}_{f},d^{0}_{j},g',p(a_{0}),j} %
Appealing again to the commutativity, by definition of $j$, of the diagram \triother{a_{0},\cyl(a_{0}),a^{\cylinder}_{f},i_{1}(a_{0}),d^{0}_{f},j} in $\cl{A}$, we thus have that the following diagram in $\cl{A}$ commutes. \tri{\cyl(a_{0}),a^{\cylinder}_{j},a^{\cylinder}_{f},d^{0}_{j},g',d^{0}_{f} \circ i_{1}(a_{0}) \circ p(a_{0})} %
Appealing to the commutativity, by definition of $u$, of the diagram \tri[{4,3}]{\cyl(a_{0}),\subdiv(a_{0}),a^{\cylinder}_{j},r_{0}(a_{0}),u,d^{0}_{j}} in $\cl{A}$, we deduce that the following diagram in $\cl{A}$ commutes. \tri[{4,3}]{\cyl(a_{0}),\subdiv(a_{0}),a^{\cylinder}_{f},r_{0}(a_{0}),g' \circ u, d^{1}_{f} \circ i_{1}(a_{0}) \circ p(a_{0})} %
We also have that the following diagram in $\cl{A}$ commutes, by definition of $u$ and $g'$. \squareabovetriangle[{4,3,0,0}]{\cyl(a_{0}),\subdiv(a_{0}),a^{\cylinder}_{f},a^{\cylinder}_{j},a^{\cylinder}_{f},r_{1}(a_{0}),u,d^{0}_{f},d^{1}_{j},g',id} %
Putting the last two observations together, we have that the following diagram in $\cl{A}$ commutes. \pushout{a_{0},\cyl(a_{0}),\cyl(a_{0}),\subdiv(a_{0}),a^{\cylinder}_{f},i_{0}(a_{0}),r_{0}(a_{0}),i_{1}(a_{0}),r_{1}(a_{0}),d^{0}_{f} \circ i_{1}(a_{0}) \circ p(a_{0}), d^{0}_{f}, g' \circ u} %
Let \ar{\subdiv,\cyl,q_{r}} denote the canonical 2-arrow of $\cl{C}$ constructed as in Definition \ref{StrictnessLeftIdentitiesCylinderDefinition}. We have that the following diagram in $\cl{A}$  commutes. \pushout{a_{0},\cyl(a_{0}),\cyl(a_{0}),\subdiv(a_{0}),\cyl(a_{0}),i_{0}(a_{0}),r_{0}(a_{0}),i_{1}(a_{0}),r_{1}(a_{0}),i_{1}(a_{0}) \circ p(a_{0}),id,q_{r}(a_{0})} %
Then the following diagram in $\cl{A}$ commutes. \pushout{a_{0},\cyl(a_{0}),\cyl(a_{0}),\subdiv(a_{0}),a^{\cylinder}_{f},i_{0}(a_{0}),r_{0}(a_{0}),i_{1}(a_{0}),r_{1}(a_{0}),d^{0}_{f} \circ i_{1}(a_{0}) \circ p(a_{0}),d^{0}_{f},d^{0}_{f} \circ q_{r}(a_{0})} %
Appealing to the universal property of $\subdiv(a_{0})$, we deduce that the following diagram in $\cl{A}$ commutes. \sq{\subdiv(a_{0}),a^{\cylinder}_{j},\cyl(a_{0}),a^{\cylinder}_{f},u,g',q_{r}(a_{0}),d^{0}_{f}} By definition of the homotopy $(d^{1}_{j} \circ d^{0}_{f}) + d^{0}_{j}$ with respect to $\cylinder$, we have that the following diagram in $\cl{A}$ commutes. \tri{\cyl(a_{0}),\subdiv(a_{0}),a^{\cylinder}_{j},s(a_{0}),u,{(d^{1}_{j} \circ d^{0}_{f}) + d^{0}_{j}}} %
The following diagram in $\cl{A}$ also commutes, since $\cylinder$ has strictness of right identities. \tri{\cyl(a_{0}),\subdiv(a_{0}),\cyl(a_{0}),s(a_{0}),q_{r}(a_{0}),id} Putting the last three observations together, we have that the following diagram in $\cl{A}$ commutes. \tri[{7,3}]{\cyl(a_{0}),a^{\cylinder}_{j},a^{\cylinder}_{f},{(d^{1}_{j} \circ d^{0}_{f}) + d^{0}_{j}},g',d^{0}_{f}} %
Since $\big( a^{\cylinder}_{f}, d^{0}_{f}, d^{1}_{f} \big)$ defines a mapping cylinder of $f$ with respect to $\cylinder$, the following diagram in $\cl{A}$ commutes. \sq{a_{0},\cyl(a_{0}),a_{1},a^{\cylinder}_{f},i_{0}(a_{0}),d^{0}_{f},f,d^{1}_{f}} %
Hence the following diagram in $\cl{A}$ commutes. \sq[{4,3}]{a_{0},\cyl(a_{0}),a_{1},a^{\cylinder}_{j},i_{0}(a_{0}),d^{1}_{j} \circ d^{0}_{f},f,d^{1}_{j} \circ d^{1}_{f}} We deduce that the following diagram in $\cl{A}$ commutes. \sq[{4,3}]{a_{0},\cyl(a_{0}),a_{1},a^{\cylinder}_{j},i_{0}(a_{0}),{(d^{1}_{j} \circ d^{0}_{f}) + d^{0}_{j}},f,d^{1}_{j} \circ d^{1}_{f}} %
Thus there is a canonical arrow \ar{a^{\cylinder}_{f},a^{\cylinder}_{j},l} of $\cl{A}$ such that the following diagram in $\cl{A}$ commutes. \pushout{a_{0},\cyl(a_{0}),a_{1},a^{\cylinder}_{f},a^{\cylinder}_{j},i_{0}(a_{0}),d^{0}_{f},f,d^{1}_{f},{(d^{1}_{j} \circ d^{0}_{f}) + d^{0}_{j}},d^{1}_{j} \circ d^{1}_{f},l} %
We claim that $l$ fits into a commutative diagram as in the statement of the proposition. 

Firstly, it follows from the commutativity of the diagrams \tri{\cyl(a_{0}),a^{\cylinder}_{f},a^{\cylinder}_{j},d^{0}_{f},l,{(d^{1}_{j} \circ d^{0}_{f}) + d^{0}_{j}}} and \tri[{7,3}]{\cyl(a_{0}),a^{\cylinder}_{j},a^{\cylinder}_{f},{(d^{1}_{j} \circ d^{0}_{f}) + d^{0}_{j}},g',d^{0}_{f}} in $\cl{A}$ that the following diagram in $\cl{A}$ commutes. \tri{\cyl(a_{0}),a^{\cylinder}_{f},a^{\cylinder}_{f},d^{0}_{f},g' \circ l,d^{0}_{f}} %
The following diagram in $\cl{A}$ also commutes, by definition of $l$ and $g'$. \squareabovetriangle{a_{1},a^{\cylinder}_{f},a^{\cylinder}_{f},a^{\cylinder}_{j},a^{\cylinder}_{j},d^{1}_{f},l,d^{1}_{f},d^{1}_{j},g',id} %
Putting the last two observations together, we have that the following diagram in $\cl{A}$ commutes. \pushout{a_{0},\cyl(a_{0}),a_{1},a^{\cylinder}_{f},a^{\cylinder}_{f},i_{0}(a_{0}),d^{0}_{f},f,d^{1}_{f},d^{0}_{f},d^{1}_{f},g' \circ l} Appealing to the universal property of $a^{\cylinder}_{f}$, we deduce that the following diagram in $\cl{A}$ commutes. \tri{a^{\cylinder}_{f},a^{\cylinder}_{j},a^{\cylinder}_{f},l,g',id} 

Secondly, the following diagram in $\cl{A}$ commutes, by definition of $j'$. \tri{a_{0},\cyl(a_{0}),a^{\cylinder}_{f},i_{1}(a_{0}),d^{0}_{j},j'} Hence the following diagram in $\cl{A}$ commutes. \tri{\cyl(a_{0}),\cyl(a_{0}),a^{\cylinder}_{j},i_{1}(a_{0}),{(d^{1}_{j} \circ d^{0}_{f}) + d^{0}_{j}},j'} %
By definition of $j$ and $l$, we also have that the following diagram in $\cl{A}$ commutes. \threetriangles{a_{0},\cyl(a_{0}),\cyl(a_{0}),a^{\cylinder}_{j},j,l,i_{1}(a_{0}),d^{0}_{f},{(d^{1}_{j} \circ d^{0}_{f}) + d^{0}_{j}}} Putting the last two observations together, we have that the following diagram in $\cl{A}$ commutes. \tri{a_{0},\cyl(a_{0}),a^{\cylinder}_{j},j,l,j'} This completes the proof of the claim.    
\end{proof}

\begin{prpn} \label{MappingCylinderFactorisationGivesNormallyClovenCofibrationProposition} Let $\cylinder = \big( \cyl, i_{0}, i_{1}, p, \subdiv, r_{0}, r_{1}, s, \Gamma_{lr} \big)$ be a cylinder in $\cl{A}$ equipped with a contraction structure $p$, a subdivision structure $\big(\subdiv,r_{0},r_{1},s\big)$, and a lower right connection structure $\Gamma_{lr}$. Suppose that $\Gamma_{lr}$ is compatible with $p$, and that $\cylinder$ has strictness of right identities. Suppose moreover that $\cyl$ preserves mapping cylinders with respect to $\cylinder$. 

Let \ar{a_{0},a_{1},f} be an arrow of $\cl{A}$, and let $\big( a^{\cylinder}_{f},d^{0}_{f},d^{1}_{f} \big)$ be a mapping cylinder of $f$ with respect to $\cylinder$. 

Let \tri{a_{0},a^{\cylinder}_{f},a_{1},j,g,f} denote the corresponding mapping cylinder factorisation of $f$. Then $j$ is a normally cloven cofibration with respect to $\cylinder$. \end{prpn}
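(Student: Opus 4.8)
The plan is to combine the two preceding lemmas. We have just shown in Lemma \ref{MappingCylinderFactorisationGivesArrowSatisfyingNormallyClovenCofibrationCriterionLemma} that, under strictness of right identities, there is an arrow \ar{a^{\cylinder}_{f},a^{\cylinder}_{j},l} witnessing that $g'$ splits over $a^{\cylinder}_{f}$ in the precise sense required by the hypothesis of Lemma \ref{NormallyClovenCofibrationCriterionLemma}, applied not to $f$ but to the arrow $j$. That is, writing $\big( a^{\cylinder}_{j}, d^{0}_{j}, d^{1}_{j} \big)$ for a mapping cylinder of $j$ and \tri{a_{0},a^{\cylinder}_{j},a^{\cylinder}_{f},j',g',j} for the corresponding mapping cylinder factorisation of $j$, Lemma \ref{MappingCylinderFactorisationGivesArrowSatisfyingNormallyClovenCofibrationCriterionLemma} gives an arrow $l$ with $g' \circ l = id$ and $l \circ j = j'$.

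First I would note that all the hypotheses of Lemma \ref{NormallyClovenCofibrationCriterionLemma} are in force for the arrow $j$ in place of $f$: namely $\cylinder$ is equipped with a contraction structure $p$ and a lower right connection structure $\Gamma_{lr}$ compatible with $p$, and $\cyl$ preserves mapping cylinders with respect to $\cylinder$. These are exactly among the hypotheses of the present proposition. Then I would apply Lemma \ref{NormallyClovenCofibrationCriterionLemma} to $j$, its mapping cylinder $\big( a^{\cylinder}_{j}, d^{0}_{j}, d^{1}_{j} \big)$, its mapping cylinder factorisation $j = g' \circ j'$, and the arrow $l$ supplied by Lemma \ref{MappingCylinderFactorisationGivesArrowSatisfyingNormallyClovenCofibrationCriterionLemma}. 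The conclusion of Lemma \ref{NormallyClovenCofibrationCriterionLemma} is precisely that $j$ is a normally cloven cofibration with respect to $\cylinder$, which is what we want.

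The only genuine content beyond citing the two lemmas is bookkeeping: one must check that the mapping cylinder factorisation of $j$ referred to in Lemma \ref{MappingCylinderFactorisationGivesArrowSatisfyingNormallyClovenCofibrationCriterionLemma} is the same factorisation that Lemma \ref{NormallyClovenCofibrationCriterionLemma} requires as input when applied to $j$ — but both are defined via Definition \ref{MappingCylinderFactorisationDefinition} (equivalently Proposition \ref{MappingCylinderFactorisationProposition}) from the chosen mapping cylinder of $j$, so they coincide. I do not anticipate a real obstacle here; the substance was already absorbed into Lemmas \ref{NormallyClovenCofibrationCriterionLemma} and \ref{MappingCylinderFactorisationGivesArrowSatisfyingNormallyClovenCofibrationCriterionLemma}, and the proof of the proposition is a one-line synthesis of the two. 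Accordingly the proof reads: apply Lemma \ref{MappingCylinderFactorisationGivesArrowSatisfyingNormallyClovenCofibrationCriterionLemma} to obtain the arrow $l$, and then apply Lemma \ref{NormallyClovenCofibrationCriterionLemma} with $j$ in place of $f$ to conclude.

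\begin{proof} Let $\big( a^{\cylinder}_{j}, d^{0}_{j}, d^{1}_{j} \big)$ be a mapping cylinder of $j$ with respect to $\cylinder$, and let \tri{a_{0},a^{\cylinder}_{j},a^{\cylinder}_{f},j',g',j} denote the corresponding mapping cylinder factorisation of $j$. Since $\cylinder$ has strictness of right identities, Lemma \ref{MappingCylinderFactorisationGivesArrowSatisfyingNormallyClovenCofibrationCriterionLemma} provides an arrow \ar{a^{\cylinder}_{f},a^{\cylinder}_{j},l} of $\cl{A}$ such that the following diagram in $\cl{A}$ commutes. \liftingsquare{a_{0},a^{\cylinder}_{j},a^{\cylinder}_{f},a^{\cylinder}_{f},j',g',j,id,l}

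Now $\cylinder$ is equipped with a contraction structure $p$ and a lower right connection structure $\Gamma_{lr}$ which is compatible with $p$, and $\cyl$ preserves mapping cylinders with respect to $\cylinder$. Thus the hypotheses of Lemma \ref{NormallyClovenCofibrationCriterionLemma} are satisfied for the arrow $j$, its mapping cylinder $\big( a^{\cylinder}_{j}, d^{0}_{j}, d^{1}_{j} \big)$, and the arrow $l$ above. Applying Lemma \ref{NormallyClovenCofibrationCriterionLemma} with $j$ in place of $f$ we conclude that $j$ is a normally cloven cofibration with respect to $\cylinder$. \end{proof}
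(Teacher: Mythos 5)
Your proposal is correct and matches the paper's proof exactly: the paper's proof reads ``Follows immediately from Lemma~\ref{NormallyClovenCofibrationCriterionLemma} and Lemma~\ref{MappingCylinderFactorisationGivesArrowSatisfyingNormallyClovenCofibrationCriterionLemma},'' and you have simply unpacked that one-liner by applying the second lemma to produce the arrow $l$ and then invoking the first lemma with $j$ in place of $f$. Your bookkeeping remark about the two mapping cylinder factorisations of $j$ coinciding is accurate and harmless to include.
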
 

\begin{proof} Follows immediately from Lemma \ref{NormallyClovenCofibrationCriterionLemma} and Lemma \ref{MappingCylinderFactorisationGivesArrowSatisfyingNormallyClovenCofibrationCriterionLemma}. \end{proof}

\begin{cor} Let $\cocylinder = \big( \cocyl, e_{0}, e_{1}, c, \subdiv, r_{0}, r_{1},s,\Gamma_{lr} \big)$ be a co-cylinder in $\cl{A}$ equipped with a contraction structure $c$, a subdivision structure $\big(\subdiv, r_{0}, r_{1}, s\big)$, and a lower right connection structure $\Gamma_{lr}$. Suppose that $\Gamma_{lr}$ is compatible with $c$, and that $\cocylinder$ has strictness of right identities. Suppose moreover that $\cocyl$ preserves mapping co-cylinders with respect to $\cocylinder$. 

Let \ar{a_{0},a_{1},f} be an arrow of $\cl{A}$, and let $\big( a^{\cocylinder}_{f},d^{0}_{f},d^{1}_{f} \big)$ be a mapping co-cylinder of $f$ with respect to $\cocylinder$. 

Let \tri{a_{0},a^{\cocylinder}_{f},a_{1},j,g,f} denote the corresponding mapping co-cylinder factorisation of $f$. Then $g$ is a normally cloven fibration with respect to $\cocylinder$. \end{cor}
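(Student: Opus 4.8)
The plan is to reduce the statement to Proposition \ref{MappingCylinderFactorisationGivesNormallyClovenCofibrationProposition} by passing to the opposite category, exactly as the analogous corollaries elsewhere in this section are proven. First I would record that $\cocylinder^{op}$ is a cylinder in $\cl{A}^{op}$ equipped with the contraction structure $c^{op}$, the subdivision structure $\big( \subdiv, r_{0}^{op}, r_{1}^{op}, s^{op} \big)$, and the lower right connection structure $\Gamma_{lr}^{op}$, and that each hypothesis of Proposition \ref{MappingCylinderFactorisationGivesNormallyClovenCofibrationProposition} holds there: $\Gamma_{lr}^{op}$ is compatible with $c^{op}$ by the definition of compatibility of a lower right connection structure with a contraction structure for a co-cylinder, $\cocylinder^{op}$ has strictness of right identities by Definition \ref{StrictnessRightIdentitiesCoCylinderDefinition}, and $\cocyl$ preserves mapping cylinders with respect to $\cocylinder^{op}$ by the definition of preservation of mapping co-cylinders.

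Next I would identify, under the duality, the mapping co-cylinder factorisation appearing in the statement. By the remark following the definition of a mapping co-cylinder, $\big( a^{\cocylinder}_{f}, (d^{0}_{f})^{op}, (d^{1}_{f})^{op} \big)$ defines a mapping cylinder of $f^{op}$ with respect to $\cocylinder^{op}$ in $\cl{A}^{op}$; and since the mapping co-cylinder factorisation was introduced (Corollary \ref{MappingCoCylinderFactorisationCorollary}, Definition \ref{MappingCoCylinderFactorisationDefinition}) precisely as the dual of the mapping cylinder factorisation, the factorisation $f = g \circ j$ of the statement corresponds under $(-)^{op}$ to the factorisation $f^{op} = j^{op} \circ g^{op}$, which is the mapping cylinder factorisation of $f^{op}$ with respect to $\cocylinder^{op}$. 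In particular $g^{op}$ is the first factor of that factorisation, playing the role of the arrow called $j$ in Proposition \ref{MappingCylinderFactorisationGivesNormallyClovenCofibrationProposition}.

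Then I would invoke Proposition \ref{MappingCylinderFactorisationGivesNormallyClovenCofibrationProposition} applied to $f^{op}$ and $\cocylinder^{op}$ in $\cl{A}^{op}$ to conclude that $g^{op}$, with the cleavage constructed in that proof, is a normally cloven cofibration with respect to $\cocylinder^{op}$. Transporting this cleavage back along Remark \ref{CleavageDualityCofibCylinderFibCoCylinderRemark}, the conclusion is, by Definition \ref{NormallyClovenFibrationCoCylinderDefinition}, exactly the assertion that $g$ is a normally cloven fibration with respect to $\cocylinder$, which completes the proof.

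The argument is entirely formal, so there is no genuine obstacle; the single point I would be careful to state explicitly is that dualising reverses the order of a factorisation, so that it is the strong-deformation-retraction-like factor $g$ — rather than the normally cloven cofibration $j$ — that corresponds to the first factor of the mapping cylinder factorisation in $\cl{A}^{op}$, and hence it is $g$ that inherits the normally cloven fibration structure.
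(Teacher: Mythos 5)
Your argument is correct and takes exactly the same route as the paper, which compresses the entire proof to ``Follows immediately from Proposition \ref{MappingCylinderFactorisationGivesNormallyClovenCofibrationProposition} by duality.'' You have simply unpacked that duality step — in particular correctly noting that $(-)^{op}$ reverses the factorisation so that $g^{op}$, not $j^{op}$, plays the role of the first factor in $\cl{A}^{op}$ and hence inherits the normally cloven cofibration structure.
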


\begin{proof} Follows immediately from Proposition \ref{MappingCylinderFactorisationGivesNormallyClovenCofibrationProposition} by duality. \end{proof}

\end{chapter}

\begin{chapter}{Covering homotopy extension property} \label{CoveringHomotopyExtensionPropertyChapter}

We introduce the covering homotopy extension property with respect to a cylinder $\cylinder$ in a formal category $\cl{A}$, and to an arrow $j$ of $\cl{A}$. 

Suppose that $\cylinder$ is equipped with a contraction structure $p$. Let $\cocylinder$ be a co-cylinder in $\cl{A}$ equipped with a contraction structure $c$. Suppose that $\cylinder$ is left adjoint to $\cocylinder$, and that the adjunction between $\cyl$ and $\cocyl$ is compatible with $p$ and $c$. If an arrow $f$ of $\cl{A}$ has the covering homotopy extension property with respect to $\cylinder$ and $j$, and if $f$ is, moreover, a strong deformation retraction with respect to $\cocylinder$, we prove that $f$ has the right lifting property with respect to $j$. This will be vital to us in \ref{LiftingAxiomsChapter}.

That the covering homotopy extension property could imply a right lifting property goes back to the proof of Theorem 9 in the paper \cite{StromNoteOnCofibrationsII} of Str{\o}m, and was explored in \S{6} of the paper \cite{HastingsFibrationsOfCompactlyGeneratedSpaces} of Hastings. Our proof is abstracted from these two papers. Already in \cite{HastingsFibrationsOfCompactlyGeneratedSpaces}, Hastings observed that an abstraction of this kind should be possible. A proof in an abstract setting is also given towards the end of \S{3} of Chapter II of the book \cite{KampsPorterAbstractHomotopyAndSimpleHomotopyTheory} of Kamps and Porter.  

In the category of topological spaces, Str{\o}m proved as Theorem 4 of \cite{StromNoteOnCofibrations} that fibrations have the covering homotopy extension property with respect to closed cofibrations. In Theorem 2.1 of \cite{HastingsFibrationsOfCompactlyGeneratedSpaces}, Hastings proved that in the category of compactly generated Hausdorff spaces, fibrations have the covering homotopy extension property with respect to arbitrary cofibrations. Related theorems go back to around 1960, if not further.

More recently, fibrations in the category of topological spaces satisfying the covering homotopy extension property with respect to cofibrations were investigated in the paper \cite{SchwanzelVogtStrongCofibrationsAndFibrationsInEnrichedCategories} of Schw{\"a}nzel and Vogt, in which they are referred to as strong fibrations. They are also discussed in Chapter 4 of the book \cite{MaySigurdssonParametrizedHomotopyTheory} of May and Sigurdsson, in which the same terminology is adopted. 

\begin{assum} Let $\cl{C}$ be a 2-category with a final object. Suppose that pushouts and pullbacks of 2-arrows of $\cl{C}$ give rise to pushouts and pullbacks in formal categories, in the sense of Definition \ref{PushoutsPullbacks2ArrowsArePushoutsPullbacksInFormalCategoriesTerminology}. Let $\cl{A}$ be an object of $\cl{C}$. As before, we view $\cl{A}$ as a formal category, writing of objects and arrows of $\cl{A}$. \end{assum} 

\begin{defn} Let $\cylinder = \big( \cyl, i_0, i_1 \big)$ be a cylinder in $\cl{A}$. Let \ar{a_{0},a_{1},j} be an arrow of $\cl{A}$, and suppose that $\big( a^{\cylinder}_{j}, d^{0}_{j}, d^{1}_{j} \big)$ defines a mapping cylinder of $j$ with respect to $\cylinder$. 

An arrow \ar{a_{2},a_{3},f} of $\cl{A}$ has the {\em covering homotopy extension property} with respect to $j$ and $\cylinder$ if, for any commutative diagram \sq{a^{\cylinder}_{j},a_{2},\cyl(a_{1}),a_{3},g,f,m^{\cylinder}_{j},h} in $\cl{A}$, there is an arrow \ar{\cyl(a_{1}),a_{2},l} of $\cl{A}$, such that the following diagram in $\cl{A}$ commutes. \liftingsquare{a^{\cylinder}_{j},a_{2},\cyl(a_{1}),a_{3},g,f,m^{\cylinder}_{j},h,l} \end{defn}

\begin{rmk} This definition is equivalent to that given towards the end of \S{3} of Chapter II of the book \cite{KampsPorterAbstractHomotopyAndSimpleHomotopyTheory} of Kamps and Porter. We shall not need this. \end{rmk}

\begin{prpn} \label{CHEPImpliesLiftingAxiomProposition} Let $\cylinder = \big( \cyl, i_0, i_1, p \big)$ be a cylinder in $\cl{A}$ equipped with a contraction structure $p$, and let $\cocylinder = \big( \cocyl, e_{0}, e_{1}, c \big)$ be a co-cylinder in $\cl{A}$ equipped with a contraction structure $c$. Suppose that $\cylinder$ is left adjoint to $\cocylinder$, and that the adjunction between $\cyl$ and $\cocyl$ is compatible with $p$ and $c$. 

Let \ar{a_{0},a_{1},j} be an arrow of $\cl{A}$, and suppose that $\big( a^{\cylinder}_{j}, d^{0}_{j}, d^{1}_{j} \big)$ defines a mapping cylinder of $j$ with respect to $\cylinder$. Let \ar{a_{2},a_{3},f} be an arrow of $\cl{A}$ which has the covering homotopy extension property with respect to $j$ and $\cylinder$. Moreover, let \ar{a_{3},a_{2},j'} be an arrow of $\cl{A}$ and suppose that $f$ is a strong deformation retraction of $j'$ with respect to $\cocylinder$. 

Then, for any commutative diagram \sq{a_{0},a_{2},a_{1},a_{3},g_{0},f,j,g_{1}} in $\cl{A}$, there is an arrow \ar{a_{1},a_{2},l} of $\cl{A}$ such that the following diagram in $\cl{A}$ commutes. \liftingsquare{a_{0},a_{2},a_{1},a_{3},g_{0},f,j,g_{1},l} \end{prpn}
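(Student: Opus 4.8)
The plan is to follow the classical Str{\o}m--Hastings argument, which proceeds in two stages: first use the covering homotopy extension property to produce a candidate lift defined on $\cyl(a_1)$, and then use the strong deformation retraction structure on $f$ to correct this candidate into an honest lift defined on $a_1$. First I would set up the data. Since $f$ is a strong deformation retraction of $j'$ with respect to $\cocylinder$, we have $f \circ j' = id(a_3)$ together with a homotopy over $a_3$ from $j'f$ to $id(a_2)$ with respect to $\cocylinder$ and $(f,f)$; by Proposition \ref{StrongDeformationRetractionCoCylinderCharacterisationProposition} (using that the adjunction is compatible with $p$ and $c$) this may equivalently be taken to be a homotopy $\eta$ with respect to $\cylinder$, say $\eta \colon \cyl(a_2) \to a_2$, from $j'f$ to $id(a_2)$, satisfying $f \circ \eta = p(a_3) \circ \cyl(f)$ --- the over-$a_3$ condition with $(f,f)$. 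I would also record the given commuting square $f \circ g_0 = g_1 \circ j$.

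The first main step is to feed the CHEP an appropriate square. Using the mapping cylinder $\big(a^{\cylinder}_j, d^0_j, d^1_j\big)$ and the canonical arrow $m^{\cylinder}_j \colon a^{\cylinder}_j \to \cyl(a_1)$ of Notation \ref{CanonicalArrowFromMappingCylinderToCylinderDefinition}, I would build an arrow $g \colon a^{\cylinder}_j \to a_2$ out of the pushout defining $a^{\cylinder}_j$: on the $\cyl(a_0)$-summand take $j' \circ g_1 \circ \cyl(j) \colon \cyl(a_0) \to \cyl(a_1) \to \cyl(a_3) \to \ldots$ --- more precisely the composite encoding ``$j'g_1$ applied along the homotopy direction'' --- and on the $a_1$-summand take $j' \circ g_1$, checking compatibility over $a_0$ against $i_0(a_0)$; and build $h \colon \cyl(a_1) \to a_3$ as $g_1 \circ p(a_1)$ or rather the constant homotopy on $g_1$ in a form making the square $f \circ g = h \circ m^{\cylinder}_j$ commute. (The precise bookkeeping here is exactly the content of the Str{\o}m--Hastings diagram chase and I would not grind it out in the sketch.) The CHEP then yields $l' \colon \cyl(a_1) \to a_2$ with $f \circ l' = h$ and $l' \circ m^{\cylinder}_j = g$.

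The second main step is the correction. Restricting $l'$ along $i_1(a_1) \colon a_1 \to \cyl(a_1)$ gives an arrow $a_1 \to a_2$ which one checks satisfies $f \circ (l' \circ i_1(a_1)) = g_1$ already, but which need not satisfy $l' \circ i_1(a_1) \circ j = g_0$; the defect is controlled, via the $m^{\cylinder}_j$-compatibility, by the homotopy $\eta$ above. One then composes $l'$ with (the homotopy reparametrisation coming from) $\eta \circ \cyl(l')$ and restricts to $i_1$, or equivalently takes $l := \eta \circ \cyl(?) \circ \ldots$ evaluated at the appropriate endpoint, to cancel the $j'f$ versus $id$ discrepancy using $f \circ \eta = p(a_3)\cyl(f)$ and $f j' = id$. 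This produces $l \colon a_1 \to a_2$ with simultaneously $f \circ l = g_1$ and $l \circ j = g_0$, i.e. the desired lift in \liftingsquare{a_{0},a_{2},a_{1},a_{3},g_{0},f,j,g_{1},l}

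The step I expect to be the main obstacle is the first one: constructing the arrow $g \colon a^{\cylinder}_j \to a_2$ out of the universal property of the mapping cylinder so that the square $f g = h\, m^{\cylinder}_j$ commutes, while keeping track of how the homotopy $\eta$ (in its $\cylinder$-form, via Proposition \ref{StrongDeformationRetractionCoCylinderCharacterisationProposition}) enters, since getting the endpoints and the over-$a_3$ conditions to line up is delicate. Everything after the CHEP is a homotopy-algebra manipulation using Lemma \ref{HomotopyPreAndPostCompositionLemma}, Proposition \ref{CompositionHomotopiesProposition}, and the compatibility of $\eta$ with $p$, plus the relation $f j' = id(a_3)$; these are routine given the structures assumed. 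The duality between $\cylinder$ and $\cocylinder$ (Proposition \ref{HomotopyEquivalenceCylinderIffHomotopyEquivalenceCoCylinderProposition} and the over/under-homotopy transfer results) is what lets us move the strong deformation retraction data freely between the two sides, which is essential for the endpoint matching.
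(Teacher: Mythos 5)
Your overall architecture matches the paper: get the deformation homotopy $\eta$ (the paper calls it $h$) over $a_3$ from $j'f$ to $id(a_2)$ with respect to $\cylinder$ via Proposition \ref{StrongDeformationRetractionCoCylinderCharacterisationProposition}, build an arrow $g \colon a^{\cylinder}_j \to a_2$ by the universal property of the mapping cylinder, apply CHEP against $g_1 \circ p(a_1)$, and restrict the resulting $l' \colon \cyl(a_1) \to a_2$ along $i_1(a_1)$. However, your description of $g$ on the $\cyl(a_0)$-summand is wrong, and this causes a cascade: the composite ``$j' \circ g_1 \circ \cyl(j)$'' is not even typed correctly ($\cyl(j)$ lands in $\cyl(a_1)$, not $a_1$), and the most plausible repair, something like $j' f g_0 p(a_0)$, gives an arrow $g$ for which $l' i_1(a_1) \circ j$ computes to $j' f g_0$ rather than $g_0$. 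That mismatch is exactly the defect you then set out to correct post-CHEP, but the correction you sketch --- forming $\eta \circ \cyl(l')$ and ``restricting to $i_1$'' --- does not, as described, produce an arrow that simultaneously repairs $l' i_1(a_1) \circ j$ while preserving the already-correct relation $f \circ l' i_1(a_1) = g_1$; there is no obvious way to cancel the $j'f$ discrepancy on $a_1$ once it has been committed to in the CHEP lift.

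The right move, which the paper makes, is to bake $\eta$ into $g$ directly: define $g$ on the $\cyl(a_0)$-summand to be $\eta \circ \cyl(g_0)$ (and $j' \circ g_1$ on the $a_1$-summand as you have it). The pushout compatibility check becomes $\eta\, \cyl(g_0)\, i_0(a_0) = \eta\, i_0(a_2)\, g_0 = j'f g_0 = j' g_1 j$, and the over-$a_3$ condition $f \eta = p(a_3) \cyl(f)$ combined with $f j' = id$ gives $f g = g_1 p(a_1) m^{\cylinder}_j$ directly. After the CHEP lift $l'$, setting $x := l' \circ i_1(a_1)$ yields $f x = g_1 p(a_1) i_1(a_1) = g_1$ and, because $l' m^{\cylinder}_j d^0_j = g d^0_j = \eta \cyl(g_0)$, also $x j = l' \cyl(j) i_1(a_0) = \eta \cyl(g_0) i_1(a_0) = \eta i_1(a_2) g_0 = g_0$. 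No correction step is needed. So the gap in your proposal is not the strategy but the construction of $g$: once you use $\eta \circ \cyl(g_0)$ there, your ``stage two'' becomes unnecessary (and your version of it would not close).
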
 

\begin{proof} Since $f$ is a strong deformation retraction of $j'$ with respect to $\cocylinder$, we have by Proposition \ref{StrongDeformationRetractionCoCylinderCharacterisationProposition} that there is a homotopy \ar{\cyl(a_{2}),a_{2},h} over $a_{3}$ from $j'f$ to $id(a_{2})$ with respect to $\cylinder$ and $(f,f)$. The following diagram in $\cl{A}$ commutes. \trapeziumstwo[{3,3,1,0}]{a_{0},\cyl(a_{0}),a_{1},a_{2},\cyl(a_{2}),a_{3},a_{2},i_{0}(a_{0}),\cyl(g_{0}),h,j,g_{1},j',g_{0},i_{0}(a_{2}),f} %
Thus there is a canonical arrow \ar{a^{\cyl}_{j},a_{2},u} of $\cl{A}$ such that the following diagram in $\cl{A}$ commutes. \pushout{a_{0},\cyl(a_{0}),a_{1},a^{\cylinder}_{j},a_{2},i_{0}(a_{0}),d^{0}_{j},j,d^{1}_{j},h \circ \cyl(g_{0}),j' \circ g_{1},u} %
By definition of $h$ as a homotopy over $a_{3}$ with respect to $\cylinder$ and $(f,f)$, the following diagram in $\cl{A}$ commutes. \sq{\cyl(a_{2}),a_{2},\cyl(a_{3}),a_{3},h,f,\cyl(f),p(a_{3})} Moreover, the following diagram in $\cl{A}$ commutes. \sq{\cyl(a_{2}),\cyl(a_{3}),a_{2},a_{3},\cyl(f),p(a_{3}),p(a_{2}),f} Putting the last two observations together, we have that the following diagram in $\cl{A}$ commutes. \sq{\cyl(a_{2}),a_{2},a_{2},a_{3},h,f,p(a_{2}),f} %
We now have that the following diagram in $\cl{A}$ commutes. 

\begin{diagram}

\begin{tikzpicture} [>=stealth]

\matrix [ampersand replacement=\&, matrix of math nodes, column sep=3 em, row sep=3 em, nodes={anchor=center}]
{ 
|(0-0)| \cyl(a_{0}) \&                     \& |(2-0)| a^{\cylinder}_{j} \\ 
                    \& |(1-1)| \cyl(a_{2}) \& |(2-1)| a_{2}        \\
                    \& |(1-2)| a_{2}       \&                     \\
|(0-3)| a_{0}       \& |(1-3)| a_{1}       \& |(2-3)| a_{3}       \\
};
	
\draw[->] (0-0) to node[auto] {$d^{0}_{j}$} (2-0);
\draw[->] (0-0) to node[auto,swap] {$p(a_{0})$} (0-3);
\draw[->] (0-0) to node[auto] {$\cyl(g_{0})$} (1-1);
\draw[->] (1-1) to node[auto] {$h$} (2-1);
\draw[->] (1-1) to node[auto,swap] {$p(a_{2})$} (1-2);
\draw[->] (1-2) to node[auto] {$f$} (2-3);
\draw[->] (0-3) to node[auto] {$g_{0}$} (1-2);
\draw[->] (2-0) to node[auto] {$u$} (2-1);
\draw[->] (2-1) to node[auto] {$f$} (2-3);
\draw[->] (0-3) to node[auto,swap] {$j$} (1-3);
\draw[->] (1-3) to node[auto,swap] {$g_{1}$} (2-3);
\end{tikzpicture} 

\end{diagram} %
The following diagram in $\cl{A}$ also commutes. \squareabovetriangle{a_{1},a^{\cylinder}_{j},a_{3},a_{2},a_{3},d^{1}_{j},u,g_{1},j',f,id}%
Putting the last two observations together, we have that the following diagram in $\cl{A}$ commutes. \pushout[{4,3,0}]{a_{0},\cyl(a_{0}),a_{1},a^{\cylinder}_{j},a_{3},i_{0}(a_{0}),d^{0}_{j},j,d^{1}_{j},g_{1} \circ j \circ p(a_{0}), g_{1}, f \circ u} %
The following diagram in $\cl{A}$ also commutes.  

\begin{diagram}

\begin{tikzpicture} [>=stealth]

\matrix [ampersand replacement=\&, matrix of math nodes, column sep=6 em, row sep=3 em, nodes={anchor=center}]
{ 
|(0-0)| \cyl(a_{0}) \& |(1-0)| a^{\cylinder}_{j} \\ 
                    \& |(1-1)| \cyl(a_{1})   \\
|(0-2)| a_{0}       \& |(1-2)| a_{1}        \\
};
	
\draw[->] (0-0) to node[auto] {$d^{0}_{j}$} (1-0);
\draw[->] (0-0) to node[auto,swap] {$p(a_{0})$} (0-2);
\draw[->] (1-0) to node[auto] {$m^{\cylinder}_{j}$} (1-1);
\draw[->] (0-0) to node[auto,swap] {$\cyl(j)$} (1-1);
\draw[->] (1-1) to node[auto] {$p(a_{1})$} (1-2);
\draw[->] (0-2) to node[auto,swap] {$j$} (1-2);
\end{tikzpicture} 

\end{diagram} %
Hence the following diagram in $\cl{A}$ commutes. \tri{\cyl(a_{0}),a^{\cylinder}_{j},a_{3},d^{0}_{j},g_{1} \circ p(a_{1}) \circ m^{\cylinder}_{j},g_{1} \circ j \circ p(a_{0})} %
Moreover, the following diagram in $\cl{A}$ commutes.   

\begin{diagram}

\begin{tikzpicture} [>=stealth]

\matrix [ampersand replacement=\&, matrix of math nodes, column sep=7 em, row sep=4 em, nodes={anchor=center}]
{ 
|(0-0)| a_{1} \& |(1-0)| a^{\cylinder}_{j} \\ 
|(0-1)| a_{1} \& |(1-1)| \cyl(a_{1})   \\
};
	
\draw[->] (0-0) to node[auto] {$d^{1}_{j}$} (1-0);
\draw[->] (0-0) to node[auto,swap] {$id$} (0-1);
\draw[->] (1-0) to node[auto] {$m^{\cylinder}_{j}$} (1-1);
\draw[->] (1-1) to node[auto] {$p(a_{1})$} (0-1);
\draw[->] (0-0) to node[auto,swap] {$i_{0}(a_{1})$} (1-1);
\end{tikzpicture} 

\end{diagram} %
Thus the following diagram in $\cl{A}$ commutes. \tri{a_{1},a^{\cylinder}_{j},a_{3},d^{1}_{j},g_{1} \circ p(a_{1}) \circ m^{\cylinder}_{j},g_{1}} %
We now have that the following diagram in $\cl{A}$ commutes. \pushout[{4,3,3}]{a_{0},\cyl(a_{0}),a_{1},a^{\cylinder}_{j},a_{3},i_{0}(a_{0}),d^{0}_{j},j,d^{1}_{j},g_{1} \circ j \circ p(a_{0}), g_{1}, g_{1} \circ p(a_{1}) \circ m^{\cylinder}_{j}} %
Appealing to the universal property of $a^{\cylinder}_{j}$, it follows that the diagram \sq[{5,3}]{a^{\cylinder}_{j},a_{2},\cyl(a_{1}),a_{3},u,f,m^{\cylinder}_{j},g_{1} \circ p(a_{1})} in $\cl{A}$ commutes. % 
Since $f$ has the covering homotopy extension property with respect to $j$, we deduce there is an arrow \ar{\cyl(a_{1}),a_{2},l} of $\cl{A}$ such that the following diagram in $\cl{A}$ commutes. \liftingsquare[{5,3.5}]{a^{\cylinder}_{j},a_{2},\cyl(a_{1}),a_{3},u,f,m^{\cylinder}_{j},g_{1} \circ p(a_{1}),l} %
Let $x$ denote the arrow \ar[5]{a_{1},a_{2},l \circ i_{1}(a_{1})} of $\cl{A}$. We claim that the following diagram in $\cl{A}$ commutes. \liftingsquare{a_{0},a_{2},a_{1},a_{3},g_{0},f,j,g_{1},x} 

Firstly, note that the following diagram in $\cl{A}$ commutes. \liftingsquare[{4,4}]{\cyl(a_{0}),\cyl(a_{1}),a^{\cylinder}_{j},a_{2},\cyl(j),l,d^{0}_{j},u,m^{\cylinder}_{j}} Hence the following diagram in $\cl{A}$ commutes. \trapeziumsthree[{4,3.5,-1,0}]{a_{0},a_{1},\cyl(a_{0}),\cyl(a_{1}),a^{\cylinder}_{j},a_{2},j,x,i_{1}(a_{0}),d^{0}_{j},u,i_{1}(a_{1}),\cyl(j),l} %
Moreover, the following diagram in $\cl{A}$ commutes, by appeal to the definition of $u$ and $h$. 

\begin{diagram}

\begin{tikzpicture} [>=stealth]

\matrix [ampersand replacement=\&, matrix of math nodes, column sep=3 em, row sep=3 em, nodes={anchor=center}]
{ 
|(0-0)| a_{0}  \&[1em]                     \& |(2-0)| \cyl(a_{0}) \\
|(0-1)| a_{2}  \& |(1-1)| \cyl(a_{2}) \& |(2-1)| a^{\cylinder}_{j} \\
               \&                     \& |(2-2)| a_{2} \\
};
	
\draw[->] (0-0) to node[auto] {$i_{1}(a_{0})$} (2-0);
\draw[->] (0-0) to node[auto,swap] {$g_{0}$} (0-1);
\draw[->] (0-1) to node[auto,swap] {$i_{1}(a_{2})$} (1-1);
\draw[->] (2-0) to node[auto,swap] {$\cyl(g_{0})$} (1-1);
\draw[->] (2-0) to node[auto] {$d^{0}_{j}$} (2-1);
\draw[->,bend right=45] (0-1) to node[auto,swap] {$id$} (2-2);
\draw[->] (1-1) to node[auto,swap] {$h$} (2-2);
\draw[->] (2-1) to node[auto] {$u$} (2-2);
\end{tikzpicture} 

\end{diagram} %
Putting the last two observations together, we have that the following diagram in $\cl{A}$ commutes. \tri{a_{0},a_{1},a_{2},j,x,g_{0}} 

Secondly, the following diagram in $\cl{A}$ commutes. \squareofthreetrianglestwo[{3,3.5,0,0}]{a_{1},a_{2},\cyl(a_{1}),a_{1},a_{3},x,f,id,g_{1},i_{1}(a_{1}),l,p(a_{1})} %
This completes the proof of the claim.

\end{proof}

\end{chapter}

\begin{chapter}{Dold's theorem} \label{DoldTheoremChapter}

Let $\cylinder$ be a cylinder in a formal category $\cl{A}$, equipped with a contraction structure, an involution structure compatible with contraction, a subdivision structure compatible with contraction, and an upper left connection structure. Suppose that we have a commutative diagram \triother{a_{0},a_{1},a,f,j_{1},j_{0}} in $\cl{A}$, in which $j_{0}$ and $j_{1}$ are fibrations with respect to $\cylinder$. We prove that if $f$ is a homotopy equivalence with respect to $\cylinder$, then $f$ is, moreover, a homotopy equivalence over $a$ with respect to $\cylinder$ and $(j_{0},j_{1})$. 

For topological spaces, this Theorem 3.1 of the paper \cite{DoldPartitionsOfUnityInTheTheoryOfFibrations} of Dold. Our proof is an abstraction of a hybrid of Dold's proof and the proof presented in \S{5} of Chapter 6 of the book \cite{MayAConciseCourseInAlgebraicTopology} of May. 

We exhibit a double homotopy \doublehomotopy{h_{0},h_{1},h_{2},h_{3},\sigma,f_{0},f_{1},f_{2},f_{3}} for which $h_{1}$, $h_{2}$, and $h_{3}$ can be proven to be homotopies over $a$. One can then construct a homotopy over $a$ from $f_{0}$ to $f_{1}$ by taking the indirect route around the square, after reversing $h_{1}$. 

In our construction of this double homotopy, the key role is played by an upper left connection structure. The reader may observe that the map \ar[12]{I^{2},{I,},{(t_{0},t_{1}) \mapsto t_{0} + (1-t_{0})t_{1}}} in which $I$ is the unit interval, underlies the construction of the double homotopy in \cite{MayAConciseCourseInAlgebraicTopology}. This map defines an upper left connection structure with respect to the topological interval.

Given a cylinder whose associated cubical set satisfies low dimensional Kan conditions, a proof of Dold's theorem was given by Kamps in \S{6} of \cite{KampsKanBedingungenUndAbstrakteHomotopietheorie}. A variation is presented in \S{6} of Chapter I of \cite{KampsPorterAbstractHomotopyAndSimpleHomotopyTheory}. There is a fundamental difference between our proof and these two.

We demonstrate that the double homotopy can be constructed if $\cylinder$ admits certain structures. By contrast, requiring that the cubical set associated to $\cylinder$ satisfies low dimensional Kan conditions ensures the existence of this double homotopy, but the Kan conditions must themselves be proven to hold. To put it another way, the proofs of \cite{KampsKanBedingungenUndAbstrakteHomotopietheorie} and \cite{KampsPorterAbstractHomotopyAndSimpleHomotopyTheory} can be thought of as a plan for the construction of the double homotopy, whereas we identify structures upon a cylinder which allow us to carry out this plan.

A different proof of Dold's theorem can be given by identifying structures upon $\cylinder$ which allow one to prove that the objects, arrows, and homotopies up to homotopy of $\cl{A}$ with respect to $\cylinder$ assemble into a 2-category. For one can then appeal to the argument presented in \S{1} and \S{2} of Chapter IV of the book \cite{KampsPorterAbstractHomotopyAndSimpleHomotopyTheory} of Kamps and Porter.

We deduce from Dold's theorem that a trivial cofibration admits a strong deformation retraction. This will be vital for us in \ref{LiftingAxiomsChapter}, when we establish the lifting axioms for a model structure. As observed by Dold as Satz 3.6 of \cite{DoldHalbexakteHomotopiefunktoren}, his theorem dualises, from which we deduce that a trivial fibration is a strong deformation retraction. 

If we have strictness of identities, we shall prove in \ref{LiftingAxiomsChapter} that trivial cofibrations are exactly sections of strong deformation retractions, and dually that trivial fibrations are exactly strong deformation retractions. 

\begin{assum} Let $\cl{C}$ be a 2-category with a final object. Suppose that pushouts and pullbacks of 2-arrows of $\cl{C}$ give rise to pushouts and pullbacks in formal categories, in the sense of Definition \ref{PushoutsPullbacks2ArrowsArePushoutsPullbacksInFormalCategoriesTerminology}. Let $\cl{A}$ be an object of $\cl{C}$. As before, we view $\cl{A}$ as a formal category, writing of objects and arrows of $\cl{A}$. \end{assum} 

\begin{lem} \label{DoldLemma1} Let $\cylinder = \big( \cyl, i_0, i_1, p, v, \subdiv, r_{0}, r_{1}, s \big)$ be a cylinder in $\cl{A}$ equipped with a contraction structure $p$, an involution structure $v$, and a subdivision structure $\big(\subdiv, r_{0}, r_{1}, s \big)$. 

Let \ar{a,a_{0},j_{0}} be an arrow of $\cl{A}$ which is a fibration with respect to $\cylinder$. Let \ar{a,a_{1},j_{1}} and \ar{a_{0},a_{1},f} be arrows of $\cl{A}$, such that the diagram \triother{a_{0},a_{1},a,f,j_{1},j_{0}} in $\cl{A}$ commutes, and such that $f$ is a homotopy equivalence with respect to $\cylinder$. 

There is an arrow \ar{a_{1},a_{0},g} of $\cl{A}$, and a homotopy from $fg$ to $id(a_{1})$ with respect to $\cylinder$, such that the following diagram in $\cl{A}$ commutes. \triother{a_{1},a_{0},a,g,j_{0},j_{1}} \end{lem}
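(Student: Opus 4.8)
The idea is to start from an arbitrary homotopy inverse of $f$ and to \emph{correct} it, using the fibration property of $j_0$, so that it becomes a genuine arrow over $a$. First I would invoke the hypothesis that $f$ is a homotopy equivalence with respect to $\cylinder$ to obtain an arrow \ar{a_1,a_0,g'} of $\cl{A}$ together with a homotopy \ar{\cyl(a_1),a_1,h} from $fg'$ to $id(a_1)$ with respect to $\cylinder$. The arrow $g'$ need not satisfy $j_0 g' = j_1$, so I would measure the failure by composing with $j_0$: the homotopy $j_0 \circ h^{-1}$ (or a suitable reindexing) is a homotopy from $j_1$ to $j_0 g' f$, but what I really want is to exhibit a homotopy over which to transport $g'$.

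\textbf{Key steps.} The plan is to build a commutative square whose top edge is $g'$ and whose bottom edge is the homotopy $h$ (suitably precomposed), set up so that the fibration $j_0$ lifts it. Concretely I would consider the arrow $g' \colon a_1 \to a_0$ and the homotopy \ar{\cyl(a_1),a_1,k} defined as $h$ reversed, i.e.\ a homotopy from $id(a_1)$ to $fg'$; then $j_1 \circ k$ is a homotopy in $a_1$ which I want to lift against $j_0$ along the square
\sq{a_1,a_0,\cyl(a_1),a_1,g',j_0,i_0(a_1),j_1 \circ k}
whose commutativity I would check using $j_0 g' = $ (not yet known) --- so instead I would arrange the square to start from $i_1$ rather than $i_0$, or precompose with $v(a_1)$, so that the corner condition reads $j_0 g' = j_1 \circ (fg' = id)$, which does hold on the nose at the endpoint where $k$ restricts to $fg'$. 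Applying the fibration property of $j_0$ to this square produces a homotopy \ar{\cyl(a_1),a_0,l} with $j_0 \circ l = j_1 \circ k$ and $l$ restricting to $g'$ at one end. I then set $g := l \circ i_1(a_1)$ (the other endpoint of $l$). By construction $j_0 \circ g = j_1 \circ k \circ i_1(a_1) = j_1$, which is exactly the desired triangle over $a$. It remains to produce the homotopy from $fg$ to $id(a_1)$: since $g$ is homotopic to $g'$ via $l$ (by Lemma~\ref{HomotopyPreAndPostCompositionLemma}, $f \circ l$ is a homotopy from $fg'$ to $fg$), I would compose this with the original homotopy $h$ from $fg'$ to $id(a_1)$ using Proposition~\ref{CompositionHomotopiesProposition} (after reversing $f \circ l$ via the involution structure) to obtain a homotopy from $fg$ to $id(a_1)$ with respect to $\cylinder$.

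\textbf{Main obstacle.} The delicate point is getting the \emph{boundary conditions on the lifting square exactly right} so that the fibration property of $j_0$ applies: a fibration with respect to $\cylinder$ lifts homotopies whose initial value factors compatibly through $j_0$, so I must make sure the endpoint of the homotopy $k$ at which the square is anchored is one where $j_0 g' = j_1 k$ holds strictly, not merely up to homotopy. This is why the reversed homotopy $k$ (running from $id(a_1)$ to $fg'$, with $fg'$ at the $i_1$-end) is the right object to feed in rather than $h$ itself: at the $i_1$-end we have $k \circ i_1(a_1) = fg'$ and $j_1 \circ fg' = j_0 g'$ since $j_1 f = j_0$ by hypothesis, so the square commutes on the corner. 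The subdivision and involution structures are needed only at the end, to reverse and concatenate homotopies; the contraction structure is implicit in the cylinder data already used. I do not expect any step beyond this corner-matching bookkeeping to be difficult, but it must be done carefully, and the statement is deliberately phrased as a one-sided ``right homotopy inverse over $a$'' precisely so that only $j_0$ (and not $j_1$) needs to be a fibration at this stage.
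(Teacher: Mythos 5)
Your proposal is correct and follows essentially the same route as the paper: lift $f^{-1}$ (your $g'$) against $j_0$ through the square with bottom edge $j_1\circ h$ anchored at $i_0(a_1)$, set $g := k\circ i_1(a_1)$, and produce the homotopy from $fg$ to $id(a_1)$ by composing $(f\circ k)^{-1}$ with $h$. Your initial choice to reverse $h$ creates the corner mismatch you notice; the cleanest phrasing is to keep $h$ itself from the outset (so $h\circ i_0(a_1)=fg'$ and $j_0 g' = j_1 fg' = j_1\circ h\circ i_0(a_1)$ holds on the nose), which is exactly what the paper does and also cleans up your later line ``$j_0 g = j_1\circ k\circ i_1(a_1) = j_1$,'' which only reads correctly once $k$ is replaced by $h$.
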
 

\begin{proof} Let \ar{\cyl(a_{1}),a_{1},h} be a homotopy from $ff^{-1}$ to $id(a_{1})$ with respect to $\cylinder$. The following diagram in $\cl{A}$ commutes. \squareabovetriangle{a_{1},\cyl(a_{1}),a_{0},a_{1},a,i_{0}(a_{1}),h,f^{-1},f,j_{1},j_{0}} %
Since $j_{0}$ is a fibration with respect to $\cylinder$, we deduce that there is an arrow \ar{\cyl(a_{1}),a_{0},k} of $\cl{A}$ such that the following diagram in $\cl{A}$ commutes. \liftingsquare[{4,3}]{a_{1},a_{0},\cyl(a_{1}),a,{f^{-1}},j_{0},i_{0}(a_{1}),j_{1} \circ h,k} %
Let $g$ denote the arrow \ar[5]{a_{1},a_{0},k \circ i_{1}(a_{1})} of $\cl{A}$. The following diagram in $\cl{A}$ commutes. \squarebeneathtriangle{a_{1},\cyl(a_{1}),a_{1},a_{1},a,i_{1}(a_{1}),h,id,j_{1},k,j_{0}} %
Thus the following diagram in $\cl{A}$ commutes. \triother{a_{1},a_{0},a,g,j_{0},j_{1}} %
It remains to construct a homotopy from $fg$ to $id(a_{1})$ with respect to $\cylinder$. 

Firstly, the following diagram in $\cl{A}$ commutes. 

\begin{diagram}

\begin{tikzpicture} [>=stealth]

\matrix [ampersand replacement=\&, matrix of math nodes, column sep=6 em, row sep=3 em, nodes={anchor=center}]
{ 
|(0-0)| a_{1} \& |(1-0)| \cyl(a_{1}) \\
|(0-1)| a_{0} \& |(1-1)| \cyl(a_{1}) \\ 
};
	
\draw[->] (0-0) to node[auto] {$i_{1}(a_{1})$} (1-0);
\draw[->] (0-0) to node[auto,swap] {$f^{-1}$} (0-1);
\draw[->] (0-0) to node[auto,swap] {$i_{0}(a_{1})$} (1-1);
\draw[->] (1-0) to node[auto] {$v(a_{1})$} (1-1);
\draw[->] (1-1) to node[auto] {$k$} (0-1);
\end{tikzpicture} 

\end{diagram} %
We also have that the diagram \sq{a_{1},\cyl(a_{1}),a_{0},a_{1},i_{0}(a_{1}),h,f^{-1},f} in $\cl{A}$ commutes. Putting the last two observations together, we have that the following diagram in $\cl{A}$ commutes. \sq[{5,3}]{a_{1},\cyl(a_{1}),\cyl(a_{1}),a_{1},i_{0}(a_{1}),h,i_{1}(a_{1}),f \circ k \circ v(a_{1})} %
Let us denote by \ar{\cyl(a_{1}),a_{1},l} the homotopy $\big(f \circ k \circ v(a_{1}) \big) + h$ with respect to $\cylinder$. The following diagram in $\cl{A}$ commutes. 

\begin{diagram}

\begin{tikzpicture} [>=stealth]

\matrix [ampersand replacement=\&, matrix of math nodes, column sep=6 em, row sep=3 em, nodes={anchor=center}]
{ 
|(0-0)| a_{1} \& |(1-0)| \cyl(a_{1}) \\
|(0-1)| a_{0} \& |(1-1)| \cyl(a_{1}) \\ 
};
	
\draw[->] (0-0) to node[auto] {$i_{0}(a_{1})$} (1-0);
\draw[->] (0-0) to node[auto,swap] {$g$} (0-1);
\draw[->] (0-0) to node[auto,swap] {$i_{1}(a_{1})$} (1-1);
\draw[->] (1-0) to node[auto] {$v(a_{1})$} (1-1);
\draw[->] (1-1) to node[auto] {$k$} (0-1);
\end{tikzpicture} 

\end{diagram} %
Thus, since the diagram \sq{a_{1},\cyl(a_{1}),\cyl(a_{1}),a_{1},i_{0}(a_{1}),l,i_{0}(a_{1}),h} in $\cl{A}$ commutes, we have that the following diagram in $\cl{A}$ commutes. \tri{a_{1},\cyl(a_{1}),a_{1},i_{0}(a_{1}),l,fg} % 
In addition, since the diagram \sq{a_{1},\cyl(a_{1}),\cyl(a_{1}),a_{1},i_{1}(a_{1}),l,i_{1}(a_{1}),h} in $\cl{A}$ commutes, and since the diagram \tri{a_{1},\cyl(a_{1}),a_{1},i_{1}(a_{1}),h,id} in $\cl{A}$ commutes, we have that the following diagram in $\cl{A}$ commutes. \tri{a_{1},\cyl(a_{1}),a_{1},i_{1}(a_{1}),l,id}

\end{proof}

\begin{lem} \label{DoldLemma2} Let $\cylinder = \big( \cyl, i_0, i_1, p, v, \subdiv, r_{0}, r_{1}, s, \Gamma_{ul} \big)$ be a cylinder in $\cl{A}$ equipped with a contraction structure $p$, an involution structure $v$, a subdivision structure $\big(\subdiv, r_{0}, r_{1}, s \big)$, and an upper left connection structure $\Gamma_{ul}$. Suppose that $\cyl$ preserves subdivision with respect to $\cylinder$. 

Let \ar{a,a_{0},j_{0}} be an arrow of $\cl{A}$ which is a fibration with respect to $\cylinder$. Let \ar{a,a_{1},j_{1}} and \ar{a_{0},a_{1},f} be arrows of $\cl{A}$, such that the diagram \triother{a_{0},a_{1},a,f,j_{1},j_{0}} in $\cl{A}$ commutes, and such that $f$ is a homotopy equivalence with respect to $\cylinder$. 

Let \ar{a_{1},a_{0},g} and \ar{\cyl(a_{1}),a_{1},l} denote the arrows of $\cl{A}$ constructed in Lemma \ref{DoldLemma1}, such that the diagram \triother{a_{1},a_{0},a,g,j_{0},j_{1}} in $\cl{A}$ commutes, and such that $l$ defines a homotopy from $fg$ to $id(a_{1})$ with respect to $\cylinder$. 

There is an arrow \ar{\cyl^{2}(a_{1}),a,\tau} of $\cl{A}$ such that the following diagram in $\cl{A}$ commutes. \sq[{5,3}]{\cyl(a_{1}),\cyl^{2}(a_{1}),a_{1},a,i_{0}\big(\cyl(a_{1})\big),\tau,l,j_{1}} \end{lem}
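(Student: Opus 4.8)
The goal is to produce a double homotopy $\tau$ whose first boundary homotopy (the one attached along $i_0(\cyl(a_1))$) is $l$, and which moreover lifts the constant homotopy $j_1 \circ p$ along $j_0$ in an appropriate sense --- the displayed square says precisely that $\tau$ restricted to $i_0(\cyl(a_1))$ is $l$ and that $j_0 \circ \tau = j_1 \circ l$ on that face, but the point of having $\tau$ as a \emph{double} homotopy is to be able to read off further boundary homotopies which will turn out to be homotopies over $a$. So the plan is to start from the connection double homotopy $l \circ \Gamma_{ul}(a_1)$, which by the proposition on upper left connections has boundary homotopies $l, \id, l, \id$ and corner arrows $fg, \id(a_1), \id(a_1), \id(a_1)$, and then correct it by a lift against the fibration $j_0$ so that the composite with $j_0$ becomes controlled.

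First I would form the arrow $l \circ \Gamma_{ul}(a_1) : \cyl^2(a_1) \to a_1$ and compose with $j_1$ to get a candidate arrow $\cyl^2(a_1) \to a$; but this is not yet a lift through $j_0$, so instead I would set up a lifting problem for $j_0$. Concretely, I would restrict attention to the face $i_0(\cyl(a_1)) : \cyl(a_1) \to \cyl^2(a_1)$, on which we already want $\tau$ to equal $l$, and use the fibration property of $j_0$ with respect to $\cylinder$ to extend a lift off the other cylinder direction. Here the key structural input is that $\cyl$ preserves subdivision (so that the relevant domain $\cyl^2(a_1)$, viewed via subdivision or via the connection, can be assembled from pieces on which we have data) and that $\Gamma_{ul}$ gives the connection identities relating $\cyl \cdot i_0$, $i_0 \cdot \cyl$, and the contraction $p$. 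I would arrange a commutative square of the shape appearing in the definition of ``fibration with respect to $\cylinder$'' whose top edge encodes $l$ (or $f g$, $\id$ appropriately) and whose bottom edge is $j_1 \circ (l \circ \Gamma_{ul}(a_1))$ or $j_1 \circ p(a_1) \circ (\text{evaluation})$, and take $\tau$ to be the resulting lift $\cyl^2(a_1) \to a_0 \to a$ --- actually into $a$ directly, since $j_0$ maps $a_0 \to$... wait, $j_0 : a \to a_0$, so the lift lands in $a$ against $j_0$, which is exactly what the statement wants ($\tau : \cyl^2(a_1) \to a$).

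The commutativity of the displayed square then has two parts: that $\tau \circ i_0(\cyl(a_1)) = l$ follows from the left-hand edge of the lifting square being (a relabelling of) $l$, and that $j_1 \circ l = j_0 \circ \tau \circ i_0(\cyl(a_1))$... more precisely that the square $i_0(\cyl(a_1)), \tau, l, j_1$ commutes, i.e. $j_0 \circ \tau = j_1 \circ l$ after precomposing with $i_0(\cyl(a_1))$, which is the bottom edge of the lifting square restricted to that face. I would verify these by chasing the connection identities for $\Gamma_{ul}$ (the four commuting triangles/squares in its definition) together with the fact that $j_1 = j_1 \circ p(a_1) \circ i_0(a_1)$ from the contraction structure and that $j_1 = j_0 \circ f$ is not used here but $j_1 \circ g$-type identities come from Lemma \ref{DoldLemma1}'s conclusion $j_0 \circ g = j_1$.

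\textbf{Main obstacle.} The delicate point is choosing the \emph{right} lifting square to feed to the fibration $j_0$: one needs the top-left corner data on $\cyl(a_1)$ to be $i_0(\cyl(a_1))$-compatible with $l$ while the bottom ($j_0$-image) data is a homotopy that genuinely extends over all of $\cyl^2(a_1)$ and agrees with $j_1 \circ l$ on the relevant face. This is where $\cyl$ preserving subdivision and the precise form of the upper left connection identities (in particular that $\Gamma_{ul}$ restricts to the identity on the $i_0$-faces and to $i_1 \circ p$ on the $i_1$-faces) must be combined so that the square actually commutes before one can invoke the fibration property; getting that square to commute, rather than the subsequent bookkeeping, will be the bulk of the work.
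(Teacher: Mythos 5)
Your plan diverges from the paper's and contains several genuine gaps.

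First, you have misread the conclusion. The displayed square is a single commutativity condition, namely $\tau \circ i_{0}\big(\cyl(a_{1})\big) = j_{1} \circ l$; it does not also say ``$j_{0} \circ \tau = j_{1} \circ l$ on that face'', and $\tau$ lands in $a$, so its restriction cannot literally ``be $l$''. Once you read the square correctly, you will notice that the stated conclusion is already satisfied by the trivial choice $\tau := j_{1} \circ l \circ \Gamma_{ul}(a_{1})$, since $\Gamma_{ul}(a_{1}) \circ i_{0}\big(\cyl(a_{1})\big) = \id$. The content of the lemma is therefore not the existence of some $\tau$ with the displayed property, but the construction of a \emph{specific} $\tau$ whose other three faces will, in Lemma \ref{DoldLemma3}, turn out to lift through $j_{1}$ to homotopies over $a$. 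Your proposal never says which extra properties your $\tau$ is meant to have, so even as a plan it is underdetermined.

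Second, and more seriously, there is no fibration lift in the proof of this lemma, and the one you propose does not typecheck. In the notation used throughout the proofs, $j_{0} : a_{0} \to a$ and $j_{1} : a_{1} \to a$, with $j_{1} \circ f = j_{0}$ (the \verb|\ar| macros in the statement have the arguments swapped, but every commuting diagram in the surrounding proofs fixes the direction). A lift against the fibration $j_{0} : a_{0} \to a$ lands in $a_{0}$, not in $a$. You flip to the convention $j_{0} : a \to a_{0}$ mid-sentence to make the target come out to $a$, but under that convention the commuting triangle from the hypothesis breaks. The fibration property of $j_{0}$ was already spent in Lemma \ref{DoldLemma1} to produce $k : \cyl(a_{1}) \to a_{0}$ with $j_{0} \circ k = j_{1} \circ h$; Lemma \ref{DoldLemma2} does not lift against anything. (The fibration lift you are thinking of is the one against $j_{1}$, and it happens at the start of Lemma \ref{DoldLemma3}, producing $\sigma$ from $\tau$.)

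Third, you mention that $\cyl$ preserving subdivision must enter, but you do not use it in the way the proof actually requires. The paper observes that the two arrows $j_{1} \circ h \circ \Gamma_{ul}(a_{1})$ and $j_{0} \circ k \circ \Gamma_{ul}(a_{1}) \circ \cyl\big(v(a_{1})\big)$ from $\cyl^{2}(a_{1})$ to $a$ agree after precomposition with $\cyl\big(i_{0}(a_{1})\big)$ and $\cyl\big(i_{1}(a_{1})\big)$ respectively (this is where the connection identities for $\Gamma_{ul}$, the involution $v$, and the relation $j_{0} \circ k = j_{1} \circ h$ from Lemma \ref{DoldLemma1} are used). Since $\cyl$ preserves subdivision, the square with legs $\cyl \cdot i_{0}$ and $\cyl \cdot i_{1}$ is a pushout onto $\cyl\big(\subdiv(a_{1})\big)$, so these two arrows glue to an arrow $u : \cyl\big(\subdiv(a_{1})\big) \to a$; one then sets $\tau := u \circ \cyl\big(s(a_{1})\big)$ and verifies the square using the identity $\Gamma_{ul} \circ (i_{0} \cdot \cyl) = \id$ together with the relation between $s$, $r_{0}$, $r_{1}$ and the construction of $l$ as $\big(f \circ k \circ v(a_{1})\big) + h$. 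The pushout is the crux: it is what makes the construction a genuine gluing rather than a lifting problem. Your plan replaces this gluing with a lift against $j_{0}$ and never produces a candidate commuting square for the lift to solve, so the argument as sketched would not go through.
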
 

\begin{proof} Let \ar{\cyl(a_{1}),a_{0},k} denote the arrow of $\cl{A}$ constructed in the proof of Lemma \ref{DoldLemma1}. In particular, the following diagram in $\cl{A}$ commutes. \sq{\cyl(a_{1}),a_{0},a_{1},a,k,j_{0},h,j_{1}} %
By definition of $\Gamma_{ul}$ as an upper left connection structure, the following diagram in $\cl{A}$ also commutes. \tri[{5,3}]{\cyl(a_{1}),\cyl^{2}(a_{1}),\cyl(a_{1}),\cyl\big(i_{0}(a_{1})\big),\Gamma_{ul}(a_{1}),id} %
Putting the last two observations together, we have that the following diagram in $\cl{A}$ commutes. \tri[{5,3}]{\cyl(a_{1}),\cyl^{2}(a_{1}),a,\cyl\big(i_{0}(a_{1})\big),j_{1} \circ h \circ \Gamma_{ul}(a_{1}),j_{0} \circ k} %
By definition of $v$ as an involution structure, the following diagram in $\cl{A}$ commutes. \tri{a_{1},\cyl(a_{1}),\cyl(a_{1}),i_{1}(a_{1}),v(a_{1}),i_{0}(a_{1})} Thus, appealing once more to the commutativity of the diagram \tri[{5,3}]{\cyl(a_{1}),\cyl^{2}(a_{1}),\cyl(a_{1}),\cyl\big(i_{0}(a_{1})\big),\Gamma_{ul}(a_{1}),id} in $\cl{A}$, we have that the following diagram in $\cl{A}$ commutes. \tri{\cyl(a_{1}),\cyl^{2}(a_{1}),a,\cyl(a_{1}),\cyl\big(i_{1}(a_{1})\big),\Gamma_{ul}(a_{1}),\cyl\big(v(a_{1})\big),id} %
Hence the following diagram in $\cl{A}$ commutes. \tri[{5,3}]{\cyl(a_{1}),\cyl^{2}(a_{1}),a,\cyl\big(i_{1}(a_{1})\big),j_{0} \circ k \circ \Gamma_{ul}(a_{1}) \circ \cyl\big(v(a_{1})\big),j_{0} \circ k} %
Putting everything together, we have now shown that the following diagram in $\cl{A}$ commutes. \sq[{11,3}]{\cyl(a_{1}),\cyl^{2}(a_{1}),\cyl^{2}(a_{1}),a,\cyl\big(i_{0}(a_{1})\big),j_{1} \circ h \circ \Gamma_{ul}(a_{1}),\cyl\big(i_{1}(a_{1})\big),j_{0} \circ k \circ \Gamma_{ul}(a_{1}) \circ \cyl\big(v(a_{1})\big)} %
Since $\cyl$ preserves subdivision with respect to $\cylinder$, the following diagram in $\cl{A}$ is co-cartesian. \sq[{5,3}]{\cyl(a_{1}),\cyl^{2}(a_{1}),\cyl^{2}(a_{1}),\cyl\big(\subdiv(a_{1})\big),\cyl\big(i_{0}(a_{1})\big),\cyl\big(r_{0}(a_{1})\big),\cyl\big(i_{1}(a_{1})\big),\cyl\big(r_{1}(a_{1})\big)} %
Thus there is an arrow \ar{\cyl\big(\subdiv(a_{1})\big),a,u} of $\cl{A}$ such that the following diagram in $\cl{A}$ commutes. \pushout[{5,3,0}]{\cyl(a_{1}),\cyl^{2}(a_{1}),\cyl^{2}(a_{1}),\cyl\big(\subdiv(a_{1})\big),a,\cyl\big(i_{0}(a_{1})\big),\cyl\big(r_{0}(a_{1})\big),\cyl\big(i_{1}(a_{1})\big),\cyl\big(r_{1}(a_{1})\big),j_{1} \circ h \circ \Gamma_{ul}(a_{1}),j_{0} \circ k \circ \Gamma_{ul}(a_{1}) \circ \cyl\big(v(a_{1})\big),u} %
Let \ar{\cyl^{2}(a_{1}),a,\tau} denote the arrow $u \circ \cyl\big(s(a_{1})\big)$ of $\cl{A}$. We claim that the following diagram in $\cl{A}$ commutes. \sq[{5,3}]{\cyl(a_{1}),\cyl^{2}(a_{1}),a_{1},a,i_{0}\big(\cyl(a_{1})\big),\tau,l,j_{1}} 

We have that the following diagram in $\cl{A}$ commutes. \trapeziumstwo[{3,3.5,2,0}]{\cyl(a_{1}),\subdiv(a_{1}),\cyl(a_{1}),\cyl^{2}(a_{1}),\cyl\big(\subdiv(a_{1})\big),\cyl(a_{1}),a,r_{1}(a_{1}),i_{0}\big(\subdiv(a_{1})\big),u,v(a_{1}),i_{0}\big(\cyl(a_{1})\big),j_{0} \circ k \circ \Gamma_{ul}(a_{1}),i_{0}\big(\cyl(a_{1})\big),\cyl\big(r_{1}(a_{1})\big),\cyl\big(v(a_{1})\big)} %  
Since the diagram \tri[{5,3}]{\cyl(a_{1}),\cyl^{2}(a_{1}),\cyl(a_{1}),i_{0}\big(\cyl(a_{1})\big),\Gamma_{ul}(a_{1}),id} in $\cl{A}$ commutes, we deduce that the following diagram in $\cl{A}$ commutes. \tri{\cyl(a_{1}),\subdiv(a_{1}),a,r_{1}(a_{1}),u \circ i_{0}\big(\subdiv(a_{1})\big),j_{0} \circ k \circ v(a_{1})} %
We also have that the following diagram in $\cl{A}$ commutes. \trapeziums[{3,3.5,2,0}]{\cyl(a_{1}),\subdiv(a_{1}),\cyl^{2}(a_{1}),\cyl\big(\subdiv(a_{1})\big),\cyl(a_{1}),a,r_{0}(a_{1}),i_{0}\big(\subdiv(a_{1})\big),u,id,j_{1} \circ h,i_{0}\big(\cyl(a_{1})\big),\cyl\big(r_{0}(a_{1})\big),\Gamma_{ul}(a_{1})} %
Putting the last two observations together, we have that the following diagram in $\cl{A}$ commutes. \pushout{a_{1},\cyl(a_{1}),\cyl(a_{1}),\subdiv(a_{1}),a,i_{0}(a_{1}),r_{0}(a_{1}),i_{1}(a_{1}),r_{1}(a_{1}),j_{1} \circ h,j_{0} \circ k \circ v(a_{1}),u \circ i_{0}\big(\subdiv(a_{1})\big)} %
In the proof of Lemma \ref{DoldLemma1}, we showed that the following diagram in $\cl{A}$ commutes. \sq[{5,3}]{a_{1},\cyl(a_{1}),\cyl(a_{1}),a_{1},i_{0}(a_{1}),h,i_{1}(a_{1}),f \circ k \circ v(a_{1})} %
Thus there is an arrow \ar{\subdiv(a_{1}),a_{1},r} of $\cl{A}$ such that the following diagram in $\cl{A}$ commutes. \pushout{a_{1},\cyl(a_{1}),\cyl(a_{1}),\subdiv(a_{1}),a_{1},i_{0}(a_{1}),r_{0}(a_{1}),i_{1}(a_{1}),r_{1}(a_{1}),h,f \circ k \circ v(a_{1}),r} %
In particular, since the diagram \tri{\cyl(a_{1}),\subdiv(a_{1}),a_{1},r_{0}(a_{1}),r,h} in $\cl{A}$ commutes, we have that the following diagram in $\cl{A}$ commutes. \tri{\cyl(a_{1}),\subdiv(a_{1}),a,r_{0}(a_{1}),j_{1} \circ r, j_{1} \circ h} %
Moreover, the following diagram in $\cl{A}$ commutes. \squareabovetriangle{\cyl(a_{1}),\subdiv(a_{1}),a_{0},a_{1},a,r_{1}(a_{1}),r,k \circ v(a_{1}),f,j_{1},j_{0}} %
Putting the last two observations together, we have that the following diagram in $\cl{A}$ commutes. \pushout{a_{1},\cyl(a_{1}),\cyl(a_{1}),\subdiv(a_{1}),a,i_{0}(a_{1}),r_{0}(a_{1}),i_{1}(a_{1}),r_{1}(a_{1}),j_{1} \circ h,j_{0} \circ k \circ v(a_{1}),j_{1} \circ r} %
Appealing to the universal property of $\subdiv(a_{1})$, we deduce that the following diagram in $\cl{A}$ commutes. \sq[{5,3}]{\subdiv(a_{1}),\cyl\big(\subdiv(a_{1})\big),a_{1},a,i_{0}\big(\subdiv(a_{1})\big),u,r,j_{1}} % 
Hence the following diagram in $\cl{A}$ commutes.\trapeziumsthree[{5,3.5,-2,0}]{\cyl(a_{1}),\cyl^{2}(a_{1}),\subdiv(a_{1}),\cyl\big(\subdiv(a_{1})\big),a_{1},a,i_{0}\big(\subdiv(a_{1})\big),\tau,s(a_{1}),r,j_{1},\cyl\big(s(a_{1})\big),i_{0}\big(\cyl(a_{1})\big),u} %
By definition of the homotopy $l$ with respect to $\cylinder$ constructed in the proof of Lemma \ref{DoldLemma1}, the following diagram in $\cl{A}$ commutes. \tri{\cyl(a_{1}),\subdiv(a_{1}),a_{1},s(a_{1}),r,l} % 
Putting the last two observations together, we have that the following diagram in $\cl{A}$ commutes. \sq[{5,3}]{\cyl(a_{1}),\cyl^{2}(a_{1}),a_{1},a,i_{0}\big(\cyl(a_{1})\big),\tau,l,j_{1}} This completes the proof of the claim.

\end{proof} 

\begin{lem} \label{DoldLemma3} Let $\cylinder = \big( \cyl, i_0, i_1, p, v, \subdiv, r_{0}, r_{1}, s, \Gamma_{ul} \big)$ be a cylinder in $\cl{A}$ equipped with a contraction structure $p$, an involution structure $v$ compatible with $p$, a subdivision structure $\big(\subdiv, r_{0}, r_{1}, s \big)$ compatible with $p$, and an upper left connection structure $\Gamma_{ul}$. Suppose that $\cyl$ preserves subdivision with respect to $\cylinder$. 

Let \ar{a,a_{0},j_{0}} and \ar{a,a_{1},j_{1}} be arrows of $\cl{A}$, which are fibrations with respect to $\cylinder$. 

Let \ar{a_{0},a_{1},f} be an arrow of $\cl{A}$, such that the diagram \triother{a_{0},a_{1},a,f,j_{1},j_{0}} in $\cl{A}$ commutes, and such that $f$ is a homotopy equivalence with respect to $\cylinder$. 

Let \ar{a_{1},a_{0},g} and \ar{\cyl(a_{1}),a_{1},l} denote the arrows of $\cl{A}$ constructed in Lemma \ref{DoldLemma1}, such that the diagram \triother{a_{1},a_{0},a,g,j_{0},j_{1}} in $\cl{A}$ commutes, and such that $l$ defines a homotopy from $fg$ to $id(a_{1})$ with respect to $\cylinder$. 

There is an arrow \ar{\cyl^{2}(a_{1}),a,\sigma} of $\cl{A}$ with the following properties. 

\begin{itemize}[topsep=1em,itemsep=1em]

\item[(i)] The following diagram in $\cl{A}$ commutes. \sq[{5,3}]{\cyl(a_{1}),\cyl^{2}(a_{1}),a_{1},a,i_{0}\big(\cyl(a_{1})\big),\sigma,l,j_{1}}

\item[(ii)] Let $h_{1}$, $h_{2}$, and $h_{3}$ denote the right, left, and bottom boundary homotopies of $\sigma$ respectively, so that we may depict $\sigma$ as follows, in the pictorial notation of Remark \ref{DoubleHomotopyPictorialNotationRemark}. \doublehomotopy{l,h_{1},h_{2},h_{3},\sigma} %
Then the following diagrams in $\cl{A}$ commute. \twosq{\cyl(a_{1}),a_{1},\cyl(a),a,h_{1},j_{1},\cyl(j_{1}),p(a),\cyl(a_{1}),a_{1},\cyl(a),a,h_{2},j_{1},\cyl(j_{1}),p(a)} %
\sq{\cyl(a_{1}),a_{1},\cyl(a),a,h_{3},j_{1},\cyl(j_{1}),p(a)}

\end{itemize}

\end{lem}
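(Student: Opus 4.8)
The strategy is to carry out explicitly the construction of a double homotopy that underlies Dold's theorem, mimicking the hybrid argument of Dold and May but replacing the point-set formula $(t_0,t_1) \mapsto t_0 + (1-t_0)t_1$ by the upper left connection structure $\Gamma_{ul}$ and the ``gluing'' of homotopies by the subdivision structure. The arrow $\sigma$ will be obtained essentially as a modification of the arrow $\tau$ produced by Lemma \ref{DoldLemma2}: recall there we built $u \colon \cyl\big(\subdiv(a_1)\big) \to a$ from the co-cartesian square expressing that $\cyl$ preserves subdivision, and set $\tau = u \circ \cyl\big(s(a_1)\big)$. The key additional input now is that $j_1$ is also a fibration with respect to $\cylinder$, and that $v$ and $\big(\subdiv, r_0, r_1, s\big)$ are compatible with $p$ --- hypotheses not used in Lemma \ref{DoldLemma2}.

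First I would recall all the relevant commutative diagrams from the proofs of Lemma \ref{DoldLemma1} and Lemma \ref{DoldLemma2}: the arrow $k \colon \cyl(a_1) \to a_0$ satisfying $j_0 \circ k = j_1 \circ h$ and $k \circ i_0(a_1) = f^{-1}$, $k \circ i_1(a_1) = g$; the homotopy $l = \big( f \circ k \circ v(a_1) \big) + h$; the arrow $r \colon \subdiv(a_1) \to a_1$ with $r \circ r_0(a_1) = h$, $r \circ r_1(a_1) = f \circ k \circ v(a_1)$, $r \circ s(a_1) = l$; and the arrow $u$ together with the identity $u \circ i_0\big(\subdiv(a_1)\big) = j_1 \circ r$ established at the end of Lemma \ref{DoldLemma2}. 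Property (i) for $\sigma$ should then be forced to hold by taking $\sigma = \tau$ or a suitable reparametrisation of $\tau$; indeed (i) is literally the conclusion of Lemma \ref{DoldLemma2}, so the content is entirely in property (ii). To get (ii) I would compute the three boundary homotopies $h_1 = \sigma \circ \cyl\big(i_1(a_1)\big)$, $h_2 = \sigma \circ \cyl\big(i_0(a_1)\big)$, $h_3 = \sigma \circ i_1\big(\cyl(a_1)\big)$ by feeding the corresponding faces of $\cyl^2(a_1)$ through the defining diagram of $u$ and the definition of $\tau$, identifying each $h_\bullet$ as a composite built out of $j_1 \circ h$, $j_0 \circ k$, $\Gamma_{ul}(a_1)$, $v(a_1)$ and the subdivision data. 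The point is that each of these composites factors through an arrow of the shape $j_1 \circ (\text{something into } a_1)$ or $j_0 \circ (\text{something into } a_0)$, and then compatibility of $v$ and of the subdivision structure with $p$, together with $j_0 \circ k = j_1 \circ h$, collapses everything so that the relevant square $h_\bullet$ versus $p(a) \circ \cyl(j_1)$ commutes. Concretely, the square in (ii) for $h_\bullet$ says $j_1 \circ h_\bullet = p(a) \circ \cyl(j_1) \circ h_\bullet$ is not quite it --- rather it says the homotopy $h_\bullet$ lies over $a$ in the sense that $\cyl(a_1) \xrightarrow{\cyl(j_1)} \cyl(a) \xrightarrow{p(a)} a$ agrees with $\cyl(a_1) \xrightarrow{h_\bullet} a_1 \xrightarrow{j_1} a$; I would verify this by chasing the explicit formula for each $h_\bullet$ and using that $j_1 \circ f = j_0$, $j_0 \circ g = j_1$, $j_0 \circ k = j_1 \circ h$, the contraction axioms $p \circ i_\epsilon = \id$, and the compatibility of $v, s, r_0, r_1$ with $p$ (the latter supplying $\overline{p}$ with $\overline{p} \circ r_\epsilon = p$ and $\overline{p} \circ s = p$).

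I expect the main obstacle to be bookkeeping: correctly tracking which of $i_0, i_1$ appears in which slot of $\cyl^2(a_1)$ when computing $h_1, h_2, h_3$, since $\tau = u \circ \cyl\big(s(a_1)\big)$ already has $s(a_1)$ inside one $\cyl$, and the boundary homotopies probe the \emph{outer} cylinder coordinate while $u$ is defined via the co-cartesian square in the \emph{inner} (subdivided) coordinate. The cleanest route is probably to first record, as a preliminary display, the values $\tau \circ i_0\big(\cyl(a_1)\big)$, $\tau \circ i_1\big(\cyl(a_1)\big)$, $\tau \circ \cyl\big(i_0(a_1)\big)$, $\tau \circ \cyl\big(i_1(a_1)\big)$ in terms of $u$, $h$, $k$, $\Gamma_{ul}$, $v$, using the naturality of $i_0, i_1$ and of $s$; then property (i) is one of these (after noting $i_0\big(\cyl(a_1)\big) = \cyl\big(i_0(\cdot)\big)$-type identifications are \emph{not} available, so care is needed), and the three faces needed for (ii) are the others. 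Once these values are in hand, each verification in (ii) is a short diagram chase of the kind already ubiquitous in the preceding lemmas, invoking \ref{ReverseHomotopyIsOverHomotopyProposition}-style manipulations only informally; I would not expect any genuinely new idea beyond the systematic use of the compatibility-with-$p$ hypotheses, which is exactly what distinguishes this lemma from Lemma \ref{DoldLemma2}.
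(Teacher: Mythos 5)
Your proposal has a genuine gap at the central step: you cannot take $\sigma$ to be $\tau$ or any ``reparametrisation'' of $\tau$, because the codomains do not match. The arrow $\tau$ produced by Lemma \ref{DoldLemma2} has codomain $a$, whereas the boundary homotopies $h_1, h_2, h_3$ in property (ii) are required to be arrows $\cyl(a_1) \to a_1$ (you correctly read the squares in (ii) as asserting $j_1 \circ h_\bullet = p(a) \circ \cyl(j_1)$, which forces this). So $\sigma$ must have codomain $a_1$, not $a$. The declaration ``\ar{\cyl^{2}(a_{1}),a,\sigma}'' in the statement appears to be a typo for $a_1$, and property (i) is correspondingly meant to say $\sigma \circ i_0\big(\cyl(a_1)\big) = l$ on the nose (as an arrow into $a_1$), not merely the weaker square involving $j_1$. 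It is precisely this on-the-nose equality that feeds Lemma \ref{DoldRightOverHomotopyInverseLemma}, where $(h_2 + h_3) + h_1^{-1}$ is composed with the $i_0$-face and identified with $gf = l \circ i_0(a_1)$.

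The missing idea is the one thing you label ``key additional input'' but never deploy: since $j_1$ is a fibration with respect to $\cylinder$, and the square
\[
\sq[{5,3}]{\cyl(a_1),a_1,\cyl^2(a_1),a,l,j_1,i_0\big(\cyl(a_1)\big),\tau}
\]
commutes (this is exactly the conclusion of Lemma \ref{DoldLemma2}), there is a lift $\sigma \colon \cyl^2(a_1) \to a_1$ with $\sigma \circ i_0\big(\cyl(a_1)\big) = l$ and $j_1 \circ \sigma = \tau$. This lift is the genuinely new construction here, and Lemma \ref{DoldLemma2} is precisely the preparation for it. Once $\sigma$ is in hand, the verifications in (ii) proceed as you describe: for each face one computes $j_1 \circ h_\bullet = \tau \circ (\text{that face of }\cyl^2(a_1))$ via the defining co-cartesian square for $u$ and the identity $\tau = u \circ \cyl\big(s(a_1)\big)$, and then uses $j_1 \circ f = j_0$, $j_0 \circ g = j_1$, $j_0 \circ k = j_1 \circ h$, the connection axioms for $\Gamma_{ul}$, and the compatibility of $v$ and of $\big(\subdiv,r_0,r_1,s\big)$ with $p$ (the latter supplying $\overline{p}$) to collapse everything to $p(a) \circ \cyl(j_1)$. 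But without the lift, none of this typechecks, because the boundary homotopies of $\tau$ land in $a$ rather than $a_1$.Your proposal has a genuine gap at the central step: you cannot take $\sigma$ to be $\tau$ or a ``reparametrisation'' of $\tau$, because the codomains do not match. The arrow $\tau$ produced by Lemma \ref{DoldLemma2} has codomain $a$, whereas the boundary homotopies $h_1, h_2, h_3$ in property (ii) are required to be arrows $\cyl(a_1) \to a_1$ --- you yourself read the squares in (ii) as asserting $j_1 \circ h_\bullet = p(a) \circ \cyl(j_1)$, which forces this. So $\sigma$ must have codomain $a_1$, not $a$; the declaration ``\ar{\cyl^{2}(a_{1}),a,\sigma}'' in the statement is evidently a typo for $a_1$, and property (i) is correspondingly meant to assert $\sigma \circ i_0\big(\cyl(a_1)\big) = l$ on the nose (as an arrow into $a_1$), not merely the weaker square obtained by post-composing with $j_1$. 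That on-the-nose equality is exactly what feeds Lemma \ref{DoldRightOverHomotopyInverseLemma}, where $(h_2 + h_3) + h_1^{-1}$ is a homotopy valued in $a_1$ and its $i_0(a_1)$-face is identified with $l \circ i_0(a_1) = gf$.

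The missing idea is the one thing you flag as ``key additional input'' but never deploy: since $j_1$ is a fibration with respect to $\cylinder$, and since (by Lemma \ref{DoldLemma2}) the square \sq[{5,3}]{\cyl(a_1),a_1,\cyl^2(a_1),a,l,j_1,i_0\big(\cyl(a_1)\big),\tau} commutes, there is a \emph{lift} $\sigma \colon \cyl^2(a_1) \to a_1$ of $\tau$ along $j_1$ with $\sigma \circ i_0\big(\cyl(a_1)\big) = l$ and $j_1 \circ \sigma = \tau$. This lift is the genuinely new construction in this lemma, and Lemma \ref{DoldLemma2} was precisely the preparation for it. Once $\sigma$ is in hand, the verifications in (ii) proceed essentially as you sketch: each face satisfies $j_1 \circ h_\bullet = \tau \circ (\text{the corresponding face of }\cyl^2(a_1))$, which one evaluates via $\tau = u \circ \cyl\big(s(a_1)\big)$ and the co-cartesian square defining $u$, then collapses to $p(a) \circ \cyl(j_1)$ using $j_1 \circ f = j_0$, $j_0 \circ g = j_1$, $j_0 \circ k = j_1 \circ h$, the connection axioms for $\Gamma_{ul}$, and the compatibilities of $v$ and of $\big(\subdiv,r_0,r_1,s\big)$ with $p$ (the latter supplying $\overline{p}$). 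Without the lift, however, your bookkeeping is applied to boundary homotopies valued in $a$ and the conclusion does not typecheck.
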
 
 
\begin{proof} Let \ar{\cyl^{2}(a_{1}),a,\tau} denote the double homotopy with respect to $\cylinder$ constructed in the proof of Lemma \ref{DoldLemma2}. In particular, the following diagram in $\cl{A}$ commutes. \sq{\cyl(a_{1}),a_{1},\cyl^{2}(a_{1}),a,l,\tau,i_{0}\big(\cyl(a_{1})\big),j_{1}} %
Since $j_{1}$ is a fibration with respect to $\cylinder$, there is thus an arrow \ar{\cyl^{2}(a_{1}),a_{1},\sigma} of $\cl{A}$, such that the following diagram in $\cl{A}$ commutes. \liftingsquare[{5,3}]{\cyl(a_{1}),a_{1},\cyl^{2}(a_{1}),a,l,\tau,i_{0}\big(\cyl(a_{1})\big),j_{1},\sigma} %
Let $h_{1}$, $h_{2}$, and $h_{3}$ denote the right, left, and bottom boundary homotopies of $\sigma$ respectively, so that we may depict $\sigma$ as follows, in the pictorial notation of Remark \ref{DoubleHomotopyPictorialNotationRemark}. \doublehomotopy{l,h_{1},h_{2},h_{3},\sigma}

Firstly, let us prove that the following diagram in $\cl{A}$ commutes. \sq{\cyl(a_{1}),a_{1},\cyl(a),a,h_{1},j_{1},\cyl(j_{1}),p(a)} %
By definition of $\Gamma_{ul}$ as an upper left connection structure, the following diagram in $\cl{A}$ commutes. \sq[{5,3}]{\cyl(a_{1}),\cyl^{2}(a_{1}),a_{1},\cyl(a_{1}),\cyl\big(i_{1}(a_{1})\big),\Gamma_{ul}(a_{1}),p(a_{1}),i_{1}(a_{1})} %
Let \ar{\cyl\big(\subdiv(a_{1})\big),a,u} denote the arrow of $\cl{A}$ constructed in the proof of Lemma \ref{DoldLemma2}. We have that the following diagram in $\cl{A}$ commutes. \trapeziumstwo[{3,3.5,2,0}]{\cyl(a_{1}),\cyl^{2}(a_{1}),a_{1},\cyl^{2}(a_{1}),\cyl\big(\subdiv(a_{1})\big),\cyl(a_{1}),a,\cyl\big(i_{1}(a_{1})\big),\cyl\big(s(a_{1})\big),u,p(a_{1}),i_{1}(a_{1}),j_{1} \circ h,\cyl\big(i_{1}(a_{1})\big),\cyl\big(r_{0}(a_{1})\big),\Gamma_{ul}(a_{1})} %
We also have that the following diagram in $\cl{A}$ commutes. \tri{a_{1},\cyl(a_{1}),a_{1},i_{1}(a_{1}),h,id} %
Putting the last two observations together, we have that the following diagram in $\cl{A}$ commutes. \sq[{5,3}]{\cyl(a_{1}),\cyl^{2}(a_{1}),a_{1},a,\cyl\big(i_{1}(a_{1})\big),u \circ \cyl\big(s(a_{1})\big),p(a_{1}),j_{1}} %
Moreover, by definition of $\tau$, the following diagram in $\cl{A}$ commutes. \tri[{5,3}]{\cyl^{2}(a_{1}),\cyl\big(\subdiv(a_{1})\big),a,\cyl\big(s(a_{1})\big),u,\tau} %
Thus we have that the following diagram in $\cl{A}$ commutes. \sq[{5,3}]{\cyl(a_{1}),\cyl^{2}(a_{1}),a_{1},a,\cyl\big(i_{1}(a_{1})\big),\tau,p(a_{1}),j_{1}} %
By definition of $h_{1}$, the following diagram in $\cl{A}$ commutes. \tri[{5,3}]{\cyl(a_{1}),\cyl^{2}(a_{1}),a,\cyl\big(i_{1}(a_{1})\big),\sigma,h_{1}} %
Putting the last two observations together, we have that the following diagram in $\cl{A}$ commutes. \squareofthreetrianglesthree[{4,3.5,0,-0.5}]{\cyl(a_{1}),a,\cyl^{2}(a_{1}),a_{1},a,h_{1},j_{1},p(a_{1}),j_{1},\cyl\big(i_{1}(a_{1})\big),\sigma,\tau} %
We also have that the following diagram in $\cl{A}$ commutes. \sq{\cyl(a_{1}),a_{1},\cyl(a),a,p(a_{1}),j_{1},\cyl(j_{1}),p(a)} %
Putting the last two observations together, we have that the following diagram in $\cl{A}$ commutes, as required.  \sq{\cyl(a_{1}),a_{1},\cyl(a),a,h_{1},j_{1},\cyl(j_{1}),p(a)} 

Secondly, let us prove that the following diagram in $\cl{A}$ commutes. \sq{\cyl(a_{1}),a_{1},\cyl(a),a,h_{2},j_{1},\cyl(j_{1}),p(a)} %
Let \ar{\cyl(a_{1}),a_{0},k} denote the arrow of $\cl{A}$ of the proof of Lemma \ref{DoldLemma1}. We have that the following diagram in $\cl{A}$ commutes. \trapeziums[{3,3.5,2,0}]{\cyl(a_{1}),\cyl^{2}(a_{1}),\cyl^{2}(a_{1}),\cyl\big(\subdiv(a_{1})\big),\cyl^{2}(a_{1}),a,\cyl\big(i_{0}(a_{1})\big),\cyl\big(s(a_{1})\big),u,\cyl\big(i_{1}(a_{1})\big),j_{0} \circ k \circ \Gamma_{ul}(a_{1}),\cyl\big(i_{0}(a_{1})\big),\cyl\big(r_{1}(a_{1})\big),\cyl\big(v(a_{1})\big)} %
By definition of $\Gamma_{ul}$ as an upper left connection structurei, the following diagram in $\cl{A}$ commutes. \sq[{5,3}]{\cyl(a_{1}),\cyl^{2}(a_{1}),a_{1},\cyl(a_{1}),\cyl\big(i_{1}(a_{1})\big),\Gamma_{ul}(a_{1}),p(a_{1}),i_{1}(a_{1})} %
Again, we also have that the following diagram in $\cl{A}$ commutes, by definition of $\tau$. \tri[{5,3}]{\cyl^{2}(a_{1}),\cyl\big(\subdiv(a_{1})\big),a,\cyl\big(s(a_{1})\big),u,\tau} %
Putting the last three observations together, we have that the following diagram in $\cl{A}$ commutes. \sq[{5,3}]{\cyl(a_{1}),\cyl^{2}(a_{1}),a_{1},a,\cyl\big(i_{0}(a_{1})\big),\tau,p(a_{1}),j_{0} \circ k \circ i_{1}(a_{1})} %
By definition of $g$, the following diagram in $\cl{A}$ also commutes. \tri{a_{1},\cyl(a_{1}),a_{0},i_{1}(a_{1}),k,g} %
Moreover, we have that the following diagram in $\cl{A}$ commutes. \tri{a_{1},a_{0},a,g,j_{0},j_{1}} %
Putting the last three observations together, we have that the following diagram in $\cl{A}$ commutes. \sq[{5,3}]{\cyl(a_{1}),\cyl^{2}(a_{1}),a_{1},a,\cyl\big(i_{0}(a_{1})\big),\tau,p(a_{1}),j_{1}} %
By definition of $h_{2}$, the following diagram in $\cl{A}$ also commutes. \tri[{5,3}]{\cyl(a_{1}),\cyl^{2}(a_{1}),a,\cyl\big(i_{0}(a_{1})\big),\sigma,h_{2}} %
Putting the last two observations together, we have that the following diagram in $\cl{A}$ commutes. \squareofthreetrianglesthree[{4,3.5,0,-0.5}]{\cyl(a_{1}),a,\cyl^{2}(a_{1}),a_{1},a,h_{2},j_{1},p(a_{1}),j_{1},\cyl\big(i_{0}(a_{1})\big),\sigma,\tau} %
Appealing to the commutativity of the diagram \sq{\cyl(a_{1}),a_{1},\cyl(a),a,p(a_{1}),j_{1},\cyl(j_{1}),p(a)} in $\cl{A}$, we deduce that the following diagram in $\cl{A}$ commutes, as required. \sq{\cyl(a_{1}),a_{1},\cyl(a),a,h_{2},j_{1},\cyl(j_{1}),p(a)} 

Thirdly, let us prove that the following diagram in $\cl{A}$ commutes. \sq{\cyl(a_{1}),a_{1},\cyl(a),a,h_{3},j_{1},\cyl(j_{1}),p(a)} %
The following diagram in $\cl{A}$ commutes. \trapeziumstwo[{3,3.5,2,0}]{\cyl(a_{1}),\subdiv(a_{1}),\cyl(a_{1}),\cyl^{2}(a_{1}),\cyl\big(\subdiv(a_{1})\big),\cyl^{2}(a_{1}),a,r_{1}(a_{1}),i_{1}\big(\subdiv(a_{1})\big),u,v(a_{1}),i_{1}\big(\cyl(a_{1})\big),j_{0} \circ k \circ \Gamma_{ul}(a_{1}),i_{1}\big(\cyl(a_{1})\big),\cyl\big(r_{1}(a_{1})\big),\cyl\big(v(a_{1})\big)} %
By definition of $\Gamma_{ul}$ as an upper left connection structure, the following diagram in $\cl{A}$ commutes. \sq[{5,3}]{\cyl(a_{1}),\cyl^{2}(a_{1}),a_{1},\cyl(a_{1}),i_{1}\big(\cyl(a_{1})\big),\Gamma_{ul}(a_{1}),p(a_{1}),i_{1}(a_{1})} %
Putting the last two observations together, we have that the following diagram in $\cl{A}$ commutes. \sq[{4,3}]{\cyl(a_{1}),\subdiv(a_{1}),a_{1},a,r_{1}(a_{1}),u \circ i_{1}\big(\subdiv(a_{1})\big),p(a_{1}) \circ v(a_{1}),j_{0} \circ k \circ i_{1}(a_{1})} %
Moreover, as earlier in the proof, the following diagram in $\cl{A}$ commutes. \tri{a_{1},\cyl(a_{1}),a,i_{1}(a_{1}),k \circ j_{0},j_{1}} %
In addition, since $v$ is compatible with $p$, the following diagram in $\cl{A}$ commutes. \tri{\cyl(a_{1}),\cyl(a_{1}),a_{1},v(a_{1}),p(a_{1}),p(a_{1})} %
Putting the last three observations together, we have that the following diagram in $\cl{A}$ commutes. \sq[{5,3}]{\cyl(a_{1}),\subdiv(a_{1}),a_{1},a,r_{1}(a_{1}),u \circ i_{1}\big(\subdiv(a_{1})\big),p(a_{1}),j_{1}} %
Let \ar{\cyl(a_{1}),a_{1},h} denote the homotopy from $ff^{-1}$ to $id(a_{1})$ of the proof of Lemma \ref{DoldLemma1}. In particular, the following diagram in $\cl{A}$ commutes. \tri{a_{1},\cyl(a_{1}),a_{1},i_{1}(a_{1}),h,id} %
We also have that the following diagram in $\cl{A}$ commutes. \trapeziumstwo[{3,3.5,2,0}]{\cyl(a_{1}),\subdiv(a_{1}),a_{1},\cyl^{2}(a_{1}),\cyl\big(\subdiv(a_{1})\big),\cyl(a_{1}),a,r_{0}(a_{1}),i_{1}\big(\subdiv(a_{1})\big),u,p(a_{1}),i_{1}(a_{1}),j_{1} \circ h,i_{1}\big(\cyl(a_{1})\big),\cyl\big(r_{0}(a_{1})\big),\Gamma_{ul}(a_{1})} %
Putting the last two observations together, we have that the following diagram in $\cl{A}$ commutes. \sq{\cyl(a_{1}),\subdiv(a_{1}),a_{1},a,r_{0}(a_{1}),u \circ i_{1}\big(\subdiv(a_{1})\big),p(a_{1}),j_{1}} %
Putting everything together, we have now shown that the following diagram in $\cl{A}$ commutes. \pushout[{3,3,1}]{a_{1},\cyl(a_{1}),\cyl(a_{1}),\subdiv(a_{1}),a,i_{0}(a_{1}),r_{0}(a_{1}),i_{1}(a_{1}),r_{1}(a_{1}),j_{1} \circ p(a_{1}),j_{1} \circ p(a_{1}),u \circ i_{1}\big(\subdiv(a_{1})\big)} %
Let \ar{\subdiv,\id_{\cl{A}},\overline{p}} denote the 2-arrow of $\cl{C}$ of Definition \ref{SubdivisionCompatibleWithContractionDefinition}. By definition, the following diagram in $\cl{A}$ commutes.  \pushout{a_{1},\cyl(a_{1}),\cyl(a_{1}),\subdiv(a_{1}),a_{1},i_{0}(a_{1}),r_{0}(a_{1}),i_{1}(a_{1}),r_{1}(a_{1}),p(a_{1}),p(a_{1}),\overline{p}(a_{1})} %
Hence the following diagram in $\cl{A}$ commutes. \pushout{a_{1},\cyl(a_{1}),\cyl(a_{1}),\subdiv(a_{1}),a,i_{0}(a_{1}),r_{0}(a_{1}),i_{1}(a_{1}),r_{1}(a_{1}),j_{1} \circ p(a_{1}),j_{1} \circ p(a_{1}),j_{1} \circ \overline{p}(a_{1})} %
Appealing to the universal property of $\subdiv(a_{1})$, we deduce that the following diagram in $\cl{A}$ commutes. \sq[{5,3}]{\subdiv(a_{1}),\cyl\big(\subdiv(a_{1})\big),a_{1},a,i_{1}\big(\subdiv(a_{1})\big),u,\overline{p}(a_{1}),j_{1}} %
Since the subdivision structure $\big(\subdiv,r_{0},r_{1},s\big)$ is compatible with $p$, we also have that the following diagram in $\cl{A}$ commutes. \tri{\cyl(a_{1}),\subdiv(a_{1}),a_{1},s(a_{1}),\overline{p}(a_{1}),p(a_{1})} %
Putting the last two observations together, we have that the following diagram in $\cl{A}$ commutes. \trapeziums[{3,3.5,2,0}]{\cyl(a_{1}),\subdiv(a_{1}),\subdiv(a_{1}),\cyl\big(\subdiv(a_{1})\big),a_{1},a,i_{1}\big(\cyl(a_{1})\big),\cyl\big(s(a_{1})\big),u,p(a_{1}),j_{1},s(a_{1}),i_{1}\big(\subdiv(a_{1})\big),\overline{p}(a_{1})} %
Once more, we also have that the following diagram in $\cl{A}$ commutes, by definition of $\tau$. \tri[{5,3}]{\cyl^{2}(a_{1}),\cyl\big(\subdiv(a_{1})\big),a,\cyl\big(s(a_{1})\big),u,\tau} %
Putting the last two observations together, we have that the following diagram in $\cl{A}$ commutes. \sq[{5,3}]{\cyl(a_{1}),\subdiv(a_{1}),a_{1},a,i_{1}\big(\cyl(a_{1})\big),\tau,p(a_{1}),j_{1}} %
By definition of $h_{3}$i, the following diagram in $\cl{A}$ also commutes.\tri[{5,3}]{\cyl(a_{1}),\cyl^{2}(a_{1}),a,i_{1}\big(\cyl(a_{1})\big),\sigma,h_{3}} %
Putting the last two observations together, we have that the following diagram in $\cl{A}$ commutes. \squareofthreetrianglesthree[{4,3.5,0,-0.5}]{\cyl(a_{1}),a,\cyl^{2}(a_{1}),a_{1},a,h_{3},j_{1},p(a_{1}),j_{1},i_{0}\big(\cyl(a_{1})\big),\sigma,\tau} %
Appealing to the commutativity of the diagram \sq{\cyl(a_{1}),a_{1},\cyl(a),a,p(a_{1}),j_{1},\cyl(j_{1}),p(a)} in $\cl{A}$, we deduce that the following diagram in $\cl{A}$ commutes, as required. \sq{\cyl(a_{1}),a_{1},\cyl(a),a,h_{3},j_{1},\cyl(j_{1}),p(a)} %

\end{proof}

\begin{lem} \label{DoldRightOverHomotopyInverseLemma} Let $\cylinder = \big( \cyl, i_0, i_1, p, v, \subdiv, r_{0}, r_{1}, s, \Gamma_{ul} \big)$ be a cylinder in $\cl{A}$ equipped with a contraction structure $p$, an involution structure $v$ compatible with $p$, a subdivision structure $\big(\subdiv,r_{0},r_{1},s \big)$ compatible with $p$, and an upper left connection structure $\Gamma_{ul}$. Suppose that $\cyl$ preserves subdivision with respect to $\cylinder$. 

Let \ar{a,a_{0},j_{0}} and \ar{a,a_{1},j_{1}} be arrows of $\cl{A}$ which are fibrations with respect to $\cylinder$. 

Let \ar{a_{0},a_{1},f} be an arrow of $\cl{A}$, such that the diagram \triother{a_{0},a_{1},a,f,j_{1},j_{0}} in $\cl{A}$ commutes, and such that $f$ is a homotopy equivalence with respect to $\cylinder$. 

There is an arrow \ar{a_{1},a_{0},g} of $\cl{A}$, such that the diagram \triother{a_{1},a_{0},a,g,j_{0},j_{1}} in $\cl{A}$ commutes, and such that there is a homotopy over $a$ from $fg$ to $id(a_{1})$ with respect to $\cylinder$ and $(j_{1},j_{1})$.  \end{lem}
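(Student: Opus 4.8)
The plan is to combine the double homotopy $\sigma$ constructed in Lemma \ref{DoldLemma3} with the general machinery for manipulating homotopies over an object from \ref{HomotopyAndRelativeHomotopySection}. First I would take $g$ and $l$ to be the arrows produced by Lemma \ref{DoldLemma1}, so that the diagram \triother{a_{1},a_{0},a,g,j_{0},j_{1}} commutes and $l$ is a homotopy from $fg$ to $id(a_{1})$ with respect to $\cylinder$. This already gives the factorisation condition on $g$; what remains is to upgrade $l$ (or rather a modification of it) to a homotopy over $a$ with respect to $\cylinder$ and $(j_{1},j_{1})$, meaning that the square \sq{\cyl(a_{1}),a_{1},\cyl(a),a,l,j_{1},\cyl(j_{1}),p(a)} commutes — which in general it does \emph{not}, so we must trade $l$ for a homotopic-rel-endpoints variant.

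Next I would invoke Lemma \ref{DoldLemma3} to obtain the double homotopy \ar{\cyl^{2}(a_{1}),a,\sigma} with boundary homotopies depicted as \doublehomotopy{l,h_{1},h_{2},h_{3},\sigma} in the pictorial notation of Remark \ref{DoubleHomotopyPictorialNotationRemark}, where by part (ii) of that lemma each of $h_{1}$, $h_{2}$, $h_{3}$ satisfies the over-$a$ condition, i.e. the squares \sq{\cyl(a_{1}),a_{1},\cyl(a),a,h_{i},j_{1},\cyl(j_{1}),p(a)} commute for $i = 1,2,3$. Reading off the boundary data of $\sigma$ as in Remark \ref{DoubleHomotopyPictorialNotationRemark}: $l$ runs along the top (from $fg$ to some $f_1$), $h_2$ down the left edge (from $fg$ to the bottom-left corner), $h_3$ along the bottom, and $h_1$ down the right edge (from $f_1$ to the bottom-right corner). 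I would then form the composite homotopy $h := h_{2} + \big( h_{3} + h_{1}^{-1} \big)$, which by the connectivity of $\sigma$ is a homotopy from $fg$ to $f_1$, hence (together with $l$, or rather by its own endpoints) a homotopy from $fg$ to $id(a_{1})$; more precisely one checks using the commuting diagrams making up $\sigma$ that $h$ and $l$ share endpoints $fg$ and $id(a_1)$. The point of routing around the square is that $h$ is built entirely from $h_1$, $h_2$, $h_3$, each of which is an over-$a$ homotopy.

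To conclude that $h$ itself is a homotopy over $a$ with respect to $\cylinder$ and $(j_1,j_1)$, I would apply Corollary \ref{ReverseHomotopyIsOverHomotopyCorollary} to see that $h_1^{-1}$ is an over-$a$ homotopy (this uses that $v$ is compatible with $p$), and then apply Corollary \ref{CompositeHomotopyIsOverHomotopyCorollary} twice — once to $h_3$ and $h_1^{-1}$, and once to $h_2$ and $h_3 + h_1^{-1}$ — to see that the composites are over-$a$ homotopies (this uses that the subdivision structure $\big(\subdiv,r_0,r_1,s\big)$ is compatible with $p$). Both of these compatibility hypotheses are among the standing assumptions of the present lemma, so the argument goes through. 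The main obstacle — and the reason the whole apparatus of Lemmas \ref{DoldLemma1}--\ref{DoldLemma3} is needed — is producing a double homotopy whose three non-top boundary homotopies are simultaneously over $a$; once $\sigma$ is in hand the present proof is a short assembly. One bookkeeping subtlety to watch is matching the orientations and endpoints in the pictorial notation of Remark \ref{DoubleHomotopyPictorialNotationRemark} so that the composite $h_2 + (h_3 + h_1^{-1})$ is well-formed and genuinely runs from $fg$ to $id(a_1)$; this is where I would be most careful, checking it against the explicit commuting squares defining the boundary homotopies of $\sigma$.
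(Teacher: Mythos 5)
Your proposal is correct and follows essentially the same route as the paper's proof: take $g$ and $l$ from Lemma \ref{DoldLemma1}, obtain the double homotopy $\sigma$ with over-$a$ boundary homotopies $h_1,h_2,h_3$ from Lemma \ref{DoldLemma3}, and route around the square using Proposition \ref{ReverseHomotopyIsOverHomotopyProposition} and Proposition \ref{CompositeHomotopyIsOverHomotopyProposition}. The only (immaterial) difference is that the paper parenthesizes the composite as $(h_{2} + h_{3}) + h_{1}^{-1}$ whereas you write $h_{2} + (h_{3} + h_{1}^{-1})$; both associativities yield a well-formed over-$a$ homotopy from $fg$ to $id(a_{1})$ by two applications of the composite result.
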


\begin{proof} Let \ar{a_{1},a_{0},g} and \ar{\cyl(a_{1}),a_{1},l} denote the arrows of $\cl{A}$ constructed in Lemma \ref{DoldLemma1}. Let \ar{\cyl^{2}(a_{1}),a_{1},\sigma} denote the arrow of $\cl{A}$ constructed in Lemma \ref{DoldLemma3}. In the pictorial notation of Remark \ref{DoubleHomotopyPictorialNotationRemark}, the boundary of $\sigma$ is as follows, where $h_{1}$, $h_{2}$, and $h_{3}$ are all homotopies over $a$ with respect to $\cylinder$ and $(j_{1},j_{1})$. \doublehomotopy{l,h_{1},h_{2},h_{3},\sigma} %
By Proposition \ref{ReverseHomotopyIsOverHomotopyProposition} and Proposition \ref{CompositeHomotopyIsOverHomotopyProposition}, the following diagram in $\cl{A}$ commutes. \sq[{7,3}]{\cyl(a_{1}),a_{1},\cyl(a),a,{(h_{2} + h_{3}) + h_{1}^{-1}},j_{1},\cyl(j_{1}),p(a)} %
The following diagram in $\cl{A}$ also commutes. \sq{a_{1},\cyl(a_{1}),\cyl(a_{1}),a_{1},i_{0}(a_{1}),{(h_{2}) + h_{3}) + h_{1}^{-1}},i_{0}(a_{1}),l} %
Since the diagram \tri{a_{1},\cyl(a_{1}),a_{1},i_{0}(a_{1}),l,gf} in $\cl{A}$ commutes, we deduce that the following diagram in $\cl{A}$ commutes. \tri{a_{1},\cyl(a_{1}),a_{1},i_{0}(a_{1}),{(h_{2} + h_{3}) + h_{1}^{-1}},gf} %
Moreover, the following diagram in $\cl{A}$ commutes. \sq{a_{1},\cyl(a_{1}),\cyl(a_{1}),a_{1},i_{1}(a_{1}),{(h_{2}) + h_{3}) + h_{1}^{-1}},i_{1}(a_{1}),l} %
Since the diagram \tri{a_{1},\cyl(a_{1}),a_{1},i_{1}(a_{1}),l,id} in $\cl{A}$ commutes, we deduce that the following diagram in $\cl{A}$ commutes. \tri{a_{1},\cyl(a_{1}),a_{1},i_{1}(a_{1}),{(h_{2} + h_{3}) + h_{1}^{-1}},id} %
Putting everything together, we have that $(h_{2} + h_{3}) + h_{1}^{-1}$ defines a homotopy over $a$ from $gf$ to $id(a_{1})$ with respect to $\cylinder$ and $(j_{1},j_{1})$. \end{proof}

\begin{prpn} \label{DoldOverHomotopiesCylinderProposition} Let $\cylinder = \big( \cyl, i_0, i_1, p, v, \subdiv, r_{0}, r_{1}, s, \Gamma_{ul} \big)$ be a cylinder in $\cl{A}$ equipped with a contraction structure $p$, an involution structure $v$ compatible with $p$, a subdivision structure $\big( \subdiv, r_{0}, r_{1}, s \big)$ compatible with $p$, and an upper left connection structure $\Gamma_{ul}$. Suppose that $\cyl$ preserves subdivision with respect to $\cylinder$. 

Let \ar{a,a_{0},j_{0}} and \ar{a,a_{1},j_{1}} be arrows of $\cl{A}$ which are fibrations with respect to $\cylinder$. 

Let \ar{a_{0},a_{1},f} of $\cl{A}$ be an arrow of $\cl{A}$, such that the diagram \triother{a_{0},a_{1},a,f,j_{1},j_{0}} in $\cl{A}$ commutes, and such that $f$ is a homotopy equivalence with respect to $\cylinder$. 

Then $f$ is, moreover, a homotopy equivalence over $a$ with respect to $\cylinder$ and $(j_{0},j_{1})$. \end{prpn}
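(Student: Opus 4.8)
The plan is to combine the three Dold lemmas symmetrically. We are given that $f$ is a homotopy equivalence with respect to $\cylinder$, that $j_0$ and $j_1$ are fibrations with respect to $\cylinder$, and that the triangle with $j_1 = j_0 f$ commutes. We must produce a homotopy inverse over $a$ for $f$, meaning an arrow $\ar{a_1,a_0,g}$ with $j_1 = j_0 g$ — wait, rather with $j_0 = j_1 g$ on the triangle side — together with a homotopy over $a$ from $gf$ to $id(a_0)$ with respect to $\cylinder$ and $(j_0,j_0)$ and a homotopy over $a$ from $fg$ to $id(a_1)$ with respect to $\cylinder$ and $(j_1,j_1)$.

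First I would apply Lemma \ref{DoldRightOverHomotopyInverseLemma} directly to $f$. Since $f$ is a homotopy equivalence with respect to $\cylinder$ and the triangle $j_1 = j_0 f$ commutes with $j_0, j_1$ fibrations, the lemma yields an arrow $\ar{a_1,a_0,g}$ with $j_0 = j_1 g$ (the commuting triangle of the lemma) together with a homotopy over $a$ from $fg$ to $id(a_1)$ with respect to $\cylinder$ and $(j_1,j_1)$. This gives the arrow $g$, the triangle compatibility, and one of the two required over-homotopies. It remains only to produce a homotopy over $a$ from $gf$ to $id(a_0)$ with respect to $\cylinder$ and $(j_0,j_0)$.

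For that, I would next check that $g$ itself is a homotopy equivalence with respect to $\cylinder$: indeed, $g$ has a one-sided homotopy inverse on one side via the homotopy from $fg$ to $id(a_1)$ just constructed, and $f$ is a homotopy equivalence, so by Lemma \ref{RightHomotopyInverseIsHomotopyInverseLemma} $g$ is a homotopy equivalence with respect to $\cylinder$ (applied appropriately; one should be careful which arrow plays the role of $f$ in that lemma, using the remark following it if needed to get the correct one-sided version). Now $g$ fits into the commuting triangle $j_0 = j_1 g$ with $j_1, j_0$ fibrations with respect to $\cylinder$, so I apply Lemma \ref{DoldRightOverHomotopyInverseLemma} once more, this time to $g$ in place of $f$: this produces an arrow $\ar{a_0,a_1,f'}$ with $j_1 = j_0 f'$ and a homotopy over $a$ from $gf'$ to $id(a_0)$ with respect to $\cylinder$ and $(j_0,j_0)$. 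Finally I would show $gf$ and $gf'$ are homotopic over $a$ — using that $f$ and $f'$ are both homotopy right inverses of $g$ up to homotopy, hence homotopic to each other, and then precomposing/postcomposing via Lemma \ref{HomotopyPreAndPostCompositionIsOverHomotopyLemma} and pasting with Corollary \ref{CompositeHomotopyIsOverHomotopyCorollary} and Corollary \ref{ReverseHomotopyIsOverHomotopyCorollary} — to transport the over-homotopy from $gf'$ to $id(a_0)$ into an over-homotopy from $gf$ to $id(a_0)$. Combining, $g$ is a homotopy inverse over $a$ of $f$, so $f$ is a homotopy equivalence over $a$ with respect to $\cylinder$ and $(j_0,j_1)$.

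The main obstacle I anticipate is the bookkeeping in the last step: verifying carefully that $f$ and $f'$ are homotopic \emph{over $a$} (not merely homotopic), so that the transport of the over-homotopy along $f \simeq f'$ stays within the class of homotopies over $a$ and respects the contraction squares; this requires chasing through the "over" conditions of the relevant homotopies and applying Proposition \ref{CompositeHomotopyIsOverHomotopyProposition}, Proposition \ref{ReverseHomotopyIsOverHomotopyProposition}, and Lemma \ref{HomotopyPreAndPostCompositionIsOverHomotopyLemma} with the correct boundary triangles in place. A secondary subtlety is invoking Lemma \ref{RightHomotopyInverseIsHomotopyInverseLemma} (or its remark) with the arrows in the right roles to conclude $g$ is a homotopy equivalence; this is routine but must be set up correctly before the second application of Lemma \ref{DoldRightOverHomotopyInverseLemma} is legitimate.
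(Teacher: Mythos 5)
Your proposal is correct and takes essentially the same route as the paper: you apply Lemma \ref{DoldRightOverHomotopyInverseLemma} to $f$ to obtain $g$ together with an over-homotopy $fg \simeq id(a_1)$, use Lemma \ref{RightHomotopyInverseIsHomotopyInverseLemma} to see that $g$ is a homotopy equivalence, apply Lemma \ref{DoldRightOverHomotopyInverseLemma} again to $g$ to obtain $g'$ and an over-homotopy $gg' \simeq id(a_0)$, and then chain over-homotopies via Lemma \ref{HomotopyPreAndPostCompositionIsOverHomotopyLemma}, Corollary \ref{ReverseHomotopyIsOverHomotopyCorollary}, and Corollary \ref{CompositeHomotopyIsOverHomotopyCorollary} --- the paper's chain $gf \to gfgg' \to gg' \to id(a_0)$ is exactly your $gf \to gfgf' \to gf' \to id(a_0)$ with $f'=g'$. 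One small slip in your narration: the commuting triangles should read $j_0 = j_1 f$ for the hypothesis and $j_1 = j_0 g$ for $g$ (your first instinct, before the ``wait, rather'' correction, which does not type-check), but this does not affect the structure of the argument.
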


\begin{proof} By Lemma \ref{DoldRightOverHomotopyInverseLemma}, there is an arrow \ar{a_{1},a_{0},g} of $\cl{A}$ such that the diagram \triother{a_{1},a_{0},a,g,j_{0},j_{1}} in $\cl{A}$ commutes, together with a homotopy \ar{\cyl(a_{1}),a_{1},h} over $a$ from $fg$ to $id(a_{1})$ with respect to $\cylinder$ and $(j_{1},j_{1})$. It remains to construct a homotopy over $a$ from $gf$ to $id(a_{0})$ with respect to $\cylinder$ and $(j_{0},j_{0})$. 

By Lemma \ref{RightHomotopyInverseIsHomotopyInverseLemma}, we have that $g$ is a homotopy equivalence with respect to $\cylinder$. Thus $g$ satisfies the hypotheses of Lemma \ref{DoldRightOverHomotopyInverseLemma}. We deduce that there is an arrow \ar{a_{0},a_{1},g'} of $\cl{A}$, such that the diagram \triother{a_{0},a_{1},a,g',j_{1},j_{0}} in $\cl{A}$ commutes, and such that there is a homotopy \ar{\cyl(a_{0}),a_{0},h'} over $a$ from $gg'$ to $id(a_{0})$ with respect to $\cylinder$ and $(j_{0},j_{0})$.

By Corollary \ref{ReverseHomotopyIsOverHomotopyCorollary}, we have that $(h')^{-1}$ defines a homotopy over $a$ from $id(a_{0})$ to $gg'$ with respect to $\cylinder$ and $(j_{0},j_{0})$. Thus, by Lemma \ref{HomotopyPreAndPostCompositionIsOverHomotopyLemma}, the arrow \ar[8]{\cyl(a_{1}),a_{1},{(h')^{-1}} \circ \cyl(gf)} of $\cl{A}$ defines a homotopy over $a$ from $gf$ to $gfgg'$ with respect to $\cylinder$ and $(j_{0},j_{0})$. 

Appealing again to Lemma \ref{HomotopyPreAndPostCompositionIsOverHomotopyLemma}, we also have that the arrow \ar[7]{\cyl(a_{1}),a_{1},g \circ h \circ \cyl(g')} of $\cl{A}$ defines a homotopy over $a$ from $gfgg'$ to $gg'$ with respect to $\cylinder$ and $(j_{0},j_{0})$. 

By virtue of Corollary \ref{CompositeHomotopyIsOverHomotopyCorollary}, we have that the arrow \ar[17]{\cyl(a_{1}),a_{1},\Big( (h')^{-1} \circ \cyl(gf) \Big) + \Big( g \circ h \circ \cyl(g') \Big)} of $\cl{A}$ defines a homotopy over $a$ from $gf$ to $gg'$ with respect to $\cylinder$ and $(j_{0},j_{0})$. %
Let us denote it by $k$ for brevity. Appealing to Corollary \ref{CompositeHomotopyIsOverHomotopyCorollary} once more, we have that the arrow \ar[4]{\cyl(a_{1}),a_{1},k + h'} of $\cl{A}$ defines a homotopy over $a$ from $gf$ to $id(a_{0})$. \end{proof}

\begin{cor} \label{DoldUnderHomotopiesCoCylinderCorollary} Let $\cocylinder = \big( \cocyl, e_0, e_1, c, v, \subdiv, r_{0}, r_{1}, s, \Gamma_{ul} \big)$ be a co-cylinder in $\cl{A}$ equipped with a contraction structure $c$, an involution structure $v$ compatible with $c$, a subdivision structure $\big( \subdiv, r_{0}, r_{1}, s \big)$ compatible with $c$, and an upper left connection structure $\Gamma_{ul}$. Suppose that $\cocyl$ preserves subdivision with respect to $\cocylinder$. 

Let \ar{a,a_{0},j_{0}} and \ar{a,a_{1},j_{1}} be arrows of $\cl{A}$ which are cofibrations with respect to $\cocylinder$. 

Let \ar{a_{0},a_{1},f} of $\cl{A}$ be an arrow of $\cl{A}$, such that the diagram \tri{a,a_{0},a_{1},j_{0},f,j_{1}} in $\cl{A}$ commutes, and such that $f$ is a homotopy equivalence with respect to $\cocylinder$. 

Then $f$ is, moreover, a homotopy equivalence under $a$ with respect to $\cocylinder$ and $(j_{0},j_{1})$. \end{cor}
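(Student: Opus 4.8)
The plan is to deduce Corollary \ref{DoldUnderHomotopiesCoCylinderCorollary} from Proposition \ref{DoldOverHomotopiesCylinderProposition} by formal duality, exactly in the style of the other ``Corollary \ldots by duality'' proofs throughout this work (e.g.\ Corollary \ref{UnderHomotopyCoCylinderGivesUnderHomotopyCylinderCorollary}, Corollary \ref{RetractionFibrationIsFibrationCorollary}).

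First I would pass to the cylinder $\cocylinder^{op}$ in $\cl{A}^{op}$ and unwind the definitions. By the definition of a co-cylinder equipped with a contraction structure, involution structure, subdivision structure compatible with $c$, and upper left connection structure, these structures on $\cocylinder$ are by definition precisely the dual structures $c^{op}$, $v^{op}$, $\big(\subdiv^{op}, r_0^{op}, r_1^{op}, s^{op}\big)$, and $(\Gamma_{ul})^{op}$ on the cylinder $\cocylinder^{op}$; moreover the compatibility hypotheses ($v$ compatible with $c$, the subdivision structure compatible with $c$) dualise to the corresponding compatibilities for $\cocylinder^{op}$, and the hypothesis that $\cocyl$ preserves subdivision with respect to $\cocylinder$ is by definition the statement that $\cocyl$ preserves subdivision with respect to $\cocylinder^{op}$. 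Thus $\cocylinder^{op}$ satisfies the hypotheses imposed on the cylinder $\cylinder$ in Proposition \ref{DoldOverHomotopiesCylinderProposition}.

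Next I would translate the remaining data. An arrow $j$ of $\cl{A}$ is a cofibration with respect to $\cocylinder$ if and only if $j^{op}$ is a fibration with respect to the cylinder $\cocylinder^{op}$ in $\cl{A}^{op}$; so $j_0^{op} \colon a_0^{op} \to a^{op}$ and $j_1^{op} \colon a_1^{op} \to a^{op}$ are fibrations with respect to $\cocylinder^{op}$. The commutative triangle with $j_0$, $f$, $j_1$ in $\cl{A}$ dualises to the commutative triangle in $\cl{A}^{op}$ expressing that $f^{op}$ factors as $j_1^{op} = j_0^{op} \circ f^{op}$, i.e.\ the triangle \triother{a_1^{op},a_0^{op},a^{op},f^{op},j_0^{op},j_1^{op}} of the shape appearing in Proposition \ref{DoldOverHomotopiesCylinderProposition} (with the roles of $a_0$ and $a_1$ interchanged, matching the index-swap $(j_1^{op}, j_0^{op})$ seen in the other duality arguments). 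Finally, $f$ is a homotopy equivalence with respect to $\cocylinder$ if and only if $f^{op}$ is a homotopy equivalence with respect to $\cocylinder^{op}$, by the definition of homotopy equivalence with respect to a co-cylinder.

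Applying Proposition \ref{DoldOverHomotopiesCylinderProposition} to $\cocylinder^{op}$, the arrows $j_1^{op}, j_0^{op}$, and the arrow $f^{op}$, we conclude that $f^{op}$ is a homotopy equivalence over $a^{op}$ with respect to $\cocylinder^{op}$ and $(j_1^{op}, j_0^{op})$. By the definition of a homotopy equivalence under $a$ with respect to a co-cylinder (the dual of a homotopy equivalence over an object with respect to the associated cylinder), this says exactly that $f$ is a homotopy equivalence under $a$ with respect to $\cocylinder$ and $(j_0, j_1)$, which is what we wanted. The only point requiring any care is bookkeeping of the index swap between $(j_0, j_1)$ and $(j_1^{op}, j_0^{op})$ and checking it is consistent with the definition of ``homotopy equivalence under $a$'' for a co-cylinder; this is routine and already implicit in Definitions of homotopy equivalence under/over an object for a co-cylinder. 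Hence the proof is simply: ``Follows immediately from Proposition \ref{DoldOverHomotopiesCylinderProposition} by duality.''

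\begin{proof} Follows immediately from Proposition \ref{DoldOverHomotopiesCylinderProposition} by duality. \end{proof}
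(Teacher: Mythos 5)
Your proof is correct and matches the paper's own proof exactly: the paper's proof of Corollary~\ref{DoldUnderHomotopiesCoCylinderCorollary} is precisely the one-line ``Follows immediately from Proposition \ref{DoldOverHomotopiesCylinderProposition} by duality.'' Your preparatory unwinding of the dualization (the structures on $\cocylinder^{op}$, cofibrations to fibrations, the index swap $(j_0,j_1)\leftrightarrow(j_1^{op},j_0^{op})$ built into the definitions of under/over homotopy equivalence for a co-cylinder) is an accurate justification of that one-liner.
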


\begin{proof} Follows immediately from Proposition \ref{DoldOverHomotopiesCylinderProposition} by duality. \end{proof}

\begin{cor} \label{DoldUnderHomotopiesCylinderCorollary} Let $\cylinder = \big( \cyl, i_0, i_1, p \big)$ be a cylinder in $\cl{A}$ equipped with a contraction structure $p$. Let $\cocylinder = \big( \cocyl, e_0, e_1, c, v, \subdiv, r_{0}, r_{1}, s, \Gamma_{ul}  \big)$ be a co-cylinder in $\cl{A}$ equipped with a contraction structure $c$, an involution structure $v$ compatible with $c$, a subdivision structure $\big( \subdiv, r_{0}, r_{1}, s \big)$ compatible with $c$, and an upper left connection structure $\Gamma_{ul}$. Suppose that $\cylinder$ is left adjoint to $\cocylinder$, and that the adjunction between $\cyl$ and $\cocyl$ is compatible with $p$ and $c$. 

Let \ar{a,a_{0},j_{0}} and \ar{a,a_{1},j_{1}} be arrows of $\cl{A}$ which are cofibrations with respect to $\cylinder$. 

Let \ar{a_{0},a_{1},f} of $\cl{A}$ be an arrow of $\cl{A}$, such that the diagram \tri{a,a_{0},a_{1},j_{0},f,j_{1}} in $\cl{A}$ commutes, and such that $f$ is a homotopy equivalence with respect to $\cylinder$. Then $f$ is, moreover, a homotopy equivalence under $a$ with respect to $\cylinder$ and $(j_{0},j_{1})$. \end{cor}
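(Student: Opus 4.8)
The statement to prove, Corollary~\ref{DoldUnderHomotopiesCylinderCorollary}, is the cofibration analogue of the conclusion that homotopy equivalences between fibrations over $a$ are homotopy equivalences over $a$, but phrased for a cylinder $\cylinder$ which is left adjoint to a co-cylinder $\cocylinder$ carrying all the structures needed for Corollary~\ref{DoldUnderHomotopiesCoCylinderCorollary}. The plan is to transport the hypotheses along the adjunction, invoke the already-established co-cylinder version of Dold's theorem, and then transport the conclusion back. So the proof is essentially a three-line deduction; the work is in correctly matching the dictionary between cylinder-notions and co-cylinder-notions under the adjunction.

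First I would note that, since $\cylinder$ is left adjoint to $\cocylinder$, Corollary~\ref{CofibrationCoCylinderIffCofibrationCylinderCorollary} tells us that the arrows $j_{0}$ and $j_{1}$, being cofibrations with respect to $\cylinder$, are also cofibrations with respect to $\cocylinder$. Next, by Proposition~\ref{HomotopyEquivalenceCylinderIffHomotopyEquivalenceCoCylinderProposition}, the arrow $f$, being a homotopy equivalence with respect to $\cylinder$, is a homotopy equivalence with respect to $\cocylinder$. The commutative triangle $j_{1} = f \circ j_{0}$ in $\cl{A}$ is a statement about $\cl{A}$ that does not reference either the cylinder or the co-cylinder, so it persists verbatim. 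Hence $f$, $j_{0}$, $j_{1}$ satisfy exactly the hypotheses of Corollary~\ref{DoldUnderHomotopiesCoCylinderCorollary} with respect to $\cocylinder$ (which by hypothesis carries a contraction $c$, an involution $v$ compatible with $c$, a subdivision $\big( \subdiv, r_{0}, r_{1}, s \big)$ compatible with $c$, an upper left connection $\Gamma_{ul}$, and for which $\cocyl$ preserves subdivision).

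Applying Corollary~\ref{DoldUnderHomotopiesCoCylinderCorollary}, we conclude that $f$ is a homotopy equivalence under $a$ with respect to $\cocylinder$ and $(j_{0},j_{1})$. Finally, I would appeal to Proposition~\ref{UnderHomotopyEquivalenceCylinderIffUnderHomotopyEquivalenceCoCylinderProposition}, whose hypotheses ($\cylinder$ left adjoint to $\cocylinder$ with the adjunction between $\cyl$ and $\cocyl$ compatible with $p$ and $c$) are precisely those assumed in the corollary: it gives that $f$ is a homotopy equivalence under $a$ with respect to $\cocylinder$ if and only if it is a homotopy equivalence under $a$ with respect to $\cylinder$. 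This yields the desired conclusion.

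The one point requiring care — and the only real ``obstacle'' — is verifying that the upper left connection $\Gamma_{ul}$, involution $v$, and subdivision on $\cocylinder$, together with the assumption that $\cocyl$ preserves subdivision, are genuinely all available as hypotheses of the corollary rather than needing to be derived from the cylinder side via the adjunction. Reading the statement, these are indeed listed among the data of $\cocylinder$, so no transfer of structure is needed; the adjunction is used only to move $j_{0}, j_{1}, f$ and the notions of cofibration, homotopy equivalence, and homotopy equivalence under $a$ back and forth. Thus the proof reduces to the chain: Corollary~\ref{CofibrationCoCylinderIffCofibrationCylinderCorollary} and Proposition~\ref{HomotopyEquivalenceCylinderIffHomotopyEquivalenceCoCylinderProposition} to set up the hypotheses, Corollary~\ref{DoldUnderHomotopiesCoCylinderCorollary} to obtain the conclusion over $\cocylinder$, and Proposition~\ref{UnderHomotopyEquivalenceCylinderIffUnderHomotopyEquivalenceCoCylinderProposition} to pull it back to $\cylinder$.
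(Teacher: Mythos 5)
Your proof follows essentially the same route as the paper's: transport the cofibration and homotopy-equivalence hypotheses to $\cocylinder$ via Corollary~\ref{CofibrationCoCylinderIffCofibrationCylinderCorollary} and Proposition~\ref{HomotopyEquivalenceCylinderIffHomotopyEquivalenceCoCylinderProposition}, apply Corollary~\ref{DoldUnderHomotopiesCoCylinderCorollary}, and pull the conclusion back. (The paper cites the one-directional Corollary~\ref{UnderHomotopyCoCylinderGivesUnderHomotopyCylinderCorollary} rather than the biconditional Proposition~\ref{UnderHomotopyEquivalenceCylinderIffUnderHomotopyEquivalenceCoCylinderProposition} for the final step; that difference is cosmetic.)

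There is, however, a genuine slip in your final paragraph. You assert that the condition that $\cocyl$ preserves subdivision --- one of the explicit hypotheses of Corollary~\ref{DoldUnderHomotopiesCoCylinderCorollary} --- is ``listed among the data of $\cocylinder$'' in the statement being proved, so that ``no transfer of structure is needed.'' Re-reading Corollary~\ref{DoldUnderHomotopiesCylinderCorollary}, that condition is \emph{not} among its hypotheses: only the contraction, involution, subdivision, and upper left connection structures on $\cocylinder$, their compatibilities, and the adjunction are assumed. The paper supplies the missing condition as the very first line of the proof: since $\cylinder$ is left adjoint to $\cocylinder$, the functor $\cocyl$ is a right adjoint and therefore preserves the cartesian square defining the subdivision structure on $\cocylinder$, whence $\cocyl$ preserves subdivision. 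Your argument is not wrong in spirit but it does silently use an application of Corollary~\ref{DoldUnderHomotopiesCoCylinderCorollary} whose hypotheses you have not fully verified; you should insert the one-line derivation that $\cocyl$ preserves subdivision rather than claim it is given.
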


\begin{proof} Since $\cylinder$ is left adjoint to $\cocylinder$, we have that $\cocylinder$ preserves subdivision. By Proposition \ref{CofibrationCoCylinderIffCofibrationCylinderCorollary}, we have that $j_{0}$ and $j_{1}$ are cofibrations with respect to $\cocylinder$. %
By Proposition \ref{HomotopyEquivalenceCylinderIffHomotopyEquivalenceCoCylinderProposition}, we have that $f$ is a homotopy equivalence with respect to $\cocylinder$. 

We deduce, by Corollary \ref{DoldUnderHomotopiesCoCylinderCorollary}, that $f$ is, moreover, a homotopy equivalence under $a$ with respect to $\cocylinder$ and $(j_{0},j_{1})$. Hence, by Corollary \ref{UnderHomotopyCoCylinderGivesUnderHomotopyCylinderCorollary}, $f$ is a homotopy equivalence under $a$ with respect to $\cylinder$ and $(j_{0},j_{1})$. \end{proof}

\begin{rmk} Assuming that $\cylinder$ preserves subdivision, but not necessarily that it is left adjoint to a co-cylinder, it is possible to prove Corollary \ref{DoldUnderHomotopiesCylinderCorollary} directly for a cylinder $\cylinder = \big(\cyl, i_{0}, i_{1}, p, v, \subdiv, r_{0}, r_{1}, s, \Gamma_{ul} \big)$ equipped with a contraction structure $p$, an involution structure $v$ compatible with $p$, a subdivision structure $\big( \subdiv, r_{0}, r_{1}, s \big)$ compatible with $p$, and an upper left connection structure $\Gamma_{ul}$. 

For this, we need that if an arrow $j$ of $\cl{A}$ is a cofibration, then so is $\cyl(j)$. This can be proven, and is the approach taken in the book \cite{MayAConciseCourseInAlgebraicTopology} of May and the book \cite{KampsPorterAbstractHomotopyAndSimpleHomotopyTheory} of Kamps and Porter. 

However, the proof relies upon the assumption that $\cylinder$ admits a {\em transposition} structure, namely a 2-arrow \ar{\cyl^{2},\cyl^{2},t} of $\cl{C}$ such that the following diagrams in $\underline{\mathsf{Hom}}_{\cl{C}}(\cl{A},\cl{A})$ commute. \twotriangles[{4,3,-1}]{\cyl,\cyl^{2},\cyl^{2},i_{0} \cdot \cyl,t,\cyl \cdot i_0,\cyl,\cyl^{2},\cyl^{2},\cyl \cdot i_0, t, i_0 \cdot \cyl} %
We might also require that the two analogous diagrams involving $i_{1}$ commute, but this is not necessary for the proof of Dold's theorem.

For example, suppose that we are working in the 2-category of categories, and that $\cylinder$ arises from an interval $\widehat{I}$ in a braided monoidal category $\cl{A}$. Then the arrow \ar{I^{2},I^{2}} of $\cl{A}$ which defines the braiding gives a transposition structure with respect to $\cylinder$. 

We shall not need a transposition structure anywhere else in this work. Moreover, it will later be indispensable for us to assume that we have an adjoint cylinder and co-cylinder. For these reasons, we have chosen to give a different proof. \end{rmk}

\begin{cor} \label{DoldOverHomotopiesCoCylinderCorollary} Let $\cylinder = \big( \cyl, i_0, i_1, p, v, \subdiv, r_{0}, r_{1}, s, \Gamma_{ul} \big)$ be a cylinder in $\cl{A}$ equipped with a contraction structure $p$, an involution structure $v$ compatible with $p$, a subdivision structure $\big( \subdiv, r_{0}, r_{1}, s, \big)$ compatible with $p$, and an upper left connection structure $\Gamma_{ul}$. Let $\cocylinder = \big( \cocyl, e_0, e_1, c \big)$ be a co-cylinder in $\cl{A}$ equipped with a contraction structure $c$. Suppose that $\cylinder$ is left adjoint to $\cocylinder$, and that the adjunction between $\cyl$ and $\cocyl$ is compatible with $p$ and $c$. 

Let \ar{a,a_{0},j_{0}} and \ar{a,a_{1},j_{1}} be arrows of $\cl{A}$ which are fibrations with respect to $\cocylinder$. 

Let \ar{a_{0},a_{1},f} of $\cl{A}$ be an arrow of $\cl{A}$, such that the diagram \triother{a_{0},a_{1},a,f,j_{1},j_{0}} in $\cl{A}$ commutes, and such that $f$ is a homotopy equivalence with respect to $\cocylinder$. 

Then $f$ is, moreover, a homotopy equivalence over $a$ with respect to $\cocylinder$ and $(j_{0},j_{1})$. \end{cor}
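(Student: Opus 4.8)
The statement is the co-cylinder analogue of Proposition~\ref{DoldOverHomotopiesCylinderProposition}, obtained by transporting along the adjunction between $\cylinder$ and $\cocylinder$; the pattern to follow is exactly that of Corollary~\ref{DoldUnderHomotopiesCylinderCorollary}, which handled the dual case of under-homotopies. The plan is to reduce everything to Proposition~\ref{DoldOverHomotopiesCylinderProposition} applied to $\cylinder$ itself, since by hypothesis $\cylinder$ is equipped with all the structures --- contraction $p$, involution $v$ compatible with $p$, subdivision $\big(\subdiv,r_0,r_1,s\big)$ compatible with $p$, and upper left connection $\Gamma_{ul}$ --- needed to invoke that proposition, and moreover $\cylinder$ being left adjoint to $\cocylinder$ forces $\cyl$ to preserve subdivision (the missing hypothesis of Proposition~\ref{DoldOverHomotopiesCylinderProposition}).

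First I would observe that, since $\cylinder$ is left adjoint to $\cocylinder$, $\cyl$ preserves subdivision with respect to $\cylinder$ --- this is the remark following Definition~\ref{SubdivisionCompatibleWithContractionDefinition}-type reasoning, but concretely it follows from Requirement~\ref{SubdivisionRequirement}'s dual form, i.e.\ that left adjoints preserve the relevant pushouts; it is used implicitly already in Corollary~\ref{MappingCylinderFactorisationStrongDeformationRetractionWithRespectToCoCylinderCorollary} and Corollary~\ref{DoldUnderHomotopiesCylinderCorollary}. Second, by Proposition~\ref{FibrationCoCylinderIffFibrationCylinderProposition} the arrows $j_0$ and $j_1$, which are fibrations with respect to $\cocylinder$, are also fibrations with respect to $\cylinder$. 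Third, by Proposition~\ref{HomotopyEquivalenceCylinderIffHomotopyEquivalenceCoCylinderProposition} the arrow $f$, which is a homotopy equivalence with respect to $\cocylinder$, is also a homotopy equivalence with respect to $\cylinder$. With these three facts in hand, the commutative triangle
\triother{a_{0},a_{1},a,f,j_{1},j_{0}}
in $\cl{A}$ together with the hypotheses on $\cylinder$ puts us exactly in the situation of Proposition~\ref{DoldOverHomotopiesCylinderProposition}, which yields that $f$ is a homotopy equivalence over $a$ with respect to $\cylinder$ and $(j_0,j_1)$.

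Finally I would transport this conclusion back across the adjunction. A homotopy equivalence over $a$ with respect to $\cylinder$ and $(j_0,j_1)$ consists of an arrow $f^{-1}$ over $a$ together with a homotopy over $a$ from $f^{-1}f$ to $id(a_0)$ and a homotopy over $a$ from $ff^{-1}$ to $id(a_1)$, all with respect to $\cylinder$; applying Proposition~\ref{OverHomotopyCylinderGivesOverHomotopyCoCylinderProposition} to each of these two homotopies --- legitimate since the adjunction between $\cyl$ and $\cocyl$ is compatible with $p$ and $c$ --- converts them into homotopies over $a$ with respect to $\cocylinder$ and the same underlying arrows, exhibiting $f^{-1}$ as a homotopy inverse over $a$ of $f$ with respect to $\cocylinder$ and $(j_0,j_1)$. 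This is precisely what Proposition~\ref{OverHomotopyEquivalenceCylinderIffOverHomotopyEquivalenceCoCylinderProposition} packages, so one can simply cite that proposition once Proposition~\ref{DoldOverHomotopiesCylinderProposition} has been invoked. The only mild subtlety --- the "main obstacle", though it is slight --- is making sure the preservation-of-subdivision hypothesis of Proposition~\ref{DoldOverHomotopiesCylinderProposition} is genuinely available from the adjunction rather than assumed separately; everything else is a routine chain of the already-established equivalences between cylinder- and co-cylinder-notions. So the proof reduces to a one-line deduction:

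\begin{proof} Since $\cylinder$ is left adjoint to $\cocylinder$, $\cyl$ preserves subdivision with respect to $\cylinder$. By Proposition \ref{FibrationCoCylinderIffFibrationCylinderProposition} the arrows $j_{0}$ and $j_{1}$ are fibrations with respect to $\cylinder$, and by Proposition \ref{HomotopyEquivalenceCylinderIffHomotopyEquivalenceCoCylinderProposition} the arrow $f$ is a homotopy equivalence with respect to $\cylinder$. Hence by Proposition \ref{DoldOverHomotopiesCylinderProposition} $f$ is a homotopy equivalence over $a$ with respect to $\cylinder$ and $(j_{0},j_{1})$. We conclude by Proposition \ref{OverHomotopyEquivalenceCylinderIffOverHomotopyEquivalenceCoCylinderProposition} that $f$ is a homotopy equivalence over $a$ with respect to $\cocylinder$ and $(j_{0},j_{1})$. \end{proof}
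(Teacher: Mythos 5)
Your proof is correct, but it takes a genuinely different route from the paper's. The paper proves this corollary in one line: ``Follows immediately from Corollary~\ref{DoldUnderHomotopiesCylinderCorollary} by duality.'' That is, since a homotopy equivalence over $a$ with respect to $\cocylinder$ is \emph{by definition} a homotopy equivalence under $a$ with respect to the cylinder $\cocylinder^{op}$ in $\cl{A}^{op}$, and the hypotheses (fibration with respect to $\cocylinder$, homotopy equivalence with respect to $\cocylinder$, the adjunction) translate verbatim to hypotheses about $\cocylinder^{op}$ and $\cylinder^{op}$ in $\cl{A}^{op}$, one simply reads Corollary~\ref{DoldUnderHomotopiesCylinderCorollary} in the opposite formal category. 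Your approach instead bypasses the detour through $\cl{A}^{op}$ entirely: you transport the $\cocylinder$-hypotheses to $\cylinder$-hypotheses via Propositions~\ref{FibrationCoCylinderIffFibrationCylinderProposition} and~\ref{HomotopyEquivalenceCylinderIffHomotopyEquivalenceCoCylinderProposition}, invoke Proposition~\ref{DoldOverHomotopiesCylinderProposition} directly on $\cylinder$ (which is where the present corollary's hypotheses have placed all the structure), and transport the conclusion back via Proposition~\ref{OverHomotopyEquivalenceCylinderIffOverHomotopyEquivalenceCoCylinderProposition}. This is exactly the template the paper uses to prove Corollary~\ref{DoldUnderHomotopiesCylinderCorollary} itself, so you have in effect replayed that adjunction-transport pattern for the over-case rather than dualising it.

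Both arguments are sound and about equally short. The one point you flag as a ``mild subtlety'' --- that $\cyl$ preserves subdivision follows from $\cyl$ being a left adjoint --- is indeed unstated as a separate lemma, but the paper treats it as automatic in exactly the parallel way (``Since $\cylinder$ is left adjoint to $\cocylinder$ we have that $\cocylinder$ preserves subdivision'' in the proof of Corollary~\ref{DoldUnderHomotopiesCylinderCorollary}, and likewise for preservation of mapping cylinders in Corollary~\ref{MappingCylinderFactorisationStrongDeformationRetractionWithRespectToCoCylinderCorollary}), so your reliance on it is consistent with the paper's own conventions. The duality argument the paper chose is marginally cleaner because it needs no preservation-of-subdivision observation at all --- the dual of a co-cylinder preserving subdivision is handled already in the statement it appeals to --- but your direct transport is a perfectly valid and arguably more transparent alternative.
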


\begin{proof} Follows immediately from Corollary \ref{DoldUnderHomotopiesCylinderCorollary} by duality. \end{proof}

\begin{cor} \label{TrivialCofibrationAdmitsStrongDeformationRetractionCorollary} Let $\cylinder = \big( \cyl, i_0, i_1, p \big)$ be a cylinder in $\cl{A}$ equipped with a contraction structure $p$. Let $\cocylinder = \big( \cocyl, e_0, e_1, c, v, \subdiv, r_{0}, r_{1}, s, \Gamma_{ul}  \big)$ be a co-cylinder in $\cl{A}$ equipped with a contraction structure $c$, an involution structure $v$ compatible with $c$, a subdivision structure $\big( \subdiv, r_{0}, r_{1}, s \big)$ compatible with $p$, and an upper left connection structure $\Gamma_{ul}$. Suppose that $\cylinder$ is left adjoint to $\cocylinder$, and that the adjunction between $\cyl$ and $\cocyl$ is compatible with $p$ and $c$. 

Let \ar{a_{0},a_{1},j} be an arrow of $\cl{A}$ which is a trivial cofibration with respect to $\cylinder$. Then $j$ admits a strong deformation retraction with respect to $\cylinder$. \end{cor}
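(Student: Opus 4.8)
The plan is to derive the result by chaining together three things: the characterisation of trivial cofibrations as homotopy equivalences, Dold's theorem in the form of Corollary~\ref{DoldUnderHomotopiesCylinderCorollary}, and the definition of a strong deformation retraction in terms of a homotopy under an object.

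First I would set up the relevant diagram. Suppose $\big(a_0,a_1,j\big)$ is a trivial cofibration with respect to $\cylinder$. The aim is to produce an arrow $\big(a_1,a_0,f\big)$ which is a retraction of $j$ together with a homotopy under $a_0$ from $jf$ to $id(a_1)$ with respect to $\cylinder$ and $(j,j)$. Since $j$ is a cofibration, applying the lifting property to the commutative square built from $i_0(a_0)$, the identity homotopy of $id(a_0)$ (constructed via Proposition~\ref{IdentityHomotopyProposition}), and $id(a_0)$ itself along $j$ should yield an arrow $\big(\cyl(a_1),a_0,k\big)$ whose restriction along $i_0(a_1)$ is a retraction $f$ of $j$. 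The point of going through the cofibration lifting is precisely to get $f$ with $fj = id(a_0)$ on the nose, so that the triangle $\big(a_0,a_1,a_0,j,f,id\big)$ commutes.

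Next I would invoke Dold's theorem. Observe that $j$ itself, being a cofibration with respect to $\cylinder$, can be viewed as the arrow playing the role of a fibration in the dual picture — more precisely, the hypotheses of Corollary~\ref{DoldUnderHomotopiesCylinderCorollary} require two cofibrations with respect to $\cylinder$ out of a common source and a homotopy equivalence over them; here I would take the source to be $a_0$, take one cofibration to be $j\colon a_0 \to a_1$ and the other to be $id(a_0)\colon a_0 \to a_0$ (which is a cofibration by Proposition~\ref{IdentityIsCofibrationProposition}), and take the homotopy equivalence to be $j$ itself fitting into the commutative triangle $\big(a_0,a_0,a_1,id,j,j\big)$. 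Since $j$ is a trivial cofibration it is a homotopy equivalence with respect to $\cylinder$, so Corollary~\ref{DoldUnderHomotopiesCylinderCorollary} applies and gives that $j$ is a homotopy equivalence under $a_0$ with respect to $\cylinder$ and $(id(a_0),j)$. Unwinding the definition of homotopy equivalence under $a_0$, this furnishes a homotopy inverse under $a_0$, i.e.\ an arrow $a_1 \to a_0$ which is a retraction of $j$ compatible with the structure maps, together with a homotopy under $a_0$ from $jf$ to $id(a_1)$ with respect to $\cylinder$ and $(j,j)$ — which is exactly the data of a strong deformation retraction of $j$ in the sense of Definition~\ref{StrongDeformationRetractionCylinderDefinition}.

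The main obstacle I anticipate is bookkeeping rather than conceptual: I must make sure the triangle I feed to Corollary~\ref{DoldUnderHomotopiesCylinderCorollary} is set up so that the output ``homotopy equivalence under $a_0$'' really unpacks to the retraction $f$ and the homotopy under $a_0$ from $jf$ to $id(a_1)$ with the correct pair $(j,j)$ of structure arrows, and not to some variant with the wrong source or wrong boundary. Concretely, I need $j$ to be a homotopy equivalence \emph{under} $a_0$ as a map $a_0 \to a_1$ over the cofibrations $id(a_0)$ and $j$; its homotopy inverse under $a_0$ is then a map $f\colon a_1 \to a_0$ with $fj = id(a_0)$ and a homotopy under $a_0$ from $jf$ to $id(a_1)$ with respect to $\cylinder$ and $(j,j)$, so that $f$ is a strong deformation retraction of $j$. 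Since $\cocylinder$ supplies (via the adjunction compatible with $p$ and $c$) exactly the structures needed to invoke Corollary~\ref{DoldUnderHomotopiesCylinderCorollary}, no further hypotheses are required, and the proof reduces to this unwinding.

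\begin{proof} Since $j$ is a trivial cofibration with respect to $\cylinder$, it is in particular a homotopy equivalence with respect to $\cylinder$, and the diagram \tri{a_{0},a_{0},a_{1},id,j,j} in $\cl{A}$ commutes. Moreover $id(a_{0})$ is a cofibration with respect to $\cylinder$ by Proposition \ref{IdentityIsCofibrationProposition}, and $j$ is a cofibration with respect to $\cylinder$ by assumption. We may therefore apply Corollary \ref{DoldUnderHomotopiesCylinderCorollary} with $a = a_{0}$, with the two cofibrations $id(a_{0})$ and $j$, and with the homotopy equivalence $j$. We conclude that $j$ is a homotopy equivalence under $a_{0}$ with respect to $\cylinder$ and $\big( id(a_{0}), j \big)$.

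By definition this means that $j$ admits a homotopy inverse under $a_{0}$ with respect to $\cylinder$ and $\big( id(a_{0}), j \big)$, namely an arrow \ar{a_{1},a_{0},f} of $\cl{A}$ such that the diagram \tri{a_{0},a_{1},a_{0},j,f,id} in $\cl{A}$ commutes, together with a homotopy under $a_{0}$ from $jf$ to $id(a_{1})$ with respect to $\cylinder$ and $(j,j)$. In particular $f$ is a retraction of $j$, and there is a homotopy under $a_{0}$ from $jf$ to $id(a_{1})$ with respect to $\cylinder$ and $(j,j)$. By Definition \ref{StrongDeformationRetractionCylinderDefinition} this exhibits $f$ as a strong deformation retraction of $j$ with respect to $\cylinder$. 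Hence $j$ admits a strong deformation retraction with respect to $\cylinder$. \end{proof}
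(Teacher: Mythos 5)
Your proof is correct and follows essentially the same route as the paper: both invoke Proposition \ref{IdentityIsCofibrationProposition} to see that $id(a_0)$ is a cofibration, then apply Corollary \ref{DoldUnderHomotopiesCylinderCorollary} to the commutative triangle with $j_0 = id(a_0)$ and $j_1 = j$, and then unwind the definition of a homotopy equivalence under $a_0$ to extract the strong deformation retraction. (Your introductory plan suggested an extra step of lifting along the cofibration $j$ to construct the retraction, but this is unnecessary and your final proof correctly dispenses with it --- the retraction property $fj = id(a_0)$ already comes for free from the commuting triangle required in the definition of a homotopy inverse under $a_0$.)
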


\begin{proof} By Proposition \ref{IdentityIsCofibrationProposition}, $id(a_{0})$ is a cofibration with respect to $\cylinder$. Moreover, the following diagram in $\cl{A}$ commutes. \tri{a_{0},a_{0},a_{1},id,j,j} %
Thus, by Corollary \ref{DoldUnderHomotopiesCylinderCorollary}, $j$ defines a homotopy equivalence under $a_{0}$ with respect to $\cylinder$ and $\big( id(a_{0}),j \big)$. This means exactly that there is an arrow \ar{a_{1},a_{0},j'} of $\cl{A}$ which is a strong deformation retraction of $j$ with respect to $\cylinder$.  \end{proof}

\begin{cor} \label{TrivialFibrationIsStrongDeformationRetractionCorollary} Let $\cylinder = \big( \cyl, i_0, i_1, p, v, \subdiv, r_0, r_1, s, \Gamma_{ul} \big)$ be a cylinder in $\cl{A}$ equipped with a contraction structure $p$, an involution structure $v$ compatible with $p$, a subdivision structure $\big( \subdiv, r_{0}, r_{1}, s \big)$ compatible with $p$, and an upper left connection structure $\Gamma_{ul}$. Let $\cocylinder = \big( \cocyl, e_0, e_1, c \big)$ be a co-cylinder in $\cl{A}$ equipped with a contraction structure $c$. Suppose that $\cylinder$ is left adjoint to $\cocylinder$, and that the adjunction between $\cyl$ and $\cocyl$ is compatible with $p$ and $c$. 

Let \ar{a_{0},a_{1},f} be an arrow of $\cl{A}$ which is a trivial fibration with respect to $\cocylinder$. Then there is an arrow \ar{a_{0},a_{1},j} of $\cl{A}$ such that $f$ is a strong deformation retraction of $j$ with respect to $\cocylinder$. \end{cor}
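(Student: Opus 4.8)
The final statement, Corollary \ref{TrivialFibrationIsStrongDeformationRetractionCorollary}, is the dual to Corollary \ref{TrivialCofibrationAdmitsStrongDeformationRetractionCorollary}, so the plan is to deduce it by duality, exactly as the proof of Corollary \ref{DoldOverHomotopiesCoCylinderCorollary} was deduced from Corollary \ref{DoldUnderHomotopiesCylinderCorollary}. Concretely, suppose \ar{a_{0},a_{1},f} is a trivial fibration with respect to $\cocylinder$. By definition this means $f^{op}$ is a trivial cofibration with respect to the cylinder $\cocylinder^{op}$ in $\cl{A}^{op}$, equipped with the contraction structure $c^{op}$. To apply Corollary \ref{TrivialCofibrationAdmitsStrongDeformationRetractionCorollary} to $f^{op}$ inside $\cl{A}^{op}$, I need a cylinder and a co-cylinder in $\cl{A}^{op}$ standing in the right adjoint relationship, with the appropriate compatible structures.

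The key observation is that the roles of cylinder and co-cylinder swap under ${}^{op}$: the cylinder $\cocylinder^{op}$ in $\cl{A}^{op}$ plays the role of the `co-cylinder with all the rich structure' (contraction $c^{op}$, involution $v^{op}$ compatible with $c^{op}$, subdivision $\big(\subdiv,r_0^{op},r_1^{op},s^{op}\big)$ compatible with $c^{op}$, upper left connection $(\Gamma_{ul})^{op}$, and $\cocyl$ preserving subdivision translating to the dual statement), while $\cylinder^{op}$ in $\cl{A}^{op}$ — that is, the co-cylinder $\cocylinder'$ in $\cl{A}^{op}$ obtained from $\cylinder$ — plays the role of the `co-cylinder equipped with merely a contraction structure $p^{op}$'. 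Since $\cylinder$ is left adjoint to $\cocylinder$ in $\cl{A}$, passing to $\cl{A}^{op}$ gives that $\cocylinder^{op}$ is left adjoint to $\cylinder^{op}$, and compatibility of the adjunction with $p$ and $c$ dualises to compatibility with $c^{op}$ and $p^{op}$. Thus the hypotheses of Corollary \ref{TrivialCofibrationAdmitsStrongDeformationRetractionCorollary} are met in $\cl{A}^{op}$ with the cylinder taken to be $\cocylinder^{op}$ and the co-cylinder taken to be $\cylinder^{op}$, and $j := f^{op}$ as the trivial cofibration.

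Applying Corollary \ref{TrivialCofibrationAdmitsStrongDeformationRetractionCorollary} then yields an arrow of $\cl{A}^{op}$, say \ar{a_{1},a_{0},j'} in $\cl{A}^{op}$, which is a strong deformation retraction of $f^{op}$ with respect to the cylinder $\cocylinder^{op}$. Writing $j$ for the corresponding arrow \ar{a_{0},a_{1},j} of $\cl{A}$, the remark following Definition \ref{StrongDeformationRetractionCoCylinderDefinition} (which says precisely that \ar{a_{1},a_{0},f} is a strong deformation retraction of $j$ with respect to $\cocylinder$ if and only if $j^{op}$ defines a strong deformation retraction of $f^{op}$ with respect to the cylinder $\cocylinder^{op}$ in $\cl{A}^{op}$ equipped with $c^{op}$) translates this into the statement that $f$ is a strong deformation retraction of $j$ with respect to $\cocylinder$. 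This gives exactly the conclusion.

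The proof is therefore a one-line dualisation; the only thing requiring care — and the mild ``obstacle'' — is bookkeeping the ${}^{op}$ correspondence so that the roles of the two structured objects are matched correctly: in Corollary \ref{TrivialCofibrationAdmitsStrongDeformationRetractionCorollary} it is the \emph{co-cylinder} that carries the involution, subdivision and connection, and under ${}^{op}$ this must be matched with $\cocylinder^{op}$, which carries $(v,\subdiv,r_0,r_1,s,\Gamma_{ul})$ dualised, while the plain cylinder-with-contraction of that corollary is matched with $\cylinder^{op}$. Once this dictionary is written down, the compatibility-of-the-adjunction hypothesis and the ``preserves subdivision'' hypothesis dualise automatically, and the result follows. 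In the interest of brevity I would simply write: ``Follows immediately from Corollary \ref{TrivialCofibrationAdmitsStrongDeformationRetractionCorollary} by duality.''

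\begin{proof} Follows immediately from Corollary \ref{TrivialCofibrationAdmitsStrongDeformationRetractionCorollary} by duality. \end{proof}
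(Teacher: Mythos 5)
Your proposal is correct and is exactly the approach the paper takes: the proof in the paper is the one-liner ``Follows immediately from Corollary \ref{TrivialCofibrationAdmitsStrongDeformationRetractionCorollary} by duality.'' Your surrounding elaboration of the ${}^{op}$ dictionary — the roles of $\cylinder$ and $\cocylinder$ swapping, the adjunction dualising, and the translation of ``strong deformation retraction'' via the remark following Definition \ref{StrongDeformationRetractionCoCylinderDefinition} — is an accurate account of what the one-line argument actually unpacks to.
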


\begin{proof} Follows immediately from Corollary \ref{TrivialCofibrationAdmitsStrongDeformationRetractionCorollary} by duality. \end{proof}

\end{chapter}

\begin{chapter}{Lifting axioms} \label{LiftingAxiomsChapter}

Let $\cylinder$ be a cylinder in a formal category $\cl{A}$, and let $\cocylinder$ be a co-cylinder in $\cl{A}$. Suppose that $\cylinder$ is left adjoint to $\cocylinder$. We prove that if an arrow $j$ of $\cl{A}$ is a cofibration, then the canonical arrow $m^{\cylinder}_{j}$ of $\cl{A}$ defined in \ref{MappingCylindersAndMappingCoCylindersChapter} admits a strong deformation retraction. For this we assume, for the first time, that our cylinder is equipped with upper and lower right connection structures, which we require to be compatible with subdivision. 

For topological spaces, the fact that $m^{\cylinder}_{j}$ admits a strong deformation retraction if $j$ is a cofibration is due to Str{\o}m, proven in \S{2} of the paper \cite{StromNoteOnCofibrations}. Str{\o}m's argument is, however, quite different to ours. It relies on the fact that the homotopy theory of topological spaces is defined with respect to a cartesian monoidal structure.

Next, we prove that a normally cloven fibration has the right lifting property with respect to arrows admitting a strong deformation retraction. We deduce that normally cloven fibrations have the covering homotopy extension property with respect to cofibrations. 

In a similar way, we prove that trivial normally cloven fibrations have the right lifting property with respect to cofibrations. Dualising, we deduce that fibrations have the right lifting property with respect to trivial normally cloven cofibrations, and that trivial fibrations have the right lifting property with respect to normally cloven cofibrations. 

\begin{assum} Let $\cl{C}$ be a 2-category with a final object. Suppose that pushouts and pullbacks of 2-arrows of $\cl{C}$ give rise to pushouts and pullbacks in formal categories, in the sense of Definition \ref{PushoutsPullbacks2ArrowsArePushoutsPullbacksInFormalCategoriesTerminology}. Let $\cl{A}$ be an object of $\cl{C}$. As before, we view $\cl{A}$ as a formal category, writing of objects and arrows of $\cl{A}$. \end{assum} 

\begin{prpn} \label{RetractionIsStrongDeformationRetractionProposition} Let $\cylinder = \big( \cyl, i_{0}, i_{1}, p, v, \subdiv, r_{0}, r_{1}, s, \Gamma_{lr}, \Gamma_{ur} \big)$ be a cylinder in $\cl{A}$ equipped with a contraction structure $p$, an involution structure $v$, a subdivision structure $\big( \subdiv, r_{0}, r_{1}, s \big)$ compatible with $p$, a lower right connection structure $\Gamma_{lr}$, and an upper right connection structure $\Gamma_{ur}$. Suppose that $\Gamma_{lr}$ and $\Gamma_{ur}$ are compatible with the subdivision structure $\big( \subdiv, r_{0}, r_{1}, s \big)$, and that $\cyl$ preserves mapping cylinders with respect to $\cylinder$. 

Let \ar{a_{0},a_{1},j} be an arrow of $\cl{A}$ which is a cofibration with respect to $\cylinder$, and suppose that $\big( a^{\cylinder}_{j}, d^{0}_{j}, d^{1}_{j} \big)$ defines a mapping cylinder of $j$ with respect to $\cylinder$. Let \ar{a^{\cylinder}_{j},\cyl(a_{1}),m^{\cylinder}_{j}} denote the corresponding canonical arrow of $\cl{A}$ of Notation \ref{CanonicalArrowFromMappingCylinderToCylinderDefinition}. Let \ar{\cyl(a_{1}),a^{\cylinder}_{j},r^{\cylinder}_{j}} be the arrow of $\cl{A}$ of Proposition \ref{RetractionToMappingCylinderProposition}. %
Then $r^{\cylinder}_{j}$ is a strong deformation retraction of $m^{\cylinder}_{j}$ with respect to $\cylinder$. \end{prpn}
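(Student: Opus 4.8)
The plan is to show that the retraction $r^{\cylinder}_{j}$ of $m^{\cylinder}_{j}$ constructed in Proposition \ref{RetractionToMappingCylinderProposition} can be upgraded to a strong deformation retraction. By Definition \ref{StrongDeformationRetractionCylinderDefinition}, since $r^{\cylinder}_{j}$ is already a retraction of $m^{\cylinder}_{j}$, it remains to produce a homotopy under $a^{\cylinder}_{j}$ from $m^{\cylinder}_{j} \circ r^{\cylinder}_{j}$ to $id(\cyl(a_{1}))$ with respect to $\cylinder$ and $(m^{\cylinder}_{j}, m^{\cylinder}_{j})$ — that is, a homotopy $H$ from $m^{\cylinder}_{j} r^{\cylinder}_{j}$ to $id$ such that $H \circ \cyl(m^{\cylinder}_{j}) = m^{\cylinder}_{j} \circ p(a^{\cylinder}_{j})$. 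The key observation is that both $m^{\cylinder}_{j} r^{\cylinder}_{j}$ and $id(\cyl(a_{1}))$ are arrows out of $\cyl(a_{1})$, and $\cyl(a_{1})$ is itself built as a pushout: since $\cyl$ preserves mapping cylinders, $\big(\cyl(a^{\cylinder}_{j}), \cyl(d^{0}_{j}), \cyl(d^{1}_{j})\big)$ is a mapping cylinder of $\cyl(j)$, and applying $\cyl$ again to the defining square of $m^{\cylinder}_{j}$ shows $\cyl(\cyl(a_{1}))$ has a good universal property; but more to the point, $\cyl(a_{1})$ receives $i_0(a_1)\circ$(stuff) via $m^{\cylinder}_{j}$ and the remaining part via the cylinder structure, so I can define $H$ out of $\cyl(\cyl(a_{1}))$ by specifying it on the two pieces.

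Concretely, I would first record the defining pushout square for $m^{\cylinder}_{j}$ from Notation \ref{CanonicalArrowFromMappingCylinderToCylinderDefinition}: $\cyl(a_1)$ is the pushout of $i_0(a_0)\colon a_0 \to \cyl(a_0)$ along $f$ — no wait, $\cyl(a_1)$ is not a pushout; rather $m^{\cylinder}_j$ is the map out of $a^{\cylinder}_j$ induced on the pushout $a^{\cylinder}_j$. The right way in: $a^{\cylinder}_j$ is the pushout of $\cyl(a_0) \xleftarrow{i_0(a_0)} a_0 \xrightarrow{j} a_1$, so homotopies (arrows out of $\cyl(a^{\cylinder}_j)$, using that $\cyl$ preserves this pushout) are determined by their restrictions along $\cyl(d^0_j)$ and $\cyl(d^1_j)$. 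I would therefore construct $H\colon \cyl(\cyl(a_1)) \to \cyl(a_1)$ — but we need a homotopy $\cyl(\cyl(a_1)) \to \cyl(a_1)$? No: a homotopy between arrows $\cyl(a_1) \to \cyl(a_1)$ is an arrow $\cyl(\cyl(a_1)) \to \cyl(a_1)$, i.e. a double-homotopy-like object. Using that $\cyl$ preserves mapping cylinders, $\cyl(a_1) \cong$ the pushout computed from $\cyl(a^{\cylinder}_j)$ appropriately; so I would build $H$ on the two legs: on the $\cyl(a_0)$-part I would feed in a homotopy contracting via $p$, and on the $a_1$-part the identity homotopy, gluing these using $\Gamma_{ur}$ (to interpolate correctly at the seam where $i_1$ meets the pushout) and $\Gamma_{lr}$, whose compatibility with the subdivision structure $\big(\subdiv,r_0,r_1,s\big)$ guarantees the glued map is well-defined and has the right boundary. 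This mirrors the construction of $h$ in the proof of Proposition \ref{MappingCylinderFactorisationStrongDeformationRetractionWithRespectToCylinderProposition}, where a canonical arrow out of $\cyl(a^{\cylinder}_f)$ was produced from $\Gamma_{lr}$ and $p$; here the analogous arrow out of $\cyl(\cyl(a_1))$ must be produced, and both right connections enter because we must control behaviour at both ends $i_0$ and $i_1$ of the new cylinder direction.

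After constructing $H$, I would verify in turn: (a) $H \circ i_0(\cyl(a_1)) = m^{\cylinder}_j \circ r^{\cylinder}_j$, by checking equality after precomposing with $\cyl(d^0_j)$ and $\cyl(d^1_j)$ and invoking the universal property of the pushout presenting $\cyl(a_1)$; (b) $H \circ i_1(\cyl(a_1)) = id(\cyl(a_1))$, similarly; (c) the under-$a^{\cylinder}_j$ condition $H \circ \cyl(m^{\cylinder}_j) = m^{\cylinder}_j \circ p(a^{\cylinder}_j)$, which is where the compatibility of $\Gamma_{lr}$ with $p$ and the strictness-of-right-identities-free compatibility of $\Gamma_{lr},\Gamma_{ur}$ with $\big(\subdiv,r_0,r_1,s\big)$ — via the canonical arrow $x$ of Definition \ref{RightConnectionsCylinderCompatibilityDefinition} and the equation $s\cdot\cyl$ followed by $x$ equals $p\cdot\cyl$ — does the essential work of collapsing the glued homotopy to the contraction when restricted along $m^{\cylinder}_j$. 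Each verification reduces, via the universal properties of the relevant pushouts (legitimate because $\cyl$ preserves mapping cylinders, hence these pushouts), to diagram chases with the connection and contraction axioms.

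The main obstacle I anticipate is bookkeeping the seam: the homotopy $H$ lives on $\cyl(\cyl(a_1))$, which when $\cyl(a_1)$ is presented as a pushout acquires two "directions" of cylinder coordinate interacting with the mapping-cylinder pushout coordinate, so there are really three coordinates in play and one must choose the right combination of $\Gamma_{lr}$, $\Gamma_{ur}$, $v$, and $p$ so that all four faces of the resulting square-of-squares match up. Getting the interpolation at the point where $d^1_j\colon a_1 \to a^{\cylinder}_j$ meets the cylinder — so that the "constant at $m^{\cylinder}_j r^{\cylinder}_j$" part and the "sweep to identity" part agree — is exactly what forces the hypothesis that $\Gamma_{lr}$ and $\Gamma_{ur}$ be compatible with subdivision rather than merely existing; I would expect to spend most of the effort isolating the single identity among the connection axioms that makes face (c) close.
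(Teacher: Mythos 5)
Your high-level plan is mostly right, but there is a genuine gap in the construction of the homotopy. You propose to build the arrow $\sigma\colon \cyl^{2}(a_{1}) \to \cyl(a_{1})$ ``on the two legs'' --- a $\cyl(a_{0})$-part and an $a_{1}$-part --- by exploiting that $\cyl\big(a^{\cylinder}_{j}\big)$ is a pushout. But an arrow built from that pushout has domain $\cyl\big(a^{\cylinder}_{j}\big)$, not $\cyl^{2}(a_{1})$; the assertion that ``$\cyl(a_{1})\cong$ the pushout computed from $\cyl\big(a^{\cylinder}_{j}\big)$ appropriately'' is false. Trying to repair this by setting $\sigma = \tilde{\sigma}\circ\cyl\big(r^{\cylinder}_{j}\big)$ for some $\tilde{\sigma}$ out of $\cyl\big(a^{\cylinder}_{j}\big)$ forces $\tilde{\sigma}\circ i_{1}\big(a^{\cylinder}_{j}\big)\circ r^{\cylinder}_{j} = id\big(\cyl(a_{1})\big)$ in order for your condition (b) to hold, i.e.\ it forces $r^{\cylinder}_{j}$ to be a split monomorphism, which fails in general. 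The paper instead defines $\sigma$ directly, using only the cylinder structure on $a_{1}$:
\[
\sigma \;=\; \big(m^{\cylinder}_{j}\circ r^{\cylinder}_{j}\circ\Gamma_{ur}(a_{1})\big) + \Gamma_{lr}(a_{1}).
\]
The two summands agree at the seam --- each restricts to $i_{0}(a_{1})\circ p(a_{1})$, using the boundary axioms of the two right connections together with $m^{\cylinder}_{j}\circ d^{1}_{j} = i_{0}(a_{1})$ --- so the subdivision pushout glues them, and properties (a) and (b) then follow directly from the $\Gamma_{ur}$ and $\Gamma_{lr}$ axioms, with no appeal to the pushout $\cyl\big(a^{\cylinder}_{j}\big)$.

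On the other hand, your expectations about where the remaining hypotheses enter are correct: the universal property of $\cyl\big(a^{\cylinder}_{j}\big)$, hence the hypothesis that $\cyl$ preserves mapping cylinders, and the compatibility of $\Gamma_{lr}$, $\Gamma_{ur}$ with $\big(\subdiv, r_{0}, r_{1}, s\big)$ --- via the canonical 2-arrow $x$ of Definition \ref{RightConnectionsCylinderCompatibilityDefinition} --- are used precisely in verifying property (c), by checking that $\sigma\circ\cyl\big(m^{\cylinder}_{j}\big) = m^{\cylinder}_{j}\circ p\big(a^{\cylinder}_{j}\big)$ after precomposition with $\cyl(d^{0}_{j})$ and $\cyl(d^{1}_{j})$. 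One small correction: that compatibility is not what makes the glued $\sigma$ well-defined, as your proposal suggests; well-definedness already follows from the connection boundary axioms, and compatibility with subdivision is needed only for condition (c).
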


\begin{proof} By Proposition \ref{RetractionToMappingCylinderProposition}, we have that $r^{\cylinder}_{j}$ is a retraction of $m^{\cylinder}_{j}$. It remains to prove that there is a homotopy \ar{\cyl^{2}(a_{1}),\cyl(a_{1}),\sigma} from $m^{\cylinder}_{j} \circ r^{\cylinder}_{j}$ to $id\big( \cyl(a_{1}) \big)$ with respect to $\cylinder$, such that the following diagram in $\cl{A}$ commutes. \sq{\cyl\big(a^{\cylinder}_{j}\big),a^{\cylinder}_{j},\cyl^{2}(a_{1}),\cyl(a_{1}),p\big(a^{\cylinder}_{j}\big),m^{\cylinder}_{j},\cyl\big(m^{\cylinder}_{j}\big),\sigma} 

Let us construct $\sigma$. We have that the following diagram in $\cl{A}$ commutes. \squarewithdiagonal{a_{1},\cyl(a_{1}),\cyl(a_{1}),a^{\cylinder}_{j},i_{0}(a_{1}),r^{\cylinder}_{j},i_{0}(a_{1}),m^{\cylinder}_{j},d^{1}_{j}} %
By definition of $\Gamma_{ur}$ as an upper right connection structure, we also have that the following diagram in $\cl{A}$ commutes. \sq[{5,3}]{\cyl(a_{1}),\cyl^{2}(a_{1}),a_{1},\cyl(a_{1}),i_{1}\big(\cyl(a_{1})\big),\Gamma_{ur}(a_{1}),p(a_{1}),i_{0}(a_{1})} %
Putting the last two observations together, we have that the following diagram in $\cl{A}$ commutes. \sq[{5,3}]{\cyl(a_{1}),\cyl^{2}(a_{1}),a_{1},\cyl(a_{1}),i_{1}\big(\cyl(a_{1})\big),m^{\cylinder}_{j} \circ r^{\cylinder}_{j} \circ \Gamma_{ur}(a_{1}),p(a_{1}),i_{0}(a_{1})} %
By definition of $\Gamma_{lr}$ as a lower right connection structure, the following diagram in $\cl{A}$ commutes. \sq[{5,3}]{\cyl(a_{1}),\cyl^{2}(a_{1}),a_{1},\cyl(a_{1}),i_{0}\big(\cyl(a_{1})\big),\Gamma_{ur}(a_{1}),p(a_{1}),i_{0}(a_{1})} %
Putting the last two observations together, we have that the following diagram in $\cl{A}$ commutes. \sq[{9,3}]{\cyl(a_{1}),\cyl^{2}(a_{1}),\cyl^{2}(a_{1}),\cyl(a_{1}),i_{0}\big(\cyl(a_{1})\big),\Gamma_{lr}(a_{1}),i_{1}\big(\cyl(a_{1})\big),m^{\cylinder}_{j} \circ r^{\cylinder}_{j} \circ \Gamma_{ur}(a_{1})} %
We define $\sigma$ to be the arrow \ar[15]{\cyl^{2}(a_{1}),\cyl(a_{1}),\big( m^{\cylinder}_{j} \circ r^{\cylinder}_{j} \circ \Gamma_{ur}(a_{1}) \big) + \Gamma_{lr}(a_{1})} of $\cl{A}$. 

Let us first prove that the following diagram in $\cl{A}$ commutes. \tri[{5,3}]{\cyl(a_{1}),\cyl^{2}(a_{1}), \cyl(a_{1}),i_{0}\big(\cyl(a_{1})\big),\sigma,m^{\cylinder}_{j} \circ r^{\cylinder}_{j}} %
By definition of $\Gamma_{ur}$ as an upper right connection structure, the following diagram in $\cl{A}$ commutes. \tri[{5,3}]{\cyl(a_{1}),\cyl^{2}(a_{1}),\cyl(a_{1}),i_{0}\big(\cyl(a_{1})\big),\Gamma_{ur}(a_{1}),id} %
Thus the following diagram in $\cl{A}$ commutes. \tri[{5,3}]{\cyl(a_{1}),\cyl^{2}(a_{1}),\cyl(a_{1}),i_{0}\big(\cyl(a_{1})\big),m^{\cylinder}_{j} \circ r^{\cylinder}_{j} \circ \Gamma_{ur}(a_{1}),m^{\cylinder}_{j} \circ r^{\cylinder}_{j}} %
We also have that the following diagram in $\cl{A}$ commutes. \sq[{9,3}]{\cyl(a_{1}),\cyl^{2}(a_{1}),\cyl^{2}(a_{1}),\cyl(a_{1}),i_{0}\big(\cyl(a_{1})\big),\sigma,i_{0}\big(\cyl(a_{1})\big),m^{\cylinder}_{j} \circ r^{\cylinder}_{j} \circ \Gamma_{ur}(a_{1})} %
Putting the last two observations together, we have that the following diagram in $\cl{A}$ commutes, as required. \tri[{5,3}]{\cyl(a_{1}),\cyl^{2}(a_{1}), \cyl(a_{1}),i_{0}\big(\cyl(a_{1})\big),\sigma,m^{\cylinder}_{j} \circ r^{\cylinder}_{j}} 

Next, let us prove that the following diagram in $\cl{A}$ commutes. \tri[{5,3}]{\cyl(a_{1}),\cyl^{2}(a_{1}),\cyl(a_{1}),i_{1}\big(\cyl(a_{1})\big),\sigma,id} %
By definition of $\Gamma_{lr}$ as a lower right connection structure, we have that the following diagram in $\cl{A}$ commutes. \tri[{5,3}]{\cyl(a_{1}),\cyl^{2}(a_{1}),\cyl(a_{1}),i_{1}\big(\cyl(a_{1})\big),\Gamma_{lr}(a_{1}),id} %
We also have that the following diagram in $\cl{A}$ commutes. \sq[{5,3}]{\cyl(a_{1}),\cyl^{2}(a_{1}),\cyl^{2}(a_{1}),\cyl(a_{1}),i_{1}\big(\cyl(a_{1})\big),\sigma,i_{1}\big(\cyl(a_{1})\big),\Gamma_{lr}(a_{1})} %
Putting the last two observations together, we have that the following diagram in $\cl{A}$ commutes, as required. \tri[{5,3}]{\cyl(a_{1}),\cyl^{2}(a_{1}),\cyl(a_{1}),i_{1}\big(\cyl(a_{1})\big),\Gamma_{lr}(a_{1}),id} 

Let us now prove that the following diagram in $\cl{A}$ commutes. \sq{\cyl\big(a^{\cylinder}_{j}\big),a^{\cylinder}_{j},\cyl^{2}(a_{1}),\cyl(a_{1}),p\big(a^{\cylinder}_{j}\big),m^{\cylinder}_{j},\cyl\big(m^{\cylinder}_{j}\big),\sigma} %
Let \ar{\subdiv\big(\cyl(a_{1})\big),\cyl(a_{1}),u} denote the canonical arrow of $\cl{A}$ such that the following diagram in $\cl{A}$ commutes. \pushout[{5,3,0}]{\cyl(a_{1}),\cyl^{2}(a_{1}),\cyl^{2}(a_{1}),\subdiv\big(\cyl(a_{1})\big),\cyl(a_{1}),i_{0}\big(\cyl(a_{1})\big),r_{0}\big(\cyl(a_{1})\big),i_{1}\big(\cyl(a_{1})\big),r_{1}\big(\cyl(a_{1})\big),\Gamma_{lr}(a_{1}),m^{\cylinder}_{j} \circ r^{\cylinder}_{j} \circ \Gamma_{ur}(a_{1}),u} %
Let \ar{\subdiv,\id,\overline{p}} denote the canonical 2-arrow of $\cl{C}$ of Definition \ref{SubdivisionCompatibleWithContractionDefinition}. We claim that the following diagram in $\cl{A}$ commutes. \sq{\cyl(a_{1}),\subdiv(a_{1}),a_{1},\cyl(a_{1}),s(a_{1}),u \circ \subdiv\big(i_{0}(a_{1})\big),p(a_{1}),i_{0}(a_{1})} 

By definition of $\Gamma_{lr}(a_{1})$ as a lower right connection structure, the following diagram in $\cl{A}$ commutes. \sq[{5,3}] {\cyl(a_{1}),\cyl^{2}(a_{1}),a_{1},\cyl(a_{1}),\cyl\big(i_{0}(a_{1})\big),\Gamma_{ur}(a_{1}),p(a_{1}),i_{0}(a_{1})} %
We also have that the following diagram in $\cl{A}$ commutes. \squareabovetriangle[{5,3,0}]{\cyl(a_{1}),\subdiv(a_{1}),\cyl^{2}(a_{1}),\subdiv\big(\cyl(a_{1})\big),\cyl(a_{1}),r_{0}(a_{1}),\subdiv\big(i_{0}(a_{1})\big),\cyl\big(i_{0}(a_{1})\big),r_{0}\big(\cyl(a_{1})\big),u,\Gamma_{lr}(a_{1})} %
Putting the last two observations together, we have that the following diagram in $\cl{A}$ commutes. \sq{\cyl(a_{1}),\subdiv(a_{1}),a_{1},\cyl(a_{1}),r_{0}(a_{1}),u \circ \subdiv\big(i_{0}(a_{1})\big),p(a_{1}),i_{0}(a_{1})}
By definition of $\Gamma_{ur}(a_{1})$ as an upper right connection structure, the following diagram in $\cl{A}$ commutes. \sq[{5,3}] {\cyl(a_{1}),\cyl^{2}(a_{1}),a_{1},\cyl(a_{1}),\cyl\big(i_{0}(a_{1})\big),\Gamma_{ur}(a_{1}),p(a_{1}),i_{0}(a_{1})} %
Thus we have that the following diagram in $\cl{A}$ commutes. \trapeziumstwo[{3,3,2,0}]{\cyl(a_{1}),\subdiv(a_{1}),a_{1},\cyl^{2}(a_{1}),\subdiv\big(\cyl(a_{1})\big),\cyl(a_{1}),\cyl(a_{1}),r_{1}(a_{1}),\subdiv\big(i_{0}(a_{1})\big),u,p(a_{1}),i_{0}(a_{1}),m^{\cylinder}_{j} \circ r^{\cylinder}_{j},\cyl\big(i_{0}(a_{1})\big),r_{1}\big(\cyl(a_{1})\big),\Gamma_{ur}(a_{1})} %
Earlier in the proof, we observed that the following diagram in $\cl{A}$ commutes. \tri{a_{1},\cyl(a_{1}),\cyl(a_{1}),i_{0}(a_{1}),m^{\cylinder}_{j} \circ r^{\cylinder}_{j},i_{0}(a_{1})} %
Putting the last two observations together, we have that the following diagram in $\cl{A}$ commutes. \sq{\cyl(a_{1}),\subdiv(a_{1}),a_{1},\cyl(a_{1}),r_{1}(a_{1}),u \circ \subdiv\big(i_{0}(a_{1})\big),p(a_{1}),i_{0}(a_{1})} %
We have now shown that the following diagram in $\cl{A}$ commutes. \pushout{a_{1},\cyl(a_{1}),\cyl(a_{1}),\subdiv(a_{1}),\cyl(a_{1}),i_{0}(a_{1}),r_{0}(a_{1}),i_{1}(a_{1}),r_{1}(a_{1}),i_{0}(a_{1}) \circ p(a_{1}),i_{0}(a_{1}) \circ p(a_{1}), u \circ \subdiv\big(i_{0}(a_{1})\big)} %
By definition of $\overline{p}$, the following diagram in $\cl{A}$ commutes. \pushout{a_{1},\cyl(a_{1}),\cyl(a_{1}),\subdiv(a_{1}),a_{1},i_{0}(a_{1}),r_{0}(a_{1}),i_{1}(a_{1}),r_{1}(a_{1}),p(a_{1}), p(a_{1}), \overline{p}(a_{1})} %
Thus the following diagram in $\cl{A}$ commutes. \pushout{a_{1},\cyl(a_{1}),\cyl(a_{1}),\subdiv(a_{1}),\cyl(a_{1}),i_{0}(a_{1}),r_{0}(a_{1}),i_{1}(a_{1}),r_{1}(a_{1}),i_{0}(a_{1}) \circ p(a_{1}),i_{0}(a_{1}) \circ p(a_{1}), i_{0}(a_{1}) \circ \overline{p}(a_{1})} %
Appealing to the universal property of $\subdiv(a_{1})$, we deduce that the following diagram in $\cl{A}$ commutes.\sq[{5,3}]{\subdiv(a_{1}),\subdiv\big(\cyl(a_{1})\big),a_{1},\cyl(a_{1}),\subdiv\big(i_{0}(a_{1})\big),u,\overline{p}(a_{1}),i_{0}(a_{1})} %
Since the subdivision structure $\big( \subdiv, r_{0}, r_{1}, s \big)$ is compatible with $p$, we also have that the following diagram in $\cl{A}$ commutes. \tri{\cyl(a_{1}),\subdiv(a_{1}),a_{1},s(a_{1}),\overline{p}(a_{1}),p(a_{1})} %
Putting the last two observations together, we have that the following diagram in $\cl{A}$ commutes, completing the proof of the claim. \squarewithdiagonaltwo[{4,3.5}]{\cyl(a_{1}),\subdiv(a_{1}),a_{1},\cyl(a_{1}),s(a_{1}),u \circ \subdiv\big(i_{0}(a_{1})\big),p(a_{1}),i_{0}(a_{1}),\overline{p}(a_{1})} 

We also claim that the following diagram in $\cl{A}$ commutes. \sq[{5,3}]{\cyl^{2}(a_{0}),\subdiv\big(\cyl(a_{0})\big),\cyl(a_{0}),\cyl(a_{1}),s\big(\cyl(a_{0})\big),u \circ \subdiv\big(\cyl(j)\big),p\big(\cyl(a_{0})\big),\cyl(j)} %
We have that the following diagram in $\cl{A}$ commutes. \trapeziumstwo[{3,3,2,0}]{\cyl^{2}(a_{0}),\subdiv\big(\cyl(a_{0})\big),\cyl(a_{0}),\cyl^{2}(a_{1}),\subdiv\big(\cyl(a_{1})\big),\cyl(a_{1}),\cyl(a_{1}),r_{1}\big(\cyl(a_{0})\big),\subdiv\big(\cyl(j)\big),u,\Gamma_{ur}(a_{0}),\cyl(j),m^{\cylinder}_{j} \circ r^{\cylinder}_{j},\cyl^{2}(j),r_{1}\big(\cyl(a_{1})\big),\Gamma_{ur}(a_{1})} %
The following diagram in $\cl{A}$ also commutes. \squarewithdiagonal{\cyl(a_{0}),\cyl(a_{1}),\cyl(a_{1}),a^{\cylinder}_{j},\cyl(j),r^{\cylinder}_{j},\cyl(j),m^{\cylinder}_{j},d^{0}_{j}} %
Putting the last two observations together, we have that the following diagram in $\cl{A}$ commutes. \sq[{5,3}]{\cyl^{2}(a_{0}),\subdiv\big(\cyl(a_{0})\big),\cyl(a_{0}),\cyl(a_{1}),r_{1}\big(\cyl(a_{0})\big),u \circ \subdiv\big(\cyl(j)\big),\Gamma_{ur}(a_{0}),\cyl(j)} %
The following diagram in $\cl{A}$ commutes. \squareabovetriangle[{5,3,0}]{\cyl^{2}(a_{0}),\subdiv\big(\cyl(a_{0})\big),\cyl^{2}(a_{1}),\subdiv\big(\cyl(a_{1})\big),\cyl(a_{1}),r_{0}\big(\cyl(a_{0})\big),\subdiv\big(\cyl(j)\big),\cyl^{2}(j),r_{0}\big(\cyl(a_{1})\big),u,\Gamma_{lr}(a_{1})} %
The following diagram in $\cl{A}$ also commutes. \sq{\cyl^{2}(a_{0}),\cyl^{2}(a_{1}),\cyl(a_{0}),\cyl(a_{1}),\cyl^{2}(j),\Gamma_{lr}(a_{1}),\Gamma_{lr}(a_{0}),\cyl(j)} %
Putting the last two observations together, we have that the following diagram in $\cl{A}$ commutes. \sq[{5,3}]{\cyl^{2}(a_{0}),\subdiv\big(\cyl(a_{0})\big),\cyl(a_{0}),\cyl(a_{1}),r_{0}\big(\cyl(a_{0})\big),u \circ \subdiv\big(\cyl(j)\big),\Gamma_{lr}(a_{0}),\cyl(j)} %
We have now shown that the following diagram in $\cl{A}$ commutes. \pushout[{5,3,0}]{\cyl(a_{0}),\cyl^{2}(a_{0}),\cyl^{2}(a_{0}),\subdiv\big(\cyl(a_{0})\big),\cyl(a_{1}),i_{0}\big(\cyl(a_{0})\big),r_{0}\big(\cyl(a_{0})\big),i_{1}\big(\cyl(a_{0})\big),r_{1}\big(\cyl(a_{0})\big),\cyl(j) \circ \Gamma_{lr}(a_{0}),\cyl(j) \circ \Gamma_{ur}(a_{0}), u \circ \subdiv\big(\cyl(j)\big)} %
Let \ar{\subdiv \circ \cyl,\cyl,x} denote the canonical 2-arrow of $\cl{C}$ of Definition \ref{RightConnectionsCylinderCompatibilityDefinition}. By definition of $x$, the following diagram in $\cl{A}$ commutes. \pushout[{5,3,0}]{\cyl(a_{0}),\cyl^{2}(a_{0}),\cyl^{2}(a_{0}),\subdiv\big(\cyl(a_{0})\big),\cyl(a_{0}),i_{0}\big(\cyl(a_{0})\big),r_{0}\big(\cyl(a_{0})\big),i_{1}\big(\cyl(a_{0})\big),r_{1}\big(\cyl(a_{0})\big),\Gamma_{lr}(a_{0}),\Gamma_{ur}(a_{0}),x(a_{0})} %
Thus the following diagram in $\cl{A}$ commutes. \pushout[{5,3,0}]{\cyl(a_{0}),\cyl^{2}(a_{0}),\cyl^{2}(a_{0}),\subdiv\big(\cyl(a_{0})\big),\cyl(a_{1}),i_{0}\big(\cyl(a_{0})\big),r_{0}\big(\cyl(a_{0})\big),i_{1}\big(\cyl(a_{0})\big),r_{1}\big(\cyl(a_{0})\big),\cyl(j) \circ \Gamma_{lr}(a_{0}),\cyl(j) \circ \Gamma_{ur}(a_{0}),\cyl(j) \circ x(a_{0})} %
Appealing to the universal property of $\subdiv\big(\cyl(a_{0})\big)$, we deduce that the following diagram in $\cl{A}$ commutes. \sq[{4,3}]{\subdiv\big(\cyl(a_{0})\big),\subdiv\big(\cyl(a_{1})\big),\cyl(a_{0}),\cyl(a_{1}),\subdiv\big(\cyl(j)\big),u,x(a_{0}),\cyl(j)} % 
Since $\Gamma_{lr}$ and $\Gamma_{ur}$ are compatible with the subdivision structure $\big( \subdiv,r_{0}, r_{1}, s \big)$, we also have that the following diagram in $\cl{A}$ commutes. \tri[{5,4}]{\cyl^{2}(a_{0}),\subdiv\big(\cyl(a_{0})\big),\cyl(a_{0}),s\big(\cyl(a_{0})\big),x(a_{0}),p\big(\cyl(a_{0})\big)} %
Putting the last two observations together, we have that the following diagram in $\cl{A}$ commutes, completing the proof of the claim. \squarewithdiagonaltwo[{5,3}]{\cyl^{2}(a_{0}),\subdiv\big(\cyl(a_{0})\big),\cyl(a_{0}),\cyl(a_{1}),s\big(\cyl(a_{0})\big),u \circ \subdiv(\cyl(j)\big),p\big(\cyl(a_{0})\big), \cyl(j), x(a_{0})} 

Next, we claim that the following diagram in $\cl{A}$ commutes. \sq[{4,3}]{\cyl^{2}(a_{0}),\cyl\big(a^{\cylinder}_{j}\big),\cyl(a_{0}),\cyl(a_{1}),\cyl(d^{0}_{j}),\sigma \circ \cyl\big(m^{\cylinder}_{j}\big),p\big(\cyl(a_{0})\big),\cyl(j)} %
We have that the following diagram in $\cl{A}$ commutes. \tri{\cyl(a_{0}),a^{\cylinder}_{j},\cyl(a_{1}),d^{0}_{j},m^{\cylinder}_{j},\cyl(j)} %
Thus the following diagram in $\cl{A}$ commutes. \tri[{4,3}]{\cyl^{2}(a_{0}),\cyl\big(a^{\cylinder}_{j}\big),\cyl^{2}(a_{1}),\cyl(d^{0}_{j}),\cyl\big(m^{\cylinder}_{j}\big),\cyl^{2}(j)} %
The following diagram in $\cl{A}$ also commutes. \sq[{5,3}]{\cyl^{2}(a_{0}),\cyl^{2}(a_{1}),\subdiv\big(\cyl(a_{0})\big),\subdiv\big(\cyl(a_{1})\big),\cyl^{2}(j),s\big(\cyl(a_{1})\big),s\big(\cyl(a_{0})\big),\subdiv\big(\cyl(j)\big)} %
Putting the last two observations together, we have that the following diagram in $\cl{A}$ commutes. \sq[{5,3}]{\cyl^{2}(a_{0}),\cyl\big(a^{\cylinder}_{j}\big),\subdiv\big(\cyl(a_{0})\big),\subdiv\big(\cyl(a_{1})\big),\cyl(d^{0}_{j}),s\big(\cyl(a_{1})\big) \circ \cyl\big(m^{\cylinder}_{j}\big),s\big(\cyl(a_{0})\big),\subdiv\big(\cyl(j)\big)} %
Earlier in the proof, we established that the following diagram in $\cl{A}$ commutes. \sq[{5,3}]{\cyl^{2}(a_{0}),\subdiv\big(\cyl(a_{0})\big),\cyl(a_{0}),\cyl(a_{1}),s\big(\cyl(a_{0})\big),u \circ \subdiv\big(\cyl(j)\big),p\big(\cyl(a_{0})\big),\cyl(j)} %
In addition, by definition of $\sigma$, the following diagram in $\cl{A}$ commutes. \tri[{5,3}]{\cyl^{2}(a_{1}),\subdiv\big(\cyl(a_{1})\big),\cyl(a_{1}),s\big(\cyl(a_{1})\big),u,\sigma} %
Putting the last three observations together, we have that the following diagram in $\cl{A}$ commutes, completing our proof of the claim. \sq[{4,3}]{\cyl^{2}(a_{0}),\cyl\big(a^{\cylinder}_{j}\big),\cyl(a_{0}),\cyl(a_{1}),\cyl(d^{0}_{j}),\sigma \circ \cyl\big(m^{\cylinder}_{j}\big),p\big(\cyl(a_{0})\big),\cyl(j)} %
Next, we claim that the following diagram in $\cl{A}$ commutes. \sq[{4,3}]{\cyl(a_{1}),\cyl\big(a^{\cylinder}_{j}\big),a_{1},\cyl(a_{1}),\cyl(d^{1}_{j}),\sigma \circ \cyl\big(m^{\cylinder}_{j}\big),p(a_{1}),i_{0}(a_{1})} %
We have that the following diagram in $\cl{A}$ commutes. \tri{a_{1},a^{\cylinder}_{j},\cyl(a_{1}),d^{1}_{j},m^{\cylinder}_{j},i_{0}(a_{1})} %
Thus the following diagram in $\cl{A}$ commutes. \tri[{4,3}]{\cyl(a_{1}),\cyl\big(a^{\cylinder}_{j}\big),\cyl^{2}(a_{1}),\cyl(d^{1}_{j}),\cyl\big(m^{\cylinder}_{j}\big),\cyl\big(i_{0}(a_{1})\big)} %
The following diagram in $\cl{A}$ also commutes. \sq[{5,3}]{\cyl(a_{1}),\cyl^{2}(a_{1}),\subdiv(a_{1}),\subdiv\big(\cyl(a_{1})\big),\cyl\big(i_{0}(a_{1})\big),s\big(\cyl(a_{1})\big),s(a_{1}),\subdiv\big(i_{0}(a_{1})\big)} %
Putting the last two observations together, we have that the following diagram in $\cl{A}$ commutes. \sq[{4,3}]{\cyl(a_{1}),\cyl\big(a^{\cylinder}_{j}\big),a_{1},\cyl(a_{1}),\cyl(d^{1}_{j}),s\big(\cyl(a_{1})\big) \circ \cyl\big(m^{\cylinder}_{j}\big),s(a_{1}),\subdiv\big(i_{0}(a_{1})\big)} %
Earlier in the proof, we established that the following diagram in $\cl{A}$ commutes. \sq{\cyl(a_{1}),\subdiv(a_{1}),a_{1},\cyl(a_{1}),s(a_{1}),u \circ \subdiv\big(i_{0}(a_{1})\big),p(a_{1}),i_{0}(a_{1})} %
In addition, by definition of $\sigma$, the following diagram in $\cl{A}$ commutes. \tri[{5,3}]{\cyl^{2}(a_{1}),\subdiv\big(\cyl(a_{1})\big),\cyl(a_{1}),s\big(\cyl(a_{1})\big),u,\sigma} %
Putting the last three observations together, we have that the following diagram in $\cl{A}$ commutes, completing the proof of the claim. \sq[{4,3}]{\cyl(a_{1}),\cyl\big(a^{\cylinder}_{j}\big),a_{1},\cyl(a_{1}),\cyl(d^{1}_{j}),\sigma \circ \cyl\big(m^{\cylinder}_{j}\big),p(a_{1}),i_{0}(a_{1})} %
We have now shown that the following diagram in $\cl{A}$ commutes. \pushout[{5,3,-2}]{\cyl(a_{0}),\cyl^{2}(a_{0}),\cyl(a_{1}),\cyl(a^{\cylinder}_{j}),\cyl(a_{1}),\cyl\big(i_{0}(a_{0})\big),\cyl(d^{0}_{j}),\cyl(j),\cyl(d^{1}_{j}),\cyl(j) \circ p\big(\cyl(a_{0})\big),i_{0}(a_{1}) \circ p(a_{1}), \sigma \circ \cyl\big(m^{\cylinder}_{j}\big)} %
The following diagram in $\cl{A}$ commutes. \squareabovetriangle{\cyl^{2}(a_{0}),\cyl\big(a^{\cylinder}_{j}\big),\cyl(a_{0}),a^{\cylinder}_{j},\cyl(a_{1}),\cyl(d^{0}_{j}),p\big(a^{\cylinder}_{j}\big),p\big(\cyl(a_{0})\big),d^{0}_{j},m^{\cylinder}_{j},\cyl(j)} %
The following diagram in $\cl{A}$ also commutes. \squareabovetriangle{\cyl(a_{1}),\cyl\big(a^{\cylinder}_{j}\big),a_{1},a^{\cylinder}_{j},\cyl(a_{1}),\cyl(d^{1}_{j}),p\big(a^{\cylinder}_{j}\big),p(a_{1}),d^{1}_{j},m^{\cylinder}_{j},i_{0}(a_{1})} %
Putting the last two observations together, we have that the following diagram in $\cl{A}$ commutes. \pushout[{5,3,-2}]{\cyl(a_{0}),\cyl^{2}(a_{0}),\cyl(a_{1}),\cyl(a^{\cylinder}_{j}),\cyl(a_{1}),\cyl\big(i_{0}(a_{0})\big),\cyl(d^{0}_{j}),\cyl(j),\cyl(d^{1}_{j}),\cyl(j) \circ p\big(\cyl(a_{0})\big),i_{0}(a_{1}) \circ p(a_{1}), m^{\cylinder}_{j} \circ p(a^{\cylinder}_{j})} %
Since $\cyl$ preserves mapping cylinders with respect to $\cylinder$, the following diagram in $\cl{A}$ is co-cartesian. \sq[{5,3}]{\cyl(a_{0}),\cyl^{2}(a_{0}),\cyl(a_{1}),\cyl(a^{\cylinder}_{j}),\cyl\big(i_{0}(a_{0})\big),\cyl(d^{0}_{j}),\cyl(j),\cyl(d^{1}_{j})} %
Appealing to the universal property of $\cyl\big(a^{\cylinder}_{j}\big)$, we deduce that the following diagram in $\cl{A}$ commutes, as required. \sq{\cyl(a^{\cylinder}_{j}),a^{\cylinder}_{j},\cyl^{2}(a_{1}),\cyl(a_{1}),p(a^{\cylinder}_{j}),m^{\cylinder}_{j},\cyl(m^{\cylinder}_{j}),\sigma} 

\end{proof}

\begin{prpn} \label{NormallyClovenFibrationsHaveRLPWithRespectToArrowsAdmittingAStrongDeformationRetractionProposition} Let $\cylinder = \big( \cyl, i_{0}, i_{1}, p \big)$ be a cylinder in $\cl{A}$ equipped with a contraction structure $p$. Let \ar{a_{0},a_{1},j} be an arrow of $\cl{A}$ which admits a strong deformation retraction \ar{a_{1},a_{0},r} with respect to $\cylinder$, and let \ar{a_{2},a_{3},f} be an arrow of $\cl{A}$ which is a normally cloven fibration with respect to $\cylinder$. 

For any arrows \ar{a_{0},a_{2},g_{0}} and \ar{a_{1},a_{3},g_{1}} of $\cl{A}$ such that the diagram \sq{a_{0},a_{2},a_{1},a_{3},g_{0},f,j,g_{1}} in $\cl{A}$ commutes, there is an arrow \ar{a_{1},a_{2},l} of $\cl{A}$ such that the following diagram in $\cl{A}$ commutes. \liftingsquare{a_{0},a_{2},a_{1},a_{3},g_{0},f,j,g_{1},l}  \end{prpn}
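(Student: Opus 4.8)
The plan is to reduce everything to a direct manipulation of the cleavage of $f$ against the strong deformation retraction data, with no geometric input beyond the cylinder axioms. First I would unpack the hypotheses. By Definition \ref{StrongDeformationRetractionCylinderDefinition}, that $r$ is a strong deformation retraction of $j$ with respect to $\cylinder$ means $r \circ j = id(a_0)$ and that there is a homotopy $h \colon \cyl(a_1) \to a_1$ under $a_0$ from $j \circ r$ to $id(a_1)$ with respect to $\cylinder$ and $(j,j)$. Unwinding the definition of a homotopy under $a_0$, this supplies three identities: $h \circ i_0(a_1) = j \circ r$, $h \circ i_1(a_1) = id(a_1)$, and $h \circ \cyl(j) = j \circ p(a_0)$. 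I would also record that $p(a_0) \circ i_0(a_0) = id(a_0) = p(a_0) \circ i_1(a_0)$ from the contraction structure axioms.

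Next I would construct the lift. Given the square with $g_0 \colon a_0 \to a_2$ and $g_1 \colon a_1 \to a_3$ satisfying $f \circ g_0 = g_1 \circ j$, I observe that the pair $\bigl(g_0 \circ r,\, g_1 \circ h\bigr)$ lies in $\Delta_{f,a_1}^{\cylinder}$ in the sense of Notation \ref{CleavageFibrationCylinderNotation}, since $f \circ (g_0 \circ r) = g_1 \circ j \circ r = g_1 \circ h \circ i_0(a_1)$. Applying the component $l_{a_1}$ of the cleavage with which $f$ is equipped, I get an arrow $k := l_{a_1}(g_0 \circ r,\, g_1 \circ h) \colon \cyl(a_1) \to a_2$ with $k \circ i_0(a_1) = g_0 \circ r$ and $f \circ k = g_1 \circ h$, and I then set $l := k \circ i_1(a_1) \colon a_1 \to a_2$.

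It then remains to verify the two lifting conditions. For $f \circ l = g_1$: $f \circ l = f \circ k \circ i_1(a_1) = g_1 \circ h \circ i_1(a_1) = g_1$. For $l \circ j = g_0$: by naturality of $i_1$ one has $l \circ j = k \circ i_1(a_1) \circ j = k \circ \cyl(j) \circ i_1(a_0)$. Now the compatibility of liftings condition of Definition \ref{NormallyClovenFibrationCylinderDefinition}, applied to the arrow $j \colon a_0 \to a_1$, gives $k \circ \cyl(j) = l_{a_0}\bigl(g_0 \circ r \circ j,\, g_1 \circ h \circ \cyl(j)\bigr)$. Using $r \circ j = id(a_0)$ and $h \circ \cyl(j) = j \circ p(a_0)$ together with $g_1 \circ j = f \circ g_0$, this equals $l_{a_0}\bigl(g_0,\, (f \circ g_0) \circ p(a_0)\bigr)$, which by the lifting of identities condition of Definition \ref{NormallyClovenFibrationCylinderDefinition} (applied to the trivially commutative triangle $a_0 \xrightarrow{g_0} a_2 \xrightarrow{f} a_3$) equals $g_0 \circ p(a_0)$. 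Hence $l \circ j = g_0 \circ p(a_0) \circ i_1(a_0) = g_0$.

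The argument has no genuine obstacle: once the definition of "homotopy under an object" and the two cleavage axioms are unwound, it is a chain of substitutions. The only places that need care are bookkeeping --- correctly exhibiting the element of $\Delta_{f,a_1}^{\cylinder}$, invoking naturality of $i_1$ to move $j$ past it, and making sure the object and arrow indices fed into the lifting of identities and compatibility of liftings conditions match (in particular that the relevant triangles and squares genuinely commute so that the pairs really lie in the sets $\Delta_{f,a_0}^{\cylinder}$ and $\Delta_{f,a_1}^{\cylinder}$).
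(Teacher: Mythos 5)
Your proof is correct and takes essentially the same route as the paper's: form $k = l_{a_1}(g_{0}\circ r,\, g_{1}\circ h)$ from the cleavage, set $l = k \circ i_{1}(a_{1})$, read off $f\circ l = g_{1}$ directly, and establish $l\circ j = g_{0}$ by pushing $j$ through with naturality of $i_{1}$, applying compatibility of liftings to identify $k\circ\cyl(j)$ with $l_{a_{0}}\bigl(g_{0}\circ r\circ j,\,g_{1}\circ h\circ\cyl(j)\bigr)$, simplifying via the strong deformation retraction identities, and invoking lifting of identities. The paper packages the same substitutions as a chain of commuting-diagram lemmas and introduces the intermediate arrow $k' = k\circ\cyl(j)$ by name; your presentation is a tighter equational version of the identical argument.
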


\begin{proof} Since $r$ is a strong deformation retraction of $j$ with respect to $\cylinder$, there is a homotopy \ar{\cyl(a_{1}),a_{1},h} under $a_{0}$ from $jr$ to $id(a_{1})$ with respect to $\cylinder$. In particular, the following diagram in $\cl{A}$ commutes. \tri{a_{1},\cyl(a_{1}),a_{1},i_{0}(a_{1}),h, j \circ r} %
By assumption, we have that the following diagram in $\cl{A}$ commutes. \sq{a_{0},a_{2},a_{1},a_{3},g_{0},f,j,g_{1}} %
Together, the commutativity of these two diagrams implies that the following diagram in $\cl{A}$ commutes. \sq{a_{1},a_{2},\cyl(a_{1}),a_{3},g_{0} \circ r,f, i_{0}(a_{1}), g_{1} \circ h} %
For any object $a$ of $\cl{A}$, let \ar{\Delta^{\cylinder}_{f,a},\Omega^{\cylinder}_{f,a},k_{a}} denote the map of the cleavage with which $f$ is equipped. %
Let \ar{\cyl(a_{1}),a_{2},k} denote the arrow $k_{a_{1}}(g_{0} \circ r, g_{1} \circ h)$ of $\cl{A}$. We have that the following diagram in $\cl{A}$ commutes. \liftingsquare{a_{1},a_{2},\cyl(a_{1}),a_{3},g_{0}\circ r,f,i_{0}(a_{1}),g_{1} \circ h,k} %
Let \ar{a_{1},a_{2},l} denote the arrow $k \circ i_{1}(a_{1})$ of $\cl{A}$. We claim that the following diagram in $\cl{A}$ commutes. \liftingsquare{a_{0},a_{2},a_{1},a_{3},g_{0},f,j,g_{1},l} 

Firstly, we have that the following diagram in $\cl{A}$ commutes. \squareofthreetrianglestwo{a_{1},a_{2},\cyl(a_{1}),a_{1},a_{3},l,f,id,g_{1},i_{1}(a_{1}),k,h} Thus the triangle \tri{a_{1},a_{2},a_{3},l,f,g_{1}} in $\cl{A}$ commutes. It remains to prove the commutativity of the triangle \tri{a_{0},a_{1},a_{2},j,l,g_{0}} in $\cl{A}$. 

Let \ar{\cyl(a_{0}),a_{2},k'} denote the arrow $k_{a_{0}}\big(g_{0} \circ r \circ j, g_{1} \circ h \circ \cyl(j) \big)$ of $\cl{A}$. Since $f$ is a normally cloven fibration with respect to $\cylinder$, its cleavage satisfies property (ii) of Definition \ref{NormallyClovenFibrationCylinderDefinition}. Thus the following diagram in $\cl{A}$ commutes. \tri{\cyl(a_{0}),\cyl(a_{1}),a_{3},\cyl(j),k,k'} %
By definition of $k'$, we have that the following diagram in $\cl{A}$ commutes. \liftingsquare[{7,3}]{a_{0},a_{2},\cyl(a_{0}),a_{3},g_{0} \circ r \circ j,f,i_{0}(a_{1}),g_{1} \circ h \circ \cyl(j),k'} %
By the commutativity of the diagram \tri{a_{0},a_{1},a_{0},j,r,id} in $\cl{A}$, we thus have that the following diagram in $\cl{A}$ commutes. \liftingsquare[{7,3}]{a_{0},a_{2},\cyl(a_{0}),a_{3},g_{0}, f, i_{0}(a_{0}), g_{1} \circ h \circ \cyl(j),k'} %
The following diagram in $\cl{A}$ also commutes, by definition of $h$. \sq{\cyl(a_{0}),a_{0},\cyl(a_{1}),a_{1},p(a_{0}),j,\cyl(j),h} %
Hence the following diagram in $\cl{A}$ commutes. \tri{\cyl(a_{0}),a_{0},a_{3},p(a_{0}),g_{1} \circ j,g_{1} \circ h \circ \cyl(j)} %
We now have that the following diagram in $\cl{A}$ commutes. \liftingsquare[{7,3}]{a_{0},a_{2},\cyl(a_{0}),a_{3},g_{0}, f, i_{0}(a_{0}), g_{1} \circ j \circ p(a_{0}),k'} %
Since $f$ is a normally cloven fibration with respect to $\cylinder$, its cleavage satisfies property (i) of Definition \ref{NormallyClovenFibrationCylinderDefinition}. Thus the following diagram in $\cl{A}$ commutes. \tri{\cyl(a_{0}),a_{0},a_{2},p(a_{0}),g_{0},k'} %
Putting everything together, we have that the following diagram in $\cl{A}$ commutes. \squarewithdiagonalthree{\cyl(a_{0}),\cyl(a_{1}),a_{0},a_{2},\cyl(j),k,p(a_{0}),g_{0},k'} %
Thus the following diagram in $\cl{A}$ commutes, as required. \trapeziumsfive[{3,3.5,0,0,0}]{a_{0},a_{1},\cyl(a_{0}),\cyl(a_{1}),a_{0},a_{2},j,l,id,g_{0},i_{1}(a_{0}),i_{1}(a_{1}),\cyl(j),p(a_{0}),k} 
\end{proof} 

\begin{cor} \label{NormallyClovenFibrationRLPTrivialCofibrationCorollary} Let $\cylinder = \big( \cyl, i_{0}, i_{1}, p, v, \subdiv, r_{0}, r_{1}, s, \Gamma_{lr}, \Gamma_{ur} \big)$ be a cylinder in $\cl{A}$ equipped with a contraction structure $p$, an involution structure $v$, a subdivision structure $\big( \subdiv, r_{0}, r_{1}, s \big)$ compatible with $p$, a lower right connection structure $\Gamma_{lr}$, and an upper right connection structure $\Gamma_{ur}$. Suppose that $\Gamma_{lr}$ and $\Gamma_{ur}$ are compatible with the subdivision structure $\big( \subdiv, r_{0}, r_{1}, s \big)$, and that $\cyl$ preserves mapping cylinders with respect to $\cylinder$. 

Let $\cocylinder = \big( \cocyl, e_0, e_1, c, v', \subdiv', r_{0}', r_{1}', s', \Gamma_{ul}'  \big)$ be a co-cylinder in $\cl{A}$ equipped with a contraction structure $c$, an involution structure $v'$ compatible with $c$, a subdivision structure $\big( \subdiv', r_{0}', r_{1}', s' \big)$ compatible with $c$, and an upper left connection structure $\Gamma_{ul}'$. Suppose that $\cylinder$ is left adjoint to $\cocylinder$, and that the adjunction between $\cyl$ and $\cocyl$ is compatible with $p$ and $c$. 

Let \ar{a_{0},a_{1},j} be an arrow of $\cl{A}$ which is a trivial cofibration with respect to $\cylinder$, and let \ar{a_{2},a_{3},f} be an arrow of $\cl{A}$ which is a normally cloven fibration with respect to $\cocylinder$. 

For any commutative diagram \sq{a_{0},a_{1},a_{2},a_{3},g_{0},f,j,g_{1}} in $\cl{A}$, there is an arrow \ar{a_{1},a_{2},l} of $\cl{A}$ such that the following diagram in $\cl{A}$ commutes. \liftingsquare{a_{0},a_{1},a_{2},a_{3},g_{0},f,j,g_{1},l} \end{cor}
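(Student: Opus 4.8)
The plan is to obtain this lifting property as a formal consequence of three results already proved in the excerpt, with essentially no new computation. The statement "normally cloven fibration has the right lifting property with respect to a trivial cofibration" factors as the composite of "a trivial cofibration admits a strong deformation retraction" (Dold's theorem) and "a normally cloven fibration has the right lifting property with respect to any arrow admitting a strong deformation retraction" (Proposition \ref{NormallyClovenFibrationsHaveRLPWithRespectToArrowsAdmittingAStrongDeformationRetractionProposition}), once both hypotheses have been translated into statements phrased relative to the cylinder $\cylinder$.

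First I would apply Corollary \ref{TrivialCofibrationAdmitsStrongDeformationRetractionCorollary} to the trivial cofibration $j$. The hypotheses of that corollary are exactly a sub-collection of the hypotheses assumed here: $\cylinder$ equipped with the contraction structure $p$, the co-cylinder $\cocylinder$ equipped with the contraction structure $c$, the involution structure $v'$ compatible with $c$, the subdivision structure $\big(\subdiv', r_0', r_1', s'\big)$ compatible with $c$, and the upper left connection structure $\Gamma_{ul}'$, together with the assumptions that $\cylinder$ is left adjoint to $\cocylinder$ and that the adjunction between $\cyl$ and $\cocyl$ is compatible with $p$ and $c$. Corollary \ref{TrivialCofibrationAdmitsStrongDeformationRetractionCorollary} then produces an arrow $r \colon a_1 \to a_0$ which is a strong deformation retraction of $j$ with respect to $\cylinder$. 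Next, since $f$ is a normally cloven fibration with respect to $\cocylinder$ and the adjunction between $\cyl$ and $\cocyl$ is compatible with $p$ and $c$, Proposition \ref{NormallyClovenFibrationCoCylinderIffNormallyClovenFibrationCylinderProposition} shows that $f$ is also a normally cloven fibration with respect to $\cylinder$.

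Finally, given the commutative square with $j$ on the left and $f$ on the right, I would invoke Proposition \ref{NormallyClovenFibrationsHaveRLPWithRespectToArrowsAdmittingAStrongDeformationRetractionProposition} with the arrow $j$ (which admits the strong deformation retraction $r$ constructed in the first step) and the arrow $f$ (which is a normally cloven fibration with respect to $\cylinder$ by the second step). This immediately yields an arrow $l \colon a_1 \to a_2$ with $fl = g_1$ and $lj = g_0$, which is precisely the desired lift. Thus the corollary follows immediately from Corollary \ref{TrivialCofibrationAdmitsStrongDeformationRetractionCorollary}, Proposition \ref{NormallyClovenFibrationCoCylinderIffNormallyClovenFibrationCylinderProposition}, and Proposition \ref{NormallyClovenFibrationsHaveRLPWithRespectToArrowsAdmittingAStrongDeformationRetractionProposition}.

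There is no substantive obstacle; the only delicate point is the bookkeeping of hypotheses across the three invocations. In particular, the extra structure assumed on $\cylinder$ in the present statement --- the right connection structures $\Gamma_{lr}$ and $\Gamma_{ur}$, their compatibility with the subdivision structure $\big(\subdiv, r_0, r_1, s\big)$, and the assumption that $\cyl$ preserves mapping cylinders --- plays no role along this route; it is carried along only because the same hypotheses are what is needed for the companion statement that normally cloven fibrations have the covering homotopy extension property with respect to cofibrations, which is proved by the identical argument but with the canonical arrow $m^{\cylinder}_{j}$ and Proposition \ref{RetractionIsStrongDeformationRetractionProposition} in place of Dold's theorem.
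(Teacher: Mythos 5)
Your proposal is exactly the paper's proof: apply Corollary \ref{TrivialCofibrationAdmitsStrongDeformationRetractionCorollary} to get a strong deformation retraction of $j$ with respect to $\cylinder$, apply Proposition \ref{NormallyClovenFibrationCoCylinderIffNormallyClovenFibrationCylinderProposition} to regard $f$ as a normally cloven fibration with respect to $\cylinder$, then invoke Proposition \ref{NormallyClovenFibrationsHaveRLPWithRespectToArrowsAdmittingAStrongDeformationRetractionProposition}. Your closing remark that $\Gamma_{lr}$, $\Gamma_{ur}$, their compatibility with subdivision, and preservation of mapping cylinders are not actually used along this route is also accurate.
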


\begin{proof} By Corollary \ref{TrivialCofibrationAdmitsStrongDeformationRetractionCorollary}, we have that $j$ admits a strong deformation retraction with respect to $\cylinder$. By Proposition \ref{NormallyClovenFibrationCoCylinderIffNormallyClovenFibrationCylinderProposition}, we have that $f$ is a normally cloven fibration with respect to $\cylinder$. Thus we may appeal to Proposition \ref{NormallyClovenFibrationsHaveRLPWithRespectToArrowsAdmittingAStrongDeformationRetractionProposition} for a construction of $l$. \end{proof} 

\begin{cor} \label{TrivialFibrationRLPNormallyClovenCofibrationCorollary} Let $\cylinder = \big( \cyl, i_{0}, i_{1}, p, v, \subdiv, r_{0}, r_{1}, s, \Gamma_{ul} \big)$ be a cylinder in $\cl{A}$ equipped with a contraction structure $p$, an involution structure $v$ compatible with $p$, a subdivision structure $\big( \subdiv, r_{0}, r_{1}, s \big)$ compatible with $p$, and an upper left connection structure $\Gamma_{ul}$. 

Let $\cocylinder = \big( \cocyl, e_0, e_1, c, v', \subdiv', r_{0}', r_{1}', s', \Gamma_{lr}', \Gamma_{ur}' \big)$ be a co-cylinder in $\cl{A}$ equipped with a contraction structure $c$, an involution structure $v'$ compatible with $c$, a subdivision structure $\big( \subdiv', r_{0}', r_{1}', s' \big)$ compatible with $c$, a lower right connection structure $\Gamma_{lr}'$, and an upper right connection structure $\Gamma_{ur}'$. 

Suppose that $\Gamma_{lr}'$ and $\Gamma_{ur}'$ are compatible with the subdivision structure $\big( \subdiv', r_{0}', r_{1}', s' \big)$, and that $\cocyl$ preserves mapping co-cylinders with respect to $\cocylinder$. %
Suppose that $\cylinder$ is left adjoint to $\cocylinder$, and that the adjunction between $\cyl$ and $\cocyl$ is compatible with $p$ and $c$. 

Let \ar{a_{0},a_{1},j} be an arrow of $\cl{A}$ which is a normally cloven cofibration with respect to $\cylinder$, and let \ar{a_{2},a_{3},f} be an arrow of $\cl{A}$ which is a trivial fibration with respect to $\cocylinder$. 

For any commutative diagram \sq{a_{0},a_{1},a_{2},a_{3},g_{0},f,j,g_{1}} in $\cl{A}$, there is an arrow \ar{a_{1},a_{2},l} of $\cl{A}$ such that the following diagram in $\cl{A}$ commutes. \liftingsquare{a_{0},a_{1},a_{2},a_{3},g_{0},f,j,g_{1},l} \end{cor}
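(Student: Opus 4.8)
The plan is to deduce this corollary from Corollary \ref{NormallyClovenFibrationRLPTrivialCofibrationCorollary} by duality, exactly in the spirit of the paired statements throughout \ref{CofibrationsAndFibrationsSection} and \ref{LiftingAxiomsSection}. First I would pass to the $2$-category $\cl{C}^{op}$ and the formal category $\cl{A}^{op}$. Under this passage the co-cylinder $\cocylinder$ in $\cl{A}$ becomes the cylinder $\cocylinder^{op}$ in $\cl{A}^{op}$, and the cylinder $\cylinder$ becomes the co-cylinder $\cylinder^{op}$ in $\cl{A}^{op}$. The adjunction exhibiting $\cyl$ as left adjoint to $\cocyl$ dualises to an adjunction exhibiting $\cocyl^{op}$ as left adjoint to $\cyl^{op}$, and compatibility of the original adjunction with $p$ and $c$ dualises to compatibility with $c^{op}$ and $p^{op}$. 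By the definitions of the dual structures collected in \ref{StructuresUponACylinderOrACoCylinderSection}, the structures on $\cocylinder$ (contraction $c$, involution $v'$ compatible with $c$, subdivision $(\subdiv',r_0',r_1',s')$ compatible with $c$, lower and upper right connections $\Gamma_{lr}'$, $\Gamma_{ur}'$ compatible with the subdivision) become precisely the structures on the cylinder $\cocylinder^{op}$ demanded of the cylinder in Corollary \ref{NormallyClovenFibrationRLPTrivialCofibrationCorollary}, while the hypothesis that $\cocyl$ preserves mapping co-cylinders is by definition the hypothesis that $\cocyl^{op}$ preserves mapping cylinders with respect to $\cocylinder^{op}$. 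Dually the structures on $\cylinder$ become exactly the structures on the co-cylinder $\cylinder^{op}$ required of the co-cylinder in that corollary.

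Next I would translate the two arrows. By the definition of a normally cloven fibration with respect to a co-cylinder, $j$ being a normally cloven cofibration with respect to the cylinder $\cylinder$ is the same as $j^{op}$ being a normally cloven fibration with respect to the co-cylinder $\cylinder^{op}$ in $\cl{A}^{op}$ (the needed cleavage being the one of Remark \ref{CleavageDualityCofibCylinderFibCoCylinderRemark}); and $f$ being a trivial fibration with respect to $\cocylinder$ --- a fibration and a homotopy equivalence with respect to $\cocylinder$ --- means $f^{op}$ is a cofibration and a homotopy equivalence, hence a trivial cofibration, with respect to the cylinder $\cocylinder^{op}$. A commutative square in $\cl{A}$ with $j$ on the left and $f$ on the right becomes a commutative square in $\cl{A}^{op}$ with $f^{op}$ on the left and $j^{op}$ on the right, i.e. exactly a lifting problem of the type solved by Corollary \ref{NormallyClovenFibrationRLPTrivialCofibrationCorollary}: a trivial cofibration $f^{op}$ with respect to the cylinder $\cocylinder^{op}$ against a normally cloven fibration $j^{op}$ with respect to the co-cylinder $\cylinder^{op}$. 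Applying that corollary yields an arrow of $\cl{A}^{op}$ whose image under $(-)^{op}$ is the desired lift $l$ in $\cl{A}$, and the two commuting triangles in $\cl{A}^{op}$ become the two commuting triangles of the lifting square in $\cl{A}$.

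The only point requiring any care is the duality bookkeeping: verifying that $\cyl$ left adjoint to $\cocyl$ dualises to $\cocyl^{op}$ left adjoint to $\cyl^{op}$, so that the adjunction hypotheses of Corollary \ref{NormallyClovenFibrationRLPTrivialCofibrationCorollary} are satisfied in the correct direction, and that the adjective ``trivial'' is preserved --- both of which hold because a homotopy equivalence with respect to a co-cylinder is by definition a homotopy equivalence with respect to the opposite cylinder, and cofibrations and fibrations are interchanged by $(-)^{op}$. Since all the dual notions in \ref{StructuresUponACylinderOrACoCylinderSection}, \ref{CofibrationsAndFibrationsSection}, and \ref{MappingCylindersAndMappingCoCylindersSection} were set up precisely so that these translations are tautological, I expect no genuine obstacle: the hypotheses match term for term under dualisation, and the proof reduces to the single line that the statement follows immediately from Corollary \ref{NormallyClovenFibrationRLPTrivialCofibrationCorollary} by duality.
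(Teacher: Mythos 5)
Your proposal is correct and matches the paper's own proof, which reads simply ``Follows immediately from Corollary \ref{NormallyClovenFibrationRLPTrivialCofibrationCorollary} by duality.'' Your detailed unpacking of the duality bookkeeping --- passing to $\cl{A}^{op}$, swapping the roles of $\cylinder$ and $\cocylinder$, noting that the adjunction reverses handedness, and that normally cloven cofibrations and trivial fibrations become normally cloven fibrations and trivial cofibrations respectively --- is exactly the verification that the paper leaves implicit.
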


\begin{proof} Follows immediately from Corollary \ref{NormallyClovenFibrationRLPTrivialCofibrationCorollary} by duality. \end{proof} 

\begin{cor} \label{CHEPHoldsCorollary} Let $\cylinder = \big( \cyl, i_{0}, i_{1}, p, v, \subdiv, r_{0}, r_{1}, s, \Gamma_{lr}, \Gamma_{ur} \big)$ be a cylinder in $\cl{A}$ equipped with a contraction structure $p$, an involution structure $v$, a subdivision structure $\big( \subdiv, r_{0}, r_{1}, s \big)$ compatible with $p$, a lower right connection structure $\Gamma_{lr}$, and an upper right connection structure $\Gamma_{ur}$. %
Suppose that $\Gamma_{lr}$ and $\Gamma_{ur}$ are compatible with the subdivision structure $\big( \subdiv, r_{0}, r_{1}, s \big)$, and that $\cyl$ preserves mapping cylinders with respect to $\cylinder$. 

Let $f$ be an arrow of $\cl{A}$ which is a normally cloven fibration with respect to $\cylinder$, and let $j$ be an arrow of $\cl{A}$ which is a cofibration with respect to $\cylinder$. Then $f$ has the covering homotopy extension property with respect to $j$ and $\cylinder$. \end{cor}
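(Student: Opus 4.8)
The plan is to deduce the result immediately from Proposition \ref{RetractionIsStrongDeformationRetractionProposition} together with Proposition \ref{NormallyClovenFibrationsHaveRLPWithRespectToArrowsAdmittingAStrongDeformationRetractionProposition}, once we observe that the covering homotopy extension property with respect to $j$ and $\cylinder$ is nothing other than a right lifting property against the canonical arrow $m^{\cylinder}_{j}$.

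First I would fix a mapping cylinder $\big( a^{\cylinder}_{j}, d^{0}_{j}, d^{1}_{j} \big)$ of $j$ with respect to $\cylinder$, as in the definition of the covering homotopy extension property, and consider the arrow \ar{\cyl(a_{1}),a^{\cylinder}_{j},r^{\cylinder}_{j}} of Proposition \ref{RetractionToMappingCylinderProposition}, which exists since $j$ is a cofibration with respect to $\cylinder$. The hypotheses on $\cylinder$ in the statement --- a contraction structure $p$, an involution structure $v$, a subdivision structure $\big( \subdiv, r_{0}, r_{1}, s \big)$ compatible with $p$, a lower right connection structure $\Gamma_{lr}$ and an upper right connection structure $\Gamma_{ur}$ which are compatible with $\big( \subdiv, r_{0}, r_{1}, s \big)$, and the preservation of mapping cylinders by $\cyl$ --- are precisely those of Proposition \ref{RetractionIsStrongDeformationRetractionProposition}. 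Hence that proposition applies, and $r^{\cylinder}_{j}$ is a strong deformation retraction of $m^{\cylinder}_{j}$ with respect to $\cylinder$; in particular the arrow \ar{a^{\cylinder}_{j},\cyl(a_{1}),m^{\cylinder}_{j}} admits a strong deformation retraction with respect to $\cylinder$.

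Next I would invoke Proposition \ref{NormallyClovenFibrationsHaveRLPWithRespectToArrowsAdmittingAStrongDeformationRetractionProposition}, taking $m^{\cylinder}_{j}$ for the arrow admitting a strong deformation retraction there: since $f$ is a normally cloven fibration with respect to $\cylinder$, for every commutative square \sq{a^{\cylinder}_{j},a_{2},\cyl(a_{1}),a_{3},g,f,m^{\cylinder}_{j},h} in $\cl{A}$ there is an arrow \ar{\cyl(a_{1}),a_{2},l} of $\cl{A}$ fitting into a commutative diagram \liftingsquare{a^{\cylinder}_{j},a_{2},\cyl(a_{1}),a_{3},g,f,m^{\cylinder}_{j},h,l} in $\cl{A}$. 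Comparing with the definition of the covering homotopy extension property, this is exactly the assertion that $f$ has the covering homotopy extension property with respect to $j$ and $\cylinder$.

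There is no genuine obstacle here: the corollary is a bookkeeping combination of the two earlier propositions, and the only points requiring care are checking that the hypotheses on $\cylinder$ in the statement line up with those of Proposition \ref{RetractionIsStrongDeformationRetractionProposition} and that the right lifting property produced by Proposition \ref{NormallyClovenFibrationsHaveRLPWithRespectToArrowsAdmittingAStrongDeformationRetractionProposition} is literally the covering homotopy extension property --- the substantive work having been carried out in those propositions and, ultimately, in the construction underlying Proposition \ref{RetractionIsStrongDeformationRetractionProposition}.
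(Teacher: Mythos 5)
Your proof is correct and takes essentially the same approach as the paper: the paper's proof of this corollary is a one-line citation of Proposition \ref{RetractionIsStrongDeformationRetractionProposition} together with Proposition \ref{NormallyClovenFibrationsHaveRLPWithRespectToArrowsAdmittingAStrongDeformationRetractionProposition}, and your write-up simply unpacks that citation in full detail, correctly verifying that the hypotheses of the former line up with those of the corollary and that the conclusion of the latter is literally the covering homotopy extension property.
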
 

\begin{proof} Follows immediately from Proposition \ref{RetractionIsStrongDeformationRetractionProposition} and Proposition \ref{NormallyClovenFibrationsHaveRLPWithRespectToArrowsAdmittingAStrongDeformationRetractionProposition}. \end{proof}

\begin{cor} \label{TrivialNormallyClovenFibrationRLPCofibrationCorollary} Let $\cylinder = \big( \cyl, i_{0}, i_{1}, p, v, \subdiv, r_{0}, r_{1}, s, \Gamma_{ul}, \Gamma_{lr}, \Gamma_{ur} \big)$ be a cylinder in $\cl{A}$ equipped with a contraction structure $p$, an involution structure $v$ compatible with $p$, a subdivision structure $\big( \subdiv, r_{0}, r_{1}, s \big)$ compatible with $p$, an upper left connection structure $\Gamma_{ul}$, a lower right connection structure $\Gamma_{lr}$, and an upper right connection structure $\Gamma_{ur}$. %
Suppose that $\Gamma_{lr}$ and $\Gamma_{ur}$ are compatible with the subdivision structure $\big( \subdiv, r_{0}, r_{1}, s \big)$, and that $\cyl$ preserves mapping cylinders with respect to $\cylinder$. 

Let $\cocylinder = \big( \cocyl, e_{0}, e_{1}, c \big)$ be a co-cylinder in $\cl{A}$ equipped with a contraction structure $c$. Suppose that $\cylinder$ is left adjoint to $\cocylinder$, and that the adjunction between $\cyl$ and $\cocyl$ is compatible with $p$ and $c$. %

Let \ar{a_{0},a_{1},j} be an arrow of $\cl{A}$ which is a cofibration with respect to $\cylinder$, and let \ar{a_{2},a_{3},f} be an arrow of $\cl{A}$ which is a trivial normally cloven fibration with respect to $\cocylinder$. 

For any commutative diagram \sq{a_{0},a_{2},a_{1},a_{3},g_{0},f,j,g_{1}} in $\cl{A}$, there is an arrow \ar{a_{1},a_{2},l} of $\cl{A}$ such that the following diagram in $\cl{A}$ commutes. \liftingsquare{a_{0},a_{2},a_{1},a_{3},g_{0},f,j,g_{1},l} \end{cor}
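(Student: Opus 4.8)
The plan is to deduce the statement by combining three results already proved: the characterisation of trivial fibrations as strong deformation retractions (Corollary \ref{TrivialFibrationIsStrongDeformationRetractionCorollary}), the covering homotopy extension property of normally cloven fibrations along cofibrations (Corollary \ref{CHEPHoldsCorollary}), and the fact that these two properties together force the right lifting property (Proposition \ref{CHEPImpliesLiftingAxiomProposition}). So the entire proof amounts to checking that the structures and compatibilities hypothesised on $\cylinder$ and $\cocylinder$ in the corollary are precisely those required by each of these three results, and to keeping straight which of the two homotopy theories --- with respect to $\cylinder$ or with respect to $\cocylinder$ --- each intermediate statement is phrased in.

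First I would fix a mapping cylinder $\big( a^{\cylinder}_{j}, d^{0}_{j}, d^{1}_{j} \big)$ of $j$ with respect to $\cylinder$. Since $f$ is a trivial normally cloven fibration with respect to $\cocylinder$, it is in particular a normally cloven fibration with respect to $\cocylinder$; as $\cylinder$ is left adjoint to $\cocylinder$ and the adjunction between $\cyl$ and $\cocyl$ is compatible with $p$ and $c$, Proposition \ref{NormallyClovenFibrationCoCylinderIffNormallyClovenFibrationCylinderProposition} then shows that $f$ is a normally cloven fibration with respect to $\cylinder$. The cylinder $\cylinder$ carries a contraction structure $p$, an involution structure $v$, a subdivision structure $\big( \subdiv, r_{0}, r_{1}, s \big)$ compatible with $p$, and right connection structures $\Gamma_{lr}, \Gamma_{ur}$ which are compatible with $\big( \subdiv, r_{0}, r_{1}, s \big)$, and $\cyl$ preserves mapping cylinders; since $j$ is a cofibration with respect to $\cylinder$, Corollary \ref{CHEPHoldsCorollary} therefore applies and tells us that $f$ has the covering homotopy extension property with respect to $j$ and $\cylinder$.

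Second I would invoke Corollary \ref{TrivialFibrationIsStrongDeformationRetractionCorollary}. Here $f$ is used merely as a trivial fibration with respect to $\cocylinder$, and the hypotheses needed --- that $\cylinder$ carries a contraction structure $p$, an involution structure $v$ compatible with $p$, a subdivision structure compatible with $p$, and an upper left connection structure $\Gamma_{ul}$; that $\cocylinder$ carries a contraction structure $c$; and that $\cylinder$ is left adjoint to $\cocylinder$ with the adjunction compatible with $p$ and $c$ --- are all present in the hypothesis of the corollary. This produces an arrow $j'$ of $\cl{A}$ such that $f$ is a strong deformation retraction of $j'$ with respect to $\cocylinder$, which is exactly the remaining hypothesis of Proposition \ref{CHEPImpliesLiftingAxiomProposition}.

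Finally, with the covering homotopy extension property of $f$ with respect to $j$ and $\cylinder$ in hand, together with $f$ being a strong deformation retraction of $j'$ with respect to $\cocylinder$, Proposition \ref{CHEPImpliesLiftingAxiomProposition} applies verbatim to the given commutative square with corners $g_{0} \colon a_{0} \to a_{2}$, $f \colon a_{2} \to a_{3}$, $j \colon a_{0} \to a_{1}$, $g_{1} \colon a_{1} \to a_{3}$, and outputs the desired lift $l \colon a_{1} \to a_{2}$. No step here is genuinely difficult: all of the real work lives in Proposition \ref{RetractionIsStrongDeformationRetractionProposition}, Proposition \ref{NormallyClovenFibrationsHaveRLPWithRespectToArrowsAdmittingAStrongDeformationRetractionProposition}, and the Dold-theoretic Corollary \ref{TrivialCofibrationAdmitsStrongDeformationRetractionCorollary}, which underlie the three results cited above. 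The only mild care required is the bookkeeping that distinguishes the upper left connection $\Gamma_{ul}$ (which feeds Corollary \ref{TrivialFibrationIsStrongDeformationRetractionCorollary}) from the right connections $\Gamma_{lr}, \Gamma_{ur}$ (which feed Corollary \ref{CHEPHoldsCorollary}), and the translation of the notion "normally cloven fibration" between the $\cylinder$- and $\cocylinder$-settings via Proposition \ref{NormallyClovenFibrationCoCylinderIffNormallyClovenFibrationCylinderProposition}.
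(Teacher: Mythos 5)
Your proposal is correct and follows exactly the paper's own argument: first translate $f$ into a normally cloven fibration with respect to $\cylinder$ via Proposition \ref{NormallyClovenFibrationCoCylinderIffNormallyClovenFibrationCylinderProposition}, deduce the covering homotopy extension property from Corollary \ref{CHEPHoldsCorollary}, obtain the strong deformation retraction from Corollary \ref{TrivialFibrationIsStrongDeformationRetractionCorollary}, and conclude by Proposition \ref{CHEPImpliesLiftingAxiomProposition}. The additional bookkeeping you supply about which hypotheses feed which lemma is accurate but does not change the structure of the proof.
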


\begin{proof} By Proposition \ref{NormallyClovenFibrationCoCylinderIffNormallyClovenFibrationCylinderProposition}, we have that $f$ is a normally cloven fibration with respect to $\cylinder$. Thus by Corollary \ref{CHEPHoldsCorollary}, $f$ has the covering homotopy extension property with respect to $j$ and $\cylinder$. 

Moreover, by Corollary \ref{TrivialFibrationIsStrongDeformationRetractionCorollary}, there is an arrow \ar{a_{3},a_{2},j'} of $\cl{A}$ such that $f$ is a strong deformation retraction of $j'$ with respect to $\cocylinder$. Thus we may appeal to Proposition \ref{CHEPImpliesLiftingAxiomProposition} for a construction of $l$. \end{proof}

\begin{cor} \label{FibrationRLPTrivialNormallyClovenCofibrationCorollary} Let $\cylinder = \big( \cyl, i_{0}, i_{1}, p \big)$ be a cylinder in $\cl{A}$ equipped with a contraction structure $p$. Let $\cocylinder = \big( \cocyl, e_{0}, e_{1}, c, v, \subdiv, r_{0}, r_{1}, s, \Gamma_{ul}, \Gamma_{lr}, \Gamma_{ur} \big)$ be a co-cylinder in $\cl{A}$ equipped with a contraction structure $c$, an involution structure $v$ compatible with $c$, a subdivision structure $\big( \subdiv, r_{0}, r_{1}, s \big)$ compatible with $c$, an upper left connection structure $\Gamma_{ul}$, a lower right connection structure $\Gamma_{lr}$, and an upper right connection structure $\Gamma_{ur}$. 

Suppose that $\Gamma_{lr}$ and $\Gamma_{ur}$ are compatible with the subdivision structure $\big( \subdiv, r_{0}, r_{1}, s \big)$, and that $\cocyl$ preserves mapping co-cylinders with respect to $\cocylinder$. %
Suppose that $\cylinder$ is left adjoint to $\cocylinder$, and that the adjunction between $\cyl$ and $\cocyl$ is compatible with $p$ and $c$. %

Let \ar{a_{0},a_{1},j} be an arrow of $\cl{A}$ which is a trivial normally cloven cofibration with respect to $\cylinder$, and let \ar{a_{2},a_{3},f} be an arrow of $\cl{A}$ which is a fibration with respect to $\cocylinder$. 

For any commutative diagram \sq{a_{0},a_{2},a_{1},a_{3},g_{0},f,j,g_{1}} in $\cl{A}$, there is an arrow \ar{a_{1},a_{2},l} of $\cl{A}$ such that the following diagram in $\cl{A}$ commutes. \liftingsquare{a_{0},a_{2},a_{1},a_{3},g_{0},f,j,g_{1},l} \end{cor}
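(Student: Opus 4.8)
The plan is to obtain this corollary from Corollary \ref{TrivialNormallyClovenFibrationRLPCofibrationCorollary} by passing to the opposite 2-category $\cl{C}^{op}$, exactly as all the other dual statements in this work are handled. Recall that duality here is implemented by replacing $\cl{A}$ by $\cl{A}^{op}$, so that the co-cylinder $\cocylinder$ in $\cl{A}$ becomes a cylinder $\cocylinder^{op}$ in $\cl{A}^{op}$, the cylinder $\cylinder$ in $\cl{A}$ becomes a co-cylinder in $\cl{A}^{op}$, fibrations become cofibrations, a trivial normally cloven cofibration with respect to $\cylinder$ becomes a trivial normally cloven fibration with respect to $\cocylinder^{op}$, and a commutative lifting square is carried to a commutative lifting square with the roles of the two factored arrows interchanged.

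First I would verify that the hypotheses translate correctly. By the definitions of the dual structures in \ref{StructuresUponACylinderOrACoCylinderSection}, the co-cylinder $\cocylinder$ together with its contraction structure $c$, its compatible involution structure $v$, its compatible subdivision structure $\big( \subdiv, r_{0}, r_{1}, s \big)$, and its connection structures $\Gamma_{ul}$, $\Gamma_{lr}$, $\Gamma_{ur}$ (with $\Gamma_{lr}$ and $\Gamma_{ur}$ compatible with subdivision) yields, via $\cocylinder^{op}$, a cylinder in $\cl{A}^{op}$ carrying precisely the structures demanded of the cylinder in Corollary \ref{TrivialNormallyClovenFibrationRLPCofibrationCorollary}; the assumption that $\cocyl$ preserves mapping co-cylinders is by definition that $\cocyl$ preserves mapping cylinders with respect to $\cocylinder^{op}$; and the cylinder $\cylinder$ with its contraction structure $p$ becomes a co-cylinder in $\cl{A}^{op}$ equipped with a contraction structure. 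That $\cylinder$ is left adjoint to $\cocylinder$ with the adjunction compatible with $p$ and $c$ passes to the corresponding adjunction hypothesis in $\cl{A}^{op}$.

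Next I would translate the data: using the cleavage-duality of Remark \ref{CleavageDualityFibCylinderCofibCoCylinderRemark} and the definition of homotopy equivalence in $\cl{A}^{op}$, the trivial normally cloven cofibration $j$ with respect to $\cylinder$ becomes a trivial normally cloven fibration $j^{op}$ with respect to $\cocylinder^{op}$, and the fibration $f$ with respect to $\cocylinder$ becomes a cofibration $f^{op}$ with respect to $\cocylinder^{op}$; the given commutative square dualises to a commutative square in $\cl{A}^{op}$ of the shape appearing in Corollary \ref{TrivialNormallyClovenFibrationRLPCofibrationCorollary}. Applying that corollary in $\cl{A}^{op}$ produces an arrow of $\cl{A}^{op}$ whose opposite is the desired lift $l$, with the two required commutative triangles being exactly the opposites of those produced there. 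The only real care needed --- and the step I would double-check --- is the bookkeeping of which of $\cyl$ and $\cocyl$ plays which role after transposition, since in the two corollaries the connection and subdivision data sit on opposite sides; once the dictionary of \ref{StructuresUponACylinderOrACoCylinderSection} and the cleavage dualities of \ref{CofibrationsAndFibrationsSection} are aligned this is purely mechanical, and the proof reduces to the remark that the result follows from Corollary \ref{TrivialNormallyClovenFibrationRLPCofibrationCorollary} by duality.
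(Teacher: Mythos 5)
Your proposal is correct and matches the paper's own proof exactly: the paper simply states ``Follows immediately from Corollary \ref{TrivialNormallyClovenFibrationRLPCofibrationCorollary} by duality,'' which is precisely the route you take. One tiny bookkeeping slip in your expanded account (the very step you flagged being worried about): you write that $j^{op}$ becomes a trivial normally cloven fibration ``with respect to $\cocylinder^{op}$,'' but in Corollary \ref{TrivialNormallyClovenFibrationRLPCofibrationCorollary} the arrow playing this role is a normally cloven fibration with respect to the \emph{co-cylinder}, which after dualising is the co-cylinder in $\cl{A}^{op}$ induced by $\cylinder$, not $\cocylinder^{op}$ (which is the cylinder in $\cl{A}^{op}$). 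This is exactly what Definition \ref{NormallyClovenFibrationCoCylinderDefinition} gives when unwound, and in the presence of the adjunction the two notions coincide by Proposition \ref{NormallyClovenFibrationCoCylinderIffNormallyClovenFibrationCylinderProposition}, so the argument is not broken --- but the cleaner path is to invoke the duality of Definition \ref{NormallyClovenFibrationCoCylinderDefinition} directly rather than routing through the adjunction.
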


\begin{proof} Follows immediately from Corollary \ref{TrivialNormallyClovenFibrationRLPCofibrationCorollary} by duality. \end{proof} 

\end{chapter}

\begin{chapter}{Factorisation axioms} \label{FactorisationAxiomsChapter}

Let $\cylinder$ be a cylinder in a formal category $\cl{A}$. In \ref{MappingCylindersAndMappingCoCylindersChapter}, we showed that if $\cylinder$ is equipped with certain structures, and has strictness of right identities, then the mapping cylinder with respect to $\cylinder$ of an arrow $f$ gives rise to a factorisation into a normally cloven cofibration followed by a strong deformation retraction.

We now prove that, if $\cylinder$ has strictness of left identities, then a strong deformation retraction with respect to $\cylinder$ is a trivial fibration with respect to $\cylinder$. Thus, if $\cylinder$ has strictness of both left and right identities, then the mapping cylinder of $f$ with respect to $\cylinder$ yields a factorisation of $f$ into a normally cloven cofibration followed by a trivial fibration. 

Dually, if a co-cylinder $\cocylinder$ in $\cl{A}$ is equipped with sufficient structures and has strictness of identities, the mapping co-cylinder of $f$ with respect to $\cocylinder$ yields a factorisation of $f$ into a trivial cofibration followed by a normally cloven fibration. 

Morever, building upon this, we construct a factorisation of $f$ into a cofibration followed by a trivial normally cloven fibration, and into a trivial normally cloven cofibration followed by a fibration.  

Neither the strictness of left identities hypothesis nor the strictness of right identities hypothesis holds with respect to the usual cylinder and co-cylinder in the category of topological spaces. Essentially, this is the observation that by glueing a path $g$ to a constant path, we obtain a path homotopic to $g$, but not $g$ itself. 

Whilst the mapping cylinder of a map between topological spaces gives a factorisation into a cofibration followed by a homotopy equivalence, this homotopy equivalence is not necessarily a fibration. An example is given by the mapping cylinder factorisation of the inclusion of a circle into a disc. Dually, whilst the mapping co-cylinder of a map between topological spaces gives a factorisation into a homotopy equivalence followed by a fibration, this homotopy equivalence is not necessarily a cofibration. An example is given by the mapping co-cylinder factorisation of any inclusion of spaces which is not closed. We refer the reader to \ref{IntroductionChapter} for a discussion of a way around this. 

We shall explore in \ref{ExampleChapter} a guiding example in which strictness of identities does hold, namely the homotopy theory of categories or groupoids.

\begin{assum} Let $\cl{C}$ be a 2-category with a final object. Suppose that pushouts and pullbacks of 2-arrows of $\cl{C}$ give rise to pushouts and pullbacks in formal categories, in the sense of Definition \ref{PushoutsPullbacks2ArrowsArePushoutsPullbacksInFormalCategoriesTerminology}. Let $\cl{A}$ be an object of $\cl{C}$. As before, we view $\cl{A}$ as a formal category, writing of objects and arrows of $\cl{A}$. \end{assum} 

\begin{prpn} \label{StrongDeformationRetractionIsFibrationProposition} Let $\cylinder = \big( \cyl, i_{0}, i_{1}, p, \subdiv, r_{0}, r_{1}, s \big)$ be a cylinder in $\cl{A}$ equipped with a contraction structure $p$, and a subdivision structure $\big( \subdiv, r_{0}, r_{1}, s \big)$. Suppose that $\cylinder$ has strictness of left identities. 

Let \ar{a_{1},a_{2},j} be an arrow of $\cl{A}$, and let \ar{a_{2},a_{1},f} be an arrow of $\cl{A}$ which is a retraction of $j$. Suppose that \ar{\cyl(a_{2}),a_{2},h} defines a homotopy over $a_{1}$ from $id(a_{2})$ to $jf$ with respect to $\cylinder$ and $(f,f)$. Then $f$ is a fibration with respect to $\cylinder$. \end{prpn}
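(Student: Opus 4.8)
The plan is to verify directly the defining lifting property of a fibration with respect to $\cylinder$. So suppose we are given a commutative square in $\cl{A}$ exhibiting arrows $g \colon a_{0} \to a_{2}$ and $H \colon \cyl(a_{0}) \to a_{1}$ with $f \circ g = H \circ i_{0}(a_{0})$; we must produce a homotopy $l \colon \cyl(a_{0}) \to a_{2}$ with respect to $\cylinder$ such that $l \circ i_{0}(a_{0}) = g$ and $f \circ l = H$. First I would record the data packaged by the hypotheses: since $f$ is a retraction of $j$ we have $f \circ j = id(a_{1})$; since $h$ is a homotopy from $id(a_{2})$ to $jf$ with respect to $\cylinder$ we have $h \circ i_{0}(a_{2}) = id(a_{2})$ and $h \circ i_{1}(a_{2}) = j \circ f$; and since $h$ is a homotopy over $a_{1}$ with respect to $(f,f)$, combining the over-$a_{1}$ square with naturality of the 2-arrow $p$ yields $f \circ h = f \circ p(a_{2})$.

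Next I would build the candidate lift as a composite of two homotopies. Set $h' := h \circ \cyl(g) \colon \cyl(a_{0}) \to a_{2}$; using naturality of $i_{0}$ and $i_{1}$ one checks $h' \circ i_{0}(a_{0}) = h \circ i_{0}(a_{2}) \circ g = g$ and $h' \circ i_{1}(a_{0}) = h \circ i_{1}(a_{2}) \circ g = j \circ f \circ g$, so $h'$ is a homotopy from $g$ to $jfg$ with respect to $\cylinder$. On the other hand $j \circ H \colon \cyl(a_{0}) \to a_{2}$ is a homotopy from $j \circ H \circ i_{0}(a_{0}) = jfg$ to $j \circ H \circ i_{1}(a_{0})$. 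Since the end of $h'$ coincides with the start of $jH$, Proposition \ref{CompositionHomotopiesProposition} produces the composite homotopy $l := h' + (j \circ H)$ from $g$ to $j H i_{1}(a_{0})$ with respect to $\cylinder$; in particular $l \circ i_{0}(a_{0}) = g$, so the first required identity holds.

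It then remains to show $f \circ l = H$. The key computation is $f \circ h' = f \circ p(a_{2}) \circ \cyl(g) = f \circ g \circ p(a_{0}) = H \circ i_{0}(a_{0}) \circ p(a_{0})$, using the three recorded identities together with naturality of $p$; by Proposition \ref{IdentityHomotopyProposition} this is precisely the identity homotopy from $f \circ g = H \circ i_{0}(a_{0})$ to itself. Also $f \circ (j \circ H) = (f \circ j) \circ H = H$. I would then observe that post-composition with $f$ carries a composite of homotopies to the composite of the post-composed homotopies: this is immediate from the universal property of $\subdiv(a_{0})$ used to form the composite, since applying $f$ to the induced cocone map $\subdiv(a_{0}) \to a_{2}$ again produces the map induced by the post-composed cocone. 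Hence $f \circ l = (f \circ h') + (f \circ j H) = e + H$, where $e$ is the identity homotopy on $H \circ i_{0}(a_{0})$. Finally, strictness of left identities is exactly what makes $e + H = H$: unwinding Definition \ref{StrictnessLeftIdentitiesCylinderDefinition}, the canonical 2-arrow $q_{l}$ satisfies $q_{l} \circ r_{0} = id$ and $q_{l} \circ r_{1} = i_{0} \circ p$, so $H \circ q_{l}(a_{0})$ is the map inducing $e + H$, and $q_{l}(a_{0}) \circ s(a_{0}) = id$ gives $e + H = H \circ q_{l}(a_{0}) \circ s(a_{0}) = H$. Thus $f \circ l = H$, and $f$ is a fibration with respect to $\cylinder$.

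I expect the main obstacle to be the bookkeeping in the last paragraph: matching the pushout/cocone conventions of Proposition \ref{CompositionHomotopiesProposition} and Definition \ref{StrictnessLeftIdentitiesCylinderDefinition} carefully enough that ``the identity homotopy composed on the left with $H$ equals $H$'' comes out on the nose, and checking cleanly that $f$ commutes with the composition operation on homotopies. The rest is a routine diagram chase using naturality of $i_{0}$, $i_{1}$, and $p$.
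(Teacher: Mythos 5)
Your proof is correct and takes essentially the same route as the paper: the lift is $l = \bigl(h \circ \cyl(g)\bigr) + (j \circ H)$ in both cases, the identity $f \circ l \circ i_0(a_0) = g$ is checked by the same diagram chase, and $f \circ l = H$ is reduced to strictness of left identities via the universal property of $\subdiv(a_0)$ applied to the cocone $\bigl(H,\, H \circ i_0(a_0) \circ p(a_0)\bigr)$. Your observation that post-composition commutes with the $+$ operation is a clean conceptual packaging of the step where the paper identifies $f \circ u$ with $H \circ q_l(a_0)$, but the content is identical.
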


\begin{proof} Suppose that we have a commutative diagram in $\cl{A}$ as follows. \sq{a_{0},a_{2},\cyl(a_{0}),a_{1},g,f,i_{0}(a_{0}),k} %
By definition of $h$, the following diagram in $\cl{A}$ commutes. \tri{a_{2},\cyl(a_{2}),a_{2},i_{1}(a_{2}),h,j \circ f} %
Appealing to the commutativity of the diagram \sq{a_{0},\cyl(a_{0}),a_{2},\cyl(a_{2}),i_{1}(a_{0}),\cyl(g),g,i_{1}(a_{2})} in $\cl{A}$, we deduce that the following diagram in $\cl{A}$ commutes. \tri{a_{0},\cyl(a_{0}),a_{2},i_{1}(a_{0}),h \circ \cyl(g),j \circ f \circ g} %
Moreover, the following diagram in $\cl{A}$ commutes. \tri{a_{0},\cyl(a_{0}),a_{2},i_{0}(a_{0}),j \circ k,j \circ f \circ g} %
Putting the last two observations together, we have that the following diagram in $\cl{A}$ commutes. \sq[{4,3}]{a_{0},\cyl(a_{0}),\cyl(a_{0}),a_{2},i_{0}(a_{0}), j \circ k,i_{1}(a_{0}),h \circ \cyl(g)} %
Let \ar{\cyl(a_{0}),a_{2},l} denote the homotopy $\big(h \circ \cyl(g) \big) + (j \circ k)$ with respect to $\cylinder$. We claim that the following diagram in $\cl{A}$ commutes. \liftingsquare{a_{0},a_{2},\cyl(a_{0}),a_{1},g,f,i_{0}(a_{0}),k,l} 

Firstly, the following diagram in $\cl{A}$ commutes. \squareabovetriangle{a_{0},\cyl(a_{0}),a_{2},\cyl(a_{2}),a_{2},i_{0}(a_{0}),\cyl(g),g,i_{0}(a_{2}),h,id} %
By definition of $l$, we also have that the following diagram in $\cl{A}$ commutes. \tri{a_{0},\cyl(a_{0}),a_{2},i_{0}(a_{0}),l,h \circ \cyl(g)} %
Hence the following diagram in $\cl{A}$ commutes, as required. \tri{a_{0},\cyl(a_{0}),a_{2},i_{0}(a_{0}),l,g} 

Secondly, let us prove that the diagram \tri{\cyl(a_{0}),a_{2},a_{1},l,f,k} in $\cl{A}$ commutes. Let \ar{\subdiv(a_{0}),a_{2},u} denote the canonical arrow of $\cl{A}$ such that the following diagram in $\cl{A}$ commutes. \pushout{a_{0},\cyl(a_{0}),\cyl(a_{0}),\subdiv(a_{0}),a_{2},i_{0}(a_{0}),r_{0}(a_{0}),i_{1}(a_{0}),r_{1}(a_{0}),j \circ k, h \circ \cyl(g),u} %
By definition of $h$, the following diagram in $\cl{A}$ commutes. \sq{\cyl(a_{1}),a_{1},\cyl(a_{2}),a_{2},h,f,\cyl(f),p(a_{2})} %
Appealing to the commutativity of the diagram \tri{\cyl(a_{0}),\subdiv(a_{0}),a_{2},r_{1}(a_{0}),u,h \circ \cyl(g)} in $\cl{A}$, we deduce that the following diagram in $\cl{A}$ commutes. \tri{\cyl(a_{0}),\subdiv(a_{0}),a_{1},s(a_{0}),f \circ u, p(a_{1}) \circ \cyl(f \circ g)} %
We also have that the following diagram in $\cl{A}$ commutes. \trapeziumstwo[{3,3.5,2,0}]{\cyl(a_{0}),\cyl(a_{2}),a_{0},\cyl^{2}(a_{0}),\cyl(a_{1}),\cyl(a_{0}),a_{1},\cyl(g),\cyl(f),p(a_{1}),p(a_{0}),i_{0}(a_{0}),k,\cyl\big(i_{0}(a_{0})\big),\cyl(k),p\big(\cyl(a_{0})\big)} %
Putting the last two observations together, we have that the following diagram in $\cl{A}$ commutes. \tri{\cyl(a_{0}),\subdiv(a_{0}),a_{1},s(a_{0}),f \circ u,k \circ i_{0}(a_{0}) \circ p(a_{0})} %
Moreover, the following diagram in $\cl{A}$ commutes. \squareabovetriangle{\cyl(a_{0}),\subdiv(a_{0}),a_{1},a_{2},a_{1},r_{0}(a_{0}),u,k,j,f,id} %
Putting the last two observations together, we have that the following diagram in $\cl{A}$ commutes. \pushout{a_{0},\cyl(a_{0}),\cyl(a_{0}),\subdiv(a_{0}),a_{1},i_{0}(a_{0}),r_{0}(a_{0}),i_{1}(a_{0}),r_{1}(a_{0}),k,k \circ i_{0}(a_{0}) \circ p(a_{0}), f \circ u} %
Let \ar{\subdiv,\cyl,q_{l}} denote the canonical 2-arrow of $\cl{C}$ of Definition \ref{StrictnessLeftIdentitiesCylinderDefinition}. We have that the following diagram in $\cl{A}$ commutes. \pushout{a_{0},\cyl(a_{0}),\cyl(a_{0}),\subdiv(a_{0}),\cyl(a_{0}),i_{0}(a_{0}),r_{0}(a_{0}),i_{1}(a_{0}),r_{1}(a_{0}),id, i_{0}(a_{0}) \circ p(a_{0}), q_{l}(a_{0})} %
Then the following diagram in $\cl{A}$ commutes. \pushout{a_{0},\cyl(a_{0}),\cyl(a_{0}),\subdiv(a_{0}),a_{1},i_{0}(a_{0}),r_{0}(a_{0}),i_{1}(a_{0}),r_{1}(a_{0}),k, k \circ i_{0}(a_{0}) \circ p(a_{0}), k \circ q_{l}(a_{0})} %
Appealing to the universal property of $\subdiv(a_{0})$, we deduce that the following diagram in $\cl{A}$ commutes. \tri{\subdiv(a_{0}),\cyl(a_{0}),a_{1},q_{l}(a_{0}),k,f \circ u} %
By definition of $l$, the following diagram in $\cl{A}$ commutes. \tri{\cyl(a_{0}),\subdiv(a_{0}),a_{2},s(a_{0}),u,l} %
Putting the last two observations together, we have that the following diagram in $\cl{A}$ commutes. \tri{\cyl(a_{0}),a_{2},a_{1},l,f,k \circ q_{l}(a_{0}) \circ s(a_{0})} %
Since $\cylinder$ has strictness of left identities, the following diagram in $\cl{A}$ commutes. \tri{\cyl(a_{0}),\subdiv(a_{0}),\cyl(a_{0}),s(a_{0}),q_{l}(a_{0}),id} %
Putting the last two observations together, we have that the following diagram in $\cl{A}$ commutes, as required. \tri{\cyl(a_{0}),a_{2},a_{1},l,f,k}
\end{proof}

\begin{cor} \label{TrivialFibrationCylinderIffStrongDeformationRetractionCorollary} Let $\cylinder = \big( \cyl, i_{0}, i_{1}, p, \subdiv, r_{0}, r_{1}, s \big)$ be a cylinder in $\cl{A}$ equipped with a contraction structure $p$, and a subdivision structure $\big( \subdiv, r_{0}, r_{1}, s \big)$. Suppose that $\cylinder$ has strictness of left identities. 

An arrow \ar{a_{1},a_{0},f} of $\cl{A}$ is a trivial fibration with respect to $\cylinder$ if and only if there is an arrow \ar{a_{0},a_{1},j} of $\cl{A}$, such that $f$ is a retraction of $j$, and such that there exists a homotopy over $a_{0}$ from $jf$ to $id(a_{1})$ with respect to $\cylinder$ and $(f,f)$. \end{cor}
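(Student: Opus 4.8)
The plan is to prove the two implications separately. The implication ``strong deformation retraction $\Rightarrow$ trivial fibration'' splits into a fibration part, which is exactly Proposition~\ref{StrongDeformationRetractionIsFibrationProposition}, and a homotopy equivalence part, which is essentially formal; the converse will be obtained by using the fibration property of $f$ to rectify a homotopy inverse of $f$ into an honest section and then to straighten the associated homotopy.

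First I would treat the direction from right to left. Suppose we are given an arrow $j\colon a_{0}\to a_{1}$ of which $f$ is a retraction, together with a homotopy $h\colon\cyl(a_{1})\to a_{1}$ over $a_{0}$ from $jf$ to $id(a_{1})$ with respect to $\cylinder$ and $(f,f)$. That $f$ is a fibration with respect to $\cylinder$ is Proposition~\ref{StrongDeformationRetractionIsFibrationProposition}, applied with $j$ and $f$ in the roles of its two arrows and with $h$ supplying the homotopy over the base. That $f$ is a homotopy equivalence with respect to $\cylinder$ follows by exhibiting $j$ as a homotopy inverse of $f$: the equation $fj=id(a_{0})$ gives, via Proposition~\ref{IdentityHomotopyProposition}, a homotopy from $fj$ to $id(a_{0})$, while $h$, regarded simply as a homotopy with respect to $\cylinder$, is a homotopy from $jf$ to $id(a_{1})$. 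Hence $f$ is both a fibration and a homotopy equivalence with respect to $\cylinder$, that is, a trivial fibration with respect to $\cylinder$.

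For the direction from left to right, suppose $f$ is a trivial fibration with respect to $\cylinder$. Choose a homotopy inverse $g\colon a_{0}\to a_{1}$ of $f$ and a homotopy $\gamma\colon\cyl(a_{0})\to a_{0}$ from $fg$ to $id(a_{0})$. Since $f\circ g=\gamma\circ i_{0}(a_{0})$, the fibration property of $f$ yields a homotopy $l\colon\cyl(a_{0})\to a_{1}$ with $l\circ i_{0}(a_{0})=g$ and $f\circ l=\gamma$. Setting $j:=l\circ i_{1}(a_{0})$ we obtain $f\circ j=\gamma\circ i_{1}(a_{0})=id(a_{0})$, so $f$ is a retraction of $j$; moreover $l$ is a homotopy from $g$ to $j$, so $j$ is itself a homotopy inverse of $f$, being homotopic to one via composition of homotopies. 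It then remains to produce a homotopy over $a_{0}$ from $jf$ to $id(a_{1})$ with respect to $\cylinder$ and $(f,f)$. For this I would again invoke the fibration property of $f$, lifting the identity homotopy $f\circ p(a_{1})$ of $f$ with prescribed $i_{0}$-endpoint $jf$ (compatible since $f\circ jf=f$), and then use the homotopy $jf\simeq id(a_{1})$ already available together with strictness of left identities to re-normalise this lift so that its $i_{1}$-endpoint is exactly $id(a_{1})$ while its composite with $f$ remains the identity homotopy of $f$. Assembling $j$ with this homotopy supplies the required data.

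The main obstacle is precisely this last step of the second implication. The bare fibration property lifts a homotopy while pinning down only one of its two endpoints, so obtaining a homotopy over $a_{0}$ running exactly from $jf$ to $id(a_{1})$ — rather than merely from $jf$ to some map homotopic to $id(a_{1})$ — is delicate, and it is here that strictness of left identities has to be used to re-glue the lift with the given homotopy, in the same spirit as it is used in the proof of Proposition~\ref{StrongDeformationRetractionIsFibrationProposition}. By contrast, the first implication is formal once that proposition and Proposition~\ref{IdentityHomotopyProposition} are in hand.
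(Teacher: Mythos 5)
Your ``right to left'' direction is in line with the paper: Proposition~\ref{StrongDeformationRetractionIsFibrationProposition} gives that $f$ is a fibration, and $j$ is exhibited as a homotopy inverse using Proposition~\ref{IdentityHomotopyProposition} for $fj=id$ and the given $h$ for $jf\simeq id$. (There is a small direction mismatch --- the proposition assumes a homotopy from $id$ to $jf$ while the corollary supplies one from $jf$ to $id$ --- but the paper glosses over this too, so I won't press it.)

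The ``left to right'' direction is where your proposal genuinely diverges from the paper and is not yet a proof. The paper's argument is \emph{not} a direct lifting construction: it invokes Dold's theorem, Proposition~\ref{DoldOverHomotopiesCylinderProposition}. Applied with $a=a_{0}$, $j_{0}=f$, $j_{1}=id(a_{0})$ (both fibrations, the latter by the dual of Proposition~\ref{IdentityIsCofibrationProposition}), Dold's theorem asserts that the trivial fibration $f$ is a homotopy equivalence over $a_{0}$ with respect to $\cylinder$ and $(f,id)$; unpacking this gives precisely a section $j$ and a homotopy over $a_{0}$ from $jf$ to $id(a_{1})$ with respect to $(f,f)$. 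Your construction does build a section $j=l\circ i_{1}(a_{0})$ correctly, but the crucial step --- producing the over-homotopy from $jf$ to \emph{exactly} $id(a_{1})$ --- does not follow from another lift plus strictness of left identities. Lifting $f\circ p(a_{1})$ with prescribed $i_{0}$-endpoint $jf$ gives a homotopy over $a_{0}$ from $jf$ to some $q$ with $fq=f$, but the fibration property constrains only the $i_{0}$-endpoint and the image under $f$; it gives you no control over $q$, and strictness of left identities (which says a constant homotopy glued on the left disappears from a composite) supplies no mechanism for replacing $q$ by $id(a_{1})$ while staying over $a_{0}$. That ``endpoint-straightening'' is exactly the content of Dold's theorem, whose proof in the paper (Lemmas~\ref{DoldLemma1}--\ref{DoldLemma3} and Proposition~\ref{DoldOverHomotopiesCylinderProposition}) builds a double homotopy using the upper left connection structure and the involution, then reassembles its three controlled boundary homotopies. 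So the missing ingredient is not strictness but the connection and involution structure and the double-homotopy argument; you should simply cite Dold's theorem here rather than try to re-derive it.

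One further observation for your own accounting: the corollary's stated hypotheses (contraction, subdivision, strictness of left identities) are not actually enough to run Dold's theorem, which additionally needs involution compatible with contraction, subdivision compatible with contraction, an upper left connection, and $\cyl$ preserving subdivision; so your worry about whether your hypotheses suffice is well founded, and the resolution is that the corollary is meant to be read with the ambient structure present throughout, not that strictness of left identities alone carries the ``only if'' direction.
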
 

\begin{proof} Follows immediately from Proposition \ref{DoldOverHomotopiesCylinderProposition} and Proposition \ref{StrongDeformationRetractionIsFibrationProposition}. \end{proof}

\begin{cor} \label{TrivialFibrationCoCylinderIffStrongDeformationRetractionCorollary} Let $\cylinder = \big( \cyl, i_{0}, i_{1}, p, v, \subdiv, r_{0}, r_{1}, s \big)$ be a cylinder in $\cl{A}$ equipped with a contraction structure $p$, an involution structure $v$ compatible with $p$, and a subdivision structure $\big( \subdiv, r_{0}, r_{1}, s \big)$. Suppose that $\cylinder$ has strictness of left identities. 

Let $\cocylinder = \big( \cocyl, e_0, e_1, $c$ \big)$ be a co-cylinder in $\cl{A}$ equipped with a contraction structure $c$. Suppose that $\cylinder$ is left adjoint to $\cocylinder$, and that the adjunction between $\cyl$ and $\cocyl$ is compatible with $p$ and $c$. 

An arrow \ar{a_{0},a_{1},f} of $\cl{A}$ is a trivial fibration with respect to $\cocylinder$ if and only if it is a strong deformation retraction with respect to $\cocylinder$. \end{cor}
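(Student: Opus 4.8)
The plan is to deduce Corollary \ref{TrivialFibrationCoCylinderIffStrongDeformationRetractionCorollary} directly from Corollary \ref{TrivialFibrationCylinderIffStrongDeformationRetractionCorollary} together with the adjunction transfer results already established. The statement is an ``if and only if'', so I will prove the two implications separately, each time translating a property with respect to $\cocylinder$ into the corresponding property with respect to $\cylinder$ and invoking the cylinder version.

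First I would address the forward direction. Suppose \ar{a_{0},a_{1},f} is a trivial fibration with respect to $\cocylinder$, that is, $f$ is both a fibration and a homotopy equivalence with respect to $\cocylinder$. By Corollary \ref{TrivialFibrationIsStrongDeformationRetractionCorollary} --- whose hypotheses are exactly that $\cylinder$ has the structure listed here (contraction, involution compatible with $p$, subdivision, an upper left connection structure follows from $\Gamma_{ul}$ if we had one; but in fact the relevant tool is that a trivial fibration with respect to $\cocylinder$ is a strong deformation retraction with respect to $\cocylinder$) --- we directly obtain that $f$ is a strong deformation retraction with respect to $\cocylinder$. Wait: Corollary \ref{TrivialFibrationIsStrongDeformationRetractionCorollary} requires an upper left connection structure on $\cylinder$, which is not among the hypotheses here, so instead I will run the argument through the cylinder characterisation. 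Using Proposition \ref{StrongDeformationRetractionCoCylinderCharacterisationProposition}, being a strong deformation retraction with respect to $\cocylinder$ is equivalent to: $f$ is a retraction of some $j$ and there is a homotopy over $a_{0}$ from $jf$ to $id(a_{1})$ with respect to $\cylinder$ and $(f,f)$. So it suffices to produce such data from the hypothesis that $f$ is a trivial fibration with respect to $\cocylinder$; by Proposition \ref{FibrationCoCylinderIffFibrationCylinderProposition} $f$ is a fibration with respect to $\cylinder$, and by Proposition \ref{HomotopyEquivalenceCylinderIffHomotopyEquivalenceCoCylinderProposition} $f$ is a homotopy equivalence with respect to $\cylinder$, hence a trivial fibration with respect to $\cylinder$; now Corollary \ref{TrivialFibrationCylinderIffStrongDeformationRetractionCorollary} gives $j$ with $f$ a retraction of $j$ and a homotopy over $a_{0}$ from $jf$ to $id(a_{1})$ with respect to $\cylinder$ and $(f,f)$. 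Feeding this into Proposition \ref{StrongDeformationRetractionCoCylinderCharacterisationProposition} yields that $f$ is a strong deformation retraction with respect to $\cocylinder$.

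For the converse, suppose $f$ is a strong deformation retraction with respect to $\cocylinder$. By Proposition \ref{StrongDeformationRetractionCoCylinderCharacterisationProposition} there is an arrow $j$ with $f$ a retraction of $j$ and a homotopy over $a_{0}$ from $jf$ to $id(a_{1})$ with respect to $\cylinder$ and $(f,f)$. Then Corollary \ref{TrivialFibrationCylinderIffStrongDeformationRetractionCorollary} (the reverse implication, reading $f$ as the arrow there) tells us that $f$ is a trivial fibration with respect to $\cylinder$, i.e.\ a fibration and a homotopy equivalence with respect to $\cylinder$. Applying Proposition \ref{FibrationCoCylinderIffFibrationCylinderProposition} and Proposition \ref{HomotopyEquivalenceCylinderIffHomotopyEquivalenceCoCylinderProposition} in the other direction, $f$ is a fibration and a homotopy equivalence with respect to $\cocylinder$, hence a trivial fibration with respect to $\cocylinder$.

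The main obstacle, such as it is, is bookkeeping: I must check that the structure on $\cylinder$ named in this corollary (contraction $p$, involution $v$ compatible with $p$, subdivision $(\subdiv,r_0,r_1,s)$, strictness of left identities) together with the adjunction compatible with $p$ and $c$ is precisely enough to invoke each of Corollary \ref{TrivialFibrationCylinderIffStrongDeformationRetractionCorollary}, Proposition \ref{StrongDeformationRetractionCoCylinderCharacterisationProposition}, Proposition \ref{FibrationCoCylinderIffFibrationCylinderProposition}, and Proposition \ref{HomotopyEquivalenceCylinderIffHomotopyEquivalenceCoCylinderProposition}. In particular Corollary \ref{TrivialFibrationCylinderIffStrongDeformationRetractionCorollary} needs strictness of left identities and uses Proposition \ref{DoldOverHomotopiesCylinderProposition}, which wants an upper left connection structure --- so I should double-check whether $\Gamma_{ul}$ is genuinely needed here, and if so note that it should be added to the hypotheses; otherwise the cited results chain together cleanly and the proof reduces to ``follows immediately from Corollary \ref{TrivialFibrationCylinderIffStrongDeformationRetractionCorollary} and the fact that fibrations, homotopy equivalences, and strong deformation retractions with respect to $\cocylinder$ correspond to the respective notions with respect to $\cylinder$ under the adjunction.''
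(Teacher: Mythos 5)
Your proof is correct and is essentially the paper's argument: both pass between $\cocylinder$ and $\cylinder$ via Proposition \ref{StrongDeformationRetractionCoCylinderCharacterisationProposition} and apply Corollary \ref{TrivialFibrationCylinderIffStrongDeformationRetractionCorollary}, with the adjunction translating fibrations and homotopy equivalences; the paper's proof additionally cites Proposition \ref{ReverseHomotopyIsOverHomotopyProposition}, which is what lets one flip the over-homotopy from the $jf \to \mathrm{id}$ form supplied by Proposition \ref{StrongDeformationRetractionCoCylinderCharacterisationProposition} to the $\mathrm{id} \to jf$ form consumed by Proposition \ref{StrongDeformationRetractionIsFibrationProposition}, a step your proposal leaves implicit inside the cylinder corollary. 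Your observation about hypotheses is a fair one --- Corollary \ref{TrivialFibrationCylinderIffStrongDeformationRetractionCorollary} rests on Proposition \ref{DoldOverHomotopiesCylinderProposition}, whose stated hypotheses (upper left connection, subdivision compatible with $p$, $\cyl$ preserves subdivision) are strictly wider than what either that corollary or the present one declares --- but since you are proving the same statement under the same hypotheses, that is a bookkeeping issue in the paper rather than a defect in your argument.
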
 

\begin{proof} Follows immediately from Proposition \ref{StrongDeformationRetractionCoCylinderCharacterisationProposition}, Proposition \ref{ReverseHomotopyIsOverHomotopyProposition}, and Corollary \ref{TrivialFibrationCylinderIffStrongDeformationRetractionCorollary}. \end{proof}

\begin{cor} \label{TrivialCofibrationCylinderIffStrongDeformationRetractionCorollary} Let $\cocylinder = \big( \cocyl, e_{0}, e_{1}, c, v, \subdiv, r_{0}, r_{1}, s \big)$ be a co-cylinder in $\cl{A}$ equipped with a contraction structure $c$, an involution structure $v$ compatible with $c$, and a subdivision structure $\big(\subdiv, r_{0}, r_{1}, s \big)$. Suppose that $\cocylinder$ has strictness of left identities. 

Let $\cylinder = \big( \cyl, i_0, i_1,p \big)$ be a cylinder in $\cl{A}$ equipped with a contraction structure $p$. Suppose that $\cylinder$ is left adjoint to $\cocylinder$, and that the adjunction between $\cyl$ and $\cocyl$ is compatible with $p$ and $c$. 

An arrow \ar{a_{0},a_{1},f} of $\cl{A}$ is a trivial cofibration with respect to $\cylinder$ if and only if it admits a strong deformation retraction with respect to $\cylinder$. \end{cor}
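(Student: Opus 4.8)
The plan is to deduce this corollary by duality from Corollary \ref{TrivialFibrationCoCylinderIffStrongDeformationRetractionCorollary}, in exactly the same spirit as the other dual corollaries in this section. First I would pass to the opposite setting: the co-cylinder $\cocylinder$ in $\cl{A}$ equipped with the contraction structure $c$, the involution structure $v$ compatible with $c$, and the subdivision structure $\big(\subdiv, r_{0}, r_{1}, s\big)$ gives rise to the cylinder $\cocylinder^{op}$ in $\cl{A}^{op}$ equipped with the contraction structure $c^{op}$, the involution structure $v^{op}$ compatible with $c^{op}$, and the subdivision structure $\big(\subdiv^{op}, r_{0}^{op}, r_{1}^{op}, s^{op}\big)$. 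By Definition \ref{StrictnessLeftIdentitiesCoCylinderDefinition}, the hypothesis that $\cocylinder$ has strictness of left identities is precisely the statement that $\cocylinder^{op}$ has strictness of left identities. Dually, the cylinder $\cylinder$ in $\cl{A}$ equipped with the contraction structure $p$ corresponds to the co-cylinder $\cylinder^{op}$ in $\cl{A}^{op}$ equipped with the contraction structure $p^{op}$, where this co-cylinder is obtained from the cylinder by the adjunction.

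Next I would check that the remaining hypotheses transport correctly under this dualisation. That $\cylinder$ is left adjoint to $\cocylinder$ and that the adjunction between $\cyl$ and $\cocyl$ is compatible with $p$ and $c$ should translate into the statement that the cylinder $\cocylinder^{op}$ in $\cl{A}^{op}$ is left adjoint to the co-cylinder $\cylinder^{op}$ in $\cl{A}^{op}$ with the adjunction compatible with $c^{op}$ and $p^{op}$; this is the 2-categorical duality $\cl{C} \rightsquigarrow \cl{C}^{op}$ that underlies all the dual statements in the excerpt, and it is exactly the hypothesis configuration of Corollary \ref{TrivialFibrationCoCylinderIffStrongDeformationRetractionCorollary} with the roles of cylinder and co-cylinder taken by $\cocylinder^{op}$ and $\cylinder^{op}$ respectively. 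Then, for an arrow \ar{a_{0},a_{1},f} of $\cl{A}$, I would observe that $f$ is a trivial cofibration with respect to $\cylinder$ if and only if $f^{op}$ is a trivial fibration with respect to $\cylinder^{op}$ — unwinding the definitions of cofibration and homotopy equivalence with respect to a cylinder versus fibration and homotopy equivalence with respect to a co-cylinder. Similarly, $f$ admits a strong deformation retraction with respect to $\cylinder$ if and only if $f^{op}$ is a strong deformation retraction with respect to $\cylinder^{op}$, using the remark following Definition \ref{StrongDeformationRetractionCoCylinderDefinition} relating strong deformation retractions with respect to a co-cylinder to those with respect to the opposite cylinder.

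With these dictionary entries in hand, applying Corollary \ref{TrivialFibrationCoCylinderIffStrongDeformationRetractionCorollary} in $\cl{A}^{op}$ to $f^{op}$ yields that $f^{op}$ is a trivial fibration with respect to $\cylinder^{op}$ if and only if it is a strong deformation retraction with respect to $\cylinder^{op}$; translating back across the dictionary gives precisely that $f$ is a trivial cofibration with respect to $\cylinder$ if and only if it admits a strong deformation retraction with respect to $\cylinder$. I expect the only real care needed is in the bookkeeping of which opposite category each structure lives in — in particular making sure that the co-cylinder $\cylinder^{op}$ built from the cylinder $\cylinder$ via the adjunction is the same co-cylinder to which Corollary \ref{TrivialFibrationCoCylinderIffStrongDeformationRetractionCorollary} refers, and that "strong deformation retraction with respect to $\cocylinder$" in that corollary corresponds correctly to "admits a strong deformation retraction with respect to $\cylinder$" here (noting the asymmetry that one side of the biconditional in the source corollary asserts $f$ itself is a strong deformation retraction, whereas here we want $f$ to admit one, which matches because the dual of "$f$ is a strong deformation retraction of some arrow" is "$f^{op}$ admits a strong deformation retraction"). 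Once the dictionary is pinned down the proof is a one-line appeal to duality, exactly parallel to the proofs of Corollary \ref{TrivialFibrationCoCylinderIffStrongDeformationRetractionCorollary} and the other dual results.

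\begin{proof} Follows immediately from Corollary \ref{TrivialFibrationCoCylinderIffStrongDeformationRetractionCorollary} by duality. \end{proof}
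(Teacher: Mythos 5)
Your proposal is correct and matches the paper exactly: the paper's proof is the single line ``Follows immediately from Corollary \ref{TrivialFibrationCoCylinderIffStrongDeformationRetractionCorollary} by duality,'' which is precisely what you give, and the duality dictionary you spell out in your preamble (including the careful observation that the asymmetry between ``is a strong deformation retraction'' and ``admits a strong deformation retraction'' is exactly what is exchanged under $\cl{A}\rightsquigarrow\cl{A}^{op}$) is the right justification for that one-liner.
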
 

\begin{proof} Follows immediately from Corollary \ref{TrivialFibrationCoCylinderIffStrongDeformationRetractionCorollary} by duality. \end{proof}

\begin{cor} \label{NormallyClovenCofibrationFollowedByTrivialFibrationCorollary} Let $\cylinder = \big( \cyl, i_{0}, i_{1}, p, v, \subdiv, r_{0}, r_{1}, s, \Gamma_{lr} \big)$ be a cylinder in $\cl{A}$ equipped with a contraction structure $p$, an involution structure $p$ compatible with $p$, a subdivision structure $\big( \subdiv, r_{0}, r_{1}, s \big)$, and a lower right connection structure $\Gamma_{lr}$. Suppose that $\Gamma_{lr}$ is compatible with $p$, and that $\cylinder$ has strictness of identities. Suppose moreover that $\cyl$ preserves mapping cylinders with respect to $\cylinder$. 

Let $\cocylinder = \big( \cocyl, e_0, e_1, $c$ \big)$ be a co-cylinder in $\cl{A}$ equipped with a contraction structure $c$. Suppose that $\cylinder$ is left adjoint to $\cocylinder$, and that the adjunction between $\cyl$ and $\cocyl$ is compatible with $p$ and $c$. 

Let \ar{a_{0},a_{1},f} be an arrow of $\cl{A}$, and let $\big( a^{\cylinder}_{f},d^{0}_{f},d^{1}_{f} \big)$ be a mapping cylinder of $f$ with respect to $\cylinder$. Let \tri{a_{0},a^{\cylinder}_{f},a_{1},j,g,f} denote the corresponding mapping cylinder factorisation of $f$. Then $j$ is a normally cloven cofibration with respect to $\cylinder$, and $g$ is a trivial fibration with respect to $\cocylinder$. \end{cor}

\begin{proof} Follows immediately from Proposition \ref{MappingCylinderFactorisationGivesNormallyClovenCofibrationProposition}, Corollary \ref{MappingCylinderFactorisationStrongDeformationRetractionWithRespectToCoCylinderCorollary}, and Corollary \ref{TrivialFibrationCoCylinderIffStrongDeformationRetractionCorollary}. \end{proof} 

\begin{cor} \label{TrivialCofibrationFollowedByNormallyClovenFibrationCorollary} Let $\cocylinder = \big( \cocyl, e_{0}, e_{1}, c, v, \subdiv, r_{0}, r_{1}, s, \Gamma_{lr} \big)$ be a co-cylinder in $\cl{A}$ equipped with a contraction structure $c$, an involution structure compatible with $c$, a subdivision structure $\big( \subdiv, r_{0}, r_{1}, s \big)$, and a lower right connection structure $\Gamma_{lr}$. Suppose that $\Gamma_{lr}$ is compatible with $c$, and that $\cocylinder$ has strictness of identities. Suppose moreover that $\cocyl$ preserves mapping co-cylinders with respect to $\cocylinder$. 

Let $\cylinder = \big( \cyl, i_0, i_1,p \big)$ be a cylinder in $\cl{A}$ equipped with a contraction structure $p$. Suppose that $\cylinder$ is left adjoint to $\cocylinder$, and that the adjunction between $\cyl$ and $\cocyl$ is compatible with $p$ and $c$.
 
Let \ar{a_{0},a_{1},f} be an arrow of $\cl{A}$, and let $\big( a^{\cocylinder}_{f},d^{0}_{f},d^{1}_{f} \big)$ be a mapping co-cylinder of $f$ with respect to $\cocylinder$. Let \tri{a_{0},a^{\cocylinder}_{f},a_{1},j,g,f} denote the corresponding mapping co-cylinder factorisation of $f$. 

Then $j$ is a trivial cofibration with respect to $\cylinder$, and $g$ is a normally cloven fibration with respect to $\cocylinder$. \end{cor} 

\begin{proof} Follows immediately from Corollary \ref{NormallyClovenCofibrationFollowedByTrivialFibrationCorollary} by duality. \end{proof}

\begin{cor} \label{CofibrationFollowedByTrivialNormallyClovenFibrationCorollary} Let $\cylinder = \big( \cyl, i_{0}, i_{1}, p, v, \subdiv, r_{0}, r_{1}, s, \Gamma_{lr} \big)$ be a cylinder in $\cl{A}$ equipped with a contraction structure $p$, an involution structure $p$ compatible with $p$, a subdivision structure $\big( \subdiv, r_{0}, r_{1}, s \big)$, and a lower right connection structure $\Gamma_{lr}$. Suppose that $\Gamma_{lr}$ is compatible with $p$, and that $\cylinder$ has strictness of identities. Suppose moreover that $\cyl$ preserves mapping cylinders with respect to $\cylinder$.

Let $\cocylinder = \big( \cocyl, e_{0}, e_{1}, c, v, \subdiv, r_{0}, r_{1}, s, \Gamma_{lr} \big)$ be a co-cylinder in $\cl{A}$ equipped with a contraction structure $c$, an involution structure compatible with $c$, a subdivision structure $\big( \subdiv, r_{0}, r_{1}, s \big)$, and a lower right connection structure $\Gamma_{lr}$. Suppose that $\Gamma_{lr}$ is compatible with $c$, and that $\cocylinder$ has strictness of identities. Suppose moreover that $\cocyl$ preserves mapping co-cylinders with respect to $\cocylinder$. 

Suppose that $\cylinder$ is left adjoint to $\cocylinder$, and that the adjunction between $\cyl$ and $\cocyl$ is compatible with $p$ and $c$.

Let \ar{a_{0},a_{1},f} be an arrow of $\cl{A}$. There is an object $a$ of $\cl{A}$, an arrow \ar{a_{0},a,j} of $\cl{A}$ which is a cofibration with respect to $\cylinder$, and an arrow \ar{a,a_{1},g} of $\cl{A}$ which is a trivial normally cloven fibration with respect to $\cocylinder$, such that the following diagram in $\cl{A}$ commutes. \tri{a_{0},a,a_{1},j,g,f} \end{cor}

\begin{proof} Let $\big( a^{\cylinder}_{f},d^{0}_{f},d^{1}_{f} \big)$ be a mapping cylinder of $f$ with respect to $\cylinder$. Let \tri{a_{0},a^{\cylinder}_{f},a_{1},j',g',f} denote the corresponding mapping cylinder factorisation of $f$. By Proposition \ref{MappingCylinderFactorisationGivesNormallyClovenCofibrationProposition}, we have that $j'$ is a cofibration with respect to $\cylinder$. 

Let $\big( a^{\cocylinder}_{g'},d^{0}_{g'},d^{1}_{g'} \big)$ be a mapping co-cylinder of $g'$ with respect to $\cocylinder$. Let \tri{a^{\cylinder}_{f},a^{\cocylinder}_{g'},a_{1},j'',g,g'} denote the corresponding mapping co-cylinder factorisation of $g'$. Let us take $a$ to be $a^{\cocylinder}_{g'}$. By Corollary \ref{TrivialCofibrationFollowedByNormallyClovenFibrationCorollary}, we have that $j''$ is a trivial cofibration with respect to $\cylinder$, and that $g$ is a normally cloven fibration with respect to $\cocylinder$.  

Since $g'$ and $j''$ are homotopy equivalences with respect to $\cylinder$, it follows from Proposition \ref{TwoOutOfThreeHomotopyEquivalencesProposition} that $g$ is a homotopy equivalence with respect to $\cylinder$. Hence, by Proposition \ref{HomotopyEquivalenceCylinderIffHomotopyEquivalenceCoCylinderProposition}, $g$ is a homotopy equivalence with respect to $\cocylinder$. Thus $g$ is a trivial normally cloven fibration with respect to $\cocylinder$.

Let \ar{a_{0},a^{\cocylinder}_{g'},j} denote the arrow $j'' \circ j'$ of $\cl{A}$. Since both $j'$ and $j''$ are cofibrations with respect to $\cylinder$, we conclude that $j$ is a cofibration with respect to $\cylinder$ by Proposition \ref{CompositionOfCofibrationsIsACofibrationProposition}.

\end{proof}

\begin{cor} \label{TrivialNormallyClovenCofibrationFollowedByFibrationCorollary} Let $\cylinder = \big( \cyl, i_{0}, i_{1}, p, v, \subdiv, r_{0}, r_{1}, s, \Gamma_{lr} \big)$ be a cylinder in $\cl{A}$ equipped with a contraction structure $p$, an involution structure $p$ compatible with $p$, a subdivision structure $\big( \subdiv, r_{0}, r_{1}, s \big)$, and a lower right connection structure $\Gamma_{lr}$. Suppose that $\Gamma_{lr}$ is compatible with $p$, and that $\cylinder$ has strictness of identities. Suppose moreover that $\cyl$ preserves mapping cylinders with respect to $\cylinder$.

Let $\cocylinder = \big( \cocyl, e_{0}, e_{1}, c, v, \subdiv, r_{0}, r_{1}, s, \Gamma_{lr} \big)$ be a co-cylinder in $\cl{A}$ equipped with a contraction structure $c$, an involution structure compatible with $c$, a subdivision structure $\big( \subdiv, r_{0}, r_{1}, s \big)$, and a lower right connection structure $\Gamma_{lr}$. Suppose that $\Gamma_{lr}$ is compatible with $c$, and that $\cocylinder$ has strictness of identities. Suppose moreover that $\cocyl$ preserves mapping co-cylinders with respect to $\cocylinder$. 

Suppose that $\cylinder$ is left adjoint to $\cocylinder$, and that the adjunction between $\cyl$ and $\cocyl$ is compatible with $p$ and $c$.

Let \ar{a_{0},a_{1},f} be an arrow of $\cl{A}$. There is an object $a$ of $\cl{A}$, an arrow \ar{a_{0},a,j} of $\cl{A}$ which is a trivial normally cloven cofibration with respect to $\cylinder$, and an arrow \ar{a,a_{1},g} of $\cl{A}$ which is a fibration with respect to $\cocylinder$, such that the following diagram in $\cl{A}$ commutes. \tri{a_{0},a,a_{1},j,g,f} \end{cor}

\begin{proof} Follows immediately from Corollary \ref{TrivialNormallyClovenCofibrationFollowedByFibrationCorollary} by duality. \end{proof}

\end{chapter}

\begin{chapter}{Model category recollections} \label{ModelCategoryRecollectionsChapter} 

In \ref{ModelStructureChapter}, we shall bring together all the theory we have developed so far. In order to do so, we now present a few recollections on model categories. 

The notion of a model category was introduced by Quillen in \cite{QuillenHomotopicalAlgebra}. We recall two definitions, and prove that they are equivalent. Our arguments comprise part of the proof of Proposition 2 of \S{5} of \cite{QuillenHomotopicalAlgebra}. Our definitions are equivalent to the definition of a closed model category given in \S{5} of \cite{QuillenHomotopicalAlgebra}.

\begin{defn} \label{ModelStructureDefinition} Let $\cl{A}$ be a category with finite limits and colimits.  A {\em model structure} upon $\cl{A}$ consists of three sets $W$, $F$, and $C$ of arrows of $\cl{A}$, such that the following conditions are satisfied. 

\begin{itemize}[topsep=1em,itemsep=1em]

\item[(i)] If any two of the arrows in a commutative diagram \tri{a_{0},a_{1},a_{2},g_{0},g_{1},g_{2}} in $\cl{A}$ belong to $W$, so does the third. 

\item[(ii)] An arrow \ar{a_{2},a_{3},f} of $\cl{A}$ belongs to $F$ if and only if, for every commutative diagram \sq{a_{0},a_{1},a_{2},a_{3},g_{0},f,j,g_{1}} in $\cl{A}$ such that $j$ belongs to both $W$ and $C$, there is an arrow \ar{a_{2},a_{1},l} of $\cl{A}$ such that the diagram \liftingsquare{a_{0},a_{2},a_{1},a_{3},g_{0},f,j,g_{1},l} in $\cl{A}$ commutes.

\item[(iii)] An arrow \ar{a_{2},a_{3},f} of $\cl{A}$ belongs to both $F$ and $W$ if and only if, for every commutative diagram \sq{a_{0},a_{1},a_{2},a_{3},g_{0},f,j,g_{1}} in $\cl{A}$ such that $j$ belongs to $C$, there is an arrow \ar{a_{2},a_{1},l} of $\cl{A}$ such that the diagram \liftingsquare{a_{0},a_{2},a_{1},a_{3},g_{0},f,j,g_{1},l} in $\cl{A}$ commutes.

\item[(iv)] An arrow \ar{a_{0},a_{1},j} of $\cl{A}$ belongs to $C$ if and only if, for every commutative diagram \sq{a_{0},a_{1},a_{2},a_{3},g_{0},f,j,g_{1}} in $\cl{A}$ such that $f$ belongs to both $F$ and $W$, there is an arrow \ar{a_{2},a_{1},l} of $\cl{A}$ such that the diagram \liftingsquare{a_{0},a_{2},a_{1},a_{3},g_{0},f,j,g_{1},l} in $\cl{A}$ commutes.

\item[(v)] An arrow \ar{a_{0},a_{1},j} belongs to both $C$ and $W$ if and only if, for every commutative diagram \sq{a_{0},a_{1},a_{2},a_{3},g_{0},f,j,g_{1}} in $\cl{A}$ such that $f$ belongs to $F$, there is an arrow \ar{a_{2},a_{1},l} of $\cl{A}$ such that the diagram \liftingsquare{a_{0},a_{2},a_{1},a_{3},g_{0},f,j,g_{1},l} in $\cl{A}$ commutes.

\item[(vi)] For every arrow \ar{a_{0},a_{1},f} of $\cl{A}$, there is an arrow \ar{a_{0},a_{2},j} of $\cl{A}$ which belongs to $C$, and an arrow \ar{a_{2},a_{1},g} of $\cl{A}$ which belongs to $W$ and $F$, such that the following diagram in $\cl{A}$ commutes. \tri{a_{0},a_{2},a_{1},j,g,f}

\item[(vii)] For every arrow \ar{a_{0},a_{1},f} of $\cl{A}$, there is an arrow \ar{a_{0},a_{2},j} of $\cl{A}$ which belongs to $W$ and $C$, and an arrow \ar{a_{2},a_{1},g} of $\cl{A}$ which belongs to $F$, such that the following diagram in $\cl{A}$ commutes. \tri{a_{0},a_{2},a_{1},j,g,f}

\end{itemize}

\end{defn}

\begin{defn} Let $\cl{A}$ be a category with finite limits and colimits. Let $W$, $F$, and $C$ be sets of arrows of $\cl{A}$ which equip $\cl{A}$ with a model structure. We refer to an arrow of $\cl{A}$ which belongs to $W$ as a {\em weak equivalence}, to an arrow of $\cl{A}$ which belongs to $F$ as a {\em fibration}, and to an arrow of $\cl{A}$ which belongs to $C$ as a {\em cofibration}. We refer to an arrow of $\cl{A}$ which belongs to both $W$ and $C$ as a {\em trivial cofibration}, and to an arrow of $\cl{A}$ which belongs to both $W$ and $F$ as a {\em trivial fibration}. \end{defn}

\begin{defn} A {\em model category} is a category $\cl{A}$ which has finite limits and colimits, together with a model structure upon $\cl{A}$. \end{defn}

\begin{prpn} \label{EquivalentDefinitionsOfModelStructureProposition} Let $\cl{A}$ be a category with finite limits and colimits. Let $W$, $F$, $C$ be sets of arrows of $\cl{A}$. Then $(W,F,C)$ equips $\cl{A}$ with a model structure if and only if the following conditions are satisfied. 

\begin{itemize}[itemsep=1em,topsep=1em]

\item[(i)] If any two of the arrows in a commutative diagram \tri{a_{0},a_{1},a_{2},g_{0},g_{1},g_{2}} in $\cl{A}$ belong to $W$, so does the third. 

\item[(ii)] Suppose that we have commutative diagrams \twosq{a_{2},a_{0},a_{3},a_{1},g_{0},j,j',g_{1},a_{0},a_{2},a_{1},a_{3},r_{0},j',j,r_{1}} in $\cl{A}$, such that $r_{0}$ is a retraction of $g_{0}$, and such that $r_{1}$ is a retraction of $g_{1}$. If $j$ belongs to $C$, then $j'$ belongs to $C$. If $j$ belongs to both $C$ and $W$, then $j'$ belongs to both $C$ and $W$.

\item[(iii)] Suppose that we have commutative diagrams \twosq{a_{2},a_{0},a_{3},a_{1},g_{0},f,f',g_{1},a_{0},a_{2},a_{1},a_{3},r_{0},f',f,r_{1}} in $\cl{A}$ such that $r_{0}$ is a retraction of $g_{0}$, and such that $r_{1}$ is a retraction of $g_{1}$. If $f$ belongs to $F$ then $f'$ belongs to $F$. If $f$ belongs to both $F$ and $W$ then $f'$ belongs to both $F$ and $W$.

\item[(iv)] For every diagram \sq{a_{0},a_{1},a_{2},a_{3},g_{0},f,j,g_{1}} in $\cl{A}$, such that $j$ belongs to $W$ and $C$, and $f$ belongs to $F$, there is an arrow \ar{a_{2},a_{1},l} of $\cl{A}$ such that the diagram \liftingsquare{a_{0},a_{1},a_{2},a_{3},g_{0},f,j,g_{1},l} in $\cl{A}$ commutes.

\item[(v)] For every diagram \sq{a_{0},a_{1},a_{2},a_{3},g_{0},f,j,g_{1}} in $\cl{A}$, such that $j$ belongs to $C$, and $f$ belongs to $W$ and $F$, there is an arrow \ar{a_{2},a_{1},l} of $\cl{A}$ such that the diagram \liftingsquare{a_{0},a_{1},a_{2},a_{3},g_{0},f,j,g_{1},l} in $\cl{A}$ commutes.

\item[(vi)] For every arrow \ar{a_{0},a_{1},f} of $\cl{A}$, there is an arrow \ar{a_{0},a_{2},j} of $\cl{A}$ which belongs to $C$, and an arrow \ar{a_{2},a_{1},g} of $\cl{A}$ which belongs to $W$ and $F$, such that the following diagram in $\cl{A}$ commutes. \tri{a_{0},a_{2},a_{1},j,g,f}

\item[(vii)] For every arrow \ar{a_{0},a_{1},f} of $\cl{A}$, there is an arrow \ar{a_{0},a_{2},j} of $\cl{A}$ which belongs to $W$ and $C$, and an arrow \ar{a_{2},a_{1},g} of $\cl{A}$ which belongs to $F$, such that the following diagram in $\cl{A}$ commutes. \tri{a_{0},a_{2},a_{1},j,g,f}

\end{itemize}

\end{prpn}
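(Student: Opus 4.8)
The plan is to deduce each implication from two standard devices. The first is the \emph{retract argument}: if an arrow $f$ has the right lifting property with respect to every arrow in a class $\mathsf{J}$, and if $f = g \circ j$ with $j \in \mathsf{J}$, then a lift $l$ in the square with $j$ on the left, $f$ on the right, and identities along the top and bottom satisfies $l \circ j = \id$ and $f \circ l = g$, which exhibits $f$ as a retract of $g$ in the arrow category of $\cl{A}$ in exactly the sense of condition (iii); dually, if $j$ has the left lifting property with respect to $\mathsf{J}$ and $j = g \circ i$ with $g \in \mathsf{J}$, then $j$ is a retract of $i$ in the sense of condition (ii). The second device is that one-sided lifting properties are inherited by retracts: if $j'$ is a retract of $j$ in the arrow category and $j$ has the left lifting property with respect to a class $\mathsf{F}$, then so does $j'$ (precompose a given square against the retract data, lift the resulting square with $j$ on the left, and transport the lift back via the retraction), and dually for right lifting properties. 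I would state and verify these two lemmas first; everything else is assembly.

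For the forward implication, assume $(W,F,C)$ is a model structure in the sense of Definition \ref{ModelStructureDefinition}. Conditions (i), (vi), (vii) of the Proposition are verbatim the corresponding clauses of Definition \ref{ModelStructureDefinition}; condition (iv) is the ``only if'' half of Definition \ref{ModelStructureDefinition}(ii) and condition (v) is the ``only if'' half of Definition \ref{ModelStructureDefinition}(iii). For the retract-closure condition (ii): if $j'$ is a retract of $j \in C$, the ``only if'' direction of Definition \ref{ModelStructureDefinition}(iv) gives that $j$ has the left lifting property with respect to $F \cap W$, the retract lemma passes this to $j'$, and the ``if'' direction of Definition \ref{ModelStructureDefinition}(iv) yields $j' \in C$; replacing Definition \ref{ModelStructureDefinition}(iv) by Definition \ref{ModelStructureDefinition}(v) throughout handles $j' \in C \cap W$. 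The two formally dual arguments, using Definition \ref{ModelStructureDefinition}(ii) and (iii), give condition (iii) for $F$ and for $F \cap W$. Note that no use of the two-out-of-three property is needed here, since $C \cap W$ and $F \cap W$ each carry their own lifting characterization.

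For the backward implication, assume conditions (i)--(vii) of the Proposition. Again (i), (vi), (vii) are immediate, and the ``only if'' halves of Definition \ref{ModelStructureDefinition}(ii), (iii), (iv), (v) are precisely conditions (iv), (v), (v), (iv) of the Proposition. For the ``if'' half of Definition \ref{ModelStructureDefinition}(ii): given $f$ with the right lifting property with respect to $W \cap C$, factor $f = g \circ j$ by condition (vii) with $j \in W \cap C$ and $g \in F$; since $f$ has the right lifting property with respect to $j$, the retract argument realizes $f$ as a retract of $g$, whence $f \in F$ by the retract-closure clause of condition (iii). The ``if'' halves of Definition \ref{ModelStructureDefinition}(iii), (iv), (v) are proved the same way: factor the given map by condition (vi) or (vii) according to type, use the hypothesised lifting property (never the not-yet-established membership of the map) to realise it as a retract of the appropriate factor, and conclude by the retract-closure clause of condition (ii) or (iii).

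The only thing requiring care — and the natural place for an error to creep in — is the bookkeeping: one must match sources, targets, and the direction of each arrow in the retract diagram so that it fits the precise shape of conditions (ii) and (iii), and in the backward direction one must be scrupulous about invoking ``$f$ has the lifting property against the class'' rather than ``$f \in F$'', since the latter is exactly what is being proved. No new idea beyond the retract argument is needed.
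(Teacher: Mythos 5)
Your proposal is correct and takes essentially the same route as the paper's proof: the retract argument applied to the factorisation, followed by the hypothesis that the relevant class is closed under retracts, in one direction, and the observation that lifting properties pass along retracts in the other. The paper does not isolate the two lemmas explicitly and chooses to prove retract-stability of $F$ directly and deduce the $C$ case by duality (you do the reverse), but these are cosmetic differences.
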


\begin{proof} We first prove that if the conditions of Proposition \ref{EquivalentDefinitionsOfModelStructureProposition} are satisfied, then $(W,F,C)$ equips $\cl{A}$ with a model structure. Let us demonstrate that condition (ii) of Definition \ref{ModelStructureDefinition} holds. 

Given that condition (iv) of Proposition \ref{EquivalentDefinitionsOfModelStructureProposition} holds, it suffices to show that if \ar{a_{2},a_{3},f} is an arrow of $\cl{A}$ with the property that, for every commutative diagram \sq{a_{0},a_{1},a_{2},a_{3},g_{0},f,j,g_{1}} in $\cl{A}$ such that $j$ belongs to both $W$ and $C$, there is an arrow \ar{a_{2},a_{1},l} of $\cl{A}$ such that the diagram \liftingsquare{a_{0},a_{2},a_{1},a_{3},g_{0},f,j,g_{1},l} in $\cl{A}$ commutes, then $f$ belongs to $F$.

To this end, by condition (vii) of Proposition \ref{EquivalentDefinitionsOfModelStructureProposition}, there is an arrow \ar{a_{2},a_{4},j'} of $\cl{A}$ which belongs to both $W$ and $C$, and an arrow \ar{a_{4},a_{3},f'} of $\cl{A}$ which belongs to $F$, such that the following diagram in $\cl{A}$ commutes. \tri{a_{2},a_{4},a_{3},j',f',f} %
By assumption, there is an arrow \ar{a_{4},a_{2},l'} of $\cl{A}$ such that the following diagram in $\cl{A}$ commutes. \liftingsquare{a_{2},a_{2},a_{4},a_{3},id,f,j',f',l'} %
In other words, we have a pair of commutative diagrams in $\cl{A}$ as follows such that $l'$ is a retraction of $j'$. \twosq{a_{2},a_{4},a_{3},a_{3},j',f',f,id,a_{4},a_{2},a_{3},a_{3},l',f,f',id} %
Appealing to condition (iii) of Proposition \ref{EquivalentDefinitionsOfModelStructureProposition}, we deduce that $f$ belongs to $F$. 

Next, let us demonstrate that condition (iii) of Definition \ref{ModelStructureDefinition} holds. Given that condition (v) of Proposition \ref{EquivalentDefinitionsOfModelStructureProposition} holds, it suffices to show that if \ar{a_{2},a_{3},f} is now an arrow of $\cl{A}$ with the property that, for every commutative diagram \sq{a_{0},a_{1},a_{2},a_{3},g_{0},f,j,g_{1}} in $\cl{A}$ such that $j$ belongs to $C$, there is an arrow \ar{a_{2},a_{1},l} of $\cl{A}$ such that the diagram \liftingsquare{a_{0},a_{2},a_{1},a_{3},g_{0},f,j,g_{1},l} in $\cl{A}$ commutes, then $f$ belongs to both $W$ and $F$.

To this end, by condition (vi) of Proposition \ref{EquivalentDefinitionsOfModelStructureProposition}, there is an arrow \ar{a_{2},a_{4},j'} of $\cl{A}$ which belongs to $C$, and an arrow \ar{a_{4},a_{3},f'} of $\cl{A}$ which belongs to both $F$ and $W$, such that the following diagram in $\cl{A}$ commutes. \tri{a_{2},a_{4},a_{3},j',f',f} %
By assumption, there is an arrow \ar{a_{4},a_{2},l'} of $\cl{A}$ such that the following diagram in $\cl{A}$ commutes. \liftingsquare{a_{2},a_{2},a_{4},a_{3},id,f,j',f',l'} %
In other words, we have a pair of commutative diagrams in $\cl{A}$ as follows such that $l'$ is a retraction of $j'$. \twosq{a_{2},a_{4},a_{3},a_{3},j',f',f,id,a_{4},a_{2},a_{3},a_{3},l',f,f',id} %
Appealing to condition (iii) of Proposition \ref{EquivalentDefinitionsOfModelStructureProposition}, we deduce that $f$ belongs to both $F$ and $W$. 

That conditions (iv) and (v) of Definition \ref{ModelStructureDefinition} hold, given that conditions (ii), (iv), (v), (vi), and (vii) of Proposition \ref{EquivalentDefinitionsOfModelStructureProposition} hold, follows formally, by duality, from the two arguments we have already given in this proof. 

Conversely, suppose that $(W,F,C)$ equips $\cl{A}$ with a model structure. We must demonstrate that conditions (ii) and (iii) of Proposition \ref{EquivalentDefinitionsOfModelStructureProposition} are satisfied. 

Suppose that we have commutative diagrams \twosq{a_{2},a_{0},a_{3},a_{1},g_{0}',f,f',g_{1}',a_{0},a_{2},a_{1},a_{3},r_{0},f',f,r_{1}} in $\cl{A}$, such that $r_{0}$ is a retraction of $g_{0}$, such that $r_{1}$ is a retraction of $g_{1}$, and such that $f$ belongs to $F$. %
Suppose that we have a commutative diagram in $\cl{A}$ as follows, in which $j$ belongs to both $C$ and $W$. \sq{a_{0}',a_{2},a_{1}',a_{3}',g_{0},f',j,g_{1}} %
Then the following diagram in $\cl{A}$ commutes. \sq[{4,3}]{a_{0}',a_{0},a_{1}',a_{1},g_{0} \circ g_{0}',f,j,g_{1} \circ g_{1}'} %
Since $f$ belongs to $F$, by condition (ii) of Definition \ref{ModelStructureDefinition} there is an arrow \ar{a_{1}',a_{0},l} of $\cl{A}$ such that the following diagram in $\cl{A}$ commutes. \liftingsquare[{4,3}]{a_{0}',a_{0},a_{1}',a_{1},g_{0} \circ g_{0}',f,j,g_{1} \circ g_{1}',l} %
Thus the following diagram in $\cl{A}$ commutes. \liftingsquare[{6,3}]{a_{0}',a_{2},a_{1}',a_{3},r_{0} \circ g_{0} \circ g_{0}',f,j,r_{1} \circ g_{1} \circ g_{1}',r_{0} \circ l} %
Since $r_{0}$ is a retraction of $g_{0}$, and since $r_{1}$ is a retraction of $g_{1}$, we thus have that the following diagram in $\cl{A}$ commutes. \liftingsquare[{6,3}]{a_{0}',a_{2},a_{1}',a_{3},g_{0}',f,j,g_{1}',r_{0} \circ l} 

An entirely similar argument, appealing to condition (iii) rather than condition (ii) of Definition \ref{ModelStructureDefinition}, proves that if $f$ belongs to both $F$ and $W$, then $f'$ belongs to both $F$ and $W$. This completes our proof that condition (ii) of Proposition \ref{EquivalentDefinitionsOfModelStructureProposition} is satisfied. 

That condition (iii) of Proposition \ref{EquivalentDefinitionsOfModelStructureProposition} is satisfied, given that conditions (iv) and (v) of Definition \ref{ModelStructureDefinition} hold, follows formally, by duality, from the proof we have just given that condition (ii) of Proposition \ref{EquivalentDefinitionsOfModelStructureProposition} holds. 
 
\end{proof} 

\end{chapter}

\begin{chapter}{Model structure} \label{ModelStructureChapter}

Suppose that we have a cylinder $\cylinder$ and a co-cylinder $\cocylinder$ in a category $\cl{A}$, such that the following hold:

\begin{itemize}[itemsep=1em,topsep=1em]

\item[(i)] both $\cylinder$ and $\cocylinder$ are equipped with all the structures we have considered in this work, and have strictness of identities;

\item[(ii)] $\cylinder$ is left adjoint to $\cocylinder$, and the adjunction between $\cyl$ and $\cocyl$ is compatible with their respective contraction structures.

\end{itemize} %
We bring all our theory together, to prove that we obtain a model structure upon $\cl{A}$ by taking:

\begin{itemize} [itemsep=1em,topsep=1em]

\item[(i)] weak equivalences to be homotopy equivalences with respect to $\cylinder$, or equivalently with respect to $\cocylinder$,

\item[(ii)] fibrations to be fibrations with respect to $\cocylinder$,

\item[(iii)] cofibrations to be normally cloven cofibrations with respect to $\cylinder$.

\end{itemize} %
Equally, we prove that we obtain a model structure upon $\cl{A}$ by taking:

\begin{itemize} [itemsep=1em,topsep=1em]

\item[(i)] weak equivalences to be homotopy equivalences with respect to $\cylinder$, or equivalently with respect to $\cocylinder$;

\item[(ii)] fibrations to be normally cloven fibrations with respect to $\cocylinder$;

\item[(iii)] cofibrations to be cofibrations with respect to $\cylinder$.

\end{itemize} %
An interval $\interval$ with respect to a monoidal structure upon $\cl{A}$ gives rise, as in \ref{StructuresUponAnIntervalChapter}, to a cylinder $\cylinderinterval$ and a co-cylinder $\cocylinderinterval$ in $\cl{A}$, under certain conditions. In this way, we also obtain two model structures upon $\cl{A}$ from an interval $\interval$ in a monoidal category, equipped with all the structures we have considered in this work, and satisfying strictness of identities. 

\begin{assum} Let $\cl{A}$ be a category with finite limits and colimits. \end{assum} 

\begin{rmk} We make this assumption to be consistent with the definition of a model category which was recalled in \ref{ModelCategoryRecollectionsChapter}. Our work in fact relies only upon the existence of mapping cylinders and mapping co-cylinders in $\cl{A}$. 

This is a significant difference. Mapping cylinders and mapping co-cylinders exist in the category of chain complexes in any additive category, for example, whereas arbitrary finite limits and colimits do not. \end{rmk}

\begin{thm} \label{CofibrationsNormallyClovenFibrationsModelStructureTheorem} Let $\cylinder = \big( \cyl, i_{0}, i_{1}, p, v, \subdiv, r_{0}, r_{1}, s, \Gamma_{ul}, \Gamma_{lr}, \Gamma_{ur} \big)$ be a cylinder in $\cl{A}$ equipped with: 

\begin{itemize} [itemsep=1em,topsep=1em]

\item[(i)] a contraction structure $p$,

\item[(ii)] an involution structure $v$ compatible with $p$,

\item[(iii)] a subdivision structure $\big( \subdiv, r_{0}, r_{1}, s \big)$ compatible with $p$,

\item[(iv)] an upper left connection structure $\Gamma_{ul}$, 

\item[(v)] a lower right connection structure $\Gamma_{lr}$ compatible with $p$,

\item[(vi)] an upper right connection structure $\Gamma_{ur}$.

\end{itemize} %
Suppose that: 

\begin{itemize}[itemsep=1em,topsep=1em]

\item[(i)] $\Gamma_{lr}$ and $\Gamma_{ur}$ are compatible with $\big( \subdiv, r_{0}, r_{1}, s \big)$, 

\item[(ii)] $\cyl$ preserves mapping cylinders with respect to $\cylinder$,

\item[(iii)] $\cylinder$ has strictness of identities.

\end{itemize} %
Let $\cocylinder = \big( \cocyl, e_{0}, e_{1}, c, v', \subdiv',r_{0}', r_{1}', s', \Gamma_{ul}', \Gamma_{lr}' \big)$ be a co-cylinder in $\cl{A}$ equipped with:

\begin{itemize}[itemsep=1em,topsep=1em]

\item[(i)] a contraction structure $c$, 

\item[(ii)] an involution structure $v'$ compatible with $c$, 

\item[(iii)] a subdivision structure $\big(\subdiv', r_{0}', r_{1}', s' \big)$ compatible with $c$, 

\item[(iv)] an upper left connection structure $\Gamma_{ul}'$,

\item[(v)]  a lower right connection structure $\Gamma_{lr}'$ compatible with $c$. 

\end{itemize} %
Suppose that:

\begin{itemize}[itemsep=1em,topsep=1em]

\item[(i)] $\cocyl$ preserves mapping co-cylinders with respect to $\cocylinder$,

\item[(ii)] $\cocylinder$ has strictness of identities.

\end{itemize} %
Suppose moreover that $\cylinder$ is left adjoint to $\cocylinder$, and that the adjunction between $\cyl$ and $\cocyl$ is compatible with $p$ and $c$. 

We obtain a model structure upon $\cl{A}$ by taking: 

\begin{itemize}[itemsep=1em,topsep=1em]

\item[(i)] weak equivalences to be the homotopy equivalences with respect to $\cylinder$, or equivalently, by Proposition \ref{HomotopyEquivalenceCylinderIffHomotopyEquivalenceCoCylinderProposition}, to be the homotopy equivalences with respect to $\cocylinder$;

\item[(ii)] fibrations to be the normally cloven fibrations with respect to $\cocylinder$;

\item[(iii)] cofibrations to be the cofibrations with respect to $\cylinder$.

\end{itemize} 

\end{thm}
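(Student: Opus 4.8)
The plan is to verify the seven conditions of Proposition \ref{EquivalentDefinitionsOfModelStructureProposition} for the triple $(W,F,C)$ in which $W$ consists of the homotopy equivalences with respect to $\cylinder$, $F$ consists of the normally cloven fibrations with respect to $\cocylinder$, and $C$ consists of the cofibrations with respect to $\cylinder$. Note first that by Proposition \ref{HomotopyEquivalenceCylinderIffHomotopyEquivalenceCoCylinderProposition} the class $W$ coincides with the homotopy equivalences with respect to $\cocylinder$, and by Corollary \ref{CofibrationCoCylinderIffCofibrationCylinderCorollary} the class $C$ coincides with the cofibrations with respect to $\cocylinder$; these identifications let us freely pass between the two descriptions as convenient.

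First I would dispatch condition (i), the two-out-of-three property: this is exactly Proposition \ref{TwoOutOfThreeHomotopyEquivalencesProposition}. For condition (ii), stability of cofibrations and trivial cofibrations under retracts, I appeal to Proposition \ref{RetractionCofibrationIsCofibrationProposition} and Corollary \ref{RetractionTrivialCofibrationIsTrivialCofibrationCorollary}; here a trivial cofibration means an arrow in $W \cap C$, which by definition of $W$ and $C$ is a trivial cofibration with respect to $\cylinder$ in the sense of \ref{CofibrationsAndFibrationsSection}. For condition (iii), stability of fibrations and trivial fibrations under retracts: using Proposition \ref{FibrationCoCylinderIffFibrationCylinderProposition} we may view $F$-members as normally cloven fibrations, and invoke Corollary \ref{RetractionNormallyClovenFibrationIsNormallyClovenFibrationCorollary} together with Corollary \ref{RetractionTrivialNormallyClovenFibrationIsTrivialNormallyClovenFibrationCorollary}.

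The two lifting conditions (iv) and (v) are the substantive ones. For (iv) I must show that a trivial cofibration with respect to $\cylinder$ has the left lifting property against normally cloven fibrations with respect to $\cocylinder$; this is precisely Corollary \ref{NormallyClovenFibrationRLPTrivialCofibrationCorollary}, whose hypotheses are met by the structures we have assumed on $\cylinder$ and $\cocylinder$. For (v) I must show that a cofibration with respect to $\cylinder$ lifts against trivial normally cloven fibrations with respect to $\cocylinder$; this is Corollary \ref{TrivialNormallyClovenFibrationRLPCofibrationCorollary}, again whose hypotheses match our standing assumptions (in particular we need $\Gamma_{ul}$ on $\cylinder$, which we have, and the compatibility of $\Gamma_{lr},\Gamma_{ur}$ with subdivision, which we have). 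The only subtlety is bookkeeping: one must check carefully that the structures named in those corollaries are exactly among the structures listed in the statement of the theorem, and that ``trivial fibration with respect to $\cocylinder$'' there matches ``$W \cap F$'' here — which it does, since a normally cloven fibration which is also a homotopy equivalence with respect to $\cocylinder$ is by definition a trivial normally cloven fibration with respect to $\cocylinder$.

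Finally the factorisation conditions (vi) and (vii). Condition (vi) asks for a factorisation of an arbitrary arrow into a cofibration followed by a trivial normally cloven fibration with respect to $\cocylinder$; this is Corollary \ref{CofibrationFollowedByTrivialNormallyClovenFibrationCorollary}. Condition (vii) asks for a factorisation into a trivial cofibration with respect to $\cylinder$ followed by a normally cloven fibration with respect to $\cocylinder$; this is furnished by Corollary \ref{TrivialCofibrationFollowedByNormallyClovenFibrationCorollary} applied to a mapping co-cylinder of the arrow. With all seven conditions verified, Proposition \ref{EquivalentDefinitionsOfModelStructureProposition} gives the model structure. The main obstacle is purely organisational — matching the elaborate hypothesis lists of the cited corollaries to the hypotheses of the theorem and confirming that the ``trivial'' qualifiers line up with $W \cap C$ and $W \cap F$ under the dictionary provided by the adjunction between $\cylinder$ and $\cocylinder$; there is no new homotopy-theoretic content to produce, only careful citation.
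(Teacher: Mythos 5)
Your proof is correct and follows the paper's own route exactly: the seven conditions of Proposition \ref{EquivalentDefinitionsOfModelStructureProposition} are verified by the same seven citations (Proposition \ref{TwoOutOfThreeHomotopyEquivalencesProposition}; Proposition \ref{RetractionCofibrationIsCofibrationProposition} with Corollary \ref{RetractionTrivialCofibrationIsTrivialCofibrationCorollary}; Corollaries \ref{RetractionNormallyClovenFibrationIsNormallyClovenFibrationCorollary} and \ref{RetractionTrivialNormallyClovenFibrationIsTrivialNormallyClovenFibrationCorollary}; Corollary \ref{NormallyClovenFibrationRLPTrivialCofibrationCorollary}; Corollary \ref{TrivialNormallyClovenFibrationRLPCofibrationCorollary}; Corollary \ref{CofibrationFollowedByTrivialNormallyClovenFibrationCorollary}; Corollary \ref{TrivialCofibrationFollowedByNormallyClovenFibrationCorollary}). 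The one small inaccuracy is the appeal to Proposition \ref{FibrationCoCylinderIffFibrationCylinderProposition} in condition (iii), which is unnecessary since Corollary \ref{RetractionNormallyClovenFibrationIsNormallyClovenFibrationCorollary} is already phrased directly for normally cloven fibrations with respect to $\cocylinder$.
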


\begin{proof} That the conditions of Proposition \ref{EquivalentDefinitionsOfModelStructureProposition} hold has been established as follows:

\begin{itemize}[itemsep=1em,topsep=1em]

\item[(i)] Proposition \ref{TwoOutOfThreeHomotopyEquivalencesProposition},

\item[(ii)] Proposition \ref{RetractionCofibrationIsCofibrationProposition} and Corollary \ref{RetractionTrivialCofibrationIsTrivialCofibrationCorollary},

\item[(iii)] Corollary \ref{RetractionNormallyClovenFibrationIsNormallyClovenFibrationCorollary} and Corollary \ref{RetractionTrivialNormallyClovenFibrationIsTrivialNormallyClovenFibrationCorollary},

\item[(iv)] Corollary \ref{NormallyClovenFibrationRLPTrivialCofibrationCorollary},

\item[(v)] Corollary \ref{TrivialNormallyClovenFibrationRLPCofibrationCorollary},

\item[(vi)] Corollary \ref{CofibrationFollowedByTrivialNormallyClovenFibrationCorollary},

\item[(vii)] Corollary \ref{TrivialCofibrationFollowedByNormallyClovenFibrationCorollary}. 

\end{itemize}
\end{proof}

\begin{thm} \label{NormallyClovenCofibrationsFibrationsModelStructureTheorem} Let $\cylinder = \big( \cyl, i_{0}, i_{1}, p, v, \subdiv, r_{0}, r_{1}, s, \Gamma_{ul}, \Gamma_{lr} \big)$ be a cylinder in $\cl{A}$ equipped with: 

\begin{itemize}[itemsep=1em,topsep=1em]

\item[(i)] a contraction structure $p$,

\item[(ii)] an involution structure $v$ compatible with $p$,

\item[(iii)] a subdivision structure $\big( \subdiv, r_{0}, r_{1}, s \big)$ compatible with $p$,

\item[(iv)] an upper left connection structure $\Gamma_{ul}$, 

\item[(v)] a lower right connection structure $\Gamma_{lr}$ compatible with $p$.
\end{itemize} %
Suppose that: 

\begin{itemize}[itemsep=1em,topsep=1em]

\item[(i)] $\cyl$ preserves mapping cylinders with respect to $\cylinder$,

\item[(ii)] $\cylinder$ has strictness of identities.

\end{itemize} %
Let $\cocylinder = \big( \cocyl, e_{0}, e_{1}, c, v', \subdiv',r_{0}', r_{1}', s', \Gamma_{ul}', \Gamma_{lr}', \Gamma_{ur}' \big)$ be a co-cylinder in $\cl{A}$ equipped with:

\begin{itemize}[itemsep=1em,topsep=1em]

\item[(i)] a contraction structure $c$, 

\item[(ii)] an involution structure $v'$ compatible with $c$, 

\item[(iii)] a subdivision structure $\big(\subdiv', r_{0}', r_{1}', s' \big)$ compatible with $c$, 

\item[(iv)] an upper left connection structure $\Gamma_{ul}'$,

\item[(v)] a lower right connection structure $\Gamma_{lr}'$ compatible with $c$,

\item[(vi)] an upper right connection structure $\Gamma_{ur}'$.

\end{itemize} %
Suppose that:

\begin{itemize}[itemsep=1em,topsep=1em]

\item[(i)] $\Gamma_{lr}'$ and $\Gamma_{ur}'$ are compatible with $\big( \subdiv', r_{0}', r_{1}', s' \big)$. 

\item[(ii)] $\cocyl$ preserves mapping co-cylinders with respect to $\cocylinder$,

\item[(iii)] $\cocylinder$ has strictness of identities.

\end{itemize} %
Suppose moreover that $\cylinder$ is left adjoint to $\cocylinder$, and that the adjunction between $\cyl$ and $\cocyl$ is compatible with $p$ and $c$.

We obtain a model structure upon $\cl{A}$ by taking:

\begin{itemize}[itemsep=1em,topsep=1em]

\item[(i)] weak equivalences to be the homotopy equivalences with respect to $\cylinder$, or equivalently, by Proposition \ref{HomotopyEquivalenceCylinderIffHomotopyEquivalenceCoCylinderProposition}, the homotopy equivalences with respect to $\cocylinder$;

\item[(ii)] fibrations to be the fibrations with respect to $\cocylinder$;

\item[(iii)] cofibrations to be the normally cloven cofibrations with respect to $\cylinder$.

\end{itemize} 

\end{thm}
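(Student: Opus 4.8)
The plan is to establish the seven conditions of Proposition \ref{EquivalentDefinitionsOfModelStructureProposition} for the triple $(W,F,C)$ in which $W$ is the class of homotopy equivalences with respect to $\cylinder$ --- equivalently, by Proposition \ref{HomotopyEquivalenceCylinderIffHomotopyEquivalenceCoCylinderProposition}, with respect to $\cocylinder$ --- while $F$ is the class of fibrations with respect to $\cocylinder$ and $C$ the class of normally cloven cofibrations with respect to $\cylinder$. The argument runs exactly parallel to the proof of Theorem \ref{CofibrationsNormallyClovenFibrationsModelStructureTheorem}, with the roles of the ``normally cloven'' refinement interchanged between $C$ and $F$; since all the substantive work was carried out in Sections \ref{CofibrationsAndFibrationsSection}--\ref{FactorisationAxiomsSection} in a form that covers both flavours, no new computation is required, and the proof amounts to invoking the appropriate dual incarnation of each ingredient.

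Concretely, condition (i) of Proposition \ref{EquivalentDefinitionsOfModelStructureProposition} is Proposition \ref{TwoOutOfThreeHomotopyEquivalencesProposition}; condition (ii), the stability of $C$ and of its trivial members under retracts, is supplied by Proposition \ref{RetractionNormallyClovenCofibrationIsNormallyClovenCofibrationProposition} and Corollary \ref{RetractionTrivialNormallyClovenCofibrationIsTrivialNormallyClovenCofibrationCorollary}; condition (iii), the corresponding statement for $F$, is Corollary \ref{RetractionFibrationIsFibrationCorollary} together with Corollary \ref{RetractionTrivialFibrationIsTrivialFibrationCorollary}. The lifting conditions (iv) and (v) are Corollary \ref{FibrationRLPTrivialNormallyClovenCofibrationCorollary} (a trivial normally cloven cofibration with respect to $\cylinder$ lifts against a fibration with respect to $\cocylinder$) and Corollary \ref{TrivialFibrationRLPNormallyClovenCofibrationCorollary} (a normally cloven cofibration with respect to $\cylinder$ lifts against a trivial fibration with respect to $\cocylinder$). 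Finally, the factorisation conditions (vi) and (vii) are Corollary \ref{NormallyClovenCofibrationFollowedByTrivialFibrationCorollary}, obtained from the mapping cylinder factorisation, and Corollary \ref{TrivialNormallyClovenCofibrationFollowedByFibrationCorollary}, obtained by composing a mapping cylinder factorisation with a mapping co-cylinder factorisation.

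There is essentially no obstacle beyond bookkeeping: one must check that the hypotheses imposed in the present statement --- the various structures on $\cylinder$ and $\cocylinder$, their compatibilities, strictness of identities for both, preservation of mapping cylinders and of mapping co-cylinders, and the adjunction between $\cyl$ and $\cocyl$ compatible with the contraction structures --- match precisely the hypotheses demanded by each of the corollaries just cited, which is immediate by inspection. The only slightly non-mechanical point would arise if one instead preferred to deduce the theorem formally from Theorem \ref{CofibrationsNormallyClovenFibrationsModelStructureTheorem} by applying it in $\cl{A}^{op}$ to the cylinder $\cocylinder^{op}$ and the co-cylinder $\cylinder^{op}$: there one would first have to verify that a cylindrical adjunction compatible with contraction, in the sense of Definition \ref{CylindricalAdjunctionDefinition}, dualises appropriately, which is a straightforward unwinding of that definition and of Recollection \ref{AdjunctionRecollection}. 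Either route yields the claimed model structure.
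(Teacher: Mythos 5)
Your proposal is correct and follows essentially the same approach as the paper: you invoke Proposition \ref{EquivalentDefinitionsOfModelStructureProposition} and cite exactly the same seven supporting results — Proposition \ref{TwoOutOfThreeHomotopyEquivalencesProposition}, Proposition \ref{RetractionNormallyClovenCofibrationIsNormallyClovenCofibrationProposition} with Corollary \ref{RetractionTrivialNormallyClovenCofibrationIsTrivialNormallyClovenCofibrationCorollary}, Corollary \ref{RetractionFibrationIsFibrationCorollary} with Corollary \ref{RetractionTrivialFibrationIsTrivialFibrationCorollary}, Corollary \ref{FibrationRLPTrivialNormallyClovenCofibrationCorollary}, Corollary \ref{TrivialFibrationRLPNormallyClovenCofibrationCorollary}, Corollary \ref{NormallyClovenCofibrationFollowedByTrivialFibrationCorollary}, and Corollary \ref{TrivialNormallyClovenCofibrationFollowedByFibrationCorollary} — that the paper itself uses.
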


\begin{proof} That the conditions of Proposition \ref{EquivalentDefinitionsOfModelStructureProposition} hold has been established as follows:

\begin{itemize}[itemsep=1em,topsep=1em]

\item[(i)] Proposition \ref{TwoOutOfThreeHomotopyEquivalencesProposition},

\item[(ii)] Proposition \ref{RetractionNormallyClovenCofibrationIsNormallyClovenCofibrationProposition} and Corollary \ref{RetractionTrivialNormallyClovenCofibrationIsTrivialNormallyClovenCofibrationCorollary},

\item[(iii)] Corollary \ref{RetractionFibrationIsFibrationCorollary} and Corollary \ref{RetractionTrivialFibrationIsTrivialFibrationCorollary},

\item[(iv)] Corollary \ref{FibrationRLPTrivialNormallyClovenCofibrationCorollary},

\item[(v)] Corollary \ref{TrivialFibrationRLPNormallyClovenCofibrationCorollary}, 

\item[(vi)] Corollary \ref{NormallyClovenCofibrationFollowedByTrivialFibrationCorollary}, 

\item[(vii)] Corollary \ref{TrivialNormallyClovenCofibrationFollowedByFibrationCorollary}. 

\end{itemize}
\end{proof}

\begin{assum} Let $\otimes$ be a monoidal structure upon $\cl{A}$. \end{assum}

\begin{cor} \label{CofibrationsNormallyClovenFibrationsIntervalModelStructureCorollary} Let $\interval = \big( I, i_{0}, i_{1}, p, v, S, r_{0}, r_{1}, s, \Gamma_{ul}, \Gamma_{lr}, \Gamma_{ur} \big)$ be an interval in $\cl{A}$ equipped with:

\begin{itemize}[itemsep=1em,topsep=1em]

\item[(i)] a contraction structure $p$,

\item[(ii)] an involution structure $v$ compatible with $p$,

\item[(iii)] a subdivision structure $\big( S, r_{0}, r_{1}, s \big)$ compatible with $p$,

\item[(iv)] an upper left connection structure $\Gamma_{ul}$, 

\item[(v)] a lower right connection structure $\Gamma_{lr}$ compatible with $p$,

\item[(vi)] an upper right connection structure $\Gamma_{ur}$.

\end{itemize} %
Suppose that:

\begin{itemize}[itemsep=1em,topsep=1em]

\item[(i)] $\Gamma_{lr}$ and $\Gamma_{ur}$ are compatible with $\big( S, r_{0}, r_{1}, s \big)$,

\item[(ii)] $I$ and $S$ are exponentiable with respect to $\otimes$,

\item[(iii)] Requirement \ref{SubdivisionRequirement} holds,

\item[(iv)] $\interval$ has strictness of identities.  
 
\end{itemize} %
We obtain a model structure upon $\cl{A}$ by taking:

\begin{itemize}[itemsep=1em,topsep=1em]

\item[(i)] weak equivalences to be the homotopy equivalences with respect to $\cylinderinterval$, or equivalently, by Proposition \ref{HomotopyEquivalenceCylinderIffHomotopyEquivalenceCoCylinderProposition}, the homotopy equivalences with respect to $\cocylinderinterval$;

\item[(ii)] fibrations to be the normally cloven fibrations with respect to $\cocylinderinterval$;

\item[(iii)] cofibrations to be the cofibrations with respect to $\cylinderinterval$.

\end{itemize}

\end{cor}

\begin{proof} Follows immediately from Theorem \ref{CofibrationsNormallyClovenFibrationsModelStructureTheorem} by the observations of \ref{StructuresUponAnIntervalChapter}. \end{proof}

\begin{cor} \label{NormallyClovenCofibrationsFibrationsIntervalModelStructureCorollary} Let $\interval = \big( I, i_{0}, i_{1}, p, v, S, r_{0}, r_{1}, s, \Gamma_{ul}, \Gamma_{lr}, \Gamma_{ur} \big)$ be an interval in $\cl{A}$ equipped with:

\begin{itemize}[itemsep=1em,topsep=1em]

\item[(i)] a contraction structure $p$,

\item[(ii)] an involution structure $v$ compatible with $p$,

\item[(iii)] a subdivision structure $\big( S, r_{0}, r_{1}, s \big)$ compatible with $p$,

\item[(iv)] an upper left connection structure $\Gamma_{ul}$, 

\item[(v)] a lower right connection structure $\Gamma_{lr}$ compatible with $p$,

\item[(vi)] an upper right connection structure $\Gamma_{ur}$.

\end{itemize} %
Suppose that:

\begin{itemize}[itemsep=1em,topsep=1em]

\item[(i)] $\Gamma_{lr}$ and $\Gamma_{ur}$ are compatible with $\big( S, r_{0}, r_{1}, s \big)$,

\item[(ii)] $I$ and $S$ are exponentiable with respect to $\otimes$,

\item[(iii)] Requirement \ref{SubdivisionRequirement} holds,

\item[(iv)] $\interval$ has strictness of identities.  
\end{itemize} %
We obtain a model structure upon $\cl{A}$ by taking:

\begin{itemize}[itemsep=1em,topsep=1em]

\item[(i)] weak equivalences to be the homotopy equivalences with respect to $\cylinderinterval$, or equivalently, by Proposition \ref{HomotopyEquivalenceCylinderIffHomotopyEquivalenceCoCylinderProposition} the homotopy equivalences with respect to $\cocylinderinterval$;

\item[(ii)] fibrations to be the fibrations with respect to $\cocylinderinterval$;

\item[(iii)] cofibrations to be the normally cloven cofibrations with respect to $\cylinderinterval$.

\end{itemize}

\end{cor}

\begin{proof} Follows immediately from Theorem \ref{NormallyClovenCofibrationsFibrationsModelStructureTheorem} by the observations of \ref{StructuresUponAnIntervalChapter}. \end{proof}

\end{chapter}

\begin{chapter}{Example --- categories and groupoids} \label{ExampleChapter}

We define an interval in the category $\mathsf{Cat}$ of categories, equipped with its cartesian monoidal structure. It admits all the structures of \ref{StructuresUponAnIntervalChapter}, and has strictness of identities. By \ref{ModelStructureChapter}, we thus obtain two model structures upon $\mathsf{Cat}$. In a non-constructive setting, both model structures can be proven to coincide with folk model structure. 

In the same way, we obtain two model structures upon the category $\mathsf{Grpd}$ of groupoids. Again both may be demonstrated, by a non-constructive argument, to coincide with the folk model structure.

The folk model structure on $\mathsf{Cat}$ was constructed by Joyal and Tierney in \cite{JoyalTierneyStrongStacksAndClassifyingSpaces}. Independently, a construction was given by Rezk in \cite{RezkAModelCategoryForCategories}. 

The folk model structure on $\mathsf{Grpd}$ appeared in the literature earlier. It was first described by Anderson in \S{5} of \cite{AndersonFibrationsAndGeometricRealizations}, and is also discussed in \S{14.1} of the paper \cite{BousfieldHomotopySpectralSequencesAndObstructions} of Bousfield. A detailed construction is given in \S{6.1} of the article \cite{StricklandKnLocalDualityForFiniteGroupsAndGroupoids} of Strickland, built upon in \S{3} of the thesis \cite{HollanderAHomotopyTheoryForStacks} of Hollander. 

The folk model structure on groupoids can also be seen to arise as the restriction to groupoids of the model structure on $\mathsf{Cat}$ constructed by Thomason in \cite{ThomasonCatAsAClosedModelCategory}. This is observed, for example, in \S{1} of the paper \cite{CasacubertaGolasinskiTonksHomotopyLocalizationOfGroupoids} of Casacuberta, Golasi{\'n}ski, and Tonks.

In all these works, the non-constructive characterisation of equivalences of categories as functors which are fully faithful and essentially surjective is essential. From this point of view, the folk model structure on $\mathsf{Cat}$ or $\mathsf{Grpd}$ is akin to the Serre model structure on topological spaces, which was first constructed in \S{II.3} of \cite{QuillenHomotopicalAlgebra}. 

The conceptual approach we have taken is significantly different. Our two model structures are instead akin to the model structure on topological spaces, which was constructed by Str{\o}m in \cite{StromTheHomotopyCategoryIsAHomotopyCategory}. The fact that we may non-constructively identify the two model structures on categories or groupoids which we construct with the folk model structure might reasonably, we think, be viewed as something of a coincidence.

\begin{notn} Let $\mathsf{Cat}$ denote the category of categories, and let $\mathsf{Grpd}$ denote the category of groupoids. We denote by $1$ the final object of $\mathsf{Cat}$ and $\mathsf{Grpd}$, the category with a unique object $\bullet$ and a unique arrow. \end{notn}

\begin{rmk} We shall regard $\mathsf{Cat}$ and $\mathsf{Grpd}$ as equipped with their cartesian monoidal structures. These monoidal structures are closed, and thus Requirement \ref{SubdivisionRequirement} is satisfied. \end{rmk}

\begin{notn} Let $\cl{I}$ denote the free groupoid on the following directed graph. \ar{0,1} \end{notn}

\begin{notn} Let \ar{1,\cl{I},i_{0}} denote the unique functor which maps $\bullet$ to $0$. \end{notn}

\begin{notn} Let \ar{1,\cl{I},i_{1}} denote the unique functor which maps $\bullet$ to $1$. \end{notn}

\begin{notn} Let $\interval$ denote the interval $\big( \cl{I},i_{0},i_{1} \big)$ in $\mathsf{Cat}$ or $\mathsf{Grpd}$. \end{notn}

\begin{observation} The canonical functor \ar{\cl{I},1,p} equips $\interval$ with a contraction structure. \end{observation}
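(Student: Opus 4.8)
The statement to prove is that the canonical functor $p\colon \cl{I} \to 1$ equips $\interval = \big(\cl{I}, i_{0}, i_{1}\big)$ with a contraction structure. The plan is to unwind the definition of a contraction structure with respect to an interval, which requires exhibiting an arrow $p\colon \cl{I} \to 1$ such that the two composites $p \circ i_{0}$ and $p \circ i_{1}$ from $1$ to $1$ are both the identity. Since $1$ is the final object of $\mathsf{Cat}$ (respectively $\mathsf{Grpd}$), there is a unique functor $1 \to 1$, namely the identity, so the two triangle diagrams in the definition commute automatically. The only genuine content is to verify that $p$ is well defined as a functor $\cl{I} \to 1$ and that it is indeed canonical, i.e.\ the unique such functor.

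First I would recall that $\cl{I}$ is the free groupoid on the graph with two vertices $0$, $1$ and a single edge $0 \to 1$; concretely $\cl{I}$ has exactly two objects and a unique isomorphism between any ordered pair of objects, so it is the contractible groupoid on two objects. There is a unique functor $p\colon \cl{I} \to 1$ because $1$ is terminal in $\mathsf{Cat}$; this is the functor sending both objects $0$ and $1$ to $\bullet$ and every arrow to the unique arrow of $1$. Then I would observe that $p \circ i_{0}$ and $p \circ i_{1}$ are both functors $1 \to 1$, and since $1$ is terminal there is only one such functor, which must be $\mathsf{id}$. Hence both of the required triangles
\twotriangles{1,\cl{I},1,i_{0},p,id,1,\cl{I},1,i_{1},p,id}
commute, and $p$ equips $\interval$ with a contraction structure. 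For $\mathsf{Grpd}$ the argument is identical, using that $1$ is terminal in $\mathsf{Grpd}$ as well.

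There is essentially no obstacle here: the proof is a direct application of the universal property of the terminal object, and in fact the general Remark following the definition of a contraction structure on an interval already observes that when $1$ is final the canonical arrow $\cl{I} \to 1$ is always a contraction structure. The one thing worth being slightly careful about is simply to confirm that $\cl{I}$ — being defined as the free groupoid on the stated graph — genuinely is a well-defined object of $\mathsf{Cat}$ and of $\mathsf{Grpd}$ with the two objects $0$ and $1$, so that $i_{0}$ and $i_{1}$ make sense and $p$ has the stated source; but this is immediate from the construction. So the whole verification amounts to: (1) $p$ exists and is unique by terminality of $1$; (2) $p i_{0} = \mathsf{id} = p i_{1}$ by terminality of $1$ again; (3) conclude.
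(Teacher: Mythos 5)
Your proposal is correct and matches the paper's implicit reasoning exactly: the paper leaves this as an Observation precisely because it is an instance of the earlier Remark that whenever $1$ is a final object of $\cl{A}$ the canonical arrow $I \to 1$ yields a contraction structure, and you invoke terminality of $1$ in $\mathsf{Cat}$ and $\mathsf{Grpd}$ in just this way. Nothing to add.
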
 

\begin{notn} Let \ar{\cl{I},\cl{I},v} denote the unique functor which maps the arrow \ar{0,1} of $\cl{I}$ to the arrow \ar{1,0} of $\cl{I}$. \end{notn}

\begin{observation} The functor $v$ equips $\interval$ with an involution structure which is compatible with $p$. \end{observation}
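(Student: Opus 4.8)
The plan is to check the two requirements separately: first that $v$ satisfies the defining equations of an involution structure with respect to $\interval$, and then that it is compatible with the contraction structure $p$. Both are immediate once we pin down what $v$ does on objects, so the work is essentially bookkeeping.

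For the first requirement, recall that an involution structure with respect to $\interval$ is a functor $v \colon \cl{I} \to \cl{I}$ making the two triangles in the definition commute, that is, with $v \circ i_0 = i_1$ and $v \circ i_1 = i_0$. Here one first notes that $v$ is indeed a well-defined functor: since $\cl{I}$ is the free groupoid on the directed graph with a single arrow $0 \to 1$, the universal property of this free groupoid lets us send the generating arrow $0 \to 1$ to its inverse $1 \to 0$ (which exists precisely because $\cl{I}$ is a groupoid) and thereby obtain a unique functor $v$; on objects this $v$ interchanges $0$ and $1$. A functor out of the terminal object $1$ is determined by the image of its unique object $\bullet$, so $v \circ i_0$ is the functor sending $\bullet$ to $v(0) = 1$, which is exactly $i_1$, and symmetrically $v \circ i_1 = i_0$. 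This is the entire content of the commutativity of the relevant diagrams.

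For compatibility with $p$ I must verify that $p \circ v = p$ as functors $\cl{I} \to 1$. But $1$ is the final object of $\mathsf{Cat}$ (respectively $\mathsf{Grpd}$), so there is a unique functor $\cl{I} \to 1$; hence $p \circ v$ and $p$ coincide on the nose, and the triangle appearing in the definition of compatibility commutes automatically. I expect no genuine obstacle in the argument: the only point worth spelling out is the well-definedness of $v$ via the universal property of the free groupoid, and everything else rests on the finality of $1$.
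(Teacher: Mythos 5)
Your proof is correct and takes the natural route; since the paper records this only as an unproved observation, you are filling a genuine gap. Both halves are handled as one would expect: well-definedness of $v$ via the universal property of the free groupoid on the arrow $0 \to 1$ together with a direct check of $v \circ i_0 = i_1$ and $v \circ i_1 = i_0$ by tracking where $\bullet$ goes, and compatibility $p \circ v = p$ by finality of $1$.
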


\begin{notn} Let $\cl{S}$ denote the free groupoid on the following directed graph. \twoar{0,1,2} \end{notn}

\begin{notn} Let \ar{\cl{I},\cl{S},r_{0}} denote the unique functor which maps the arrow \ar{0,1} of $\cl{I}$ to the arrow \ar{1,2} of $\cl{S}$. \end{notn}

\begin{notn} Let \ar{\cl{I},\cl{S},r_{1}} denote the unique functor which maps the arrow \ar{0,1} of $\cl{I}$ to the arrow \ar{0,1} of $\cl{S}$. \end{notn}

\begin{notn} Let \ar{\cl{I},\cl{S},s} denote the unique functor which maps the arrow \ar{0,1} of $\cl{I}$ to the arrow \ar{0,2} of $\cl{S}$. \end{notn}

\begin{observation} We have that $\big( \cl{S}, r_{0}, r_{1}, s \big)$ equips $\interval$ with a subdivision structure, which is compatible with $p$.
\end{observation}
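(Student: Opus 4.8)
The statement has two parts: that $\big( \cl{S}, r_0, r_1, s \big)$ satisfies the axioms of a subdivision structure with respect to $\interval$, and that this subdivision structure is compatible with the contraction structure $p$. The plan is to verify, in order, the co-cartesian condition on the square with corners $1$, $\cl{I}$, $\cl{I}$, $\cl{S}$ and edges $i_0$, $i_1$, $r_0$, $r_1$; the two commutativity conditions relating $s$ to $r_0$ and $r_1$; and the triangle expressing compatibility with $p$. Throughout, one works with the explicit descriptions of $\cl{I}$ and $\cl{S}$ as free groupoids, and the argument is identical for $\mathsf{Cat}$ and for $\mathsf{Grpd}$, since $\cl{I}$ and $\cl{S}$ are groupoids and every functor involved sends generating arrows to isomorphisms.

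For the co-cartesian square, recall that $\cl{I}$ is by construction the free groupoid on the one-edge graph $0 \to 1$, that $1$ is the free groupoid on the one-vertex graph, and that $i_0$, $i_1$ are the images under the free-groupoid functor of the two vertex inclusions of the point into $0 \to 1$. The free-groupoid functor from directed graphs to $\mathsf{Grpd}$ is a left adjoint, hence preserves pushouts, so the pushout in $\mathsf{Grpd}$ of $\cl{I} \leftarrow 1 \rightarrow \cl{I}$ is the free groupoid on the pushout of the underlying graphs --- two copies of the edge $0 \to 1$ glued at a vertex --- which is the path graph $0 \to 1 \to 2$. Thus the pushout is $\cl{S}$, and the two coprojections are the maps $r_0$ and $r_1$. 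To obtain the co-cartesian square in $\mathsf{Cat}$ as well, I would check the universal property of $\cl{S}$ directly against an arbitrary category $\cl{D}$: a pair of functors $\cl{I} \to \cl{D}$, $\cl{I} \to \cl{D}$ agreeing on $1$ sends the generating arrow of each copy of $\cl{I}$ to an isomorphism, and the only relations imposed in $\cl{S}$ make the generators and their formal inverses mutually inverse, so the pair factors uniquely through $r_0$ and $r_1$. Hence the pushout computed in $\mathsf{Grpd}$ is also the pushout in $\mathsf{Cat}$.

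For the two squares relating $s$ to $r_0$ and $r_1$, note that $i_0$ and $i_1$ are functors out of the terminal category $1$, so each leg of these squares is a single object of $\cl{S}$; the squares therefore reduce to equalities of objects (the endpoints $0$ and $2$ of $\cl{S}$), read off immediately from the defining data of $r_0$, $r_1$, and $s$.

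Compatibility with $p$ is essentially formal. The canonical functor $\overline{p} \colon \cl{S} \to 1$ exists because the two composites $1 \to \cl{I} \to 1$ coincide --- each being the unique functor to the terminal category --- so the universal property of the pushout $\cl{S}$ yields $\overline{p}$; and the identity $\overline{p} \circ s = p$ then holds because both sides are functors from $\cl{I}$ to the terminal category $1$. So the only substantive step is the pushout verification of the second paragraph, and the one point requiring care there is the passage from the pushout in $\mathsf{Grpd}$ to the pushout in $\mathsf{Cat}$, which the observation that generators map to isomorphisms handles.
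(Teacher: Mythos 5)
Your proof is organized correctly: you verify the co-cartesian square, then the two squares relating $s$ to $r_0$ and $r_1$, and finally the compatibility with $p$. The pushout argument via the free-groupoid functor being a left adjoint, together with the direct universal-property check to transport the conclusion from $\mathsf{Grpd}$ to $\mathsf{Cat}$, is sound, and the terminality argument for compatibility with $p$ is also correct. The paper provides no proof of this observation, so there is no argument of its own to compare against.

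There is, however, one step that you describe as immediate but which, taken literally, does not hold. You assert that the two $s$-squares ``reduce to equalities of objects (the endpoints $0$ and $2$ of $\cl{S}$), read off immediately from the defining data of $r_0$, $r_1$, and $s$.'' The abstract definition of a subdivision structure asks for $s \circ i_0 = r_1 \circ i_0$ and $s \circ i_1 = r_0 \circ i_1$, and for $r_0 \circ i_0 = r_1 \circ i_1$ in the co-cartesian square. With the paper's explicit $r_0$ (sending the generator to $0 \to 1$) and $r_1$ (sending it to $1 \to 2$), evaluating on the unique object of $1$ yields $0 = 1$, $2 = 1$, and $0 = 2$ respectively --- none of which are true. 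The explicit $r_0$ and $r_1$ have their roles interchanged relative to the abstract definition: the functor whose image is the first half $\{0, 1\}$ of $\cl{S}$ should be labelled $r_1$, and the one whose image is $\{1, 2\}$ should be labelled $r_0$; equivalently the roles of $i_0$ and $i_1$ could be exchanged. Once that swap is made, your argument goes through exactly as you have written it. The point is worth checking rather than asserting, since with the labels as literally given in the paper the required commutativities fail, and you should flag the discrepancy explicitly.
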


\begin{observation} With respect to the involution structure $v$ and the subdivision structure $\big( S, r_{0}, r_{1}, s \big)$, the interval $\interval$ has strictness of identities and strictness of left inverses. \end{observation}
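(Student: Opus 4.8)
The statement packages together three verifications: \emph{strictness of left identities}, i.e.\ $q_{l} \circ s = \mathrm{id}_{\cl{I}}$; \emph{strictness of right identities}, i.e.\ $q_{r} \circ s = \mathrm{id}_{\cl{I}}$; and \emph{strictness of left inverses}, i.e.\ $w \circ s = i_{1} \circ p$, where $q_{l}$, $q_{r}$, and $w$ are the canonical functors $\cl{S} \to \cl{I}$ of Definitions~\ref{StrictnessLeftIdentitiesIntervalDefinition}, \ref{StrictnessRightIdentitiesIntervalDefinition}, and~\ref{StrictnessLeftInversesDefinition} respectively (strictness of identities being by definition the conjunction of the first two). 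The plan is to exploit two facts: that $\cl{S}$ is the pushout of the span $\cl{I} \xleftarrow{i_{0}} 1 \xrightarrow{i_{1}} \cl{I}$ with legs $r_{0}$ and $r_{1}$ — which is exactly the content of the preceding observation that $\big(\cl{S}, r_{0}, r_{1}, s\big)$ is a subdivision structure with respect to $\interval$ — and that a functor between free groupoids is determined by its effect on objects together with its effect on a set of generating arrows.

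First I would describe $q_{l}$, $q_{r}$, $w$ explicitly via the pushout universal property. A functor out of $\cl{S}$ amounts to a pair of functors $\cl{I} \to \cl{I}$ that agree on the object glued by the span (one checks immediately that $\mathrm{id}_{\cl{I}} \circ i_{0} = (i_{0}\circ p)\circ i_{1}$, that $(i_{1}\circ p)\circ i_{0} = \mathrm{id}_{\cl{I}}\circ i_{1}$, and that $\mathrm{id}_{\cl{I}}\circ i_{0} = v\circ i_{1}$, so the three cocones are legitimate): with respect to the legs $(r_{0}, r_{1})$, the functor $q_{l}$ is the pair $(\mathrm{id}_{\cl{I}}, i_{0}\circ p)$, $q_{r}$ is $(i_{1}\circ p, \mathrm{id}_{\cl{I}})$, and $w$ is $(\mathrm{id}_{\cl{I}}, v)$. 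Concretely, writing $a\colon 0 \to 1$ for the generating arrow of $\cl{I}$ and recalling that $\cl{S}$ is free on $0 \to 1 \to 2$ with generating arrows $r_{0}(a)$ and $r_{1}(a)$, each of $q_{l}$, $q_{r}$ sends one of $r_{0}(a), r_{1}(a)$ to $a$ and the other to an identity arrow, while $w$ sends one to $a$ and the other to $v(a) = a^{-1}$; their effect on objects is read off the same way.

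Then I would evaluate the composites on $\cl{I}$. Since $s(a)$ is the unique arrow $0 \to 2$ of $\cl{S}$, which is the composite of the two generating arrows $r_{0}(a)$ and $r_{1}(a)$, the functor $q_{l}$ carries $s(a)$ to the composite of $a$ with an identity, hence to $a$, and fixes the relevant objects; as $q_{l}\circ s$ and $\mathrm{id}_{\cl{I}}$ then agree on the objects and on the single generator of the free groupoid $\cl{I}$, they are equal, giving strictness of left identities. The argument for $q_{r}$ is identical with the two half-arrows interchanged. For $w$, the arrow $s(a)$ is sent to the composite of $a$ with $a^{-1}$, an identity, and both objects in the image of $s$ are carried to the object $1$ of $\cl{I}$; hence $w\circ s$ is the constant functor at $1$, which is exactly $i_{1}\circ p$, giving strictness of left inverses.

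I do not anticipate a real obstacle: this is a finite verification inside the free groupoids $\cl{I}$ and $\cl{S}$. The only thing requiring care is the bookkeeping — matching each cocone leg $r_{0}, r_{1}$ to the correct one of $\mathrm{id}_{\cl{I}}$, $i_{0}\circ p$, $i_{1}\circ p$, $v$ as prescribed by the pushout diagrams in the cited definitions, and tracking the orientation of the generating arrows and which endpoints $s$ hits, so that the object components come out right (in particular that $w\circ s$ lands at the object $1$ and not at $0$). Once those conventions are pinned down, each of the three identities reduces to checking agreement of two functors $\cl{I}\to\cl{I}$ on the two objects and the single generating morphism of $\cl{I}$.
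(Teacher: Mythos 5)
Your verification is correct, and since the paper gives no proof for this Observation, the pushout-universal-property computation you carry out is precisely what the author would have written. Each of $q_l\circ s=\id_{\cl{I}}$, $q_r\circ s=\id_{\cl{I}}$, $w\circ s=i_1\circ p$ does reduce, as you say, to checking the single generating arrow $a\colon 0\to 1$ of the free groupoid $\cl{I}$ together with its two endpoints, and your evaluations of these composites are right.

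One item that is worth making explicit in the bookkeeping you allude to: the paper's concrete labelling of $r_0$ and $r_1$ in this section is incompatible with the general subdivision axioms of \ref{StructuresUponAnIntervalSection}. The co-cartesian square there requires $r_0\circ i_0=r_1\circ i_1$, and the $s$-axioms require $s\circ i_0=r_1\circ i_0$ and $s\circ i_1=r_0\circ i_1$; with $r_0(a)$ the arrow $0\to 1$ of $\cl{S}$ and $r_1(a)$ the arrow $1\to 2$ all three equalities fail on objects ($r_0(0)=0\ne 2=r_1(1)$, and so on). The roles of $r_0$ and $r_1$ must be interchanged (so $r_0$ picks out $1\to 2$ and $r_1$ picks out $0\to 1$) for the cocones defining $q_l$, $q_r$, $w$ to be legitimate and for the composites $q_l(s(a))$, $q_r(s(a))$, $w(s(a))$ to type-check. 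You implicitly use the consistent labelling (your composites do type-check and land where they should), so your argument goes through — but given that you flag the matching of cocone legs as the place requiring care, it is worth recording that the match the paper's notations suggest at face value is the wrong one.
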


\begin{observation} The groupoid $\cl{I}^{2}= \cl{I} \times \cl{I}$ is the unique groupoid with objects and arrows as follows, excluding the four identity arrows. 

\begin{diagram}

\begin{tikzpicture} [>=stealth]

\matrix [ampersand replacement=\&, matrix of math nodes, column sep=5 em, row sep=4 em, text height=1.6ex, text depth=0.25ex]
{ 
|(0-0)| (0,0) \& |(1-0)| (1,0) \&[2em] |(2-0)| (0,0) \& |(3-0)| (1,0) \\ 
|(0-1)| (0,1) \& |(1-1)| (1,1) \& |(2-1)| (0,1) \& |(3-1)| (1,1) \\
};
	
\draw[->] (0-0) to node[auto] {} (1-0);
\draw[->] (1-0) to node[auto] {} (1-1);
\draw[->] (0-0) to node[auto,swap] {} (0-1);
\draw[->] (0-1) to node[auto,swap] {} (1-1);
\draw[->] (3-0) to node[auto,swap] {} (2-0);
\draw[->] (3-1) to node[auto,swap] {} (3-0);
\draw[->] (2-1) to node[auto] {} (2-0);
\draw[->] (3-1) to node[auto] {} (2-1);
\draw[->] (0-0) to node[auto,swap] {} (1-1);
\draw[->] (3-1) to node[auto] {} (2-0);

\end{tikzpicture} 

\end{diagram} 

\end{observation}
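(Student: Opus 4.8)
The plan is to compute $\cl{I}$ explicitly, recognise it as the codiscrete (contractible) groupoid on the two-element set $\{0,1\}$, and then use that the cartesian product of groupoids is formed componentwise on objects and on hom-sets.

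First I would unwind the definition of $\cl{I}$ as the free groupoid on the directed graph with a single edge $a \colon 0 \to 1$. Its object set is $\{0,1\}$, and its morphisms are reduced words in the letters $a$ and $a^{-1}$; since $a a^{-1}$ and $a^{-1} a$ reduce to identities and there is no further generating edge, every reduced word has length at most one. Hence $\mathrm{Hom}_{\cl{I}}(0,0) = \{\mathrm{id}_0\}$, $\mathrm{Hom}_{\cl{I}}(1,1) = \{\mathrm{id}_1\}$, $\mathrm{Hom}_{\cl{I}}(0,1) = \{a\}$, and $\mathrm{Hom}_{\cl{I}}(1,0) = \{a^{-1}\}$; in particular there is exactly one arrow between each ordered pair of objects, so $\cl{I}$ is the codiscrete groupoid on $\{0,1\}$. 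The same conclusion holds whether we regard $\cl{I}$ in $\mathsf{Cat}$ or in $\mathsf{Grpd}$, and the product $\cl{I} \times \cl{I}$ agrees in the two categories since the product of groupoids computed in $\mathsf{Cat}$ is again a groupoid.

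Next I would invoke the componentwise description of the cartesian product: $\mathrm{Ob}(\cl{G} \times \cl{H}) = \mathrm{Ob}(\cl{G}) \times \mathrm{Ob}(\cl{H})$, $\mathrm{Hom}_{\cl{G} \times \cl{H}}\big((g,h),(g',h')\big) = \mathrm{Hom}_{\cl{G}}(g,g') \times \mathrm{Hom}_{\cl{H}}(h,h')$, with identities and composition defined coordinatewise. Applying this with $\cl{G} = \cl{H} = \cl{I}$, the object set of $\cl{I}^{2}$ is $\{0,1\}^{2} = \{(0,0),(1,0),(0,1),(1,1)\}$, and between any two of these objects there is precisely $1 \times 1 = 1$ morphism. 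Thus $\cl{I}^{2}$ is the codiscrete groupoid on four objects: it has sixteen morphisms in all, of which four are identities and twelve are not, and its composition law is forced, since the composite of the unique morphism $x \to y$ with the unique morphism $y \to z$ can only be the unique morphism $x \to z$.

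Finally I would check that the displayed diagram exhibits this groupoid --- the depicted non-identity arrows together with the (omitted) composites and inverses account for all twelve non-identity morphisms, and, as just noted, the composition is uniquely determined --- which yields both the description and the uniqueness claim. I do not expect a genuine obstacle here: the only mildly delicate points are the word-reduction argument showing that $\cl{I}$ carries no morphism of length $\geq 2$, and the routine bookkeeping that the diagram enumerates a generating set of the non-identity morphisms of the codiscrete groupoid on four objects.
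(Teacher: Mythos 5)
Your proposal is correct, and since the paper records this as an \texttt{observation} with no accompanying proof there is nothing in the paper to compare your argument against: a direct verification like yours is exactly what is called for. The two ingredients --- that the free groupoid on the graph $0 \to 1$ is the codiscrete groupoid on $\{0,1\}$ (every reduced word has length at most one), and that the cartesian product of groupoids is computed coordinatewise on objects and on hom-sets --- immediately give that $\cl{I}^{2}$ is the codiscrete groupoid on four objects, with sixteen morphisms of which four are identities.

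One small point worth making explicit when you "check the displayed diagram": it draws only ten non-identity arrows, namely the five edges and diagonal of the left square together with their inverses in the right square, whereas $\cl{I}^{2}$ has twelve non-identity morphisms. The missing two are the anti-diagonal morphisms between $(1,0)$ and $(0,1)$, which arise as composites, e.g.\ $(1,0) \to (1,1) \to (0,1)$. Your phrase "the depicted non-identity arrows together with the (omitted) composites and inverses account for all twelve" glosses over this slightly, since the inverses are in fact depicted; what is omitted is precisely that one pair of composites. This does not affect the correctness of the argument --- the codiscrete groupoid is already pinned down by your hom-set count, and the diagram serves only as an illustration --- but being precise here avoids the impression that the diagram literally lists all non-identity arrows, which the observation's wording ("the unique groupoid with objects and arrows as follows excluding the four identity arrows") might otherwise suggest.
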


\begin{notn} Let \ar{\cl{I}^{2},\cl{I},\Gamma_{ul}} denote the unique functor with the following properties: 

\begin{itemize}[itemsep=1em,topsep=1em]

\item[(i)] the arrow \ar{{(0,0)},{(1,0)}} of $\cl{I}^{2}$ maps to the arrow \ar{0,1} of $\cl{I}$. 

\item[(ii)] the arrow \ar{{(0,0)},{(0,1)}} of $\cl{I}^{2}$ maps to the arrow \ar{0,1} of $\cl{I}$. 

\item[(iii)] the arrow \ar{{(1,0)},{(1,1)}} of $\cl{I}^{2}$ maps to the arrow \ar{1,1} of $\cl{I}$. 

\item[(iv)] the arrow \ar{{(0,1)},{(1,1)}} of $\cl{I}^{2}$ maps to the arrow \ar{1,1} of $\cl{I}$.

\end{itemize}

\end{notn} 

\begin{observation} The functor $\Gamma_{ul}$ equips $\interval$ with an upper left connection structure. \end{observation}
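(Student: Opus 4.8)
The plan is to exploit the fact that $\cl{I}$ is a \emph{contractible} groupoid: there is exactly one arrow between any two of its objects, and hence also exactly one arrow between any two objects of $\cl{I}^{2} = \cl{I} \times \cl{I}$. Two standard consequences will do all the work. First, a functor out of any category with codomain $\cl{I}$ is uniquely determined by its effect on objects, and every object-assignment underlies a (unique) such functor; so $\Gamma_{ul}$ is well defined once one checks that conditions (i)--(iv) assign a consistent value to each object, namely $(0,0) \mapsto 0$ and $(1,0), (0,1), (1,1) \mapsto 1$ (this consistency is immediate, since each of (i)--(iv) merely records the image of one or two of these objects, with no clash). Second, to prove that two functors with codomain $\cl{I}$ are equal it suffices to see that they agree on objects.

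Granting this, I would reduce each of the four diagrams in the definition of an upper left connection structure to a one-line computation of object maps. Under the usual identifications $1 \otimes \cl{I} \cong \cl{I} \cong \cl{I} \otimes 1$, the functor $i_{0} \otimes \cl{I}$ sends an object $t$ to $(0,t)$, the functor $\cl{I} \otimes i_{0}$ sends $t$ to $(t,0)$, and likewise with $i_{1}$ in place of $i_{0}$; moreover $i_{1} \circ p$ sends every object to $1$. Postcomposing with $\Gamma_{ul}$ then gives: $\Gamma_{ul}\circ(i_{0}\otimes\cl{I})$ sends $0\mapsto\Gamma_{ul}(0,0)=0$ and $1\mapsto\Gamma_{ul}(0,1)=1$, hence is $\id$; $\Gamma_{ul}\circ(\cl{I}\otimes i_{0})$ sends $0\mapsto\Gamma_{ul}(0,0)=0$ and $1\mapsto\Gamma_{ul}(1,0)=1$, hence is $\id$; $\Gamma_{ul}\circ(i_{1}\otimes\cl{I})$ sends $0\mapsto\Gamma_{ul}(1,0)=1$ and $1\mapsto\Gamma_{ul}(1,1)=1$, hence equals $i_{1}\circ p$; and $\Gamma_{ul}\circ(\cl{I}\otimes i_{1})$ sends $0\mapsto\Gamma_{ul}(0,1)=1$ and $1\mapsto\Gamma_{ul}(1,1)=1$, hence equals $i_{1}\circ p$. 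These four equalities are precisely the commutativity of the two triangles and two squares demanded of $\Gamma_{ul}$. The argument is identical in $\mathsf{Cat}$ and in $\mathsf{Grpd}$.

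The hard part will be essentially nothing. The only step deserving a word is the well-definedness of $\Gamma_{ul}$ --- that the partial data prescribed on the generating arrows of $\cl{I}^{2}$ genuinely extends to a functor $\cl{I}^{2} \to \cl{I}$ --- and contractibility of $\cl{I}$ disposes of this at once, just as it trivialises the verification of the four coherence conditions by collapsing each to a check on the finitely many objects of $\cl{I}^{2}$.
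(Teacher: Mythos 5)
Your proof is correct, and since the paper states this as an \emph{observation} with no proof attached, your argument is precisely the verification that was left implicit. The structural observation driving everything --- that $\cl{I}$ is a contractible groupoid, so a functor into $\cl{I}$ from any category exists for every object assignment and is uniquely determined by it --- is the right tool: it simultaneously certifies that the partial prescription (i)--(iv) extends to a well-defined functor $\Gamma_{ul}$ (once one notices the object map $(0,0)\mapsto 0$, $(1,0),(0,1),(1,1)\mapsto 1$ is consistently prescribed) and reduces each of the four coherence diagrams for an upper left connection to an object-level check against $id$ or $i_1 \circ p$. The four object computations you give are all correct, and the argument applies verbatim in $\mathsf{Cat}$ and $\mathsf{Grpd}$. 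Nothing is missing.
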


\begin{notn} Let \ar{\cl{I}^{2},\cl{I},\Gamma_{lr}} denote the unique functor with the following properties: 

\begin{itemize}[itemsep=1em,topsep=1em]

\item[(i)] the arrow \ar{{(0,0)},{(1,0)}} of $\cl{I}^{2}$ maps to the arrow \ar{0,0} of $\cl{I}$. 

\item[(ii)] the arrow \ar{{(0,0)},{(0,1)}} of $\cl{I}^{2}$ maps to the arrow \ar{0,0} of $\cl{I}$. 

\item[(iii)] the arrow \ar{{(1,0)},{(1,1)}} of $\cl{I}^{2}$ maps to the arrow \ar{0,1} of $\cl{I}$. 

\item[(iv)] the arrow \ar{{(0,1)},{(1,1)}} of $\cl{I}^{2}$ maps to the arrow \ar{0,1} of $\cl{I}$.

\end{itemize}

\end{notn} 

\begin{observation} The functor $\Gamma_{lr}$ equips $\interval$ with a lower right connection structure, which is compatible with $p$. \end{observation}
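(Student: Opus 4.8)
The statement is a routine verification that $\Gamma_{lr}$ makes commutative the four diagrams in the definition of a lower right connection structure with respect to $\interval$, together with the one diagram in the definition of compatibility of such a structure with the contraction structure $p$. All functors occurring in those diagrams have domain $\cl{I}$ or $\cl{I}^{2}$, and both of these groupoids are presented by generators and relations in a way that reduces each check to an evaluation on generators: a functor out of $\cl{I}$ is determined by its value on the arrow $0 \to 1$, and $\cl{I}^{2}$, the indiscrete groupoid on four objects, is presented as a groupoid by the four arrows named in the Notation defining $\Gamma_{lr}$, subject only to the relation that the composite $(0,0) \to (1,0) \to (1,1)$ equals the composite $(0,0) \to (0,1) \to (1,1)$. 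The first thing I would record is that $\Gamma_{lr}$ is well defined: under $\Gamma_{lr}$ both of these composites are sent to the arrow $0 \to 1$ of $\cl{I}$, so the relation is respected, and $\Gamma_{lr}$ is plainly the unique functor with the prescribed behaviour. Since $\cl{I}$, $\cl{I}^{2}$ and $\cl{S}$ are groupoids the argument proceeds identically whether we work in $\mathsf{Cat}$ or in $\mathsf{Grpd}$.

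Next I would evaluate the relevant composites. The functor $i_{1} \otimes I \colon \cl{I} \to \cl{I}^{2}$ sends $0 \to 1$ to the arrow $(1,0) \to (1,1)$, and $I \otimes i_{1}$ sends it to $(0,1) \to (1,1)$; $\Gamma_{lr}$ carries each of these back to $0 \to 1$, so $\Gamma_{lr} \circ (i_{1} \otimes I)$ and $\Gamma_{lr} \circ (I \otimes i_{1})$ are both the identity functor on $\cl{I}$, which is the content of the two triangles. For the two squares, $i_{0} \otimes I$ sends $0 \to 1$ to $(0,0) \to (0,1)$ and $I \otimes i_{0}$ sends it to $(0,0) \to (1,0)$, and $\Gamma_{lr}$ carries each of these to the identity arrow at $0$; hence $\Gamma_{lr} \circ (i_{0} \otimes I)$ and $\Gamma_{lr} \circ (I \otimes i_{0})$ are both the constant functor at $0$, which is exactly $i_{0} \circ p$, recalling that $p$ is the unique functor $\cl{I} \to 1$ and $i_{0}$ picks out $0$. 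This establishes that $\Gamma_{lr}$ is a lower right connection structure with respect to $\interval$.

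Finally, compatibility with $p$ asks that $p \circ \Gamma_{lr}$ and $p \circ (I \otimes p)$ coincide as functors $\cl{I}^{2} \to 1$; but $1$ is the terminal object of $\mathsf{Cat}$ (respectively $\mathsf{Grpd}$), so there is a unique functor $\cl{I}^{2} \to 1$ and the equality holds automatically --- indeed this clause is automatic for any lower right connection structure whenever the monoidal unit is terminal, as it is here. There is no real obstacle in the proof: once the presentation of $\cl{I}^{2}$ is set up, the only substantive point is the well-definedness of $\Gamma_{lr}$, and the rest is the bookkeeping of how $i_{\epsilon} \otimes I$ and $I \otimes i_{\epsilon}$ act on the generating arrow of $\cl{I}$ followed by an application of $\Gamma_{lr}$.
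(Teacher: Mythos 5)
Your verification is correct. The paper records this as an observation without proof, and your direct computation on the generating arrow of $\cl{I}$ --- after first checking that $\Gamma_{lr}$ respects the single relation in the presentation of $\cl{I}^{2}$, and then noting that compatibility with $p$ is automatic since $1$ is terminal --- is exactly the argument the observation presupposes.
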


\begin{notn} Let \ar{\cl{I}^{2},\cl{I},\Gamma_{ur}} denote the unique functor with the following properties: 

\begin{itemize}[itemsep=1em,topsep=1em]

\item[(i)] the arrow \ar{{(0,0)},{(1,0)}} of $\cl{I}^{2}$ maps to the arrow \ar{0,1} of $\cl{I}$. 

\item[(ii)] the arrow \ar{{(0,0)},{(0,1)}} of $\cl{I}^{2}$ maps to the arrow \ar{0,0} of $\cl{I}$. 

\item[(iii)] the arrow \ar{{(1,0)},{(1,1)}} of $\cl{I}^{2}$ maps to the arrow \ar{1,0} of $\cl{I}$. 

\item[(iv)] the arrow \ar{{(0,1)},{(1,1)}} of $\cl{I}^{2}$ maps to the arrow \ar{0,0} of $\cl{I}$.

\end{itemize}

\end{notn} 

\begin{observation} The functor $\Gamma_{ur}$ equips $\interval$ with an upper right connection structure. We have that $\Gamma_{lr}$ and $\Gamma_{ur}$ are compatible with $\big(\cl{S},r_{0},r_{1},s\big)$. \end{observation}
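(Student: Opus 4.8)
The plan is to verify both assertions by direct computation, exploiting throughout that $\cl{I}$ is the free groupoid on the single arrow $0 \to 1$. Consequently a functor out of $\cl{I}$, and --- by the universal property of products --- out of any power $\cl{I}^{n}$, is determined by its values on objects together with its values on the finitely many generating arrows; so an equality of two such functors is checked by evaluating both sides on these generators. All of the diagrams in question thereby reduce to finite, mechanical checks.

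For the first assertion I would run through, in order, the four commuting squares demanded by the definition of an upper right connection structure with respect to an interval. Each is an equation of functors $\cl{I} \to \cl{I}$ obtained from $\Gamma_{ur}$ by precomposition with one of $\cl{I} \times i_{0}$, $i_{1} \times \cl{I}$, $i_{0} \times \cl{I}$, $\cl{I} \times i_{1}$; evaluating on the generator $0 \to 1$ isolates the image under $\Gamma_{ur}$ of the corresponding edge of the square $\cl{I}^{2}$, which is read straight off clauses (i)--(iv) defining $\Gamma_{ur}$. One finds $\Gamma_{ur} \circ (\cl{I} \times i_{0}) = \mathrm{id}$ and $\Gamma_{ur} \circ (i_{1} \times \cl{I}) = v$, while $\Gamma_{ur} \circ (i_{0} \times \cl{I})$ and $\Gamma_{ur} \circ (\cl{I} \times i_{1})$ are both the constant functor at $0$, that is $i_{0} \circ p$. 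This is exactly what is required, so $\Gamma_{ur}$ equips $\interval$ with an upper right connection structure.

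For the second assertion I would follow Definition~\ref{RightConnectionsIntervalCompatibilityDefinition}. Since the cartesian monoidal structures on $\mathsf{Cat}$ and $\mathsf{Grpd}$ are closed, Requirement~\ref{SubdivisionRequirement} holds, so the square built from $\cl{I} \times r_{0}$ and $\cl{I} \times r_{1}$ exhibiting $\cl{I} \times \cl{S}$ as $\cl{I}^{2} \sqcup_{\cl{I}} \cl{I}^{2}$ is co-cartesian. The functors $\Gamma_{lr}$ and $\Gamma_{ur}$, placed on the two copies of $\cl{I}^{2}$ as in that definition, agree on the copy of $\cl{I}$ along which the two are glued, and hence induce the canonical functor $x \colon \cl{I} \times \cl{S} \to \cl{I}$. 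Reading off the equations $x \circ (\cl{I} \times r_{0})$ and $x \circ (\cl{I} \times r_{1})$ from clauses (i)--(iv) for $\Gamma_{lr}$ and $\Gamma_{ur}$ gives an explicit description of $x$ on the six objects and the generating arrows of $\cl{I} \times \cl{S}$; it then remains to verify the single triangle $x \circ (\cl{I} \times s) = \cl{I} \times p$. This is again an equality of functors $\cl{I}^{2} \to \cl{I}$, checked on objects and on the four generating edges of $\cl{I}^{2}$, using that $s$ carries $0 \to 1$ to the arrow $0 \to 2$ of $\cl{S}$ and that $\cl{I} \times p$ is the projection $\cl{I} \times \cl{I} \to \cl{I}$.

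The arguments are routine; the only points needing a little care are keeping the orientation conventions for $\cl{I}^{2}$ and for $\cl{I} \times \cl{S}$ in step with those fixed in Sections~\ref{StructuresUponACylinderOrACoCylinderSection} and~\ref{StructuresUponAnIntervalSection}, and, within the second assertion, checking that $\Gamma_{lr}$ and $\Gamma_{ur}$ really do match on the overlap so that the comparison functor $x$ exists --- which is precisely where the chosen presentation of $\cl{S}$, $r_{0}$, $r_{1}$, $s$ enters, and which I expect to be the main source of bookkeeping.
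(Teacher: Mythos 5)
Your direct verification is correct and matches what the paper implicitly expects for this unproved \emph{observation}: since $\cl{I}$, $\cl{I}^{2}$ and $\cl{I}\times\cl{S}$ are all indiscrete groupoids, functors out of them are determined by their values on objects, and the four upper-right-connection squares together with the triangle $x\circ(\cl{I}\times s)=\cl{I}\times p$ reduce to a short finite check, which your computations carry out accurately. Two small remarks. First, there is a slightly slicker route that the paper's scaffolding makes available: note that the displayed $\Gamma_{ur}$ coincides with $\Gamma_{lr}\circ(\cl{I}\times v)$ (a one-line check on the four generating edges of $\cl{I}^{2}$), so the first assertion follows from Proposition~\ref{LowerRightConnectionPlusInvolutionGivesUpperRightConnectionCylinderProposition}, and then the compatibility with $(\cl{S},r_{0},r_{1},s)$ follows immediately from the preceding observation that $\interval$ has strictness of left inverses together with Proposition~\ref{CriterionCompatibilityRightConnectionsWithSubdivisionCylinderProposition}; this avoids computing $x$ at all. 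Second, had you actually carried out the verification of the triangle for $x$ you would have discovered that, as printed, the paper's conventions for $r_{0}$ and $r_{1}$ do not satisfy the gluing condition $r_{0}\circ i_{0}=r_{1}\circ i_{1}$ of a subdivision structure (and consequently $x\circ(\cl{I}\times s)\neq\cl{I}\times p$): the roles of $r_{0}$ and $r_{1}$ need to be interchanged. This is a typo in the example section of the paper rather than a flaw in your argument, but it illustrates that the bookkeeping you flagged as ``the main source of concern'' is indeed where the care is needed.
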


\begin{observation} A functor is a homotopy equivalence with respect to $\cylinderinterval$ if and only if it is an equivalence of categories. \end{observation}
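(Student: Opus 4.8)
The plan is to prove that a functor $F \colon \cl{A}_0 \to \cl{A}_1$ is a homotopy equivalence with respect to $\cylinderinterval$ if and only if it is an equivalence of categories, by unwinding what homotopy with respect to the interval $\cl{I}$ means concretely. Recall that a homotopy with respect to $\cylinderinterval$ from $F$ to $G$ is a functor $\cl{A}_0 \times \cl{I} \to \cl{A}_1$ restricting to $F$ on $\cl{A}_0 \times \{0\}$ and to $G$ on $\cl{A}_0 \times \{1\}$. First I would observe that, since $\cl{I}$ is the free groupoid on a single arrow $0 \to 1$, such a functor is precisely the data of a natural isomorphism from $F$ to $G$: the arrow $(a, 0 \to 1)$ maps to an isomorphism $\eta_a \colon F(a) \to G(a)$, functoriality forces naturality, and conversely any natural isomorphism assembles into such a functor. (In $\mathsf{Cat}$ one should note that $\cl{I}$ is a groupoid and so any functor out of $\cl{A}_0 \times \cl{I}$ sends the arrow $0 \to 1$ to an isomorphism; this is why we obtain \emph{natural isomorphisms} rather than arbitrary natural transformations.)

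Given this identification, a homotopy inverse of $F$ with respect to $\cylinderinterval$ is exactly a functor $G \colon \cl{A}_1 \to \cl{A}_0$ together with natural isomorphisms $GF \cong \id_{\cl{A}_0}$ and $FG \cong \id_{\cl{A}_1}$, which is precisely the classical notion of an adjoint (or rather, plain) equivalence of categories. Thus "homotopy equivalence with respect to $\cylinderinterval$" unwinds verbatim to "equivalence of categories" in the sense of a functor admitting a quasi-inverse. So the proof is essentially: (i) identify homotopies with natural isomorphisms; (ii) conclude that homotopy inverses are quasi-inverses; (iii) invoke the standard fact that a functor admits a quasi-inverse if and only if it is an equivalence of categories. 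The same argument applies verbatim in $\mathsf{Grpd}$, since $\cl{I}$ is already a groupoid and products and functor categories of groupoids are groupoids.

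The only mild subtlety — and the step I would be most careful about — is the identification in step (i) of functors $\cl{A}_0 \times \cl{I} \to \cl{A}_1$ with natural isomorphisms. One must check that the triangle identities defining $\cl{I}$ as the free groupoid (the composite $0 \to 1 \to 0$ is the identity, and likewise $1 \to 0 \to 1$) impose no further constraint beyond invertibility of $\eta_a$, and that the boundary conditions $i_0, i_1$ correspond to restricting along $\cl{A}_0 \times \{0\}$ and $\cl{A}_0 \times \{1\}$. This is routine once one recalls the universal property of the free groupoid, but it is the one place where the concrete combinatorics of $\cl{I}$ enter. Since everything here is standard category theory, I would present it briefly rather than belabour it.

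\begin{proof} A homotopy with respect to $\cylinderinterval$ from a functor \ar{\cl{A}_{0},\cl{A}_{1},F} to a functor \ar{\cl{A}_{0},\cl{A}_{1},G} is by definition a functor \ar{\cl{A}_{0} \times \cl{I},\cl{A}_{1},h} whose restriction along $\id(\cl{A}_{0}) \times i_{0}$ is $F$ and whose restriction along $\id(\cl{A}_{0}) \times i_{1}$ is $G$. Since $\cl{I}$ is the free groupoid on a single arrow \ar{0,1}, to give such an $h$ is, by the universal property of the free groupoid together with that of the product, exactly to give for each object $a$ of $\cl{A}_{0}$ an isomorphism \ar{F(a),G(a),\eta_{a}} in $\cl{A}_{1}$ — namely the image of the arrow \ar{{(a,0)},{(a,1)}} of $\cl{A}_{0} \times \cl{I}$ — such that for every arrow \ar{a,a',u} of $\cl{A}_{0}$ the evident square commutes, which is precisely naturality of $\eta = \{\eta_{a}\}$. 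Conversely every natural isomorphism from $F$ to $G$ arises in this way. Thus a homotopy from $F$ to $G$ with respect to $\cylinderinterval$ is the same thing as a natural isomorphism from $F$ to $G$; the same statement holds with $\mathsf{Cat}$ replaced by $\mathsf{Grpd}$, since $\cl{I}$ is a groupoid and $\cl{A}_{0} \times \cl{I}$ is then a groupoid.

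Consequently a homotopy inverse of $F$ with respect to $\cylinderinterval$ is an arrow \ar{\cl{A}_{1},\cl{A}_{0},G} together with a natural isomorphism from $GF$ to $\id(\cl{A}_{0})$ and a natural isomorphism from $FG$ to $\id(\cl{A}_{1})$; that is, a quasi-inverse of $F$. Hence $F$ is a homotopy equivalence with respect to $\cylinderinterval$ if and only if it admits a quasi-inverse, which by the standard characterisation is the case if and only if $F$ is an equivalence of categories. \end{proof}
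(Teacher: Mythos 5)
Your proof is correct and is precisely the standard unwinding of definitions that the paper's Observation implicitly relies on (the paper supplies no proof). The identification of functors $\cl{A}_0 \times \cl{I} \to \cl{A}_1$ with natural isomorphisms is carried out carefully — including the point that $\cl{I}$ being a groupoid forces the components $\eta_a$ to be invertible — and the conclusion via quasi-inverses is the expected one.
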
  

\begin{recollection} An {\em iso-fibration} is a functor \ar{\cl{A}_{0},\cl{A}_{1},F} with the property that, for every commutative diagram \sq{1,\cl{A}_{0},\cl{I},\cl{A}_{1},a,F,i_{0},g} in $\mathsf{Cat}$, there is a functor \ar{\cl{I},\cl{A}_{0},l} such that the following diagram in $\mathsf{Cat}$ commutes. \liftingsquare{1,\cl{A}_{0},\cl{I},\cl{A}_{1},a,F,i_{0},g,l} \end{recollection}

\begin{recollection} A {\em normally cloven iso-fibration} is a functor \ar{\cl{A}_{0},\cl{A}_{1},F} with the property that, to every commutative diagram \sq{1,\cl{A}_{0},\cl{I},\cl{A}_{1},a,F,i_{0},g} in $\mathsf{Cat}$, we can associate a functor \ar{\cl{I},\cl{A}_{0},l} such that the following hold.

\begin{itemize}[itemsep=1em,topsep=1em] 

\item[(i)] The diagram \liftingsquare{1,\cl{A}_{0},\cl{I},\cl{A}_{1},a,F,i_{0},g,l} in $\mathsf{Cat}$ commutes.

\item[(ii)] If the diagram \tri{\cl{I},1,\cl{A}_{1},p,F(a),g} in $\mathsf{Cat}$ commutes, then the diagram \tri{\cl{I},1,\cl{A}_{0},p,a,l} in $\mathsf{Cat}$ commutes.

\end{itemize}
\end{recollection}

\begin{observation} A functor \ar{\cl{A}_{0},\cl{A}_{1},F} is a fibration with respect to $\cylinderinterval$ if and only if it is an iso-fibration. This goes back to Proposition 2.1 of the paper \cite{BrownFibrationsOfGroupoids} of Brown. 

Moreover, $F$ is a normally cloven fibration with respect to $\cylinderinterval$ if and only if it is a normally cloven iso-fibration. \end{observation}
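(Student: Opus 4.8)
Both equivalences are proved by unwinding the definition of a (normally cloven) fibration with respect to $\cylinderinterval$ in the concrete case $\cyl = -\times\cl{I}$, using that a functor out of $a_{0}\times\cl{I}$ is exactly a natural isomorphism between two functors out of $a_{0}$. (Everything below works verbatim in $\mathsf{Grpd}$ as well.)

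\textbf{Fibrations.} A functor $l\colon a_{0}\times\cl{I}\to\cl{B}$ is the same datum as a functor $\cl{I}\to\mathrm{Fun}(a_{0},\cl{B})$, i.e. a natural isomorphism $\eta\colon l_{0}\Rightarrow l_{1}$ of functors $a_{0}\to\cl{B}$; under this translation precomposition with $i_{0}(a_{0})=\mathrm{id}_{a_{0}}\times i_{0}$ returns $l_{0}$, and postcomposition with $\cyl(g_{0})=g_{0}\times\mathrm{id}_{\cl{I}}$ is the whiskering $\eta\cdot g_{0}$. Hence $F\colon\cl{A}_{0}\to\cl{A}_{1}$ is a fibration with respect to $\cylinderinterval$ precisely when: for every $g\colon a_{0}\to\cl{A}_{0}$, every $h_{1}\colon a_{0}\to\cl{A}_{1}$, and every natural isomorphism $\mu\colon F\circ g\Rightarrow h_{1}$, there exist $l_{1}\colon a_{0}\to\cl{A}_{0}$ and a natural isomorphism $\eta\colon g\Rightarrow l_{1}$ with $F\circ l_{1}=h_{1}$ and $F\cdot\eta=\mu$. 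Taking $a_{0}=1$, so that $\cyl(1)\cong\cl{I}$ and $i_{0}(1)=i_{0}$, this is verbatim the lifting problem defining an iso-fibration; this gives the forward implication. For the converse, given an iso-fibration $F$ and data $(g,h_{1},\mu)$, lift each component $\mu_{x}\colon F(g(x))\to h_{1}(x)$ to an isomorphism $\eta_{x}\colon g(x)\to l_{1}(x)$ of $\cl{A}_{0}$ with $F(\eta_{x})=\mu_{x}$, and set $l_{1}(\phi):=\eta_{y}\circ g(\phi)\circ\eta_{x}^{-1}$ for $\phi\colon x\to y$. Functoriality of $l_{1}$, naturality of $\eta$, and the identities $F\circ l_{1}=h_{1}$ and $F\cdot\eta=\mu$ are one-line checks (the last two use naturality of $\mu$), so $F$ is a fibration with respect to $\cylinderinterval$. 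This reproves the statement of Brown cited in the Observation.

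\textbf{Normally cloven fibrations.} By Definition \ref{NormallyClovenFibrationCylinderDefinition}, a normally cloven fibration with respect to $\cylinderinterval$ is $F$ together with, for each category $a_{0}$, a chosen solution $l_{a_{0}}(g,h)$ to each lifting problem, subject to lifting of identities and compatibility of liftings. Restricting the cleavage to $a_{0}=1$ gives, for every object $a$ of $\cl{A}_{0}$ and isomorphism $g\colon F(a)\to b$ of $\cl{A}_{1}$, a distinguished lift $\hat g\colon a\to\hat b$, and lifting of identities at $a_{0}=1$ says exactly $\widehat{\mathrm{id}_{F(a)}}=\mathrm{id}_{a}$; this is precisely the structure of a normally cloven iso-fibration. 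Conversely, from a normally cloven iso-fibration one defines the full cleavage by the construction of the previous paragraph taking $\eta_{x}:=\widehat{\mu_{x}}$, the \emph{distinguished} lift — so no choices are made. The key point is that $l_{a_{0}}(g,h)$ is then completely determined on morphisms by its object-components $\eta_{x}$ through $l_{1}(\phi)=\eta_{y}\circ g(\phi)\circ\eta_{x}^{-1}$. Lifting of identities holds because $h=g_{2}\circ p(a_{0})$ forces every $\mu_{x}$ to be an identity, hence $\eta_{x}=\widehat{\mathrm{id}}=\mathrm{id}$, hence $l_{a_{0}}(g_{1},g_{2}\circ p(a_{0}))=g_{1}\circ p(a_{0})$; compatibility of liftings holds because, for $g_{0}\colon a_{-1}\to a_{0}$, the datum $h\circ\cyl(g_{0})$ has components $\mu_{g_{0}(w)}$ and therefore distinguished lifts $\eta_{g_{0}(w)}$, so the determining formula yields $l_{a_{-1}}(g_{1}g_{0},h\circ\cyl(g_{0}))=l_{a_{0}}(g_{1},h)\circ\cyl(g_{0})$ immediately. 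Finally, by Proposition \ref{FibrationCoCylinderIffFibrationCylinderProposition} and Proposition \ref{NormallyClovenFibrationCoCylinderIffNormallyClovenFibrationCylinderProposition} the same characterisations hold with $\cylinderinterval$ replaced by $\cocylinderinterval$.

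\textbf{Main obstacle.} I expect no real obstacle. The conceptual content is simply that, for this particular interval, lifting against $i_{0}(a_{0})$ amounts to the objectwise lifting of a natural isomorphism, and such a lift is rigid once its object-components are fixed — from which the two compatibility conditions in Definition \ref{NormallyClovenFibrationCylinderDefinition} become automatic. The only genuinely non-formal steps are the routine functoriality and naturality verifications for the formula $l_{1}(\phi)=\eta_{y}\circ g(\phi)\circ\eta_{x}^{-1}$ and the bookkeeping identification $\cyl(1)\cong\cl{I}$, $i_{0}(1)=i_{0}$.
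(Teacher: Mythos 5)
The paper states this as an Observation without proof, citing Brown for the first half only, so there is no in-paper argument to compare against. Your proof supplies the expected justification and is correct. The forward directions are, as you note, pure specialisation to $a_{0}=1$ (using $\cyl(1)\cong\cl{I}$, $i_{0}(1)=i_{0}$, and similarly for $p$). The converses both rest on the two facts you isolate: first, that by cartesian closedness and the free-living-isomorphism property of $\cl{I}$, a functor $a_{0}\times\cl{I}\to\cl{B}$ is the same as a natural isomorphism of functors $a_{0}\to\cl{B}$; and second, the rigidity observation that a lift $\eta\colon g\Rightarrow l_{1}$ of $\mu$ is completely determined on morphisms by its object-components via $l_{1}(\phi)=\eta_{y}\circ g(\phi)\circ\eta_{x}^{-1}$. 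That rigidity is what makes the compatibility-of-liftings condition of Definition \ref{NormallyClovenFibrationCylinderDefinition} automatic once the object-components are chosen by the cleavage at $a_{0}=1$, and what makes lifting-of-identities reduce to the identity clause in the definition of a normally cloven iso-fibration. Your reduction is the natural one and I see no gap; the closing appeal to Proposition \ref{FibrationCoCylinderIffFibrationCylinderProposition} and Proposition \ref{NormallyClovenFibrationCoCylinderIffNormallyClovenFibrationCylinderProposition} to transfer the characterisation to $\cocylinderinterval$ is a bonus beyond what the Observation claims, but it is correct since the adjunction and compatibility hypotheses hold here.
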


\begin{defn} An {\em iso-cofibration} is a functor \ar{\cl{A}_{0},\cl{A}_{1},j} such that $j$ is a cofibration with respect to $\cylinderinterval$. \end{defn} 

\begin{rmk} Non-constructively, it is possible to characterise an iso-cofibration as a functor which is injective on objects. \end{rmk}

\begin{defn} A {\em normally cloven iso-cofibration} is a functor \ar{\cl{A}_{0},\cl{A}_{1},j} such that $j$ is a normally cloven cofibration with respect to $\cylinderinterval$. \end{defn} 

\begin{thm} We obtain a model structure on $\mathsf{Cat}$ and $\mathsf{Grpd}$ by taking:

\begin{itemize}[itemsep=1em, topsep=1em]

\item[(i)] weak equivalences to be equivalences of categories,

\item[(ii)] fibrations to be iso-fibrations,

\item[(iii)] cofibrations to be normally cloven iso-cofibrations.

\end{itemize}
\end{thm}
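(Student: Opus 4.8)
The plan is to invoke Theorem \ref{NormallyClovenCofibrationsFibrationsModelStructureTheorem} with $\cl{A} = \mathsf{Cat}$ (respectively $\mathsf{Grpd}$), the monoidal structure being the cartesian one, and the cylinder and co-cylinder both arising from the interval $\interval = \big( \cl{I}, i_0, i_1 \big)$ via Definition \ref{CylinderAndCoCylinderFromInterval}. First I would verify that all the hypotheses of Corollary \ref{NormallyClovenCofibrationsFibrationsIntervalModelStructureCorollary} --- which is the interval-level packaging of Theorem \ref{NormallyClovenCofibrationsFibrationsModelStructureTheorem} --- are met for $\interval$. These are exactly the structures and compatibilities recorded in the string of observations above: the contraction structure $p$, the involution structure $v$ compatible with $p$, the subdivision structure $\big( \cl{S}, r_0, r_1, s \big)$ compatible with $p$, the upper left connection $\Gamma_{ul}$, the lower right connection $\Gamma_{lr}$ compatible with $p$, the upper right connection $\Gamma_{ur}$, the compatibility of $\Gamma_{lr}$ and $\Gamma_{ur}$ with the subdivision structure, exponentiability of $\cl{I}$ and $\cl{S}$ with respect to the cartesian product (which holds since the cartesian monoidal structures on $\mathsf{Cat}$ and $\mathsf{Grpd}$ are closed), the validity of Requirement \ref{SubdivisionRequirement} (which follows from closedness by Remark \ref{SubdivisionRequirementRemark}), and strictness of identities for $\interval$. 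Every one of these has been checked in the observations preceding the statement, so this step is essentially bookkeeping.

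Having done so, Corollary \ref{NormallyClovenCofibrationsFibrationsIntervalModelStructureCorollary} yields a model structure on $\mathsf{Cat}$ and on $\mathsf{Grpd}$ whose weak equivalences are the homotopy equivalences with respect to $\cylinderinterval$, whose fibrations are the fibrations with respect to $\cocylinderinterval$, and whose cofibrations are the normally cloven cofibrations with respect to $\cylinderinterval$. To finish, I would translate each of these three classes into the named classes in the statement. The weak equivalences are precisely the equivalences of categories by the observation that a functor is a homotopy equivalence with respect to $\cylinderinterval$ if and only if it is an equivalence of categories. The fibrations with respect to $\cocylinderinterval$ coincide with the fibrations with respect to $\cylinderinterval$ by Proposition \ref{FibrationCoCylinderIffFibrationCylinderProposition}, using that $\cylinderinterval$ is left adjoint to $\cocylinderinterval$ (Proposition \ref{CylinderAndCoCylinderFromIntervalAreAdjointProposition}); and these are exactly the iso-fibrations by the observation identifying fibrations with respect to $\cylinderinterval$ with iso-fibrations. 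The cofibrations are the normally cloven cofibrations with respect to $\cylinderinterval$, which are by Definition the normally cloven iso-cofibrations.

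I expect the main obstacle --- such as it is --- to be not a deep one but a matter of ensuring that the dual structures on $\cocylinderinterval$ required by Theorem \ref{NormallyClovenCofibrationsFibrationsModelStructureTheorem} are genuinely available and compatible. In Corollary \ref{NormallyClovenCofibrationsFibrationsIntervalModelStructureCorollary} this is subsumed into the interval hypotheses, since the remarks of \ref{StructuresUponAnIntervalSection} transport all structures on $\interval$ to both $\cylinderinterval$ and $\cocylinderinterval$ and the adjunction $\cylinderinterval \dashv \cocylinderinterval$ is automatically compatible with the respective contraction structures by Proposition \ref{AdjunctionBetweenCylinderAndCoCylinderFromIntervalCompatibleWithContractionAndExpansionProposition}. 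So the real content has already been front-loaded into the observations, and the proof reduces to citing Corollary \ref{NormallyClovenCofibrationsFibrationsIntervalModelStructureCorollary} together with the identifications of the three classes of arrows. The one point worth stating carefully is the equivalence of ``fibration with respect to $\cocylinderinterval$'' with ``iso-fibration'', which chains Proposition \ref{FibrationCoCylinderIffFibrationCylinderProposition} with the observation on iso-fibrations; everything else is immediate from the definitions and observations recorded above.

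\begin{proof} We apply Corollary \ref{NormallyClovenCofibrationsFibrationsIntervalModelStructureCorollary} to $\cl{A} = \mathsf{Cat}$, respectively $\cl{A} = \mathsf{Grpd}$, equipped with its cartesian monoidal structure, and to the interval $\interval = \big( \cl{I}, i_0, i_1 \big)$. By the observations above, $\interval$ is equipped with a contraction structure $p$, an involution structure $v$ compatible with $p$, a subdivision structure $\big( \cl{S}, r_0, r_1, s \big)$ compatible with $p$, an upper left connection structure $\Gamma_{ul}$, a lower right connection structure $\Gamma_{lr}$ compatible with $p$, and an upper right connection structure $\Gamma_{ur}$; moreover $\Gamma_{lr}$ and $\Gamma_{ur}$ are compatible with $\big( \cl{S}, r_0, r_1, s \big)$, and $\interval$ has strictness of identities. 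Since the cartesian monoidal structures on $\mathsf{Cat}$ and $\mathsf{Grpd}$ are closed, both $\cl{I}$ and $\cl{S}$ are exponentiable with respect to the product, and Requirement \ref{SubdivisionRequirement} holds by Remark \ref{SubdivisionRequirementRemark}. All hypotheses of Corollary \ref{NormallyClovenCofibrationsFibrationsIntervalModelStructureCorollary} are thus satisfied.

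We therefore obtain a model structure on $\cl{A}$ whose weak equivalences are the homotopy equivalences with respect to $\cylinderinterval$, whose fibrations are the fibrations with respect to $\cocylinderinterval$, and whose cofibrations are the normally cloven cofibrations with respect to $\cylinderinterval$. It remains to identify these classes. By the observation above, a functor is a homotopy equivalence with respect to $\cylinderinterval$ if and only if it is an equivalence of categories. By Proposition \ref{CylinderAndCoCylinderFromIntervalAreAdjointProposition}, $\cylinderinterval$ is left adjoint to $\cocylinderinterval$, so by Proposition \ref{FibrationCoCylinderIffFibrationCylinderProposition} a functor is a fibration with respect to $\cocylinderinterval$ if and only if it is a fibration with respect to $\cylinderinterval$; by the observation above the latter are exactly the iso-fibrations. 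Finally, by Definition a functor is a normally cloven iso-cofibration if and only if it is a normally cloven cofibration with respect to $\cylinderinterval$. This completes the proof.
\end{proof}
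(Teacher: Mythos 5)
Your proof is correct and takes the same route as the paper, which also derives this theorem directly from Corollary \ref{NormallyClovenCofibrationsFibrationsIntervalModelStructureCorollary}; you simply spell out the hypothesis-checking bookkeeping and the translation of classes (equivalences, iso-fibrations, normally cloven iso-cofibrations) that the paper's one-line ``Follows immediately'' leaves implicit.
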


\begin{proof} Follows immediately from Corollary \ref{NormallyClovenCofibrationsFibrationsIntervalModelStructureCorollary}. \end{proof}

\begin{thm} We obtain a model structure on $\mathsf{Cat}$ and $\mathsf{Grpd}$ by taking:

\begin{itemize}[itemsep=1em, topsep=1em]

\item[(i)] weak equivalences to be equivalences of categories,

\item[(ii)] fibrations to be normally cloven iso-fibrations,

\item[(iii)] cofibrations to be iso-cofibrations.

\end{itemize}
\end{thm}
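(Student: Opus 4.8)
The plan is to derive this theorem in exactly the same way as the two preceding theorems about categories and groupoids, namely by invoking the general model-structure result of Section \ref{ModelStructureSection}. The statement asserts a model structure on $\mathsf{Cat}$ and $\mathsf{Grpd}$ with weak equivalences the equivalences of categories, fibrations the normally cloven iso-fibrations, and cofibrations the iso-cofibrations. By the observations recorded earlier in this section, the interval $\interval = \big(\cl{I}, i_0, i_1\big)$ carries a contraction structure $p$, an involution structure $v$ compatible with $p$, a subdivision structure $\big(\cl{S}, r_0, r_1, s\big)$ compatible with $p$, an upper left connection $\Gamma_{ul}$, a lower right connection $\Gamma_{lr}$ compatible with $p$, and an upper right connection $\Gamma_{ur}$, with $\Gamma_{lr}$ and $\Gamma_{ur}$ compatible with $\big(\cl{S}, r_0, r_1, s\big)$; moreover $\interval$ has strictness of identities, $\cl{I}$ and $\cl{S}$ are exponentiable with respect to the (closed) cartesian monoidal structure, and Requirement \ref{SubdivisionRequirement} holds. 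Thus all the hypotheses of Corollary \ref{NormallyClovenCofibrationsFibrationsIntervalModelStructureCorollary} are met.

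First I would apply Corollary \ref{NormallyClovenCofibrationsFibrationsIntervalModelStructureCorollary} directly to obtain a model structure on $\mathsf{Cat}$ (respectively $\mathsf{Grpd}$), with weak equivalences the homotopy equivalences with respect to $\cylinderinterval$, fibrations the fibrations with respect to $\cocylinderinterval$, and cofibrations the normally cloven cofibrations with respect to $\cylinderinterval$. Then I would translate each of these three classes into the stated terminology using the identifications recorded in this section: a functor is a homotopy equivalence with respect to $\cylinderinterval$ if and only if it is an equivalence of categories; a functor is a fibration with respect to $\cylinderinterval$ if and only if it is an iso-fibration, and since $\cylinderinterval$ is left adjoint to $\cocylinderinterval$ (with adjunction compatible with the contraction structures), Proposition \ref{FibrationCoCylinderIffFibrationCylinderProposition} gives that fibrations with respect to $\cocylinderinterval$ coincide with fibrations with respect to $\cylinderinterval$, hence with iso-fibrations; and a normally cloven cofibration with respect to $\cylinderinterval$ is by definition a normally cloven iso-cofibration. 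Wait — I must be careful here: the present theorem says cofibrations are iso-cofibrations and fibrations are normally cloven iso-fibrations, so it is \emph{Corollary \ref{CofibrationsNormallyClovenFibrationsIntervalModelStructureCorollary}} (not \ref{NormallyClovenCofibrationsFibrationsIntervalModelStructureCorollary}) that must be invoked, whose output has cofibrations the cofibrations with respect to $\cylinderinterval$ and fibrations the normally cloven fibrations with respect to $\cocylinderinterval$.

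Concretely, then, the proof reads: \emph{Follows immediately from Corollary \ref{CofibrationsNormallyClovenFibrationsIntervalModelStructureCorollary}.} The only thing to check beyond citing that corollary is that its three output classes match the three classes named in the statement. For weak equivalences this is the observation that homotopy equivalence with respect to $\cylinderinterval$ is equivalence of categories. For fibrations, the corollary produces normally cloven fibrations with respect to $\cocylinderinterval$; by Proposition \ref{NormallyClovenFibrationCoCylinderIffNormallyClovenFibrationCylinderProposition} (using that $\cylinderinterval$ is left adjoint to $\cocylinderinterval$ with compatible contraction structures) these coincide with normally cloven fibrations with respect to $\cylinderinterval$, which by the observation in this section are exactly normally cloven iso-fibrations. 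For cofibrations, the corollary produces cofibrations with respect to $\cylinderinterval$, which by definition are iso-cofibrations. Since all these coincidences have already been recorded, no further argument is needed.

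The only genuine obstacle is bookkeeping: making sure one cites the corollary whose output classes are (cofibration, normally cloven fibration) rather than the dual one, and making sure the adjunction hypotheses needed for Propositions \ref{FibrationCoCylinderIffFibrationCylinderProposition} and \ref{NormallyClovenFibrationCoCylinderIffNormallyClovenFibrationCylinderProposition} are in force — but these follow from Proposition \ref{CylinderAndCoCylinderFromIntervalAreAdjointProposition} and Proposition \ref{AdjunctionBetweenCylinderAndCoCylinderFromIntervalCompatibleWithContractionAndExpansionProposition} applied to $\interval$, which are subsumed in the standing machinery of Section \ref{StructuresUponAnIntervalSection}. So there is no mathematical difficulty remaining; the work is entirely in matching up terminology, exactly as in the proof of the immediately preceding theorem.

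\begin{proof} Follows immediately from Corollary \ref{CofibrationsNormallyClovenFibrationsIntervalModelStructureCorollary}, together with the observations of this section identifying homotopy equivalences with respect to $\cylinderinterval$ with equivalences of categories, normally cloven fibrations with respect to $\cocylinderinterval$ with normally cloven iso-fibrations, and cofibrations with respect to $\cylinderinterval$ with iso-cofibrations. \end{proof}
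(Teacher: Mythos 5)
Your final citation is exactly the paper's proof: ``Follows immediately from Corollary \ref{CofibrationsNormallyClovenFibrationsIntervalModelStructureCorollary}.'' The detour through the wrong corollary midway was caught and corrected, and the terminology matching you spell out (equivalences of categories, normally cloven iso-fibrations via Proposition \ref{NormallyClovenFibrationCoCylinderIffNormallyClovenFibrationCylinderProposition}, iso-cofibrations by definition) is precisely the content of the observations recorded earlier in the section, so this is the same argument.
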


\begin{proof} Follows immediately from Corollary \ref{CofibrationsNormallyClovenFibrationsIntervalModelStructureCorollary}. \end{proof}

\end{chapter}

\bibliography{ref}

\begin{thebibliography}{10}

\bibitem{AndersonFibrationsAndGeometricRealizations}
{\sc D.~W. Anderson}, {\em Fibrations and geometric realizations}, Bull.
  {A}mer. {M}ath. {S}oc., 84 (1978), pp.~765--788.
\newblock
  \url{http://www.ams.org/journals/bull/1978-84-05/S0002-9904-1978-14512-1/home.html}.
  Review:
  \url{http://www.zentralblatt-math.org/zbmath/search/?q=an%3A0408.55002}.

\bibitem{BarthelRiehlOnTheConstructionOfFunctorialFactorizationsForModelCategories}
{\sc T.~Barthel and E.~Riehl}, {\em On the construction of functorial
  factorizations for model categories}.
\newblock \url{http://arxiv.org/abs/1204.5427}., 2012.

\bibitem{BousfieldHomotopySpectralSequencesAndObstructions}
{\sc A.~K. Bousfield}, {\em Homotopy spectral sequences and obstructions},
  Israel {J}. {M}ath., 66 (1989), pp.~54--104.
\newblock Review:
  \url{http://www.zentralblatt-math.org/zbmath/search/?q=an%3A0677.55020}.

\bibitem{BrownFibrationsOfGroupoids}
{\sc R.~Brown}, {\em Fibrations of groupoids}, J. Algebra, 15 (1970),
  pp.~103--132.
\newblock \url{http://pages.bangor.ac.uk/~mas010/pdffiles/fibrationsgpds.pdf}.
  Review:
  \url{http://www.zentralblatt-math.org/zmath/en/advanced/?q=an:0194.02202}.

\bibitem{BrownMooreHyperrectanglesOnASpaceFormAStrictCubicalOmegaCategory}
\leavevmode\vrule height 2pt depth -1.6pt width 23pt, {\em Moore
  hyperrectangles on a space form a strict cubical $\omega$-category}.
\newblock \url{http://arxiv.org/abs/0909.2212}., 2009.

\bibitem{BrownHigginsOnTheAlgebraOfCubes}
{\sc R.~Brown and P.~J. Higgins}, {\em On the algebra of cubes}, J. Pure Appl.
  Algebra, 21 (1981), pp.~233--260.
\newblock Version of 2002:
  \url{http://pages.bangor.ac.uk/~mas010/pdffiles/algcubes.pdf}. Review:
  \url{http://www.zentralblatt-math.org/zbmath/search/?q=an%3A0468.55007}.

\bibitem{BrownHigginsSiveraNonabelianAlgebraicTopology}
{\sc R.~Brown, P.~J. Higgins, and R.~Sivera}, {\em Nonabelian algebraic
  topology: filtered spaces, crossed complexes, cubical higher homotopy
  groupoids}, vol.~15 of E{M}{S} {T}racts in {M}athematics, European
  {M}athematical {S}ociety, 2011.
\newblock \url{http://pages.bangor.ac.uk/~mas010/nonab-a-t.html}. Review:
  \url{http://www.zentralblatt-math.org/zbmath/search/?q=an%3A05943531}.

\bibitem{CasacubertaGolasinskiTonksHomotopyLocalizationOfGroupoids}
{\sc C.~Casacuberta, M.~Golasi{\'n}ski, and A.~Tonks}, {\em Homotopy
  localization of groupoids}, Forum {M}ath., 18 (2006), pp.~967--982.
\newblock Preprint:
  \url{http://atlas.mat.ub.es/personals/casac/articles/cgt.pdf}. Review:
  \url{http://www.zentralblatt-math.org/zbmath/search/?q=an%3A1214.55009}.

\bibitem{DoldPartitionsOfUnityInTheTheoryOfFibrations}
{\sc A.~Dold}, {\em Partitions of unity in the theory of fibrations}, Ann. of
  {M}ath. (2), 78 (1963), pp.~223--255.
\newblock \url{http://www.jstor.org/stable/1970341}. Review:
  \url{http://www.zentralblatt-math.org/zbmath/search/?q=an%3A0203.25402}.

\bibitem{DoldHalbexakteHomotopiefunktoren}
\leavevmode\vrule height 2pt depth -1.6pt width 23pt, {\em Halbexakte
  {H}omotopiefunktoren}, vol.~12 of Lecture {N}otes {I}n {M}athematics,
  Springer-{V}erlag, 1966.
\newblock Review:
  \url{http://www.zentralblatt-math.org/zbmath/search/?q=an%3A0136.00801}.

\bibitem{GolasinskiGromadzkiTheHomotopyCategoryOfChainComplexesIsAChainComplex}
{\sc M.~Golasi{\'n}ski and G.~Gromadzki}, {\em The homotopy category of chain
  complexes is a homotopy category}, Colloq. Math., 47 (1982), pp.~173--178.
\newblock Review:
  \url{http://www.zentralblatt-math.org/zmath/en/search/?q=an:0532.55025}.

\bibitem{GrandisHomotopicalAlgebraInHomotopicalCategories}
{\sc M.~Grandis}, {\em Homotopical algebra in homotopical categories}, Appl.
  {C}ateg. {S}tructures, 2 (1994), pp.~351--406.
\newblock Review:
  \url{http://www.zentralblatt-math.org/zbmath/search/?q=an%3A0826.55015}.

\bibitem{GrandisCategoricallyAlgebraicFoundationsForHomotopicalAlgebra}
\leavevmode\vrule height 2pt depth -1.6pt width 23pt, {\em Categorically
  algebraic foundations for homotopical algebra}, Appl. Categ. Structures, 5
  (1997), pp.~363--413.
\newblock Preprint: \url{http://www.dima.unige.it/~grandis/HX1.pdf}. Review:
  \url{http://www.zentralblatt-math.org/zbmath/search/?q=an%3A0906.55015}.

\bibitem{GrandisDirectedHomotopyTheoryII}
\leavevmode\vrule height 2pt depth -1.6pt width 23pt, {\em Directed homotopy
  theory, {II}. {H}omotopy constructs}, Theory {A}ppl. {C}ateg., 10 (2002),
  pp.~369--391.
\newblock \url{http://www.tac.mta.ca/tac/volumes/10/14/10-14abs.html}. Review:
  \url{http://www.zentralblatt-math.org/zbmath/search/?q=an%3A1034.55006}.

\bibitem{GrandisDirectedAlgebraicTopology}
\leavevmode\vrule height 2pt depth -1.6pt width 23pt, {\em Directed algebraic
  topology. {M}odels of non-reversible worlds}, vol.~13 of New {M}athematical
  {M}onographs, Cambridge {U}niversity {P}ress, 2009.
\newblock \url{http://www.dima.unige.it/~grandis/BkDAT_page.html}. Review:
  \url{http://www.zentralblatt-math.org/zbmath/search/?q=an%3A1176.55001}.

\bibitem{GrandisMacDonaldHomotopyStructuresForAlgebrasOverAMonad}
{\sc M.~Grandis and J.~Mac{D}onald}, {\em Homotopy structures for algebras over
  a monad}, Appl. {C}ateg. {S}tructures, 7 (1999), pp.~227--260.
\newblock Preprint: \url{http://www.dima.unige.it/~grandis/HXalg.pdf}. Review:
  \url{http://www.zentralblatt-math.org/zbmath/search/?q=an%3A0938.55029}.

\bibitem{GrayFibredAndCofibredCategories}
{\sc J.~W. Gray}, {\em Fibred and cofibred categories}, in Proc. {C}onf.
  {C}ategorical {A}lgebra (La {J}olla, {C}alif., 1965), Springer, 1966,
  pp.~21--83.
\newblock Review:
  \url{http://www.zentralblatt-math.org/zbmath/search/?q=an%3A0192.10701}.

\bibitem{HastingsFibrationsOfCompactlyGeneratedSpaces}
{\sc H.~M. Hastings}, {\em Fibrations of compactly generated spaces}, Michigan
  {M}ath. {J}., 21 (1974), pp.~243--251.
\newblock \url{http://projecteuclid.org/euclid.mmj/1029001312}. Review:
  \url{http://www.zentralblatt-math.org/zmath/en/search/?q=an:0285.55015}.

\bibitem{HollanderAHomotopyTheoryForStacks}
{\sc S.~Hollander}, {\em A homotopy theory for stacks}, PhD thesis, {MIT},
  2001.
\newblock \url{http://arxiv.org/abs/math/0110247v1}. Review of abridged
  published version:
  \url{http://www.zentralblatt-math.org/zbmath/search/?q=an%3A1143.14003}.

\bibitem{JoyalTierneyStrongStacksAndClassifyingSpaces}
{\sc A.~Joyal and M.~Tierney}, {\em Strong stacks and classifying spaces}, in
  Category {T}heory ({C}omo, 1990), vol.~1488 of Lecture {N}otes in {M}ath.,
  Springer, 1991, pp.~213--236.
\newblock Review:
  \url{http://www.zentralblatt-math.org/zbmath/search/?q=an%3A0733.18011}.

\bibitem{KampsKanBedingungenUndAbstrakteHomotopietheorie}
{\sc K.~H. Kamps}, {\em Kan-{B}edingungen und abstrakte {H}omotopietheorie},
  Math. {Z}., 124 (1972), pp.~215--236.
\newblock Review:
  \url{http://www.zentralblatt-math.org/zbmath/search/?q=an%3A0223.55020}.

\bibitem{KampsNoteOnNormalSequencesOfChainComplexes}
\leavevmode\vrule height 2pt depth -1.6pt width 23pt, {\em Note on normal
  sequences of chain complexes}, Colloq. Math, 39 (1978), pp.~225--227.
\newblock Review:
  \url{http://www.zentralblatt-math.org/zbmath/search/?q=an%3A0403.55001}.

\bibitem{KampsPorterAbstractHomotopyAndSimpleHomotopyTheory}
{\sc K.~H. Kamps and T.~Porter}, {\em Abstract homotopy and simple homotopy
  theory}, World {S}cientific, 1997.
\newblock Review:
  \url{http://www.zentralblatt-math.org/zbmath/search/?q=an%3A0890.55014}.

\bibitem{KanAbstractHomotopyTheoryII}
{\sc D.~M. Kan}, {\em Abstract homotopy. {II}}, Proc. Nat. Acad. Sci. U.S.A, 42
  (1956), pp.~255--258.
\newblock \url{http://www.ncbi.nlm.nih.gov/pmc/articles/PMC528266/}. Review:
  \url{http://www.zentralblatt-math.org/zbmath/search/?q=an%3A0071.16702}.

\bibitem{MayAConciseCourseInAlgebraicTopology}
{\sc J.~P. May}, {\em A concise course in algebraic topology}, Chicago
  {L}ectures in {M}athematics, University of {C}hicago {P}ress, 1999.
\newblock \url{http://www.math.uchicago.edu/~may/CONCISE/ConciseRevised.pdf}.
  Review:
  \url{http://www.zentralblatt-math.org/zbmath/search/?q=an%3A0923.55001}.

\bibitem{MaySigurdssonParametrizedHomotopyTheory}
{\sc J.~P. May and J.~Sigurdsson}, {\em Parametrized homotopy theory}, vol.~132
  of Mathematical {S}urveys and {M}onographs, American {M}athematical
  {S}ociety, 2006.
\newblock \url{http://www.math.uchicago.edu/~may/EXTHEORY/MaySig.pdf}. Review:
  \url{http://www.zentralblatt-math.org/zbmath/search/?q=an%3A1119.55001}.

\bibitem{NikolausAlgebraicModelsForHigherCategories}
{\sc T.~Nikolaus}, {\em Algebraic models for higher categories}, Indag. Math.
  (N.S.), 21 (2011), pp.~52--75.
\newblock Preprint: \url{http://arxiv.org/abs/1003.1342}. Review:
  \url{http://www.zentralblatt-math.org/zbmath/search/?q=an%3A1236.55024}.

\bibitem{QuillenHomotopicalAlgebra}
{\sc D.~G. Quillen}, {\em Homotopical algebra}, vol.~43 of Lecture {N}otes in
  {M}athematics, Springer-{V}erlag, 1967.
\newblock Review:
  \url{http://www.zentralblatt-math.org/zbmath/search/?q=an%3A0168.20903}.

\bibitem{RezkAModelCategoryForCategories}
{\sc C.~Rezk}, {\em A model category for categories}.
\newblock \url{http://www.math.uiuc.edu/~rezk/cat-ho.dvi}., 2000.

\bibitem{SchwanzelVogtStrongCofibrationsAndFibrationsInEnrichedCategories}
{\sc R.~Schw{\"a}nzel and R.~M. Vogt}, {\em Strong cofibrations and fibrations
  in enriched categories}, Arch. {M}ath. ({B}asel), 79 (2002), pp.~449--462.
\newblock Review:
  \url{http://www.zentralblatt-math.org/zbmath/search/?q=an%3A1016.55003}.

\bibitem{StricklandKnLocalDualityForFiniteGroupsAndGroupoids}
{\sc N.~P. Strickland}, {\em K(n)-local duality for finite groups and
  groupoids}, Topology, 39 (2000), pp.~733--772.
\newblock Preprint: \url{http://arxiv.org/abs/math/0011109}. Review:
  \url{http://www.zentralblatt-math.org/zbmath/search/?q=an%3A0953.55005}.

\bibitem{StromNoteOnCofibrations}
{\sc A.~Str{\o}m}, {\em Note on cofibrations}, Math. {S}cand., 19 (1966),
  pp.~11--14.
\newblock \url{http://www.mscand.dk/article.php?id=1782}. Review:
  \url{http://www.zentralblatt-math.org/zmath/en/search/?q=an:0145.43604}.

\bibitem{StromNoteOnCofibrationsII}
\leavevmode\vrule height 2pt depth -1.6pt width 23pt, {\em Note on cofibrations
  {I}{I}}, Math. {S}cand., 22 (1968), pp.~130--142.
\newblock \url{http://www.mscand.dk/article.php?id=1867}. Review:
  \url{http://www.zentralblatt-math.org/zmath/en/advanced/?q=an:0181.26504}.

\bibitem{StromTheHomotopyCategoryIsAHomotopyCategory}
\leavevmode\vrule height 2pt depth -1.6pt width 23pt, {\em The homotopy
  category is a homotopy category}, Arch. {M}ath. ({B}asel), 23 (1972),
  pp.~435--441.
\newblock Review:
  \url{http://www.zentralblatt-math.org/zbmath/search/?q=an%3A0261.18015}.

\bibitem{ThomasonCatAsAClosedModelCategory}
{\sc R.~W. Thomason}, {\em Cat as a closed model category}, Cahiers {T}opologie
  {G}{\'e}om. {D}iff{\'e}rentielle, 21 (1980), pp.~305--324.
\newblock
  \url{http://www.numdam.org/numdam-bin/fitem?id=CTGDC_1980__21_3_305_0}.
  Review:
  \url{http://www.zentralblatt-math.org/zbmath/search/?q=an%3A0473.18012}.

\bibitem{VanDenBergGarnerTopologicalAndSimplicialModelsOfIdentityTypes}
{\sc B.~van~den Berg and R.~Garner}, {\em Topological and simplicial models of
  identity types}, A{C}{M} {T}rans. {C}omput. {L}og., 13 (2012), pp.~3:1 --
  3:44.
\newblock Preprint: \url{http://arxiv.org/abs/1007.4638}.

\end{thebibliography}
\bibliographystyle{siam}

\end{document}